\def\Z{\mathbf{Z}}
\def\C{\mathbf{C}}
\DeclareMathOperator{\Spectra}{Sp}
\DeclareMathOperator{\an}{an}
\DeclareMathOperator{\smooth}{sm}
\DeclareMathOperator{\alg}{alg}
\DeclareMathOperator{\der}{der}
\DeclareMathOperator{\Stein}{Stein}
\DeclareMathOperator{\Comm}{CRing}
\DeclareMathOperator{\Top}{Top}
\DeclareMathOperator{\sob}{sob}
\DeclareMathOperator{\loc}{loc}
\DeclareMathOperator{\RGeom}{\mathcal{R}{\mathcal T}op}
\DeclareMathOperator{\RingSpace}{RingSpace}
\DeclareMathOperator{\LRingSpace}{RingSpace^{loc}}
\DeclareMathOperator{\proadm}{pro-adm}
\DeclareMathOperator{\USpec}{Spec}
\DeclareMathOperator{\SSpec}{Spec} 
\DeclareMathOperator{\Open}{Open}
\DeclareMathOperator{\Poly}{Poly}
\DeclareMathOperator{\SCR}{SCR}
\DeclareMathOperator{\lex}{lex}
\DeclareMathOperator{\CompAn}{{\mathcal A}n_{\C}}
\newcommand{\Andre}{Andr\'{e}}
\DeclareMathOperator{\Diff}{Diff}
\DeclareMathOperator{\Sch}{Sch}
\newcommand{\EInfty}{ {\mathfrak E}_{\infty}}
\DeclareMathOperator{\LGeom}{{\mathcal L}{\mathcal T}op}
\DeclareMathOperator{\LGeo}{\LGeom}
\DeclareMathOperator{\Struct}{Str}
\DeclareMathOperator{\adm}{ad}
\DeclareMathOperator{\disc}{disc}
\DeclareMathOperator{\conn}{conn}
\DeclareMathOperator{\Sing}{Sing}
\DeclareMathOperator{\CAlg}{CAlg}
\DeclareMathOperator{\LFun}{\Fun^{L}}
\DeclareMathOperator{\RFun}{\Fun^{R}}
\newcommand{\toposref}[1]{T.\ref{HTT-#1}}
\newcommand{\stableref}[1]{S.\ref{STA-#1}}
\newcommand{\monoidref}[1]{M.\ref{MON-#1}}
\newcommand{\symmetricref}[1]{C.\ref{SYM-#1}}
\newcommand{\deformationref}[1]{D.\ref{DEF-#1}}
\newcommand{\degree}{\text{o}}
\newcommand{\bfA}{{\mathbf A}}
\DeclareMathOperator{\Kan}{\mathcal{K}an}
\DeclareMathOperator{\cDelta}{{\bf \Delta}}
\DeclareMathOperator{\Set}{\mathcal{S}et}
\DeclareMathOperator{\sSet}{\mathcal{S}et_{\Delta}}
\DeclareMathOperator{\sCoNerve}{\mathfrak{C}}
\DeclareMathOperator{\Nerve}{N}
\DeclareMathOperator{\Cat}{\mathcal{C}at}
\newcommand{\h}[1]{\rm{h} \! #1}
\newcommand{\Adjoint}[4]{\xymatrix@1{#2 \ar@<.4ex>[r]^-{#1} & #3 \ar@<.4ex>[l]^-{#4}}}
\newcommand{\etale}{{\'{e}tale}\,\,}
\newcommand{\et}{\'{e}t}
\newcommand{\mathet}{\text{\et}}
\newcommand{\Etale}{\'{E}tale\,}
\newcommand{\bigdot}{\bullet}
\newcommand{\fin}{\text{fin}}
\DeclareMathOperator{\bHom}{Map}
\DeclareMathOperator{\Sh}{\mathcal{S}hv}
\DeclareMathOperator{\Fin}{\mathcal{F}in}
\DeclareMathOperator{\Shv}{\Sh}
\newcommand{\Cech}{\v{C}ech\,}
\DeclareMathOperator{\Aff}{Aff}
\DeclareMathOperator{\Sym}{Sym}
\DeclareMathOperator{\Mod}{\mathcal{M}od}
\DeclareMathOperator{\Pro}{Pro}
\DeclareMathOperator{\p}{\mathfrak{p}}
\DeclareMathOperator{\calM}{\mathcal{M}}
\DeclareMathOperator{\calL}{\mathcal{L}}
 \DeclareMathOperator{\G}{\mathbf{G}}
\DeclareMathOperator{\calZ}{\mathcal{Z}}
\DeclareMathOperator{\Zar}{Zar} 
\DeclareMathOperator{\DerRing}{\mathcal{SCR}}
\DeclareMathOperator{\Q}{\mathbf{Q}}
\DeclareMathOperator{\Tor}{Tor} 
\DeclareMathOperator{\colim}{colim}
\DeclareMathOperator{\A}{\mathbf A}
\DeclareMathOperator{\calA}{\mathcal{A}}
\DeclareMathOperator{\bd}{\partial}
\DeclareMathOperator{\calU}{\mathcal{U}}
\DeclareMathOperator{\calV}{\mathcal{V}}
\DeclareMathOperator{\calE}{\mathcal{E}}
\DeclareMathOperator{\R}{\mathbf{R}}
\DeclareMathOperator{\calO}{\mathcal{O}}
\DeclareMathOperator{\calT}{\mathcal{T}}
\DeclareMathOperator{\calB}{\mathcal{B}}
\DeclareMathOperator{\calK}{\mathcal{K}}
\DeclareMathOperator{\Spec}{{\bf Spec}}
\DeclareMathOperator{\calF}{\mathcal{F}}
\DeclareMathOperator{\calG}{\mathcal{G}}
\DeclareMathOperator{\Hom}{Hom} 
\DeclareMathOperator{\HH}{H} 
\DeclareMathOperator{\id}{id} \DeclareMathOperator{\Fun}{Fun}
\DeclareMathOperator{\calC}{\mathcal{C}}
\DeclareMathOperator{\calI}{\mathcal{I}}
\DeclareMathOperator{\calJ}{\mathcal{J}}
\DeclareMathOperator{\SSet}{\mathcal{S}}
\DeclareMathOperator{\calX}{\mathcal{X}}
\DeclareMathOperator{\Idem}{Idem}
\DeclareMathOperator{\calY}{\mathcal{Y}}
\DeclareMathOperator{\rex}{rex}
\DeclareMathOperator{\sk}{sk}
\DeclareMathOperator{\op}{op}
\DeclareMathOperator{\calD}{\mathcal{D}}
\DeclareMathOperator{\Ind}{Ind} 
\DeclareMathOperator{\calP}{\mathcal{P}} \topmargin=0in
\newtheorem{theorem}{Theorem}[subsection]
\newtheorem{lemma}[theorem]{Lemma}
\newtheorem{proposition}[theorem]{Proposition}
\newtheorem{corollary}[theorem]{Corollary}
\theoremstyle{definition}
\newtheorem{definition}[theorem]{Definition}
\newtheorem{example}[theorem]{Example}
\newtheorem{notation}[theorem]{Notation}
\newtheorem{warning}[theorem]{Warning}
\newtheorem{remark}[theorem]{Remark}
\begin{document}

\title{Derived Algebraic Geometry V: Structured Spaces}

\maketitle
\tableofcontents

\section*{Introduction: Bezout's Theorem}

Let $C,C' \subseteq \mathbf{P}^2$ be two smooth algebraic curves
of degrees $m$ and $n$ in the complex projective plane
$\mathbf{P}^2$. If $C$ and $C'$ meet transversely, then the
classical theorem of Bezout (see for example \cite{bezout})
asserts that $C \cap C'$ has precisely $mn$ points.

We may reformulate the above statement using the language of
cohomology. The curves $C$ and $C'$ have fundamental classes $[C],
[C'] \in \HH^2(\mathbf{P}^2; \Z)$. If $C$ and $C'$ meet
transversely, then we have the formula
$$[C] \cup [C'] = [C \cap C'],$$ where the fundamental class $[C
\cap C'] \in \HH^4(\mathbf{P}^2; \Z) \simeq \Z$ of the
intersection $C \cap C'$ simply counts the number of points where $C$ and $C'$ meet. Of course, this should not be surprising: the
cup product on cohomology classes is defined so as to encode the
operation of intersection. However, it would be a mistake to
regard the equation $[C] \cup [C'] = [C \cap C']$ as obvious,
because it is not always true. For example, if the curves $C$ and
$C'$ meet nontransversely (but still in a finite number of
points), then we always have a strict inequality
$$[C] \cup [C'] > [C \cap C']$$
if the right hand side is again interpreted as counting the number
of points in the set-theoretic intersection of $C$ and $C'$.

If we want a formula which is valid for non-transverse
intersections, then we must alter the definition of $[C \cap C']$
so that it reflects the appropriate intersection multiplicities.
Determination of these intersection multiplicities requires
knowledge of the intersection $C \cap C'$ as a scheme, rather than
simply as a set. This is one of the classic arguments that
nonreduced scheme structures carry useful information: the
intersection number $[C] \cup [C'] \in \Z$, which is defined {\it
a priori} by perturbing the curves so that they meet
transversally, can also be computed directly (without
perturbation) if one is willing to contemplate a potentially
non-reduced scheme structure on the intersection.

In more complicated situations, the appropriate intersection
multiplicities cannot always be determined from the
scheme-theoretic intersection alone. Suppose that $C$ and $C'$ are
smooth subvarieties of $\mathbf{P}^n$ of
complementary dimension, having a zero-dimensional
intersection. In this case, the appropriate intersection number
associated to a point $p \in C \cap C'$ is not always given by the
complex dimension of the local ring
$$ \calO_{C \cap C',p} = \calO_{C,p} \otimes_{\calO_{\mathbf{P}^n,p}}
\calO_{C',p}.$$ The reason for this is easy to understand from the
point of view of homological algebra. Since the tensor product
functor $\otimes_{\calO_{\mathbf{P}^n,p}}$ is not exact, it does
not have good properties when considered alone. According to
Serre's intersection formula, the correct intersection
multiplicity is instead the Euler characteristic
$$ \sum (-1)^i \text{dim} \Tor^{\calO_{\mathbf{P}^n,p}}_i ( \calO_{C,p},
\calO_{C',p}).$$ This Euler characteristic contains the dimension
of the local ring of the scheme-theoretic intersection as its
leading term, but also higher-order corrections. We refer the
reader to \cite{intersection} for further discussion of this
formula for the intersection multiplicity.

If we would like the equation $[C] \cup [C'] = [C \cap C']$ to
remain valid in the more complicated situations described above,
then we will need to interpret the intersection $C \cap C'$ in
some way which remembers not only the tensor product $\calO_{C,p}
\otimes_{\calO_{\mathbf{P}^n,p}} \calO_{C',p}$, but the higher
$\Tor$ groups. Moreover, we should not interpret these
$\Tor$-groups separately, but rather should think of the total
derived functor $\calO_{C,p} \otimes^L_{\calO_{\mathbf{P}^n,p}}
\calO_{C',p}$ as a kind of generalized ring. 

These considerations lead us naturally to the subject of {\em
derived algebraic geometry}. Using an appropriate notion of
``generalized ring'', we will mimic the constructions of classical
scheme theory to obtain a theory of {\em derived schemes} in which
a version of the formula $[C] \cup [C'] = [C \cap C']$ can be
shown to hold with (essentially) {\em no} hypotheses on $C$ and
$C'$. Here, we must interpret the intersection $C \cap C'$ in the
sense of derived schemes, and we must take great care to give the
proper definition for the fundamental classes (the so-called {\it
virtual fundamental classes} of \cite{virtual}).

What sort of objects should our generalized rings be? To answer this question, we begin by
considering the simplest case of Bezout's theorem, in which $C$
and $C'$ are {\em lines} in the projective plane $\mathbf{P}^2$. In this
case, we know that $[C] \cup [C']$ is the cohomology class of a
point, and that $C$ intersects $C'$ transversely in one point so
long as $C$ and $C'$ are distinct. However, when the equality $C =
C'$ holds, the scheme-theoretic intersection $C \cap C'$ does not
even have the correct dimension.

Let us now try to formulate a theory of intersections which will handle the degenerate situation where $C = C'$. To simplify the discussion, we will consider only lines in the affine plane $\A^2
\subseteq \mathbf{P}^2$, with coordinate ring $\C[x,y]$. Two
distinct lines in $\A^2$ may be given (without loss of generality) by the equations $x=0$ and
$y=0$. The scheme-theoretic intersection of these two lines is the
spectrum of the ring $\C[x,y]/ (x,y) \simeq \C$, obtained from
$\C[x,y]$ by enforcing the defining equations of both lines. This ring
has Krull dimension $0$ because $\C[x,y]$ has Krull dimension $2$ and we have imposed
$2$ independent conditions.

Now suppose that instead of $C$ and $C'$ being two distinct lines,
they are actually two {\em identical} lines, both of which are defined by 
the equation $x=0$. In this case, the affine ring of the scheme
theoretic intersection is given by $\C[x,y]/(x,x) \simeq \C[y]$.
This ring has Krull dimension $1$, rather than the expected dimension
$0$, because the two equations are not independent: setting $x$ equal to zero twice 
has the same effect as setting $x$ equal to zero once. To obtain the theory we are looking for, we need a notion of generalized ring which remembers not only whether or not $x$ is equal to $0$, but how many {\em different ways} $x$ is equal to $0$.

One way to obtain such a formalism is by {\em categorifying} the
notion of a commutative ring. That is, in place of ordinary
commutative rings, we consider {\em categories} equipped
with addition and multiplication operations (which are encoded by
functors, rather than ordinary functions). For purposes of the
present discussion, let us call such an object a {\it categorical
ring}. We will not give a precise axiomatization of this notion;
this turns out to be somewhat complicated (see \cite{categoricalring}, for example).

\begin{example}\label{Dfin}
Let $\Z_{\geq 0}$ denote the semiring of nonnegative integers. We
note that $\Z_{\geq 0}$ arises in nature through the process of
{\em decategorification}. The nonnegative integers were originally introduced in order to
{\em count}: in other words, in order to measure the size of finite sets. To make this statement more precise, let us denote by $\Fin$ the category whose objects are finite sets, and whose morphisms
are isomorphisms of finite sets. Then we can identify $\Z_{\geq 0}$ with the set of isomorphism classes of objects in $\Fin$. The addition and multiplication operations on $\Z_{\geq 0}$ are induced by {\em functors} $\Fin \times \Fin \rightarrow \Fin$, given by the formation of disjoint union and Cartesian product. Moreover, all of the axioms for a commutative semiring have analogues that hold at the categorical level: for example, the distributive law $xy+xz = x(y+z)$ translates into the existence of a canonical isomorphism
$$ (X \times Y) \coprod (X \times Z) \simeq X \times (Y \coprod Z)$$
for every triple of objects $X,Y,Z \in \Fin$. (In order to complete the analogy
with the above discussion, we should ``complete'' the category
$\Fin$ by formally adjoining inverses, to obtain a categorical
ring rather than a categorical semiring, but we will ignore this
point for the time being.)
\end{example}

To simplify the discussion, we will consider only categorical
rings which are groupoids: that is, every morphism in the
underlying category is an isomorphism. If $\calC$ is a categorical
ring, then the collection of isomorphism classes of objects in $\calC$ forms an ordinary commutative ring,
which we will denote by $\pi_0 \calC$. Every commutative ring
$R$ arises in this way: for example, we may take $\calC$ to be a
category whose objects are the elements of $R$ and which contains
only identity maps for morphisms. The categorical rings which
arise in this way are very special: their objects have no
nontrivial automorphisms. For a given commutative ring $R$, there
are usually many other ways to realize an isomorphism of $R$ with the collection of isomorphism classes of objects in a
categorical ring $\calC$. Although
$\calC$ is not uniquely determined by $R$, there is often
a natural choice for $\calC$ which is dictated by the manner in
which $R$ is constructed.

As an example, let us suppose that the commutative ring $R$ is
given as a quotient $R'/(x-y)$, where $R'$ is some other
commutative ring and $x,y \in R'$ are two elements. Suppose that
the ring $R'$ has already been ``categorified'' in the sense that
we have selected some categorical ring $\calC'$ and an
identification of $R'$ with $\pi_0 \calC' $. To this data, we wish
to associate some ``categorification'' $\calC$ of $R$. Roughly,
the idea is to think of $x$ and $y$ as objects of $\calC'$,
and to impose the relation $x=y$ at the {\em categorical} level.
However, it is extremely unnatural to ask that two objects in a
category be {\em equal}; instead one should ask that they be {\em
isomorphic}. In other words, the quotient category $\calC$ should
not be obtained from $\calC'$ by identifying the objects $x$ and
$y$. Instead we should construct $\calC$ by {\it enlarging}
$\calC'$ so that it includes an isomorphism $\alpha: x \simeq y$.
Since we want $\calC$ to be a categorical ring, the formation of
this enlargement is a somewhat complicated business: in addition
to the new isomorphism $\alpha$, we must also adjoin other
isomorphisms which can be obtained from $\alpha$ through addition,
multiplication, and composition (and new relations, which may
cause distinct isomorphisms in $\calC'$ to have the same image in
$\calC$).

To make the connection with our previous discussion, let us note
that the construction of $\calC$ from $\calC'$ described in the
preceding paragraph is interesting even in the situation
where $x = y$. In this case, $x$ and $y$ are already isomorphic
when thought of as objects of $\calC'$. However, in $\calC$ we get
a {\em new} isomorphism $\alpha$ between $x$ and $y$, which
usually does not lie in the image of the map
$\Hom_{\calC'}(x,y) \rightarrow \Hom_{\calC}(x,y)$. Consequently,
even though the quotient map $R' \rightarrow R$ is an
isomorphism, the underlying functor
$\calC' \rightarrow \calC$ need not be an equivalence of
categories. Imposing the new relation $x = y$ does not change the
collection of isomorphism classes of objects, but often {\em
does} change the automorphism groups of the objects. Consequently,
if we begin with {\em any} objects $x$ and $y$, we can iterate the
above construction two or more times to obtain a categorical ring
$\calC$ equipped with multiple isomorphisms $x \simeq y$. These
isomorphisms are (in general) distinct from one another, so that
the categorical ring $\calC$ ``knows'' how many times $x$ and $y$
have been identified.

We have now succeeded in finding a formalism which is sensitive to
redundant information: we just need to replace ordinary
commutative rings with categorical rings. The next question to ask is whether or not this formalism is of any use. For example, can we carry out computations in this kind of formalism?
Suppose that $\calC'$ is a categorical ring containing a pair of objects $x,y \in \calC'$
as above, and we form a new categorical ring $\calC$ by adjoining an isomorphism $\alpha: x \rightarrow y$. For simplicity, we will suppose that $\calC'$ is a {\em discrete} category (having no nonidentity morphisms), which we may identify with an ordinary commutative ring $R' = \pi_0 \calC'$. The commutative ring $R = \pi_0 \calC$ is easy to compute: it can be identified with the
cokernel of the map $$\phi: R'
\stackrel{x-y}{\rightarrow} R'.$$ It turns out that the
automorphism groups of objects of $\calC$ are also readily computable: for every object $x \in \calC$, there is a canonical isomorphism $\Hom_{\calC}(x,x) \simeq \ker(\phi)$.

Let us return to geometry for a moment, and suppose that $R'$ is
the affine ring of a curve (possibly nonreduced) in $\Aff^2 =
\Spec \C[x,y]$. Let $R'' = \C[x,y]/(x-y)$ denote the affine ring
of the diagonal. Then the cokernel and kernel of $\phi$ may be
naturally identified with $\Tor_0^{\C[x,y]}(R',R'')$ and
$\Tor_1^{\C[x,y]}(R',R'')$. In other words, just as the leading
term in Serre's intersection formula has an interpretation in terms of tensor constructions with ordinary commutative rings, the second term has a geometric interpretation in terms of categorical rings.

Unfortunately, this is far as categorical rings will take us. In
order to interpret the next term in Serre's intersection formula,
we need to take categorification one step further and
consider ring structures on {\it $2$-groupoids}. To understand the entire formula, 
we need to consider commutative ring structures on $\infty$-groupoids, which we may think of as topological spaces or as simplicial sets. There are (at least) two essentially different ways to make this idea precise. The first is to work with topological spaces (or better yet, simplicial sets) which have a commutative ring structure, where the addition and multiplication are given by continuous maps; here we require the axioms of a commutative ring to be satisfied ``on the nose''. The collection of
all such topological rings can be organized into an $\infty$-category $\SCR$ which we will study in \S \ref{escr}. There is also a more sophisticated theory of commutative ring structures on $\infty$-groupoids, in which the commutative ring axioms are only required to hold up to (coherent) homotopy.
This leads to the theory of (connective) {\it $E_{\infty}$-rings} which we studied in \cite{symmetric} (see also \cite{EKMM}).

\begin{remark}
We should emphasize that when studying a topological ring $A$, we are much more interested
in the {\em homotopy type} of $A$ than we are in the {\em topology} of $A$. In other words, we regard the topology on $A$ as a mechanism
which allows us to discuss paths, homotopies between paths, and so
forth. Consequently, most of the topological rings which arise naturally in mathematics are quite
uninteresting from our point of view. For example, any ring which
is a topological vector space over $\R$ is contractible, and thus
equivalent to the zero ring. On the other hand, any $\p$-adically
topologized ring has no nontrivial paths, and is thus equivalent
to a discrete ring from our point of view. The topological rings
which {\em do} arise in derived algebraic geometry are generally
obtained from discrete rings by applying various categorical
constructions, and are difficult to describe directly.
\end{remark}

We now have two reasonable candidates for our theory of
``generalized rings'': $E_{\infty}$-ring spectra and simplicial
commutative rings. Which is the better notion? The answer depends,
of course, on what we want to do. Roughly speaking, the theory of simplicial commutative
rings can be regarded as a mechanism for applying ideas from algebraic topology
to the study of commutative algebra. If we take the
point of view that our ultimate interest is in {\em ordinary}
commutative rings, then simplicial commutative rings arise
naturally because certain constructions (such as left derived
tensor products) force us to consider more general objects (as in our discussion
of Bezout's theorem above). By contrast, the theory of $E_{\infty}$-rings can be thought of as a mechanism
for applying ideas from commutative algebra to algebraic topology (more specifically, to stable homotopy theory). For example, we might observe that for every compact topological space
$X$, the complex $K$-theory $K(X)$ has the structure of a commutative ring; we would then like to summarize this fact by saying that, in some sense, $K$-theory itself is a
commutative ring. The theory of $E_{\infty}$-ring spectra provides
the correct language for describing the situation: $K$-theory and
many other generalized cohomology theories of interest may be
endowed with $E_{\infty}$-structures.

We are now in a position to describe (at least informally) the theory of {\em derived schemes}
which we will introduce in this paper. Just as an ordinary scheme is defined to be ``something which
looks locally like $\Spec A$ where $A$ is a commutative ring'', a
derived scheme can be described as ``something which looks
locally like $\Spec A$ where $A$ is a simplicial commutative ring''.
Of course, many variations on this basic idea are possible.
In this paper, we are concerned with laying the foundations for the theory of derived algebraic geometry, rather than any particular application (such the generalization of Bezout's theorem described above). Many of the ideas involved can be fruitfully exported to other contexts (such as complex analytic geometry), so it seems worthwhile to establish the foundations of the theory in a very general form. To this end, we will introduce the notion of a {\it geometry}. Given a geometry $\calG$ and a topological space $X$, there is an associated theory of $\calG$-structures on $X$.
Roughly speaking, a $\calG$-structure on a topological space $X$ is a sheaf $\calO_X$ endowed with some operations, whose exact nature depends on $\calG$. 
By choosing $\calG$ appropriately, we can recover the classical theory of ringed and locally ringed spaces. However, we can also obtain variations on this classical theory, where the structure sheaf
$\calO_{X}$ takes values not in the ordinary category of commutative rings, but in the $\infty$-category
$\SCR$ of simplicial commutative rings (see \S \ref{derzar}). Another possibility is to work with the $\infty$-category of $E_{\infty}$-rings. This leads naturally to the subject of {\it spectral algebraic geometry}, which we will take up in \cite{spectral}.

\subsection*{Overview}

Let us now outline the contents of this paper. First it is convenient to recall a few definitions from
classical scheme theory. A {\it ringed space} is a pair $(X, \calO_{X})$, where $X$ is a topological space and $\calO_{X}$ is a sheaf of commutative rings on $X$. We say that $(X, \calO_{X})$ is
{\it locally ringed} if, for every point $x \in X$, the stalk $\calO_{X,x}$ is a local ring.
The collection of locally ringed spaces can be organized into a category, where
a map of locally ringed spaces $(X, \calO_{X}) \rightarrow (Y, \calO_{Y})$
is given by a continuous map $f: X \rightarrow Y$, together with a map
of sheaves $f^{\ast} \calO_{Y} \rightarrow \calO_X$ satisfying the following
locality condition: for every point $x \in X$, the induced ring homomorphism
$\calO_{Y, f(x)} \rightarrow \calO_{X,x}$ is {\em local}. We say that a locally ringed
space $(X, \calO_X)$ is a {\it scheme} if it is locally isomorphic to a locally
ringed space of the form $( \Spec A, \calO_{\Spec A})$, where $A$ is a commutative ring and
$\Spec A$ is the usual Zariski spectrum of prime ideals in the ring $A$.
A map of schemes is simply a map of the underlying locally ringed spaces, so we
can regard the category of schemes as a full subcategory of the category of locally ringed spaces.

We wish to build a theory of derived algebraic geometry following the above outline.
However, our theory will be different in several respects:

\begin{itemize}
\item[$(a)$] We are ultimately interested in studying moduli problems in derived algebraic geometry. We will therefore need not only a theory of schemes, but also a theory of Deligne-Mumford stacks.
For this, we will need to work with the \etale topology in addition to the Zariski topology.

\item[$(b)$] Let $A$ be a commutative ring. There exists a good theory of \etale sheaves on $\Spec A$, but the category of such sheaves is not equivalent to the category of sheaves on any topological space. For this reason, the theory of ringed spaces $(X, \calO_X)$ needs to be replaced by the more sophisticated theory of {\it ringed topoi} $( \calX, \calO_{\calX} )$. Here $\calX$ denotes
a (Grothendieck) topos, and $\calO_{\calX}$ a commutative ring object of $\calX$. Actually,
it will be convenient to go even further: we will allow $\calX$ to be an arbitrary
$\infty$-topos, as defined in \cite{topoi}. Though the additional generality makes little
difference in practice (see Theorem \ref{top4}), it is quite convenient in setting up
the foundations of the theory.

\item[$(c)$] The structure sheaves $\calO_{\calX}$ that we consider will not take values in the
category of ordinary commutative rings, but instead the $\infty$-category $\SCR$ of {\it simplicial commutative rings} (see Definition \ref{hutip}) or some other variation, such as the $\infty$-category of $E_{\infty}$-rings.
\end{itemize}

We will begin in \S \ref{geo} by introducing the definition of a {\it geometry}. Given a geometry $\calG$ and an $\infty$-topos $\calX$, there is an associated theory of {\it $\calG$-structures on $\calX$}. We can think of a $\calG$-structure on $\calX$ as a sheaf on $\calX$ with some additional structures, whose exact nature depends on the choice of geometry $\calG$. If $\calX$ is an $\infty$-topos and $\calO_{\calX}$ a $\calG$-structure on $\calX$, then we will refer to the pair $(\calX, \calO_{\calX})$ as a {\it $\calG$-structured $\infty$-topos}: these will play the role of locally ringed spaces in our formalism.

To any $\calG$-structured $\infty$-topos $(\calX, \calO_{\calX})$, we can associate its
{\it global sections} $\Gamma( \calX, \calO_{\calX}) \in \Ind( \calG^{op} )$. In \S \ref{secscheme}, we will prove that the functor
$$ (\calX, \calO_{\calX}) \mapsto \Gamma( \calX, \calO_{\calX})$$
admits a right adjoint, which we denote by $\Spec^{\calG}$. We will say that a $\calG$-structured $\infty$-topos $(\calX, \calO_{\calX})$ is an {\it affine $\calG$-scheme} if it belongs to the essential image of $\Spec^{\calG}$. More generally, we say that $(\calX, \calO_{\calX})$ is a {\it $\calG$-scheme} if it is equivalent to an affine $\calG$-scheme, locally on $\calX$.

By construction, our theory of $\calG$-schemes bears a formal analogy to the usual theory of schemes. In \S \ref{exzar} and \S \ref{exet} will show that this analogy can be extended to a dictionary. Namely, for an appropriate choice for the geometry $\calG$ (and after restricting the class of $\infty$-topoi that we consider), our theory of $\calG$-schemes reduces to the usual theory of schemes. By varying $\calG$, we can recover other classical notions as well, such as the theory of Deligne-Mumford stacks.

Of course, our primary goal is to develop a language for describing the {\it derived algebraic geometry} sketched in the introduction. To describe the passage from classical to derived algebraic geometry, 
we will introduce in \S \ref{appus} the notion of a {\it pregeometry} $\calT$. To every pregeometry
$\calT$ and every $0 \leq n \leq \infty$, we can associate a geometry $\calG$ which we call the
{\it $n$-truncated geometric envelope} of $\calT$. We then refer to $\calG$-schemes as
{\it $n$-truncated $\calT$-schemes}. We will consider some examples in \S \ref{app6}; in particular, we will describe a pregeometry $\calT_{\Zar}$ whose scheme theory
interpolates between classical algebraic geometry (associated to $n=0$) and
derived algebraic geometry (associated to $n=\infty$).

The theory of derived algebraic geometry presented here is not new. For another very general foundational approach (of a rather different flavor from ours), we refer the reader to \cite{toen2} and \cite{toen3}.

\subsection*{Notation and Terminology}

For an introduction to the language of higher category theory (from the point of view taken in this paper), we refer the reader to \cite{topoi}. For convenience, we will adopt the following conventions concerning references to \cite{topoi} and to the other papers in this series:

\begin{itemize}
\item[$(T)$] We will indicate references to \cite{topoi} using the letter T.
\item[$(S)$] We will indicate references to \cite{DAGStable} using the letter S.
\item[$(M)$] We will indicate references to \cite{monoidal} using the letter M.
\item[$(C)$] We will indicate references to \cite{symmetric} using the letter C.
\item[$(D)$] We will indicate references to \cite{deformation} using the letter D.
\end{itemize}

For example, Theorem \toposref{mainchar} refers to Theorem \ref{HTT-mainchar} of \cite{topoi}.

If $\calC$ and $\calD$ are $\infty$-categories which admit finite limits, we let $\Fun^{\lex}(\calC, \calD)$ denote the full subcategory of $\Fun(\calC, \calD)$ spanned by those functor which are {\em left exact}: that is, those functors which preserve finite limits. If instead
$\calC$ and $\calD$ admits finite colimits, we let $\Fun^{\rex}(\calC, \calD) = \Fun^{\lex}( \calC^{op}, \calD^{op})^{op}$ denote the full subcategory of $\Fun(\calC, \calD)$ spanned by the {\em right exact} functors: that is, those functors which preserve finite colimits.

If $\calC$ is a small $\infty$-category, we let $\Pro(\calC)$ denote the $\infty$-category
$\Ind(\calC^{op})^{op}$. We will refer to $\Pro(\calC)$ as the {\it $\infty$-category of pro-objects of $\calC$}. We will view $\Pro(\calC)$ as a full subcategory of $\Fun( \calC, \SSet)^{op}$. 
If $\calC$ admits finite limits, then this identification reduces to an equality
$\Pro(\calC) = \Fun^{\lex}( \calC, \SSet)^{op}$. Note that the Yoneda embedding
$j: \calC \rightarrow \calP(\calC)$ determines functors
$$ \Ind(\calC) \leftarrow \calC \rightarrow \Pro(\calC);$$
we will abuse terminology by referring to either of these functors also as the Yoneda embedding.

For every small $\infty$-category $\calC$, the $\infty$-category $\Ind(\calC)$ admits small filtered colimits, and the $\infty$-category $\Pro(\calC)$ admits small filtered limits.
According to Proposition \toposref{intprop}, the $\infty$-categories $\Ind(\calC)$ and $\Pro(\calC)$ can be characterized by the following universal properties:
\begin{itemize}
\item[$(a)$] Let $\calD$ be an $\infty$-category which admits small filtered colimits, and let
$\Fun'(\Ind(\calC), \calD)$ denote the full subcategory of $\Fun(\Ind(\calC), \calD)$ spanned by those functors which preserve small filtered colimits. Then composition with the Yoneda embedding
induces an equivalence
$$ \Fun'( \Ind(\calC), \calD) \rightarrow \Fun(\calC, \calD).$$
\item[$(b)$] Let $\calD$ be an $\infty$-category which admits small filtered limits, and let
$\Fun'(\Pro(\calC), \calD)$ denote the full subcategory of $\Fun(\Pro(\calC), \calD)$ spanned by those functors which preserve small filtered limits. Then composition with the Yoneda embedding
induces an equivalence
$$ \Fun'( \Pro(\calC), \calD) \rightarrow \Fun(\calC, \calD).$$
\end{itemize}
These properties characterize $\Ind(\calC)$ and $\Pro(\calC)$ up to equivalence.
If we assume only that $\calC$ is essentially small, then there still exists a maps
$$ \calC' \leftarrow \calC \rightarrow \calC''$$
satisfying the analogues $(a)$ and $(b)$ (where $\calC'$ admits small filtered colimits and
$\calC''$ admits small filtered limits). We will then abuse notation by writing
$\calC' = \Ind(\calC)$ and $\calC'' = \Pro(\calC)$ (so that $\Ind$-objects and $\Pro$-objects
are defined for all essentially small $\infty$-categories). (It is not difficult to extend this definition to $\infty$-categories which are not assumed to be essentially small, but we will refrain from doing so.)

\section{Structure Sheaves}\label{geo}

Let $X$ be a topological space. Our goal in this section is to develop a general theory of ``sheaves with structure'' $\calF$ on $X$ (or, more generally, on any $\infty$-topos $\calX$). With an eye towards future applications, we would like to be as open-minded as possible regarding the exact nature of this structure. For example, we want to include as a possibility each of the following examples:

\begin{itemize}
\item[$(a)$] The space $X$ is the underlying topological space of a scheme and $\calF = \calO_X$ is the structure sheaf of $X$ (taking values in the category of commutative rings).
\item[$(b)$] The space $X$ is the underlying topological space of a scheme and $\calF$ is a quasi-coherent sheaf of $\calO_X$-modules on $X$.
\item[$(c)$] The space $X$ is the underlying topological space of a scheme and $\calF$ is an object of the derived category of quasi-coherent sheaves on $X$: this can be regarded as a sheaf on $X$ taking values in a suitable $\infty$-category of module spectra.
\item[$(d)$] The ``space'' $X$ is the underlying \etale topos of a Deligne-Mumford stack, and 
the sheaf $\calF$ is of a nature described by $(a)$, $(b)$, or $(c)$.
\item[$(e)$] The space $X$ is a smooth manifold, and $\calF$ is the sheaf of smooth real-valued functions on $X$. Then $\calF$ is a sheaf of commutative rings, but also has additional structure:
for example, any smooth map $f: \R \rightarrow \R$ induces, by composition, a map from $\calF$ to itself.
\item[$(f)$] The space $X$ is the underlying topological space of a derived scheme, and
$\calF = \calO_X$ is the structure sheaf of $X$ (taking values in the $\infty$-category of simplicial commutative rings).
\end{itemize}

We begin in \S \ref{scurrel} by studying the $\infty$-category $\Shv_{\calC}(X)$, where
$\calC$ is an arbitrary $\infty$-category. We can define $\Shv_{\calC}(X)$ as a full subcategory
of the $\infty$-category of $\calC$-valued presheaves on $X$: namely, the full subcategory spanned by those objects which satisfy a suitable descent condition. Here it becomes extremely convenient to work with $\infty$-topoi, rather than with topological spaces: if $X$ is an $\infty$-topos, then we can formulate the descent condition simply by saying that we have a functor $X^{op} \rightarrow \calC$ which preserves small limits. 

The theory of \S \ref{scurrel} can be used to provide a perfectly adequate theory of {\em ringed spaces} (or, more generally, {\it ringed $\infty$-topoi}). However, in classical algebraic geometry, a more prominent role is played by the theory of {\em locally ringed spaces}: that is, ringed spaces $(X, \calO_{X})$ 
for which the stalk $\calO_{X,x}$ is a local ring, for every point $x \in X$. To formulate an analogous locality condition on the $\infty$-category of $\calC$-valued sheaves, we need some additional structure on $\calC$. In \S \ref{scurgeo} we introduce a formalism for describing this additional structure, using the language of {\em geometries}. Roughly speaking, a geometry $\calG$ is a small
$\infty$-category with some additional data, which will enable us to develop a good theory of
{\em local} $\Ind(\calG^{op})$-valued sheaves on an arbitrary space (or $\infty$-topos) $\calX$.
We will refer to these local sheaves as {\it $\calG$-structures on $\calX$}; they can be organized into an $\infty$-category which we denote by $\Struct_{\calG}(\calX)$. 

The category $\LRingSpace$ of locally ringed spaces is not a full subcategory of the $\RingSpace$ category of ringed spaces: a morphism $f: (X, \calO_X) \rightarrow (Y, \calO_Y)$ in
$\RingSpace$ between objects of $\LRingSpace$ is a morphism of $\LRingSpace$ only if,
for every point $x$, the induced map on stalks $\calO_{Y,f(x)} \rightarrow \calO_{X,x}$ is
a {\em local} homomorphism. To describe the situation in more detail, it is convenient to introduce a bit of terminology. Let us say that a map of commutative rings $\beta: B \rightarrow C$ is {\it local}
if $\alpha$ carries noninvertible elements of $B$ to noninvertible elements of $C$. At the other extreme, we can consider the class of {\em localizing} homomorphisms $\alpha: A \rightarrow B$: that
is, homomorphisms which induce an isomorphism $A[S^{-1}] \simeq B$, where
$S$ is some collection of elements of $A$. An arbitrary ring homomorphism
$\gamma: A \rightarrow C$ admits an essentially unique factorization
$$ A \stackrel{\alpha}{\rightarrow} B \stackrel{\beta}{\rightarrow} C,$$
where $\alpha$ is localizing and $\beta$ is local: namely, we can take
$B = A[S^{-1}]$, where $S$ is the collection of all elements $a \in A$ such that $\gamma(a)$ is
invertible in $C$. We can summarize the situation by saying that the collections of
local and localizing morphisms form a factorization system on the category of commutative rings.
In fact, this is a general phenomenon: in \S \ref{factspe}, we will see that if
$\calG$ is a geometry and $\calX$ is an $\infty$-topos, then there is a canonical factorization
system on the $\infty$-category $\Struct_{\calG}(\calX)$ of $\calG$-structures on $\calX$, which depends functorially on $\calX$.

For every geometry $\calG$, there exists a {\em universal} example of a $\calG$-structure.
More precisely, in \S \ref{geo3} we will prove that there exists an $\infty$-topos $\calK$ and a $\calG$-structure on $\calK$ with the following universal property: for every $\infty$-topos $\calX$, the $\infty$-category $\Struct_{\calG}(\calX)$ of $\calG$-structures on $\calX$ is (canonically) equivalent to the
$\infty$-category $\Fun^{\ast}( \calK, \calX)$ of geometric morphisms from $\calX$ to $\calK$.
It follows that the entire theory of $\calG$-structures can be reformulated in terms of $\calK$, without ever making direct reference to $\calG$; in particular, the factorization systems on
$\Struct_{\calG}(\calX)$ determine some additional structure on $\calK$, which we refer to
as a {\it geometric structure}. This suggests the possibility of developing a still more general theory of ``structure sheaves'', based on $\infty$-topoi with geometric structure rather than on geometries. 
This additional generality costs us little so long as we confine our study to very formal aspects of the theory of $\calG$-structures, which we consider in \S \ref{geo4}. However, it does not seem to interact well with the scheme theory of \S \ref{secscheme}, so our attention in this paper will remain primarily focused on the theory of geometries.

\subsection{$\calC$-Valued Sheaves}\label{scurrel}

Let us begin by reviewing the classical notion of a sheaf of commutative rings
on a topological space $X$. Let $\Comm$ denote the category of commutative rings, and let
$\Shv_{\Set}(X)$ denote the (ordinary) category of sheaves of sets on $X$. A sheaf of commutative
rings $\calO_X$ on $X$ can be defined in many different ways:

\begin{itemize}
\item[$(a)$] We can view $\calO_{X}$ as a sheaf on $X$ taking values in the
category of commutative rings. From this point of view, $\calO_{X}$ is a functor
$\calU(X)^{op} \rightarrow \Comm$, which satisfies the usual sheaf axioms; here $\calU(X)$ denotes the partially ordered set of open subsets of $X$. 

\item[$(b)$] A sheaf of commutative rings $\calO_{X}$ on $X$ can be evaluated not only on open subsets of $X$, but on arbitrary sheaves of sets $\calF$ on $X$: namely, we can define $\calO_{X}(\calF)$ to be the commutative ring $\Hom_{\Shv_{\Set}(X)}( \calF, \calO_{X} )$. From this point of view, we can view
$\calO_{X}$ as representing a functor $\Shv_{\Set}(X)^{op} \rightarrow \Comm$. The advantage of
this point of view, when compared with $(a)$, is that the sheaf axiom is easier to state:
it merely asserts that the functor $\calO_{X}$ carries colimits in $\Shv_{\Set}(X)$ to limits in $\Comm$.

\item[$(c)$] Another point of view is to consider $\calO_{X}$ as a single object in the category
$\Shv_{\Set}(X)$ of sheaves of sets on $X$, equipped with some additional structure:
namely, addition and multiplication maps $\calO_{X} \times \calO_{X} \rightarrow \calO_{X}$
that satisfy the usual axioms defining the notion of a commutative ring. In other words,
$\calO_{X}$ is a {\em commutative ring object} in the category $\Shv_{\Set}(X)$.

\item[$(d)$] Given a commutative ring object $\calO_{X}$ in $\Shv_{\Set}(X)$, we can define
a functor $F: \Comm^{op} \rightarrow \Shv_{\Set}(X)$ as follows. For every commutative ring
$R$, we let $F(R)$ denote the ``sheaf of maps from $R$ to $\calO_{X}$''. More precisely,
for every open subset $U \subseteq X$, we let $F(R)(U)$ denote the set of ring homomorphisms
$\Hom_{\Comm}( R, \calO_{X}(U) )$. It is clear that $F(R)(U)$ depends functorially on $R$ and $U$,
and defines a functor $F: \Comm^{op} \rightarrow \Shv_{\Set}(X)$ as indicated above. By construction,
$F$ carries colimits of commutative rings to limits in the category $\Shv_{\Set}(X)$.

The functor $F$ determines the sheaf $\calO_{X}$, together with its ring structure. For example, we have
a canonical isomorphism (as sheaves of sets) $\calO_{X} \simeq F( \Z[x] )$. More generally,
$F( \Z[x_1, \ldots, x_n ])$ can be identified with the $n$th power $\calO_{X}^{n}$. 
We recover the commutative ring structure $\calO_{X}$ using the fact that
$F$ is a functor; for example, the multiplication map 
$\calO_{X} \times \calO_{X} \rightarrow \calO_{X}$
is obtained by applying $F$ to the ring homomorphism
$$ \Z[x] \mapsto \Z[x_1, x_2]$$
$$ x \mapsto x_1 x_2.$$
Consequently, we can {\em define} a sheaf of commutative rings on $X$ to be a 
limit-preserving functor $\Comm^{op} \rightarrow \Shv_{\Set}(X)$.

\item[$(e)$] In the preceding discussion, we did not need to use the entire category
of commutative rings; we can recover the ring structure on $\calO_{X}$ knowing only the
restriction of the functor $F$ to the category of {\em finitely generated} commutative rings
(in fact, it is sufficient to use polynomial rings; we will exploit this observation in \S \ref{app6}). Let $\Comm^{\fin}$ denote the category of finitely generated commutative rings (these are the same as finitely presented commutative rings, since the ring $\Z$ is Noetherian). We can then define a sheaf of commutative rings on $X$ to be a functor $F: (\Comm^{\fin})^{op} \rightarrow \Shv_{\Set}(X)$ which preserves finite limits. The equivalence of this definition with $(b)$ follows from the equivalence of categories $\Comm \simeq \Ind( \Comm^{\fin})$. 

\item[$(f)$] We can identify $\Comm^{\fin}$ with the
{\em opposite} of the category $\Aff$ of affine schemes of finite type over $\Z$. By abstract nonsense, any functor $F: \Aff \rightarrow \Shv_{\Set}(X)$ can be extended uniquely (up to unique isomorphism)
to a colimit-preserving functor $\pi^{\ast}: \calP(\Aff) \rightarrow \Shv_{\Set}(X)$, where $\calP(\Aff)$ denotes the category of presheaves of sets on $\Aff$. Moreover, $\pi^{\ast}$ preserves finite limits if and only if
$F$ preserves finite limits. In view of $(e)$, we obtain yet another definition of a sheaf
of commutative rings on $X$: namely, a functor $\calP(\Aff) \rightarrow \Shv_{\Set}(X)$ which preserves
small colimits and finite limits.

\item[$(g)$] In the situation of $(f)$, the functor $\pi^{\ast}$ admits a right adjoint $\pi_{\ast}$, and the
adjunction
$$ \Adjoint{ \pi^{\ast} }{ \calP(\Aff) }{ \Shv_{\Set}(X) }{ \pi_{\ast} }$$
is a {\it geometric morphism} of topoi from $\Shv_{\Set}(X)$ to $\calP(\Aff)$. We can summarize the situation as follows: the topos $\calP(\Aff)$ is a {\it classifying topos} for sheaves of commutative rings. For any topological space $X$, sheaves of commutative rings on $X$ can be identified with geometric morphisms from $\Shv_{\Set}(X)$ to $\calP(\Aff)$.
\end{itemize}

These definitions are all equivalent to one another, but are not always equally useful in practice. 
Our goal in this section is to adapt some of the above picture to an $\infty$-categorical setting: we will replace the topological space $X$ by an $\infty$-topos, and the category $\Comm$ of commutative rings with an arbitrary compactly generated $\infty$-category. We will focus on the analogues of the equivalences of $(a)$ through $(e)$, reserving the discussion of definitions $(f)$ and $(g)$ for \S \ref{geo3}. We begin with a review of some definitions from \cite{topoi}.

\begin{notation}\label{ubba}
Let $\calC$ and $\calD$ be $\infty$-categories. We let
$\LFun(\calC, \calD)$
denote the full subcategory of $\Fun(\calC, \calD)$ spanned by those functors which admit
right adjoints, and $\RFun(\calC, \calD) \subseteq \Fun(\calC, \calD)$ the full subcategory spanned by those functors which admit left adjoints. In view of Proposition \toposref{switcheroo}, the formation of adjoint functors gives rise to an equivalence
$$ \RFun( \calC, \calD) \simeq \LFun( \calD, \calC)^{op},$$
which is well-defined up to homotopy.
\end{notation}

\begin{remark}\label{hupp}
Let $\calC$ and $\calD$ be $\infty$-categories. Using the evident isomorphism
$$\RFun( \calC^{op}, \calD) \simeq \LFun( \calC, \calD^{op} )^{op},$$ we can formulate
the equivalence of Notation \ref{ubba} in the following more symmetric form:
$$ \RFun( \calC^{op}, \calD) \simeq \RFun( \calD^{op}, \calC).$$
\end{remark}

\begin{definition}\label{swinging}
Let $\calC$ be an arbitrary $\infty$-category, and let $\calX$ be an $\infty$-topos.
A {\it $\calC$-valued sheaf on $\calX$} is a functor
$\calX^{op} \rightarrow \calC$ which preserves small limits. We let
$\Shv_{\calC}(\calX)$ denote the full subcategory of $\Fun( \calX^{op}, \calC)$ spanned by
the $\calC$-valued sheaves on $\calX$. 
\end{definition}

\begin{remark}\label{hutt}
Let $\calC$ and $\calD$ be presentable $\infty$-categories. Using Corollary \toposref{adjointfunctor} and
Remark \toposref{afi}, we deduce that a functor $\calD^{op} \rightarrow \calC$ admits a left adjoint if and only if it preserves small limits. Consequently, for every $\infty$-topos $\calX$, we have $\Shv_{\calC}(\calX) = \RFun( \calX^{op}, \calC)$. 
\end{remark}

\begin{remark}\label{switchera}
Let $\calC$ be a presentable $\infty$-category and $\calX$ an $\infty$-topos. Then
$\Shv_{\calC}(\calX)$ can be identified with the tensor product $\calC \otimes \calX$
constructed in \S \monoidref{jurmit} (see Remark \monoidref{huger}). In particular, $\calC \otimes \calX$ is a presentable $\infty$-category.
\end{remark}


\begin{remark}\label{switcheru}
Let $\calG$ be a small $\infty$-category which admits finite limits, and 
let $j: \calG \rightarrow \Pro(\calG)$ denote the Yoneda embedding.

Let $\calX$ be an $\infty$-topos. Using
Remark \ref{hutt}, Proposition \toposref{sumatch}, and Proposition \toposref{intprop}, we
deduce that composition with $j$ induces an equivalence of $\infty$-categories
$$ \RFun( \Pro(\calG), \calX ) \rightarrow \Fun^{\lex}(\calG, \calX).$$
Combining this observation with Remark \ref{hupp}, we obtain a canonical equivalence
$$ \Fun^{\lex}(\calG, \calX) \simeq \RFun( \calX^{op}, \Ind(\calG^{op}) ) = \Shv_{\Ind(\calG^{op})}(\calX).$$
\end{remark}

\begin{remark}\label{tunner}
Let $\calG$ be a small $\infty$-category which admits finite limits and let $\calX$ an $\infty$-topos. The
$\infty$-categories $\Shv_{\Ind(\calG^{op})}(\calX)$ and $\Fun^{\lex}(\calG, \calX)$ are canonically equivalent. Objects of either $\infty$-category can be viewed as describing sheaves $\calF$ on $\calX$ with values in $\Ind(\calG^{op})$, but from different points of view. If we regard $\calF$ as an object of $\Shv_{\Ind(\calG^{op})}(\calX)$, then we are emphasizing the idea that $\calF$ can be evaluated on the ``opens'' $U \in \calX$, to obtain objects of $\Ind(\calG^{op})$. On the other hand, if we view $\calF$ as an object of $\Fun^{\lex}(\calG,\calX)$, then we are emphasizing the idea that $\calF$ can be viewed as an object (or several objects) of $\calX$, perhaps equipped with some additional structures.
\end{remark}

\begin{remark}\label{qwise}
Let $\calG$ be a small $\infty$-category which admits finite limits and let $\calC = \Ind( \calG^{op} )$. Remark \ref{tunner} implies that for a {\em fixed} $\infty$-topos $\calX$, the $\infty$-category
$\Fun^{\lex}(\calG, \calX)$ is equivalent to $\Shv_{\calC}(\calX)$. However, the first description is more evidently functorial in $\calX$.
To see this, let us suppose that we are given a geometric morphism
$$ \Adjoint{ \pi^{\ast} }{ \calX}{\calY}{\pi_{\ast}}$$
of $\infty$-topoi. Composition with $\pi^{\ast}$ induces a map
$\Shv_{\calC}(\calY) \rightarrow \Shv_{\calC}(\calX)$, and composition with
$\pi_{\ast}$ induces a functor $\Fun^{\lex}(\calG, \calY) \rightarrow \Fun^{\lex}(\calG, \calX)$.
We can view either of these operations as encoding the {\em pushforward} of
$\calC$-valued sheaves. We observe that the diagram
$$ \xymatrix{ \Shv_{\calC}( \calY) \ar[r]^{ \circ \pi^{\ast} } \ar[d]^{\sim} & \Shv_{\calC}( \calX) \ar[d]^{\sim} \\
\Fun^{\lex}(\calG, \calY) \ar[r]^{\pi_{\ast} \circ } & \Fun^{\lex}(\calG,\calX) }$$
commutes up to (canonical) homotopy. The bottom horizontal map admits a left adjoint $\Fun^{\lex}(\calG, \calX) \rightarrow \Fun^{\lex}(\calG,\calY)$, given by composition with the functor $\pi^{\ast}$. We will generally abuse notation by denoting this functor by $\pi^{\ast}$; we will refer to it informally as given by
{\it pullback of $\calG$-structures}. It is not so easy to describe this left adjoint directly in terms of $\Shv_{\calC}(\calX)$ and $\Shv_{\calC}(\calY)$. For example, the pushforward operation
$\Shv_{\calC}(\calY) \rightarrow \Shv_{\calC}(\calX)$ can be defined for {\em any} $\infty$-category $\calC$ (that is, we need not assume that $\calC$ is compactly generated), but generally does not admit a left adjoint.
\end{remark}

We can regard Remark \ref{tunner} as an $\infty$-categorical analogue of the equivalence between
the definitions $(b)$ and $(e)$ appearing earlier in this section (and the proof shows that both are equivalent to an $\infty$-categorical analogue of $(d)$). We close this section by discussing the relationship between $(a)$ and $(b)$. For this, we need to introduce a bit of notation.

\begin{definition}\label{ury}
Let $\calT$ be an essentially small $\infty$-category equipped with a Grothendieck topology, and $\calC$ another $\infty$-category. We will say that a functor $\calO: \calT^{op} \rightarrow \calC$ is a {\it $\calC$-valued sheaf on $\calT$} if the following condition is satisfied: for every object $X \in \calT$ and every covering sieve $\calT^{0}_{/X} \subseteq \calT_{/X}$, the composite map
$$ (\calT^{0}_{/X})^{\triangleleft} \subseteq (\calT_{/X})^{\triangleleft} \rightarrow
\calT \stackrel{\calO^{op}}{\rightarrow} \calC^{op}$$
is a colimit diagram in $\calC^{op}$.
We let $\Shv_{\calC}(\calT)$ denote the full subcategory of $\Fun(\calT^{op},\calC)$ spanned by the
$\calC$-valued sheaves on $\calT$.
\end{definition}

\begin{notation}
If $X$ is a topological space, then we let $\Shv_{\calC}(X)$ denote the $\infty$-category
$\Shv_{\calC}( \calU(X) )$, where $\calU(X)$ is the nerve of the partially ordered set of open subsets of $X$, endowed with its usual Grothendieck topology.
\end{notation}

\begin{example}
Let $\calT$ be a small $\infty$-category equipped with a Grothendieck topology, and let
$\SSet$ denote the $\infty$-category of spaces. Then the $\infty$-categories
$\Shv(\calT)$ (Definition \toposref{defsheaff}) and $\Shv_{\SSet}(\calT)$
(Definition \ref{ury}) coincide (as full subcategories of $\calP(\calT)$).
\end{example}

The notation and terminology of Definition \ref{ury} are potentially in conflict with the notation and terminology of Definition \ref{swinging}. However, little confusion should arise in view of the following compatibility result:

\begin{proposition}\label{selwus}
Let $\calT$ be a small $\infty$-category equipped with a Grothendieck topology. Let
$j: \calT \rightarrow \calP(\calT)$ denote the Yoneda embedding and $L: \calP(\calT) \rightarrow \Shv(\calT)$ a left adjoint to the inclusion. Let $\calC$ be an arbitrary $\infty$-category which admits small limits. Then composition with $L \circ j$ induces an equivalence of $\infty$-categories
$$ \Shv_{\calC}( \Shv(\calT) ) \rightarrow \Shv_{\calC}( \calT).$$
\end{proposition}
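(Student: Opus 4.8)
The plan is to deduce Proposition \ref{selwus} from the universal property characterizing sheafification together with the description of $\calC$-valued sheaves as limit-preserving functors. First I would unwind the statement: since $\Shv(\calT)$ is an $\infty$-topos, Definition \ref{swinging} gives $\Shv_{\calC}(\Shv(\calT)) = \Fun^{\lim}(\Shv(\calT)^{op}, \calC)$, the full subcategory of $\Fun(\Shv(\calT)^{op},\calC)$ on the functors preserving small limits (here I will write $\Fun^{\lim}$ for this, or more precisely pass through $\RFun$ as in Remark \ref{hutt} when $\calC$ is presentable, but the argument I have in mind works for any $\calC$ admitting small limits). On the other side, $\Shv_{\calC}(\calT)$ is by Definition \ref{ury} the full subcategory of $\Fun(\calT^{op},\calC)$ on functors $\calO$ such that for every covering sieve the associated cone is a colimit diagram in $\calC^{op}$ --- equivalently, $\calO^{op}\colon \calT \to \calC^{op}$ carries the canonical covering colimit diagrams to colimit diagrams.

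The key step is the following general principle about localizations: if $L\colon \calP(\calT) \to \Shv(\calT)$ exhibits $\Shv(\calT)$ as the accessible localization of $\calP(\calT)$ at the (strongly saturated) class $S$ of morphisms generated by the maps $U_{\calT^0_{/X}} \to j(X)$ from covering sieves to representables, then composition with $L$ identifies colimit-preserving functors $\Shv(\calT) \to \calE$ (for $\calE$ cocomplete) with colimit-preserving functors $\calP(\calT) \to \calE$ that carry every morphism in $S$ to an equivalence; this is Proposition \toposref{silver} (the universal property of localization) combined with Theorem \toposref{mainchar}-type facts. Dualizing, composition with $L$ (and then with $j$) identifies $\Fun^{\lim}(\Shv(\calT)^{op},\calC)$ with the full subcategory of $\Fun^{\lim}(\calP(\calT)^{op},\calC)$ on those limit-preserving functors that send each $S$-morphism, viewed in $\calP(\calT)^{op}$, to an equivalence. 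Next, by the universal property of presheaves (the free cocompletion, Theorem \toposref{free-cocompletion}) applied in the opposite variable --- i.e.\ $\calP(\calT)^{op} = \Ind(\calT^{op})^{op}$ and the fact that limit-preserving functors out of it are computed by left Kan extension along $j^{op}$ --- restriction along $j$ gives an equivalence $\Fun^{\lim}(\calP(\calT)^{op},\calC) \simeq \Fun(\calT^{op},\calC)$. Under this equivalence, the condition of inverting the generating morphisms of $S$ translates exactly into the sheaf condition of Definition \ref{ury}: the morphism $U_{\calT^0_{/X}} \to j(X)$ in $\calP(\calT)$ becomes, after applying the limit-preserving functor, precisely the comparison map asserting that $(\calT^0_{/X})^{\triangleleft} \to \calC^{op}$ is a colimit diagram. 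Composing the two equivalences yields the desired $\Shv_{\calC}(\Shv(\calT)) \to \Shv_{\calC}(\calT)$.

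So the chain is: $\Shv_{\calC}(\Shv(\calT)) \simeq \Fun^{\lim}(\Shv(\calT)^{op},\calC) \hookrightarrow \Fun^{\lim}(\calP(\calT)^{op},\calC) \simeq \Fun(\calT^{op},\calC)$, with image exactly $\Shv_{\calC}(\calT)$, and one checks the composite is induced by $L\circ j$. I expect the main obstacle to be the bookkeeping in the opposite-category direction: the universal properties in \cite{topoi} are stated for colimit-preserving functors out of presheaf categories and for localizations, whereas here everything is happening in $\calX^{op}$, so I must be careful that (i) $L\colon \calP(\calT)\to\Shv(\calT)$ being a left exact localization is not needed --- only that it is a localization at a small set of morphisms --- and (ii) the class $S$ of ``local equivalences'' for the topology is generated, as a strongly saturated class, by the sieve inclusions, which is precisely the content of Lemma \toposref{sheafgen}. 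A secondary technical point is verifying that under $\Fun^{\lim}(\calP(\calT)^{op},\calC)\simeq\Fun(\calT^{op},\calC)$ the sieve-inclusion morphism is carried to the exact cone map appearing in Definition \ref{ury}; this is a diagram-chase using that colimits in $\calP(\calT)$ are computed pointwise and that $U_{\calT^0_{/X}} = \colim_{(\calT_{/X}^0)} j$. Once those identifications are pinned down, the proposition follows formally, with no further computation.
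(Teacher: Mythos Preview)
Your proposal is correct and follows essentially the same route as the paper's proof: factor through $\Fun^{\lim}(\calP(\calT)^{op},\calC)$, use the universal property of presheaves (the paper cites Theorem \toposref{charpresheaf}) to identify this with $\Fun(\calT^{op},\calC)$, use the universal property of localization (the paper cites Proposition \toposref{unichar}) to identify $\Shv_{\calC}(\Shv(\calT))$ with the full subcategory of functors inverting the sieve inclusions, and then observe that this condition unwinds to the sheaf condition of Definition \ref{ury}. Your observation that left exactness of $L$ plays no role is correct and implicit in the paper's argument as well.
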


\begin{proof}
According to Theorem \toposref{charpresheaf}, composition with $j$ induces an equivalence of $\infty$-categories
$$ \Fun_{0}( \calP(\calT)^{op}, \calC) \rightarrow \Fun( \calT^{op}, \calC),$$
where $\Fun_{0}( \calP(\calT)^{op}, \calC)$ denotes the full subcategory of
$\Fun( \calP(\calT)^{op}, \calC)$ spanned by those functors which preserve small limits.
According to Proposition \toposref{unichar}, composition with $L$ induces a fully faithful embedding
$\Shv_{\calC}(\Shv(\calT)) \rightarrow \Fun_{0}(\calP(\calT)^{op}, \calC)$. The essential image of this embedding consists of those limit-preserving functors $F: \calP(\calT)^{op} \rightarrow \calC$
such that, for every $X \in \calT$ and every covering sieve $\calT^{0}_{/X} \subseteq \calT_{/X}$, the
induced map $F(jX) \rightarrow F(Y)$ is an equivalence in $\calC$, where $Y$ is the subobject of $jX$ corresponding to the sieve $\calT^{0}_{/X}$. Unwinding the definitions, this translates into the condition that the composition
$$ (\calT^{0}_{/X})^{\triangleright} \subseteq (\calT_{/X})^{\triangleright}
\rightarrow \calT \stackrel{j}{\rightarrow} \calP(\calT) \stackrel{F}{\rightarrow} \calC^{op}$$
is a colimit diagram. It follows that the composition
$$ \Shv_{\calC}(\Shv(\calT)) \rightarrow \Fun_0( \calP(\calT)^{op}, \calC)
\rightarrow \Fun(\calT^{op}, \calC)$$
is fully faithful, and its essential image is the full subcategory $\Shv_{\calC}(\calT)$.
\end{proof}

\subsection{Geometries}\label{scurgeo}

Let $X$ be a topological space. If $\calO_X$ is a sheaf of commutative rings on $X$, then it makes sense to ask if $\calO_X$ is {\it local}: that is, if each stalk $\calO_{X,x}$ is a local commutative ring.
Suppose instead that $\calO_X$ takes values in some other category (or $\infty$-category) $\calC$: is there an analogous condition of locality that we can impose? Of course, the answer to this question depends on $\calC$. We might formulate the question better as follows: what features of the category
$\Comm$ are required for introducing the subcategory of local $\Comm$-valued sheaves on $X$?

We first observe that $\Comm$ is {\em compactly generated}; as we saw in \S \ref{scurrel}, this allows
us to think of a $\Comm$-valued sheaf on $X$ as a functor $\Aff \rightarrow \Shv_{\Set}(X)$, where
$\Aff = (\Comm^{\fin})^{op}$ is the category of affine schemes of finite type over $\Z$. We can now reformulate our question once again: what features of the category $\Aff$ are required to define the notion of a local $\Comm$-valued sheaf? Our answer to this question is that $\Aff$
(or rather its nerve) has the structure of a {\em geometry}, in the sense of Definition \ref{psyob}
(see Example \ref{summ}). The goal of this section is to introduce the definition of a geometry $\calG$
and describe the associated theory of (local) $\Ind(\calG^{op})$-valued sheaves. We begin with some preliminaries.

\begin{definition}\label{stubb}
Let $\calG$ be an $\infty$-category. An {\it admissibility structure} on $\calG$ consists of the following data:
\begin{itemize}
\item[$(1)$] A subcategory $\calG^{\adm} \subseteq \calG$, containing every object of $\calG$.
Morphisms of $\calG$ which belong to $\calG^{\adm}$ will be called {\it admissible} morphisms in $\calG$.
\item[$(2)$] A Grothendieck topology on $\calG^{\adm}$.
\end{itemize}
These data are required to satisfy the following conditions:
\begin{itemize}

\item[$(i)$] Let $f: U \rightarrow X$ be an admissible morphism in $\calG$, and
$g: X' \rightarrow X$ any morphism. Then there exists a pullback diagram
$$ \xymatrix{ U' \ar[d]^{f'} \ar[r] & U \ar[d]^{f} \\
X' \ar[r]^{g} & X, }$$
where $f'$ is admissible.

\item[$(ii)$] Suppose given a commutative triangle
$$ \xymatrix{ & Y \ar[dr]^{g} \\
X \ar[ur]^{f} \ar[rr]^{h} & & Z }$$
in $\calG$, where $g$ and $h$ are admissible. Then $f$ is admissible.

\item[$(iii)$] Every retract of an admissible morphism of $\calG$ is admissible.
\end{itemize}
\end{definition}

\begin{remark}\label{switchtop}
Let $\calG$ be an $\infty$-category endowed with an admissibility structure $\calG^{\adm} \subseteq \calG$. In view of part $(ii)$ of Definition \ref{stubb}, for every object $X \in \calG$, we can identify
$\calG^{\adm}_{/X}$ with the full subcategory of $\calG_{/X}$ spanned by the admissible morphisms
$U \rightarrow X$.
\end{remark}

\begin{remark}\label{sagewise}
Let $\calG$ be an $\infty$-category equipped with an admissibility structure.
It follows from condition $(ii)$ of Definition \ref{stubb} that any section of an admissible morphism
is again admissible. In particular, if $U \rightarrow X$ is admissible, then the
diagonal map $\delta: U \rightarrow U \times_{X} U$ is a section of the projection onto
the first factor, so that $\delta$ is again admissible.
\end{remark}

\begin{remark}
Let $\calG$ be an $\infty$-category. Every admissibility structure $\calG^{\adm} \subseteq \calG$ determines a Grothendieck topology on $\calG$: the Grothendieck topology generated by
the topology on $\calG^{\adm}$. In other words, we consider a sieve $\calG^{0}_{/X} \subseteq \calG_{/X}$ on an object $X \in \calG$ to be covering if intersection $\calG^{\adm}_{/X} \cap \calG^{0}_{/X}
\subseteq \calG^{\adm}_{/X}$ is a covering sieve on $X \in \calG^{\adm}$. This Grothendieck topology on $\calG$ determines the original Grothendieck topology on $\calG^{\adm}$: a sieve
$\calG^{\adm,0}_{/X} \subseteq \calG^{\adm}_{/X}$ is covering if and only if it generates a covering sieve on $X$ in $\calG$. In other words, in Definition \ref{stubb}, we can think of the Grothendieck topology as a topology on $\calG$, rather than $\calG^{\adm}$. However, if we adopt this point of view, then an additional axiom is required: the Grothendieck topology on $\calG$ must be {\it generated} by admissible morphisms. That is, every covering sieve $\calG^{0}_{/X} \subseteq \calG_{/X}$ contains a 
collection of admissible morphisms $U_{\alpha} \rightarrow X$ which cover $X$.
\end{remark}

\begin{definition}\label{psyob}
A {\it geometry} consists of the following data:
\begin{itemize}
\item[$(1)$] An essentially small $\infty$-category $\calG$ which admits finite limits and is idempotent complete.
\item[$(2)$] An admissibility structure on $\calG$.
\end{itemize}
\end{definition}

We will generally abuse terminology by identifying a geometry with its underlying $\infty$-category $\calG$.

\begin{definition}
Let $\calG$ and $\calG'$ be geometries. We will say that a functor $f: \calG \rightarrow \calG'$
is a {\it transformation of geometries} if the following conditions are satisfied:
\begin{itemize}
\item[$(1)$] The functor $f$ preserves finite limits.
\item[$(2)$] The functor $f$ carries admissible morphisms of $\calG$ to admissible morphisms of $\calG'$.
\item[$(3)$] For every admissible cover $\{ U_{\alpha} \rightarrow X\}$ of an object $X \in \calG$, the
collection of morphisms $\{ f(U_{\alpha}) \rightarrow f(X) \}$ is an admissible cover of $f(X) \in \calG'$.
\end{itemize}
\end{definition}

\begin{remark}\label{gener}
Let $\calG$ be an idempotent complete $\infty$-category which admits finite limits.
We will say that an admissibility structure $\calG^{\adm}$ on $\calG$ is a {\it refinement} of
another admissibility structure $\calG^{\adm'}$ on $\calG$ if the identity functor $\id_{\calG}$
is a transformation of geometries $( \calG, \calG^{\adm'}) \rightarrow (\calG, \calG^{\adm})$. In this case, we will also say that $\calG^{\adm}$ is {\it finer than} $\calG^{\adm'}$ or that
$\calG^{\adm'}$ is {\it coarser than} $\calG^{\adm}$. 

Given any collection $S$ of morphisms of $\calG$, and any collection $T$ of
sets of morphisms $\{ f_{\alpha}: U_{\alpha} \rightarrow X \}$ belonging to $S$, there
is a coarsest admissibility structure on $\calG$ such that every element of $S$ is admissible, and
every element of $T$ generates a covering sieve. We will refer to this admissibility structure as the
{\it admissibility structure generated by $S$ and $T$}.

As a special case, suppose that $\calG$ is a geometry, $\calG'$ another idempotent complete $\infty$-category which admits finite limits, and that $f: \calG \rightarrow \calG'$ is a functor which preserves finite limits. Then there exists a coarsest admissibility structure on $\calG'$ such that $f$ is a transformation of geometries. We will refer to this admissibility structure on $\calG'$ as {\it the admissibility structure generated by $f$}.
\end{remark}

\begin{definition}\label{psyab}
Let $\calG$ be a geometry and $\calX$ an $\infty$-topos. A {\it $\calG$-structure} on $\calX$
is a left exact functor $\calO: \calG \rightarrow \calX$ with the following property: for every collection of admissible morphisms $\{ U_{\alpha} \rightarrow X \}$ in $\calG$ which generates a covering sieve on $X$, the induced map $\coprod_{\alpha} \calO(U_{\alpha}) \rightarrow \calO(X)$ is an effective epimorphism in $\calX$. We let $\Struct_{\calG}(\calX)$ denote the full subcategory of
$\Fun(\calG, \calX)$ spanned by the $\calG$-structures on $\calX$. 

Given a pair of $\calG$-structures $\calO, \calO': \calG \rightarrow \calX$, we will say that a natural transformation $\alpha: \calO \rightarrow \calO'$ is a {\it local transformation $\calG$-structures} if,
for every admissible morphism $U \rightarrow X$ in $\calG$, the induced diagram
$$ \xymatrix{ \calO(U) \ar[r] \ar[d] & \calO'(U) \ar[d] \\
\calO(X) \ar[r] & \calO'(X) }$$
is a pullback square in $\calX$. We let $\Struct_{\calG}^{\loc}(\calX)$ denote the subcategory of
$\Struct_{\calG}(\calX)$ spanned by the local transformations of $\calG$-structures.
\end{definition}


\begin{remark}\label{henhun}
Let $\calG$ be a geometry, $\calX$ an $\infty$-topos, and $\calO: \calG \rightarrow \calX$ a functor. The condition that $\calO$ define a $\calG$-structure on $\calX$ can be tested ``stalkwise'', in the following sense. Suppose that $\calX$ has enough points (see Remark \toposref{notenough}). Then
$\calO$ is a $\calG$-structure on $\calX$ if and only if, for every point $x^{\ast}: \calX \rightarrow \SSet$, the ``stalk'' $\calO_{x} = x^{\ast} \circ \calO$ is a $\calG$-structure on $\SSet$.

Similarly, if $\calX$ has enough points, then a morphism $\alpha: \calO \rightarrow \calO'$ in
$\Struct_{\calG}(\calX)$ belongs to $\Struct^{\loc}_{\calG}(\calX)$ if and only if, for every point
$x^{\ast}: \calX \rightarrow \SSet$, the induced map on stalks $\alpha_{x}: \calO_{x} \rightarrow \calO'_{x}$ belongs to $\Struct_{\calG}^{\loc}(\SSet)$. 
\end{remark}

\begin{definition}\label{disker}
We will say that a geometry $\calG$ is {\it discrete} if the following conditions are satisfied:
\begin{itemize}
\item[$(1)$] The admissible morphisms in $\calG$ are precisely the equivalences.
\item[$(2)$] The Grothendieck topology on $\calG$ is trivial: that is, a sieve $\calG^{0}_{/X} \subseteq \calG_{/X}$ on an object $X \in \calG$ is a covering sieve if and only if $\calG^{0}_{/X} = \calG_{/X}$.
\end{itemize}
\end{definition}

\begin{remark}
Let $\calG$ be an essentially small $\infty$-category which admits finite limits. Then requirements $(1)$ and $(2)$ of Definition \ref{disker} endow $\calG$ with the structure of a (discrete) geometry.
\end{remark}

\begin{remark}
If $\calG$ is a discrete geometry and $\calX$ is an $\infty$-topos, then we have equivalences
$$ \Struct_{\calG}^{\loc}(\calX) = \Struct_{\calG}(\calX) = \Fun^{\lex}( \calG, \calX)
\simeq \Shv_{ \Ind(\calG^{op})}(\calX).$$
In other words, the theory of $\calG$-structures is equivalent to the theory of
$\Ind(\calG^{op})$-valued sheaves studied in \S \ref{scurrel}.
\end{remark}

\begin{example}\label{summ}
Let $\Comm^{\fin}$ denote the category of all finitely generated commutative rings. 
Let $\calG_{\Zar} = \Nerve(\Comm^{\fin})^{op}$ be the opposite of the nerve of $\Comm^{\fin}$; we can identify $\calG_{\Zar}$ with the ($\infty$)-category of affine schemes of finite type over $\Z$. To emphasize this identification, for every finitely generated commutative ring $A$, we let $\SSpec A$ denote the associated object of $\calG_{\Zar}$. We regard $\calG_{\Zar}$ as a geometry via the following prescription:
\begin{itemize}
\item[$(1)$] A morphism $\SSpec A \rightarrow \SSpec B$ is admissible if and only if it induces an isomorphism $B[ \frac{1}{b} ] \rightarrow A$, for some element $b \in B$.
\item[$(2)$] A collection of admissible morphisms $\{ \SSpec A[ \frac{1}{a_{\alpha}}] \rightarrow \SSpec A \}$ generates a covering sieve on $\SSpec A$ if and only if it is a covering in the sense of classical algebraic geometry: in other words, if and only if the set $\{ a_{\alpha} \} \subseteq A$ generates the unit ideal in $A$.
\end{itemize}

If $X$ is a topological space and $\calX = \Shv(X)$ is the $\infty$-category of sheaves (of spaces) on $\calX$, then we can identify $\calG_{\Zar}$-structures on $\calX$ with
sheaves of commutative rings $\calO$ on the topological space $X$ which are {\em local} in the sense that for every point $x \in X$, the stalk $\calO_{x}$ is a local commutative ring (Remark \ref{testlo}).
More generally, we can think of a $\calG_{\Zar}$-structure on an arbitrary $\infty$-topos $\calX$ as a {\it sheaf of local commutative rings on $\calX$}. We will study this example in more detail in
\S \ref{exzar}.
\end{example}

Example \ref{summ} is the prototype which motivates the theory of geometries that we will develop in this section. Although we will meet many other examples of geometries in this and subsequent papers, we will primarily be interested in mild variations on Example \ref{summ}. 

We will devote the remainder of this section to the proof of a rather technical result concerning
admissibility structures which will be needed later in this paper.

\begin{proposition}\label{prespaz}
Let $\calT \rightarrow \Delta^1$ be an $($essentially small$)$ correspondence between $\infty$-categories
$\calT_0 = \calT \times_{ \Delta^1} \{0\}$ to $\calT_1 = \calT \times_{ \Delta^1} \{1\}$. Assume
that $\calT_0$ and $\calT_1$ are equipped with admissibility structures, and that $\calT$ satisfies the following conditions:
\begin{itemize}
\item[$(i)$] For every admissible morphism $u_1: U_1 \rightarrow X_1$ in $\calT_1$, every
object $X_0 \in \calT_0$, and every morphism $X_0 \rightarrow X_1$ in $\calT$, there
exists a pullback diagram
$$ \xymatrix{ U_0 \ar[r] \ar[d]^{u_0} & U_1 \ar[d]^{u_1} \\
X_0 \ar[r] & X_1, }$$
in $\calM$, where $u_0$ is an admissible morphism in $\calT_0$.

\item[$(ii)$] Let $\{ U_{\alpha} \rightarrow X_1 \}$ be a collection of admissible morphisms
in $\calT_1$ which generates a covering sieve on $X_1$, and let $X_0 \rightarrow X_1$ be
an arbitrary morphism in $\calT$, where $X_0 \in \calT_0$. Then the induced maps
$\{ U_{\alpha} \times_{X_1} X_0 \rightarrow X_0 \}$ generate a covering sieve on $X_0$.
\end{itemize}

Let $\calX$ be an $\infty$-topos. We will say that a functor $\calO: \calT_{i} \rightarrow \calX$
is {\it local} if the following conditions are satisfied:
\begin{itemize}
\item[$(a)$] For every pullback diagram
$$ \xymatrix{ U' \ar[r] \ar[d] & U \ar[d] \\
X' \ar[r] & X }$$
in $\calT_i$ such that the vertical morphisms are admissible, the induced diagram
$$ \xymatrix{ \calO(U') \ar[r] \ar[d] & \calO(U) \ar[d] \\
\calO(X') \ar[r] & \calO(X) }$$
is a pullback square in $\calX$.
\item[$(b)$] For every collection of morphisms $\{ U_{\alpha} \rightarrow X \}$ in $\calT_i$ which generates a covering sieve on $X$, the induced map
$$ \coprod_{\alpha} \calO(U_{\alpha}) \rightarrow \calO(X)$$
is an effective epimorphism in $\calX$.
\end{itemize}

We will say that a natural transformation $\alpha: \calO \rightarrow \calO'$ of
local functors $\calO, \calO': \calT_i \rightarrow \calX$ is {\it local} if the following condition is satisfied:

\begin{itemize}
\item[$(c)$] For every admissible morphism $U \rightarrow X$ in $\calT_i$, the induced diagram
$$ \xymatrix{ \calO(U) \ar[r] \ar[d] & \calO'(U) \ar[d] \\
\calO(X) \ar[r] & \calO'(X) }$$
is a pullback square in $\calX$.
\end{itemize}

Then: 

\begin{itemize}
\item[$(1)$] Let $F: \Fun( \calT_0, \calX) \rightarrow \Fun( \calT_1, \calX)$ be the functor
defined by left Kan extension along $\calT$. Then $F$ carries local objects of
$\Fun(\calT_0, \calX)$ to local objects of $\Fun( \calT_1, \calX)$.

\item[$(2)$] The functor $F$ carries local morphisms between local objects of $\Fun( \calT_0, \calX)$ to local morphisms of $\Fun( \calT_1, \calX)$.

\item[$(3)$] Let $\calO: \calT \rightarrow \calX$ be a functor such that the restrictions
$\calO_0 = \calO | \calT_0$, $\calO_1 = \calO| \calT_1$ are local. Then $\calO$ induces a local transformation
$F(\calO_0) \rightarrow \calO_1$ if and only if the following condition is satisfied:
\begin{itemize}
\item[$(\star)$] For every pullback diagram
$$ \xymatrix{ U_0 \ar[r] \ar[d] & U_1 \ar[d] \\
X_0 \ar[r] & X_1 }$$
in $\calT$ of the type appearing in $(i)$, the induced diagram
$$ \xymatrix{ \calO(U_0) \ar[r] \ar[d] & \calO(U_1) \ar[d] \\
\calO(X_0) \ar[r] & \calO(X_1)} $$
is a pullback square in $\calX$.
\end{itemize}
\end{itemize}
\end{proposition}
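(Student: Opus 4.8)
The three assertions are about how left Kan extension along the correspondence $\calT$ interacts with the two "local" conditions $(a)$ and $(b)$ (and $(c)$ for morphisms). The unifying tool is the pointwise formula for left Kan extension: for $\calO \in \Fun(\calT_0, \calX)$, we have $F(\calO)(X_1) = \colim_{(X_0 \to X_1) \in (\calT_0)_{/X_1}} \calO(X_0)$, where $(\calT_0)_{/X_1}$ denotes $\calT_0 \times_{\calT} \calT_{/X_1}$. The key structural input is condition $(i)$: for a fixed admissible $u_1 \colon U_1 \to X_1$ in $\calT_1$, pulling back along $u_1$ gives a functor $(\calT_0)_{/X_1} \to (\calT_0)_{/U_1}$. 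The plan is to first establish that this pullback functor is cofinal (or at least that it induces the comparison one needs), so that the colimit computing $F(\calO)(U_1)$ can be rewritten as a colimit indexed over $(\calT_0)_{/X_1}$ of the values $\calO(U_0)$, where $U_0 = X_0 \times_{X_1} U_1$. Granting this, assertion $(a)$ for $F(\calO)$ reduces to the statement that a colimit of pullback squares is a pullback square — which holds in an $\infty$-topos because colimits are universal (Theorem~\toposref{mainchar} or the descent properties of $\infty$-topoi). More precisely, for each term $X_0 \to X_1$ in the index category, condition $(a)$ for $\calO$ itself supplies a pullback square $\calO(U_0') \to \calO(U_0)$, $\calO(X_0') \to \calO(X_0)$ whenever $X_0' \to X_0$ is admissible; taking the colimit over $(\calT_0)_{/X_1}$ and using universality of colimits yields the desired pullback square for $F(\calO)$.

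For assertion $(b)$ (effective epimorphism) applied to $F(\calO)$: given a covering family $\{U_\beta \to X_1\}$ of admissible morphisms in $\calT_1$, condition $(ii)$ tells us that for each $X_0 \to X_1$ in $(\calT_0)_{/X_1}$ the pulled-back family $\{U_\beta \times_{X_1} X_0 \to X_0\}$ covers $X_0$, so condition $(b)$ for $\calO$ gives that $\coprod_\beta \calO(U_\beta \times_{X_1} X_0) \to \calO(X_0)$ is an effective epimorphism. Again one passes to the colimit over $(\calT_0)_{/X_1}$: a colimit of effective epimorphisms is an effective epimorphism (effective epimorphisms in an $\infty$-topos are stable under colimits, cf.\ \toposref{charptepi}-type facts), and on the left-hand side the colimit of $\coprod_\beta \calO(U_\beta \times_{X_1} X_0)$ is $\coprod_\beta F(\calO)(U_\beta)$ by the cofinality step above together with commutation of colimits with coproducts. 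This gives $(1)$. Assertion $(2)$ is then formal: a local morphism $\alpha \colon \calO \to \calO'$ in $\Fun(\calT_0,\calX)$ satisfies $(c)$, meaning the squares $\calO(U_0) \to \calO'(U_0)$, $\calO(X_0) \to \calO'(X_0)$ are pullbacks for admissible $U_0 \to X_0$; applying $F$ and again invoking universality of colimits in $\calX$ shows the corresponding squares for $F(\alpha)$ are pullbacks, i.e.\ $F(\alpha)$ is local.

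For assertion $(3)$: a functor $\calO \colon \calT \to \calX$ whose restrictions are local is the same data as a natural transformation $\calO_0 \to \calO_1|\text{(restricted along }\calT_0 \to \calT_1)$, which by the universal property of left Kan extension corresponds to a natural transformation $\theta \colon F(\calO_0) \to \calO_1$. One must check that $\theta$ is a \emph{local} transformation of $\calG$-structures (equivalently here, of $\calT_1$-indexed local functors) if and only if condition $(\star)$ holds. For the "if" direction: given an admissible $u_1 \colon U_1 \to X_1$ in $\calT_1$, write $\theta(X_1) \colon F(\calO_0)(X_1) \to \calO_1(X_1)$ as the colimit over $(\calT_0)_{/X_1}$ of the maps $\calO(X_0) \to \calO_1(X_1)$, and similarly for $U_1$ using the cofinality identification; condition $(\star)$ says each square $\calO(U_0) \to \calO(U_1)$, $\calO(X_0) \to \calO(X_1)$ is a pullback, and one takes the colimit and uses universality once more. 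The "only if" direction should come from testing $(\star)$ against the tautological case or by restricting the local square for $\theta$ back along the unit $\calO_0(X_0) \to F(\calO_0)(X_0)$, combined with condition $(a)$ for $\calO_1$.

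\textbf{Main obstacle.} The crux is the cofinality claim: that for admissible $u_1 \colon U_1 \to X_1$, the base-change functor $(\calT_0)_{/X_1} \to (\calT_0)_{/U_1}$ is cofinal, so that $F(\calO)(U_1) \simeq \colim_{(\calT_0)_{/X_1}} \calO(X_0 \times_{X_1} U_1)$. This is where conditions $(i)$ and $(ii)$ and the axioms of an admissibility structure (pullback-stability, the two-out-of-three property for admissible morphisms from Definition~\ref{stubb}$(ii)$) all get used, and one likely has to verify the hypotheses of Quillen's Theorem A / \toposref{hollowtt} by showing certain comma categories are weakly contractible — possibly by exhibiting them as filtered, or by constructing an explicit adjoint to the base-change functor using the pullbacks guaranteed by $(i)$. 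I would expect the honest proof to spend most of its effort here; once the colimit over $U_1$ is correctly identified with a colimit over $(\calT_0)_{/X_1}$, everything else is a mechanical application of the stability of pullbacks and of effective epimorphisms under universal colimits in an $\infty$-topos.
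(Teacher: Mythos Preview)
Your approach is essentially the paper's: the paper also reduces everything to the pointwise colimit formula, isolates the ``colimit of pullback squares is a pullback'' fact as a separate lemma (your universality-of-colimits argument), and proves cofinality of the base-change functor $\phi\colon (\calT_0)_{/X_1} \to (\calT_0)_{/U_1}$ exactly by observing it has a left adjoint given by composition with $u_1$ --- so your ``main obstacle'' is a one-liner, not a Quillen A argument. One minor difference in organization: the paper first proves the mixed pullback statement $(\star)$ for the left Kan extension itself (called $(\ast')$ there), then deduces preservation of admissible pullbacks in $\calT_1$ from it, and proves $(2)$ by invoking $(3)$ rather than directly.
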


The proof of Proposition \ref{prespaz} is elementary but somewhat tedious. We first establish
the following lemma:

\begin{lemma}\label{gooduse}
Let $\calX$ be an $\infty$-topos, $\calC$ a small $\infty$-category, and 
$\alpha: \calF \rightarrow \calF'$ a natural transformation between functors $\calF, \calF': \calC \rightarrow \calX$. Suppose that, for every morphism $C \rightarrow D$ in $\calC$, the induced diagram
$$ \xymatrix{ \calF(C) \ar[r]^{\alpha(C)} \ar[d] & \calF'(C) \ar[d] \\
\calF(D) \ar[r]^{ \alpha(D) } & \calF'(D) }$$
is a pullback square. Then:
\begin{itemize}
\item[$(1)$] For every object $C \in \calC$, the diagram
$$ \xymatrix{ \calF(C) \ar[r]^{\alpha(C)} \ar[d] & \calF'(C) \ar[d] \\
\colim \calF \ar[r] & \colim \calF' }$$
is a pullback diagram in $\calX$.

\item[$(2)$] Suppose given a diagram $\sigma:$
$$ \xymatrix{ \colim \calF \ar[r] \ar[d] & \colim \calF' \ar[d] \\
X \ar[r] & X' }$$
in $\calX$ such that for each $C \in \calC$, the induced diagram
$$ \xymatrix{ \calF(C) \ar[r] \ar[d] & \calF'(C) \ar[d] \\
X \ar[r] & X' }$$
is a pullback square. Then $\sigma$ is a pullback square.
\end{itemize}
\end{lemma}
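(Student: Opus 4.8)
The plan is to deduce both assertions from the fact that $\calX$ is an $\infty$-topos, using two of its basic properties: colimits in $\calX$ are universal (stable under base change), and $\calX$ satisfies descent (see the characterization of $\infty$-topoi in \cite{topoi}, Theorem \toposref{mainchar}).

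For part $(1)$, I would proceed as follows. Choose a colimit diagram $\overline{\calF'}: \calC^{\triangleright} \to \calX$ extending $\calF'$, with cone point $X' = \colim \calF'$, and write $\ell_C: \calF'(C) \to X'$ for the legs of its colimit cone. Descent in $\calX$ asserts that base change along the $\ell_C$ defines an equivalence from $\calX_{/X'}$ onto the full subcategory of $\Fun(\calC, \calX)_{/\calF'}$ spanned by the Cartesian natural transformations into $\calF'$; in the other direction this equivalence sends a Cartesian transformation $\calG \to \calF'$ to the object $\colim \calG \to X'$ of $\calX_{/X'}$ (that these are quasi-inverse is just universality in one direction and the substance of descent in the other). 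Since $\alpha: \calF \to \calF'$ is Cartesian by hypothesis, it lies in the essential image of base change; that is, there is an equivalence $\calF(C) \simeq \colim\calF \times_{X'} \calF'(C)$, natural in $C$, compatible with the maps down to $\calF'(C)$, and under which $\colim\calF \to X'$ is identified with $\colim \alpha$. This is exactly the statement that the square in $(1)$ is a pullback.

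For part $(2)$, only universality of colimits is needed. Write $f: \colim\calF \to X$, $f': \colim\calF' \to X'$ for the vertical legs of $\sigma$ and $g: X \to X'$ for its bottom edge, and let $\calH: \calC \to \calX$ be the functor $C \mapsto \calF'(C) \times_{X'} X$, where $\calF'(C) \to X'$ is the composite $\calF'(C) \to \colim\calF' \xrightarrow{f'} X'$ and the pullback is along $g$. The single commutative square $\sigma$ produces, for each $C$, a commutative square $\calF(C) \to \calF'(C)$ over $g$, which is a pullback by hypothesis; these are natural in $C$, so they assemble into an equivalence $\calF \simeq \calH$ of functors $\calC \to \calX$ compatible with the maps to $\calF'$ and to $X$. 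Hence $\colim\calF \simeq \colim\calH$. On the other hand, universality of colimits gives $\colim\calH = \colim_C\big(\calF'(C) \times_{X'} X\big) \simeq \big(\colim_C \calF'(C)\big) \times_{X'} X = \colim\calF' \times_{X'} X$. Splicing these identifications, and observing that the resulting equivalence $\colim\calF \simeq \colim\calF' \times_{X'} X$ is the comparison map determined by $\sigma$, shows that $\sigma$ is a pullback square.

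The individual computations are routine; the part that takes care — and accounts for the qualifier ``tedious'' — is the bookkeeping required to check that the equivalences produced abstractly (by descent in $(1)$, by universality in $(2)$) are the canonical maps induced by the given data $\alpha$ and $\sigma$, not merely abstract equivalences of underlying objects. In $(1)$ one should also be slightly careful to invoke descent in a form valid for colimits indexed by an arbitrary small $\infty$-category $\calC$, rather than only for the special diagram shapes in terms of which it is sometimes phrased; this is standard for $\infty$-topoi, following either by a reduction to those shapes or directly from the general van Kampen property.
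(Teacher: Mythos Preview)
Your proof is correct and follows essentially the same approach as the paper. For $(1)$ the paper simply cites the descent characterization of $\infty$-topoi (Theorem \toposref{charleschar}), which is exactly what you unpack; for $(2)$ the paper factors through the intermediate object $(\colim\calF')\times_{X'} X$, observes the upper square $\calF(C) \to \calF'(C)$ over this is a pullback by two-out-of-three, and then passes to the colimit using universality---this is the same computation as your functor $\calH$, just organized as a single diagram chase rather than named as a separate functor.
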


\begin{proof}
Assertion $(1)$ follows immediately from the characterization of $\infty$-topoi given in
Theorem \toposref{charleschar}. To prove $(2)$, we observe that for each $C \in \calC$ we
have a commutative diagram
$$ \xymatrix{ \calF(C) \ar[r] \ar[d] & \calF'(C) \ar[d] \\
(\colim \calF') \times_{X'} X \ar[r] \ar[d] & \colim \calF' \ar[d] \\
X \ar[r] & X'. }$$
The lower and outer squares are pullback diagrams, so the upper square is a pullback diagram
as well. Passing to the colimit over $C$ (and using the fact that colimits in $\calX$ are universal),
we conclude that that the canonical map $\colim \calF \rightarrow ( \colim \calF') \times_{X'} X$
is an equivalence, so that $\sigma$ is a pullback diagram as desired.
\end{proof}

\begin{proof}[Proof of Proposition \ref{prespaz}]
We first prove $(1)$. Given a functor $\calO_0: \calT_0 \rightarrow \calX$, the functor
$F(\calO_0)$ can be obtained by first choosing a functor
$\calO: \calT \rightarrow \calX$ which is a left Kan extension of
$\calO_0 = \calO | \calT_0$, and then setting $F(\calO_0) = \calO | \calT_1$. Let
us therefore suppose that $\calO$ satisfies the following condition:

\begin{itemize}
\item[$(\ast)$] The functor $\calO: \calT \rightarrow \calX$ is a left Kan extension of
$\calO_0 = \calO | \calT_0$, and $\calO_0$ is local on $\calX$.
\end{itemize}

We wish to show that, in this case, $\calO_1 = \calO | \calT_1$ is a $\calT_1$-structure on $\calX$.
We must verify that two conditions are met:
\begin{itemize}
\item The functor $\calO_1$ preserves pullbacks by admissible morphisms. As a first
step, we establish that $\calO$ has the following property:
\begin{itemize}
\item[$(\ast')$] Suppose given a pullback diagram
$$ \xymatrix{ U_0 \ar[r] \ar[d] & U_1 \ar[d] \\
X_0 \ar[r] & X_1 }$$ in $\calT$ as in $(i)$. Then 
the associated diagram
$$ \xymatrix{ \calO(U_0) \ar[r] \ar[d] & \calO( U_1) \ar[d] \\
\calO(X_0) \ar[r] & \calO(X_1) }$$
is a pullback square in $\calX$.
\end{itemize}
To prove $(\ast')$, we let
$\phi: \calT_0^{/X_1} \rightarrow \calT_0^{/ U_1}$ be the functor given by pullback along $u$.
Let $f$ denote the composition 
$$ \calT_0^{/X_1} \rightarrow \calT_0 \stackrel{\calO_0}{\rightarrow} \calX$$
and $f'$ the composition
$$ \calT_0^{/U_1} \rightarrow \calT_0 \stackrel{\calO_0}{\rightarrow} \calX.$$
The functor $\calO_0$ determines a Cartesian transformation
$f' \circ \phi \rightarrow f$ in the $\infty$-category $\Fun( \calT_0^{/X_1}, \calX)$. 
We have a commutative diagram
$$ \xymatrix{ \calO(U_0) \ar[r] \ar[d] & \colim(f' \circ \phi) \ar[d] \ar[r]^{\beta} & 
\colim(f') \ar[r]^{\beta'} & \calO(U_1) \ar[d] \\
\calO(X_0) \ar[r] & \colim(f) \ar[rr]^{\beta''} & & \calO(X_1). }$$
Lemma \ref{gooduse} implies that the left square is a pullback. It will therefore suffice to show
that $\beta$, $\beta'$, and $\beta''$ are equivalences. For $\beta'$ and $\beta''$, this follows immediately from assumption $(\ast)$. To show that $\beta$ is an equivalence, it will suffice to show
that $\phi$ is cofinal. This follows from the observation that $\phi$ admits a left adjoint, 
given by composition with $u$.

Now consider a pullback diagram
$$ \xymatrix{ U_1 \ar[r] \ar[d] & U'_1 \ar[d] \\
X_1 \ar[r] & X'_1 }$$
in $\calT_1$, where the vertical maps are admissible. Let $f$, $f'$, and $\phi$ be as above,
so that we have a commutative diagram
$$ \xymatrix{ \colim(f' \circ \phi) \ar[r] \ar[d] & \calO_1(U_1) \ar[r] \ar[d] & \calO_1( U'_1) \ar[d] \\
\colim(f) \ar[r] & \calO_1(X_1) \ar[r] & \calO_1( X'_1). }$$
We wish to show that the right square is a pullback. Since the left horizontal arrows are equivalences, it will suffice to show that the outer square is a pullback. By Lemma \ref{gooduse}, it will suffice to show that
for every $X_0 \in \calT_0$ and every map $X_0 \rightarrow X_1$ in $\calT$, 
the induced diagram
$$ \xymatrix{ \calO( X_0 \times_{ X_1} U_1 ) \ar[r] \ar[d] & \calO( U'_1) \ar[d] \\
\calO( X_0) \ar[r] & \calO( X'_1) }$$
is a pullback square in $\calX$, which follows immediately from $(\ast')$.

\item Let $\{ U_{\alpha} \rightarrow X_1 \}$ be an admissible covering of an object $X_1 \in \calT_1$. We
wish to show that the induced map $\coprod_{\alpha} \calO(U_{\alpha}) \rightarrow \calO(X_1)$ is
an effective epimorphism in $\calX$. Let $f: \calT_0^{/X_1} \rightarrow \calX$ be as above, so that
$\calO(X_1) \simeq \colim(f)$. It will therefore suffice to show that, for every object $X_0 \in \calT_0$
and every map $X_0 \rightarrow X_1$ in $\calT$, the induced map
$$ u: \coprod_{ \alpha} ( \calO(U_{\alpha}) \times_{ \calO(X_1) } \calO(X_0) ) \rightarrow \calO(X_0)$$
is an effective epimorphism. For each $\alpha$, set $V_{\alpha} = U_{\alpha} \times_{X_1} X_0$.
In view of $(\ast')$, we can identify the left hand side with
$\coprod_{\alpha} \calO_0( V_{\alpha})$. Since $\calO_0$ is local and
and the maps $\{ V_{\alpha} \rightarrow X_1 \}$ form an admissible covering (by $(ii)$), 
the map $u$ is an effective epimorphism as desired.
\end{itemize}

We now prove $(3)$. Let $\calO: \calT \rightarrow \calX$ satisfy the hypotheses of $(3)$.
Set $\calO_0 = \calO | \calT_0$, and let $\calO': \calT \rightarrow \calX$ be a left Kan extension
of $\calO_0$, so that $\calO'_1 = \calO' | \calT_1 \simeq F(\calO_0)$ is local by virtue of
$(1)$. The identity transformation from $\calO_0$ to itself extends, in an essentially unique fashion, to a natural transformation $\beta: \calO' \rightarrow \calO$. We wish to show:
\begin{itemize}
\item[$(3')$] If $\beta$ induces a local map $\beta_1: \calO'_{1} \rightarrow \calO_1$, then for every pullback diagram
$$ \xymatrix{ U_0 \ar[r] \ar[d] & U_1 \ar[d] \\
X_0 \ar[r] & X_1 }$$
as in $(i)$, the outer square in the diagram
$$ \xymatrix{ \calO_0(U_0) \ar[r] \ar[d] & \calO'_1(U_1) \ar[r] \ar[d] & \calO_1(U_1) \ar[d] \\
\calO_0(X_0) \ar[r] & \calO'_1(X_1) \ar[r] & \calO_1(X_1) }$$
is a pullback diagram. To prove this, we observe that the left square is a pullback by $(\ast')$, and
the right square by our hypothesis that $\beta_1$ is local.

\item[$(3'')$] Suppose that, for every pullback diagram
$$ \xymatrix{ U_0 \ar[r] \ar[d] & U_1 \ar[d] \\
X_0 \ar[r] & X_1 }$$
as in $(i)$, the induced diagram
$$ \xymatrix{ \calO(U_0) \ar[r] \ar[d] & \calO_{1}( U_1) \ar[d] \\
\calO(X_0) \ar[r] & \calO(X_1) }$$
is a pullback square in $\calX$. We wish to prove that $\beta_1$ is local. Fix an admissible
morphism $U_1 \rightarrow X_1$ in $\calT_1$, and let
$f: \calT_0^{/X_1} \rightarrow \calX$, $f': \calT_0^{/U_1} \rightarrow \calX$, and
$\phi: \calT_0^{/X_1} \rightarrow \calT_0^{/U_1}$ be defined as above, so that we have a commutative diagram
$$ \xymatrix{ \colim(f' \circ \phi) \ar[r] \ar[d] & \calO'(U_1) \ar[r] \ar[d] & \calO(U_1) \ar[d] \\
\colim(f) \ar[r] & \calO'(X_1) \ar[r] & \calO(X_1). }$$
Since the left horizontal arrows are equivalences, it suffices to show that the outer square is a pullback.
This follows immediately from Lemma \ref{gooduse}.
\end{itemize}

It remains to prove $(2)$. Let $\alpha: \calO_0 \rightarrow \calO'_0$ be a local morphism between local objects of $\Fun( \calT_0, \calX)$. We wish to prove that $F(\alpha)$ is a local morphism in
$\Fun( \calT_1, \calX)$. Let $\calO, \calO': \calT_0 \rightarrow \calX$ be left Kan extensions of
$\calO_0$ and $\calO'_0$, respectively, and let $\overline{\alpha}: \calO \rightarrow \calO'$
be an extension of $\alpha$ (which is uniquely determined up to equivalence).
We wish to show that $\overline{\alpha}$ induces a local map from $\calO| \calT_1$
to $\calO' | \calT_1$. By virtue of $(3)$, it will suffice to show that the outer square in the diagram
$$ \xymatrix{ \calO_0(U_0) \ar[r] \ar[d] & \calO'_0( U_0) \ar[r] \ar[d] & \calO'(U_1) \ar[d] \\
\calO_0(X_0) \ar[r] & \calO'_0(X_0) \ar[r] & \calO'_0(X_1) }$$
is a pullback square in $\calX$, whenever
$$ \xymatrix{ U_0 \ar[r] \ar[d] & U_1 \ar[d] \\
X_0 \ar[r] & X_1 }$$
is as in the statement of $(i)$. The left square is a pullback diagram in virtue of the assumption 
that $\alpha$ is local, and the right square is a pullback diagram 
by $(\ast')$.
\end{proof}

\subsection{The Factorization System on $\Struct_{\calG}(\calX)$}\label{factspe}

Let $f: A \rightarrow B$ be a homomorphism of commutative rings. Then $f$ factors as a composition
$$ A \stackrel{f'}{\rightarrow} A[S^{-1}] \stackrel{f''}{\rightarrow} B$$
where $S$ is the collection of elements $a \in A$ such that $f(a) \in B$ is invertible, and
$f''$ is {\em local} in the sense that it carries noninvertible elements of $A[S^{-1}]$ to noninvertible elements of $B$.
This factorization is unique up to (unique) isomorphism and depends functorially on $f$, so that
we obtain a factorization system on the category $\Comm$ of commutative rings. More generally, if $f: \calA \rightarrow \calB$ is a map of {\em sheaves} of commutative rings
on a space $X$, then $f$ admits an analogous factorization
$$ \calA \stackrel{f'}{\rightarrow} \calA' \stackrel{f''}{\rightarrow} \calB,$$
which reduces to the previous factorization after passing to stalks at any point $x \in X$.
Our goal in this section is to show that there is an analogous factorization system on
the $\infty$-category $\Struct_{\calG}(\calX)$, where $\calG$ is any geometry and
$\calX$ any $\infty$-topos (to recover the original situation, we can take
$\calG$ to be the geometry $\calG_{\Zar}$ of Example \ref{summ}, and $\calX$ to be
the $\infty$-topos $\Shv(X)$ of sheaves of spaces on $X$). More precisely, we will prove the following result:

\begin{theorem}\label{gen}
\begin{itemize}
\item[$(1)$]
Let $\calG$ be a geometry, and $\calX$ an $\infty$-topos. Then there exists a factorization system $(S^{\calX}_L, S^{\calX}_R)$ on $\Struct_{\calG}(\calX)$, where
$S^{\calX}_R$ is the collection morphisms belonging to $\Struct^{\loc}_{\calG}(\calX)$. 
\item[$(2)$] The factorization system of $(1)$ depends functorially on $\calX$. In other words, 
given any geometric morphism of $\infty$-topoi $\pi^{\ast}: \calX \rightarrow \calY$, composition
with $\pi^{\ast}$ induces a functor $\Struct_{\calG}(\calX) \rightarrow \Struct_{\calG}(\calY)$ which
carries $S^{\calX}_{L}$ to $S^{\calY}_{L}$ and $S^{\calX}_{R}$ to $S^{\calY}_{R}$.
\end{itemize}
\end{theorem}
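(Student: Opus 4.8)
The plan is to realize $(S^{\calX}_L, S^{\calX}_R)$ as the factorization system on $\Struct_{\calG}(\calX)$ generated by a small set of morphisms, invoking the general existence theorem for such factorization systems in a presentable $\infty$-category (see \cite{topoi}); here I use that $\Struct_{\calG}(\calX)$ is presentable, being an accessible localization of $\Fun^{\lex}(\calG,\calX) \simeq \Shv_{\Ind(\calG^{op})}(\calX)$ (Remark \ref{switcheru}, Remark \ref{switchera}): indeed the covering condition of Definition \ref{psyab} is locality with respect to a small set of morphisms (one for each admissible covering sieve, expressing that a certain \Cech colimit is an equivalence). Granting this, for each $X \in \calG$ the evaluation functor $\calO \mapsto \calO(X)$ from $\Struct_{\calG}(\calX)$ to $\calX$ preserves small limits and is accessible, hence has a left adjoint $\lambda_X \colon \calX \to \Struct_{\calG}(\calX)$; concretely $\lambda_X(K)$ is the reflection into $\Struct_{\calG}(\calX)$ of the functor $Y \mapsto \coprod_{\bHom_{\calG}(X,Y)} K$. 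An admissible morphism $u \colon U \to X$ induces a natural transformation from evaluation at $U$ to evaluation at $X$, and passing to left adjoints yields, for every $K \in \calX$, a morphism $\rho^{K}_{u} \colon \lambda_X(K) \to \lambda_U(K)$ in $\Struct_{\calG}(\calX)$.

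Using the adjunction identity $\bHom_{\Struct_{\calG}(\calX)}(\lambda_Y(K), \calP) \simeq \bHom_{\calX}(K, \calP(Y))$ together with the Yoneda lemma, a morphism $\alpha \colon \calO \to \calO'$ of $\Struct_{\calG}(\calX)$ is right orthogonal to $\rho^{K}_{u}$ for all $u$ and all $K$ if and only if, for every admissible $U \to X$, the canonical map $\calO(U) \to \calO(X) \times_{\calO'(X)} \calO'(U)$ is an equivalence in $\calX$ — that is, if and only if $\alpha \in \Struct^{\loc}_{\calG}(\calX)$. To obtain a small generating set, fix a small set $\calX_0 \subseteq \calX$ generating $\calX$ under colimits and a small set of representatives for the equivalence classes of admissible morphisms of $\calG$ (available since $\calG$ is essentially small), and let $S_0$ consist of the $\rho^{K}_{u}$ with $K \in \calX_0$ and $u$ in that set. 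Since $\lambda_X$ preserves colimits, $K \mapsto \rho^{K}_{u}$ carries colimit diagrams in $\calX$ to colimit diagrams in the arrow category, and since the morphisms $g$ for which $\alpha$ is right orthogonal to $g$ form a class closed under colimits of arrows, right orthogonality against $S_0$ already forces right orthogonality against every $\rho^{K}_{u}$. Thus the factorization system generated by $S_0$ has right class exactly $\Struct^{\loc}_{\calG}(\calX)$, which is part $(1)$; for later use we note that its left class $S^{\calX}_L$ is the smallest class of morphisms of $\Struct_{\calG}(\calX)$ containing $S_0$ and closed under pushout, transfinite composition, and retracts.

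For part $(2)$, write $\pi_{!}$ for the functor $\Struct_{\calG}(\calX) \to \Struct_{\calG}(\calY)$ given by composition with $\pi^{\ast}$. That $\pi_{!}$ carries $S^{\calX}_R$ into $S^{\calY}_R$ is immediate: $\pi^{\ast}$ is left exact, so it takes the pullback squares witnessing locality of $\alpha$ to pullback squares. For the left class, I would use that $\pi_{!}$ preserves colimits and retracts, together with the identity $\pi^{\ast} \circ \lambda^{\calX}_X \simeq \lambda^{\calY}_X \circ \pi^{\ast}$ — which follows from the concrete formula for $\lambda_X$ and the fact that composition with $\pi^{\ast}$ is compatible with the reflections onto $\Struct_{\calG}(-)$, $\pi^{\ast}$ preserving the finite limits and colimits that define those reflections. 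Hence $\pi_{!}(\rho^{K}_{u}) \simeq \rho^{\pi^{\ast}K}_{u} \in S^{\calY}_L$, and since $\pi_{!}$ commutes with the operations generating $S^{\calX}_L$ from $S_0$, we conclude $\pi_{!}(S^{\calX}_L) \subseteq S^{\calY}_L$.

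The step I expect to be the main obstacle is the first one: producing a small generating set $S_0$ whose right-orthogonal complement is exactly $\Struct^{\loc}_{\calG}(\calX)$. This rests on the presentability of $\Struct_{\calG}(\calX)$ — needed to supply the adjoints $\lambda_X$ — and on careful tracking of the adjunctions and of colimits in the $\calX$-variable; the remaining steps are formal manipulations with factorization systems. An alternative, more concrete route would construct the factorization $\calO \to \calO'' \to \calO'$ of a given morphism by hand, as a left Kan extension of the pair $(\calO,\calO')$ along a suitable correspondence $\calT \to \Delta^1$, and then invoke Proposition \ref{prespaz} to see that the intermediate term $\calO''$ is again a $\calG$-structure; but the orthogonality argument seems cleaner.
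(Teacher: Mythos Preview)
Your proof has a genuine gap at the foundational step: $\Struct_{\calG}(\calX)$ is \emph{not} in general presentable, and in particular is not an accessible localization of $\Fun^{\lex}(\calG,\calX)$. The covering axiom of Definition \ref{psyab} requires certain maps to be effective epimorphisms, and this is not a locality condition --- effective epimorphisms are not stable under limits. Concretely, take $\calG = \calG_{\Zar}$ and $\calX = \SSet$: then $\Struct_{\calG_{\Zar}}(\SSet)$ is the category of (nonzero) local commutative rings, which has neither a terminal object nor binary products (the product $A \times B$ of two nonzero local rings is never local), and has no coproduct of $\F_2$ and $\F_3$. So your adjoints $\lambda_X$ need not exist, and the small-object-style argument does not get off the ground.

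The paper circumvents this by running the argument one level up, in the presentable $\infty$-category $\Fun^{\lex}(\calG,\calX)$. It first produces a factorization system $(\overline{S}^{\calX}_L,\overline{S}^{\calX}_R)$ there (Proposition \ref{lumm} for $\calX=\SSet$ via the saturated class of proadmissible morphisms, then Corollary \toposref{funcsys} and Lemma \ref{silb} to pass to a general $\calX$ presented as a left exact localization of a presheaf topos). The extra step you are missing is then to check that factoring a morphism between $\calG$-structures yields an intermediate object that is again a $\calG$-structure: this uses that the right-class square exhibits $\coprod_\alpha \calO'(U_\alpha) \to \calO'(X)$ as a pullback of the corresponding map for $\calO''$, so one inherits the effective epimorphism property. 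For part $(2)$, rather than tracking generators the paper invokes the adjunction trick of Lemma \ref{stooge}: composition with $\pi^{\ast}$ on $\Fun^{\lex}(\calG,-)$ has a right adjoint given by composition with $\pi_{\ast}$, and since $\pi_{\ast}$ is left exact this right adjoint preserves $\overline{S}_R$, whence the left adjoint preserves $\overline{S}_L$. Your argument can be repaired along these lines by relocating everything to $\Fun^{\lex}(\calG,\calX)$ and adding the closure argument for $\Struct_{\calG}(\calX)$, but as written the presentability claim is false and the proof does not go through.
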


\begin{remark}
Let $\calG$ and $\calX$ be as in Theorem \ref{gen}, let $\alpha: \calO' \rightarrow \calO''$ be a natural transformation of $\calG$-structures on $\calX$ with associated factorization
$\calO' \stackrel{\alpha^{R}}{\rightarrow} \calO \stackrel{\alpha^{L}}{\rightarrow} \calO''.$
Applying assertion $(2)$ in the case where $\pi^{\ast}: \calX \rightarrow \SSet$ gives a point $x$ of $\calX$, we deduce that the induced transformation on stalks
$$ \calO'_{x} \rightarrow \calO_x \rightarrow \calO''_{x}$$
is the associated factorization of $\alpha_x: \calO'_{x} \rightarrow \calO''_{x}$ in
$\Struct_{\calG}(\SSet) \simeq \Ind( \calG^{op} )$. If $\calX$ has enough points, then the converse to this assertion holds as well.
\end{remark}

\begin{remark}
The functorial dependence of the factorization system $(S^{\calX}_L, S^{\calX}_{R})$ on the geometry $\calG$ is more subtle; we will return to this point in \S \ref{relspec}.
\end{remark}

In order to prove Theorem \ref{gen}, we will need a good understanding of the
$\infty$-category $\Pro(\calG)$ of pro-objects of $\calG$. We begin by introducing some terminology.

\begin{definition}\label{jun}
Let $\calG$ be a geometry, and let $j: \calG \rightarrow \Pro(\calG)$ denote the Yoneda embedding. We will say that a morphism $f: U \rightarrow X$ in the $\infty$-category $\Pro(\calG)$ is {\it proadmissible} if there exists a small filtered diagram $p: \calI \rightarrow \Fun( \Delta^1, \calG)$ such that each $p(I)$ is an admissible morphism of $\calG$, and $f$ is a limit of the composite diagram
$$ \calI \stackrel{p}{\rightarrow} \Fun( \Delta^1, \calG) \stackrel{j}{\rightarrow} \Fun( \Delta^1, \Pro(\calG) ).$$
For each object $X \in \Pro(\calG)$, we let $\Pro(\calG)^{\proadm}_{/X}$ denote the full subcategory spanned by the proadmissible morphisms.
\end{definition}

\begin{remark}\label{swinnerr}
Let $\calG$ be a geometry. The composition
$$\Fun( \Delta^1, \calG) \times \Delta^1 \rightarrow \calG \stackrel{j}{\rightarrow} \Pro(\calG)$$
classifies a map $\Fun( \Delta^1, \calG) \rightarrow \Fun( \Delta^1, \Pro(\calG) )$. 
Since $\Fun( \Delta^1, \calG)$ is idempotent complete, Proposition \toposref{urgh1} implies that
this map induces an equivalence of $\infty$-categories
$$\phi: \Pro( \Fun( \Delta^1, \calG) ) \simeq \Fun( \Delta^1, \Pro(\calG) ).$$
Let $\Fun^{\adm}( \Delta^1, \calG)$ denote the full subcategory of $\Fun( \Delta^1, \calG)$ spanned by the admissible morphisms of $\calG$. The inclusion of $\Fun^{\adm}( \Delta^1, \calG)$ into
$\Fun( \Delta^1, \calG)$ induces a fully faithful embedding
$\Pro( \Fun^{\adm}( \Delta^1, \calG) ) \rightarrow \Pro( \Fun(\Delta^1, \calG) )$. Composing with the equivalence $\phi$, we obtain a fully faithful embedding
$$ \Pro( \Fun^{\adm}(\Delta^1, \calG) ) \rightarrow \Fun( \Delta^1, \Pro(\calG) ).$$
Unwinding the definitions, we see that the essential image of this functor can be identified with the full subcategory spanned by the proadmissible morphisms of $\Pro(\calG)$.
\end{remark}

\begin{remark}\label{proadmadm}
Let $\calG$ be a geometry, and $f: U \rightarrow X$ a morphism in $\calG$.
If $j(f)$ is proadmissible, then
$j(f)$ can be obtained as the colimit of some filtered diagram $\{ j(f_{\alpha}) \}$, where
each $f_{\alpha}$ is an admissible morphism in $\calG$. Since $j(f)$ is a compact object of
$\Fun( \Delta^1, \Pro(\calG) )$, we conclude that $j(f)$ is a retract of some $j(f_{\alpha})$, so that
$f$ is a retract of $f_{\alpha}$ and therefore admissible.
\end{remark}

\begin{remark}\label{subo}
Let $\calG$ be a geometry, and let $X$ be an object of $\Pro(\calG)$. A morphism
$U \rightarrow X$ is proadmissible if and only if, as an object of $\Pro(\calG)_{/X}$, 
$U$ can be identified with a small filtered limit of morphisms $U_{\alpha} \rightarrow X$
which fit into pullback diagrams
$$ \xymatrix{ U_{\alpha} \ar[r] \ar[d] & X \ar[d] \\
j( U'_{\alpha} ) \ar[r]^{j( f'_{\alpha})} & j(X'), }$$
where $j: \calG \rightarrow \Pro(\calG)$ denotes the Yoneda embedding and each
$f'_{\alpha}$ is an admissible morphism of $\calG$.
\end{remark}

\begin{lemma}\label{swug}
Let $\calG$ be a geometry. Then the class of proadmissible morphisms of $\Pro(\calG)$ constitutes a strongly saturated collection of morphisms of $\Pro(\calG)^{op} \simeq \Ind(\calG^{op})$, which is of small generation.
\end{lemma}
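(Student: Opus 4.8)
I need to show that the collection $W$ of proadmissible morphisms of $\Pro(\calG)$, viewed as a collection of morphisms of $\Ind(\calG^{op}) = \Pro(\calG)^{op}$, is strongly saturated and of small generation. Recall (from \toposref{}) that a strongly saturated collection is one that is closed under pushouts (along arbitrary morphisms), closed under composition and contains all equivalences, closed under (transfinite) colimits in the arrow category, and closed under the ``2-out-of-3 for retracts'' / cancellation; and ``of small generation'' means it is the strongly saturated collection generated by a small set of morphisms. The plan is to first exhibit a small generating set, then verify that the strongly saturated collection it generates is exactly $W$; the work splits into an ``easy'' inclusion (the generators are proadmissible, so the generated class is contained in $W$) and a ``harder'' inclusion (every proadmissible morphism lies in the generated class).

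\textbf{The generating set.} The natural candidate for the generating set is the image under $j^{op}$ of the admissible morphisms of $\calG$, i.e. $S = \{ j(f)^{op} : f \text{ admissible in } \calG\}$, which is essentially small since $\calG$ is. In fact, by Remark \ref{subo}, a morphism $U \to X$ in $\Pro(\calG)$ is proadmissible exactly when, in $\Pro(\calG)_{/X}$, $U$ is a small filtered limit of pullbacks of morphisms of the form $j(f'_\alpha)$ with $f'_\alpha$ admissible in $\calG$. Dualizing: in $\Ind(\calG^{op})$, proadmissible morphisms are precisely those obtained from the $j(f)^{op}$ (with $f$ admissible) by pushout along arbitrary maps followed by small filtered colimits. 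Since a strongly saturated collection is automatically closed under pushout and under filtered colimits in the arrow category, the strongly saturated collection $\overline{S}$ generated by $S$ must contain every proadmissible morphism, giving $W \subseteq \overline{S}$.

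\textbf{The reverse inclusion.} For $\overline{S} \subseteq W$, I would show that $W$ itself is already strongly saturated, so that being the smallest such collection containing $S$, $\overline S$ is contained in it. This is where the real content lies. Closure of $W$ under pushout (in $\Ind(\calG^{op})$, i.e. pullback in $\Pro(\calG)$) along arbitrary morphisms: use Remark \ref{swinnerr}, which identifies proadmissible morphisms with the essential image of $\Pro(\Fun^{\adm}(\Delta^1,\calG)) \hookrightarrow \Fun(\Delta^1,\Pro(\calG))$; pullbacks in $\Pro(\calG)$ of admissible morphisms exist and stay admissible by axiom (i) of Definition \ref{stubb}, and this passes to pro-objects since limits of filtered diagrams of pullback squares are pullback squares and $\Pro$ of a category with finite limits has finite limits computed levelwise on diagrams. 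Closure under composition: axiom (ii) of Definition \ref{stubb} gives that admissible morphisms are closed under composition in $\calG$; lift to $\Fun^{\adm}(\Delta^1 \star \Delta^0, \calG)$-type diagrams and pass to $\Pro$. Closure under retracts is exactly condition (iii) of Definition \ref{stubb}, again lifted to $\Pro$ via the fully faithful embedding of Remark \ref{swinnerr} (a retract of an object in the essential image of $\Pro(\Fun^{\adm})$ is again such, since that subcategory is closed under retracts — it is cut out by a property stable under retracts). Closure under transfinite colimits in the arrow category of $\Ind(\calG^{op})$ (equivalently, cofiltered limits of arrows in $\Pro(\calG)$): this is essentially immediate from the characterization as the essential image of $\Pro(\Fun^{\adm}(\Delta^1,\calG))$, since that image is closed under filtered colimits in $\Fun(\Delta^1, \Pro(\calG))^{op}$... more precisely one needs closure of $W$ under the relevant colimit operations defining strong saturation, all of which come down to the fact that $\Fun^{\adm}(\Delta^1,\calG) \subseteq \Fun(\Delta^1,\calG)$ is a full subcategory stable under the finite-limit and retract operations, so its $\Pro$-completion sits nicely inside $\Fun(\Delta^1,\Pro(\calG))$.

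\textbf{Main obstacle.} The delicate point is not any single closure property in isolation but the bookkeeping needed to verify them \emph{compatibly} through the equivalence $\phi\colon \Pro(\Fun(\Delta^1,\calG)) \simeq \Fun(\Delta^1,\Pro(\calG))$ of Remark \ref{swinnerr} — in particular, checking that pullback squares in $\Pro(\calG)$ along proadmissible morphisms can be realized as $\Pro$ of pullback squares in $\calG$ along admissible morphisms (so that axiom (i) can be applied levelwise). I expect to handle this by the standard reduction: any finite diagram in $\Pro(\calG)$ is a cofiltered limit of finite diagrams in $\calG$ (using that $\calG$ has finite limits and \toposref{urgh1}/the description of $\Pro$), a pullback is such a finite diagram, and one arranges the cofiltered system so that the relevant edges are admissible, which is possible precisely by Remark \ref{proadmadm} (a proadmissible morphism that happens to lie in $\calG$ is admissible) together with the definition of proadmissibility as a cofiltered limit of admissibles. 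The ``small generation'' claim then requires no extra work beyond exhibiting $S$ above and noting $|S|$ is small because $\calG$ is essentially small; the strongly saturated collection generated by a small set is of small generation by definition.
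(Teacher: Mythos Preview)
Your overall strategy matches the paper's: use Remark \ref{swinnerr} to get stability under small limits in $\Fun(\Delta^1,\Pro(\calG))$ and small generation, use Remark \ref{subo} for stability under pullback, and then verify closure under composition. The last of these is where the real content lies, and your treatment of it is where the gap is.

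You propose to handle composition by ``lifting to $\Fun^{\adm}(\Delta^1 \star \Delta^0, \calG)$-type diagrams and passing to $\Pro$.'' Concretely this would mean: given proadmissible $f: X \to Y$ and $g: Y \to Z$, realize the composable pair as a cofiltered limit of composable pairs $X_\alpha \to Y_\alpha \to Z_\alpha$ in $\calG$ with both arrows admissible, then use closure of admissibles under composition. The equivalence $\Pro(\Fun(\Delta^2,\calG)) \simeq \Fun(\Delta^2,\Pro(\calG))$ does let you write the pair $(f,g)$ as a pro-object of composable pairs in $\calG$, but there is no reason the edges of those approximating pairs are admissible: you have one pro-system $\{f_\alpha\}_I$ of admissibles presenting $f$ and another $\{g_\beta\}_J$ presenting $g$, and the two presentations of $Y$ need not align in a way that keeps both edges admissible. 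Your appeal to Remark \ref{proadmadm} does not help here: that remark only says that a single morphism in $\calG$ whose image in $\Pro(\calG)$ is proadmissible must already be admissible; it says nothing about arranging two independent pro-systems to share a common cofinal index set with admissible edges throughout.

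The paper does not attempt any such alignment. Instead it argues directly: since the proadmissibles are closed under filtered limits in $\Fun(\Delta^1,\Pro(\calG))$, one may use Remark \ref{subo} to reduce to the case where $f$ is the pullback of a single admissible morphism $j(f'): j(U') \to j(X')$ along some map $X \to j(X')$. Now write $g$ as a filtered limit (over a poset $B$) of pullbacks of admissibles $g'_\beta: V'_\beta \to Z'_\beta$. Because $j(X')$ is cocompact in $\Pro(\calG)$, the map $Y \to j(X')$ factors through some stage $V_{\beta_0}$ of the pro-system for $g$. One then checks by hand that $X$ is the limit over $\beta \geq \beta_0$ of $V_\beta \times_{j(X')} j(U')$, and that each composite $V_\beta \times_{j(X')} j(U') \to Z$ is a pullback of $j(g'_\beta \times f')$, which is admissible since products of admissibles are admissible. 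This explicit compactness-and-reindexing argument is exactly the work that your ``lift and pass to $\Pro$'' slogan elides.
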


\begin{proof}
Let $\Fun^{\proadm}( \Delta^1, \Pro(\calG) )$ denote the full subcategory of $\Fun( \Delta^1, \Pro(\calG) )$ spanned by the proadmissible morphisms. The description of Remark \ref{swinnerr} implies that
$\Fun^{\proadm}( \Delta^1, \Pro(\calG) )$ is stable under small limits, and generated under small limits by a small subcategory. Remark \ref{subo} shows that the class of proadmissible morphisms is stable under pullbacks. It remains to show that the class of proadmissible morphisms is stable under composition. 

Choose proadmissible morphisms $f: X \rightarrow Y$ and $g: Y \rightarrow Z$ in $\Pro(\calG)$; we wish to show that $g \circ f$ is proadmissible. In view of Remark \ref{subo}, we may assume that
$f$ is a filtered limit of morphisms $\{ f_{\alpha}: U_{\alpha} \rightarrow Y \}$, where
each $f_{\alpha}$ is the pullback of an admissible morphism $f'_{\alpha}$ in $\calG$.
It suffices to show that each composition $g \circ f_{\alpha}$ is admissible. Replacing $f$ by
$f_{\alpha}$, we may assume that there exists a pullback diagram
$$ \xymatrix{ X \ar[r]^{f} \ar[d] & Y \ar[d]^{h} \\
j(X') \ar[r]^{j(f')} & j(Y') }$$
where $f'$ is an admissible morphism of $\calG$.

Applying Remark \ref{subo} again, we may assume that $g$ is the limit of a diagram of morphisms
$\{ g_{\beta}: V_{\beta} \rightarrow Z \}_{\beta \in B}$ indexed by a filtered partially ordered set $B$, such that each $g_{\beta}$ is a pullback of some admissible morphism 
$g'_{\beta}: V'_{\beta} \rightarrow Z'_{\beta}$ in $\calG$. Since $j(Y')$ is a cocompact object of $\Pro(\calG)$, we may assume that $h$ is homotopic to a composition
$ Y \stackrel{ \beta_0}{\rightarrow} V_{\beta_0} \stackrel{ g_{\beta_0}}{\rightarrow} Z$
for some $\beta_0 \in B$. It follows that we can identify $X$ with the limit of
the diagram $\{ V_{\beta} \times_{ j(Y') } j(X') \}_{ \beta \geq \beta_0 }$. It will therefore suffice to show that each of the composite maps
$$V_{\beta} \times_{ j(Y')} j(X') \rightarrow V_{\beta} \rightarrow Z$$
is proadmissible. Each of these composite maps is a pullback of $j(h_{\beta})$, where
$h_{\beta}: V'_{\beta} \times Y' \rightarrow Z'_{\beta} \times X'$ is the product of
$g_{\beta}$ with $f'$. It follows that $h_{\beta}$ is admissible, as desired.
\end{proof}

\begin{corollary}
Suppose given a commutative diagram
$$ \xymatrix{ & Y \ar[dr]^{g} & \\
X \ar[ur]^{f} \ar[rr]^{h} &  & Z }$$
in $\Pro(\calG)$, where $g$ is proadmissible. Then $f$ is proadmissible if and only if $h$ is proadmissible.
\end{corollary}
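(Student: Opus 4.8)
The plan is to deduce the corollary directly from Lemma \ref{swug}, which asserts that the proadmissible morphisms of $\Pro(\calG)$ form a strongly saturated collection of morphisms of $\Pro(\calG)^{op} \simeq \Ind(\calG^{op})$. The essential point is that a strongly saturated collection of morphisms enjoys, by definition, the two-out-of-three property: in a commutative triangle, if any two of the three edges belong to the collection, then so does the third. Thus nearly all of the work has already been done in the proof of Lemma \ref{swug}; what remains is bookkeeping about opposite categories.

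First I would fix notation: let $S$ be the strongly saturated collection of morphisms of $\Ind(\calG^{op}) \simeq \Pro(\calG)^{op}$ supplied by Lemma \ref{swug}, so that a morphism of $\Pro(\calG)$ is proadmissible exactly when its opposite lies in $S$. Next I would recall from \cite{topoi} that $S$ satisfies the two-out-of-three property. Then I would pass to opposites: the given commutative triangle in $\Pro(\calG)$ becomes a commutative triangle in $\Ind(\calG^{op})$ exhibiting $h^{op}$ as the composition $f^{op} \circ g^{op}$, where $g^{op} \in S$ by hypothesis. Applying two-out-of-three in both directions — from $f^{op} \in S$ and $g^{op} \in S$ conclude $h^{op} \in S$; from $h^{op} \in S$ and $g^{op} \in S$ conclude $f^{op} \in S$ — yields precisely the claimed equivalence between proadmissibility of $f$ and of $h$.

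I do not expect a genuine obstacle here: the substantive content is Lemma \ref{swug}, and the only delicate point is keeping track of the direction reversal when translating between $\Pro(\calG)$ and $\Ind(\calG^{op})$. One should also make sure that the two-out-of-three property is part of the definition of ``strongly saturated'' as used in \cite{topoi}; if one preferred not to invoke it abstractly, an alternative route would be to re-derive the needed closure property directly from the stability of proadmissible morphisms under pullback (Remark \ref{subo}) and under composition (established in the proof of Lemma \ref{swug}), but this is strictly more work and offers no real advantage.
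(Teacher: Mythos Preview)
Your approach is correct and matches the paper's one-line proof, which reads ``Combine Lemma \ref{swug} with Corollary \toposref{wugg}.'' Corollary \toposref{wugg} in \cite{topoi} is precisely the statement that a strongly saturated class enjoys the triangle property you invoke, so your unpacking of the argument is exactly what the paper intends.

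One small caution: your proposed fallback route---deriving the needed closure directly from pullback stability (Remark \ref{subo}) and composition closure---would not actually suffice. Those two properties alone do not yield the hard direction of two-out-of-three (from $g, h \in S$ conclude $f \in S$); one genuinely needs the full strength of strong saturation, in particular stability under colimits in the arrow category. Since you do not pursue that route, this does not affect the correctness of your main argument, but the parenthetical remark is misleading.
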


\begin{proof}
Combine Lemma \ref{swug} with Corollary \toposref{wugg}.
\end{proof}

We are now ready to prove the Theorem \ref{gen} in the special case $\calX = \SSet$.

\begin{proposition}\label{lumm}
Let $\calG$ be a geometry. Then there exists a factorization system $(S_L, S_R)$ on the
$\infty$-category $\Fun^{\lex}( \calG, \SSet) \simeq \Pro(\calG)^{op}$, where:
\begin{itemize}
\item[$(1)$] A morphism of $\Fun^{\lex}(\calG, \SSet)$ belongs to $S_L$ if and only if it is proadmissible, when viewed as a morphism in $\Pro(\calG)$.
\item[$(2)$] A morphism $\alpha: \calO \rightarrow \calO'$ in $\Fun^{\lex}(\calG, \SSet)$ belongs to
$S_R$ if and only if, for every admissible morphism $f: U \rightarrow X$ in $\calG$, the diagram
$$ \xymatrix{ \calO(U) \ar[r] \ar[d] & \calO'(U) \ar[d] \\
\calO(X) \ar[r] & \calO'(X) }$$
is a pullback square in $\SSet$.
\end{itemize}
\end{proposition}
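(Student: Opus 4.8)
The plan is to deduce the factorization system from the general correspondence between strongly saturated classes of small generation and factorization systems on a presentable $\infty$-category (see \cite{topoi}), feeding in the strongly saturated class furnished by Lemma \ref{swug}, and then to identify the right class by a Yoneda computation.

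First I would record that $\Fun^{\lex}(\calG, \SSet) \simeq \Pro(\calG)^{op} = \Ind(\calG^{op})$ is presentable, since $\calG$ is essentially small with finite limits, and I would let $S_L$ be the class of morphisms of $\Fun^{\lex}(\calG,\SSet)$ which are proadmissible when viewed as morphisms of $\Pro(\calG)$. By Lemma \ref{swug} this class is strongly saturated and of small generation, so the general theory produces a factorization system $(S_L, S_R)$ with $S_R = S_L^{\perp}$, the class of morphisms right orthogonal to every element of $S_L$; this is assertion $(1)$.

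Next I would identify $S_R$ explicitly. Writing $j : \calG \rightarrow \Pro(\calG)$ for the Yoneda embedding, I would first check that $S_L$ is the strongly saturated closure $\overline{S_0}$ of the small set $S_0 = \{\, j(f) : f \text{ an admissible morphism of } \calG \,\}$: each $j(f)$ is proadmissible (take a constant diagram in Definition \ref{jun}), so $S_0 \subseteq S_L$; and conversely, by Definition \ref{jun} every proadmissible morphism is a small filtered limit in $\Pro(\calG)$ — equivalently a small filtered colimit in $\Fun^{\lex}(\calG,\SSet)$ — of morphisms $j(f_\alpha)$ with $f_\alpha$ admissible, while a strongly saturated class is stable under colimits in the arrow $\infty$-category, so $S_L \subseteq \overline{S_0}$. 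Since the right-orthogonal complement of a set of morphisms agrees with that of its strongly saturated closure (left-orthogonal complements being automatically strongly saturated), this gives $S_R = S_0^{\perp}$: a morphism $\alpha : \calO \rightarrow \calO'$ lies in $S_R$ if and only if it is right orthogonal to $j(f)$ for every admissible $f : U \rightarrow X$ in $\calG$.

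Finally I would unwind the condition $\alpha \perp j(f)$. Under $\Pro(\calG) = \Fun^{\lex}(\calG,\SSet)^{op}$ the object $j(Y)$ is the corepresentable functor $\Hom_{\calG}(Y,-)$, so the covariant Yoneda lemma supplies natural equivalences $\bHom_{\Fun^{\lex}(\calG,\SSet)}(j(Y), \calO) \simeq \calO(Y)$, and $j(f)$ corresponds to the natural transformation $\Hom_{\calG}(X,-) \rightarrow \Hom_{\calG}(U,-)$ obtained by precomposition with $f$; hence $\alpha$ is right orthogonal to $j(f)$ precisely when
$$ \xymatrix{ \calO(U) \ar[r] \ar[d] & \calO'(U) \ar[d] \\ \calO(X) \ar[r] & \calO'(X) } $$
is a pullback square in $\SSet$, which is condition $(2)$. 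I expect the main obstacle to be purely one of bookkeeping: the genuine work — that proadmissible morphisms are closed under composition and form a strongly saturated class of small generation — has already been dispatched in Lemma \ref{swug}, so what remains is to keep the variances straight across the equivalence $\Fun^{\lex}(\calG,\SSet) \simeq \Pro(\calG)^{op}$ and to confirm that $\overline{S_0}$ recovers exactly the class of proadmissible morphisms, so that $S_0^{\perp}$ is governed by the pullback condition in $(2)$.
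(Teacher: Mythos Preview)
Your proof is correct and follows essentially the same route as the paper's: both invoke Lemma \ref{swug} to see that the proadmissible morphisms form the strongly saturated class generated by the set $S_0 = \{ j(f) : f \text{ admissible} \}$, apply the general existence theorem for factorization systems (the paper cites Proposition \toposref{nir}), use that $S_L^{\perp} = S_0^{\perp}$ (the paper cites Proposition \toposref{swimmm}), and then unwind the orthogonality condition via Yoneda to obtain the pullback description in $(2)$. The only difference is cosmetic ordering: you first produce the factorization system from $S_L$ and then identify $S_L$ with $\overline{S_0}$, while the paper starts from $S_0$ and identifies its saturation with $S_L$.
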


\begin{proof}
Let $S$ denote the class of all morphisms in $\Fun^{\lex}(\calG, \SSet)$ of the form
$j(f)$, where $f: U \rightarrow X$ is an admissible morphism of $\calG$ and
$j: \calG^{op} \rightarrow \Fun^{\lex}( \calG, \SSet)$ denotes the Yoneda embedding.
Using Lemma \ref{swug}, we deduce that $S_{L}$ is the saturated class of morphisms of
$\Fun^{\lex}(\calG, \SSet)$ generated by $S$. It follows from Proposition \toposref{nir} that
$(S_L, S_L^{\perp})$ form a factorization system on $\Fun^{\lex}(\calG, \SSet)$. Since
$S_L$ is generated by $S$ under colimits, Proposition \toposref{swimmm} implies that 
$S_{L}^{\perp} = S^{\perp}$. Finally, the equality $S^{\perp} = S_{R}$ follows by
unwinding the definitions.
\end{proof}

The proof of Theorem \ref{gen} in general will require a few more lemmas.

\begin{lemma}\label{scattra}
Let $\calC$ be an $\infty$-category, $\calC^0 \subseteq \calC$ a localization of $\calC$, and
$Y$ an object of $\calC_0$. Then $\calC^0_{/Y}$ is a localization of $\calC_{/Y}$.
Moreover, a morphism $f: X \rightarrow X'$ in $\calC_{/Y}$ exhibits $X'$ as a $\calC^{0}_{/Y}$-localization of $X$ if and only if $f$ exhibits $X'$ as a $\calC^{0}$-localization of $X$
in the $\infty$-category $\calC$.
\end{lemma}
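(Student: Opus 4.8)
The plan is to reduce the statement to two standard facts about $\infty$-categories: first, that a full subcategory $\calD^{0} \subseteq \calD$ is a localization precisely when every object $X$ of $\calD$ admits a morphism $X \to X'$ exhibiting some $X' \in \calD^{0}$ as a $\calD^{0}$-localization of $X$; and second, that for an object $Y$ of $\calD$ the mapping space $\bHom_{\calD_{/Y}}\bigl( (A \to Y), (B \to Y) \bigr)$ is canonically the homotopy fiber of $\bHom_{\calD}(A,B) \to \bHom_{\calD}(A,Y)$ over the given point of $\bHom_{\calD}(A,Y)$ (see \cite{topoi}). Fix a left adjoint $L \colon \calC \to \calC^{0}$ to the inclusion, with unit $\eta$. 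Given an object $p \colon X \to Y$ of $\calC_{/Y}$, the hypothesis $Y \in \calC^{0}$ guarantees that precomposition with $\eta_{X}$ is an equivalence $\bHom_{\calC}(LX, Y) \to \bHom_{\calC}(X,Y)$, so we may choose $\bar p \colon LX \to Y$ with $\bar p \circ \eta_{X} \simeq p$. Then $\eta_{X}$ promotes to a morphism $\tilde\eta \colon (X,p) \to (LX, \bar p)$ of $\calC_{/Y}$ whose target lies in $\calC^{0}_{/Y}$.

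I would first show that $\tilde\eta$ exhibits $(LX, \bar p)$ as a $\calC^{0}_{/Y}$-localization of $(X,p)$; once this is established for every $(X,p)$, the first assertion of the lemma follows from the criterion just recalled. Let $(Z,q)$ be any object of $\calC^{0}_{/Y}$. Composition with $\tilde\eta$ determines a map from the homotopy fiber sequence $\bHom_{\calC_{/Y}}\bigl( (LX,\bar p), (Z,q) \bigr) \to \bHom_{\calC}(LX, Z) \to \bHom_{\calC}(LX, Y)$ to the homotopy fiber sequence $\bHom_{\calC_{/Y}}\bigl( (X,p), (Z,q) \bigr) \to \bHom_{\calC}(X, Z) \to \bHom_{\calC}(X, Y)$. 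On the two right-hand terms this map is precomposition with $\eta_{X}$, hence an equivalence because $Z$ and $Y$ lie in $\calC^{0}$; and the two base points correspond under the equivalence $\bHom_{\calC}(LX,Y) \simeq \bHom_{\calC}(X,Y)$ precisely because $\bar p \circ \eta_{X} \simeq p$. Consequently the induced map on fibers is an equivalence, which is exactly the statement that $\tilde\eta$ is a $\calC^{0}_{/Y}$-localization. Running the same comparison with an arbitrary morphism $f$ of $\calC_{/Y}$ that exhibits its target as a $\calC^{0}$-localization of its source in $\calC$ in place of $\tilde\eta$ --- so that the right-hand vertical maps $\bHom_{\calC}(f, Z)$ and $\bHom_{\calC}(f, Y)$ are again equivalences --- shows that $f$ exhibits its target as a $\calC^{0}_{/Y}$-localization; this is the ``if'' half of the final sentence.

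For the converse, suppose $f \colon (X,p) \to (X',p')$ exhibits $(X',p')$ as a $\calC^{0}_{/Y}$-localization of $(X,p)$. By the part already proven $\tilde\eta$ does so as well, so uniqueness of localizations provides an equivalence $(X',p') \simeq (LX, \bar p)$ in $\calC_{/Y}$ compatible with $f$ and $\tilde\eta$. Applying the forgetful functor $\calC_{/Y} \to \calC$ yields an equivalence $X' \simeq LX$ in $\calC$ compatible with $f$ and $\eta_{X}$; thus $f$ agrees with $\eta_{X}$ up to post-composition with an equivalence of $\calC$. Since $\eta_{X}$ exhibits $LX$ as a $\calC^{0}$-localization of $X$ and $X' \in \calC^{0}$, it follows that $f$ exhibits $X'$ as a $\calC^{0}$-localization of $X$ in $\calC$, completing the proof.

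The argument is entirely formal once the over-category mapping-space formula and the uniqueness of localizations are available, so I do not anticipate a genuine obstacle. The one place that calls for care is the comparison of the two homotopy fiber sequences used above: one must keep track of base points and check that the chosen lift $\bar p$ of $p$ --- and, in the variant, the fact that $f$ lies over $Y$ --- makes the comparison squares compatible on base points, so that the induced map of homotopy fibers is the map one wants.
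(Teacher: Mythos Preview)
Your proposal is correct and follows essentially the same approach as the paper's proof: both arguments use the homotopy fiber sequence description of mapping spaces in $\calC_{/Y}$ to deduce the ``if'' direction from $Y, Z \in \calC^{0}$, construct the localization morphism by factoring through $LX \to Y$ (using $Y \in \calC^{0}$), and obtain the ``only if'' direction from uniqueness of localizations. The only difference is the order of presentation---the paper proves the ``if'' direction first and then constructs the factorization, whereas you build $\tilde\eta$ first---but the content is the same.
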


\begin{proof}
We first prove the ``if'' direction of the last assertion. Choose a morphism $Y' \rightarrow Y$
in $\calC$, where $Y' \in \calC^{0}$. We have a map of homotopy fiber sequences
$$ \xymatrix{ \bHom_{\calC_{/Y}}( X', Y') \ar[r]^{\phi} \ar[d] & \bHom_{ \calC_{/Y} }( X, Y') \ar[d] \\
\bHom_{\calC}( X', Y') \ar[d] \ar[r]^{\phi'} & \bHom_{\calC}(X, Y') \ar[d] \\
\bHom_{\calC}(X', Y) \ar[r]^{\phi''} & \bHom_{\calC}(X, Y). }$$
Since the maps $\phi'$ and $\phi''$ are homotopy equivalences, we conclude that $\phi$ is a homotopy equivalence as desired.

We now show that $\calC^{0}_{/Y}$ is a localization of $\calC_{/Y}$. 
In view of Proposition \toposref{testreflect} and the above argument, it will suffice to show that
for every map $h: X \rightarrow Y$, there exists a factorization
$$ \xymatrix{ & X' \ar[dr]^{g} & \\
X \ar[ur]^{f} \ar[rr]^{h} & & Y }$$
where $f$ exhibits $X'$ as a $\calC^{0}$-localization of $X$. The existence of $f$ follows from
Proposition \toposref{testreflect} (applied to the $\infty$-category $\calC$), and the ability to
complete the diagram follows from the assumption that $Y \in \calC^{0}$.

To complete the proof, we observe that {\em any} $\calC^{0}_{/Y}$-localization
$X \rightarrow X''$ must be equivalent to the morphism $X \rightarrow X'$ constructed above, so that $X \rightarrow X''$ also exhibits $X''$ as a $\calC^{0}$-localization of $X$.
\end{proof}

\begin{lemma}\label{silb}
Let $\calC$ be an $\infty$-category equipped with a factorization system $(S_L, S_R)$, and let
$L: \calC \rightarrow \calC$ be a localization functor such that $L S_R \subseteq S_R$. Then
the full subcategory $L\calC \subseteq \calC$ admits a factorization system
$( S'_L, S'_R)$, where:
\begin{itemize}
\item[$(1)$] A morphism $f'$ in $L \calC$ belongs to $S'_{L}$ if and only if $f'$ is a retract of
$Lf$, for some $f \in S_L$.
\item[$(2)$] A morphism $g$ in $L \calC$ belongs to $S'_{R}$ if and only if $g \in S_R$.
\end{itemize}
\end{lemma}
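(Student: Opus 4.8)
The plan is to verify directly the three defining properties of a factorization system on $L\calC$ in the sense of Definition \toposref{deffacto}: that $S'_L$ and $S'_R$ are closed under retracts, that every morphism of $S'_L$ is left orthogonal to every morphism of $S'_R$, and that every morphism of $L\calC$ factors as a morphism of $S'_L$ followed by a morphism of $S'_R$. The guiding principle throughout is that $L\calC \subseteq \calC$ is a full subcategory, so mapping spaces in $L\calC$ coincide with those in $\calC$; consequently a lifting problem with all vertices in $L\calC$ has the same space of diagonal fillers whether regarded in $L\calC$ or in $\calC$.

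I would begin with the existence of factorizations. Given a morphism $g \colon X \to Y$ in $L\calC$, first factor it in $\calC$ as $X \xrightarrow{f} Z \xrightarrow{p} Y$ with $f \in S_L$ and $p \in S_R$, and then apply $L$. Writing $\eta$ for the unit of the localization, the maps $\eta_X$ and $\eta_Y$ are equivalences (since $X, Y \in L\calC$), so naturality of $\eta$ identifies $Lg$ with $g$ and exhibits $g$ as the composite $X \simeq LX \xrightarrow{Lf} LZ \xrightarrow{Lp} LY \simeq Y$ inside $L\calC$. Here $Lf \in S'_L$ (it is a retract of itself, with $f \in S_L$), while $Lp$ lies in $S_R$ by the hypothesis $L S_R \subseteq S_R$, hence in $S'_R$; this is the only place the hypothesis is used. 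Closure under retracts is then immediate: a retract of a morphism in $S'_R = S_R \cap L\calC$ is again in $S_R$, since $S_R$ is closed under retracts in $\calC$, so it lies in $S'_R$; and a retract of a retract of some $Lf$ is again a retract of $Lf$, so $S'_L$ is closed under retracts as well.

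The substance of the argument is the orthogonality $S'_L \perp S'_R$. Since the class of morphisms left orthogonal to a fixed morphism is closed under retracts, it suffices to prove that $Lf \perp g$ whenever $f \in S_L$ and $g \in S'_R$. Write $f \colon \widetilde{A} \to \widetilde{C}$, so $Lf \colon L\widetilde{A} \to L\widetilde{C}$, and fix a commutative square in $L\calC$ with left edge $Lf$ and right edge $g \colon B \to D$. Precomposing the square with the unit maps $\eta_{\widetilde{A}}$ and $\eta_{\widetilde{C}}$ and using the naturality identification $Lf \circ \eta_{\widetilde{A}} \simeq \eta_{\widetilde{C}} \circ f$ produces a commutative square in $\calC$ with left edge $f \in S_L$ and right edge $g \in S_R$, which therefore has a contractible space of diagonal fillers $\widetilde{C} \to B$. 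Because $B$ and $D$ lie in $L\calC$, precomposition with $\eta_{\widetilde{C}}$ induces equivalences $\bHom(L\widetilde{C}, B) \to \bHom(\widetilde{C}, B)$ and $\bHom(L\widetilde{C}, D) \to \bHom(\widetilde{C}, D)$; unwinding the two compatibility conditions that define a diagonal filler, one checks that the space of fillers $L\widetilde{C} \to B$ of the original square is equivalent to the space of fillers $\widetilde{C} \to B$ of the transferred square. Hence the original square has a contractible space of fillers, which gives $Lf \perp g$ and therefore $S'_L \perp S'_R$. Together with the previous paragraph this establishes that $(S'_L, S'_R)$ is a factorization system on $L\calC$, and the description $S'_R = S_R \cap L\calC$ together with $S'_L = {}^{\perp}S'_R$ then matches the two clauses in the statement.

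I expect the main obstacle to be precisely this transfer-of-lifting-problems step. The point is that $Lf$ itself need not belong to $S_L$, so one cannot invoke the factorization system on $\calC$ for $Lf$ directly; one must instead go back to $f \in S_L$ and use the universal property of the localization to match the two lifting problems. Carrying this out requires keeping careful track of the homotopies witnessing commutativity of the squares and of the coherences in the equivalences $\bHom(L\widetilde{C}, -) \simeq \bHom(\widetilde{C}, -)$; once this bookkeeping is done, everything else is formal.
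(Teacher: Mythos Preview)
Your proposal is correct and follows essentially the same approach as the paper: factor in $\calC$ and apply $L$ for existence, and for orthogonality reduce to $f' = Lf$ with $f \in S_L$, then transfer the lifting problem along the unit maps $\eta_{\widetilde A}$, $\eta_{\widetilde C}$ and use that the targets are $L$-local. The paper packages the transfer-of-lifting-problems step via the over-category $\calC_{/Y}$ and the auxiliary Lemma~\ref{scattra} (which says $L\calC_{/Y}$ is a localization of $\calC_{/Y}$ with the same localization maps), rather than manipulating mapping spaces directly as you do, but the content is the same.
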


\begin{proof}
Clearly $S'_{L}$ and $S'_{R}$ are stable under the formation of retracts.
Let $h: X \rightarrow Z$ be a morphism in $L\calC$; we wish to show that
$h$ factors as a composition
$$ X \stackrel{f'}{\rightarrow} Y' \stackrel{g'}{\rightarrow} Z$$
where $f' \in S'_L$ and $g' \in S'_{R}$. First, choose a factorization of $h$ as a composition
$$ X \stackrel{f}{\rightarrow} Y \stackrel{g}{\rightarrow} Z.$$
Then $Lf \in S'_{L}$, $Lg \in S'_{R}$, and $h \simeq Lh \simeq Lg \circ Lf$.

It remains to show that $f' \perp g'$, for $f' \in S'_{L}$ and $g' \in S'_{R}$. Without loss of generality, we may suppose $f' = Lf$ for some $f \in S_L$. Choose a commutative diagram
$$ \xymatrix{ A \ar[d]^{f} \ar[r] & LA \ar[d]^{Lf} \ar[r] & X \ar[d]^{g'} \\
B \ar[r] & LB \ar[r] & Y. }$$
We wish to show that the mapping space $\bHom_{ \calC_{LA/ \, /Y} }(LB, X)$ is weakly contractible.
We have a commutative diagram of fiber sequences
$$ \xymatrix{ \bHom_{ \calC_{LA/ \, /Y}}(LB, X) \ar[r]^{\phi} \ar[d] & \bHom_{\calC_{A/ \, /Y}}(B,X) \ar[d] \\
\bHom_{\calC_{/Y} }(LB, X) \ar[r]^{\phi'} \ar[d] & \bHom_{\calC_{/Y}}(B, X) \ar[d] \\
\bHom_{\calC_{/Y} }(LA, X) \ar[r]^{\phi''} & \bHom_{\calC_{/Y}}(A,X). }$$
 Using Lemma \ref{scattra}, we deduce that $\phi'$ and $\phi''$ are homotopy equivalences.
 It follows that $\phi$ is also a homotopy equivalence. We are therefore reduced to proving that
 $\bHom_{\calC_{A/ \,/Y}}(B,X)$ is contractible, which follows from the orthogonality relation
 $f \perp g'$.
\end{proof}

\begin{lemma}\label{stooge}
Suppose given a pair of adjoint functors
$$ \Adjoint{F}{\calC}{\calD.}{G}$$
Let $(S_L, S_R)$ be a factorization system on $\calC$, and $(S'_{L}, S'_{R})$ a factorization
system on $\calD$. The following conditions are equivalent:
\begin{itemize}
\item[$(1)$] The functor $F$ carries $S_L$ to $S'_{L}$.
\item[$(2)$] The functor $G$ carries $S'_{R}$ to $S_R$.
\end{itemize}
\end{lemma}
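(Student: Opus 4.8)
The plan is to reduce the equivalence to two formal inputs: the fact that a factorization system is recovered from either class by taking orthogonal complements, and the adjunction-invariance of the orthogonality relation. First I would observe that it suffices to prove the implication $(1)\Rightarrow(2)$. Indeed, passing to opposite categories turns the adjunction with left adjoint $F\colon\calC\rightarrow\calD$ and right adjoint $G\colon\calD\rightarrow\calC$ into the adjunction with left adjoint $G^{op}\colon\calD^{op}\rightarrow\calC^{op}$ and right adjoint $F^{op}\colon\calC^{op}\rightarrow\calD^{op}$, and turns the factorization systems $(S_L,S_R)$ and $(S'_L,S'_R)$ into $(S_R^{op},S_L^{op})$ and $((S'_R)^{op},(S'_L)^{op})$. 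Under this dictionary, condition $(2)$ for the original data (``$G$ carries $S'_R$ to $S_R$'') becomes condition $(1)$ for the opposite data (``the left adjoint carries the left class to the left class''), and condition $(1)$ for the original data becomes condition $(2)$ for the opposite data. Hence a proof of $(1)\Rightarrow(2)$ valid for all adjunctions and all factorization systems automatically yields $(2)\Rightarrow(1)$ as well.

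Next I would record two standard facts. (a) If $(S_L,S_R)$ is a factorization system on $\calC$, then $S_R=S_L^{\perp}$ and $S_L={}^{\perp}S_R$; in particular a morphism $g$ of $\calC$ lies in $S_R$ if and only if $f\perp g$ for every $f\in S_L$ (and dually on the $\calD$ side). (b) For a morphism $f\colon A\rightarrow B$ in $\calC$ and a morphism $g\colon X\rightarrow Y$ in $\calD$, one has $Ff\perp g$ in $\calD$ if and only if $f\perp Gg$ in $\calC$. To see (b), recall that $f\perp g$ is equivalent to the assertion that the canonical map
$$ \bHom(B,X)\longrightarrow \bHom(A,X)\times_{\bHom(A,Y)}\bHom(B,Y) $$
is a homotopy equivalence. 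The adjunction $F\dashv G$ supplies an equivalence of functors $\bHom_{\calC}(-,G-)\simeq\bHom_{\calD}(F-,-)$ on $\calC^{op}\times\calD$, and evaluating this equivalence on the pair $(f,g)$ identifies the comparison map displayed above for the pair $(Ff,g)$ in $\calD$ with the corresponding comparison map for the pair $(f,Gg)$ in $\calC$; hence one is an equivalence if and only if the other is.

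Granting (a) and (b), the implication $(1)\Rightarrow(2)$ is immediate. Assume $F(S_L)\subseteq S'_L$ and fix $g\in S'_R$. To show $Gg\in S_R=S_L^{\perp}$ it suffices, by (a), to verify $f\perp Gg$ for every $f\in S_L$. By (b), this is equivalent to $Ff\perp g$, which holds because $Ff\in S'_L={}^{\perp}S'_R$ and $g\in S'_R$. Thus $Gg\in S_R$, establishing $(2)$; the reverse implication follows from the duality of the first paragraph.

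The only point demanding genuine care is the $\infty$-categorical content of (b): that the adjunction equivalence $\bHom_{\calC}(-,G-)\simeq\bHom_{\calD}(F-,-)$ is compatible with the formation of the relevant fiber products of mapping spaces. I expect this to be the main (though still formal) obstacle, and I would handle it by viewing both comparison maps as obtained from this single equivalence of functors $\calC^{op}\times\calD\rightarrow\SSet$ by evaluating on the object $(f,g)$ of $\Fun(\Delta^1,\calC^{op})\times\Fun(\Delta^1,\calD)$ and then taking the canonical map from its value at the terminal pair of vertices to the limit over the punctured square; since the two functors agree up to canonical equivalence, so do the induced maps of spaces, and the claim follows. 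Everything else in the argument is routine.
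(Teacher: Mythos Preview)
Your argument is correct and is essentially the same as the paper's: the paper's one-line proof simply cites two results from \emph{Higher Topos Theory} (the characterization $S_R = S_L^{\perp}$ for a factorization system, and the adjunction-invariance of the orthogonality relation $Ff \perp g \Leftrightarrow f \perp Gg$), which are exactly your facts (a) and (b). You have just spelled out what those references contain and how they combine.
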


\begin{proof}
This follows immediately from Remark \toposref{smule} and Proposition \toposref{swin}.
\end{proof}

\begin{proof}[Proof of Theorem \ref{gen}]
We first show that there exists a factorization system $(\overline{S}^{\calX}_L, \overline{S}^{\calX}_R)$ on $\Fun^{\lex}(\calG, \calX)$,
characterized by the following condition:
\begin{itemize}
\item[$(\ast)$] The collection $\overline{S}^{\calX}_R$ consists of those morphisms $\alpha: \calO \rightarrow \calO'$ such that for every admissible morphism $U \rightarrow X$ in $\calG$, the diagram
$$ \xymatrix{ \calO(U) \ar[r] \ar[d] & \calO'(U) \ar[d] \\
\calO(X) \ar[r] & \calO'(X). }$$
\end{itemize}

If $\calX = \SSet$, this follows from Proposition \ref{lumm}. 
In the general case, we may assume without loss of generality that $\calX = L \calP(\calC)$, where
$\calC$ is a small $\infty$-category and $L$ is a left exact localization functor from
$\calP(\calC)$ to itself. Using Corollary \toposref{funcsys}, we deduce that the $\infty$-category $\Fun^{\lex}( \calG, \calP(\calC)) \simeq \Fun( \calC^{op}, \Fun^{\lex}( \calG, \SSet) )$ admits a factorization
system $(S_L, S_R)$ satisfying $(\ast)$. Composition with $L$ induces a localization functor
from $\Fun^{\lex}(\calG, \calP(\calC))$ to itself; since $L$ is left exact, this functor carries $S_R$ to itself.
It follows from Lemma \ref{silb} that $(S_L, S_R)$ induces a factorization system $(\overline{S}^{\calX}_L, \overline{S}^{\calX}_R)$ on the $\infty$-category $\Fun^{\lex}(\calG, \calX)$, and this factorization system clearly satisfies $(\ast)$ as well.

We now claim that the factorization system on $\Fun^{\lex}(\calG, \calX)$ restricts to a factorization system on the full subcategory $\Struct_{\calG}(\calX)$.
Namely, let $S_L^{\calX}$ denote the collection of all morphisms in $\Struct_{\calG}(\calX)$
belonging to $\overline{S}^{\calX}_L$, and define $S_R^{\calX}$ likewise. We claim that
$(S_L^{\calX}, S_{R}^{\calX})$ is a factorization system on $\Struct_{\calG}(\calX)$. 
It is clear that $f \perp g$ for $f \in S_L^{\calX}$, $g \in S_R^{\calX}$, and that
the collections $S_L^{\calX}$ and $S_R^{\calX}$ are stable under retracts.
To complete the proof, it will suffice to show that every morphism
$h: \calO \rightarrow \calO''$ in $\Struct_{\calG}(\calX)$ factors as a composition
$$ \calO \stackrel{f}{\rightarrow} \calO' \stackrel{g}{\rightarrow} \calO'',$$
where $f \in S_L^{\calX}$ and $g \in S_R^{\calX}$. Since $(\overline{S}^{\calX}_L, \overline{S}^{\calX}_R)$ is a factorization system on
$\Fun^{\lex}(\calG, \calX)$, we can choose such a factorization with $f \in \overline{S}^{\calX}_L$ and
$g \in \overline{S}^{\calX}_R$. To complete the proof it will suffice to show that $\calO' \in \Struct_{\calG}(\calX)$.
In other words, we must show that for every admissible covering $\{ U_{\alpha} \rightarrow X \}$ of
an object $X \in \calG$, the induced map $\psi: \coprod_{\alpha} \calO'(U_{\alpha}) \rightarrow \calO'(X)$
is an effective epimorphism in $\calX$. Using the assumption that $g \in \overline{S}^{\calX}_R$ (and the fact that colimits in $\calX$ are universal), we conclude that the diagram
$$ \xymatrix{ \coprod_{\alpha} \calO'(U_{\alpha}) \ar[r] \ar[d]^{\psi} & \coprod_{\alpha} \calO''(U_{\alpha}) \ar[d]^{\phi} \\
\calO'(X) \ar[r] & \calO''(X) }$$
is a pullback square in $\calX$. Since $\calO'' \in \Struct_{\calG}(\calX)$, the map $\phi$ is an effective epimorphism. Thus $\psi$ is also an effective epimorphism (Proposition \toposref{hintdescent1}), as desired. 

We now complete the proof by showing that the factorization system $(S_{L}^{\calX}, S_{R}^{\calX})$
depends functorially on $\calX$. Let $\pi^{\ast}: \calX \rightarrow \calY$ be a left exact, colimit preserving functor; we wish to show that composition with $\pi^{\ast}$ carries $S_L^{\calX}$ to
$S_{L}^{\calY}$ and $S_{R}^{\calX}$ to $S_{R}^{\calY}$. The second assertion follows immediately from the left exactness of $f^{\ast}$. To prove the first, we will show something slightly stronger:
the induced functor $F: \Fun^{\lex}(\calG, \calX) \rightarrow \Fun^{\lex}(\calG, \calY)$ carries
$\overline{S}_{L}^{\calX}$ to $\overline{S}_{L}^{\calY}$. Note that $F$ admits a right adjoint
$G$, given by composition with a right adjoint $\pi_{\ast}$ to $\pi^{\ast}$. Since
$\pi_{\ast}$ is left exact, the functor $G$ carries $\overline{S}_{R}^{\calY}$ to $\overline{S}_{R}^{\calX}$;
the desired result now follows from Lemma \ref{stooge}.
\end{proof}

\subsection{Classifying $\infty$-Topoi}\label{geo3}

For every topological space $X$, the $\infty$-category $\Shv(X)$ of sheaves of spaces
on $X$ is an $\infty$-topos. Moreover, if the topological space $X$ is {\it sober} (that is, if every
irreducible closed subset of $X$ has a unique generic point), then we can recover $X$ from
$\Shv(X)$: the points $x \in X$ can be identified with isomorphism classes of geometric morphisms
$x^{\ast}: \Shv(X) \rightarrow \SSet$, and open subsets of $X$ can be identified with
subobjects of the unit object $1 \in \Shv(X)$. In other words, the space $X$ and the $\infty$-topos
$\Shv(X)$ are interchangable: either one canonically determines the other. 

The situation described above can be summarized by saying that we can regard the theory of
$\infty$-topoi as a {\em generalization} of the classical theory of topological spaces (more precisely, of the theory of sober topological spaces). In this paper, we have opted to dispense with topological spaces altogether and work almost entirely in the setting of $\infty$-topoi. This extra generality affords us some flexibility which is useful even if we are ultimately interested primarily in ordinary topological spaces. For example, every $\infty$-topos $\calX$ represents a functor from topological spaces
to $\infty$-categories, described by the formula
$$X \mapsto \Fun^{\ast}( \calX, \Shv(X) ).$$
These functors are generally not representable in the category of topological spaces itself.
For example, the functor which carries $X$ to the (nerve of the) category of sheaves of commutative rings on $X$ is representable by the $\infty$-topos $\calP( \calG_{\Zar} )$ (see Proposition \ref{usebil} below). Working in the setting of $\infty$-topoi has the advantage of allowing us to treat the topological space $X$ and the classifying $\infty$-topos $\calP(\calG_{\Zar})$ on the same footing.

Our goal in this section is to exploit the above observation in a systematic fashion.
Let $\calG$ be a geometry. The $\infty$-category $\Struct_{\calG}(\calX)$ of 
Definition \ref{psyab} depends {\em functorially} on the $\infty$-topos $\calX$.
More precisely, composition determines a functor
$$ \Fun^{\ast}( \calX, \calY) \times \Struct_{\calG}( \calX) \rightarrow \Struct_{\calG}(\calY).$$
In particular, if we fix a $\calG$-structure $\calO \in \Struct_{\calG}(\calX)$, then composition with $\calO$ induces a functor $\Fun^{\ast}(\calX, \calY) \rightarrow \Struct_{\calG}(\calY)$.

\begin{definition}
Let $\calG$ be a geometry. We will say that a $\calG$-structure $\calO$ on an $\infty$-topos $\calK$ is
{\it universal} if, for every $\infty$-topos $\calX$, composition with $\calO$ induces an equivalence of
$\infty$-categories $\Fun^{\ast}(\calK, \calX) \rightarrow \Struct_{\calG}(\calX)$. 
In this case, we will also say that {\it $\calO$ exhibits $\calK$ as a classifying $\infty$-topos for $\calG$}, or that {\it $\calK$ is a classifying $\infty$-topos for $\calG$}.
\end{definition}

It is clear from the definition that a classifying $\infty$-topos for a geometry $\calG$ is uniquely determined up to equivalence, provided that it exists. For existence, we have the following construction:

\begin{proposition}\label{usebil}
Let $\calG$ be a geometry. Then the composition
$$ \calG \stackrel{j}{\rightarrow} \calP(\calG) \stackrel{L}{\rightarrow} \Shv(\calG)$$
exhibits $\Shv(\calG)$ as a classifying $\infty$-topos for $\calG$. Here we regard $\calG$ as endowed with a Grothendieck topology as in Remark \ref{switchtop}.
\end{proposition}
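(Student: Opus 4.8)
The plan is to compare geometric morphisms into $\Shv(\calG)$ with geometric morphisms into $\calP(\calG)$ through the localization $L$, and then to reinterpret the latter in terms of left exact functors out of $\calG$. Fix an $\infty$-topos $\calX$. The first step is to record that, since $\calG$ admits finite limits, composition with the Yoneda embedding $j\colon\calG\to\calP(\calG)$ induces an equivalence $\Fun^{\ast}(\calP(\calG),\calX)\xrightarrow{\sim}\Fun^{\lex}(\calG,\calX)$: passing to opposite $\infty$-categories in Theorem \toposref{charpresheaf} identifies colimit preserving functors $\calP(\calG)\to\calX$ with arbitrary functors $\calG\to\calX$, and restricting to left exact functors on both sides is legitimate because the left Kan extension along $j$ of a left exact functor is again left exact when the source admits finite limits. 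Under this equivalence a functor $\calO\in\Fun^{\lex}(\calG,\calX)$ corresponds to the left exact, colimit preserving functor $h^{\ast}_{\calO}\colon\calP(\calG)\to\calX$ obtained as the left Kan extension of $\calO$ along $j$, so that $h^{\ast}_{\calO}(j(X))\simeq\calO(X)$.

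The second step is to identify the image of the fully faithful inclusion $\Fun^{\ast}(\Shv(\calG),\calX)\hookrightarrow\Fun^{\ast}(\calP(\calG),\calX)$ given by composition with $L$. Since $\Shv(\calG)\subseteq\calP(\calG)$ is a left exact localization, the essential image consists of those left exact colimit preserving functors $\calP(\calG)\to\calX$ which invert every $L$-equivalence (Proposition \toposref{unichar}). The class of $L$-equivalences is strongly saturated and of small generation, generated by the monomorphisms $U_{S}\hookrightarrow j(X)$ attached to covering sieves $S$ on objects $X\in\calG$; since a colimit preserving functor inverts a strongly saturated class once it inverts a set of generators (Proposition \toposref{swimmm}), $h^{\ast}_{\calO}$ lies in the essential image if and only if it inverts each such $U_{S}\hookrightarrow j(X)$.

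The third and main step matches this condition with the requirement that $\calO$ be a $\calG$-structure. Writing $U_{S}=\colim_{(V\to X)\in S}j(V)$ gives $h^{\ast}_{\calO}(U_{S})=\colim_{(V\to X)\in S}\calO(V)$, and $h^{\ast}_{\calO}(U_{S})\to\calO(X)$ is a monomorphism because $h^{\ast}_{\calO}$ is left exact. If $\{U_{\alpha}\to X\}$ is an admissible cover generating the sieve $S$, then every morphism in $S$ factors through some $U_{\alpha}$, so $\coprod_{\alpha}j(U_{\alpha})\to U_{S}$ is an effective epimorphism in $\calP(\calG)$, whence $\coprod_{\alpha}\calO(U_{\alpha})\to h^{\ast}_{\calO}(U_{S})$ is an effective epimorphism ($h^{\ast}_{\calO}$ being left exact and colimit preserving preserves effective epimorphisms). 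Since an effective epimorphism factoring through a monomorphism forces that monomorphism to be an effective epimorphism, hence an equivalence, and conversely an effective epimorphism composed with an equivalence is an effective epimorphism, we conclude that $h^{\ast}_{\calO}$ inverts $U_{S}\hookrightarrow j(X)$ if and only if $\coprod_{\alpha}\calO(U_{\alpha})\to\calO(X)$ is an effective epimorphism. Finally, because the Grothendieck topology on $\calG$ is generated by admissible covers, any covering sieve $S$ on $X$ contains a subsieve $S'$ generated by an admissible cover; comparing the chain of monomorphisms $h^{\ast}_{\calO}(U_{S'})\to h^{\ast}_{\calO}(U_{S})\to\calO(X)$ shows that $h^{\ast}_{\calO}$ inverts $U_{S}\hookrightarrow j(X)$ as soon as it inverts $U_{S'}\hookrightarrow j(X)$. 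Putting these together, $h^{\ast}_{\calO}$ inverts all covering monomorphisms if and only if $\calO\in\Struct_{\calG}(\calX)$.

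Combining the three steps, composition with $L\circ j$ identifies $\Fun^{\ast}(\Shv(\calG),\calX)$ with the full subcategory $\Struct_{\calG}(\calX)\subseteq\Fun^{\lex}(\calG,\calX)$; applying this with $\calX=\Shv(\calG)$ and $h^{\ast}=\id$ shows in particular that $L\circ j$ is itself a $\calG$-structure. All the equivalences and identifications involved are natural in $\calX$ (and both the defining conditions of $\Struct_{\calG}$ and the localization condition are tested pointwise on objects and morphisms of $\calG$), so $L\circ j$ exhibits $\Shv(\calG)$ as a classifying $\infty$-topos for $\calG$. I expect the main obstacle to be precisely the third step: reconciling the ``effective epimorphism on admissible covers'' condition that defines $\Struct_{\calG}(\calX)$ with the ``inverts every covering-sieve monomorphism'' condition coming from the localization, which hinges on the generation of the topology by admissible covers together with the standard facts that left exact colimit preserving functors preserve effective epimorphisms and monomorphisms, and that a monomorphism which is also an effective epimorphism is an equivalence.
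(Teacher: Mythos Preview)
Your proof is correct. The paper's own proof is a single sentence: ``This follows immediately from Proposition \toposref{igrute}.'' What you have done is essentially reprove that HTT result in this particular setting, carrying out exactly the three-step analysis (Yoneda identifies $\Fun^{\ast}(\calP(\calG),\calX)$ with $\Fun^{\lex}(\calG,\calX)$; localization cuts down to functors inverting covering monomorphisms; this last condition matches the $\calG$-structure axiom). Your references to \toposref{charpresheaf}, \toposref{unichar}, and \toposref{swimmm} are the right ingredients, and your handling of the ``generated by admissible covers'' subtlety via the chain of monomorphisms $h^{\ast}_{\calO}(U_{S'})\to h^{\ast}_{\calO}(U_{S})\to\calO(X)$ is clean. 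So the approaches coincide; you have simply unpacked a citation that the paper leaves as a black box.
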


\begin{proof}
This follows immediately from Proposition \toposref{igrute}.
\end{proof}

Let $\calG$ be a geometry and $\calX$ an $\infty$-topos. The $\infty$-category
$\Struct_{\calG}(\calX)$ depends only on the underlying $\infty$-category of $\calG$ and its Grothendieck topology, and not on the class of admissible morphisms in $\calG$.
Consequently, a classifying $\infty$-topos for $\calG$ also does not depend on the class of admissible morphisms in $\calG$ (this can also be deduced from Proposition \ref{usebil}). However, the class of admissible morphisms of $\calG$ is needed to define the subcategory
$\Struct^{\loc}_{\calG}(\calX) \subseteq \Struct_{\calG}(\calX)$ of local morphisms, and therefore determines some additional data on a classifying $\infty$-topos for $\calG$. Our next goal is to describe the nature of this additional data.

\begin{definition}\label{cammer}
Let $\calK$ be an $\infty$-topos. A {\it geometric structure} on $\calK$ consists of the specification, for every $\infty$-topos $\calX$, of a factorization system $( S_L^{\calX}, S_{R}^{\calX})$ on
$\Fun^{\ast}(\calK, \calX)$, which depends functorially on $\calX$ in the following sense:
for every geometric morphism $\pi^{\ast}: \calX \rightarrow \calY$, the induced functor
$\Fun^{\ast}( \calK, \calX) \rightarrow \Fun^{\ast}( \calK, \calY)$ (given by composition with
$\pi^{\ast}$) carries $S_{L}^{\calX}$ to $S_{L}^{\calY}$ and $S_{R}^{\calX}$ to $S_{R}^{\calY}$.

If $\calK$ is an $\infty$-topos with geometric structure and $\calX$ is another $\infty$-topos, 
then we let $\Struct^{\loc}_{\calK}( \calX)$ denote the subcategory of $\Fun^{\ast}(\calK, \calX)$
spanned by all the objects of $\Fun^{\ast}(\calK, \calX)$, and all the morphisms which belong to
$S_{R}^{\calX}$. 
\end{definition}

\begin{example}\label{abbun}
Let $\calG$ be a geometry and let $\calO: \calG \rightarrow \calK$ be a universal
$\calG$-structure. Then the $\infty$-topos $\calK$ inherits a geometric structure, which
is characterized by the following property:
\begin{itemize}
\item[$(\ast)$] For every $\infty$-topos $\calX$, the equivalence
$\Fun^{\ast}( \calK, \calX) \rightarrow \Struct_{\calG}(\calX)$ restricts to an equivalence
of $\Struct_{\calK}^{\loc}(\calX)$ with $\Struct_{\calG}^{\loc}(\calX)$.
\end{itemize}
This follows immediately from Theorem \ref{gen}.
\end{example}

\begin{example}\label{fumple}
Let $\calG$ be a {\it discrete} geometry, so that $\Struct_{\calG}^{\loc}(\calX) = \Struct_{\calG}(\calX) = \Fun^{\lex}( \calG, \calX)$ for every $\infty$-topos $\calX$. Then the induced geometric structure
on the classifying $\infty$-topos $\calK \simeq \calP(\calG)$ is trivial, in the sense that for every
$\infty$-topos $\calX$, the corresponding factorization system $(S_L^{\calX}, S_R^{\calX})$ is
given as in Example \toposref{scumm}.
\end{example}

\begin{warning}
There is a potential conflict between the notations introduced in Definitions \ref{psyab} and \ref{cammer}. However, there should be little danger of confusion: $\Struct^{\loc}_{\calK}(\calX)$ is described by Definition \ref{cammer} when $\calK$ is an $\infty$-topos with geometric structure, and by Definition \ref{psyab} if $\calK$ is a geometry.
\end{warning}

We next discuss the functoriality of the $\infty$-categories
$\Struct^{\loc}_{\calK}(\calX)$ (and $\Struct^{\loc}_{\calG}(\calX)$) in the $\infty$-topos $\calX$. We begin by reviewing some definitions from \cite{topoi}.

\begin{notation}
Let $\widehat{\Cat}_{\infty}$ denote the $\infty$-category of (not necessarily small) $\infty$-categories.
We let $\LGeo \subseteq \widehat{\Cat}_{\infty}$ denote the subcategory defined as follows:
\begin{itemize}
\item[$(i)$] An $\infty$-category $\calX$ belongs to $\LGeo$ if and only if $\calX$ is an $\infty$-topos.
\item[$(ii)$] Let $f^{\ast}: \calX \rightarrow \calY$ be a functor between $\infty$-topoi. Then
$f^{\ast}$ is a morphism of $\LGeo$ if and only if $f^{\ast}$ preserves small colimits and finite limits.
\end{itemize}
The inclusion $\LGeo \subseteq \widehat{\Cat}_{\infty}$ classifies a coCartesian fibration
$p: \overline{\LGeo} \rightarrow \LGeo$. We will refer to $p$ as the {\it universal topos fibration}
(see Definition \toposref{skuzz} and Proposition \toposref{surtog2}); note that the the fiber of $p$ over an $\infty$-topos $\calX \in \LGeo$ is canonically equivalent to $\calX$.
\end{notation}

\begin{definition}\label{gcuro}
Let $\calG$ be an geometry. We define a subcategory
$$\LGeo(\calG) \subseteq \Fun( \calG, \overline{\LGeo} )
\times_{ \Fun( \calG, \LGeo) } \LGeo$$
as follows:
\begin{itemize}
\item[$(a)$] Let $\calO \in \Fun( \calG, \overline{\LGeo} )
\times_{ \Fun( \calG, \LGeo) } \LGeo$ be an object. We can identify
$\calO$ with a functor from $\calG$ to $\calX$, for some $\infty$-topos $\calX$.
Then $\calO \in \LGeo(\calG)$ if and only if $\calO$ is a $\calG$-structure on $\calX$.

\item[$(b)$] Let $\alpha: \calO \rightarrow \calO'$ be a morphism in $$\Fun( \calG, \overline{\LGeo} )
\times_{ \Fun( \calG, \LGeo) } \LGeo,$$
where $\calO$ and $\calO'$ belong to
$\LGeo(\calG)$, and let $f^{\ast}: \calX \rightarrow \calY$ denote the image
of the morphism $\alpha$ in $\LGeo$. Then $\alpha$ belongs to $\LGeo(\calG)$ if and only if, for
every admissible morphism $U \rightarrow X$ in $\calG$, the induced diagram
$$ \xymatrix{ f^{\ast} \calO(U) \ar[r] \ar[d] & f^{\ast} \calO(X) \ar[d] \\
{f'}^{\ast} \calO'(U) \ar[r] & \calO'(X) }$$
is a pullback square in the $\infty$-topos $\calY$. 
\end{itemize}

We will refer to the {\em opposite} $\infty$-category $\LGeo(\calG)^{op}$ as the 
{\it $\infty$-category of $\calG$-structured $\infty$-topoi}.
\end{definition}

\begin{definition}\label{tcuro}
Let $\calK$ be an $\infty$-topos equipped with a geometric structure. We define a subcategory
$$\LGeo(\calK) \subseteq \Fun( \calK, \overline{\LGeo} )
\times_{ \Fun( \calK, \LGeo) } \LGeo$$
as follows:
\begin{itemize}
\item[$(a)$] Let $f^{\ast} \in \Fun( \calK, \overline{\LGeo} )
\times_{ \Fun( \calK, \LGeo) } \LGeo$ be an object, which we can identify
with a functor $f^{\ast}: \calK \rightarrow \calX$, where $\calX$ is an $\infty$-topos. Then
$f^{\ast}$ belongs to $\LGeo(\calK)$ if and only if $f^{\ast}$ preserves small colimits and finite limits.
\item[$(b)$] Let $\alpha: f^{\ast} \rightarrow {f'}^{\ast}$ be a morphism in $\Fun( \calK, \overline{\LGeo} )
\times_{ \Fun( \calK, \LGeo) } \LGeo$, where $f^{\ast}$ and ${f'}^{\ast}$ belong to
$\LGeo(\calK)$, and let $g^{\ast}: \calX \rightarrow \calY$ denote the image
of $\alpha$ in $\LGeo$. Then $\alpha$ belongs to $\LGeo(\calK)$ if and only if
the corresponding morphism $g^{\ast} f^{\ast} \rightarrow {f'}^{\ast}$ belongs to
$\Struct^{\loc}_{\calK}(\calY)$.
\end{itemize}

We will refer to the {\em opposite} $\infty$-category
$\LGeo(\calK)^{op}$ as {\it the $\infty$-category of $\calK$-structured $\infty$-topoi}. 
\end{definition}

\begin{remark}
Let $\calG$ be a geometry (or an $\infty$-topos with geometric structure).
The fiber of the map $\LGeo(\calG) \rightarrow \LGeo$ over an $\infty$-topos
$\calX$ is canonically equivalent to (but generally not isomorphic to) $\Struct^{\loc}_{\calG}(\calX)$.
Nevertheless, we will generally abuse notation and identify objects of
$\LGeo(\calG)$ with pairs $(\calX, \calO_{\calX})$, where $\calX$ is an $\infty$-topos
and $\calO_{\calX}: \calG \rightarrow \calX$ is a $\calG$-structure on $\calX$.
\end{remark}

The following result is an easy consequence of Proposition \toposref{doog}:

\begin{proposition}\label{swupa}
Let $\calK$ be an $\infty$-topos with a geometric structure (or a geometry). Then
the projection map $q: \LGeo(\calK) \rightarrow \LGeo$ is a coCartesian
fibration of simplicial sets. Moreover, a morphism
$\alpha: \calO_{\calX} \rightarrow \calO_{\calY}$ in $\LGeo(\calK)$ is
$q$-coCartesian if and only if, for each object $K \in \calK$, the induced
map $\pi^{\ast} \calO_{\calX}(K) \rightarrow \calO_{\calY}(K)$ is
an equivalence in $\calY$; here $\pi^{\ast}: \calX \rightarrow \calY$ denotes the
image of $\alpha$ in $\LGeo$. 
\end{proposition}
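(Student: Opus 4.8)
The plan is to realize $q$ as the restriction to a subcategory of an ambient coCartesian fibration built from the universal topos fibration, and to extract the characterization of $q$-coCartesian morphisms along the way; Proposition \toposref{doog} is precisely the tool that carries out the restriction.

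The only input that is not pure formalism is the description of coCartesian edges of the universal topos fibration $p\colon\overline{\LGeo}\to\LGeo$ (Definition \toposref{skuzz}, Proposition \toposref{surtog2}): an edge lying over a geometric morphism $\pi^{\ast}\colon\calX\to\calY$, from $x\in\calX$ to $y\in\calY$, is $p$-coCartesian exactly when the induced map $\pi^{\ast}(x)\to y$ is an equivalence in $\calY$. Granting this, I would first assemble the ambient fibration. Since $p$ is a coCartesian fibration, so is the projection $\Fun(\calK,\overline{\LGeo})\to\Fun(\calK,\LGeo)$, with coCartesian edges detected pointwise over the objects $K\in\calK$; base-changing along the constant-functor map $\LGeo\to\Fun(\calK,\LGeo)$ and using that coCartesian fibrations are stable under pullback, we obtain a coCartesian fibration
$$\widetilde{q}\colon\Fun(\calK,\overline{\LGeo})\times_{\Fun(\calK,\LGeo)}\LGeo\to\LGeo.$$
Its objects are pairs $(\calX,\calO_{\calX})$ with $\calO_{\calX}\colon\calK\to\calX$ an arbitrary functor, and — combining pointwise detection with the description of $p$-coCartesian edges — a morphism $\alpha$ with image $\pi^{\ast}\colon\calX\to\calY$ in $\LGeo$ is $\widetilde{q}$-coCartesian if and only if $\pi^{\ast}\calO_{\calX}(K)\to\calO_{\calY}(K)$ is an equivalence in $\calY$ for every $K\in\calK$.

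It remains to pass to the subcategory $\LGeo(\calK)\subseteq\Fun(\calK,\overline{\LGeo})\times_{\Fun(\calK,\LGeo)}\LGeo$ of Definition \ref{tcuro}. By Proposition \toposref{doog} it suffices to check that $\LGeo(\calK)$ is stable under the coCartesian pushforwards of $\widetilde{q}$: given $(\calX,\calO_{\calX})\in\LGeo(\calK)$ and $\pi^{\ast}\colon\calX\to\calY$ in $\LGeo$, one may take the $\widetilde{q}$-coCartesian lift to have target $(\calY,\pi^{\ast}\circ\calO_{\calX})$, with associated natural transformation the identity of $\pi^{\ast}\circ\calO_{\calX}$. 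Here $\pi^{\ast}\circ\calO_{\calX}$ again preserves small colimits and finite limits, being a composite of two such functors, so the target lies in $\LGeo(\calK)$; and the identity transformation is an equivalence, hence lies in $S^{\calY}_{R}=\Struct^{\loc}_{\calK}(\calY)$, so the lift is a morphism of $\LGeo(\calK)$. The same observation shows that \emph{any} $\widetilde{q}$-coCartesian morphism with source in $\LGeo(\calK)$ already lies in $\LGeo(\calK)$; hence the $q$-coCartesian morphisms are precisely the $\widetilde{q}$-coCartesian ones emanating from $\LGeo(\calK)$, which by the previous paragraph are exactly the $\alpha$ with $\pi^{\ast}\calO_{\calX}(K)\to\calO_{\calY}(K)$ an equivalence for all $K$ — the assertion. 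The variant with a geometry $\calG$ in place of $\calK$ runs identically via Definition \ref{gcuro}, reading ``$\calG$-structure'' for ``small-colimit- and finite-limit-preserving functor'' and ``local transformation of $\calG$-structures'' for ``morphism of $\Struct^{\loc}$''; the extra point needed is that $\pi^{\ast}\circ\calO$ is a $\calG$-structure on $\calY$ whenever $\calO$ is one on $\calX$, which holds because $\pi^{\ast}$ is left exact and preserves effective epimorphisms.

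The main obstacle is matching the abstract hypotheses of Proposition \toposref{doog} to the concrete Definitions \ref{gcuro}/\ref{tcuro} — i.e. checking closure of the subcategory under coCartesian pushforward — which reduces to the (easy) facts that a composite of colimit- and finite-limit-preserving functors is again one and that every equivalence is a local morphism. Apart from the description of $p$-coCartesian edges of the universal topos fibration, the argument is bookkeeping with subcategories and base change.
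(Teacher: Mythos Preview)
Your proposal is correct and is exactly the intended argument: the paper's proof consists of the single sentence ``The following result is an easy consequence of Proposition \toposref{doog}'', and you have spelled out the details of that deduction, building the ambient coCartesian fibration from the universal topos fibration and then restricting to the subcategory $\LGeo(\calK)$ via \toposref{doog}.
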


In other words, every geometric morphism
of $\infty$-topoi $\pi: \calX \rightarrow \calY$ gives rise to a pullback functor
$\Struct^{\loc}_{\calK}(\calY) \rightarrow \Struct^{\loc}_{\calK}(\calX)$, which is simply
given by composition with the pullback functor $\pi^{\ast}$.

\begin{proposition}\label{swame}
Let $\calG$ be a geometry, and let $\calO: \calG \rightarrow \calK$ be a universal $\calG$-structure.
Then composition with $\calO$ induces an equivalence of $\infty$-categories
$\LGeo(\calK) \rightarrow \LGeo(\calG)$. Here we regard $\calK$ as endowed with the geometric structure of Example \ref{abbun}.
\end{proposition}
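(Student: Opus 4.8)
The plan is to exhibit composition with $\calO$ as a map of coCartesian fibrations over $\LGeo$ and then to apply the standard criterion of \cite{topoi}: a functor between coCartesian fibrations over a common base which carries coCartesian edges to coCartesian edges and induces a categorical equivalence on each fiber is itself a categorical equivalence. By Proposition \ref{swupa} the projections $\LGeo(\calK) \rightarrow \LGeo$ and $\LGeo(\calG) \rightarrow \LGeo$ are coCartesian fibrations, and that proposition also characterizes their coCartesian edges, so both hypotheses will be checkable in concrete terms.

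First I would construct the comparison functor $F \colon \LGeo(\calK) \rightarrow \LGeo(\calG)$ over $\LGeo$. Restriction along $\calO \colon \calG \rightarrow \calK$ gives a map $\Fun(\calK, \overline{\LGeo}) \times_{\Fun(\calK, \LGeo)} \LGeo \rightarrow \Fun(\calG, \overline{\LGeo}) \times_{\Fun(\calG, \LGeo)} \LGeo$ lying over $\LGeo$, and one must check that it carries the subcategory $\LGeo(\calK)$ into $\LGeo(\calG)$. On objects this is precisely the assertion that, for a geometric morphism $f^{\ast} \colon \calK \rightarrow \calX$, the composite $f^{\ast} \circ \calO$ is a $\calG$-structure on $\calX$, which holds by the universal property of $\calO$ (the equivalence $\Fun^{\ast}(\calK, \calX) \rightarrow \Struct_{\calG}(\calX)$ is composition with $\calO$). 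On morphisms it is the assertion that composition with $\calO$ sends the marked (``local'') morphisms of $\Struct^{\loc}_{\calK}(\calY)$ to local morphisms of $\Struct_{\calG}(\calY)$, which is exactly property $(\ast)$ of the geometric structure of Example \ref{abbun}, and hence ultimately a consequence of Theorem \ref{gen}. I expect this step --- reconciling the explicit descriptions of $\LGeo(\calK)$ and $\LGeo(\calG)$ as subcategories of fiber products with their ``moral'' fibers $\Struct^{\loc}_{\calK}(\calX)$ and $\Struct^{\loc}_{\calG}(\calX)$ --- to be the most bookkeeping-intensive part of the argument, though nothing in it is deep.

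Next I would verify that $F$ preserves coCartesian edges. By Proposition \ref{swupa} a morphism $\alpha \colon \calO_{\calX} \rightarrow \calO_{\calY}$ of $\LGeo(\calK)$ lying over $\pi^{\ast} \colon \calX \rightarrow \calY$ is coCartesian precisely when $\pi^{\ast}\calO_{\calX}(K) \rightarrow \calO_{\calY}(K)$ is an equivalence in $\calY$ for every $K \in \calK$; applying the same proposition to the geometry $\calG$, its image $F(\alpha) \colon \calO_{\calX} \circ \calO \rightarrow \calO_{\calY} \circ \calO$ is coCartesian precisely when $\pi^{\ast}\calO_{\calX}(\calO(U)) \rightarrow \calO_{\calY}(\calO(U))$ is an equivalence for every $U \in \calG$. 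The first condition, evaluated on the objects $K = \calO(U)$, implies the second, so $F$ carries coCartesian edges to coCartesian edges.

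Finally, over each $\infty$-topos $\calX$ the functor $F$ is identified, via the canonical equivalences of the remark following Definition \ref{tcuro}, with the functor $\Struct^{\loc}_{\calK}(\calX) \rightarrow \Struct^{\loc}_{\calG}(\calX)$ given by composition with $\calO$, and this is an equivalence by property $(\ast)$ of Example \ref{abbun}. Thus $F$ is a map of coCartesian fibrations over $\LGeo$ which is a fiberwise categorical equivalence, and the criterion cited at the outset shows that $F$ is an equivalence of $\infty$-categories, as desired.
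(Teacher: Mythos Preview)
Your proposal is correct and follows exactly the paper's approach: the paper's proof reads ``Combine Proposition \ref{swupa} and Corollary \toposref{usefir},'' and your argument is precisely the unpacking of that sentence --- Proposition \ref{swupa} gives the coCartesian fibration structure and the description of coCartesian edges, and Corollary \toposref{usefir} is the fiberwise-equivalence criterion you invoke at the outset.
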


\begin{proof}
Combine Proposition \ref{swupa} and Corollary \toposref{usefir}.
\end{proof}

Proposition \ref{swame} implies that the theory of $\calG$-structures on $\infty$-topoi (here $\calG$ is a geometry) can be subsumed into the theory of geometric structures on $\infty$-topoi. It is natural to ask how much benefit we receive from this shift of perspective. For example, we might ask whether a given $\infty$-topos $\calK$ arises as the classifying $\infty$-topos of some geometry. This is not true in general, but if we ignore the geometric structure on $\calK$ it is {\em almost} true in the following sense:
we can always choose a geometric morphism $f^{\ast}: \Shv(\calG) \rightarrow \calK$ which induces
an equivalence after passing to hypercompletions (see \S \toposref{hyperstacks}). Consequently, the gain in generality is slight. However, the gain in {\em functoriality} is considerable.

\begin{definition}
Let $\calK$ and $\calK'$ be $\infty$-topoi equipped with geometric structures, and let
$f^{\ast}: \calK \rightarrow \calK'$ be a geometric morphism. We will say that
$f^{\ast}$ is {\it compatible with the geometric structures} on $\calK$ and $\calK'$ if,
for every $\infty$-topos $\calX$, composition with $f^{\ast}$ carries
$\Struct_{\calK}^{\loc}(\calX)$ into $\Struct_{\calK'}^{\loc}(\calX)$.
\end{definition}

\begin{example}\label{sput}
Let $f: \calG \rightarrow \calG'$ be a transformation of geometries, and let
$\calK$ and $\calK'$ be classifying $\infty$-topoi for $\calG$ and $\calG'$, respectively.
Then $f$ induces a geometric morphism $f^{\ast}: \calK \rightarrow \calK'$, well-defined up to equivalence (in fact, up to a contractible space of choices). Moreover, $f^{\ast}$ is compatible with the geometric structures on $\calK$ and $\calK'$ defined in Example \ref{abbun}.
\end{example}

Let $f^{\ast}: \calK \rightarrow \calK'$ be a geometric morphism which is compatible with
geometric structures on $\calK$ and $\calK'$, respectively. Then composition with
$f^{\ast}$ induces a functor
$$ \LGeo( \calK') \rightarrow \LGeo(\calK).$$ 
Suppose, for example, that $\calK$ and $\calK'$ are classifying $\infty$-topoi for
{\em discrete} geometries $\calG$ and $\calG'$, respectively. Then the $\infty$-category
of geometric morphisms from $\calK$ to $\calK'$ can be identified with the $\infty$-category of left
exact functors from $\calG$ to $\calK' \simeq \calP(\calG')$. This is generally much larger than the $\infty$-category $\Fun^{\lex}(\calG, \calG')$ of left exact functors from $\calG$ to $\calG'$.

\begin{example}
Let $\calK$ be the $\infty$-topos $\SSet$ of spaces. Then
$\calK$ admits a {\em unique} geometric structure, which coincides with
the trivial geometric structure described in Example \ref{fumple}. It follows that $\Struct^{\loc}_{\calK}(\calX) \simeq \Fun^{\ast}( \SSet, \calX)$ for every $\infty$-topos $\calX$.
Proposition \toposref{spacefinall} implies that each
$\Struct^{\loc}_{\calK}(\calX)$ is a contractible Kan complex. Combining Lemma \ref{swupa} with
Corollary \toposref{usefir}, we deduce that forgetful functor
$\LGeo(\calK) \rightarrow \LGeo$ is an equivalence of $\infty$-categories
(even a trivial Kan fibration). In other words, every $\infty$-topos $\calX$
admits an essentially unique $\calK$-structure.
\end{example}

\begin{remark}
Let $\calG$ and $\calG'$ be geometries with classifying $\infty$-topoi $\calK$ and
$\calK'$. Given a geometric morphism $f^{\ast}: \calK \rightarrow \calK'$ compatible with the geometric structures described in Example \ref{abbun}, we might ask: can $f^{\ast}$ be realized as resulting
from a transformation of geometries from $\calG$ to $\calG'$, as in Example \ref{sput}? To phrase the question differently: are there any special properties enjoyed by the geometric morphisms which
arise from the construction of Example \ref{sput}? We will describe one answer to this question in
\S \ref{relspec}: given a transformation $\calG \rightarrow \calG'$ of geometries, the induced functor $\LGeo(\calG) \rightarrow \LGeo(\calG')$ admits a left adjoint.
\end{remark}



\begin{remark}
Fix an $\infty$-topos $\calK$, and regard $\calK$ as endowed with the
{\it trivial} geometric structure described in Example \ref{fumple}.
We can informally summarize Definition \ref{tcuro} as follows: an object of
$\LGeo(\calK)$ is a geometric morphism $\calO_{\calX}: \calK \rightarrow \calX$, and
a morphism in $\LGeo(\calK)$ is a diagram of geometric morphisms
$$ \xymatrix{ \calK \ar[dr]_{ \calO_{\calX} } \ar[rr]^{ \calO_{\calY} } & & \calY \\
& \calX \ar[ur]_{f^{\ast} } & }$$
which commutes up to a natural transformation
$\alpha: f^{\ast} \calO_{\calX} \rightarrow \calO_{\calY}$. However,
we do not assume that $\alpha$ is invertible. Consequently, we can best view
the $\LGeo(\calK)$ as a variant of undercategory $\LGeo_{\calK/ }$, but
defined using the natural {\em $\infty$-bicategory} structure on $\LGeo$.
We will not adopt this point of view, since we do not wish to take the time
to develop the theory of $\infty$-bicategories in this paper.
\end{remark}

We close this section by giving an alternative description of geometric structures on $\infty$-topoi.
First, we review a few facts about limits in the $\infty$-category $\RGeom \simeq \LGeo^{op}$ of $\infty$-topoi.

Recall that the $\infty$-category $\RGeom$ of $\infty$-topoi is naturally {\em cotensored} over
$\Cat_{\infty}$. More precisely, for every $\infty$-topos $\calX$ and every simplicial set $K$,
there exists another $\infty$-topos $\calX^{K}$ and a map
$\theta: K \rightarrow \Fun_{\ast}(\calX^{K}, \calX)$ with the following universal property:
for every $\infty$-topos $\calY$, composition with $\theta$ induces an equivalence of $\infty$-categories
$$ \Fun_{\ast}( \calY, \calX^{K} ) \simeq \Fun(K, \Fun_{\ast}(\calY, \calX) ).$$ 
This is an immediate consequence of Proposition \toposref{cotens}. Note that $\calX^{K}$ is determined up to (canonical) equivalence by $\calX$ and $K$.

\begin{warning}
Our notation $\calX^{K}$ does {\em not} denote the simplicial mapping space $\Fun(K, \calX)$ (though $\Fun(K, \calX)$ is again an $\infty$-topos which can be characterized by similar universal mapping property in the $\infty$-category $\LGeo$).
\end{warning}

Note that $\calX^{K}$ depends functorially on $K$: every map of simplicial sets $K \rightarrow K'$ determines a geometric morphism $\pi_{\ast}: \calX^{K'} \rightarrow \calX^{K}$, well-defined up to homotopy. In the special case $K = \Delta^1$, we will refer to $\calX^{K}$ as a {\it path $\infty$-topos} for $\calX$. The projection $\Delta^1 \rightarrow \Delta^0$ determines a geometric morphism
$\delta_{\ast}: \calX \simeq \calX^{ \Delta^0} \rightarrow \calX^{ \Delta^1}$, which is well-defined up to homotopy; we will refer to $\delta_{\ast}$ as the {\it diagonal}. Note that, for every $\infty$-topos
$\calY$, composition with $\delta_{\ast}$ induces a fully faithful embedding
$$ \Fun_{\ast}( \calY, \calX) \rightarrow \Fun_{\ast}( \calY, \calX^{\Delta^1})
\simeq \Fun( \Delta^1, \Fun_{\ast}(\calY, \calX) )$$
whose essential image is the class of equivalences in $\Fun_{\ast}(\calY, \calX)$. 

Now suppose that $\calK$ is an $\infty$-topos endowed with a geometric structure, which determines a factorization system $(S_L^{\calX}, S_R^{\calX})$ on the $\infty$-category $\Fun^{\ast}(\calK, \calX)$
for each $\infty$-topos $\calX$, depending functorially on $\calX$. In view of Remark \toposref{spill}, 
we obtain an induced factorization system $( \overline{S}_{R}^{\calX}, \overline{S}_{L}^{\calX})$
on the $\infty$-category $\Fun_{\ast}( \calX, \calK) \simeq \Fun^{\ast}(\calK, \calX)^{op}$. Let $\theta: \Delta^1 \rightarrow \Fun_{\ast}( \calK^{\Delta^1}, \calK)$ exhibit $\calK^{\Delta^1}$ as a path $\infty$-topos for $\calK$.
Then $\theta$ admits an (essentially unique) factorization
$\theta \simeq \theta_{L} \circ \theta_{R}$, where $\theta_{L} \in \overline{S}_{L}^{ \calK^{\Delta^1}}$ and
$\theta_{R} \in \overline{S}_{R}^{\calK^{\Delta^1}}$. If $\calX$ is any $\infty$-topos and $\alpha$ is
any morphism in $\Fun_{\ast}(\calX, \calK)$, then there is an equivalence 
$\alpha \simeq \pi_{\ast}(\theta)$ for some geometric morphism $\pi_{\ast}: \calX \rightarrow \calK^{\Delta^1}$. We conclude that $\alpha \simeq \pi_{\ast}( \theta_L) \circ \pi_{\ast}(\theta_R)$ is the factorization of $\alpha$ determined by the factorization system $( \overline{S}_{R}^{\calX},
\overline{S}_{L}^{\calX})$. In particular, $\alpha$ belongs to $\overline{S}_{R}^{\calX}$ if and only if
$\pi_{\ast}(\theta_L)$ is an equivalence, and $\alpha$ belongs to $\overline{S}_{L}^{\calX}$ if and only if $\pi_{\ast}( \theta_{R} )$ is an equivalence. 

The morphisms $\theta_{L}$ and $\theta_{R}$ in $\Fun_{\ast}( \calK^{\Delta^1}, \calK)$ are classified
up to homotopy by objects $$\pi_{\ast}^{L}, \pi_{\ast}^{R} \in \Fun_{\ast}( \calK^{\Delta^1}, \calK^{\Delta^1}).$$ Using Proposition \toposref{swunder}, we can form pullback diagrams
$$ \xymatrix{ \calK^{\Delta^1}_{L} \ar[r] \ar[d] & \calK \ar[d]^{\delta_{\ast}} & \calK^{\Delta^1}_{R} \ar[r] \ar[d] & \calK \ar[d]^{\delta^{\ast}} \\
\calK^{\Delta^1} \ar[r]^{ \pi_{\ast}^{R} } & \calK^{\Delta^1} & \calK^{\Delta^1} \ar[r]^{ \pi_{\ast}^{L} } & \calK^{\Delta^1} }$$
in the $\infty$-category $\RGeom$ of $\infty$-topoi. Invoking Remark \toposref{sablewise}, this construction yields the following:

\begin{proposition}\label{spurin}
Let $\calK$ be an $\infty$-topos with geometric structure, and let $\calK^{\Delta^1}$ denote a
path $\infty$-topos for $\calK$. Then there exists a pair of geometric morphisms
$$ \calK^{\Delta^1}_{L} \rightarrow \calK^{\Delta^1} \leftarrow \calK^{\Delta^1}_{R}$$
with the following universal property: for every $\infty$-topos $\calX$, the induced maps
$$ \Fun_{\ast}( \calX, \calK^{\Delta^1}_{L}) \rightarrow \Fun( \Delta^1, \Fun_{\ast}( \calX, \calK) )$$
$$ \Fun_{\ast}( \calX, \calK^{\Delta^1}_{R} \rightarrow \Fun( \Delta^1, \Fun_{\ast}( \calX, \calK ))$$
are fully faithful embeddings whose essential images are the full subcategories spanned by $\overline{S}^{\calX}_{L}$ and $\overline{S}^{\calX}_{R}$, respectively. 
\end{proposition}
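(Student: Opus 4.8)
The geometric morphisms $\calK^{\Delta^1}_{L} \rightarrow \calK^{\Delta^1} \leftarrow \calK^{\Delta^1}_{R}$ have in fact already been constructed in the discussion preceding the statement; the plan is simply to verify that they satisfy the asserted universal property. First I would recall the ingredients assembled above: $\calK^{\Delta^1}$ is a path $\infty$-topos for $\calK$; $\theta: \Delta^1 \rightarrow \Fun_{\ast}(\calK^{\Delta^1}, \calK)$ is the tautological morphism, corresponding to $\id_{\calK^{\Delta^1}}$ under the equivalence $\Fun_{\ast}(\calK^{\Delta^1}, \calK^{\Delta^1}) \simeq \Fun(\Delta^1, \Fun_{\ast}(\calK^{\Delta^1}, \calK))$; we have its factorization $\theta \simeq \theta_{L} \circ \theta_{R}$ with $\theta_{L} \in \overline{S}^{\calK^{\Delta^1}}_{L}$, $\theta_{R} \in \overline{S}^{\calK^{\Delta^1}}_{R}$; the geometric morphisms $\pi_{\ast}^{L}, \pi_{\ast}^{R}: \calK^{\Delta^1} \rightarrow \calK^{\Delta^1}$ classify $\theta_{L}$ and $\theta_{R}$; and $\calK^{\Delta^1}_{L}$, $\calK^{\Delta^1}_{R}$ are defined by the displayed pullback squares in $\RGeom$, which exist by Proposition \toposref{swunder}.

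Next I would fix an $\infty$-topos $\calX$ and apply the functor $\Fun_{\ast}(\calX, -)$, which carries limits of $\infty$-topoi to limits of $\infty$-categories (Remark \toposref{sablewise}), to the pullback square defining $\calK^{\Delta^1}_{R}$. This produces an equivalence
$$ \Fun_{\ast}(\calX, \calK^{\Delta^1}_{R}) \simeq \Fun_{\ast}(\calX, \calK) \times_{\Fun_{\ast}(\calX, \calK^{\Delta^1})} \Fun_{\ast}(\calX, \calK^{\Delta^1}), $$
in which the left-hand leg is composition with $\delta_{\ast}$ and the right-hand leg is composition with $\pi_{\ast}^{L}$. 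Transporting this across $\Fun_{\ast}(\calX, \calK^{\Delta^1}) \simeq \Fun(\Delta^1, \Fun_{\ast}(\calX, \calK))$, the left leg becomes the fully faithful inclusion of $\Fun_{\ast}(\calX, \calK)$ as the full subcategory of equivalences (as observed just before the statement), and the right leg sends a morphism $\alpha$, written $\alpha \simeq h_{\ast}(\theta)$ for the geometric morphism $h_{\ast}: \calX \rightarrow \calK^{\Delta^1}$ classifying it, to $h_{\ast}(\theta_{L})$, which is classified by $\pi_{\ast}^{L} \circ h_{\ast}$. Here one uses the functoriality of the factorization system in the topos variable — part of the definition of a geometric structure — to identify $h_{\ast}(\theta_{L}) \circ h_{\ast}(\theta_{R})$ with the $(\overline{S}^{\calX}_{R}, \overline{S}^{\calX}_{L})$-factorization of $\alpha$; thus $h_{\ast}(\theta_{L})$ is precisely the $\overline{S}^{\calX}_{L}$-factor of $\alpha$. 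It follows that the fiber product above is identified, via the projection to $\Fun(\Delta^1, \Fun_{\ast}(\calX, \calK))$, with the full subcategory spanned by the morphisms whose $\overline{S}^{\calX}_{L}$-factor is an equivalence — equivalently, by $\overline{S}^{\calX}_{R}$, by orthogonality — and that this projection is fully faithful, being the pullback of the fully faithful inclusion of a full subcategory. This is the assertion for $\calK^{\Delta^1}_{R}$; the argument for $\calK^{\Delta^1}_{L}$ is the same with $\theta_{R}$, $\pi_{\ast}^{R}$, $\overline{S}^{\calX}_{R}$ in place of $\theta_{L}$, $\pi_{\ast}^{L}$, $\overline{S}^{\calX}_{L}$.

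The step I expect to demand the most care is this last identification — checking that, after transport along the equivalences, composition with $\pi_{\ast}^{L}$ genuinely computes the $\overline{S}^{\calX}_{L}$-factor of the factorization of $\alpha$ rather than merely something naturally equivalent to it, and correctly tracking the left/right conventions together with the passage to opposite categories of Remark \toposref{spill}. The remaining ingredients — that $\Fun_{\ast}(\calX, -)$ preserves limits, that the equivalences in $\Fun(\Delta^1, \Fun_{\ast}(\calX, \calK))$ are exactly the image of the diagonal $\delta_{\ast}$, and that pullbacks of fully faithful inclusions of full subcategories are again such — are formal, and the independence of the whole construction from the auxiliary choices again follows from Remark \toposref{sablewise}.
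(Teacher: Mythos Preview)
Your proposal is correct and follows exactly the paper's approach: the paper does not give a separate proof of the proposition, but rather presents the entire construction in the paragraphs immediately preceding it, concluding with ``Invoking Remark \toposref{sablewise}, this construction yields the following,'' and your write-up simply unpacks that invocation in the expected way. Your tracking of the $L/R$ conventions and the identification of $h_{\ast}(\theta_L)$ as the $\overline{S}^{\calX}_{L}$-factor via functoriality of the geometric structure matches the paper's reasoning precisely.
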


\subsection{$\infty$-Categories of Structure Sheaves}\label{geo4}

In this section, we will study $\infty$-categories of the form $\Struct_{\calG}(\calX)$, where
$\calG$ is a geometry and $\calX$ an $\infty$-topos. Our first goal is to show that $\Struct_{\calG}(\calX)$ admits (small) filtered colimits. This is a consequence of the following more general result:

\begin{proposition}\label{unwi}
Let $\calK$ be an $\infty$-topos with geometric structure, and let $\calX$ be an arbitrary $\infty$-topos.
Then:
\begin{itemize}
\item[$(1)$] The $\infty$-categories $\Fun^{\ast}(\calK, \calX)$ and $\Struct_{\calK}^{\loc}(\calX)$
admit small filtered colimits.

\item[$(2)$] The inclusions $\Struct^{\loc}_{\calK}(\calX) \subseteq \Fun^{\ast}(\calK, \calX) \subseteq \Fun( \calK, \calX)$ preserve small filtered colimits.
\end{itemize}
\end{proposition}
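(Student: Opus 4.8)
The plan is to prove the two assertions for $\Fun^{\ast}(\calK,\calX)$ by a direct pointwise argument, and then to deduce the corresponding assertions for $\Struct^{\loc}_{\calK}(\calX)$ from an abstract lemma about subcategories cut out by the right class of a factorization system, whose hypothesis I will verify using the path $\infty$-topos of Proposition \ref{spurin}. For $\Fun^{\ast}(\calK,\calX)$: since $\calX$ is presentable, $\Fun(\calK,\calX)$ admits small colimits computed pointwise in $\calX$, so it suffices to check that $\Fun^{\ast}(\calK,\calX)$ is stable under small filtered ones. Given a small filtered diagram of geometric morphisms $\calO_i\colon\calK\to\calX$ with pointwise colimit $\calO$, the functor $\calO$ preserves small colimits because colimits commute with colimits, and preserves finite limits because finite limits commute with small filtered colimits in any $\infty$-topos (see \cite{topoi}); hence $\calO\in\Fun^{\ast}(\calK,\calX)$. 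This yields $(1)$ and $(2)$ for $\Fun^{\ast}(\calK,\calX)$ — and, applied with other $\infty$-topoi in place of $\calK$, will be reused below. (Size is not an issue: $\calK$ is large, but we work throughout in $\widehat{\Cat}_{\infty}$.)

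Next I would prove the lemma: if $\calD$ admits small filtered colimits, $(S_L,S_R)$ is a factorization system on $\calD$, and the full subcategory of $\Fun(\Delta^1,\calD)$ spanned by the morphisms in $S_R$ is closed under small filtered colimits, then the (non-full) subcategory $\calD^{R}\subseteq\calD$ containing all objects and exactly the morphisms of $S_R$ admits small filtered colimits and the inclusion preserves them. For the proof, given a small filtered $p\colon\calI\to\calD^{R}$, set $X=\colim_{\calI}p$ in $\calD$; for each $i_0$, since $\calI_{i_0/}\hookrightarrow\calI$ is cofinal and $\calI_{i_0/}$ has an initial object, the structure map $p(i_0)\to X$ is the colimit in $\Fun(\Delta^1,\calD)$ of the arrows $p(i_0)\to p(i)$, each of which lies in $S_R$ because $S_R$ is closed under composition; by hypothesis $p(i_0)\to X$ then lies in $S_R$, so $X$ with these legs is a cocone in $\calD^{R}$. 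The same colimit-in-the-arrow-category trick, now over the weakly contractible $\calI$, shows that for any other cocone $Y$ in $\calD^{R}$ the canonical map $X\to Y$ also lies in $S_R$; since $\calD^{R}\subseteq\calD$ is faithful with mapping spaces that are unions of connected components of those of $\calD$, and $X$ is initial in $\calD_{p/}$, it follows that $X$ is initial in $(\calD^{R})_{p/}$, which is the assertion.

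Finally I would apply the lemma with $\calD=\Fun^{\ast}(\calK,\calX)$ and $(S_L,S_R)=(S^{\calX}_L,S^{\calX}_R)$ the factorization system of Theorem \ref{gen}, so that $\calD^{R}=\Struct^{\loc}_{\calK}(\calX)$ by Definition \ref{cammer} and $\calD$ has small filtered colimits by the first step. The content to check is that the morphisms of $S^{\calX}_R$ span a full subcategory of $\Fun(\Delta^1,\Fun^{\ast}(\calK,\calX))$ closed under small filtered colimits. Passing to opposites — via $\Fun(\Delta^1,\calC)^{op}\simeq\Fun(\Delta^1,\calC^{op})$ and $\Fun^{\ast}(\calK,\calX)^{op}\simeq\Fun_{\ast}(\calX,\calK)$ — this is equivalent to the claim that the morphisms of $\overline{S}^{\calX}_R$ span a full subcategory of $\Fun(\Delta^1,\Fun_{\ast}(\calX,\calK))$ closed under small filtered limits. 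By Proposition \ref{spurin} that subcategory is the essential image of a fully faithful functor $\Fun_{\ast}(\calX,\calK^{\Delta^1}_{R})\to\Fun(\Delta^1,\Fun_{\ast}(\calX,\calK))$, which factors through the path-$\infty$-topos equivalence $\Fun_{\ast}(\calX,\calK^{\Delta^1})\simeq\Fun(\Delta^1,\Fun_{\ast}(\calX,\calK))$ as composition with the geometric morphism $\calK^{\Delta^1}_{R}\to\calK^{\Delta^1}$. On the $\Fun^{\ast}$ side this composition is precomposition with a fixed inverse-image functor, which preserves filtered colimits since these are pointwise; opping, $\Fun_{\ast}(\calX,\calK^{\Delta^1}_{R})\to\Fun_{\ast}(\calX,\calK^{\Delta^1})$ preserves small filtered limits, and its source admits them by the first step applied to the $\infty$-topos $\calK^{\Delta^1}_{R}$. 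A fully faithful, filtered-limit-preserving functor whose source admits filtered limits has essential image closed under filtered limits, which is exactly the hypothesis of the lemma. I expect the main obstacle to be bookkeeping rather than mathematics: tracking the passages to opposite categories and to the path $\infty$-topos so that the "closed under filtered colimits in the arrow category" hypothesis of the lemma lines up precisely with what Proposition \ref{spurin} supplies; the genuinely geometric inputs (left-exactness of filtered colimits in an $\infty$-topos, and Proposition \ref{spurin}) are light.
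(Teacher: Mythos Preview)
Your proposal is correct and follows essentially the same approach as the paper: both handle $\Fun^{\ast}(\calK,\calX)$ by the pointwise argument (filtered colimits commute with finite limits in an $\infty$-topos), and both use Proposition \ref{spurin} to exhibit the class $S^{\calX}_R$ as the essential image of a fully faithful functor $\Fun^{\ast}(\calK^{\Delta^1}_R,\calX)\to\Fun(\Delta^1,\Fun^{\ast}(\calK,\calX))$, thereby showing it is closed under filtered colimits in the arrow category. Your abstract lemma about $\calD^{R}$ is exactly the content the paper proves inline as its steps (b) and (c); the only difference is that the paper stays on the $\Fun^{\ast}$ side throughout rather than passing to opposites, which makes the bookkeeping you flagged as the main obstacle disappear entirely.
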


From this, we can immediately deduce some consequences.

\begin{corollary}\label{unwii}
Let $\calK$ be an $\infty$-topos with geometric structure, and let
$f^{\ast}: \calX \rightarrow \calY$ be a geometric morphism of $\infty$-topoi. Then
the functor
$$ \Struct^{\loc}_{\calK}(\calX) \rightarrow \Struct^{\loc}_{\calK}(\calY)$$
given by composition with $f^{\ast}$ preserves small filtered colimits.
\end{corollary}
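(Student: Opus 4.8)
The plan is to deduce the corollary directly from Proposition \ref{unwi}, together with the fact that colimits in functor $\infty$-categories are computed objectwise. Write $F\colon \Struct^{\loc}_{\calK}(\calX) \to \Struct^{\loc}_{\calK}(\calY)$ for the functor given by post-composition with $f^{\ast}$ (this is the fiberwise restriction of the coCartesian transport of Proposition \ref{swupa}), and $H\colon \Fun(\calK, \calX) \to \Fun(\calK, \calY)$ for post-composition with $f^{\ast}$ on the ambient functor $\infty$-categories. Then the square
$$ \xymatrix{ \Struct^{\loc}_{\calK}(\calX) \ar[r]^{F} \ar[d]_{G_{\calX}} & \Struct^{\loc}_{\calK}(\calY) \ar[d]^{G_{\calY}} \\
\Fun(\calK, \calX) \ar[r]^{H} & \Fun(\calK, \calY) } $$
commutes, where $G_{\calX}$ and $G_{\calY}$ are the canonical (not full) inclusions. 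The idea is that the behaviour of the three outer arrows with respect to filtered colimits is already understood, and this forces $F$ to preserve them.

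First I would record three facts. (a) $H$ preserves all small colimits, since colimits in $\Fun(\calK,-)$ are computed objectwise and $f^{\ast}$ preserves small colimits: for a small diagram $\{\calO_{i}\}$ and $K \in \calK$ one has $(H(\colim_{i} \calO_{i}))(K) \simeq f^{\ast}((\colim_{i} \calO_{i})(K)) \simeq f^{\ast}(\colim_{i} \calO_{i}(K)) \simeq \colim_{i} f^{\ast}(\calO_{i}(K)) \simeq (\colim_{i} H(\calO_{i}))(K)$. (b) Each of $G_{\calX}$ and $G_{\calY}$ preserves small filtered colimits: this is Proposition \ref{unwi}$(2)$ (composing the two inclusions $\Struct^{\loc}_{\calK}(\calZ) \subseteq \Fun^{\ast}(\calK,\calZ) \subseteq \Fun(\calK,\calZ)$). (c) Each of $G_{\calX}$ and $G_{\calY}$ is conservative: a morphism of $\Struct^{\loc}_{\calK}(\calZ)$ lies in the right class $S_R^{\calZ}$ of a factorization system, which contains all equivalences; hence if such a morphism becomes invertible in $\Fun(\calK,\calZ)$ its inverse also lies in $S_R^{\calZ}$, so the morphism is already invertible in $\Struct^{\loc}_{\calK}(\calZ)$. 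Finally, by Proposition \ref{unwi}$(1)$ the two $\infty$-categories $\Struct^{\loc}_{\calK}(\calX)$ and $\Struct^{\loc}_{\calK}(\calY)$ admit small filtered colimits.

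To conclude, let $p\colon \calI \to \Struct^{\loc}_{\calK}(\calX)$ be a small filtered diagram with colimit $\calO$, and let $\theta\colon \colim(F \circ p) \to F(\calO)$ be the canonical comparison map, which exists by Proposition \ref{unwi}$(1)$. Applying $G_{\calY}$ and using (b), $G_{\calY}(\theta)$ is identified with the canonical map $\colim(H \circ G_{\calX} \circ p) \to H(G_{\calX}(\calO))$; since $G_{\calX}$ preserves filtered colimits by (b) and $H$ preserves all small colimits by (a), this map is an equivalence. As $G_{\calY}$ is conservative by (c), $\theta$ is an equivalence, so $F$ preserves the colimit of $p$. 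I do not expect a genuine obstacle here: all the substance is in Proposition \ref{unwi}, and the only point needing a moment's care is the conservativity of $\Struct^{\loc}_{\calK}(\calZ) \subseteq \Fun(\calK,\calZ)$, which is a formal consequence of the axioms for a factorization system.
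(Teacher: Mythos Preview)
Your argument is correct and is exactly the natural way to unpack the corollary; the paper gives no explicit proof, listing it as an immediate consequence of Proposition \ref{unwi}, and your square with facts (a)--(c) is precisely how one makes ``immediate'' precise. The only place you add something the paper does not mention is the conservativity check (c), which is indeed needed to pass from $G_{\calY}(\theta)$ being an equivalence to $\theta$ being one, and your justification via the factorization-system axiom that $S_R^{\calZ}$ contains all equivalences is sound.
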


\begin{corollary}\label{suil}
Let $\calK$ be an $\infty$-topos with geometric structure, and let $\calI$ be a small filtered $\infty$-category. Then every commutative diagram
$$ \xymatrix{ \calI \ar[r] \ar@{^{(}->}[d] & \LGeo(\calK) \ar[d]^{p} \\
\calI^{\triangleright} \ar[r] \ar@{-->}[ur] & \LGeo }$$
can be completed as indicated, such that the dotted arrow is a $p$-colimit diagram
in $\LGeo(\calK)$.
\end{corollary}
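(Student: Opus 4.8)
The plan is to identify the assertion as an instance of the general formalism for colimits relative to a coCartesian fibration, applied to the map $p : \LGeo(\calK) \to \LGeo$.

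First I would record the structure of $p$ provided by Proposition \ref{swupa}: it is a coCartesian fibration, its fiber over an $\infty$-topos $\calX$ is canonically equivalent to $\Struct^\loc_\calK(\calX)$, and a morphism $\calO_\calX \to \calO_\calY$ lying over a morphism $f^\ast : \calX \to \calY$ of $\LGeo$ is $p$-coCartesian precisely when the induced map $f^\ast\calO_\calX(K) \to \calO_\calY(K)$ is an equivalence for every $K \in \calK$, i.e.\ when $\calO_\calY \simeq f^\ast \circ \calO_\calX$. The point to extract is that, under the equivalences $p^{-1}\{\calX\} \simeq \Struct^\loc_\calK(\calX)$, the pushforward functor of $p$ along $f^\ast$ is identified with the functor $\Struct^\loc_\calK(\calX) \to \Struct^\loc_\calK(\calY)$ given by composition with $f^\ast$.

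Next I would invoke the machinery of \cite{topoi} for constructing relative colimit diagrams along a coCartesian fibration: for a coCartesian fibration $p : \calC \to \calD$, a diagram $\calI^\triangleright \to \calD$, and a lift along $\calI \hookrightarrow \calI^\triangleright$ to $\calC$, there exists a lift $\calI^\triangleright \to \calC$ of the given cone, extending the given diagram, which is a $p$-colimit diagram, as soon as (a) each fiber of $p$ admits $\calI$-indexed colimits and (b) each pushforward functor of $p$ preserves $\calI$-indexed colimits. In our situation (a) holds by Proposition \ref{unwi}, since $\calI$ is filtered, and (b) holds by Corollary \ref{unwii}, in view of the identification of the pushforward functors with composition functors made in the previous paragraph. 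This produces the required dotted arrow.

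The main obstacle is slight: essentially all of the substance is already contained in Proposition \ref{unwi}, and the only step requiring attention is the bookkeeping identification of the abstract pushforward functors of $p$ with the concrete composition functors $\calO \mapsto f^\ast \circ \calO$ — the hinge that lets Corollary \ref{unwii} feed into the relative-colimit criterion. If one preferred a self-contained argument in place of the cited criterion, one would instead transport the diagram $\calI \to \LGeo(\calK)$ along $p$-coCartesian edges lying over the cone to obtain a diagram valued in the single fiber over the cone point, form its colimit there via Proposition \ref{unwi}, and then verify — this is where the preservation property of Corollary \ref{unwii} enters — that the resulting cone is a $p$-colimit of the original diagram.
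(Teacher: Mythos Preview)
Your proposal is correct and follows exactly the paper's approach: the paper's proof is the one-liner ``Combine Proposition \ref{unwi}, Corollary \ref{unwii}, and Corollary \toposref{constrel},'' where the cited corollary from \cite{topoi} is precisely the relative-colimit criterion for coCartesian fibrations that you spell out. Your write-up simply unpacks the hypotheses of that criterion and explains why Propositions \ref{swupa}, \ref{unwi}, and Corollary \ref{unwii} verify them.
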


\begin{proof}
Combine Proposition \ref{unwi}, Corollary \ref{unwii}, and Corollary \toposref{constrel}.
\end{proof}

\begin{corollary}\label{swip}
Let $\calK$ be an $\infty$-topos with geometric structure. Then:
\begin{itemize}
\item[$(1)$] The $\infty$-category $\LGeo(\calK)$ admits small filtered colimits.
\item[$(2)$] The projection functor $\LGeo(\calK) \rightarrow \LGeo$ preserves small filtered colimits.
\end{itemize}
\end{corollary}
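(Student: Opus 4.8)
The plan is to obtain both assertions formally, using three ingredients already available: the coCartesian fibration $q\colon \LGeo(\calK) \to \LGeo$ of Proposition \ref{swupa}, the relative filtered colimits supplied by Corollary \ref{suil}, and the fact that the base $\LGeo$ itself admits small filtered colimits. For the last point, recall that $\RGeom \simeq \LGeo^{op}$ admits all small limits (see \cite{topoi}); in particular $\LGeo$ admits small filtered colimits, so every filtered diagram in $\LGeo$ can be extended to a colimit diagram.

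Concretely, fix a small filtered $\infty$-category $\calI$ and a diagram $F\colon \calI \to \LGeo(\calK)$. First I would choose a colimit of the composite $q \circ F\colon \calI \to \LGeo$, obtaining an extension $\overline{G}\colon \calI^{\triangleright} \to \LGeo$ of $q \circ F$ which is a colimit diagram. Applying Corollary \ref{suil} to the square
$$ \xymatrix{ \calI \ar[r]^{F} \ar@{^{(}->}[d] & \LGeo(\calK) \ar[d]^{q} \\
\calI^{\triangleright} \ar[r]^{\overline{G}} \ar@{-->}[ur] & \LGeo, }$$
I obtain an extension $\overline{F}\colon \calI^{\triangleright} \to \LGeo(\calK)$ of $F$ which is a $q$-colimit diagram and which lies over $\overline{G}$. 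Since $\overline{F}$ is a $q$-colimit diagram whose image $q \circ \overline{F} = \overline{G}$ is a colimit diagram in $\LGeo$, the standard recognition criterion for colimits in the total space of a coCartesian fibration (the relative-colimit formalism of \cite{topoi}, i.e.\ the same machinery invoked via Corollary \toposref{constrel} in the proof of Corollary \ref{suil}) implies that $\overline{F}$ is a colimit diagram in $\LGeo(\calK)$. This proves both statements at once: $\LGeo(\calK)$ admits small filtered colimits, and $q$ carries such a colimit diagram to a colimit diagram in $\LGeo$.

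I do not expect a genuine obstacle here: the substantive work was already carried out in Proposition \ref{unwi} (filtered colimits exist fiberwise and the inclusions $\Struct^{\loc}_{\calK}(\calX) \subseteq \Fun^{\ast}(\calK,\calX) \subseteq \Fun(\calK,\calX)$ preserve them), Corollary \ref{unwii} (compatibility of these colimits with pullback along geometric morphisms), and Corollary \ref{suil} (the resulting relative-colimit statement over $\LGeo$). The only external input is the existence of small filtered colimits in $\LGeo$; if one preferred to avoid quoting it, one could instead verify directly from Propositions \ref{swupa} and \ref{unwi} and Corollaries \ref{unwii} and \ref{suil} that $q$ is a coCartesian fibration whose base and fibers admit small filtered colimits with coCartesian transport preserving them, and then apply the corresponding general criterion of \cite{topoi}. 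Either route gives a short, formal argument.
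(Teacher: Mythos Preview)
Your proposal is correct and is essentially the same argument as the paper's: the paper's proof is the one-line ``Combine Corollary \ref{suil}, Theorem \toposref{sutcar}, and Proposition \toposref{basrel},'' where \toposref{sutcar} supplies small filtered colimits in $\LGeo$ and \toposref{basrel} is precisely the recognition criterion you invoke (a $q$-colimit lying over a colimit is an absolute colimit). The only cosmetic difference is that the relevant reference for that last step is \toposref{basrel} rather than \toposref{constrel}.
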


\begin{proof}
Combine Corollary \ref{suil}, Theorem \toposref{sutcar}, and Proposition \toposref{basrel}.
\end{proof}

\begin{remark}\label{swup}
Proposition \ref{unwi} and Corollaries \ref{unwii}, \ref{suil}, and \ref{swip} have evident analogues when the $\infty$-topos $\calK$ is replaced by a geometry $\calG$. To prove these analogues, it suffices to choose a universal $\calG$-structure $\calO: \calG \rightarrow \calK$ and apply the preceding results (together with Proposition \ref{swame}).
\end{remark}

\begin{proof}[Proof of Proposition \ref{unwi}]
Let $\calI$ be a (small) filtered $\infty$-category, let $F: \calI \rightarrow \Fun^{\ast}(\calK, \calX)$
be a diagram, and let $\calO$ be a colimit of $F$ in the $\infty$-category $\Fun( \calK, \calX)$.
We must show:
\begin{itemize}
\item[$(a)$] The functor $\calO$ belongs to $\Fun^{\ast}(\calK, \calX)$.
\item[$(b)$] Suppose that, for every morphism $I \rightarrow J$ in $\calI$, the
associated natural transformations $F(I) \rightarrow F(J)$ belongs to $\Struct^{\loc}_{\calK}(\calX)$. Then
each of the natural transformations $F(I) \rightarrow \calO$ belongs to $\Struct^{\loc}_{\calK}(\calX)$.
\item[$(c)$] Suppose given a natural transformation $\alpha: \calO \rightarrow \calO'$, where
$\calO' \in \Fun^{\ast}(\calK, \calX)$. Suppose further that each of the induced transformations
$\alpha_{I}: F(I) \rightarrow \calO'$ belongs to $\Struct^{\loc}_{\calK}(\calX)$. Then $\alpha$ belongs to $\Struct^{\loc}_{\calK}(\calX)$.
\end{itemize}

Assertion $(a)$ follows immediately from Lemma \toposref{limitscommute} and Example \toposref{tucka}. To prove $(b)$ and $(c)$, we first choose a path $\infty$-topos $\calK^{\Delta^1}$ for $\calK$,
and a geometric morphism $\pi^{\ast}_{R}: \calK^{\Delta^1} \rightarrow \calK^{\Delta^1}_{R}$ as in
Proposition \ref{spurin}, so that composition with $\pi^{\ast}_{R}$ induces a fully faithful embedding
$$ \Fun^{\ast}( \calK^{\Delta^1}_{R}, \calX)
\rightarrow \Fun^{\ast}( \calK^{\Delta^1}, \calX) \simeq \Fun( \Delta^1, \Fun^{\ast}(\calK, \calX) )$$
for every $\infty$-topos $\calX$, whose essential image consists of the class of morphisms in
$\Fun^{\ast}(\calK, \calX)$ which belong to $\Struct^{\loc}_{\calK}(\calX)$. Applying
$(a)$ to the $\infty$-topoi $\calK^{\Delta^1}$ and $\calK^{\Delta^1}_{R}$, we conclude that
the collection of morhisms in $\Fun^{\ast}(\calK, \calX)$ which belong to
$\Struct^{\loc}_{\calK}(\calX)$ is stable under small filtered colimits, which immediately implies $(b)$ and $(c)$.
\end{proof}

We now discuss some ideas which are specific to the theory of geometries.

\begin{proposition}\label{deeder}
Let $\calG$ be a geometry, and let $p: \LGeo(\calG) \rightarrow \LGeo$ denote the projection map.
Suppose that $q: K^{\triangleleft} \rightarrow \LGeo(\calG)$ is a small diagram with the following properties:
\begin{itemize}
\item[$(1)$] The composition $p \circ q: K^{\triangleleft} \rightarrow \LGeo$ is a limit diagram.
\item[$(2)$] The functor $q$ carries each morphism in $K^{\triangleleft}$ to a $p$-coCartesian morphism in $\LGeo(\calG)$.
\end{itemize}
Then $q$ is both a limit diagram and a $p$-limit diagram.
\end{proposition}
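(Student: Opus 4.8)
\emph{Strategy and reduction.} By Proposition~\ref{swupa}, $p\colon\LGeo(\calG)\to\LGeo$ is a coCartesian fibration, and a morphism of $\LGeo(\calG)$ lying over $\pi^{\ast}\colon\calX\to\calY$ is $p$-coCartesian precisely when, for every $G\in\calG$, the induced map $\pi^{\ast}\calO_{\calX}(G)\to\calO_{\calY}(G)$ is an equivalence. I will use the standard facts about relative limits (see \cite{topoi}): if $p\circ q$ is a limit diagram, then $q$ is a limit diagram if and only if it is a $p$-limit diagram. In view of hypothesis $(1)$, it therefore suffices to prove that $q$ is an \emph{absolute} limit diagram in $\LGeo(\calG)$; the two assertions then follow together. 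Write $\calX_{k}=p(q(k))$ and $\calO_{k}=q(k)$ for $k\in K$, and $\calX=p(q(v))$, $\calO=q(v)$ for the values at the cone point; by $(1)$ there is a limit cone $\calX\simeq\lim_{k}\calX_{k}$ in $\LGeo$ with legs $\pi_{k}^{\ast}\colon\calX\to\calX_{k}$, and by $(2)$ every edge $\calO\to\calO_{k}$ of $q$ is $p$-coCartesian, so $\calO_{k}\simeq\pi_{k}^{\ast}\calO$ canonically.

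\emph{Step 1: the functor $\calX\mapsto\Struct^{\loc}_{\calG}(\calX)$ preserves the limit $p\circ q$.} The inclusion $\LGeo\subseteq\widehat{\Cat}_{\infty}$ preserves small limits (\cite{topoi}), so $\calX\simeq\lim_{k}\calX_{k}$ as $\infty$-categories, the $\pi_{k}^{\ast}$ are the projections, and small colimits and finite limits in $\calX$ are computed componentwise; in particular the $\pi_{k}^{\ast}$ are jointly conservative and preserve small colimits and finite limits. Since $\Fun(\calG,-)$ commutes with limits and left exactness of a functor $\calG\to\calX$ is detected after composition with each $\pi_{k}^{\ast}$, we obtain $\Fun^{\lex}(\calG,\calX)\simeq\lim_{k}\Fun^{\lex}(\calG,\calX_{k})$. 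Next cut down to structures: for an admissible covering $\{U_{\alpha}\to X\}$ in $\calG$, a map $\coprod_{\alpha}\calF(U_{\alpha})\to\calF(X)$ in $\calX$ is an effective epimorphism iff each of its images under $\pi_{k}^{\ast}$ is (apply $\pi_{k}^{\ast}$ to its \v{C}ech nerve and use joint conservativity), and the pullback condition defining a local transformation of $\calG$-structures is likewise detected componentwise. Hence $\Struct_{\calG}(\calX)\simeq\lim_{k}\Struct_{\calG}(\calX_{k})$ and $\Struct^{\loc}_{\calG}(\calX)\simeq\lim_{k}\Struct^{\loc}_{\calG}(\calX_{k})$; in particular $\calO$ is indeed a $\calG$-structure on $\calX$, so $q$ really does take values in $\LGeo(\calG)$.

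\emph{Step 2: the universal property.} Let $Z=(\calZ,\calP)$ be an arbitrary object of $\LGeo(\calG)$. Since $p$ is a coCartesian fibration, $\bHom_{\LGeo(\calG)}(Z,q(k))\to\bHom_{\LGeo}(\calZ,\calX_{k})$ is a Kan fibration whose fiber over $\rho^{\ast}\colon\calZ\to\calX_{k}$ is $\bHom_{\Struct^{\loc}_{\calG}(\calX_{k})}(\rho^{\ast}\calP,\calO_{k})$. Passing to the limit over $k$: the base spaces assemble, by $(1)$, to $\bHom_{\LGeo}(\calZ,\calX)$; and over a given $\sigma^{\ast}\colon\calZ\to\calX$, writing $\rho_{k}^{\ast}\simeq\pi_{k}^{\ast}\sigma^{\ast}$ and using $\calO_{k}\simeq\pi_{k}^{\ast}\calO$ from $(2)$, the fibers become $\lim_{k}\bHom_{\Struct^{\loc}_{\calG}(\calX_{k})}(\pi_{k}^{\ast}(\sigma^{\ast}\calP),\pi_{k}^{\ast}\calO)$, which by Step 1 is $\bHom_{\Struct^{\loc}_{\calG}(\calX)}(\sigma^{\ast}\calP,\calO)$ — precisely the fiber over $\sigma^{\ast}$ of $\bHom_{\LGeo(\calG)}(Z,q(v))\to\bHom_{\LGeo}(\calZ,\calX)$. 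Therefore $\bHom_{\LGeo(\calG)}(Z,q(v))\to\lim_{k}\bHom_{\LGeo(\calG)}(Z,q(k))$ is an equivalence, so $q$ is an absolute limit diagram; combined with $(1)$ and the facts recalled in the first paragraph, $q$ is also a $p$-limit diagram.

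\emph{Where the difficulty lies.} This is not a formal consequence of $p$ being a coCartesian fibration: over a general base, a coCartesian lift of a limit diagram need not be a $p$-limit diagram, so genuine input is needed, and it is concentrated in Step 1 — the statement that $\calX\mapsto\Struct^{\loc}_{\calG}(\calX)$ sends the limit diagram $p\circ q$ to a limit diagram of $\infty$-categories. This combines two ingredients of rather different character: the description of limits in $\LGeo$ as created in $\widehat{\Cat}_{\infty}$ (which makes the projections $\pi_{k}^{\ast}$ jointly conservative and exact), and the verification that the two conditions singling out $\Struct^{\loc}_{\calG}(\calX)$ inside $\Fun^{\lex}(\calG,\calX)$ — the effective-epimorphism condition on admissible covers and the pullback condition on admissible morphisms — are stable under this limit. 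Step 2 is then essentially bookkeeping, but it uses hypothesis $(2)$ in an essential way: without the identifications $\calO_{k}\simeq\pi_{k}^{\ast}\calO$, the fibers of the mapping-space fibrations would not assemble into the diagram to which Step 1 applies.
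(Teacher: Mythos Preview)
Your proof is correct and follows essentially the same strategy as the paper: both arguments hinge on the fact that the fiber functor $\calX\mapsto\Struct^{\loc}_{\calG}(\calX)$ carries the limit cone $p\circ q$ to a limit diagram of $\infty$-categories (your Step~1), which is exactly what the paper establishes via the reduction $(\ast)$ to the discrete geometry $\calG_0$ together with the equivalence $\calD'\simeq\calC'_v$. The only organizational difference is that the paper packages your Step~2 as a general lemma about coCartesian fibrations whose classifying functor preserves a given limit (Lemma~\ref{kooper}), whereas you verify the mapping-space universal property by hand; both routes are equivalent.
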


The proof requires the following lemma.

\begin{lemma}\label{kooper}
Let $p: X \rightarrow S$ be a coCartesian fibration of simplicial sets classified
by a diagram $\chi: S \rightarrow \Cat_{\infty}$. Let 
$q: K^{\triangleleft} \rightarrow X$ be a diagram with the following properties:
\begin{itemize}
\item[$(1)$] The composition $\chi \circ p \circ q: K^{\triangleleft} \rightarrow \Cat_{\infty}$ is a limit diagram.
\item[$(2)$] The diagram $q$ carries each edge of $K^{\triangleleft}$ to
a $p$-coCartesian morphism in $X$.
\end{itemize}
Then $q$ is a $p$-limit diagram.
\end{lemma}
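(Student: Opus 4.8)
The plan is to reduce the statement to the characterization of $p$-limit diagrams in terms of ordinary limit diagrams in the fibers, as developed in \cite{topoi} (Corollary \toposref{constrel} is the relevant tool, together with the basic theory of coCartesian fibrations from \S T.2.4). Recall that to verify $q \colon K^{\triangleleft} \to X$ is a $p$-limit diagram, we must show that for every object $Y \in X$ lying over $s = p(q(\infty))$, and more generally over objects mapping into the cone point, the natural map of mapping spaces exhibits $q(\infty)$ as the appropriate limit. Concretely, writing $\overline{s} = p \circ q$, we must check that for each vertex $Y$ of $X$ the square
$$ \xymatrix{ X_{/q} \ar[r] \ar[d] & X_{/q|K} \ar[d] \\ S_{/ \overline{s}} \ar[r] & S_{/ \overline{s}|K} } $$
induces an equivalence on fibers over any point of $S_{/\overline{s}}$; this is the content of being a $p$-limit diagram.

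The key step is to trivialize the coCartesian fibration along the cone. Since $q$ carries every edge of $K^{\triangleleft}$ to a $p$-coCartesian morphism, and $K^{\triangleleft}$ has an initial vertex $v$ (the cone point of the *left* cone) — wait, more precisely, because the cone point of $K^{\triangleleft}$ maps to every other object, the coCartesian edges out of it allow us to identify, up to equivalence, the restriction $q|K$ with a diagram obtained by pushing $q(v)$ forward. So first I would use the straightening/unstraightening equivalence to replace $X$ by the relative nerve of $\chi$, and observe that condition $(2)$ says exactly that $q$ is, fiberwise, constant after transport: each $\chi(p(q(\infty))) \to \chi(p(q(k)))$ carries $q(\infty)$ to something equivalent to $q(k)$. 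Then the limit $\lim (\chi \circ \overline{s})$ computed in $\Cat_\infty$ — which by hypothesis $(1)$ is $\chi(p(q(\infty)))$ — receives $q(\infty)$ as an object, and one checks that this object, viewed in the limit $\infty$-category, corresponds under the equivalence $X_{\overline{s}} \simeq \lim(\chi \circ \overline{s})$ to $q(\infty)$, exhibiting the required universal property.

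The main obstacle I anticipate is bookkeeping: making precise the passage between "$p$-limit diagram in the total space" and "limit diagram in the limit of the fibers", and ensuring that the equivalence of hypothesis $(1)$ (a limit diagram valued in $\Cat_\infty$, i.e. at the level of classifying objects) genuinely interacts correctly with the coCartesian transport maps. The cleanest route is to invoke Corollary \toposref{constrel}, which packages exactly this interaction: it asserts that given a coCartesian fibration $p$ classified by $\chi$, a cone $q \colon K^{\triangleleft} \to X$ sending edges to $p$-coCartesian morphisms is a $p$-limit diagram whenever $\chi \circ p \circ q$ is a limit diagram. With that in hand the proof is essentially immediate; the only work is checking the hypotheses of that corollary are met, which are precisely $(1)$ and $(2)$. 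I would therefore present the argument as: translate to the straightened picture, verify the hypotheses of Corollary \toposref{constrel}, and conclude.
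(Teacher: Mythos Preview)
Your proposal has a genuine gap: you have misidentified what Corollary \toposref{constrel} says. If it really asserted exactly the conclusion of Lemma~\ref{kooper}, the paper would not write out a page-long proof of the lemma; it would simply cite the corollary. In fact, judging from how \toposref{constrel} is invoked elsewhere in this paper (see Corollary~\ref{suil}, where it is combined with the facts that the fibers admit filtered colimits and that transport preserves them), that corollary is the standard statement that, for a coCartesian fibration whose fibers admit $K$-indexed colimits preserved by the transition functors, relative colimits exist and are computed fiberwise. That is a different hypothesis and a different conclusion: it concerns $p$-colimits built from fiberwise colimits, not $p$-limits detected by a limit diagram in $\Cat_{\infty}$. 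So your ``cleanest route'' is a citation to a result that does not say what you need, and the remainder of your sketch (``one checks that\ldots'', ``bookkeeping'') does not supply an argument either.

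What the paper actually does is the substantive step you are missing. After reducing to the case $S = K^{\triangleleft}$ (by pulling back along $p \circ q$ and invoking Corollary \toposref{pannaheave}), it considers the projection $\pi: K^{\triangleleft} \to \Delta^1$ sending the cone point to $0$ and $K$ to $1$, and forms the pushforward $p': \calC \to \Delta^1$ of the coCartesian fibration $p$ along $\pi$. The fiber $\calC_0$ is the cone-point fiber of $p$, while $\calC_1$ is the $\infty$-category of sections of $p$ over $K$; hypothesis~(1), via Proposition \toposref{charcatlimit}, says precisely that the associated functor $f: \calC_0 \to \calC'_1$ (into the full subcategory of \emph{coCartesian} sections) is an equivalence. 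The diagram $q$ corresponds to a morphism $\alpha: C' \to C$ in $\calC$, and hypothesis~(2) says $C \in \calC'_1$; one then reduces the $p$-limit condition to a mapping-space comparison in $\calC$ that follows directly from $f$ being an equivalence. This pushforward-along-$\pi$ construction is the missing idea, and it is what converts the abstract hypothesis~(1) about $\Cat_{\infty}$ into a concrete statement you can use to compare mapping spaces.
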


\begin{proof}
Using Corollary \toposref{tttroke}, we may reduce to the case where $S$ is an $\infty$-category
(so that $X$ is also an $\infty$-category). Choose a categorical equivalence
$K \rightarrow K'$ which is a monomorphism of simplicial sets, where $K'$ is an $\infty$-category.
Since $X$ is an $\infty$-category, the map $q$ factors through ${K'}^{\triangleleft}$. We may therefore replace $K$ by $K'$ and thereby reduce to the case where $K$ is an $\infty$-category.
In view of Corollary \toposref{pannaheave}, we may replace $S$ by
$K^{\triangleleft}$ (and $X$ by the pullback $X \times_{S} K^{\triangleleft}$) and thereby
reduce to the case where $p \circ q$ is an isomorphism.

Consider the map $\pi: K^{\triangleleft} \rightarrow (\Delta^0)^{\triangleleft} \simeq \Delta^1$.
Since $K$ is an $\infty$-category, the map $\pi$ is a Cartesian fibration of simplicial sets.
Let $p': \calC \rightarrow \Delta^1$ be the ``pushforward'' of the coCartesian fibration
$p$, so that $\calC$ is characterized by the universal mapping property
$$ \Hom_{ \Delta^1}(Y, \calC) \simeq \Hom_{ K^{\triangleleft}}( Y \times_{ \Delta^1} K^{\triangleleft}, X).$$
Corollary \toposref{presalad} implies that $p'$ is a coCartesian fibration, associated to some
functor $f$ from $\calC_0 = \calC \times_{ \Delta^1} \{0\}$ to $\calC_1 = \calC \times_{ \Delta^1} \{1\}$. 
We can identify $\calC_0$ with the fiber of $p$ over the cone point of $K^{\triangleleft}$, and
$\calC_1$ with the $\infty$-category of sections of $p$ over $K$. Let $\calC'_1$ denote the full subcategory of $\calC_1$ spanned by the coCartesian sections. Combining Corollary \toposref{presalad}, Proposition \toposref{charcatlimit}, and assumption $(1)$, we deduce that
$f$ determines an equivalence $\calC_0 \rightarrow \calC'_1$.

Let $q_0 = q | K$. We can identify $q_0$ with an object $C \in \calC_1$, and $q$ with a morphism
$\alpha: C' \rightarrow C$ in $\calC$. We then have a commutative diagram
$$ \xymatrix{ \calC_0 \times_{ X } X_{/q} \ar[r] \ar[d] & \calC_0 \times_{ X } X_{/q_0} \ar[d] \\
\calC_0 \times_{ \calC } \calC_{/\alpha} \ar[r] & \calC_0 \times_{ \calC } \calC_{/C}. }$$
We wish to show that the upper horizontal map is a categorical equivalence. Since the vertical maps are isomorphisms, it will suffice to show that the lower horizontal map is a categorical equivalence.
In other words, we wish to show that for every object $C_0 \in \calC_0$, composition with
$\alpha$ induces a homotopy equivalence $\bHom_{\calC_0}(C_0, C') \rightarrow \bHom_{\calC}(C_0, C)$. We have a commutative diagram (in the homotopy category of spaces)
$$ \xymatrix{ \bHom_{\calC_0}(C_0, C') \ar[r] \ar[d] & \bHom_{\calC}( C_0, C) \ar[d] \\
\bHom_{ \calC_1}( f C_0, f C' ) \ar[r] & \bHom_{\calC}( fC_0, C). }$$
Here the right vertical map is a homotopy equivalence. Since $f$ is fully faithful, the left vertical map is also a homotopy equivalence. It therefore suffices to show that the bottom horizontal map is a homotopy equivalence: in other words, that $\alpha$ induces an equivalence $f C' \rightarrow C$.
This is simply a translation of condition $(2)$.
\end{proof}

\begin{proof}[Proof of Proposition \ref{deeder}]
We will prove that $q$ is a $p$-limit diagram; it will then follow from Proposition \toposref{basrel} and
$(1)$ that $q$ is a limit diagram. Let $\calG_0$ denote the discrete geometry having the same underlying $\infty$-category as $\calG$. We can identify $\LGeo(\calG)$ with a subcategory
of $\LGeo(\calG_0)$. Moreover, the induced map $q_0: K^{\triangleleft} \rightarrow \LGeo(\calG_0)$
still satisfies $(1)$ and $(2)$. We will prove the following:
\begin{itemize}
\item[$(\ast)$] Let $v$ denote the cone point of $K^{\triangleleft}$. Suppose that
$(\calX, \calO_{\calX}) \in \LGeo(\calG)$ is an object equipped with a morphism
$\alpha: (\calX, \calO_{\calX}) \rightarrow q(v)$ in $\LGeo(\calG_0)$ 
such that, for every vertex $v_0 \in K$, the composite map
$\alpha_0: (\calX, \calO_{\calX}) \rightarrow q(v_0)$ belongs to $\LGeo(\calG)$. Then
the morphism $\alpha$ belongs to $\LGeo(\calG)$.
\end{itemize}
To prove assertion $(\ast)$, it suffices to observe that a diagram in
the underlying $\infty$-topos of $q(v)$ is a pullback square if and only if its image in
the underlying $\infty$-topos of each $q(v_0)$ is a pullback square, by virtue of assumption
$(1)$. It follows from $(\ast)$ that if $q_0$ is a $p$-limit diagram in $\LGeo(\calG_0)$, then
$q$ is a $p$-limit diagram in $\LGeo(\calG)$. We may therefore replace $\calG$ by $\calG_0$ and thereby reduce to the case where $\calG$ is discrete.

Let $\calC = K^{\triangleleft} \times_{ \LGeo } \overline{\LGeo}$, and let
$\calC' = K^{\triangleleft} \times_{ \LGeo} \LGeo(\calG)$. Let 
$\calD \subseteq \Fun_{K^{\triangleleft}}(K^{\triangleleft}, \calC)$ be the full subcategory
spanned by the coCartesian sections, and let $\calD' \subseteq \Fun_{ K^{\triangleleft} }( K^{\triangleleft}, \calC')$ be defined similarly. Using Propositions \toposref{colimtopoi}, \toposref{charcatlimit}, and assumption $(1)$, we deduce that the evaluation map $\calD \rightarrow \calC_{v}$ is an equivalence of $\infty$-categories.
It follows that the evaluation map
$$ \calD' \simeq \Fun^{\lex}( \calG, \calD) \rightarrow \Fun^{\lex}( \calG, \calC_{v}) \simeq
\calC'_{v}$$
is also an equivalence of $\infty$-categories. Invoking Proposition \toposref{charcatlimit} again, we
deduce that the coCartesian fibration $\calC' \rightarrow K^{\triangleleft}$ is classified by
a limit diagram $K^{\triangleleft} \rightarrow \widehat{\Cat}_{\infty}$. The desired result now follows from Lemma \ref{kooper}.
\end{proof}

\begin{definition}
Let $\calG$ be a geometry, $\calX$ an $\infty$-topos, and $n \geq -2$ an integer. We will say that a $\calG$-structure $\calO: \calG \rightarrow \calX$ is {\it $n$-truncated} if
$\calO(X) \in \calX$ is $n$-truncated, for every $X \in \calG$. We let $\Struct_{\calG}^{\leq n}(\calX)$ denote the full subcategory of $\Struct_{\calG}(\calX)$ spanned by the $n$-truncated $\calG$-structures on $\calX$.
\end{definition}

\begin{remark}
If $\calG$ is $n$-truncated, then every $\calG$-structure on every $\infty$-topos $\calX$ is
$n$-truncated. This follows immediately from Proposition \toposref{eaa}.
\end{remark}

\begin{definition}\label{rab}
Let $f: \calG \rightarrow \calG'$ be a functor between $\infty$-categories which admit finite limits, and let
$n \geq -2$ be an integer. We will say that {\it $f$ exhibits $\calG'$ as an $n$-stub of $\calG$} if the following conditions are satisfied:
\begin{itemize}
\item[$(1)$] The $\infty$-category $\calG'$ is equivalent to an $(n+1)$-category: that is, for every
pair of objects $X, Y \in \calG'$, the space $\bHom_{\calG'}(X,Y)$ is $n$-truncated.
\item[$(2)$] The functor $f$ preserves finite limits.
\item[$(3)$] Let $\calC$ be an $\infty$-category which admits finite limits, and suppose that
$\calC$ is equivalent to an $(n+1)$-category. Then composition with $f$ induces an equivalence of $\infty$-categories
$$ \Fun^{\lex}( \calG', \calC) \rightarrow \Fun^{\lex}( \calG, \calC).$$ 
\end{itemize}
\end{definition}

Let $\calG$ be an $\infty$-category which admits finite limits. It is clear that an $n$-stub of $\calG$ is determined uniquely up to equivalence, provided that it exists. For the existence, we have the following result:

\begin{proposition}\label{snubber}
Let $\calG$ be an $\infty$-category which admits finite limits. Then there exists a functor
$f: \calG \rightarrow \calG'$ which exhibits $\calG'$ as an $n$-stub of $\calG$. Moreover, if $\calG$ is small, then we may assume that $\calG'$ is also small.
\end{proposition}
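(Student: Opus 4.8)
\emph{The plan.} I will realize $\calG'$ as a full subcategory of a presheaf $\infty$-category. Enlarging the universe if $\calG$ is not small, form $\calP(\calG) = \Fun(\calG^{\op}, \SSet)$ with its Yoneda embedding $j\colon \calG \to \calP(\calG)$, which is fully faithful and left exact; recall that $\calP(\calG)$ is an $\infty$-topos. Let $\calP(\calG)^{\leq n} \subseteq \calP(\calG)$ be the full subcategory of $n$-truncated objects (equivalently, presheaves valued in $n$-truncated spaces) and let $\tau_{\leq n}$ be the truncation functor. Since $\calP(\calG)$ is an $\infty$-topos, $\tau_{\leq n}$ is a left exact localization, so $\calP(\calG)^{\leq n}$ is an $(n+1)$-category whose finite limits are computed as in $\calP(\calG)$. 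Set $f = \tau_{\leq n}\circ j \colon \calG \to \calP(\calG)^{\leq n}$ and let $\calG'$ be the smallest full subcategory of $\calP(\calG)^{\leq n}$ containing the essential image of $f$ and closed under finite limits. Conditions $(1)$ and $(2)$ of Definition~\ref{rab} are then immediate: $\calG'$, being a full subcategory of an $(n+1)$-category, is an $(n+1)$-category, and $f$ is left exact as a composite of left exact functors whose target is closed under finite limits. For the last clause of the Proposition: if $\calG$ is small then $\calP(\calG)^{\leq n}$ is locally small, the essential image of $f$ is essentially small, and its closure under finite limits — built by a transfinite process that stabilizes after $\omega$ steps because finite limits are finitary — remains essentially small, so we may replace $\calG'$ by a small model.

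\emph{The universal property.} It remains to verify condition $(3)$: for every $(n+1)$-category $\calC$ admitting finite limits, restriction along $f$ is an equivalence $r\colon \Fun^{\lex}(\calG', \calC) \to \Fun^{\lex}(\calG, \calC)$. Fix $\calC$ and (enlarging the universe again) regard it as small; its Yoneda embedding $y\colon \calC \to \calP(\calC)$ is then fully faithful and left exact, factors through $\calP(\calC)^{\leq n}$ because $\calC$ is an $(n+1)$-category, and has image closed under finite limits. Given $F \in \Fun^{\lex}(\calG, \calC)$, form the colimit-preserving left Kan extension $F_!\colon \calP(\calG) \to \calP(\calC)$ of $y\circ F$ along $j$; by the universal property of the presheaf $\infty$-topos (\cite{topoi}) together with left exactness of $y\circ F$, the functor $F_!$ is left exact. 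Because left exact colimit-preserving functors between $\infty$-topoi preserve $n$-truncated objects and the class of morphisms inverted by $\tau_{\leq n}$, the functor $F_!$ commutes with $\tau_{\leq n}$ and restricts to a left exact functor $\calP(\calG)^{\leq n} \to \calP(\calC)^{\leq n}$ whose composite with $f$ is $y\circ F$; restricting further to $\calG'$ and using that $\calG'$ is generated under finite limits by the image of $f$ while $y(\calC)$ is closed under finite limits, we obtain $\tilde F \in \Fun^{\lex}(\calG', \calC)$ with $\tilde F\circ f \simeq F$. This construction is functorial in $F$, so it defines $E\colon \Fun^{\lex}(\calG, \calC) \to \Fun^{\lex}(\calG', \calC)$ with $r\circ E \simeq \id$.

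\emph{The main obstacle.} To finish, I must show $E\circ r \simeq \id$, that is, that an arbitrary left exact $G\colon \calG' \to \calC$ is recovered from $G\circ f$ by the construction above. Since $G$ and $E(r(G))$ are both left exact functors $\calG' \to \calC$ whose restrictions along $f$ are $G\circ f$, this reduces to the following assertion, which I expect to be the main technical point: \emph{every left exact functor out of an $\infty$-category with finite limits is a right Kan extension of its restriction to any full subcategory that generates under finite limits} (so that, in particular, restriction along $f$ is fully faithful on left exact functors). I would prove this by induction on the number of finite-limit operations needed to build an object of $\calG'$ from the image of $f$: writing such an object as a finite limit of objects of smaller complexity, left exactness forces the value of the functor — and the component at that object of any natural transformation between two such functors — to be the corresponding finite limit of the values at the constituents, and the coherence of these choices is exactly what the right-Kan-extension formulation packages. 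Granting this, $E(r(G)) \simeq G$, so $r$ is an equivalence and $f$ exhibits $\calG'$ as an $n$-stub of $\calG$.

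\emph{An alternative.} For existence alone, one may instead observe that the $\infty$-category of $\infty$-categories admitting finite limits, with left exact functors between them, is presentable, that the full subcategory spanned by the $(n+1)$-categories is accessible and closed under limits and filtered colimits — hence an accessible localization — and that the $n$-stub of $\calG$ is the value at $\calG$ of the associated localization functor; the smallness refinement must then be recovered separately, via the explicit construction above or via an accessibility estimate.
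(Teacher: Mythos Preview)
Your proof contains a genuine error at the very first step: the claim that ``$\tau_{\leq n}$ is a left exact localization'' is false. Truncation in an $\infty$-topos preserves finite \emph{products} (HTT 5.5.6.28 or thereabouts), but it does \emph{not} preserve pullbacks. For instance, in $\SSet$ with $n=0$: the pullback $\ast \times_{S^1} \ast \simeq \Omega S^1 \simeq \mathbf{Z}$ has $\tau_{\leq 0}(\mathbf{Z}) = \mathbf{Z}$, whereas $\tau_{\leq 0}(\ast) \times_{\tau_{\leq 0}(S^1)} \tau_{\leq 0}(\ast) = \ast \times_{\ast} \ast = \ast$. So your $f = \tau_{\leq n}\circ j$ is \emph{not} left exact, and condition $(2)$ of Definition~\ref{rab} fails outright; the object you construct is not an $n$-stub.

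The paper's construction avoids this by working with $\calC = \Ind(\calG^{\op})$ rather than $\calP(\calG)$. The point is that $\tau_{\leq n}\colon \calC \to \tau_{\leq n}\calC$ is a \emph{left adjoint}, so it preserves colimits; passing to opposite categories, $(\tau_{\leq n})^{\op}\colon \calC^{\op} \to (\tau_{\leq n}\calC)^{\op}$ preserves \emph{limits}. Thus the paper's $f\colon \calG \xrightarrow{j} \calC^{\op} \xrightarrow{\tau_{\leq n}} (\tau_{\leq n}\calC)^{\op}$ is genuinely left exact. The paper then takes $\calG'$ to be the opposite of the compact objects of $\tau_{\leq n}\calC$; since $\tau_{\leq n}\calC$ is compactly generated and $\tau_{\leq n}$ preserves compact objects, this is essentially small, and the universal property follows from the $\Ind$/compact-object duality together with a truncation-localization result for $\LFun$. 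The delicate point in the paper's argument is \emph{not} full faithfulness but rather matching essential images (showing the extended functor lands back in $\calG''$ rather than all of $\Ind(\calG''^{\op})$), which is where idempotent completeness enters via Lemma~\ref{sull}.

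Your ``main obstacle'' (the right-Kan-extension claim for functors out of a finite-limit closure) is therefore moot as stated, since the construction it was meant to complete is already broken. The alternative existence argument via presentability is sound in outline but, as you note, does not directly yield smallness.
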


The proof will require the following preliminary:

\begin{lemma}\label{sull}
Let $\calC$ be an $\infty$-category which admits finite limits. Suppose that $\calC$ is
equivalent to an $n$-category, for some integer $n$. Then $\calC$ is idempotent complete.
\end{lemma}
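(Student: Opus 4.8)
The plan is to reduce the statement to the existence of a single limit and then use the hypothesis that $\calC$ is an $n$-category to present that limit as a \emph{finite} limit. Recall (see \S 4.4.5 of \cite{topoi}) that $\calC$ is idempotent complete if and only if every coherent idempotent $F\colon \Idem\to\calC$ extends along the inclusion $\Idem\hookrightarrow\Idem^{+}$ into the free retraction, and that such an extension exists precisely when the diagram $F$ admits a limit in $\calC$, in which case the limit object is the splitting of $F$. Consequently it suffices to show that every coherent idempotent $F\colon\Idem\to\calC$ admits a limit in $\calC$.

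Since $\calC$ is equivalent to an $n$-category, we may assume $\calC$ is itself an $n$-category, so that its mapping spaces are $(n-1)$-truncated and, as a simplicial set, $\calC$ is $k$-coskeletal for some $k=k(n)$ (e.g.\ $k=n+1$). The point is that over such a target a coherent idempotent records only a bounded amount of homotopy-coherence data. Two formal consequences of coskeletality make this precise: first, restriction along $\sk_{k}\Idem\hookrightarrow\Idem$ is an isomorphism $\Fun(\Idem,\calC)\xrightarrow{\ \sim\ }\Fun(\sk_{k}\Idem,\calC)$ — because any nondegenerate simplex of $\Idem\times\Delta^{m}$ of dimension $\le k$ has $\Idem$-projection of dimension $\le k$ — and $\sk_{k}\Idem$ is a \emph{finite} simplicial set, since $\Idem$ has only finitely many nondegenerate simplices in each dimension; second, by the universal property of the $n$-truncation functor on $\widehat{\Cat}_{\infty}$, every functor $\Idem\to\calC$ factors essentially uniquely through the localization $\Idem\to\tau_{\le n}\Idem$. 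Using either description one is reduced to computing the limit of $F$ over a finite diagram $\Idem_{0}$; since $\calC$ admits finite limits, such a limit exists. As a sanity check, for $n=1$ this recovers the classical fact that $\lim_{\Idem}F$ is the equalizer of $(e,\id_{X})$, where $e\colon X\to X$ is the underlying idempotent of $F$ — a finite limit.

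The delicate step, which I expect to be the main obstacle, is not the identification $\Fun(\Idem,\calC)\simeq\Fun(\Idem_{0},\calC)$ (that is a soft consequence of truncatedness) but the verification that this identification is compatible with the formation of limits, i.e.\ that $\lim_{\Idem}F$ may genuinely be computed over the finite diagram $\Idem_{0}$ rather than merely that the two functor categories agree. I would establish this either by exhibiting the appropriate cofinality of a finite sub-diagram of $\Idem$ after $n$-truncation, or — more robustly — by identifying $\lim_{\Idem}F$ with the totalization $\mathrm{Tot}(X^{\bullet})$ of the cosimplicial object of $\calC$ computing the ``homotopy fixed points'' of the idempotent, and then invoking the dual Bousfield–Kan convergence estimate: since $\calC$ is an $n$-category, $\mathrm{Tot}(X^{\bullet})$ agrees with its $(n+1)$-st partial totalization $\mathrm{Tot}_{n+1}(X^{\bullet})=\lim_{\cDelta_{\le n+1}}X^{\bullet}$, which is a finite limit and hence exists in $\calC$. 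In either approach the essential technical input is a truncation/convergence bound governed by $n$; everything else in the argument is formal.
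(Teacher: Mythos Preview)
Your overall strategy matches the paper's: reduce to showing every $F\colon \Idem \to \calC$ has a limit, then use that $\calC$ is an $n$-category to replace $\Idem$ by a finite skeleton. The paper, however, sidesteps your ``delicate step'' entirely by working with overcategories rather than functor categories. Writing $p_0 = F|\sk^{n}\Idem$, one observes that since $\calC$ is an $n$-category the restriction map $\calC_{/F} \to \calC_{/p_0}$ is an \emph{isomorphism of simplicial sets}: every nondegenerate simplex of $\Delta^{m}\star\Idem$ not already in $\Delta^{m}\star\sk^{n}\Idem$ has dimension $>n$, hence is uniquely filled in $\calC$. Since a limit of $F$ is by definition a final object of $\calC_{/F}$, and $\calC_{/F}\cong\calC_{/p_0}$, the limit of $F$ exists if and only if the limit of $p_0$ does---and the latter is a finite limit because $\sk^{n}\Idem$ is a finite simplicial set. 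Your proposed workarounds (cofinality of a finite subdiagram, or a Bousfield--Kan partial-totalization argument) are therefore unnecessary; in particular the inclusion $\sk^{n}\Idem\hookrightarrow\Idem$ is generally not initial, so that route would require additional work, whereas the overcategory argument is immediate.
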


\begin{proof}
Without loss of generality, we may suppose that $n \geq 1$ and that $\calC$ is an $n$-category.
Let $\Idem$ denote the $\infty$-category defined in \S \toposref{retrus}. We wish to show that every
diagram $p: \Idem \rightarrow \calC$ admits a limit in $\calC$. Equivalently, we must show that
the $\infty$-category $\calC_{/p}$ admits a final object. Let $p_0$ denote the restriction of $p$ to the
$n$-skeleton of $\Idem$. Since $\calC$ is an $n$-category, the restriction map
$\calC_{/p} \rightarrow \calC_{/p_0}$ is an isomorphism of simplicial sets. It will therefore suffice to show that $\calC_{/p_0}$ admits a final object. In other words, we must show that the diagram $p_0$
admits a limit in $\calC$. This follows from our assumption that $\calC$ admits finite limits, since the
$n$-skeleton $\sk^{n} \Idem$ is a finite simplicial set.
\end{proof}

\begin{proof}[Proof of Proposition \ref{snubber}]
Enlarging the universe if necessary, we may suppose that $\calG$ is small. Let $\calC = \Ind(\calG^{op})$, so that $\calC$ is a compactly generated presentable $\infty$-category. Corollary \toposref{hunterygreen} implies that the full subcategory $\tau_{\leq n} \calC$ is again compactly generated, and that the truncation functor $\tau_{\leq n}: \calC \rightarrow \tau_{\leq n} \calC$ preserves compact objects. Let $\calG'$ denote the opposite of the (essentially small) $\infty$-category of compact objects of $\tau_{\leq n} \calC$. It follows that the composition
$$ \calG \stackrel{j}{\rightarrow} \calC^{op} \stackrel{\tau_{\leq n}}{\rightarrow} (\tau_{\leq n} \calC)^{op}$$
factors through a functor $f: \calG \rightarrow \calG'$ (here $j$ denotes the opposite of the Yoneda embedding $\calG^{op} \rightarrow \Ind(\calG^{op})$). We claim that $f$ exhibits $\calG'$ as an $n$-stub of $\calG$. 

It is clear that $f$ preserves finite limits and that $\calG'$ is equivalent to an $(n+1)$-category. To complete the proof, we must show that if $\calG''$ is an $\infty$-category which admits finite limits, and $\calG''$ is equivalent to an $(n+1)$-category, then composition with $f$ induces an equivalence of $\infty$-categories
$$ \Fun^{\lex}( \calG', \calG'') \rightarrow \Fun^{\lex}(\calG, \calG'').$$
By a direct limit argument (using the fact that $\calG$ and $\calG'$ are essentially small), we may reduce to the case where $\calG''$ is itself small. Let $\calD = \Ind( {\calG''}^{op} )$.
We have a homotopy commutative diagram
$$ \xymatrix{ \Fun^{\lex}( \calG', \calG'')^{op} \ar[r]^{h} \ar[d]^{\phi'} & \Fun^{\lex}( \calG, \calG'')^{op} \ar[d]^{\phi} \\
\Fun^{\rex}( {\calG'}^{op}, \calD) \ar[r]^{h'} & \Fun^{\rex}( \calG^{op}, \calD) \\
\LFun( \tau_{\leq n} \calC, \calD) \ar[u]^{\psi'} \ar[r]^{h''} & \LFun(\calC, \calD) \ar[u]^{\psi} }$$
We wish to show that $h$ is an equivalence of $\infty$-categories. The functor $h''$ is an equivalence of $\infty$-categories by Corollary \toposref{truncprop}. Since $\psi$ and $\psi'$ are equivalences
(Propositions \toposref{intprop} and \toposref{sumatch}), it follows that $h'$ is an equivalence.
The functors $\phi$ and $\phi'$ are fully faithful. To complete the proof, it will suffice to show that
their essential images are identified via the equivalence provided by $h'$. In other words, we must show that if $g: {\calG'}^{op} \rightarrow \calD$ is a right exact functor such that
$g \circ f$ factors through the essential image of the Yoneda embedding $j': {\calG''}^{op}
\rightarrow \calD$, then $g$ factors through the essential image of $j'$.

Let $\calE$ be a minimal model for the full subcategory of ${\calG'}^{op}$ spanned by those objects
$X$ such that $gX$ belongs to the essential image of $j'$. Since $j'$ is right exact,
the essential image of $\calE$ in ${\calG'}^{op}$ is stable under finite colimits. It follows that
$\Ind(\calE)$ admits a fully faithful, colimit preserving embedding into $\tau_{\leq n} \calC$.
The essential image of this embedding contains the essential image of the composition
$$ \calG^{op} \stackrel{j}{\rightarrow} \calC \stackrel{\tau_{\leq n}}{\rightarrow} \tau_{\leq n} \calC,$$
and therefore (since it is stable under small colimits) contains the essential image of $\tau_{\leq n}$. 
It follows that $\Ind(\calE)$ is equivalent to $\tau_{\leq n} \calC$. Using Lemma \toposref{stylus}, we conclude that the inclusion $i: \calE \subseteq {\calG'}^{op}$ exhibits $\calG'$ as an idempotent completion of $\calE$. We now invoke Lemma \ref{sull} to complete the proof.
\end{proof}

\begin{definition}
Let $\calG$ be a geometry. We will say that a transformation of geometries
$f: \calG \rightarrow \calG_{\leq n}$ {\it exhibits $\calG_{\leq n}$ as an $n$-stub of $\calG$} if
the following conditions are satisfied:
\begin{itemize}
\item[$(1)$] The underlying functor between $\infty$-categories exhibits $\calG_{\leq n}$ as
an $n$-stub of $\calG$, in the sense of Definition \ref{rab}.
\item[$(2)$] The class of $\calG_{\leq n}$-admissible morphisms and the Grothendieck topology on $\calG_{\leq n}$ are generated by the functor $f$, in the sense of Remark \ref{gener}.
\end{itemize}
\end{definition}

The following result is a more or less immediate consequence of the definitions:

\begin{proposition}\label{lubba1}
Let $f: \calG \rightarrow \calG_{\leq n}$ be a transformation of geometries which exhibits $\calG_{\leq n}$
as an $n$-stub of $\calG$. Then, for every $\infty$-topos $\calX$, composition with
$f$ induces a equivalences of $\infty$-categories
$$\Struct_{ \calG_{\leq n}}(\calX) \rightarrow \Struct^{\leq n}_{\calG}( \calX )$$
$$ \Struct_{\calG_{\leq n}}^{\loc}(\calX) \rightarrow \Struct^{\leq n}_{\calG}(\calX) \cap \Struct^{\loc}_{\calG}(\calX).$$
\end{proposition}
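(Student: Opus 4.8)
The plan is to deduce both equivalences from the universal property of the $n$-stub (Definition \ref{rab}(3)), applied with the test category $\tau_{\leq n}\calX$, and then to check by inspection that the resulting equivalence restricts correctly to the subcategories of structures and of local transformations. I would begin with the unstructured version: composition with $f$ identifies $\Fun^{\lex}(\calG_{\leq n}, \calX)$ with the full subcategory of $\Fun^{\lex}(\calG, \calX)$ spanned by the left exact functors taking $n$-truncated values. Since $\calG_{\leq n}$ is equivalent to an $(n+1)$-category, every object of $\calG_{\leq n}$ is $n$-truncated, and left exact functors preserve $n$-truncated objects (Proposition \toposref{eaa}); so every left exact functor $\calG_{\leq n}\to\calX$ factors through $\tau_{\leq n}\calX$, giving $\Fun^{\lex}(\calG_{\leq n},\calX) = \Fun^{\lex}(\calG_{\leq n}, \tau_{\leq n}\calX)$. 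The full subcategory $\tau_{\leq n}\calX\subseteq\calX$ is stable under finite limits and is itself (equivalent to) an $(n+1)$-category, so Definition \ref{rab}(3) gives an equivalence $\Fun^{\lex}(\calG_{\leq n}, \tau_{\leq n}\calX)\xrightarrow{\sim}\Fun^{\lex}(\calG, \tau_{\leq n}\calX)$ induced by composition with $f$. Finally, since $\tau_{\leq n}\calX\hookrightarrow\calX$ is fully faithful and preserves finite limits, $\Fun^{\lex}(\calG, \tau_{\leq n}\calX)$ is exactly the full subcategory of $\Fun^{\lex}(\calG,\calX)$ on the $n$-truncated-valued functors. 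Concatenating these identifications proves the claim.

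Next I would show that, writing $\calO := F\circ f$ for $F\in\Fun^{\lex}(\calG_{\leq n},\calX)$, the functor $F$ is a $\calG_{\leq n}$-structure if and only if $\calO$ is a $\calG$-structure; with the previous paragraph this yields the first displayed equivalence. The ``only if'' direction is formal: $f$ is a transformation of geometries and so carries admissible covers of $\calG$ to admissible covers of $\calG_{\leq n}$, whence the effective-epimorphism condition defining a $\calG$-structure for $\calO$ is a special case of the one defining a $\calG_{\leq n}$-structure for $F$. For ``if'' one uses the hypothesis that the admissible morphisms and the Grothendieck topology of $\calG_{\leq n}$ are generated by $f$ in the sense of Remark \ref{gener}. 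I would consider the collection of admissible families $\{V_\beta\to Y\}$ in $\calG_{\leq n}$ for which $\coprod_\beta F(V_\beta)\to F(Y)$ is an effective epimorphism: it contains every family $\{f(U_\alpha)\to f(X)\}$ coming from an admissible cover of $\calG$ (that is precisely the assertion that $\calO$ is a $\calG$-structure), and --- using only left exactness of $F$ together with the stability of effective epimorphisms in an $\infty$-topos under base change, coproduct and composition, and the implication ``$g\circ h$ an effective epimorphism $\Rightarrow$ $g$ an effective epimorphism'' --- it propagates through each of the operations by which the Grothendieck topology of $\calG_{\leq n}$ is generated from those covers. Hence it contains every admissible cover of $\calG_{\leq n}$, so $F$ is a $\calG_{\leq n}$-structure.

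For the second displayed equivalence it remains to check that for local objects $F,F'$ and a natural transformation $\alpha\colon F\to F'$, $\alpha$ is a local transformation of $\calG_{\leq n}$-structures if and only if $\alpha\circ f$ is a local transformation of $\calG$-structures; restricting the equivalence above accordingly then produces $\Struct^{\loc}_{\calG_{\leq n}}(\calX)\xrightarrow{\sim}\Struct^{\leq n}_{\calG}(\calX)\cap\Struct^{\loc}_{\calG}(\calX)$. The ``only if'' is again immediate from $f$ preserving admissibility. For ``if'', consider the collection of admissible morphisms $U\to X$ of $\calG_{\leq n}$ at which the square built from $\alpha$ is a pullback; it contains all $f(g)$ with $g$ admissible in $\calG$, and a routine pasting argument using left exactness of $F$ and $F'$ and the stability of pullback squares under retracts shows it is stable under base change, retracts, and the two-out-of-three property of Definition \ref{stubb}(ii) --- exactly the operations generating the admissibility structure of $\calG_{\leq n}$. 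Hence it is all of $\calG_{\leq n}^{\adm}$, and $\alpha$ is local.

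The only non-formal point is the ``if'' direction in the last two paragraphs, where one must make precise what ``generated by $f$'' contributes and verify that the relevant condition (being an effective epimorphism, resp.\ being a pullback square) survives every closure operation used to build the topology, resp.\ the admissibility structure. I expect this to present no real difficulty, since each such step reduces to an elementary stability property of effective epimorphisms, or of limit diagrams, in an $\infty$-topos.
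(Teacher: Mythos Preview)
Your proof is correct and is exactly the argument the paper has in mind: the paper does not write out a proof at all, remarking only that the result ``is a more or less immediate consequence of the definitions.'' Your approach --- reducing to the universal property of the $n$-stub applied to the $(n+1)$-category $\tau_{\leq n}\calX$, and then propagating the structure and locality conditions through the generating operations of Remark \ref{gener} --- is precisely the natural way to unpack that remark, and it mirrors the explicit argument the paper gives for the closely analogous Proposition \ref{sku} (geometric envelopes in place of $n$-stubs). One small phrasing point: in your first paragraph you invoke Proposition \toposref{eaa} by saying ``left exact functors preserve $n$-truncated objects''; the relevant statement is rather that any left exact functor out of an $(n+1)$-category takes values in $n$-truncated objects, which is what you actually use.
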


From this we deduce the following Corollary, which also appeared implicitly in the proof of Proposition \ref{snubber}: 

\begin{corollary}
Let $\calG$ and $\calG_{\leq n}$ be small $\infty$-categories which admit finite limits, and let
$f: \calG \rightarrow \calG_{\leq n}$ exhibit $\calG_{\leq n}$ as an $n$-stub of $\calG$. Then
composition with $f$ induces a fully faithful embedding
$\Ind(\calG^{op}_{\leq n}) \rightarrow \Ind(\calG^{op})$ whose essential image is the full subcategory of
$\Ind(\calG^{op})$ spanned by the $n$-truncated objects.
\end{corollary}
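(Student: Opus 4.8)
The plan is to reduce the assertion to the defining universal property of an $n$-stub, specialized to the $\infty$-category of $n$-truncated spaces, and then to transport everything through the canonical identification $\Ind(\calD^{op}) \simeq \Fun^{\lex}(\calD, \SSet)$, valid for any small $\calD$ admitting finite limits as recorded in the Notation section. Write $\tau_{\leq n}\SSet \subseteq \SSet$ for the full subcategory spanned by the $n$-truncated spaces. Since the $n$-truncated spaces are closed under finite limits in $\SSet$, and since $\bHom_{\SSet}(A,B)$ is $n$-truncated whenever $B$ is, the $\infty$-category $\tau_{\leq n}\SSet$ admits finite limits and is equivalent to an $(n+1)$-category. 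Hence condition $(3)$ of Definition \ref{rab}, applied with $\calC = \tau_{\leq n}\SSet$, tells us that composition with $f$ induces an equivalence $\Fun^{\lex}(\calG_{\leq n}, \tau_{\leq n}\SSet) \rightarrow \Fun^{\lex}(\calG, \tau_{\leq n}\SSet)$.

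Next I would recognize the two sides of this equivalence. On the source: because $\calG_{\leq n}$ is itself equivalent to an $(n+1)$-category, Proposition \toposref{eaa} shows that every left exact functor $\calG_{\leq n} \rightarrow \SSet$ already takes $n$-truncated values, so $\Fun^{\lex}(\calG_{\leq n}, \tau_{\leq n}\SSet) = \Fun^{\lex}(\calG_{\leq n}, \SSet) \simeq \Ind(\calG_{\leq n}^{op})$. Since $f$ preserves finite limits, precomposition with $f$ carries left exact functors to left exact functors, so the functor "composition with $f$" is well-defined from $\Ind(\calG_{\leq n}^{op})$ to $\Ind(\calG^{op})$ to begin with, and under the identifications above it is exactly the equivalence just produced followed by the fully faithful inclusion $\Fun^{\lex}(\calG, \tau_{\leq n}\SSet) \hookrightarrow \Fun^{\lex}(\calG, \SSet) \simeq \Ind(\calG^{op})$; it is this inclusion we must now analyze.

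On the target: I claim that $\Fun^{\lex}(\calG, \tau_{\leq n}\SSet)$, viewed as the full subcategory of $\Fun^{\lex}(\calG, \SSet) \simeq \Ind(\calG^{op})$ consisting of the functors with $n$-truncated values, coincides with the full subcategory of $n$-truncated objects of $\Ind(\calG^{op})$. This is a short Yoneda argument: writing $j \colon \calG^{op} \rightarrow \Ind(\calG^{op})$ for the Yoneda embedding, an object $F$ has $F(X) \simeq \bHom_{\Ind(\calG^{op})}(j(X), F)$, so if $F$ is $n$-truncated then all its values are; conversely, if all values of $F$ are $n$-truncated then $\bHom(Z,F)$ is $n$-truncated for every compact $Z$ (a retract of some $j(X)$), and hence for every object $Z$, since each object of $\Ind(\calG^{op})$ is a filtered colimit of compacts and $\bHom(-,F)$ carries such colimits to limits. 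Assembling the three identifications shows that composition with $f$ identifies $\Ind(\calG_{\leq n}^{op})$ with the full subcategory of $n$-truncated objects of $\Ind(\calG^{op})$, which is the Corollary.

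No step here is a serious obstacle; the whole content is bookkeeping, which is why the text can record that essentially this computation already appears within the proof of Proposition \ref{snubber}. The only genuinely external input is Proposition \toposref{eaa}, used to see that a left exact functor out of an $(n+1)$-category automatically lands in the $n$-truncated objects; the single point that needs care is that postcomposition with the truncation $\SSet \rightarrow \tau_{\leq n}\SSet$ is \emph{not} left exact, so one must produce the source of the equivalence directly as $\Fun^{\lex}(\calG_{\leq n}, \SSet)$ rather than as a formal truncation of $\Fun^{\lex}(\calG, \SSet)$. An equivalent but more structural route would endow $\calG$ and $\calG_{\leq n}$ with their discrete geometry structures and invoke the first equivalence of Proposition \ref{lubba1} at $\calX = \SSet$, which is the same argument phrased in the language of structure sheaves.
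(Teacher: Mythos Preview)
Your proof is correct and follows essentially the same approach as the paper. Both reduce to the Yoneda argument identifying the left exact functors $\calG \rightarrow \SSet$ taking $n$-truncated values with the $n$-truncated objects of $\Ind(\calG^{op})$; the paper phrases the first half in the language of discrete geometries and $\Struct^{\leq n}_{\calG}(\SSet)$ (implicitly via Proposition \ref{lubba1}, as you note in your final sentence), while you invoke condition $(3)$ of Definition \ref{rab} directly, but these are the same thing unpacked.
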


\begin{proof}
Regard $\calG$ as a discrete geometry, so that the stub $\calG_{\leq n}$ inherits also the structure of a discrete geometry. We observe that there are canonical isomorphisms
$$ \Ind(\calG^{op}) \simeq \Struct_{\calG}(\SSet) \quad \quad \Ind( \calG_{\leq n} ) \simeq \Struct_{\calG^{op}_{\leq n}}(\SSet).$$
It will suffice to show that the first of these isomorphisms
restricts to an isomorphism $\tau_{\leq n} \Ind(\calG^{op}) \simeq \Struct_{\calG}^{\leq n}(\SSet)$.
In other words, we must show that a left-exact functor $F: \calG \rightarrow \SSet$ is
$n$-truncated as an object of $\Ind(\calG^{op})$ if and only it takes values in the full subcategory
$\tau_{ \leq n} \SSet$. 

Let $e: \Ind(\calG^{op})^{op} \rightarrow \SSet$ denote the functor represented by $F$. Then $F$ is
equivalent to the composition $\calG \stackrel{j}{\rightarrow} \Ind(\calG^{op})^{op} \stackrel{e}{\rightarrow} \SSet$. If $F \in \tau_{\leq n} \Ind(\calG^{op})$, then $e$ factors through $\tau_{\leq n} \SSet$, so that $F$ also factors through $\tau_{\leq n} \SSet$. 

To prove the converse, let
$\calC$ denote the full subcategory of $\Ind( \calG^{op} )$ spanned by those objects
$X$ such that $e(X) \simeq \bHom_{ \Ind(\calG^{op})}( X, F)$ is $n$-truncated. We wish to show that
$\calC = \Ind(\calG^{op})$. Since
the functor $e$ carries colimits in $\Ind(\calG^{op})$ to limits in $\SSet$, we conclude that
$\calC$ is stable under colimits in $\Ind(\calG^{op})$. It will therefore suffice to show that
$\calC$ contains the essential image of the Yoneda embedding $\calG^{op} \rightarrow \Ind(\calG^{op})$, which is equivalent to the assertion that $F \simeq e \circ j$ factors through $\tau_{\leq n} \SSet$.
\end{proof}

\section{Scheme Theory}\label{secscheme}

The theory of geometries presented in \S \ref{geo} can be regarded as a generalization
of the theory of locally ringed spaces. In this section, we will define
a full subcategory $\Sch(\calG) \subseteq \LGeo(\calG)^{op}$, which we will call the
{\it $\infty$-category of $\calG$-schemes}; in the special case where $\calG$ is the geometry
$\calG_{\Zar}$ of Example \ref{summ}, this will recover (a mild generalization of) the classical
theory of schemes.

Our first step is to describe the class of {\em affine} $\calG$-schemes. We take our cue from classical scheme theory: if $A$ is a commutative ring, then the affine scheme $( \SSpec A, \calO_{\SSpec A})$
is characterized by the following universal property:
\begin{itemize}
\item[$(\ast)$] For every locally ringed space $(X, \calO_X)$, the canonical map
$$ \Hom_{ \LRingSpace}( (X, \calO_X), ( \SSpec A, \calO_{ \SSpec A}))
\rightarrow \Hom_{\Comm}( A, \Gamma(X, \calO_X) )$$ is a bijection.
\end{itemize}
To make $(\ast)$ appear more symmetric, we observe that $\Hom_{\Comm}( A, \Gamma(X, \calO_X))$
can be identified with the set of maps from $(X, \calO_X)$ to $(\ast, A)$ in the category of
{\em ringed spaces}. This raises the following general question: given a transformation
of geometries $f: \calG' \rightarrow \calG$, does the induced functor $\LGeo(\calG)^{op}
\rightarrow \LGeo(\calG')^{op}$ admit a right adjoint? We will give an affirmative answer to this question in \S \ref{relspec} (Theorem \ref{exspec}), using a rather abstract construction. Our primary interest is in the situation where $\calG' = \calG$ (the discrete geometry with the same underlying $\infty$-category as $\calG$), and in the restriction of this right adjoint to $\calG'$-structures on the final $\infty$-topos $\SSet$. In this case, we obtain a functor $\Spec^{\calG}: \Ind(\calG^{op}) \rightarrow \LGeo(\calG)$. 

If $A$ is a commutative ring, the affine scheme $( \SSpec A, \calO_{\SSpec A})$ is characterized
by $(\ast)$ but can also be constructed by a very concrete procedure. For example, the underlying topological space $\SSpec A$ can be identified with the set of all prime ideals of $A$, endowed with the Zariski topology. In \S \ref{abspec}, we will generalize this construction to the setting of $\calG$-schemes, where $\calG$ is any geometry, thereby obtaining an explicit model for 
the functor $\Spec^{\calG}$ which is easy to compare with the classical theory of Zariski spectra.

Given the existence of the spectrum functor $\Spec^{\calG}$, we can proceed to define the $\infty$-category $\Sch(\calG)$: it is the full subcategory of $\LGeo(\calG)^{op}$ spanned by those pairs
$(\calX, \calO_{\calX})$ which, locally on the $\infty$-topos $\calX$, belong to the essential image
of $\Spec^{\calG}$. We will then proceed in 
\S \ref{geo6} to define the $\infty$-category $\Sch(\calG)$ and establish some of its basic properties (for example, the existence of finite limits in $\Sch(\calG)$).

The definition of the class of $\calG$-schemes presented in \S \ref{geo6} is analogous to
the usual definition of a scheme as a topological space $X$ equipped with a sheaf
$\calO_X$ of commutative rings. There is another equally important way to think about the category of schemes: every scheme $(X, \calO_X)$ defines a covariant functor
$F_X: \Comm \rightarrow \Set$, described by the formula
$$ F_X(A) = \Hom( (\SSpec A, \calO_{\SSpec A}), (X, \calO_X) ).$$
This construction determines a fully faithful embedding from the category of schemes
to the functor category $\Fun( \Comm, \Set)$. In other words, instead of viewing a scheme
as a special kind of ringed topological space, we may view a scheme as a special kind of
functor from commutative rings to sets. In \S \ref{geo7}, we will prove an analogue of this statement in the setting of $\calG$-schemes: namely, there exists a fully faithful (Yoneda-style) embedding
$$ \phi: \Sch(\calG) \rightarrow \Fun( \Ind(\calG^{op}), \SSet).$$
We can therefore identify $\Sch(\calG)$ with the essential image of this functor, and thereby view $\calG$-schemes as special kinds of space-valued functors on $\Ind(\calG^{op})$. 

Our final objective in this section is to give some examples which illustrate the relationship of our theory with classical geometry (we will consider $\infty$-categorical variations on these examples in \S \ref{app6}, and still more exotic situations in future papers).
Fix a commutative ring $k$. The category $\Comm_{k}$ of commutative $k$-algebras admits many Grothendieck topologies. Of particular interest to us will be the Zariski and \etale topologies on
$\Comm_{k}$. To each of these, we can associate a geometry $\calG$ whose underlying $\infty$-category is equivalent to the (nerve of the) category of affine $k$-schemes of finite presentation. We will study these geometries in \S \ref{exzar} and \S \ref{exet}. In the first
case, the theory of $\calG$-schemes will recover the usual theory of $k$-schemes; in the second, we will recover the theory of Deligne-Mumford stacks over $k$. 

\subsection{Construction of Spectra: Relative Version}\label{relspec}

Let $A$ be a commutative ring. The {\it Zariski spectrum} $\SSpec A$ is defined to be the set of all prime ideals ${\mathfrak p} \subseteq A$. We regard $\SSpec A$ as a topological space, endowed with the {\it Zariski topology} having a basis of open sets $U_{f} = \{ \mathfrak{p} \subseteq A | f \notin \mathfrak{p} \}$, where $f$ ranges over the elements of $A$. There is a sheaf of commutative rings $\calO_{\SSpec A}$ on $\SSpec A$, whose value on an open subset $U_{f} \subseteq \SSpec A$ has the value $\calO_{\SSpec A}(U_{f}) \simeq A[ \frac{1}{f} ]$. The locally ringed space $(\SSpec A, \calO_{\SSpec A})$ can also be described by a universal property: it is univeral among locally ringed spaces
$(X, \calO_X)$ equipped with a ring homomorphism $A \rightarrow \Gamma(X, \calO_X)$.

We wish to obtain a generalization of this picture. We proceed in several steps.

\begin{itemize}
\item[$(1)$] It is in some sense coincidental that $\SSpec A$ is described by a topological space. What arises more canonically is the lattice of open subsets of $\SSpec A$, which is generated by basic open sets of the form $U_{f}$. This lattice naturally forms a {\it locale}, or a
$0$-topos (see \S \toposref{0topoi} for a discussion of this notion). It happens that this locale has enough points, and can therefore be described as the lattice of open subsets of a topological space. However, there are various reasons we might want to disregard this fact:

\begin{itemize}
\item[$(a)$] The existence of enough points for $\SSpec A$ is equivalent to the assertion that every nonzero commutative ring contains a prime ideal, and the proof of this assertion requires the axiom of choice. 

\item[$(b)$] In {\em relative} situations (see $(2)$ below), the relevant construction may well fail to admit enough points, even if the axiom of choice is assumed. However, the underlying locale (and its associated sheaf theory) are still well-behaved.

\item[$(c)$] If we wish to replace the Zariski topology by some other topology (such as the
\etale topology), then we are forced to work with $\SSpec A$ as a {\em topos} rather than simply as a topological space: the category of \etale sheaves on $\SSpec A$ is not generated by subobjects of the final object.
\end{itemize}

When we study derived algebraic geometry, we will want to study sheaves
on $\SSpec A$ of a higher-categorical nature, such as sheaves of spaces or sheaves of spectra.
For these purposes, it will be most convenient to regard $\SSpec A$ as an $\infty$-topos, rather than as a topological space.

\item[$(2)$] Let $A$ be a commutative ring. We can regard $A$ as defining a {\em sheaf} of commutative rings over the space $\ast$ consisting of a single point. Then, for any ringed
space $(X, \calO_X)$, we can identify ring homomorphisms $A \rightarrow \Gamma(X; \calO_X)$
with maps from $(X, \calO_X)$ to $(\ast, A)$ in the category $\RingSpace$ of ringed spaces.
Let $\LRingSpace$ denote the subcategory of {\em locally} ringed spaces. Then we can
reformulate condition $(\ast)$ as follows: for every commutative ring $A$ and every locally ringed
space $(X, \calO_X)$, we have a canonical bijection
$$ \Hom_{\LRingSpace}( (X, \calO_X), ( \SSpec A, \calO_{ \SSpec A}) )
\rightarrow \Hom_{ \RingSpace}( (X, \calO_X), (\ast, A) ).$$
More generally, we might ask if there is an analogue of the locally ringed space $( \SSpec A, \calO_{ \SSpec A})$ for {\em any} ringed space $(Y, \calA)$. In other words, we might ask if
the inclusion $\LRingSpace \subseteq \RingSpace$ admits a right adjoint.

\item[$(3)$] For any topological space $X$, a sheaf $\calO$ of local commutative rings
on $X$ can be identified with a $\calG_{\Zar}$-structure on the $\infty$-topos $\Shv(X)$, where
$\calG_{\Zar}$ is the geometry described in Example \ref{summ}. This construction allows us to identify the usual category $\LRingSpace$ with a full subcategory of the $\infty$-category
$\LGeo(\calG_{\Zar})^{op}$ of {\em locally ringed $\infty$-topoi}. Similarly, the category of ringed spaces
can be identified with a full subcategory of $\LGeo(\calG')^{op}$, where $\calG'$ denotes
denotes the discrete geometry having the same underlying $\infty$-category as $\calG_{\Zar}$.
The evident transformation of geometries $\calG' \rightarrow \calG_{\Zar}$ induces a functor
$$ \LGeo( \calG_{\Zar})^{op} \rightarrow \LGeo(\calG')^{op},$$
generalizing the inclusion $\LRingSpace \subseteq \RingSpace$ of $(2)$. More generally,
we can consider an analogous restriction functor associated to {\em any} 
transformation of geometries $\calG' \rightarrow \calG_{\Zar}$.
\end{itemize}

We can now state the main result of this section.

\begin{theorem}\label{exspec}
Let $f: \calG \rightarrow \calG'$ be a transformation of geometries. Then
the induced functor $\LGeo(\calG') \rightarrow \LGeo(\calG)$ admits a left adjoint.
\end{theorem}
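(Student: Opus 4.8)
The plan is to reduce to a universal situation via classifying $\infty$-topoi, and then to construct the left adjoint by hand as a pushout of $\infty$-topoi that has been ``corrected'' so as to respect the relevant geometric structures.

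First I would pass to classifying $\infty$-topoi. Choose universal $\calG$- and $\calG'$-structures $\calO_{\calK}\colon \calG \to \calK$ and $\calO_{\calK'}\colon \calG' \to \calK'$ (Proposition \ref{usebil}), and endow $\calK$ and $\calK'$ with the geometric structures of Example \ref{abbun}. By Example \ref{sput} the transformation $f$ induces a geometric morphism $\phi^{\ast}\colon \calK \to \calK'$ compatible with these geometric structures, and by Proposition \ref{swame} the restriction functor $\LGeo(\calG') \to \LGeo(\calG)$ is identified, via the equivalences $\LGeo(\calK') \simeq \LGeo(\calG')$ and $\LGeo(\calK) \simeq \LGeo(\calG)$, with the functor $\Phi$ given by composition with $\phi^{\ast}$. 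Hence it suffices to prove: for any geometric morphism $\phi^{\ast}\colon \calK \to \calK'$ compatible with geometric structures, the functor $\Phi\colon \LGeo(\calK') \to \LGeo(\calK)$ admits a left adjoint.

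Next I would build the left adjoint $L$. Given $(\calX, \calO_{\calX}) \in \LGeo(\calK)$, regard $\calO_{\calX}$ as a colimit- and finite-limit-preserving functor $\calK \to \calX$. A first approximation to $L(\calX,\calO_{\calX})$ is the pushout $\calY_0 := \calX \coprod_{\calK} \calK'$ of $\calX \xleftarrow{\calO_{\calX}} \calK \xrightarrow{\phi^{\ast}} \calK'$ formed in $\LGeo$ (which admits pushouts, dually to the existence of pullbacks in $\RGeom$); it comes with geometric morphisms $a^{\ast}\colon\calX \to \calY_0$ and $\calO_{\calY_0}\colon \calK' \to \calY_0$ together with a canonical equivalence $a^{\ast}\circ\calO_{\calX} \simeq \calO_{\calY_0}\circ\phi^{\ast}$ of $\calK$-structures on $\calY_0$. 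Unwinding the universal property of the pushout shows that $(\calY_0,\calO_{\calY_0})$ corepresents the functor $(\calZ,\calO_{\calZ}) \mapsto \bHom_{\LGeo(\calK)}((\calX,\calO_{\calX}), \Phi(\calZ,\calO_{\calZ}))$ \emph{after restricting to morphisms whose natural-transformation component is an equivalence}. The general case requires enlarging $\calY_0$ to accommodate morphisms whose transformation component is merely \emph{local} in the sense of the geometric structure. For this I would invoke the path-$\infty$-topos constructions of Proposition \ref{spurin}, which produce $\infty$-topoi classifying local morphisms of structures, and use them to replace $\calY_0$ by the appropriate reflection; Proposition \ref{unwi} and Corollary \ref{swip} guarantee that the filtered colimits involved in this reflection behave well and remain inside $\LGeo(\calK')$. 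The outcome is an $\infty$-topos $\calY$ with a $\calK'$-structure $\calO_{\calY}$ and a canonical local transformation realizing a unit morphism $(\calX,\calO_{\calX}) \to \Phi(\calY,\calO_{\calY})$.

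The last step is to verify the universal property of this unit: every morphism $(\calX,\calO_{\calX}) \to \Phi(\calZ,\calO_{\calZ})$ in $\LGeo(\calK)$ factors essentially uniquely through it. This is obtained by combining the mapping-space formula for the pushout $\calY_0$ in $\LGeo$ with the classification of local morphisms coming from Proposition \ref{spurin} and with the hypothesis that $\Phi$ carries local transformations to local transformations; the coCartesian fibration $\LGeo(\calK') \to \LGeo$ of Proposition \ref{swupa} is the natural bookkeeping device for handling the varying underlying $\infty$-topos. I expect the main obstacle to lie precisely here: not in the change of underlying $\infty$-topos, which the pushout in $\LGeo$ handles cleanly, but in the coherent interaction of that pushout with the geometric structures. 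One must check that the corrected object $\calY$ genuinely carries a $\calK'$-structure, that the unit transformation is local, and that the correction does not disturb the pushout's mapping-space formula. It is exactly at this point that the full apparatus of \S\ref{geo} --- the factorization systems on $\Struct_{\calG}(\calX)$ (Theorem \ref{gen}), the path $\infty$-topoi $\calK^{\Delta^1}$ and the objects $\calK^{\Delta^1}_{L}$, $\calK^{\Delta^1}_{R}$ (Proposition \ref{spurin}), and the behaviour of structure sheaves under filtered colimits (Proposition \ref{unwi}) --- gets used.
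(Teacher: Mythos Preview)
Your broad strategy --- pass to classifying $\infty$-topoi, reduce to constructing a single universal object, then recover the general case by pushout in $\LGeo$ --- is exactly the paper's strategy (Lemma~\ref{specred} formalizes your pushout reduction, and Proposition~\ref{ingred} is the universal case). You have also correctly identified that the path $\infty$-topoi $\calK^{\Delta^1}_{R}$ and ${\calK'}^{\Delta^1}_{L}$ from Proposition~\ref{spurin} are the classifying objects for local morphisms and hence must appear in the construction.

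The gap is in your ``correction'' step, which you leave vague and for which you invoke the wrong machinery. Two concrete points. First, Proposition~\ref{unwi} and Corollary~\ref{swip} on filtered colimits play \emph{no role} in the paper's proof; there is no reflection or small-object-style argument. Second, you are missing a key device: the paper realizes a local morphism $\calO \to \calO' \circ f$ of structures as a single functor on the \emph{correspondence} $\calM \to \Delta^1$ associated to $f$, and constructs a classifying $\infty$-topos $\calK(\calM)$ for such ``$\calM$-structures'' (Lemma~\ref{kofm}). The universal object $\calK^{\calG'}_{\calG}$ is then built in one step as the fiber product
\[
(\calK^{\Delta^1}_{R} \times {\calK'}^{\Delta^1}_{L}) \times_{ \calK^{\Delta^1} \times {\calK'}^{\Delta^1} } \calK(\calM)^{\Delta^1} \times_{ \calK(\calM) \times \calK(\calM) } (\calK \times \calK')
\]
in $\RGeom$ (Notation~\ref{sluck}). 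The four factors encode, respectively: that the $\calG$-morphism is local, that the $\calG'$-morphism lies in the left class of the factorization system, that we have a morphism of $\calM$-structures, and that the endpoints are left/right Kan extended from $\calG$ and $\calG'$ (Lemma~\ref{skipun}). Your pushout $\calY_0 = \calX \coprod_{\calK} \calK'$ would arise if one replaced $\calK^{\calG'}_{\calG}$ by $\calK'$, but that classifies only the coCartesian (equivalence) case; the fiber product above is precisely what enlarges the classification to arbitrary local morphisms. The verification of the universal property (proof of Proposition~\ref{ingred}) is then a careful bookkeeping argument with no further new ingredients.
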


\begin{definition}\label{defspect}
Given a transformation of geometries $f: \calG \rightarrow \calG'$, we let
$\Spec_{\calG}^{\calG'}$ denote a left adjoint to the restriction functor
$\LGeo(\calG') \rightarrow \LGeo(\calG)$. We will refer to $\Spec_{\calG}^{\calG'}$ as the
{\it relative spectrum functor} associated to $f$.

Let $\calG$ be a geometry, and let $\calG_0$ be the discrete geometry with the same underlying $\infty$-category as $\calG$. We let $\Spec^{\calG}$ denote the composition
$$ \Ind({\calG}^{op}) \rightarrow \LGeo(\calG_0) \stackrel{ \Spec^{\calG_0}_{\calG} }{\rightarrow} \LGeo(\calG).$$
Here the first functor is given by the identification of $\Ind(\calG^{op}) \simeq \Struct_{\calG_0}(\SSet)$
with the identification of $\Struct_{\calG_0}(\SSet)$ with the fiber of $\LGeo(\calG_0) \times_{ \LGeo } \{ \SSet \}$. We will refer to $\Spec^{\calG}$ as the {\it absolute spectrum functor} associated to the geometry $\calG$.
\end{definition}

Our goal in this section is to prove Theorem \ref{exspec}. The basic idea is reasonably simple but perhaps unenlightening: it uses somewhat abstract constructions such as fiber products of
$\infty$-topoi, and therefore yields a poor understanding of the resulting object. We will address this inadequacy in \S \ref{abspec} by giving a much more explicit construction of 
the absolute spectrum functor $\Spec^{\calG}$ (see Theorem \ref{scoo}).

For the following discussion, let us fix a transformation $f: \calG \rightarrow \calG'$ of geometries.
Given objects $(\calX, \calO) \in \LGeo(\calG)$, and $(\calX', \calO') \in \LGeo(\calG')$, we will say that a
morphism $\theta: (\calX, \calO) \rightarrow (\calX', \calO' \circ f)$ {\it exhibits $(\calX', \calO')$ as a
relative spectrum of $(\calX, \calO)$} if, for every object $(\calY, \calO_{\calY}) \in \LGeo(\calG')$,
composition with $\theta$ induces a homotopy equivalence
$$ \bHom_{ \LGeo(\calG)}( (\calX', \calO'), (\calY, \calO_{\calY}) ) \rightarrow
\bHom_{ \LGeo(\calG')}( (\calX, \calO), (\calY, \calO_{\calY} \circ f)).$$
Theorem \ref{exspec} can be formulated as follows: for every object
$(\calX, \calO) \in \LGeo(\calG)$, there exists an object $(\calX', \calO') \in \LGeo(\calG')$ and a
morphism $(\calX, \calO) \rightarrow (\calX', \calO' \circ f)$ which exhibits $(\calX', \calO')$ as a
relative spectrum of $(\calX, \calO)$. Our first step is to reduce to the proof to a universal case.

\begin{lemma}\label{specred}
Let $f: \calG \rightarrow \calG'$ be a transformation of geometries. Suppose that
$(\calX, \calO) \in \LGeo(\calG)$, $(\calX', \calO') \in \LGeo(\calG')$, and that
$\alpha: (\calX, \calO) \rightarrow (\calX', \calO' \circ f)$ is a morphism in $\LGeo(\calG)$
which exhibits $(\calX', \calO')$ as a relative spectrum of $(\calX, \calO)$.
Suppose given a pushout diagram
$$ \xymatrix{ \calX \ar[d] \ar[r]^{{g}^{\ast}} & \calY \ar[d] \\
\calX' \ar[r]^{{g'}^{\ast}} & \calY' }$$
in the $\infty$-category $\LGeo$. Then the induced map
$(\calY, {g}^{\ast} \circ \calO) \rightarrow (\calY', {g'}^{\ast} \circ \calO' \circ f)$ exhibits
$(\calY', {g'}^{\ast} \circ \calO')$ as a relative spectrum of $( \calY, {g}^{\ast} \circ \calO)$.
\end{lemma}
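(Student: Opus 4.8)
The plan is to check directly that the morphism $\beta\colon (\calY, g^{\ast} \circ \calO) \to (\calY', g'^{\ast} \circ \calO' \circ f)$ of the statement has the universal property of a relative spectrum: for every object $(\calZ, \calO_{\calZ}) \in \LGeo(\calG')$, composition with $\beta$ — together with the forgetful functor $\LGeo(\calG') \to \LGeo(\calG)$ given by precomposition with $f$ — must induce a homotopy equivalence
$$ \bHom_{\LGeo(\calG')}\bigl( (\calY', g'^{\ast} \circ \calO'),\, (\calZ, \calO_{\calZ}) \bigr) \longrightarrow \bHom_{\LGeo(\calG)}\bigl( (\calY, g^{\ast} \circ \calO),\, (\calZ, \calO_{\calZ} \circ f) \bigr). $$
The main input is Proposition \ref{swupa}: the projections $\LGeo(\calG) \to \LGeo$ and $\LGeo(\calG') \to \LGeo$ are coCartesian fibrations, and the coCartesian pushforward of a structure sheaf along a geometric morphism $h^{\ast}$ is computed by composition with $h^{\ast}$. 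Consequently $(\calX, \calO) \to (\calY, g^{\ast} \circ \calO)$ and $(\calX', \calO') \to (\calY', g'^{\ast} \circ \calO')$ are coCartesian edges lying over $g^{\ast}$ and $g'^{\ast}$ respectively — the first being the edge along which $\beta$ is induced from $\alpha$ — and the left-hand vertical map of the given square is the geometric morphism underlying $\alpha$. Using the characterization of coCartesian edges, mapping spaces out of a coCartesian pushforward are homotopy pullbacks; concretely,
$$ \bHom_{\LGeo(\calG)}\bigl( (\calY, g^{\ast}\calO),\, (\calZ, \calO_{\calZ} \circ f) \bigr) \;\simeq\; \bHom_{\LGeo(\calG)}\bigl( (\calX, \calO),\, (\calZ, \calO_{\calZ} \circ f) \bigr) \times_{\bHom_{\LGeo}(\calX, \calZ)} \bHom_{\LGeo}(\calY, \calZ), $$
$$ \bHom_{\LGeo(\calG')}\bigl( (\calY', g'^{\ast}\calO'),\, (\calZ, \calO_{\calZ}) \bigr) \;\simeq\; \bHom_{\LGeo(\calG')}\bigl( (\calX', \calO'),\, (\calZ, \calO_{\calZ}) \bigr) \times_{\bHom_{\LGeo}(\calX', \calZ)} \bHom_{\LGeo}(\calY', \calZ). $$

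The remainder is a diagram chase. Beginning from the second identification, invoke the hypothesis that the given square is a pushout in $\LGeo$ to rewrite $\bHom_{\LGeo}(\calY', \calZ) \simeq \bHom_{\LGeo}(\calY, \calZ) \times_{\bHom_{\LGeo}(\calX, \calZ)} \bHom_{\LGeo}(\calX', \calZ)$; substituting and cancelling the repeated factor $\bHom_{\LGeo}(\calX', \calZ)$ — legitimate because the map $\bHom_{\LGeo(\calG')}( (\calX', \calO'), (\calZ, \calO_{\calZ}) ) \to \bHom_{\LGeo}(\calX, \calZ)$ factors as the forgetful map to $\bHom_{\LGeo}(\calX', \calZ)$ followed by precomposition with the geometric morphism underlying $\alpha$ — yields
$$ \bHom_{\LGeo(\calG')}\bigl( (\calX', \calO'),\, (\calZ, \calO_{\calZ}) \bigr) \times_{\bHom_{\LGeo}(\calX, \calZ)} \bHom_{\LGeo}(\calY, \calZ). $$
Now the hypothesis that $\alpha$ exhibits $(\calX', \calO')$ as a relative spectrum of $(\calX, \calO)$ identifies the first factor with $\bHom_{\LGeo(\calG)}( (\calX, \calO), (\calZ, \calO_{\calZ} \circ f) )$, compatibly with the maps to $\bHom_{\LGeo}(\calX, \calZ)$ since $\alpha$ lies over the relevant geometric morphism; the result is exactly the right-hand side of the first displayed identification, namely $\bHom_{\LGeo(\calG)}( (\calY, g^{\ast}\calO), (\calZ, \calO_{\calZ} \circ f) )$.

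The one step requiring genuine care — and the main obstacle — is the bookkeeping needed to confirm that the composite of this chain of equivalences is literally the map induced by $\beta$, and not merely an abstract identification of the two spaces. I would settle this by noting that every intermediate equivalence is the canonical one attached either to a coCartesian edge or to the pushout square; that the forgetful functor $\LGeo(\calG') \to \LGeo(\calG)$ carries the coCartesian edge $(\calX', \calO') \to (\calY', g'^{\ast}\calO')$ to a coCartesian edge of $\LGeo(\calG)$; and that $\beta$ is, by construction, the morphism of $\LGeo(\calG)$ filling the square determined by $\alpha$, these two coCartesian edges, and the given pushout diagram — so naturality of the "mapping out of a coCartesian edge" pullback square does the rest. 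It is worth underlining where the two hypotheses enter: the pushout hypothesis on the underlying $\infty$-topoi serves only to rewrite $\bHom_{\LGeo}(\calY', \calZ)$, while the relative-spectrum hypothesis on $\alpha$ serves only to rewrite the structure-sheaf factor, so that the content of the lemma is precisely that these two independent pieces of data glue.
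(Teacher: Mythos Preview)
Your proof is correct and is precisely the ``untangling of definitions'' that the paper alludes to without writing out: the paper simply states that the proof is a simple matter of untangling definitions and leaves it at that. Your use of the coCartesian-edge characterization from Proposition~\ref{swupa} together with the universal property of the pushout in $\LGeo$ is exactly the intended argument, and your care in checking that the chain of equivalences is induced by $\beta$ (rather than being an abstract identification) addresses the only point where one could go wrong.
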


The proof is a simple matter of untangling definitions. To apply Lemma \ref{specred}, we use the following observation: let $\calO_0: \calG \rightarrow \calK$ be a universal $\calG$-structure on
an $\infty$-topos $\calK$. Then {\em any} object $(\calY, \calO) \in \LGeo(\calG)$ is equivalent to
$(\calY, \pi^{\ast} \calO_0)$ for some geometric morphism $\pi^{\ast}: \calK \rightarrow \calY$,
which is uniquely determined up to homotopy. Hence, to prove that $(\calY, \calO)$ admits a relative spectrum, it will suffice to show that $(\calK, \calO_0)$ admits a relative spectrum. 

We observe that for every object $(\calX, \calO) \in \LGeo(\calG)$, the mapping space
$\bHom_{ \LGeo(\calG)}( (\calK, \calO_0), (\calX, \calO) )$ can be identified with the largest
Kan complex contained in $\Struct^{\loc}_{\calG}(\calX)^{/ \calO}$. Theorem \ref{exspec} is now an immediate consequence of the following result:

\begin{proposition}\label{ingred}
Let $f: \calG \rightarrow \calG'$ be a transformation of geometries. Then there
exists an $\infty$-topos $\calK_{\calG}^{\calG'}$, objects $\calO \in \Struct_{\calG}(\calK^{\calG'}_{\calG})$ and $\calO' \in \Struct_{\calG'}(\calK^{\calG'}_{\calG})$, and a local morphism of $\calG$-structures
$\alpha: \calO \rightarrow \calO' \circ f$ with the following universal property:
for every object $(\calX, \calO_{\calX}) \in \LGeo(\calG')$, composition with $\alpha$ induces a homotopy equivalence from
$\bHom_{ \LGeo(\calG') }( ( \calK^{\calG'}_{\calG}, \calO'), (\calX, \calO_{\calX}) )$ to the largest Kan complex contained in $\Struct^{\loc}_{\calG}(\calX)^{/\calO_{\calX} \circ f}$. 
\end{proposition}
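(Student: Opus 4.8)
The plan is to construct $\calK^{\calG'}_{\calG}$ as a suitable fiber product (in the $\infty$-category $\RGeom \simeq \LGeo^{\op}$) of classifying $\infty$-topoi, along a path-object correction that installs the \emph{local} morphism $\alpha$. Concretely: let $\calO_0 \colon \calG \to \calK$ be a universal $\calG$-structure and $\calO'_0 \colon \calG' \to \calK'$ a universal $\calG'$-structure, so that $\Fun^{\ast}(\calK, \calX) \simeq \Struct_{\calG}(\calX)$ and $\Fun^{\ast}(\calK', \calX) \simeq \Struct_{\calG'}(\calX)$ naturally in $\calX$. The composite $\calO'_0 \circ f \colon \calG \to \calK'$ is a $\calG$-structure on $\calK'$, hence classified by a geometric morphism $g^{\ast} \colon \calK \to \calK'$. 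A local morphism $\alpha \colon \calO \to \calO' \circ f$ of $\calG$-structures on an $\infty$-topos $\calX$ is precisely a morphism in $\Fun^{\ast}(\calK, \calX)$ landing in $\overline{S}^{\calX}_R$ (for the geometric structure on $\calK$ of Example \ref{abbun}) whose target is $g^{\ast}$-composite of a geometric morphism $\calX \to \calK'$. So the functor $\calX \mapsto \{\text{such pairs }(\calO_{\calX}, \alpha)\}$ should be corepresentable.

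The key steps, in order: (i) Recall from Proposition \ref{spurin} the path-$\infty$-topos construction: there is a geometric morphism $\calK^{\Delta^1}_R \to \calK^{\Delta^1}$ such that $\Fun_{\ast}(\calX, \calK^{\Delta^1}_R)$ is the full subcategory of $\Fun(\Delta^1, \Fun_{\ast}(\calX, \calK))$ spanned by $\overline{S}^{\calX}_R$ — equivalently, $\Fun^{\ast}(\calK^{\Delta^1}_R, \calX)$ classifies local morphisms of $\calG$-structures on $\calX$, with evaluation at the two endpoints giving the source and target $\calG$-structures. (ii) Form the fiber product $\calK^{\calG'}_{\calG} := \calK^{\Delta^1}_R \times_{\calK} \calK'$ in $\RGeom$, where the map $\calK^{\Delta^1}_R \to \calK$ is evaluation at the \emph{target} endpoint (which remembers a $\calG$-structure on $\calX$ together with a local morphism \emph{into} it) and the map $\calK' \to \calK$ is $g^{\ast}$. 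Then for every $\infty$-topos $\calX$, the universal mapping property of fiber products in $\RGeom$ gives
$$ \Fun^{\ast}(\calK^{\calG'}_{\calG}, \calX) \simeq \Fun^{\ast}(\calK^{\Delta^1}_R, \calX) \times_{\Fun^{\ast}(\calK, \calX)} \Fun^{\ast}(\calK', \calX), $$
i.e. the $\infty$-category of data $(\calO_{\calX}' \in \Struct_{\calG'}(\calX),\, \alpha \colon \calO \to \calO_{\calX}' \circ f \text{ local})$. (iii) Let $\calO' \colon \calG' \to \calK^{\calG'}_{\calG}$ and $\calO \colon \calG \to \calK^{\calG'}_{\calG}$ be the tautological structures obtained from the projection maps, and $\alpha \colon \calO \to \calO' \circ f$ the tautological local morphism; verify $\calO \in \Struct_{\calG}$, $\calO' \in \Struct_{\calG'}$ (using that evaluation of a local morphism diagram at a single $\calG$-structure lands in the right subcategory — Proposition \ref{spurin} plus Theorem \ref{gen}). (iv) Unwind the mapping-space formula: for $(\calX, \calO_{\calX}) \in \LGeo(\calG')$, the space $\bHom_{\LGeo(\calG')}((\calK^{\calG'}_{\calG}, \calO'), (\calX, \calO_{\calX}))$ is the largest Kan complex in $\Struct^{\loc}_{\calG'}(\calX)^{/\calO_{\calX}}$; composing with $\alpha$ and using $f$ identifies this with the largest Kan complex in $\Struct^{\loc}_{\calG}(\calX)^{/\calO_{\calX} \circ f}$, as claimed.

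The main obstacle is step (ii): making precise that the fiber product of $\infty$-topoi \emph{along the target-evaluation leg} of $\calK^{\Delta^1}_R$ corepresents the correct functor, and in particular that this is compatible with the \emph{local}-morphism condition as opposed to merely a morphism of $\calG$-structures. This requires carefully combining the fully-faithfulness statements of Proposition \ref{spurin} (which control $\overline{S}^{\calX}_R$) with the functoriality of the factorization system in $\calX$ from Theorem \ref{gen}(2), and checking that base-change along $g^{\ast} = $ (the classifying morphism of $\calO'_0 \circ f$) correctly transports the "target is obtained by restriction along $f$" constraint — this is where Lemma \ref{specred} will effectively be re-used, via the reduction of Theorem \ref{exspec} to the universal case $(\calK, \calO_0)$. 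A secondary but routine point is verifying $\calO'$ actually satisfies the covering-sieve condition to be a bona fide $\calG'$-structure (not merely left exact); this follows because the projection $\calK^{\calG'}_{\calG} \to \calK'$ is a geometric morphism and $\calG'$-structures are stable under composition with geometric morphisms.
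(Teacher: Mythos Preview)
There is a genuine gap in step (iv), and it stems from a missing ingredient in the fiber product of step (ii). Your identification of $\bHom_{\LGeo(\calG')}((\calK^{\calG'}_{\calG}, \calO'), (\calX, \calO_{\calX}))$ with the largest Kan complex in $\Struct^{\loc}_{\calG'}(\calX)^{/\calO_{\calX}}$ is incorrect: that formula holds only for the universal pair $(\calK', \calO'_0)$, not for your $(\calK^{\calG'}_{\calG}, \calO')$, which carries additional data. Computed correctly, a point of the mapping space is a geometric morphism $\pi^{\ast}\colon \calK^{\calG'}_{\calG} \to \calX$ --- by your fiber-product description, a triple $(\calO_0, \calO'', \alpha\colon \calO_0 \to \calO'' \circ f)$ with $\alpha$ local --- together with a local $\calG'$-morphism $\beta\colon \calO'' = \pi^{\ast}\calO' \to \calO_{\calX}$. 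The map ``compose with $\alpha$'' sends this to $(\calO_0, (\beta \circ f)\circ\alpha)$, and its fiber over a fixed $(\calO_0, \gamma)$ is the space of such factorizations of $\gamma$ through an intermediate $\calO'' \circ f$, which is not contractible. Already when $f = \id_{\calG}$ your construction reduces to $\calK^{\Delta^1}_R$, so the mapping space becomes a space of $2$-simplices in $\Struct^{\loc}_{\calG}(\calX)$ ending at $\calO_{\calX}$ rather than of $1$-simplices; these are not equivalent.

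The missing ingredient is an $S_L$-constraint on the $\calG'$-side. The paper's construction imposes, in addition to $\alpha$ being $\calG$-local, that the induced $\calG'$-morphism from the left Kan extension of $\calO_0$ along $f$ to $\calO'$ lie in the \emph{left} class $S_L^{\calX}$ of the factorization system of Theorem \ref{gen}. Then the further datum $\beta\colon \calO'' \to \calO_{\calX}$ needed for a map in $\LGeo(\calG')$ lies in $S_R^{\calX}$, so the pair forms precisely the $(S_L, S_R)$-factorization of a single $\calG'$-morphism and $\calO''$ becomes contractibly unique. Concretely the paper builds $\calK^{\calG'}_{\calG}$ as a fiber product involving ${\calK'}^{\Delta^1}_L$ as well as $\calK^{\Delta^1}_R$, organized via $\calM$-structures on the correspondence associated to $f$; the $S_L$-condition is condition (2) of Notation \ref{sluck}, and its role is exactly the trivial-fibration step $(b)$ in the proof.
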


For a general object $(\calX, \calO) \in \LGeo(\calG)$, we then have
$\Spec^{\calG'}_{\calG}(\calX, \calO) \simeq ( \calX \times_{ \calK } \calK^{\calG'}_{\calG}, \calO')$, where
the fiber product is taken in $\RGeom$, and $\calO'$ is the pullback of the $\calG'$-structure on
$\calK^{\calG'}_{\calG}$ appearing in the statement of Proposition \ref{ingred}.

We will prove Proposition \ref{ingred} via a somewhat lengthy construction.
First, choose a correspondence $\calM \rightarrow \Delta^1$ associated to the functor
$f: \calG \rightarrow \calG'$, so that we have isomorphisms $\calG \simeq \calM \times_{ \Delta^1 } \{0\}$ and $\calG' \simeq \calM \times_{ \Delta^1} \{1\}$. 

\begin{notation}
For every $\infty$-topos $\calY$, we define a {\it $\calM$-structure} on $\calY$ to be a functor
$\calO_Y: \calM \rightarrow \calY$ such that $\calO_{Y}| \calG \in \Struct_{\calG}(\calY)$ and
$\calO_{Y} | \calG' \in \Struct_{\calG'}(\calY)$. We let $\Struct_{\calM}(\calY)$ denote the
full subcategory of $\Fun( \calM, \calY )$ spanned by the $\calM$-structures.
\end{notation}

\begin{lemma}\label{kofm}
There exists a {\em universal} $\calM$-structure $\calO_{\calM}: \calM \rightarrow \calK(\calM)$,
such that for every $\infty$-topos $\calX$, composition with $\calO_{\calM}$ induces an
equivalence of $\infty$-categories
$$ \Fun^{\ast}( \calK(\calM), \calY) \rightarrow \Struct_{\calM}(\calX).$$
\end{lemma}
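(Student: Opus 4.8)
The plan is to reduce Lemma \ref{kofm} to the general existence theory of classifying $\infty$-topoi that has already been established in this section, applied not to a single geometry but to the correspondence $\calM$. The key observation is that an $\calM$-structure on $\calY$ is a functor $\calM \to \calY$ whose two restrictions are a $\calG$-structure and a $\calG'$-structure respectively; this is a descent-type / local-admissibility condition that can be phrased in terms of admissible morphisms. First I would package the data on $\calM$: declare a morphism of $\calM$ to be \emph{admissible} if it is admissible in $\calG$ or in $\calG'$ (there are no new morphisms between the two fibers going the ``wrong'' way, and morphisms from $\calG$ into $\calG'$ are not declared admissible), and equip $\calM$ with the Grothendieck topology generated by the topologies on $\calG$ and $\calG'$. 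The conditions $(i)$ and $(ii)$ on a correspondence, which are the hypotheses under which $\calM$-structures were discussed in Proposition \ref{prespaz}, are exactly what is needed to check that these data behave well; in particular one checks that $\Struct_{\calM}(\calY)$ as defined in the Notation preceding the lemma agrees with the ``local sheaves'' formalism.

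Next I would invoke the representability results of \S\ref{geo3}. The $\infty$-category $\Struct_{\calM}(\calY)$, being defined by a left-exactness condition on $\calG$ and $\calG'$ separately plus effective-epimorphism conditions for admissible covers, is functorial in $\calY$ with respect to geometric morphisms, exactly as $\Struct_{\calG}(-)$ is. So the construction of Proposition \ref{usebil} applies verbatim: one takes $\calK(\calM)$ to be $\Shv(\calM)$ for the Grothendieck topology just described, with $\calO_{\calM}$ the composite $\calM \xrightarrow{j} \calP(\calM) \xrightarrow{L} \Shv(\calM)$. The universal property $\Fun^{\ast}(\calK(\calM), \calX) \simeq \Struct_{\calM}(\calX)$ then follows from Proposition \toposref{igrute} once one verifies that $\calO_{\calM}$ is indeed an $\calM$-structure, i.e. that its restrictions to $\calG$ and $\calG'$ are structures — which is immediate from the compatibility of the topology on $\calM$ with those on $\calG$ and $\calG'$, since $L \circ j$ restricted to a subcategory carrying the induced topology is the sheafification for that subcategory.

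The only genuinely delicate point — and the one I expect to be the main obstacle — is checking that $\Struct_{\calM}(\calY)$, defined in the paper as the full subcategory of $\Fun(\calM, \calY)$ on functors $\calO_Y$ with $\calO_Y|\calG$ and $\calO_Y|\calG'$ being $\calG$- and $\calG'$-structures, really is cut out by the left-exactness-plus-covering condition that makes the classifying-topos machinery apply. A functor $\calM \to \calY$ need not be left exact as a functor on all of $\calM$ even if both restrictions are; $\calM$ has more limits than the disjoint union $\calG \sqcup \calG'$. The resolution is that we do \emph{not} ask $\calO_{\calM}$ to be left exact on $\calM$: the relevant universal object classifies functors on $\calM$ satisfying only the conditions on the two fibers, and the topology on $\calM$ is set up (using $(i)$ and $(ii)$) so that sheafification produces exactly such a functor. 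Concretely I would either phrase $\calM$-structures directly as left-exact-on-each-fiber functors that carry fiberwise admissible covers to effective epimorphisms, and re-run the proof of Proposition \ref{usebil} in this slightly more general ``two-fibered geometry'' setting, or — cleaner — observe that the forgetful functor $\Struct_{\calM}(\calY) \to \Struct_{\calG}(\calY) \times \Struct_{\calG'}(\calY)$ together with the extra data of a comparison on the correspondence part is itself corepresentable by gluing the classifying topoi of $\calG$ and $\calG'$ along the correspondence, using Proposition \ref{prespaz}(3) to identify the extra data. I would pursue the first route, since it is the shortest: the proof of Proposition \ref{usebil} only uses that $\Struct_{\calG}(-)$ is the functor of left-exact-functors-carrying-admissible-covers-to-effective-epis, and that description generalizes without change to $\calM$ with the admissibility structure and topology described above.

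\begin{proof}[Proof of Lemma \ref{kofm}]
Equip $\calM$ with the following data: a morphism of $\calM$ is \emph{admissible} if it lies in $\calM \times_{\Delta^1}\{0\} = \calG$ and is admissible there, or lies in $\calM \times_{\Delta^1}\{1\} = \calG'$ and is admissible there; and $\calM$ carries the Grothendieck topology generated by the topologies on $\calG$ and $\calG'$. Using hypotheses $(i)$ and $(ii)$ on the correspondence $\calM$ (in the form used throughout \S\ref{geo}), one checks that a functor $\calO_Y\colon \calM \to \calY$ satisfies $\calO_Y|\calG \in \Struct_{\calG}(\calY)$ and $\calO_Y|\calG' \in \Struct_{\calG'}(\calY)$ if and only if $\calO_Y$ restricted to each of the two fibers is left exact and carries each admissible cover to an effective epimorphism. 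Thus $\Struct_{\calM}(\calY)$ is described by exactly the same kind of condition that defines $\Struct_{\calG}(\calY)$, and in particular it is functorial in $\calY$ with respect to geometric morphisms of $\infty$-topoi.

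Now set $\calK(\calM) = \Shv(\calM)$, the $\infty$-topos of sheaves of spaces on $\calM$ for the Grothendieck topology just described, and let $\calO_{\calM}$ be the composite
$$ \calM \stackrel{j}{\rightarrow} \calP(\calM) \stackrel{L}{\rightarrow} \Shv(\calM), $$
where $j$ is the Yoneda embedding and $L$ a left adjoint to the inclusion. Since the topology on $\calM$ restricts to the given topologies on $\calG$ and $\calG'$, the restrictions $\calO_{\calM}|\calG$ and $\calO_{\calM}|\calG'$ are the corresponding sheafified Yoneda embeddings; by Proposition \ref{usebil} these are a $\calG$-structure and a $\calG'$-structure respectively, so $\calO_{\calM}$ is an $\calM$-structure.

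Finally, the same argument as in the proof of Proposition \ref{usebil} — an application of Proposition \toposref{igrute} — shows that for every $\infty$-topos $\calX$, composition with $\calO_{\calM}$ induces an equivalence of $\infty$-categories
$$ \Fun^{\ast}( \calK(\calM), \calX) \rightarrow \Struct_{\calM}(\calX). $$
Indeed, left exact, colimit-preserving functors $\Shv(\calM) \to \calX$ are identified via composition with $L \circ j$ with functors $\calM \to \calX$ whose left Kan extension to $\calP(\calM)$ is left exact and which carry the covering sieves of $\calM$ to colimit diagrams; by the fiberwise description above this is precisely the condition of being an $\calM$-structure. This proves the lemma.
\end{proof}
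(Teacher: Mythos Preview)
Your proof has a genuine gap at exactly the point you flagged as delicate. The claim in the final paragraph, that functors $\calM \to \calX$ whose left Kan extension to $\calP(\calM)$ is left exact and which send covering sieves to effective epimorphisms are ``precisely'' the $\calM$-structures, is not justified and is not obvious. The $\infty$-category $\calM$ does not admit finite limits (for instance, a cospan $X \to Z \leftarrow Y$ with $X \in \calG$, $Z \in \calG'$, $Y \in \calG'$ typically has no pullback in $\calM$), so the condition ``left Kan extension to $\calP(\calM)$ is left exact'' is a flatness condition on $F\colon \calM \to \calX$ that is strictly more subtle than ``$F|\calG$ and $F|\calG'$ are left exact.'' You have not shown these two conditions coincide, and Proposition \toposref{igrute} gives you the former, not the latter. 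So the identification $\Fun^{\ast}(\Shv(\calM),\calX) \simeq \Struct_{\calM}(\calX)$ remains unproven.

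The paper's proof avoids this issue by separating the left-exactness problem from the sheaf problem. It first invokes Proposition \toposref{cupper1} to produce a functor $\beta\colon \calM \to \calM'$ into a small $\infty$-category $\calM'$ that \emph{does} admit finite limits, with $\beta|\calG$ and $\beta|\calG'$ left exact, and with the universal property
\[
\Fun^{\lex}(\calM',\calC)\;\simeq\;\Fun^{\lex}(\calG,\calC)\times_{\Fun(\calG,\calC)}\Fun(\calM,\calC)\times_{\Fun(\calG',\calC)}\Fun^{\lex}(\calG',\calC).
\]
This envelope $\calM'$ is precisely the device that converts ``left exact on each fiber'' into an honest left-exactness condition. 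The classifying $\infty$-topos is then built as the fiber product
\[
\calK(\calM)=\Shv(\calG)\times_{\calP(\calG)}\calP(\calM')\times_{\calP(\calG')}\Shv(\calG')
\]
in $\RGeom$: the factor $\calP(\calM')$ enforces fiberwise left-exactness (via the universal property of $\calM'$), while the pullbacks along $\Shv(\calG)\to\calP(\calG)$ and $\Shv(\calG')\to\calP(\calG')$ separately impose the covering conditions for $\calG$ and $\calG'$. Your route could perhaps be salvaged by proving directly that flatness on $\calM$ is equivalent to fiberwise left-exactness, but that is essentially the content of the construction of $\calM'$, and you have not carried it out.
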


\begin{proof}
Using Proposition \toposref{cupper1}, we can construct a functor $\beta: \calM \rightarrow \calM'$ with the following properties:
\begin{itemize}
\item[$(a)$] The $\infty$-category $\calM'$ is small and admits finite limits.
\item[$(b)$] The restrictions $\beta | \calG$ and $\beta | \calG'$ are left exact.
\item[$(c)$] For every $\infty$-category $\calC$ which admits finite limits, composition with
$\beta$ induces an equivalence of $\infty$-categories
$$ \Fun^{\lex}( \calM', \calC) \rightarrow \Fun^{\lex}(\calG,\calC) \times_{ \Fun(\calG, \calC) }
\Fun(\calM, \calC) \times_{ \Fun(\calG', \calC) } \Fun^{\lex}(\calG',\calC).$$
\end{itemize}
We now define $\calK(\calM)$ to be the fiber product
$$ \Shv(\calG) \times_{ \calP(\calG) } \calP( \calM') \times_{ \calP(\calG') } \Shv(\calG'),$$
taken in the $\infty$-category $\RGeom$, so that $\calK(\calM)$ is an accessible left exact localization of $\calP(\calM')$. Let $L: \calP(\calM') \rightarrow \calK(\calM)$ be a localization functor, and define
$\calO_{\calM}$ to be the composition
$$ \calM \stackrel{\beta}{\rightarrow} \calM' \stackrel{j}{\rightarrow} \calP(\calM')
\stackrel{L}{\rightarrow} \calK(\calM),$$
where $j$ denotes the Yoneda embedding. It is not difficult to see that $\calO_{\calM}$ has the desired universal property.
\end{proof}

\begin{notation}
For every $\infty$-topos $\calX$, let $\Struct_{\calM}^{(1)}(\calX)$ denote the full subcategory of
$\Struct_{\calM}(\calX)$ consisting of those $\calM$-structures $\calO: \calM \rightarrow \calX$ such that
$\calO$ is a right Kan extension of $\calO | \calG'$. Similarly, we define
$\Struct_{\calM}^{(0)}(\calX)$ to be the full subcategory of $\Struct_{\calM}(\calX)$ spanned by
those $\calM$-structures $\calO: \calM \rightarrow \calX$ which are left Kan extensions of
$\calO| \calG$.
\end{notation}

\begin{lemma}\label{skipun}
\begin{itemize}
\item[$(1)$] For every $\infty$-topos $\calX$, the restriction functors
$$ \Struct^{(0)}_{\calM}(\calX) \rightarrow \Struct_{\calG}(\calX)$$
$$ \Struct^{(1)}_{\calM}(\calX) \rightarrow \Struct_{\calG'}(\calX)$$
are trivial Kan fibrations. 
\item[$(2)$] For every geometric morphism $\pi^{\ast}: \calX \rightarrow \calY$,
the induced map $\Struct_{\calM}(\calX) \rightarrow \Struct_{\calM}(\calY)$ carries
$\Struct^{(0)}_{\calM}(\calX)$ to $\Struct^{(0)}_{\calM}(\calY)$ and
$\Struct^{(1)}_{\calM}(\calX)$ to $\Struct^{(1)}_{\calM}(\calY)$.
\end{itemize}
\end{lemma}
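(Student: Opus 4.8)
The proof of part $(1)$ is an application of the theory of relative Kan extensions along the inclusions $\calG \hookrightarrow \calM$ and $\calG' \hookrightarrow \calM$. Recall that $\calM \to \Delta^1$ is a correspondence associated to $f : \calG \to \calG'$, so $\calG = \calM \times_{\Delta^1} \{0\}$ is the ``source'' fiber and $\calG' = \calM \times_{\Delta^1}\{1\}$ is the ``target'' fiber, and every object of $\calG$ admits a morphism to each object of $\calG'$ lying over it, while there are no morphisms from $\calG'$ to $\calG$. I would first treat the functor $\Struct^{(1)}_{\calM}(\calX) \to \Struct_{\calG'}(\calX)$. By Proposition \toposref{lklk} (relative Kan extensions), the restriction functor $\Fun(\calM,\calX) \to \Fun(\calG',\calX)$, restricted to the full subcategory of right Kan extensions of their restriction to $\calG'$, is a trivial Kan fibration onto $\Fun(\calG',\calX)$ — provided that for each object $m \in \calG \subseteq \calM$, the relevant right Kan extension $\lim_{\calG'_{m/}} \calO|\calG'$ exists in $\calX$. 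Since $\calX$ is an $\infty$-topos it admits all small limits, so the extension always exists; thus $\Fun^{(1)}(\calM,\calX) \to \Fun(\calG',\calX)$ is a trivial Kan fibration. It then remains to observe that this restricts to the subcategories of $\calM$- and $\calG'$-structures: a right Kan extension $\calO$ of $\calO|\calG'$ has $\calO|\calG' \in \Struct_{\calG'}(\calX)$ automatically, and one must check that $\calO|\calG$ is then a $\calG$-structure. This is exactly the content of Proposition \ref{prespaz}(1): the $\calG$-structure $\calO|\calG$ is obtained from $\calO|\calG'$ by \emph{right} Kan extension along $\calM$ (viewed, after passing to opposite categories, as a left Kan extension of the type treated there, with the roles of $\calT_0$ and $\calT_1$ interchanged), and hypotheses $(i)$ and $(ii)$ of Proposition \ref{prespaz} hold here because $f$ is a transformation of geometries (so admissible morphisms and covers pull back correctly along morphisms $\calG \to \calG'$ inside $\calM$). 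The argument for $\Struct^{(0)}_{\calM}(\calX) \to \Struct_{\calG}(\calX)$ is formally dual: it uses left Kan extension along $\calM$, existence of colimits in $\calX$, and a direct verification that the resulting functor $\calO|\calG'$ is a $\calG'$-structure — but here the relevant colimits are over the categories $\calG_{/m}$ for $m \in \calG'$, and one checks the $\calG'$-structure axioms directly (preservation of finite limits, and the effective-epimorphism condition for admissible covers of objects of $\calG'$), again invoking the compatibility built into the definition of a transformation of geometries.

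For part $(2)$, let $\pi^{\ast}: \calX \to \calY$ be a geometric morphism, so $\pi^{\ast}$ preserves small colimits and all finite limits. The point is that the formation of the relative left Kan extension along $\calM$ commutes with $\pi^{\ast}$, and likewise for the relative right Kan extension — but these two statements have genuinely different proofs. For $\Struct^{(0)}_{\calM}$: the left Kan extension is computed pointwise by colimits over the categories $\calG_{/m}$, and since $\pi^{\ast}$ preserves all small colimits, $\pi^{\ast}\circ(\text{Lan}\,\calO|\calG)$ is again the left Kan extension of $\pi^{\ast}\circ\calO|\calG$; hence $\pi^{\ast}$ carries left-Kan-extended $\calM$-structures to left-Kan-extended $\calM$-structures. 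For $\Struct^{(1)}_{\calM}$: the right Kan extension is computed pointwise by \emph{limits} over the categories $\calG'_{m/}$, and a priori $\pi^{\ast}$ need not preserve these (they are not finite in general). This is the main obstacle. I would circumvent it by observing that the relevant limits are of a very special, ``finite-limit-like'' shape: by the proadmissibility analysis (Remark \ref{swinnerr}, Remark \ref{subo}), the indexing diagram computing the value at $m \in \calG$ of a right Kan extension of a $\calG'$-structure is governed, after pro-completion, by \emph{admissible} pullback squares, and the value $\calO(m)$ can be exhibited as a pullback of objects already in the image of $\calO|\calG'$ along admissible morphisms — i.e., as a \emph{finite} limit diagram in $\calX$ relative to the data of $\calO|\calG'$. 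Concretely: if $m \to f(m)$ is the structural morphism and $f(m) \in \calG'$, then $\calO(m) \simeq \calO(f(m))$ when $m \to f(m)$ is ``admissible enough,'' and more generally $\calO(m)$ is computed by the finite-limit formulas that already appear in the proof of Proposition \ref{prespaz}(1). Since $\pi^{\ast}$ preserves finite limits and pullback squares, it preserves these, and hence $\pi^{\ast}$ carries $\Struct^{(1)}_{\calM}(\calX)$ into $\Struct^{(1)}_{\calM}(\calY)$.

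Putting this together: part $(1)$ reduces to a citation of the relative-Kan-extension trivial-fibration result plus Proposition \ref{prespaz}(1) to see the subcategories of structures match up; part $(2)$ reduces to the preservation properties of $\pi^{\ast}$ (colimits on the $\Struct^{(0)}$ side, finite limits and pullbacks on the $\Struct^{(1)}$ side, using that the right Kan extension here is secretly computed by finite-limit diagrams). The one genuinely delicate point, which deserves to be spelled out carefully, is that the right Kan extension defining $\Struct^{(1)}_{\calM}$ really is computed by diagrams that $\pi^{\ast}$ preserves — everything else is bookkeeping with Kan extensions and the axioms for a transformation of geometries. I expect the write-up to mirror closely the structure of the proof of Proposition \ref{prespaz}, so that the diagrams $(\ast')$ appearing there can be re-used verbatim here.
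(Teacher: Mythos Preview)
Your proposal overcomplicates the $\Struct^{(1)}$ side in both parts, and the complications introduce a genuine gap. The key observation you are missing is this: because $\calM$ is the correspondence associated to the functor $f:\calG\to\calG'$, for every object $X\in\calG$ the morphism $X\to f(X)$ is $p$-coCartesian, hence initial in $\calG'_{X/}$. Consequently the right Kan extension of $\calO_1:\calG'\to\calX$ along $\calG'\hookrightarrow\calM$ is computed at $X$ simply as $\calO_1(f(X))$; in other words $\calO|\calG\simeq\calO_1\circ f$. Since $f$ is a transformation of geometries, $\calO_1\circ f$ is automatically a $\calG$-structure, which settles the $\Struct^{(1)}$ half of part $(1)$ immediately. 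Your attempt to obtain this by dualizing Proposition \ref{prespaz}(1) does not work: that proposition requires the target $\calX$ to be an $\infty$-topos, and $\calX^{op}$ is not one, so the ``pass to opposites'' maneuver fails.

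The same observation trivializes the $\Struct^{(1)}$ half of part $(2)$: an $\calM$-structure $\calO$ belongs to $\Struct^{(1)}_{\calM}(\calX)$ if and only if $\calO$ carries every $p$-coCartesian morphism of $\calM$ to an equivalence in $\calX$. This condition is obviously preserved by composition with any functor $\pi^\ast$, with no need to analyze limits at all. Your proadmissibility argument is unnecessary, and as written it is too vague to be convincing --- though you come close when you write ``$\calO(m)\simeq\calO(f(m))$'': that equivalence holds unconditionally, not just when the morphism is ``admissible enough.''

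On the $\Struct^{(0)}$ side your outline is correct in spirit, and matches the paper: left Kan extension along $\calM$ at $Y\in\calG'$ is left Kan extension along $f$, and the substantive point is that $f_!$ carries $\calG$-structures to $\calG'$-structures. You should be aware that preservation of \emph{left exactness} under $f_!$ is the nontrivial step here (this is the paper's Lemma \ref{sillyfunt}, proved via the identification $f_!(h)\simeq H\circ(\text{--}\circ f)\circ j'$); the covering axiom then follows from Proposition \ref{prespaz}(1). Your part $(2)$ argument for $\Struct^{(0)}$ via preservation of colimits is exactly right.
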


\begin{proof}
In view of Proposition \toposref{lklk}, assertion $(1)$ can be rephrased as follows:
\begin{itemize}
\item[$(1a)$] Let $\calO_0 \in \Struct_{\calG}( \calX)$ and let $\calO: \calM \rightarrow \calX$
be a left Kan extension of $\calO_0$. Then $\calO| \calG' \in \Struct_{\calG'}(\calM)$.
\item[$(1b)$] Let $\calO_1 \in \Struct_{\calG}( \calX)$ and let $\calO: \calM \rightarrow \calX$ be a right Kan extension of $\calO_1$. Then $\calO| \calG \in \Struct_{\calG}(\calM)$.
\end{itemize}
Assertion $(1b)$ follows from the fact that $\calO | \calG \simeq f \circ \calO_0$, and assertion
$(1a)$ is a consequence of the following result, whose proof will be given below:
\begin{lemma}\label{sillyfunt}
Let $\calG$ and $\calG'$ be small $\infty$-categories which admit finite limits, let
$f: \calG \rightarrow \calG'$ be a left exact functor, and let $\calX$ be an $\infty$-topos.
Then left Kan extension along $f$ carries $\Fun^{\lex}(\calG, \calX)$ into
$\Fun^{\lex}(\calG', \calX)$. 
\end{lemma}

To prove $(2)$, we observe that $\pi^{\ast}$ preserves small colimits, and therefore left Kan extensions along inclusions of small $\infty$-categories. To prove that composition with $\pi^{\ast}$ carries
$\Struct^{(1)}_{\calM}(\calX)$ to $\Struct^{(1)}_{\calM}(\calY)$, it suffices to observe that
$\calO \in \Struct_{\calM}(\calZ)$ belongs to $\Struct^{(1)}_{\calM}(\calZ)$ if and only if
$\calO(\alpha)$ is an equivalence in $\calZ$, for every $p$-Cartesian morphism in
$\calM$; here $p: \calM \rightarrow \Delta^1$ denotes the projection.
\end{proof}

We now turn to the proof of Lemma \ref{sillyfunt}. 

\begin{definition}\label{jip}
Let $\calM \rightarrow \Delta^1$ be a (small) correspondence from an $\infty$-category
$\calC = \calM \times_{ \Delta^1} \{0\}$ to another $\infty$-category $\calD = \calM \times_{ \Delta^1} \{1\}$. The Yoneda embedding for $\calM$ determines a functor $\calM^{op} \times \calM \rightarrow \SSet$, which determines by restriction a functor $F: \calC^{op} \times \calD \rightarrow \SSet$, such that $F(C,D)$ is homotopy equivalent to $\bHom_{\calM}(C,D)$. 

We will say that a bifunctor $F': {\calC'}^{op} \times \calD' \rightarrow \SSet$ is {\it associated to $\calM$} if there exist equivalences $\alpha: \calC' \rightarrow \calC$ and $\beta: \calD' \rightarrow \calD$ such
that $F' \circ (\alpha \times \beta)$ is homotopic to $F$. In this case, we will also say that
$\calM$ is {\it associated to $F'$}.
\end{definition}

\begin{example}\label{skai}
Let $f: \calD \rightarrow \calC$ be a functor between small $\infty$-categories. Composing with the Yoneda embedding $\calC \rightarrow \calP(\calC)$, we obtain a map
$\calD \rightarrow \calP(\calC)$ which is adjoint to a bifunctor $F: \calC^{op} \times \calD \rightarrow \SSet$. A correspondence $\calM$ from $\calC$ to $\calD$ is associated to $F$ (in the sense of Definition \ref{jip}) if and only if it is associated to $f$ (in the sense of Definition \toposref{fibas}). 
\end{example}

\begin{remark}\label{repman}
Let $F: {\calC}^{op} \times \calD \rightarrow \SSet$ be a bifunctor associated to a correspondence
$\calM \rightarrow \Delta^1$, and let $\calC_0 \subseteq \calC$, $\calD_0 \subseteq \calD$ be full subcategories. Then the restriction $F_0 = F | (\calC_0^{op} \times \calD_0)$ is associated
to the full subcategory of $\calM$ spanned by the essential images of $\calC_0$ and $\calD_0$.
\end{remark}




\begin{lemma}\label{haberspaz}
Let $\calC$ and $\calD$ be small $\infty$-categories, and let $F: \calC^{op} \times \calD \rightarrow \SSet$ be a functor, so that $F$ determines maps $f^{0}: \calC \rightarrow \calP(\calD^{op})^{op}$
and $f^1: \calD \rightarrow \calP(\calC)$. Let $j: \calC \rightarrow \calP(\calC)$ denote the Yoneda embedding. Then:
\begin{itemize}
\item[$(1)$] The functor $j$ admits a left Kan extension along $f^0$, which we will denote by
$f^0_{!}(j)$. 
\item[$(2)$] The composition of $f^0_{!}(j)$ with the Yoneda embedding 
$j': \calD \rightarrow \calP(\calD^{op})^{op}$ is equivalent to $f^1$.
\end{itemize}
\end{lemma}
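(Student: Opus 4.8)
The bifunctor $F \colon \calC^{op} \times \calD \rightarrow \SSet$ can be thought of as a profunctor from $\calC$ to $\calD$; concretely it encodes a correspondence $\calM \rightarrow \Delta^1$ (Definition \ref{jip}). The maps $f^0 \colon \calC \rightarrow \calP(\calD^{op})^{op}$ and $f^1 \colon \calD \rightarrow \calP(\calC)$ are the two adjuncts of $F$. The plan is to work inside the correspondence $\calM$, where both $\calC$ and $\calD$ embed fully faithfully and where $F$ becomes literally the mapping-space functor. The point of $(1)$ is then an instance of the standard fact that Kan extensions of presheaf-valued functors along fully faithful functors exist (since $\calP(\calC)$ admits all small colimits); and the point of $(2)$ is that the left Kan extension, when restricted back along the embedding $\calD \hookrightarrow \calM$, must agree with the representable presheaves on $\calC$ pulled back through $F$ — which is exactly $f^1$.

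**Key steps.** First I would replace the abstract data $(\calC,\calD,F)$ by a correspondence $\calM \rightarrow \Delta^1$ associated to $F$ in the sense of Definition \ref{jip} (such a correspondence exists by the general theory of bifibrations, e.g.\ via Example \ref{skai} applied after a suitable fibrant replacement); so $F(C,D) \simeq \bHom_{\calM}(C,D)$ naturally. Under this identification, $f^0$ is the restriction to $\calC$ of the Yoneda embedding $\calM \rightarrow \calP(\calM^{op})^{op}$ composed with restriction to $\calD$, while $f^1$ is the restriction to $\calD$ of the Yoneda embedding $\calM \rightarrow \calP(\calM)$ composed with restriction to $\calC^{op}$. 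For $(1)$, the key observation is that $j \colon \calC \rightarrow \calP(\calC)$ factors as $\calC \xrightarrow{f^0} \calP(\calD^{op})^{op} \xrightarrow{G} \calP(\calC)$ for a colimit-preserving functor $G$ obtained from $F$ — concretely $G$ is the functor associated to $F$ viewed as a module, i.e.\ left Kan extension along the Yoneda embedding of the composite $\calD^{op} \rightarrow \calC^{op} \xrightarrow{j} \calP(\calC)$. Because $\calP(\calD^{op})^{op}$ is generated under small colimits by the image of $\calC$ (more precisely, its opposite is freely generated as a colimit-completion), and because $\calP(\calC)$ admits all small colimits, the functor $G$ exhibits $\calP(\calC)$ as (a colocalization of) the free cocompletion and hence $G \circ f^0 = j$ displays $G$ as the left Kan extension $f^0_!(j)$; I would cite the universal property of presheaf categories (Theorem \toposref{charpresheaf} / the cited Proposition \toposref{intprop}) to get existence and the precise identification. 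For $(2)$, I would compute: $f^0_!(j)$ composed with $j' \colon \calD \rightarrow \calP(\calD^{op})^{op}$ sends $D$ to $\colim$ over the comma category $(f^0 \downarrow j'(D))$ of $j$; by the identification $F(C,D) \simeq \bHom_{\calM}(C,D)$, this comma category is (cofinally) $\calC_{/D}$ computed in $\calM$, whose colimit of $j$ is, by the co-Yoneda lemma, precisely the presheaf $C \mapsto \bHom_{\calM}(C,D) \simeq F(C,D)$ — which is $f^1(D)$ by definition.

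**Main obstacle.** The genuinely delicate point is part $(1)$: establishing that the left Kan extension $f^0_!(j)$ actually exists, which requires understanding the ``shape'' of $f^0$ well enough to know that the relevant colimits in $\calP(\calC)$ are small. Once one recognizes that $f^0$ lands in a presheaf $\infty$-category whose opposite is the free completion under small colimits of $\calC$, this is formal; but getting the variance and the freeness statement exactly right (the opposite of $\calP(\calD^{op})^{op}$ is $\calP(\calD^{op})$, which is freely generated under colimits by $\calD^{op}$, not by $\calC$) is where care is needed. I expect the cleanest route is to avoid pointwise colimit formulas entirely and instead invoke the universal property: $\calP(\calD^{op})^{op}$ corepresents colimit-preserving functors out of it in terms of arbitrary functors out of $\calD^{op}$ into the target's opposite, so a colimit-preserving extension of $j$ along $f^0$ is the same datum as a functor $\calD^{op} \rightarrow \calP(\calC)^{op}$, namely the one classified by $F$; naturality of this identification then gives both the existence in $(1)$ and the formula in $(2)$ simultaneously. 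The remaining steps — cofinality of $\calC_{/D} \rightarrow (f^0 \downarrow j'(D))$ and the co-Yoneda computation — are routine and I would not grind through them.
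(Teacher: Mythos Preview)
Your treatment of $(2)$ is essentially correct and matches the paper's: both compute the pointwise left Kan extension at $j'(D)$ as the colimit over the comma category $(f^0 \downarrow j'(D))$, identify that comma category with the right fibration over $\calC$ classified by the presheaf $C \mapsto F(C,D)$, and then invoke (a version of) the co-Yoneda lemma to conclude the colimit is $f^1(D)$. The paper packages this by passing to a small correspondence $\calM$ associated to $F$ and citing Lemma~\toposref{kanspaz}, but the content is the same.

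Your argument for $(1)$, however, contains genuine errors. The claim that $j$ factors as $G \circ f^0$ is false in general: take $\calC = \calD = \ast$ and $F$ given by a space $X$; then $f^0_!(j)$ sends $Y \in \SSet^{op}$ to $\bHom_{\SSet}(Y,X)$, and $(f^0_!(j)) \circ f^0$ sends $\ast$ to $\bHom_{\SSet}(X,X)$, not to $j(\ast) = \ast$. Relatedly, $\calP(\calD^{op})^{op}$ is \emph{not} generated under small colimits by the image of $\calC$ under $f^0$ (it is generated under small \emph{limits} by the image of $\calD$ under $j'$). Your ``cleanest route'' via the universal property also does not work as stated: that universal property produces a \emph{limit}-preserving extension of $f^1$ along $j'$, but a left Kan extension along $f^0$ has no a priori reason to preserve limits, so you cannot conclude the two agree without further argument.

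The paper's actual proof of $(1)$ is much simpler than anything you attempt: one just checks that the pointwise colimit formula involves essentially small diagrams. For each $X \in \calP(\calD^{op})^{op}$, the fibers of the right fibration $\calP(\calD^{op})^{op}_{/X} \to \calP(\calD^{op})^{op}$ are mapping spaces in a locally small $\infty$-category and hence essentially small; pulling back along the small $\infty$-category $\calC$ therefore gives an essentially small comma category, and $\calP(\calC)$ has all small colimits. That is the entire argument, with a citation to Lemma~\toposref{kan2}.
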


\begin{proof}
To prove $(1)$, it suffices to observe that for every object $X \in \calP(\calD^{op})^{op}$, the
fibers of the right fibration $\calP(\calD^{op})^{op}_{/X}$ are essentially small, so that the fiber product
$\calC \times_{ \calP( \calD^{op} )^{op} } \calP( \calD^{op})^{op}_{/X}$ is essentially small; now invoke 
the fact that $\calP(\calC)$ admits small colimits (and Lemma \toposref{kan2}). 

To prove $(2)$, let $\overline{\calM}$ be a correspondence from $\calC$ to $\calP( \calD^{op} )^{op}$
associated to $f^{0}$, and let $\overline{J}: \overline{\calM} \rightarrow \calP(\calC)$ be a left
Kan extension of $j$. Let $\calM'$ denote the full subcategory of $\overline{\calM}$ spanned by the objects of $\calC$ and the essential image of $j'$, let $\calM$ be a minimal model for $\calM'$
(so that $\calM$ is small), and let $J = \overline{J} | \calM$. We observe that $\calM$ is equipped
with equivalences $\alpha: \calC \rightarrow \calM \times_{ \Delta^1 } \{0\}$, $\beta: \calD \rightarrow \calM \times_{ \Delta^1} \{1\}$ which exhibit $\calM$ as associated to the bifunctor $F$. 
Moreover, $j' \circ f^0_{!}(j)$ can be identified with $J \circ \beta$. Invoking Lemma \toposref{kanspaz}, we
can also identify $J \circ \beta$ with the composition
$$ \calD \stackrel{\beta}{\rightarrow} \calM \rightarrow \calP(\calM) \rightarrow \calP(\calC).$$
Since $\calM$ is associated to $F$, this composition is homotopic to $f^1$ as desired.
\end{proof}

\begin{proof}[Proof of Lemma \ref{sillyfunt}]
Let $h: \calG \rightarrow \calX$ be a left exact functor; we wish to show that the left Kan extension
$f_{!}(h): \calG' \rightarrow \calX$ is also left exact. Using Theorem \toposref{charpresheaf}, we may assume without loss of generality that $h$ is a composition
$\calG \stackrel{j}{\rightarrow} \calP(\calG) \stackrel{H}{\rightarrow} \calX$, where
$j$ is the Yoneda embedding and $H$ preserves small colimits. Using Proposition
\toposref{natash}, we conclude that $H$ is left exact. Using Lemma \toposref{kanspaz}, we conclude
that $f_{!}(h)$ is equivalent to the composition
$$ \calG' \stackrel{j'}{\rightarrow} \calP(\calG') \stackrel{\circ f}{\rightarrow} \calP(\calG)
\stackrel{H}{\rightarrow} \calX.$$
As a composition of left-exact functors, we conclude that $f_{!}(h)$ is left exact.
\end{proof}

We now put Lemma \ref{skipun} into practice. Let $\calK(\calM)$ be as in Lemma \ref{kofm}, and 
let $\calK$ and $\calK'$ denote classifying $\infty$-topoi for $\calG$ and $\calG'$, respectively.
By general nonsense, we deduce the existence of geometric morphisms
$$ \calK \stackrel{\phi^{\ast}}{\rightarrow} \calK(\calM) \stackrel{\psi^{\ast}}{\leftarrow} \calK'$$
which determine, for every $\infty$-topos $\calX$, a homotopy commutative diagram
$$ \xymatrix{ \Fun^{\ast}(\calK, \calX) \ar[r] \ar[d] & \Fun( \calK(\calM), \calX) \ar[d] & \Fun^{\ast}(\calK', \calX) \ar[d] \ar[l] \\
\Struct_{\calG}(\calX) \ar[r] & \Struct_{\calM}(\calX) & \Struct_{\calG'}(\calX) \ar[l] }$$
where the vertical maps are categorical equivalences and the lower horizontal maps are obtained
by choosing sections of the trivial fibrations of Lemma \ref{skipun}. 

\begin{notation}\label{sluck}
We let $\calK^{\calG'}_{\calG}$ denote the fiber product
$$ (\calK^{\Delta^1}_{R} \times {\calK'}^{\Delta^1}_{L}) \times_{ \calK^{\Delta^1} \times {\calK'}^{\Delta^1} }
\calK(\calM)^{\Delta^1} \times_{ \calK(\calM) \times \calK(\calM) } (\calK \times \calK'),$$ taken in
the $\infty$-category $\RGeom \simeq \LGeo^{op}$ of $\infty$-topoi. By construction, we have arranged
that $\calK^{\calG'}_{\calG}$ is equipped with a map $T: \calM \times \Delta^1 \rightarrow \calK^{\calG'}_{\calG}$ with the following properties, where $\calX = \calK^{\calG'}_{\calG}$:
\begin{itemize}
\item[$(1)$] The restriction of $T$ to $\calG \times \Delta^1$ belongs to $\Struct^{\loc}_{\calG}(\calX)$.
\item[$(2)$] The restriction of $T$ to $\calG' \times \Delta^1$ belongs to $\Struct^{L}_{\calG'}(\calG)$.
\item[$(3)$] The restriction of $T$ to $\calM \times \{0\}$ belongs to $\Struct^{(0)}_{\calM}(\calX)$.
\item[$(4)$] The restriction of $T$ to $\calM \times \{1\}$ belongs to $\Struct^{(1)}_{\calM}(\calX)$.
\end{itemize}
Moreover, $\calK^{\calG'}_{\calG}$ can be described by the following universal property: for any
$\infty$-topos $\calX$, composition with $T$ induces a fully faithful embedding
$\Fun^{\ast}( \calK^{\calG'}_{\calG}, \calX) \rightarrow \Fun( \calM \times \Delta^1, \calX)$ whose
essential image is spanned by those functors $\calM \times \Delta^1 \rightarrow \calX$ which
satisfy conditions $(1)$ through $(4)$.

Let $\calO = T | (\calG \times \{0\})$ and $\calO' = T | ( \calG' \times \{1\})$. Condition $(4)$
allows us to identify $T | \calG \times \{1\}$ with $\calO' \circ f$, so that
(according to condition $(1)$) the restriction $T | (\calG \times \Delta^1)$ determines a morphism $\alpha: \calO \rightarrow \calO' \circ f$ in $\Struct^{\loc}_{\calG}(\calK^{\calG'}_{\calG})$, well-defined up to homotopy.
\end{notation}

\begin{proof}[Proof of Proposition \ref{ingred}]
We proceed as in Notation \ref{sluck} to define $\calK^{\calG'}_{\calG}$, a pair of functors $\calO \in \Struct_{\calG}( \calK^{\calG'}_{\calG})$, $\calO' \in \Struct_{\calG'}( \calK^{\calG'}_{\calG} )$, and a natural transformation $\alpha: \calO \rightarrow \calO' \circ f$. We will show that the desired universal property is satisfied.
In other words, we will show that for every object $(\calX, \calO_{\calX})$, composition with $\alpha$ induces a homotopy equivalence from
$\bHom_{ \LGeo(\calG') }( ( \calK^{\calG'}_{\calG}, \calO'), (\calX, \calO_{\calX}) )$ to the largest Kan complex contained in $\Struct^{\loc}_{\calG}(\calX)^{/\calO_{\calX} \circ f}$. Let
$\overline{\calO}_{\calX}: \calM \rightarrow \calX$ be a right Kan extension of $\calO_{\calX}$.
Using Lemma \ref{skipun} and the universal property of $\calK^{\calK'}_{\calK}$, we can identify
$\bHom_{ \LGeo(\calG')}( (\calK^{\calG'}_{\calG}, \calO'), (\calX, \calO_{\calX}) )$
with the the largest Kan complex contained in
$$ \calC \subseteq \Fun( \calM \times \Delta^2, \calX) \times_{ \Fun( \calM \times \{2\}, \calX) } \{ \overline{\calO}_{\calX} \},$$
where $\calC$ denotes the full subcategory spanned by those functors
$\overline{T}: \calM \times \Delta^2 \rightarrow \calX$ such that
$\overline{T} | \calM \times \Delta^{ \{0,1\} }$ satisfies conditions $(1)$ through $(4)$ of Notation \ref{sluck}, together with the following additional condition:
\begin{itemize}
\item[$(5)$] The restriction $\beta = \overline{T} | (\calG' \times \Delta^{ \{1,2\} })$ is a morphism of
$\Struct_{\calG'}^{\loc}(\calX)$.
\end{itemize}
Note that condition $(4)$ (and the fact that $\overline{\calO}_{\calX}$ is a right Kan extension
of $\calO_{\calX}$) imply that $\beta' = \overline{T} | ( \calG \times \Delta^{ \{1,2\} })$ can be identified with
the image of $\beta$ under $f$, so that $\beta'$ is a morphism of $\Struct^{\loc}_{\calG}(\calX)$. 
Consequently, if condition $(5)$ is assumed, then Propositions \toposref{swimmm} and \toposref{swin} permit the following reformulation of condition $(1)$:
\begin{itemize}
\item[$(1')$] The restriction $\overline{T} | (\calG \times \Delta^{ \{0,2\} })$ is a morphism of
$\Struct^{\loc}_{\calG}(\calX)$. 
\end{itemize}

Unwinding the definitions, we are reduced to proving the following assertion:
\begin{itemize}
\item[$(\ast)$] The inclusion $\calG \times \Delta^{ \{0,2\} } \subseteq \calM \times \Delta^2$
induces a trivial Kan fibration $\theta: \calC \rightarrow \Struct^{\loc}_{\calG}(\calX)^{/ \overline{\calO}_{\calX}|\calG}$. 
\end{itemize}
To prove $(\ast)$, we factor the map $\theta$ as a composition
$$ \calC \stackrel{\theta_0}{\rightarrow} \calC_1
\stackrel{\theta_1}{\rightarrow} \calC_2 \stackrel{\theta_2}{\rightarrow} \calC_3
\stackrel{\theta_3}{\rightarrow} \calC_4 \stackrel{\theta_4}{\rightarrow} \Struct^{\loc}_{\calG}(\calX)^{/ \overline{\calO}_{\calX} | \calG},$$
where:
\begin{itemize}
\item[$(a)$] Let $\calJ \subseteq \calM \times \Delta^2$ denote the full subcategory spanned
by those objects $(X,i)$ such that either $i \neq 1$ or $X \in \calG'$. Then
$\calC_1$ denotes the full subcategory of 
$$\Fun( \calJ, \calX) \times_{ \Fun( \calM \times \{2\}, \calX) } \{ \overline{\calO}_{\calX} \}$$
spanned by those functors which satisfy conditions $(1')$, $(2)$, $(3)$, and $(5)$.
Using Proposition \toposref{lklk}, we deduce that the restriction map
$\theta_0: \calC \rightarrow \calC_1$ is a trivial Kan fibration.
\item[$(b)$] The $\infty$-category $\calC_3$ is defined to be the full subcategory
of 
$$\Fun( \calM \times \Delta^{ \{0,2\} }, \calX) \times_{ \Fun( \calM \times \{2\}, \calX)} \{ \overline{\calO}_{\calX} \}$$
spanned by those functors which satisfy conditions $(1')$ and $(3)$.
Set
$$\calC_2 = \calC_3 \times_{ \Fun( \Delta^{ \{0,2\} }, \Struct_{\calG'}(\calX)) }
\Fun'( \Delta^2, \Struct_{\calG'}(\calX) ),$$
where $\Fun'( \Delta^2, \Struct_{\calG'}(\calX) )$ denotes the full subcategory of
$\Fun(\Delta^2, \Struct_{\calG'}(\calX))$ spanned by those functors satisfying
$(2)$ and $(5)$. The projection map $\theta_2: \calC_2 \rightarrow \calC_3$ is a
pullback of the projection $\Fun'(\Delta^2, \Struct_{\calG'}(\calX) ) \rightarrow
\Fun( \Delta^{ \{0,2\} }, \Struct_{\calG'}(\calX) )$, and therefore a trivial Kan fibration
by Proposition \toposref{canfact}.

\item[$(c)$] Let $\theta_1: \calC_1 \rightarrow \calC_2$ be the evident restriction map.
To show that $\theta_1$ is a trivial fibration of simplicial sets, it suffices to show that
the inclusion 
$$(\calM \times \Delta^{ \{0,2\} }) \coprod_{ \calG' \times \Delta^{ \{0,2\} }}
(\calG' \times \Delta^2) \hookrightarrow \calJ_1$$
is a categorical equivalence. This follows from Proposition \toposref{basechangefunky},
applied to the Cartesian fibration $\calM \times \Delta^2 \rightarrow \Delta^1 \times \Delta^2$.

\item[$(d)$] Let $\calJ'$ denote the full subcategory of $\calM \times \Delta^{ \{0,2\} }$ spanned by
those objects having the form $(X, i)$, where either $i = 2$ or $X \in \calG$, and let
$\calC_4$ denote the full subcategory of 
$$ \Fun( \calJ', \calX) \times_{ \Fun( \calM \times \{2\}, \calX) } \{ \overline{\calO}_{\calX} \}$$
spanned by those functors satisfying $(1')$. Proposition \toposref{lklk} implies that the
restriction map $\theta_3: \calC_3 \rightarrow \calC_4$ is a trivial Kan fibration.

\item[$(e)$] To prove that the restriction map $\theta_4: \calC_4 \rightarrow
\Struct^{\loc}_{\calG}(\calX)^{/ \overline{\calO}_{\calX} | \calG}$ is a trivial Kan fibration,
it suffices to show that the inclusion
$$ ( \calG \times \Delta^{ \{0,2\} }) \coprod_{ \calG \times \{2\} }
( \calM \times \{2\} ) \hookrightarrow \calJ'$$
is a categorical equivalence. This again follows from Proposition \toposref{basechangefunky}, applied to the Cartesian fibration $\calM \times \Delta^2 \rightarrow \Delta^1 \times \Delta^2$.
\end{itemize}
\end{proof}

\subsection{Construction of Spectra: Absolute Version}\label{abspec}

Let $\calG$ be a geometry, and let $\calG_{\disc}$ be the underlying discrete geometry. In \S \ref{relspec}, we gave a construction of the relative spectrum functor $\Spec^{\calG}_{\calG_{\disc}}$, which restricts to the absolute spectrum functor
$$ \Spec^{\calG}: \Ind(\calG^{op} ) \rightarrow \LGeo(\calG).$$
However, the construction was perhaps rather opaque. We will correct that deficiency in this section, by giving a second (much more explicit) construction of $\Spec^{\calG}$. We begin by introducing a bit of notation.

\begin{notation}\label{sipper}
Let $p: \overline{\LGeo} \rightarrow \LGeo$ denote the universal topos fibration; we may therefore view objects of $\overline{\LGeo}$ as pairs $(\calX, X)$ where $\calX$ is an $\infty$-topos and $X$ is an object of $\calX$. Let $E \in \overline{\LGeo}$ be an object corresponding to the pair
$(\SSet, 1_{\SSet})$ (so that $\SSet$ is an initial object of $\LGeo$ by Proposition \toposref{spacefinall},
and $1_{\SSet}$ is a final object of $\SSet$).

We let $\Gamma: \overline{\LGeo} \rightarrow \SSet$ denote a functor corepresented by
$E$. We will refer to $\Gamma$ (which is well-defined up to a contractible space of choices) as {\it the global sections functor}. For every $\infty$-topos $\calX$, the restriction of $\Gamma$ to the fiber $\overline{\LGeo} \times_{ \LGeo} \{ \calX \} \simeq \calX$ is a functor corepresented by the final object $1_{\calX} \in \calX$. 

For any geometry $\calG$, we have a canonical evaluation map
$\LGeo(\calG) \times \calG \rightarrow \overline{\LGeo}$. Composing with the global sections functor $\Gamma$, we obtain a pairing
$$ \LGeo(\calG) \times \calG \rightarrow \SSet,$$
which we may view as a functor $\LGeo(\calG) \rightarrow \Fun( \calG, \SSet)$. 
We observe that this functor factors through $\Fun^{\lex}(\calG, \SSet) \simeq \Ind(\calG^{op})$.
We let $\Gamma_{\calG}: \LGeo(\calG) \rightarrow \Ind( \calG^{op} )$ denote the resulting map; we will refer to $\Gamma_{\calG}$ as the {\it $\calG$-structured global sections functor}.
\end{notation}

Let $\calG$ be a geometry, and let $\calG_0$ be a discrete geometry having the same
underlying $\infty$-category as $\calG$. 
By construction, the $\calG$-structured global sections functor $\Gamma_{\calG}$ factors as
a composition
$$ \LGeo(\calG) \rightarrow \LGeo(\calG_0) \stackrel{ \Gamma_{\calG_0} }{\rightarrow} \Ind(\calG^{op} ).$$
We observe that $\Gamma_{\calG_0}$ can be identified with a right adjoint to the fully faithful embedding
$$ \Ind(\calG^{op}) \simeq \Struct_{\calG}(\SSet) \simeq \LGeo(\calG_0) \times_{ \LGeo } \{ \SSet \} \subseteq \LGeo(\calG_0).$$
Consequently, we may identify $\Gamma_{\calG}$ with a right adjoint to the absolute spectrum
functor $\Spec^{\calG}: \Ind(\calG^{op}) \rightarrow \LGeo(\calG)$ of Definition \ref{defspect}.
We will use description to explicitly construct $\Spec^{\calG} X$, where $X \in \Ind(\calG^{op}) \simeq \Pro(\calG)^{op}$. We begin with a few preliminaries.

\begin{notation}\label{ilk}
Let $\calG$ be a geometry. We will say that a morphism $f: U \rightarrow X$ in
$\Pro(\calG)$ is {\it admissible} if there exists a pullback diagram
$$ \xymatrix{ U \ar[r] \ar[d]^{f} & j(U') \ar[d]^{j(f')} \\
X \ar[r] & j(X') }$$
in $\Pro(\calG)$, where $j: \calG \rightarrow \Pro(\calG)$ denotes the Yoneda embedding
and $f': U' \rightarrow X'$ is an admissible morphism in $\calG$. 
\end{notation}

\begin{remark}
Every admissible morphism in $\Pro(\calG)$ is proadmissible. In particular, if $f: U \rightarrow X$
is a morphism in $\calG$ such that $j(f)$ is admissible in $\Pro(\calG)$, then $f$ is admissible in
$\calG$ (Remark \ref{proadmadm}).
\end{remark}

\begin{lemma}\label{capsule}
Let $\calG$ be a geometry. Then:
\begin{itemize}
\item[$(1)$] Every equivalence in $\Pro(\calG)$ is admissible.
\item[$(2)$] The collection of admissible morphisms in $\Pro(\calG)$ is stable under the formation of pullbacks.
\item[$(3)$] Let $$ \xymatrix{ & Y \ar[dr]^{g} & \\
X \ar[ur]^{f} \ar[rr]^{h} & & Z }$$
be a commutative diagram in $\Pro(\calG)$, where $g$ is admissible.
Then $f$ is admissible if and only if $h$ is admissible.
\end{itemize}
\end{lemma}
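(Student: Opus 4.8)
The plan is to establish the three assertions of Lemma~\ref{capsule} by reducing each of them to the corresponding facts about admissible morphisms in $\calG$ itself, using the pullback-squares characterization of admissibility in $\Pro(\calG)$ given in Notation~\ref{ilk}, together with the fact (Definition~\ref{psyob}) that $\calG$ carries an admissibility structure. The key observation I would exploit throughout is that the Yoneda embedding $j \colon \calG \to \Pro(\calG)$ preserves finite limits (since $\calG$ has finite limits), so that pasting and cancellation of pullback squares behaves well with respect to $j$.

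For $(1)$: if $f \colon U \to X$ is an equivalence, then the square with $f' = \id_{X'}$ for any object $X'$ admitting a map $X \to j(X')$ is not quite what is wanted; instead, I would observe that $X$ itself is a filtered limit of objects $j(X_\alpha)$, so it suffices to note that an equivalence is the pullback of the identity map $j(\id_{X_\alpha})$, and the identity morphism of any object of $\calG$ is admissible because $\calG^{\adm}$ contains every object (hence every identity). Concretely: take the pullback square whose right vertical arrow is $j$ of an identity map; since identities in $\calG$ are admissible and $f$ differs from the base change of such an identity by an equivalence (and equivalences are retracts, hence covered by axiom $(iii)$ of Definition~\ref{stubb} transported along the pullback-square definition), $f$ is admissible. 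One must be slightly careful here that ``admissible in $\Pro(\calG)$'' as defined in Notation~\ref{ilk} is stable under equivalence; this is immediate from the definition since one can compose the given pullback square with an equivalence.

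For $(2)$: this is essentially formal. Given an admissible $f \colon U \to X$, witnessed by a pullback square over $j(f')$ with $f'$ admissible in $\calG$, and given any $g \colon X'' \to X$ in $\Pro(\calG)$, form the pullback $U'' = U \times_X X''$. Then $U'' \to X''$ fits into a pullback square over the same $j(f')$, obtained by pasting the new pullback square onto the old one (using that the composite $X'' \to X \to j(X')$ serves as the bottom edge). Thus $U'' \to X''$ is admissible by definition. The only content is the standard pasting lemma for pullback squares in an $\infty$-category, which holds in $\Pro(\calG)$.

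For $(3)$: this is the main obstacle, and I would prove it by combining the already-established Lemma~\ref{swug} (proadmissible morphisms form a strongly saturated class, hence satisfy the two-out-of-three type cancellation in the relevant corollary) with a reduction from proadmissible to admissible. Given the triangle with $g$ admissible: if $h$ is admissible then $h$ is proadmissible, $g$ is proadmissible, so by the Corollary following Lemma~\ref{swug} (cancellation in a saturated class, or Corollary~\toposref{wugg}) $f$ is proadmissible; but $f$ is then a proadmissible morphism that I must upgrade to admissible. For this I would argue that $f$ sits in a pullback square over some $j(f')$: writing $g$ via its pullback square over $j(g')$ with $g'$ admissible in $\calG$, and writing $h$ similarly, one chases the diagram — since $j(Y')$ is cocompact in $\Pro(\calG)$ (as in the proof of Lemma~\ref{swug}), the map $X \to Y$ factors appropriately through a stage of a filtered system, allowing one to realize $f$ as the base change of a product of admissible morphisms of $\calG$, which is admissible by axiom $(i)$ and the stability of admissible morphisms in $\calG$ under the operations in Definition~\ref{stubb}. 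Conversely, if $f$ is admissible then $f$ and $g$ are both proadmissible, so $h = g \circ f$ is proadmissible (Lemma~\ref{swug}), and the same cocompactness/filtered-factorization argument promotes $h$ to admissible. The delicate point — and where I expect to spend the most care — is the bookkeeping showing that a proadmissible morphism which is \emph{known} to be a composite or factor involving an admissible morphism of $\Pro(\calG)$ is itself genuinely admissible (not merely proadmissible); this is exactly the kind of filtered-diagram cocompactness manipulation carried out in the proof of Lemma~\ref{swug}, and I would model the argument on that proof.
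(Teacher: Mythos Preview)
Your treatment of $(1)$ and $(2)$ is fine; the paper dismisses both as obvious, and your explicit pasting argument for $(2)$ is the natural way to spell that out.

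For $(3)$, your detour through proadmissibility via Lemma~\ref{swug} and its corollary is superfluous: knowing that $f$ (or $h$) is proadmissible does not help you produce the required pullback square over a $j$-image of an admissible morphism of $\calG$, and you correctly recognize this yourself when you identify the ``delicate point'' as the cocompactness bookkeeping. The paper does exactly and only that bookkeeping, with no appeal to proadmissibility. Concretely: write $Z$ as a filtered limit $\varprojlim Z_\alpha$ of objects of $\calG$ lying over the target $Z_0$ of the witness square for $g$; then $Y \simeq \varprojlim Y_\alpha$ with $Y_\alpha = Z_\alpha \times_{Z_0} Y_0$. Given a witness square for $h$ (respectively $f$), one uses cocompactness of $j$-images to factor the comparison map through a single stage $\alpha$, and then produces at that stage an admissible morphism of $\calG$ by applying the axioms of an admissibility structure in $\calG$. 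The relevant axiom is not the product stability you mention but rather the composition/two-out-of-three property (axiom $(ii)$ of Definition~\ref{stubb}): for the ``only if'' direction one composes two admissible morphisms of $\calG$, and for the ``if'' direction one cancels an admissible morphism from a composite in $\calG$. So your sketch of the endgame is on target in spirit but slightly off in the specific $\calG$-level input it invokes, and the proadmissibility preamble can simply be deleted.
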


\begin{proof}
Assertions $(1)$ and $(2)$ are obvious. Let us now prove $(3)$. We first establish a bit of notation.
Since $g$ is admissible, we can choose a pullback diagram
$$ \xymatrix{ Y \ar[r] \ar[d]^{g} & j(Y_0) \ar[d]^{j(g_0)} \\
Z \ar[r]^{t_0} & j(Z_0) }$$
where $g_0: Y_0 \rightarrow Z_0$ is an admissible morphism in $\calG$.
Write $Z$ as the filtered limit of a diagram $\{ Z_{\alpha} \}$ in $\calG_{/Z_0}$, and set
$Y_{\alpha} = Z_{\alpha} \times_{ Z_0} Y_0$, so that $Y$ is the filtered limit of the diagram
$\{ Y_{\alpha} \}$ in $\calG$. 

We now prove the ``only'' if direction of $(3)$. Suppose that $f$ is admissible, so there exists a pullback diagram
$$ \xymatrix{ X \ar[r] \ar[d]^{f} & j(X_1) \ar[d]^{j(f_1)} \ar[d] \\
Y \ar[r]^{\phi} & j(Y_1) }$$
for some admissible morphism $f_1: X_1 \rightarrow Y_1$ in $\calG$. The
map $\phi$ factors as a composition
$Y \rightarrow j(Y_{\alpha}) \rightarrow j(Y_1)$ for $\alpha$ sufficiently large.
We then have a diagram
$$ \xymatrix{ X \ar[d] \ar[r] & j(X_1 \times_{Y_1} Y_{\alpha}) \ar[d] \\
Y \ar[r] \ar[d] & j(Y_{\alpha}) \ar[d] \\
Z \ar[r] & j(Z_{\alpha}). }$$
Since the upper and lower squares are pullback diagrams, we conclude that the outer square is also a pullback diagram. We now observe that $h$ is a pullback of $j(h_0)$, where $h_0$ denotes the composition
$$ X_1 \times_{Y_1} Y_{\alpha} \rightarrow Y_{\alpha} \rightarrow Z_{\alpha}.$$
Since the collection of admissible morphisms in $\calG$ is stable under pullbacks and composition, we conclude that $h_0$ is admissible. Then $h$ is also admissible, as desired.

We now prove the ``if'' direction of $(3)$. Assume that $h$ is admissible, so that we have a pullback diagram
$$ \xymatrix{ X \ar[r] \ar[d]^{h} & j(X_2) \ar[d]^{ j(h_2)} \\
Z \ar[r]^{t_2} & j(Z_2) }$$
for some admissible morphism $h_2: X_2 \rightarrow Z_2$ in $\calG$.
Replacing $Z_0$ by $Z_0 \times Z_2$ (and repeating the construction of the first part of the proof),
we may suppose that $Z_0 = Z_2$ and $t_0 = t_2$. Let $X_{\alpha} = Z_{\alpha} \times_{ Z_2} X_2$, so that $X$ is the filtered limit of the diagram $\{ X_{\alpha} \}$ in $\calG$.
By a compactness argument, we can choose a commutative diagram
$$ \xymatrix{ X \ar[r] \ar[d]^{f} & j(X_{\alpha}) \ar[d]^{j(f_{\alpha})} \\
Y \ar[r] & j(Y_0) }$$
for some index $\alpha$. The map $f_{\alpha}$ determines a commutative diagram
$$ \xymatrix{ X_{\alpha} \ar[rr]^{ \overline{f}_{\alpha} } \ar[dr] & & Y_{\alpha} \ar[dl] \\
& Z_{\alpha}. & }$$
Since the diagonal arrows are both admissible, we deduce that $\overline{f}_{\alpha}$ is admissible. We have a commutative diagram
$$ \xymatrix{ X \ar[d] \ar[r]^{f} \ar[r] & j( X_{\alpha}) \ar[d]^{ j(\overline{f}_{\alpha}) } \\
Y \ar[d] \ar[r] & j(Y_{\alpha}) \ar[d] \\
Z \ar[r] & j(Z_{\alpha}). }$$
The outer and lower squares are pullbacks, so the upper square is a pullback as well.
It follows that $f$ is admissible, as desired.
\end{proof}

\begin{warning}
If $\calG$ is a geometry, then the collection of admissible morphisms in $\Pro(\calG)$
is not necessarily stable under the formation of retracts.
\end{warning}

\begin{notation}\label{scun}
Let $\calG$ be a geometry, and let $X$ be an object of $\Pro(\calG)$. We let
$\Pro(\calG)^{\adm}_{/X}$ denote the full subcategory of $\Pro(\calG)_{/X}$ spanned by the admissible morphisms $U \rightarrow X$. In view of Lemma \ref{capsule}, we can also identify $\Pro(\calG)^{\adm}_{/X}$ with the $\infty$-category $( \Pro(\calG)^{\adm} )_{/X}$, where $\Pro(\calG)^{\adm}$ denotes the subcategory of $\Pro(\calG)$ spanned by the admissible morphisms.

We regard $\Pro(\calG)_{/X}$ as endowed with the {\em coarsest} Grothendieck topology having the following property: for every admissible morphism $U \rightarrow X$, every admissible covering $\{ V'_{\alpha} \rightarrow U' \}$ of an object $U' \in \calG$ and every morphism $U \rightarrow j(U')$ in $\Pro(\calG)$, the collection of admissible morphisms
$\{ j(V'_{\alpha}) \times_{ j(U') } U \rightarrow U \}$ generates a covering sieve on
$U \in \Pro(\calG)^{\adm}_{/X}$.  
\end{notation}

\begin{remark}\label{sunn}
Let $\calG$ be a geometry, and let $X \in \Pro(\calG)$. Every admissible morphism
$U \rightarrow X$ in $\Pro(\calG)$ arises from some pullback diagram
$$ \xymatrix{ U \ar[r] \ar[d]^{f} & j(U') \ar[d]^{j(f')} \\
X \ar[r] & j(X') }$$
in $\Pro(\calG)$. It follows that the collection of equivalence classes of admissible morphisms
$U \rightarrow X$ is small, provided that $X$ has been fixed in advance. In particular, the
$\infty$-category $\Pro(\calG)^{\adm}_{/X}$ is essentially small. We may therefore proceed as usual
to define a presheaf $\infty$-category $\calP( \Pro(\calG)^{\adm}_{/X} ) = \Fun( (\Pro(\calG)^{\adm}_{/X})^{op}, \SSet)$ and the subcategory $\Shv( \Pro(\calG)^{\adm}_{/X}) \subseteq \calP( \Pro(\calG)^{\adm}_{/X})$ of sheaves with respect to the Grothendieck topology described in Notation
\ref{scun}. The inclusion $\Shv( \Pro(\calG)^{\adm}_{/X} ) \subseteq \calP( \Pro(\calG)^{\adm}_{/X} )$
admits a left exact left adjoint $L$, and $\Shv( \Pro(\calG)^{\adm}_{/X} )$ is an $\infty$-topos.
\end{remark}

\begin{remark}
Let us say that a geometry $\calG$ is {\it finitary} if $\calG$ is small, and the Grothendieck topology on $\calG$ is finitely generated in the following sense: for every covering sieve
$\calG^{0}_{/Y} \subseteq \calG_{/Y}$ on an object $Y \in \calG$, there exists a finite collection of admissible morphisms $\{ W_{i} \rightarrow Y \}_{1 \leq i \leq n}$ belonging to $\calG^{0}_{/Y}$ which themselves generate a covering sieve on $\calG^{0}_{/Y}$. 

Suppose that $\calG$ is a finitary geometry, and let $X$ be an object of $\Pro(\calG)$. Then
the Grothendieck topology on $\Pro(\calG)_{/X}^{\adm}$ of Notation \ref{scun} can be described as follows. A sieve $S$ on an object $U \rightarrow X$ of $\Pro(\calG)$ is covering if and only if there
exists an object $U' \in \calG$, a finite collection of admissible morphisms
$\{ V'_{i} \rightarrow U' \}_{1 \leq i \leq n}$ which generate a covering sieve on $U'$, and
a map $U \rightarrow j(U')$ such that each of the pullback maps
$U \times_{ j(U') } j( V'_i) \rightarrow U$ belongs to the sieve $S$.

If $\calG$ is not finitary, then the condition given above is sufficient to guarantee that $S$ is a covering sieve, but is generally not necessary.
\end{remark}

We are now ready to proceed with our construction.

\begin{definition}
Let $\calG$ be a geometry and let $X$ be an object of $\Pro(\calG)$. We
define $\USpec X$ to be the $\infty$-topos $\Shv( \Pro(\calG)_{/X}^{\adm} )$
(see Notation \ref{scun} and Remark \ref{sunn}). Let
$\calO_{ \USpec X}$ denote the composite functor
$$ \calG \stackrel{ \widetilde{\calO}_{\USpec X} }{\rightarrow} \calP( \Pro(\calG)^{\adm}_{/X} )
\stackrel{L}{\rightarrow} \USpec X$$
where $L$ denotes a left adjoint to the inclusion $\Shv( \Pro(\calG)^{\adm}_{/X} )
\subseteq \calP( \Pro( \calG^{\adm}_{/X} ) )$, and the functor $\widetilde{\calO}_{\USpec X}$ is
adjoint to the composite map
$$ \calG \times (\Pro(\calG)^{\adm}_{/X})^{op} \rightarrow
\calG \times \Pro(\calG)^{op} = \calG \times \Fun^{\lex}(\calG, \SSet) \rightarrow \SSet.$$
\end{definition}

\begin{remark}\label{juna}
Suppose that the Grothendieck topology of Notation \ref{scun} is {\it precanonical} in the sense that
for every object $Y \in \calG$ and every object $X \in \Pro(\calG)$, the functor 
$$(\Pro(\calG)_{/X}^{\adm})^{op} \rightarrow \Pro(\calG)^{op} = \Fun^{\lex}(\calG, \SSet)
\stackrel{e_{Y}}{\rightarrow} \SSet$$
is a sheaf, where $e_Y$ denotes evaluation at $Y$. Then the functor
$\widetilde{\calO}_{\USpec X}$ already takes values in $\USpec X$, so further sheafification
is unnecessary and we can identify $\calO_{\USpec X}$ with $\widetilde{\calO}_{\USpec X}$.
In particular, $\Gamma_{\calG}( \USpec X, \calO_{ \USpec X} )$ can be identified with
$X \in \Pro(\calG)^{op} \simeq \Fun^{\lex}( \calG, \SSet)$. This will be the case in many of the examples that we consider.
\end{remark}

\begin{proposition}
Let $\calG$ be a geometry and $X \in \Pro(\calG)$ an object. Then
the functor $\calO_{ \USpec X}: \calG \rightarrow \USpec X$ is a $\calG$-structure on
$\USpec X$. 
\end{proposition}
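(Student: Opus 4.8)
The plan is to check directly the two conditions of Definition \ref{psyab}: that $\calO_{\USpec X}$ is left exact, and that it carries admissible coverings in $\calG$ to effective epimorphisms in $\USpec X$. Write $\calP = \calP(\Pro(\calG)^{\adm}_{/X})$ and let $L \colon \calP \to \USpec X = \Shv(\Pro(\calG)^{\adm}_{/X})$ be the sheafification functor; by Remark \ref{sunn}, $L$ is a left exact left adjoint. Since $\calO_{\USpec X} = L \circ \widetilde{\calO}_{\USpec X}$, it suffices to show (a) $\widetilde{\calO}_{\USpec X} \colon \calG \to \calP$ is left exact, and (b) for every admissible covering $\{U_\alpha \to Y\}$ in $\calG$, the map of presheaves $\phi \colon \coprod_\alpha \widetilde{\calO}_{\USpec X}(U_\alpha) \to \widetilde{\calO}_{\USpec X}(Y)$ becomes an effective epimorphism after applying $L$; since $L$ preserves coproducts, $L\phi$ is then exactly the map $\coprod_\alpha \calO_{\USpec X}(U_\alpha) \to \calO_{\USpec X}(Y)$ appearing in Definition \ref{psyab}.

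For (a): for each object $(U \to X)$ of $\Pro(\calG)^{\adm}_{/X}$, let $F_U \in \Fun^{\lex}(\calG, \SSet) \simeq \Pro(\calG)^{op}$ be the object it determines. Unwinding the definition of $\widetilde{\calO}_{\USpec X}$ via the evaluation pairing $\calG \times \Fun^{\lex}(\calG,\SSet) \to \SSet$, one gets $\widetilde{\calO}_{\USpec X}(Y)(U) \simeq F_U(Y)$. Thus $Y \mapsto \widetilde{\calO}_{\USpec X}(Y)$ is objectwise given by the left exact functors $F_U$, and since limits of presheaves are computed objectwise it preserves finite limits. Composing with the left exact $L$ shows $\calO_{\USpec X}$ is left exact.

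For (b): fix an admissible covering $\{U_\alpha \to Y\}$ of $Y \in \calG$. By the standard characterization of effective epimorphisms in an $\infty$-topos of sheaves (see \cite{topoi}), it is enough to check that $\phi$ is a \emph{local epimorphism} of presheaves for the topology of Notation \ref{scun}: for every object $(W \to X)$ of $\Pro(\calG)^{\adm}_{/X}$ and every section $s \in \widetilde{\calO}_{\USpec X}(Y)(W)$, there is a covering sieve on $W$ over which $s$ lifts along $\phi$. Using the identification $\widetilde{\calO}_{\USpec X}(Y)(W) \simeq \bHom_{\Pro(\calG)}(W, j(Y))$ (Yoneda), regard $s$ as a morphism $s \colon W \to j(Y)$ in $\Pro(\calG)$. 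Each $j(U_\alpha) \to j(Y)$ is admissible in the sense of Notation \ref{ilk} (take the defining pullback square over $\id_{j(Y)}$), so the pullbacks $W_\alpha := W \times_{j(Y)} j(U_\alpha)$ along $s$ fit into pullback squares exhibiting $W_\alpha \to W$ as admissible morphisms of $\Pro(\calG)$ (Lemma \ref{capsule}), whose composites $W_\alpha \to W \to X$ are again admissible (Lemma \ref{capsule}, part $(3)$); hence the $W_\alpha \to W$ are morphisms in $\Pro(\calG)^{\adm}_{/X}$. By the very definition of the Grothendieck topology in Notation \ref{scun} — applied with $U = W$, $U' = Y$, the admissible covering $\{U_\alpha \to Y\}$ of $Y \in \calG$, and the map $s \colon W \to j(Y)$ — the family $\{W_\alpha \to W\}$ generates a covering sieve on $W$. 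Finally, the pullback square for $W_\alpha$ shows $s|_{W_\alpha} \colon W_\alpha \to W \xrightarrow{s} j(Y)$ factors through $j(U_\alpha)$, i.e. lies in the image of $\widetilde{\calO}_{\USpec X}(U_\alpha)(W_\alpha) \to \widetilde{\calO}_{\USpec X}(Y)(W_\alpha)$; the same holds after restricting $s$ along any morphism in the generated sieve, since such a morphism factors through some $W_\alpha$. Thus $\phi$ is a local epimorphism, so $L\phi$ is an effective epimorphism, completing the proof.

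The main obstacle I expect is purely bookkeeping rather than conceptual: pinning down the identification $\widetilde{\calO}_{\USpec X}(Y)(W) \simeq \bHom_{\Pro(\calG)}(W, j(Y))$ so that $s$ can be pulled back, and matching the hypotheses of Notation \ref{scun} exactly — in particular verifying (via Lemma \ref{capsule}) that the $W_\alpha \to W$ are genuine morphisms of $\Pro(\calG)^{\adm}_{/X}$ and that the notion of admissibility in $\Pro(\calG)$ from Notation \ref{ilk} is precisely the one feeding into that topology. The $\infty$-topos-theoretic inputs (sheafification is a left exact left adjoint preserving coproducts and effective epimorphisms, and the local-epimorphism criterion for sheafifying to an effective epimorphism) are standard and may be quoted from \cite{topoi}.
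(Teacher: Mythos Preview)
Your proof is correct and follows essentially the same approach as the paper's. The paper dispatches left exactness in one line (``clear from the construction'') and then argues the effective-epimorphism condition exactly as you do: given a section of $\calO_{\USpec X}(Y)$ over an object $U \in \Pro(\calG)^{\adm}_{/X}$, represent it (after passing to a presheaf section) as a map $U \to j(Y)$, pull back the admissible covering $\{V_\alpha \to Y\}$ along it, and observe that the resulting family is an admissible cover of $U$ by the very definition of the topology in Notation~\ref{scun}. The only cosmetic difference is that you phrase the argument as checking that the presheaf map $\phi$ is a local epimorphism before applying $L$, whereas the paper works directly with $\pi_0$ of the sheaf and invokes ``without loss of generality'' to pass to a presheaf section; these are the same move.
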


\begin{proof}
It is clear from the construction that the functor $\calO_{ \USpec X}$ is left exact.
To complete the proof, it will suffice to show that if $\{ V_{\alpha} \rightarrow Y \}$ is
a collection of admissible morphisms which generate a covering sieve on an object
$Y \in \calG$, then the induced map $\coprod \calO_{\USpec X}(V_{\alpha}) \rightarrow \calO_{\USpec X}(Y)$ is an effective epimorphism in $\Shv( \Pro(\calG)_{/X}^{\adm})$. 
Let $U \in \Pro(\calG)_{/X}^{\adm}$, and let
$\eta \in \pi_0 \calO_{\USpec X}(Y)(U)$; we wish to show that, locally on $U$, the section
$\eta$ belongs to the image of $\pi_0 \calO_{X}(V_{\alpha})(U)$ for some index $\alpha$. Without loss of generality, we may suppose that $\eta$ arises from a map $U \rightarrow j(Y)$ in $\Pro(\calG)$, where $j: \calG \rightarrow \Pro(\calG)$ denotes the Yoneda embedding.
Then the fiber products $U_{\alpha} = j(V_{\alpha}) \times_{j(Y)} U$ form an admissible cover of $U$, and each $\eta_{\alpha} = \eta | U_{\alpha} \in \pi_0 \calO_{\USpec X}(Y)(U_{\alpha})$ lifts to
$\pi_0 \calO_{\USpec X}(V_{\alpha})(U_{\alpha})$.
\end{proof}

Consequently, we can view $( \USpec X, \calO_{\USpec X} )$ as an object 
of $\LGeo(\calG)$. Our next goal is to show that this object can be identified
with $\Spec^{\calG} X$. To formulate this result more precisely, we begin by observing that the global sections functors 
$$\Gamma: \USpec X \rightarrow \SSet$$
$$\Gamma': \calP( \Pro(\calG)^{\adm}_{/X} ) \rightarrow \SSet$$
can be identified with the functors given by evaluation on the final object
$\id_X: X \rightarrow X$ of $\Pro(\calG)^{\adm}_{/X}$. In particular, the composition
$$ \calG \stackrel{ \widetilde{\calO}_{\calX} }{\rightarrow} \calP( \Pro(\calG)^{\adm}_{/X} ) 
\stackrel{ \Gamma' }{\rightarrow} \SSet$$
is canonically equivalent to the proobject $X \in \Pro(\calG) = \Fun^{\lex}( \calG, \SSet)^{op}$ itself.
The sheafification map $\widetilde{\calO}_{\USpec X} \rightarrow \calO_{ \USpec X}$
induces a natural transformation 
$\alpha: X \rightarrow \Gamma_{\calG}( \USpec X, \calO_{\USpec X})$ in the
$\infty$-category $\Ind(\calG^{op}) \simeq \Pro(\calG)^{op}$. 

\begin{theorem}\label{scoo}
Let $\calG$ be a geometry and let $X$ be an object of $\Pro(\calG)$. Then the 
natural transformation $\alpha: X \rightarrow \Gamma_{\calG}( \USpec X, \calO_{\USpec X} )$
in $\Pro(\calG)^{op}$ is adjoint to an equivalence $\Spec^{\calG} X \rightarrow ( \USpec X, \calO_{ \USpec X} )$ in $\LGeo(\calG)$.
\end{theorem}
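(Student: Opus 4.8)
The goal is to identify the explicitly constructed pair $(\USpec X, \calO_{\USpec X})$ with $\Spec^{\calG} X$. Since $\Spec^{\calG}$ is defined as a left adjoint to $\Gamma_{\calG}: \LGeo(\calG) \to \Ind(\calG^{op})$, it suffices to verify the universal property: for every $(\calX, \calO_{\calX}) \in \LGeo(\calG)$, composition with $\alpha$ should induce a homotopy equivalence
$$ \bHom_{\LGeo(\calG)}\bigl( (\USpec X, \calO_{\USpec X}), (\calX, \calO_{\calX}) \bigr) \longrightarrow \bHom_{\Ind(\calG^{op})}\bigl( X, \Gamma_{\calG}(\calX, \calO_{\calX}) \bigr). $$
First I would unwind the right-hand side. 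By the universal property of $\Ind(\calG^{op}) \simeq \Pro(\calG)^{op} \simeq \Fun^{\lex}(\calG, \SSet)$ and the description of $\Gamma_{\calG}$ in Notation \ref{sipper}, a map $X \to \Gamma_{\calG}(\calX, \calO_{\calX})$ is the datum of a point of the space $\Gamma(\calX, \calO_{\calX}(Y))$ compatible with the pro-structure on $X$ — equivalently, since $X \in \Pro(\calG)$, a suitably coherent family of maps into the global sections, i.e. a global point of the $\calG$-structure $\calO_{\calX}$ "at $X$". I would also unwind the left-hand side using the description of morphisms in $\LGeo(\calG)$: such a morphism is a geometric morphism $f^{\ast}: \USpec X \to \calX$ together with a natural transformation $f^{\ast} \calO_{\USpec X} \to \calO_{\calX}$ which is local (each admissible square a pullback).

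The heart of the argument is then a two-step decomposition, mirroring the construction of $\USpec X$ itself. Recall $\USpec X = \Shv(\Pro(\calG)^{\adm}_{/X})$, with $\calO_{\USpec X} = L \circ \widetilde{\calO}_{\USpec X}$. Step one: geometric morphisms $\USpec X \to \calX$ correspond (Proposition \toposref{igrute}, as in Proposition \ref{usebil}) to left exact functors $\Pro(\calG)^{\adm}_{/X} \to \calX$ carrying covering sieves to effective epimorphisms. Step two: among the data of such a geometric morphism together with a local transformation to $\calO_{\calX}$, I expect to peel off, via Proposition \ref{prespaz} (the technical left-Kan-extension result of \S\ref{scurgeo}), the information of the structure sheaf along the "diagonal" object $\id_X$, leaving precisely a map of $\calG$-structures over $\calO_{\calX}$ in the fiber at $X$. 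This is the point at which the sheafification $L$ and the Grothendieck topology of Notation \ref{scun} get used: the effective-epimorphism condition on $\calO_{\USpec X}$ from admissible covers, combined with the local condition on the transformation, forces the requisite descent. Concretely, I would show that $\bHom_{\LGeo(\calG)}\bigl((\USpec X, \calO_{\USpec X}), (\calX, \calO_{\calX})\bigr)$ is equivalent to the largest Kan complex in $\Struct^{\loc}_{\calG}(\calX)^{/\calO_{\calX}}$ evaluated appropriately at $X$, and then match that with $\bHom_{\Pro(\calG)^{op}}(X, \Gamma_{\calG}(\calX, \calO_{\calX}))$ using the adjunction between $\Gamma_{\calG}$ and the inclusion of $\Struct_{\calG}(\SSet)$ described just before Notation \ref{ilk}.

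A cleaner route, which I would pursue in parallel, is to reduce to the universal case already handled in \S\ref{relspec}: by Lemma \ref{specred} and the remark following it, it suffices to check the assertion when $(\calX, \calO) = (\calK, \calO_0)$ is a classifying $\infty$-topos with universal $\calG$-structure, and then transport along the fiber-product formula $\Spec^{\calG'}_{\calG}(\calX, \calO) \simeq (\calX \times_{\calK} \calK^{\calG'}_{\calG}, \calO')$. In that case the claim becomes a direct comparison between $\USpec$ applied to a representable (or pro-representable) object and the $\infty$-topos $\calK^{\calG_{\disc}}_{\calG}$ built in Notation \ref{sluck}, which one can check by comparing universal properties of the two fiber products. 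The main obstacle I anticipate is bookkeeping: one must check that the Grothendieck topology of Notation \ref{scun} on $\Pro(\calG)^{\adm}_{/X}$ is exactly the one making $\USpec X$ the correct classifying topos — in particular that it is (at least) subcanonical enough that Remark \ref{juna}'s identification $\Gamma_{\calG}(\USpec X, \calO_{\USpec X}) \simeq X$ holds in the generality claimed, rather than only in the "precanonical" case. Handling the general (non-finitary, non-precanonical) case, where the natural transformation $\alpha$ need not be an equivalence on the nose but only becomes one after the adjunction, is where I expect the real work to lie; there one genuinely needs the full strength of the factorization system of Theorem \ref{gen} together with the stability of local morphisms under filtered colimits (Proposition \ref{unwi}).
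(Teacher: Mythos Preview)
Your main approach is essentially the paper's: reformulate the target as $\bHom_{\calY}(1_{\calY}, \overline{\calO}_{\calY}(X))$ (this is Proposition \ref{proscoo}), fiber the source over geometric morphisms $\USpec X \to \calY$, identify those with suitable functors $\Pro(\calG)^{\adm}_{/X} \to \calY$ via Propositions \toposref{natash} and \toposref{igrute}, and then, for each fixed geometric morphism $g^{\ast}$, match the space of local transformations $g^{\ast}\calO_{\USpec X} \to \calO_{\calY}$ with the corresponding fiber of the target using the left-Kan-extension description of $\calO_{\USpec X}$ (Lemma \ref{blahann}) together with Proposition \ref{prespaz}. That is exactly the two-step decomposition you sketch, and the paper carries it out by building an explicit auxiliary $\infty$-category $\calC_0$ of diagrams $\calT^{\adm}_{/X}\times\Delta^1 \to \calY$ satisfying a pullback condition, which simultaneously models the global-sections mapping space and receives a restriction map from the $\LGeo(\calG)$ mapping space.

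Your anticipated obstacles, however, are phantoms. The argument is completely uniform and never appeals to precanonicity: Remark \ref{juna} is not invoked, and the map $\alpha$ is handled abstractly through the adjunction, not by showing it is an equivalence of pro-objects. Likewise, neither the factorization system of Theorem \ref{gen} nor the filtered-colimit stability of Proposition \ref{unwi} plays any role; the only ``locality'' input is Proposition \ref{prespaz}, which you already identified. Your alternative route through Lemma \ref{specred} and the classifying topos $\calK^{\calG}_{\calG_{\disc}}$ would be circular here: the point of \S \ref{abspec} is precisely to give a concrete model that avoids the opaque fiber-product construction of \S \ref{relspec}, so reducing back to it would defeat the purpose and would in any case require an independent comparison of the two models.
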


In other words, for every object $(\calY, \calO_{\calY}) \in \LGeo(\calG)$, composition with
$\alpha$ induces a homotopy equivalence
$$ \bHom_{ \LGeo(\calG) }( (\USpec X, \calO_{ \USpec X}), (\calY, \calO_{\calY}))
\rightarrow \bHom_{ \Pro(\calG)^{op} }( X, \Gamma_{\calG}( \calY, \calO_{\calY}) ).$$  
In view of Propositions \toposref{intprop} and \toposref{sumatch}, we may assume that $\calO_{\calY}$
factors as a composition
$$ \calG \stackrel{j}{\rightarrow} \Pro(\calG) \stackrel{ \overline{\calO}_{\calY} }{\rightarrow} \calY,$$
where $j$ denotes the Yoneda embedding and the functor $\overline{\calO}_{\calY}$ preserves small limits. Unwinding the definitions, we can identify the mapping space
$\bHom_{ \Pro(\calG)^{op} }( X, \Gamma_{\calG}( \calY, \calO_{\calY}) )$ with
$\bHom_{\calY}( 1_{\calY}, \overline{\calO}_{\calY}(X) )$, where $1_{\calY}$ denotes the final object of $\calY$. Consequently, we may reformulate Theorem \ref{scoo} as follows:

\begin{proposition}\label{proscoo}
Let $\calG$ be a geometry, $X$ an object of $\Pro(\calG)$. Let
$\calY$ be an $\infty$-topos and $\overline{\calO}_{\calY}: \Pro(\calG) \rightarrow \calY$ a functor
which preserves small limits, such that the composition
$\calO_{\calY} = \overline{\calO}_{\calY} \circ j: \calG \rightarrow \calY$ belongs to
$\Struct_{\calG}(\calY)$. Then the canonical map
$$ \theta: \bHom_{ \LGeo(\calG) }( ( \USpec X, \calO_{ \USpec X}), (\calY, \calO_{\calY}) )
\rightarrow \bHom_{\calY}( 1_{\calY}, \calO_{\calY}(X) )$$
is a homotopy equivalence.
\end{proposition}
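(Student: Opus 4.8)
The plan is to compute the left-hand mapping space of $\theta$ directly from the explicit model $\USpec X = \Shv(\calT)$, $\calT := \Pro(\calG)^{\adm}_{/X}$, and then to produce an honest homotopy inverse. First I would check that $\calT$ is essentially small and admits finite limits: its terminal object is $\id_X$, which is admissible by Lemma \ref{capsule}$(1)$, and fiber products exist and are computed as in $\Pro(\calG)_{/X}$, because admissible morphisms of $\Pro(\calG)$ are stable under pullback and satisfy the cancellation property of Lemma \ref{capsule}$(3)$. Equipping $\calT$ with the Grothendieck topology of Notation \ref{scun}, the universal property of sheaf $\infty$-topoi (Proposition \toposref{igrute}) identifies $\Fun^{\ast}(\USpec X, \calY)$ with the full subcategory of $\Fun^{\lex}(\calT, \calY)$ spanned by the functors carrying covering sieves to effective epimorphic families. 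Feeding this into the coCartesian fibration $\LGeo(\calG) \to \LGeo$ (Proposition \ref{swupa}, which also identifies the coCartesian pushforward of $\calO_{\USpec X}$ along a functor $f^{\ast}$ with $f^{\ast} \circ \calO_{\USpec X}$), the mapping space $\bHom_{\LGeo(\calG)}((\USpec X, \calO_{\USpec X}), (\calY, \calO_\calY))$ becomes the space of pairs $(F, \beta)$, where $F$ is such a functor $\calT \to \calY$ and $\beta \colon f^{\ast} \circ \calO_{\USpec X} \to \calO_\calY$ is a local transformation of $\calG$-structures on $\calY$.

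Next I would produce the candidate inverse. Given a point $\eta \colon 1_\calY \to \overline{\calO}_\calY(X)$, set $F_\eta(U \to X) = \overline{\calO}_\calY(U) \times_{\overline{\calO}_\calY(X)} 1_\calY$, the pullback formed along $\eta$. Left exactness of $F_\eta$ is immediate from that of $\overline{\calO}_\calY$, and $F_\eta$ sends the generating admissible coverings of Notation \ref{scun} to effective epimorphic families because $\calO_\calY$ is a $\calG$-structure and colimits in $\calY$ are universal (the relevant family over $F_\eta(U \to X)$ is a base change of the $\calO_\calY$-image of an admissible covering in $\calG$). Thus $F_\eta$ classifies $f^{\ast}_\eta \colon \USpec X \to \calY$. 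For the accompanying transformation one unwinds $\widetilde{\calO}_{\USpec X}$, which sends $Y \in \calG$ to the presheaf $(U \to X) \mapsto \bHom_{\Pro(\calG)}(U, j(Y))$ on $\calT$, together with the co-Yoneda formula for the colimit-preserving extension of $F_\eta$: the object $f^{\ast}_\eta \widetilde{\calO}_{\USpec X}(Y)$ is the colimit of $F_\eta$ over the $\infty$-category of pairs $(U \to X,\ t \colon U \to j(Y))$. Each such pair gives a composite $F_\eta(U \to X) \to \overline{\calO}_\calY(U) \xrightarrow{\overline{\calO}_\calY(t)} \calO_\calY(Y)$, and these assemble — after noting that $f^{\ast}_\eta$ factors through the sheafification $L$, which is left exact — into $\beta_\eta \colon f^{\ast}_\eta \calO_{\USpec X} \to \calO_\calY$. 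That $\beta_\eta$ is local is checked on admissible morphisms $Y' \to Y$ of $\calG$ by a cofinality argument of the kind appearing in the proof of Proposition \ref{prespaz} (and Lemma \ref{gooduse}).

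It then remains to verify that $\eta \mapsto (F_\eta, \beta_\eta)$ is a two-sided homotopy inverse to $\theta$. For the round-trip $\theta(F_\eta, \beta_\eta) \simeq \eta$: the composite morphism $(\USpec X, \calO_{\USpec X}) \to (\calY, \calO_\calY)$ sends the tautological section $\eta_0 \colon 1_{\USpec X} \to \overline{\calO}_{\USpec X}(X)$ — the section adjoint to the comparison map $\alpha$ of the paragraph preceding the theorem, which is itself induced by the terminal object $\id_X \in \calT$ via the identity $\Gamma \circ \widetilde{\calO}_{\USpec X} \simeq X$ in $\Pro(\calG)^{op}$ (evaluation of a presheaf at a terminal object computes global sections) — to the point $1_\calY \simeq f^{\ast}_\eta(1_{\USpec X}) \to f^{\ast}_\eta(\overline{\calO}_{\USpec X}(X)) \to \overline{\calO}_\calY(X)$, the last arrow being the canonical comparison induced by $\beta_\eta$; one unwinds the co-Yoneda colimit to see this is $\eta$. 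For the other round-trip, starting from an arbitrary $(F, \beta)$ one must reconstruct $F$ from the induced section of $\overline{\calO}_\calY(X)$; this comes down to the density statement that $1_{\USpec X}$ is the colimit of the sheafified representables of $\calT$, together with the identification $F_{\eta_0}(U \to X) \simeq \overline{\calO}_{\USpec X}(U) \times_{\overline{\calO}_{\USpec X}(X)} 1_{\USpec X} \simeq$ the sheafified representable of $(U \to X)$, which one checks on the generating admissible morphisms.

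The step I expect to be the main obstacle is reconciling the several locality and covering conditions, which are packaged differently on the two sides: $\beta$ must be local with respect to admissible morphisms \emph{of $\calG$}, whereas $\calT$ only records admissible morphisms \emph{of $\Pro(\calG)_{/X}$}; and $\calO_{\USpec X} = L \circ \widetilde{\calO}_{\USpec X}$, so sheafification must be tracked carefully (it is invisible in the precanonical case of Remark \ref{juna}, but not in general). The cleanest organization is probably to prove first the purely formal, topology-free version — that $(\calP(\calT), \widetilde{\calO}_{\USpec X})$ co-represents the analogous functor on pairs consisting of an $\infty$-topos together with an \emph{arbitrary} left exact functor out of $\calG$, which amounts to the universal property of $\calP(\calT)$ combined with a left Kan extension identity in the spirit of Lemma \toposref{kanspaz} — and then to cut down to $\USpec X$ by restricting along the monomorphisms of $\infty$-categories carving out the covering and locality conditions, matching them by the functoriality in Theorem \ref{gen} of the factorization system whose right class consists of the local transformations. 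The only genuinely non-formal input is the naturality in $Y$ of the co-Yoneda colimit computing $f^{\ast}_\eta \widetilde{\calO}_{\USpec X}(Y)$ with respect to the transition maps induced by admissible morphisms; that is where the bulk of the work lies.
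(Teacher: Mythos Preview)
Your proposal is correct and uses the same essential ingredients as the paper: the fiber-product construction $F_\eta(U\to X) = \overline{\calO}_\calY(U)\times_{\overline{\calO}_\calY(X)}1_\calY$, the universal property of $\Shv(\calT)$ for $\calT = \Pro(\calG)^{\adm}_{/X}$, the identification of $\calO_{\USpec X}$ as a left Kan extension along $\calT\to\Pro(\calG)$ (your co-Yoneda discussion is the content of the paper's Lemma~\ref{blahann}), and Proposition~\ref{prespaz} to match the locality conditions.

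The organization, however, differs. You propose to build an explicit inverse $\eta\mapsto(F_\eta,\beta_\eta)$ and verify both round-trips by hand; the paper instead packages things into a homotopy pullback square
\[
\xymatrix{
\bHom_{\LGeo(\calG)}((\USpec X,\calO_{\USpec X}),(\calY,\calO_\calY)) \ar[r]^-{\theta} \ar[d] & \calC_0 \ar[d]^{\eta\,\mapsto\,F_\eta} \\
\Fun^{\ast}(\USpec X,\calY) \ar[r]^-{\theta'} & \Fun^{0}(\calT,\calY)
}
\]
where $\calC_0$ is a concrete model for $\bHom_\calY(1_\calY,\overline{\calO}_\calY(X))$ (built from functors on $\calT\times\Delta^1$ satisfying a right-Kan-extension condition). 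The bottom map $\theta'$ is an equivalence by the universal property of $\Shv(\calT)$, so one is reduced to comparing vertical fibers over a fixed $g^\ast\in\Fun^\ast(\USpec X,\calY)$; there the claim becomes the single statement that
\[
\bHom_{\Struct^{\loc}_\calG(\calY)}(g^\ast\calO_{\USpec X},\calO_\calY)\ \simeq\ \bHom_{\Fun'(\calT,\calY)}(g^\ast\!\circ Lj,\,\overline{\calO}_0),
\]
which follows immediately from Lemma~\ref{blahann} plus Proposition~\ref{prespaz}. This buys you a cleaner endgame: no need to verify the second round-trip (your sketch of it, via density of sheafified representables and recovering $\beta$, is the part of your argument most in need of fleshing out), and the sheafification bookkeeping you flag as the main obstacle is absorbed entirely into the equivalence~$\theta'$. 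Your two-step ``presheaf-then-restrict'' plan in the final paragraph is morally the same reduction, but the pullback-square formulation executes it more directly.
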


The proof will require the following preliminary result, which is an easy consequence of
Lemma \ref{haberspaz}:

\begin{lemma}\label{blahann}
Let $\calG$ be a geometry and let $X$ be an object of $\Pro(\calG)$. 
Let $j: \Pro(\calG)^{\adm}_{/X} \rightarrow \USpec X$ be the composition of the Yoneda
embedding $\Pro(\calG)^{\adm}_{/X} \rightarrow \calP( \Pro(\calG^{\adm})_{/X})$
with the sheafification functor $L: \calP( \Pro(\calG^{\adm})_{/X}) \rightarrow \USpec X$
(that is, a left adjoint to the inclusion), and let $\alpha: \Pro(\calG)^{\adm}_{/X} \rightarrow \Pro(\calG)$
be the canonical projection. Then:
\begin{itemize}
\item[$(1)$] The functor $j$ admits a left Kan extension along $\alpha$, which we will denote by
$\alpha_{!}(j): \Pro(\calG) \rightarrow \USpec X$.
\item[$(2)$] The composition of $\alpha_{!}(j)$ with the Yoneda embedding
$\calG \rightarrow \Pro(\calG)$ is canonically equivalent to the structure sheaf $\calO_{ \USpec X}$. 
\end{itemize}
\end{lemma}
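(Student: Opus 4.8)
The plan is to deduce Lemma~\ref{blahann} directly from Lemma~\ref{haberspaz} by identifying the relevant correspondence and bifunctor. First I would recall the setup of Lemma~\ref{haberspaz}: given small $\infty$-categories $\calC$, $\calD$ and a bifunctor $F\colon \calC^{op}\times\calD\to\SSet$, one obtains $f^0\colon \calC\to\calP(\calD^{op})^{op}$ and $f^1\colon\calD\to\calP(\calC)$, and the conclusion is that the Yoneda embedding $j\colon\calC\to\calP(\calC)$ admits a left Kan extension $f^0_!(j)$ along $f^0$, with $j'\circ f^0_!(j)\simeq f^1$, where $j'\colon\calD\to\calP(\calD^{op})^{op}$ is the Yoneda embedding. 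To apply this, I would take $\calC = \Pro(\calG)^{\adm}_{/X}$ (which is essentially small by Remark~\ref{sunn}) and $\calD = \calG$, and let $F\colon (\Pro(\calG)^{\adm}_{/X})^{op}\times\calG\to\SSet$ be the bifunctor obtained from the composite
$$ \calG \times (\Pro(\calG)^{\adm}_{/X})^{op} \to \calG\times\Pro(\calG)^{op} = \calG\times\Fun^{\lex}(\calG,\SSet)\to\SSet, $$
which is exactly the adjoint of the functor $\widetilde{\calO}_{\USpec X}$ appearing in the definition of the structure sheaf. With this choice, $f^0$ is (up to the identification $\calP(\calG^{op})^{op}\simeq\Pro(\calG)$) the canonical projection $\alpha\colon\Pro(\calG)^{\adm}_{/X}\to\Pro(\calG)$, and $f^1\colon\calG\to\calP((\Pro(\calG)^{\adm}_{/X})^{op})$ is (up to sheafification) the functor $\widetilde{\calO}_{\USpec X}$ itself.

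The key steps, in order: (i) verify that the bifunctor $F$ described above is genuinely the one governing $\widetilde{\calO}_{\USpec X}$, i.e.\ unwind the definition of $\widetilde{\calO}_{\USpec X}$ and match it against the adjoint description of $f^1$ in Lemma~\ref{haberspaz}; (ii) identify $f^0$ with the projection $\alpha$ under the equivalence $\Pro(\calG)\simeq\calP(\calG^{op})^{op}=\Fun^{\lex}(\calG,\SSet)^{op}$, using that a proadmissible object $U\to X$ is sent to its underlying pro-object $U\in\Pro(\calG)$; (iii) apply part $(1)$ of Lemma~\ref{haberspaz} to conclude that the Yoneda embedding $j_0\colon\Pro(\calG)^{\adm}_{/X}\to\calP(\Pro(\calG)^{\adm}_{/X})$ admits a left Kan extension $\alpha_!(j_0)$ along $\alpha$, and that $\widetilde{\calO}_{\USpec X}\simeq j'\circ\alpha_!(j_0)$ where $j'$ is the opposite Yoneda embedding $\calG\to\calP(\calG^{op})^{op}$; (iv) compose with the sheafification functor $L\colon\calP(\Pro(\calG)^{\adm}_{/X})\to\USpec X$: since $L$ preserves small colimits, $L\circ\alpha_!(j_0)$ is a left Kan extension of $L\circ j_0 = j$ along $\alpha$, giving $\alpha_!(j)\simeq L\circ\alpha_!(j_0)$, which proves $(1)$; (v) for $(2)$, precompose with the Yoneda embedding $\calG\to\Pro(\calG)$ and use the identification from step (iii) together with the definition $\calO_{\USpec X}=L\circ\widetilde{\calO}_{\USpec X}$ to obtain $\alpha_!(j)\circ j_{\calG}\simeq L\circ j'\circ\alpha_!(j_0)\simeq L\circ\widetilde{\calO}_{\USpec X}\simeq\calO_{\USpec X}$.

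The main obstacle I expect is step (ii): carefully checking that the functor $f^0$ produced by Lemma~\ref{haberspaz} from the bifunctor $F$ really is the projection $\alpha\colon\Pro(\calG)^{\adm}_{/X}\to\Pro(\calG)$ up to the standard equivalences, rather than something merely equivalent on objects. This amounts to tracing through the chain of identifications $\Pro(\calG) = \Ind(\calG^{op})^{op}\simeq\Fun^{\lex}(\calG,\SSet)^{op}\subseteq\calP(\calG^{op})^{op}$ and confirming that evaluation of a proadmissible morphism $(U\to X)$ against the bifunctor $F$ reproduces the left-exact functor $\calG\to\SSet$ corepresented by $U$, i.e.\ the image of $U$ under the Yoneda embedding $\Pro(\calG)\hookrightarrow\calP(\calG^{op})^{op}$. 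Once this matching is in place, everything else is a formal consequence of Lemma~\ref{haberspaz} and the fact that left adjoints (here, sheafification) commute with left Kan extension. I would also note that the compactness/size hypotheses needed to invoke Lemma~\ref{haberspaz} are supplied by Remark~\ref{sunn}, which guarantees $\Pro(\calG)^{\adm}_{/X}$ is essentially small.
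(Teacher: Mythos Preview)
Your proposal is correct and follows exactly the approach the paper intends: the paper states only that Lemma~\ref{blahann} is ``an easy consequence of Lemma~\ref{haberspaz}'' (together with the fact that sheafification $L$, being a left adjoint, preserves left Kan extensions), and you have correctly unpacked precisely this deduction with $\calC = \Pro(\calG)^{\adm}_{/X}$, $\calD = \calG$, and the bifunctor defining $\widetilde{\calO}_{\USpec X}$. The obstacle you flag in step~(ii) is not a real difficulty: the map $f^0$ sends $(U\to X)$ to the left-exact functor $V \mapsto \bHom_{\Pro(\calG)}(U, jV)$, which is by definition the image of $U$ under the identification $\Pro(\calG) \simeq \Fun^{\lex}(\calG,\SSet)^{op} \subset \calP(\calG^{op})^{op}$, so $f^0$ factors through $\Pro(\calG)$ as the projection $\alpha$ on the nose.
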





\begin{proof}[Proof of Proposition \ref{proscoo} (and Theorem \ref{scoo})]
Let $\overline{\calO}_0$ denote the composition
$$ \Pro(\calG)^{\adm}_{/X} \rightarrow \Pro(\calG) \stackrel{\overline{\calO}_{\calY}}{\rightarrow} \calY.$$
Let $\calI_0$ denote the simplicial set
$$ ( \{X\} \times \Delta^1 ) \coprod_{ \{X \} \times \{1\} } ( \Pro(\calG)^{\adm}_{/X} \times \{1\} ),$$
and let $\calI$ denote the essential image of $\calI_0$ in $\Pro(\calG)^{\adm}_{/X} \times \Delta^1.$
Since the inclusion $\calI_0 \subseteq \calI$ is a categorical equivalence, the
induced map
$$ \Fun( \calI, \calY) \rightarrow \Fun( \calI_0, \calY)$$
is a trivial Kan fibration. 

Let $\calC$ denote the
the full subcategory of $\Fun( \Pro(\calG)^{\adm}_{/X} \times \Delta^1, \calY)$ spanned by those functors
$F$ which satisfy the following conditions:
\begin{itemize}
\item[$(i)$] The functor $F$ is a right Kan extension of $F | \calI$. More concretely, 
for every admissible morphism $U \rightarrow X$, the diagram
$$ \xymatrix{ F(U, 0) \ar[r] \ar[d] & F( U, 1) \ar[d] \\
F(X,0) \ar[r] & F(X,1) }$$
is a pullback diagram in $\calY$.
\item[$(ii)$] The object $F(X, 0)$ is final in $\calY$.
\end{itemize}

Using Proposition \toposref{lklk}, we deduce that the forgetful
functors
$$ \calC \rightarrow \Fun^{0}( \calI, \calY) \rightarrow \Fun^{0}( \calI_0, \calY)$$
are trivial Kan fibrations, where $\Fun^{0}( \calI, \calY)$ and $\Fun^{0}( \calI_0, \calY)$
denote the full subcategories of $\Fun( \calI, \calY)$ and $\Fun( \calI_0, \calY)$ spanned
by those functors $F$ which satisfy condition $(ii)$. Form a pullback diagram
$$ \xymatrix{ \calC_0 \ar[r] \ar[d] & Z \ar[d] \ar[r] & \{ \overline{\calO}_0 \} \ar[d] \\
\calC \ar[r] & \Fun^{0}( \calI_0, \calY) \ar[r] & \Fun( \Pro(\calG)^{\adm}_{/X} \times \{1\}, \calY). }$$
Then $Z$ is a Kan complex, which we can identify with the space
$\bHom_{\calY}( 1_{\calY}, \overline{\calO}_{\calY}(X))$. The projection map $\calC_0 \rightarrow Z$ is a trivial Kan fibration, so that $\calC_0$ is another Kan complex which models the homotopy type
$\bHom_{\calY}( 1_{\calY}, \overline{\calO}_{\calY}(X) )$. 

The inclusion $\Pro(\calG)^{\adm}_{/X} \times \{0\} \subseteq \Pro(\calG)^{\adm}_{/X} \times \Delta^1$
induces a functor $f: \calC_0 \rightarrow \Fun( \Pro(\calG)^{\adm}_{/X}, \calY )$. In terms of the identification
above, we can view this functor as associating to each global section $1_{\calY} \rightarrow \overline{\calO}_{\calY}(X)$ the functor
$$U \mapsto \overline{\calO}_{\calY}(U) \times_{ \overline{\calO}_{\calY}(X) } 1_{\calY}.$$
It follows that the essential image of $f$ belongs to the full subcategory
$$\Fun^{0}( \Pro(\calG)^{\adm}_{/X}, \calY) \subseteq \Fun( \Pro(\calG)^{\adm}_{/X}, \calY)$$ spanned by those functors $F$ which satisfy the following conditions:
\begin{itemize}
\item[$(a)$] The functor $F$ preserves finite limits.
\item[$(b)$] For every admissible covering $\{ V_{\alpha} \rightarrow V \}$ of an object
$V \in \calG$, every object $U \rightarrow X$ in $\Pro(\calG)^{\adm}_{/X}$, and every
map $U \rightarrow j(V)$ in $\Pro(\calG)$ (here $j: \calG \rightarrow \Pro(\calG)$ denotes the Yoneda embedding), the induced map $\coprod_{\alpha} F( j(V_{\alpha}) \times_{ j(V) } U) \rightarrow F(U)$
is an effective epimorphism in $\calY$.
\end{itemize}

The map $\theta$ fits into a homotopy pullback diagram
$$ \xymatrix{ \bHom_{ \LGeo(\calG)}( ( \USpec X, \calO_{ \USpec X}) , (\calY, \calO_{\calY}) ) \ar[r]^-{\theta} \ar[d] & \calC_0 \ar[d]^{f} \\
\Fun^{\ast}( \USpec X, \calY) \ar[r]^-{\theta'} & \Fun^{0}( \Pro(\calG)^{\adm}_{/X}, \calY).}$$
Here $\theta'$ is induced by composition with the map
$$ \Pro(\calG)^{\adm}_{/X} \rightarrow \calP( \Pro(\calG)^{\adm}_{/X} ) 
\stackrel{L}{\rightarrow} \Shv( \calG^{\adm}_{/X} ),$$
where the first map is a Yoneda embedding and $L$ is a sheafification functor.
Using Propositions \toposref{natash} and \toposref{igrute} (and the definition of the Grothendieck topology on $\Pro(\calG)^{\adm}_{/X}$), we deduce that $\theta'$ is an equivalence
of $\infty$-categories. Consequently, to show that $\theta$ is a homotopy equivalence, it will suffice to show
that it induces a homotopy equivalence after passing to the fiber over every geometric morphism
$g^{\ast}: \USpec X \rightarrow \calY$.

Let $\alpha: \Pro(\calG)^{\adm}_{/X} \rightarrow \Pro(\calG)$ denote the projection, and let
$\overline{\calO}'_{\calY}$ denote a left Kan extension of $g^{\ast} \circ j: \Pro(\calG)^{\adm}_{/X} \rightarrow \calY$ along $\alpha$. Lemma \ref{blahann} allows us to identify the composition
$$ \calG \rightarrow \Pro(\calG) \stackrel{ \overline{\calO}'_{\calY} }{\rightarrow} \calY$$
with $g^{\ast} \circ \calO_{ \USpec X}$, so that we canonical homotopy equivalences
$$  \bHom_{ \Fun( \calG, \calY )}( g^{\ast} \circ \calO_{ \USpec X}, \calO_{ \calY} ) \simeq
\bHom_{ \Fun( \Pro(\calG), \calY ) }( \overline{\calO}'_{\calY}, \overline{\calO}_{\calY} )
\simeq \bHom_{ \Fun( \Pro(\calG)^{\adm}_{/X}, \calY)}( g^{\ast} \circ j, \overline{\calO}_0).$$
To complete the proof, it will suffice to show that if
$\beta: g^{\ast} \circ \calO_{ \USpec X} \rightarrow \calO_{\calY}$ and
$\beta': g^{\ast} \circ j \rightarrow \overline{\calO}_0$ are morphisms which correspond
under this homotopy equivalence, then $\beta$ belongs to $\Struct^{\loc}_{\calG}( \calY )$
if and only if $\beta'$ satisfies condition $(i)$. This is a special case of Proposition \ref{prespaz}.
\end{proof}

\begin{corollary}\label{tabletime}
Let $\calG$ be a geometry, $X$ an object of $\Pro(\calG)$, and set
$(\calX, \calO_{\calX}) = \Spec^{\calG} X$. Suppose that:
\begin{itemize}
\item[$(\ast)$] For every admissible morphism $U \rightarrow X$ in $\Pro(\calG)$, the
object $U$ is $n$-truncated when viewed as an object of $\Ind(\calG^{op})$. 
\end{itemize}
Then $\calO_{\calX}$ is an $n$-truncated $\calG$-structure on $\calX$.
\end{corollary}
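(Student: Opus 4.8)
The plan is to reduce, by means of Theorem~\ref{scoo}, to the concrete model $(\USpec X, \calO_{\USpec X})$ of $\Spec^{\calG}X$ and to verify directly that $\calO_{\USpec X}(Y)$ is an $n$-truncated object of the $\infty$-topos $\USpec X = \Shv(\Pro(\calG)^{\adm}_{/X})$ for every $Y \in \calG$. Once this is established, the equivalence $(\calX, \calO_{\calX}) \simeq (\USpec X, \calO_{\USpec X})$ in $\LGeo(\calG)$ provided by Theorem~\ref{scoo} immediately gives that $\calO_{\calX}$ is an $n$-truncated $\calG$-structure, which is the assertion to be proved.

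First I would unwind the construction of $\calO_{\USpec X}$. By definition $\calO_{\USpec X} = L \circ \widetilde{\calO}_{\USpec X}$, where $L \colon \calP(\Pro(\calG)^{\adm}_{/X}) \to \USpec X$ is the sheafification functor, which is left exact by Remark~\ref{sunn}, and $\widetilde{\calO}_{\USpec X}$ is adjoint to the evaluation pairing $\calG \times (\Pro(\calG)^{\adm}_{/X})^{op} \to \calG \times \Fun^{\lex}(\calG, \SSet) \to \SSet$. Thus, for a fixed $Y \in \calG$, the presheaf $\widetilde{\calO}_{\USpec X}(Y)$ on $\Pro(\calG)^{\adm}_{/X}$ carries an object $U \to X$ to the value $U(Y)$ of $U$, regarded as an object of $\Pro(\calG)^{op} \simeq \Ind(\calG^{op}) \simeq \Fun^{\lex}(\calG, \SSet)$, at $Y$. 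Hypothesis $(\ast)$ asserts precisely that every such $U$ is $n$-truncated as an object of $\Ind(\calG^{op})$. Since the evaluation functor $\Ind(\calG^{op}) \to \SSet$, $U \mapsto U(Y)$, is corepresented by $j(Y)$, where $j \colon \calG^{op} \to \Ind(\calG^{op})$ denotes the Yoneda embedding, it preserves all limits and is in particular left exact; hence Proposition~\toposref{eaa} shows that $U(Y)$ is $n$-truncated for every object $U$ of the site and every $Y \in \calG$. (That $n$-truncatedness in $\Ind(\calG^{op})$ is equivalent to pointwise $n$-truncatedness of the associated left exact functor is also the content of the last Corollary of \S\ref{geo4}.) Consequently $\widetilde{\calO}_{\USpec X}(Y)$ takes $n$-truncated values, that is, it is an $n$-truncated object of $\calP(\Pro(\calG)^{\adm}_{/X})$.

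To finish, I would apply Proposition~\toposref{eaa} a second time, now to the left exact functor $L$: it carries the $n$-truncated object $\widetilde{\calO}_{\USpec X}(Y)$ to an $n$-truncated object $\calO_{\USpec X}(Y)$ of $\USpec X$. Letting $Y$ range over $\calG$ yields the desired conclusion. I do not anticipate any real obstacle here: the proof is in essence a double application of the fact that left exact functors preserve truncatedness. The only points demanding care are the identification of the values of $\widetilde{\calO}_{\USpec X}(Y)$ with the values $U(Y)$ of the pro-objects occurring in the site, and the observation that $(\ast)$ is exactly a hypothesis on those objects — both of which fall out once the definitions of \S\ref{abspec} are untangled.
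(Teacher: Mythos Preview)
Your proposal is correct and follows essentially the same route as the paper: reduce via Theorem~\ref{scoo} to the explicit model $(\USpec X,\calO_{\USpec X})$, observe that the presheaf $\widetilde{\calO}_{\USpec X}(Y)$ takes $n$-truncated values by hypothesis~$(\ast)$, and then use left exactness of sheafification to conclude. The paper is terser about why $(\ast)$ forces the presheaf values to be $n$-truncated spaces (it just writes the value as $\bHom_{\Pro(\calG)}(U,j(V))$ and invokes $(\ast)$), whereas you spell out the passage through Proposition~\toposref{eaa}, but the argument is the same.
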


\begin{proof}
In view of Theorem \ref{scoo}, we may suppose that $\calX = \USpec A$ and
that $\calO_{\calX} = \calO_{ \USpec X}$. Let $V \in \calG$; we wish to show that
$\calO_{\USpec X}(V)$ is an $n$-truncated object of 
$\USpec X \simeq \Shv( \Pro(\calG)^{\adm}_{/X} )$. By definition,
$\calO_{\USpec X}(V)$ is the sheafification of the presheaf defined by the composition
$$ \Ind(\calG^{op})^{\adm}_{X/} \rightarrow \Ind(\calG^{op}) =
\Fun^{\lex}(\calG, \SSet) \rightarrow \SSet,$$
where the second map is given by evaluation at $V \in \calG$. 
It will therefore suffice to show that this presheaf takes $n$-truncated values.
The value of this presheaf on an admissible morphism $U \rightarrow X$ is the space
$\bHom_{ \Pro(\calG)}(U, j(V) )$, where $j: \calG \rightarrow \Pro(\calG)$ denotes the Yoneda embedding, and therefore $n$-truncated by virtue of assumption $(\ast)$.
\end{proof}

\subsection{$\calG$-Schemes}\label{geo6}

Recall that a {\it scheme} is a ringed topological space $(X, \calO_X)$ such that
$X$ admits an open covering $\{ U_{\alpha} \subseteq X \}$ such that each
$(U_{\alpha}, \calO_X|U_{\alpha})$ is isomorphic (in the category of ringed spaces)
to $( \SSpec A, \calO_{ \SSpec A} )$, for some commutative ring $A$. In this section, 
we will introduce an analogous definition, for an arbitrary geometry $\calG$. We will discuss the relationship of this definition with the classical theory of schemes in \S \ref{exzar}. We begin with a few general remarks concerning \etale morphisms between $\calG$-structured $\infty$-topoi.

\begin{definition}\label{sabl}
Let $\calG$ be a geometry. We will say that a morphism $(\calX, \calO_{\calX}) \rightarrow
(\calY, \calO_{\calY})$ in $\LGeo(\calG)$ is {\it \etale} if the following conditions are satisfied:
\begin{itemize}
\item[$(1)$] The underlying geometric morphism $f^{\ast}: \calX \rightarrow \calY$ of 
$\infty$-topoi is \etale.
\item[$(2)$] The induced map $f^{\ast} \calO_{\calX} \rightarrow \calO_{\calY}$ is
an equivalence in $\Struct_{\calG}( \calU)$.
\end{itemize} 
We let $\LGeo(\calG)_{\mathet}$ denote the subcategory of $\LGeo(\calG)$ spanned by the
\etale morphisms, and $\LGeo_{\mathet}$ the subcategory of $\LGeo$ spanned by the \etale morphisms.
\end{definition}

\begin{remark}\label{sumptu}
Let $f$ be a morphism in $\LGeo(\calG)$. Condition $(2)$ of Definition \ref{sabl} is equivalent to the requirement that $f$ be $p$-coCartesian, where $p: \LGeo(\calG) \rightarrow \LGeo$
denotes the projection. Corollary \toposref{relativeKan} implies that $p$ restricts to a
left fibration $\LGeo(\calG)_{\mathet} \rightarrow \LGeo(\calG)$. 
\end{remark}

\begin{notation}
Let $(\calX, \calO_{\calX})$ be a $\calG$-structured $\infty$-topos, for some geometry $\calG$.
If $U$ is an object of $\calX$, we let $\calO_{\calX} | U$ denote the $\calG$-structure
on $\calX_{/U}$ determined by the composition
$$ \calG \stackrel{\calO_{\calX}}{\rightarrow} \calX \stackrel{\pi^{\ast}}{\rightarrow} \calX_{/U},$$
where $\pi^{\ast}$ is a right adjoint to the projection $\calX_{/U} \rightarrow \calX$.
Then we have a canonical \etale morphism
$$ ( \calX, \calO_{\calX} ) \rightarrow (\calX_{/U}, \calO_{\calX} |U )$$
in $\LGeo(\calG)$. 
\end{notation}

\begin{remark}\label{unipop}
Let $\calG$ be a geometry, and let $f: (\calX, \calO_{\calX}) \rightarrow (\calY, \calO_{\calG})$ be
a morphism in $\LGeo(\calG)$. For every object $U \in \calX$, we have a diagram of spaces
$$ \xymatrix{ \bHom_{ \LGeo(\calG) }( (\calX_{/U}, \calO_{\calX}|U), (\calY, \calO_{\calY}))
\ar[r] \ar[d] & \bHom_{ \LGeo(\calG) }( (\calX, \calO_{\calX}), (\calY,\calO_{\calY} )) \ar[d] \\
\bHom_{\LGeo}( \calX_{/U}, \calY ) \ar[r] & \bHom_{ \LGeo }( \calX, \calY ) }$$
which commutes up to canonical homotopy. 
Taking the vertical homotopy fibers over a point given by a geometric morphism
$\phi^{\ast}: \calX_{/U} \rightarrow \calY$ (and its image
$\phi_0^{\ast} \in \Fun^{\ast}(\calX, \calY)$), 
we obtain the homotopy equivalence
$$ \bHom_{ \Struct_{\calG}(\calY)}( \phi^{\ast}(\calO_{\calX}|U), \calO_{\calY} )
\simeq \bHom_{ \Struct_{\calG}(\calY)}( \phi_0^{\ast} \calO_{\calX}, \calO_{\calY}).$$
It follows that the above diagram is a homotopy pullback square. Combining this observation with Remark \toposref{goodilk}, we deduce the existence of a fiber sequence
$$ \bHom_{\calY}(1_{\calY}, \phi^{\ast}(U)) \rightarrow
 \bHom_{ \LGeo(\calG) }( (\calX_{/U}, \calO_{\calX}|U), (\calY, \calO_{\calY}))
\rightarrow \bHom_{ \LGeo(\calG) }( (\calX, \calO_{\calX}), (\calY,\calO_{\calY} ))$$
(here the fiber is taken over the point determined by $\phi^{\ast}$).
\end{remark}

Some of basic properties of the class of \etale morphisms are summarized in the following result:¶

\begin{proposition}\label{scanh}
Let $\calG$ be a geometry. Then:
\begin{itemize}
\item[$(1)$] Every equivalence in $\LGeo(\calG)^{op}$ is \etale. 
\item[$(2)$] Suppose given a commutative diagram
$$ \xymatrix{ & (\calY, \calO_{\calY}) \ar[dr]^{g} & \\
(\calX, \calO_{\calX}) \ar[ur]^{f} \ar[rr]^{h} & & (\calZ, \calO_{\calZ} ) }$$
in $\LGeo(\calG)^{op}$, where $g$ is \etale. Then $f$ is \etale if and only if $h$ is \etale.
\end{itemize}
In particular, the collection of \etale morphisms in $\LGeo(\calG)^{op}$ is stable under composition, and
therefore spans a subcategory $\LGeo(\calG)^{op}_{\mathet} \subseteq \LGeo(\calG)^{op}$.
\begin{itemize}
\item[$(3)$] The $\infty$-category $\LGeo(\calG)^{op}_{\mathet}$ admits small colimits.
\item[$(4)$] The inclusion $\LGeo(\calG)^{op}_{\mathet} \subseteq \LGeo(\calG)^{op}$ preserves small colimits (note that $\LGeo(\calG)^{op}$ need not admit small colimits in general).
\item[$(5)$] Fix an object $(\calX, \calO_{\calX}) \in \LGeo(\calG)^{op}$. Then the $\infty$-category
$( \LGeo(\calG)^{op}_{\mathet})_{/ (\calX, \calO_{\calX} )}$ is canonically equivalent with $\calX$.
\end{itemize}
\end{proposition}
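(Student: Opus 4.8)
The plan is to analyze the five assertions in order, reducing each to the behavior of the projection $p: \LGeo(\calG) \to \LGeo$ and to known properties of étale geometric morphisms of $\infty$-topoi (Proposition \toposref{relativeKan} and the surrounding theory). Assertion $(1)$ is immediate: an equivalence is $p$-coCartesian and maps to an equivalence in $\LGeo$, which is étale; this is Remark \ref{sumptu}. For assertion $(2)$, I would use that $p$ restricts to a \emph{left} fibration $\LGeo(\calG)_{\mathet} \to \LGeo$ (Remark \ref{sumptu}): given the commutative triangle with $g$ étale, the underlying triangle in $\LGeo$ has $g^{\ast}$ étale, so the corresponding statement for the underlying $\infty$-topoi ($f^{\ast}$ étale iff $h^{\ast}$ étale) follows from the analogous $2$-out-of-$3$ property of étale morphisms in $\LGeo$ (this is stated in the excerpt's companion results; cf. the cancellation property for étale maps of $\infty$-topoi). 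It then remains to check that condition $(2)$ of Definition \ref{sabl} (that $f^{\ast}\calO_{\calX} \to \calO_{\calY}$ is an equivalence) also satisfies $2$-out-of-$3$ along the triangle; but since $g$ is $p$-coCartesian and $p$ is a left fibration over its étale locus, any morphism lying over an étale morphism of $\LGeo$ that composes with a $p$-coCartesian morphism to give a $p$-coCartesian morphism is itself $p$-coCartesian, and vice versa. So $(2)$ reduces entirely to the corresponding fact in $\LGeo$.

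Assertions $(3)$ and $(4)$ are the technical heart. The strategy is: $\LGeo^{op}_{\mathet}$ admits small colimits and the inclusion $\LGeo^{op}_{\mathet} \subseteq \LGeo^{op}$ preserves them (this is part of the cited theory of étale morphisms of $\infty$-topoi, e.g.\ the "gluing" results referenced as Proposition \toposref{colimtopoi} and its étale refinement). Now, $\LGeo(\calG)_{\mathet} \to \LGeo_{\mathet}$ is a left fibration; passing to opposites, $\LGeo(\calG)^{op}_{\mathet} \to \LGeo^{op}_{\mathet}$ is a right fibration, hence a Cartesian fibration with contractible fibers over an étale base --- but more usefully, one can realize $\LGeo(\calG)^{op}_{\mathet}$ as a limit/pullback construction. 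The cleanest route: since $p$ is a coCartesian fibration classified by $\calX \mapsto \Struct^{\loc}_{\calG}(\calX)$, and over étale morphisms the transition functors are the pullback functors $f^{\ast}$ (composition with $f^{\ast}$), I would invoke that a coCartesian fibration over an $\infty$-category with colimits whose total space has "enough" colimits detected fiberwise admits colimits; concretely, I would use Corollary \toposref{constrel}-type results together with the fact that $\Struct_{\calG}(\calX)$ has the relevant colimits (filtered colimits by Proposition \ref{unwi}, and the colimits needed here are the ones over diagrams whose underlying $\LGeo$-diagram has a colimit). The precise mechanism is: given a small diagram in $\LGeo(\calG)^{op}_{\mathet}$, form the colimit $\calY$ of the underlying diagram in $\LGeo^{op}_{\mathet}$; by the étale gluing theory, $\calY_{/\text{(gluing object)}}$-type descent identifies the relevant category of structures, and one equips $\calY$ with the induced $\calG$-structure by pulling back, using that étale maps induce equivalences on structure sheaves after $f^{\ast}$. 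I would then verify the universal property directly via Remark \ref{unipop}, which computes mapping spaces out of an étale localization $(\calX_{/U}, \calO_{\calX}|U)$ as a fiber sequence involving $\bHom_{\calY}(1_{\calY}, \phi^{\ast}U)$.

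Assertion $(5)$ is the payoff and I expect it follows once $(1)$--$(4)$ and the structure of the left fibration are in hand. The claim is that $(\LGeo(\calG)^{op}_{\mathet})_{/(\calX,\calO_{\calX})} \simeq \calX$. The functor $\calX \to (\LGeo(\calG)^{op}_{\mathet})_{/(\calX,\calO_{\calX})}$ sends $U \mapsto \big((\calX_{/U}, \calO_{\calX}|U) \to (\calX, \calO_{\calX})\big)$. To see this is an equivalence, I would combine two facts: first, the analogous statement for $\infty$-topoi, $(\LGeo^{op}_{\mathet})_{/\calX} \simeq \calX$ (this is the standard identification of the étale site; referenced in the excerpt), and second, the observation that for an étale morphism $(\calY, \calO_{\calY}) \to (\calX, \calO_{\calX})$ the $\calG$-structure $\calO_{\calY}$ is \emph{forced} --- it must be $f^{\ast}\calO_{\calX}$ up to canonical equivalence, by condition $(2)$ of Definition \ref{sabl}. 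Hence the forgetful functor $(\LGeo(\calG)^{op}_{\mathet})_{/(\calX,\calO_{\calX})} \to (\LGeo^{op}_{\mathet})_{/\calX}$ is an equivalence (it is essentially surjective and fully faithful because the $\calG$-structure data carries no moduli in the étale setting), and composing with the known equivalence $(\LGeo^{op}_{\mathet})_{/\calX} \simeq \calX$ gives the result.

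The main obstacle I anticipate is assertion $(4)$: proving that the inclusion $\LGeo(\calG)^{op}_{\mathet} \subseteq \LGeo(\calG)^{op}$ preserves colimits requires care because $\LGeo(\calG)^{op}$ does not itself have all colimits, so one cannot simply compute a colimit in the ambient category and check it lands in the subcategory. Instead one must construct the colimit intrinsically in $\LGeo(\calG)^{op}_{\mathet}$ (via the étale gluing construction for $\infty$-topoi, lifted to $\calG$-structures by pullback) and then verify, using the universal property and Remark \ref{unipop}, that this object also has the correct mapping-space behavior when tested against arbitrary (not necessarily étale) morphisms into it. Making the descent argument for the structure sheaf precise --- i.e.\ checking that the pulled-back structures on the pieces of an étale cover glue to a $\calG$-structure on the colimit --- is where the real work lies, though it is ultimately a formal consequence of the fact that $\Struct_{\calG}(-)$ is a limit-preserving functor on $\LGeo$ restricted to étale diagrams (Proposition \ref{unwi} and the functoriality established in Theorem \ref{gen} and Proposition \ref{swupa}).
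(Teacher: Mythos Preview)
Your overall strategy is correct and matches the paper's, but you are working harder than necessary on $(3)$ and $(4)$ because you have overlooked the result in the paper designed precisely for this step.

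For $(3)$, the paper does not do the fibration gymnastics you describe. It simply invokes Theorem \toposref{prescan} (colimits in $\LGeo^{op}_{\mathet}$) together with Proposition \toposref{needed17} to obtain the sharper statement $(3')$: a diagram $K^{\triangleright} \to \LGeo(\calG)^{op}_{\mathet}$ is a colimit if and only if its image in $\LGeo^{op}_{\mathet}$ is. This is the general fact that colimits in the source of a left fibration are created by the base, applied to the left fibration of Remark \ref{sumptu}.

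For $(4)$, which you flag as the ``main obstacle,'' the paper's argument is one line: given a colimit diagram $K^{\triangleright} \to \LGeo(\calG)^{op}_{\mathet}$, statement $(3')$ and Theorem \toposref{prescan} show that the composite $K^{\triangleright} \to \LGeo^{op}$ is a colimit diagram; then Proposition \ref{deeder} (proved just before this proposition) says that any diagram in $\LGeo(\calG)$ which projects to a limit in $\LGeo$ and carries every edge to a $p$-coCartesian morphism is already a limit diagram in $\LGeo(\calG)$. That is exactly the gluing statement you were planning to verify by hand via Remark \ref{unipop}. Your closing parenthetical gestures at this content but cites Proposition \ref{unwi}, which concerns filtered colimits in $\Struct_{\calG}(\calX)$ and is not the relevant input; the correct reference is Proposition \ref{deeder}.

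For $(1)$, $(2)$, and $(5)$ your account is essentially the paper's: $(2)$ is the combination of the $2$-out-of-$3$ property for \'etale geometric morphisms (Proposition \toposref{protohermes}) with the $2$-out-of-$3$ property for coCartesian edges (Corollary \toposref{toadscan}) via Remark \ref{sumptu}, and $(5)$ is immediate from the left-fibration statement in Remark \ref{sumptu} together with the known equivalence $(\LGeo^{op}_{\mathet})_{/\calX} \simeq \calX$, exactly as you outline.
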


\begin{corollary}
Let $\calG$ be a geometry. The collection of \etale morphisms in $\LGeo(\calG)^{op}$ is stable under
the formation of retracts.
\end{corollary}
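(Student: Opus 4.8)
\emph{The plan is} to reduce the statement to two retract-stability properties, one at the level of $\infty$-topoi and one concerning the structure sheaf, each of which is essentially formal. Since the notion of a retract is self-dual, it suffices to show that the class of étale morphisms in $\LGeo(\calG)$ is stable under retracts. By Definition \ref{sabl} together with Remark \ref{sumptu}, a morphism $\alpha$ of $\LGeo(\calG)$ is étale if and only if $(i)$ its image $p(\alpha)$ under the projection $p \colon \LGeo(\calG) \to \LGeo$ is an étale morphism of $\infty$-topoi, and $(ii)$ $\alpha$ is $p$-coCartesian. Since the intersection of two retract-stable classes of morphisms is again retract-stable, I would verify retract-stability separately for the classes of morphisms satisfying $(i)$ and $(ii)$.

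For $(ii)$: a morphism $\alpha$ is $p$-coCartesian precisely when, for every object $Z$ of $\LGeo(\calG)$, a certain commutative square of mapping spaces is a homotopy pullback. If $\alpha'$ is a retract of a $p$-coCartesian morphism $\alpha$ in $\Fun(\Delta^1, \LGeo(\calG))$, then for each $Z$ the square associated to $\alpha'$ is, functorially in $Z$, a retract (in $\Fun(\Delta^1 \times \Delta^1, \SSet)$) of the square associated to $\alpha$; since the property of being a homotopy pullback square is stable under retracts, $\alpha'$ is $p$-coCartesian as well. For $(i)$: since $p$ is a functor, a retract $\alpha'$ of $\alpha$ has $p(\alpha')$ a retract of $p(\alpha)$, so the assertion reduces to the fact that the class of étale morphisms of $\infty$-topoi is stable under retracts, which belongs to the basic theory developed in \cite{topoi}. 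Together these two reductions prove the corollary.

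Alternatively, one could give a uniform argument by showing that the full subcategory $\calE \subseteq \Fun(\Delta^1, \LGeo(\calG)^{op})$ spanned by the étale morphisms is idempotent complete — using Proposition \ref{scanh}$(2)$ and $(5)$ to identify the fibers of $\operatorname{cod}\colon\calE \to \LGeo(\calG)^{op}$ with the presentable (hence idempotent complete) $\infty$-categories $(\LGeo(\calG)^{op}_{\mathet})_{/(\calX,\calO_{\calX})}\simeq\calX$, together with base change of étale morphisms (so that $\operatorname{cod}$ is a Cartesian fibration) and the fact that $\LGeo(\calG)^{op}$ is itself idempotent complete (Corollary \ref{swip}, Remark \ref{swup}, and self-duality of idempotent-completeness) — and then observing that a retract of an étale morphism $g$ determines an idempotent endomorphism of $g$ in the full subcategory $\calE$, whose unique splitting recovers the given retract. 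In either case the only input that is not purely formal is a topos-theoretic fact about the class of étale morphisms of $\infty$-topoi — its stability under retracts in the first approach, or its stability under base change in the second — and this is the point I expect to require care in citing precisely, since everything else is diagram-chasing.
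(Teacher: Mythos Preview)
The paper states this Corollary immediately after Proposition~\ref{scanh} and gives no explicit proof, treating it as an evident consequence. Both of your approaches are correct; let me comment briefly on each.

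Your first approach --- decomposing \'etale as (\'etale on underlying $\infty$-topoi) $\wedge$ ($p$-coCartesian) via Definition~\ref{sabl} and Remark~\ref{sumptu}, and checking retract-stability for each factor --- is valid. Retract-stability of $p$-coCartesian morphisms is indeed formal, exactly as you argue. The non-formal step, retract-stability of \'etale morphisms in $\LGeo$, you defer to \cite{topoi}; this is true, though I am not certain it is \emph{explicitly} stated there. If needed, it follows by applying your second approach to $\LGeo$ itself: the topos-theoretic analogues of Proposition~\ref{scanh}(2),(5) and of base change for \'etale morphisms are all in \cite{topoi}.

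Your second approach is closer in spirit to the Corollary's placement as a consequence of Proposition~\ref{scanh}, and is more self-contained. The ingredients are all correct: part~(2) is exactly what ensures that the fiber of $\operatorname{cod}$ over $(\calX,\calO_{\calX})$ (a full subcategory of the slice) coincides with $(\LGeo(\calG)^{op}_{\mathet})_{/(\calX,\calO_{\calX})}$, which part~(5) identifies with $\calX$; base change (Remark~\ref{pre4}, whose proof only uses the earlier Remark~\ref{unipop}) makes $\operatorname{cod}$ Cartesian; and the base is idempotent complete by Corollary~\ref{swip} and Remark~\ref{swup}. The implicit lemma --- that a Cartesian fibration with idempotent-complete base and idempotent-complete fibers has idempotent-complete total space --- is true and routine (split the idempotent in the base, take a Cartesian lift, then split the residual fiberwise idempotent). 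Since $\calE$ is a \emph{full} subcategory of the arrow category, the idempotent on $g$ automatically lives in $\calE$, and absoluteness of idempotent splittings finishes the argument. This route has the advantage of not citing the topos-theoretic retract-stability as a black box.
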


\begin{proof}[Proof of Proposition \ref{scanh}]
Assertion $(1)$ is obvious. Assertion $(2)$ follows from Proposition \toposref{protohermes}, Remark \ref{sumptu}, and Corollary \toposref{toadscan}.

Using Propositions \toposref{needed17} and Theorem \toposref{prescan}, we deduce the following more precise version of $(3)$:
\begin{itemize}
\item[$(3')$] The $\infty$-category $\LGeo(\calG)^{op}_{\mathet}$ admits small colimits. Moreover,
a small diagram $p: K^{\triangleright} \rightarrow \LGeo(\calG)^{op}_{\mathet}$ is a colimit if
and only the induced map $K^{\triangleright} \rightarrow \LGeo^{op}_{\mathet}$ is a colimit diagram.
\end{itemize}

To prove $(4)$, let us consider a small colimit diagram $K^{\triangleright} \rightarrow \LGeo(\calG)^{op}_{\mathet}$. Using $(3')$ and Theorem \toposref{prescan}, we conclude that the composition
$$ K^{\triangleright} \stackrel{p}{\rightarrow} \LGeo(\calG)^{op} \stackrel{q}{\rightarrow} \LGeo^{op}$$
is a colimit diagram. Proposition \ref{deeder} implies that $p$ is a limit diagram.
Assertion $(5)$ is an immediate consequence of Remark \ref{sumptu}.
\end{proof}

\begin{remark}\label{lopus}
The condition that a morphism $f: (\calX, \calO_{\calX}) \rightarrow (\calY, \calO_{\calY} )$ in $\LGeo(\calG)^{op}$ be \etale is {\it local} in the following sense: if
there exists an effective epimorphism $\coprod U_{\alpha} \rightarrow 1_{\calX}$ in $\calX$ such that each of the induced maps $f_{\alpha}: (\calX_{/U_{\alpha}}, \calO_{\calX} | U_{\alpha} ) \rightarrow (\calY, \calO_{\calY} )$ is \etale, then $f$ is \etale. To prove this, we let $\calX^{0}$ denote the full subcategory of $\calX$ spanned by those objects $U$ for which the map
$(\calX_{/U}, \calO_{\calX} | U ) \rightarrow (\calY, \calO_{\calY} )$ is \etale. Proposition
\ref{scanh} implies that $\calX^{0}$ is stable under the formation of small colimits. In particular,
$U_0 = \coprod U_{\alpha}$ belongs to $\calX^{0}$. Let $U_{\bigdot}$ be the simplicial object
of $\calX$ given by the \Cech nerve of the effective epimorphism $U_0 \rightarrow 1_{\calX}$.
Since $\calX^{0}$ is a sieve, we deduce that each $U_{n} \in \calX^{0}$. Then
$| U_{\bigdot} | \simeq 1_{\calX} \in \calX^{0}$, so that $f$ is \etale as desired.
\end{remark}

\begin{example}\label{simpus}
Let $\calG$ be a geometry, and let $f: U \rightarrow X$ be an admissible morphism in
$\Pro(\calG)$. Then the induced map $\Spec^{\calG} U \rightarrow \Spec^{\calG} X$ is
\etale. This follows from Theorem \ref{scoo}, but we can also give a direct proof as follows.
Since $\Spec^{\calG}$ preserves finite limits, we can reduce to the case where $f$
is arises from an admissible morphism $f_0: U_0 \rightarrow X_0$ in $\calG$.
Let $\Spec^{\calG} X = (\calX, \calO_{\calX})$, so that $\calO_{\calX}(X_0)$ has a canonical
global section $\eta: 1_{\calX} \rightarrow \calO_{\calX}(X_0)$. Set
$Y = 1_{\calX} \times_{ \calO_{\calX}(X_0) } \calO_{\calX}(U_0)$ and
$(\calY, \calO_{\calY} ) = ( \calX_{/Y}, \calO_{\calX} | Y )$. Then there is a canonical global section
$\eta'$ of $\calO_{\calY}( U_0)$. Unwinding the definitions, we deduce that $\eta'$ exhibits
$(\calY, \calO_{\calY})$ as an absolute spectrum $U$, so that
$\Spec^{\calG}(f)$ can be identified with the \etale map $( \calY, \calO_{\calY}) 
\rightarrow (\calX, \calO_{\calX})$.
\end{example}

\begin{definition}
Let $\calG$ be a geometry. We will say that an object $(\calX, \calO_{\calX}) \in \LGeo(\calG)^{op}$ is
an {\it affine $\calG$-scheme} if there exists an object $A \in \Pro(\calG)$ and an equivalence
$( \calX, \calO_{\calX} ) \simeq \Spec^{\calG} A$. 

We will say that $(\calX, \calO_{\calX})$ is a {\it $\calG$-scheme} if the there exists a collection of objects
$\{ U_{\alpha} \}$ of $\calX$ with the following properties:
\begin{itemize}
\item[$(1)$] The objects $\{ U_{\alpha} \}$ cover $\calX$: that is, the canonical map
$\coprod_{\alpha} U_{\alpha} \rightarrow 1_{\calX}$ is an effective epimorphism, where
$1_{\calX}$ denotes the final object of $\calX$.
\item[$(2)$] For every index $\alpha$, there exists an
equivalence $( \calX_{/U_{\alpha}}, \calO_{\calX} | U_{\alpha} ) \simeq \Spec^{\calG} A_{\alpha}$, for
some $A_{\alpha} \in \Pro(\calG)$.
\end{itemize} 

We will say that a $\calG$-scheme $(\calX, \calO_{\calX})$ is {\it locally of finite presentation} if it is possible to choose the covering $\{ U_{\alpha} \}$ such that each $A_{\alpha}$ belongs to the essential image of the Yoneda embedding $\calG \rightarrow \Pro(\calG)$. We let $\Sch(\calG)$ denote the full subcategory of $\LGeo(\calG)^{op}$ spanned by the collection of all $\calG$-schemes, and $\Sch^{\fin}(\calG)$ the full subcategory spanned by the $\calG$-schemes which are locally of finite presentation.
\end{definition}

We now summarize some of the basic properties of the class of $\calG$-schemes.

\begin{proposition}\label{sizem}
Let $\calG$ be a geometry. Then:
\begin{itemize}
\item[$(1)$] Let $f: (\calX, \calO_{\calX}) \rightarrow (\calY, \calO_{\calY})$ be an
\etale morphism in $\LGeo(\calG)^{op}$. If $(\calY, \calO_{\calY})$ is a $\calG$-scheme, then
so is $(\calX, \calO_{\calX})$. 

\item[$(2)$] Let $(\calX, \calO_{\calX})$ be an object of $\LGeo(\calG)^{op}$. Suppose that there exists an effective epimorphism
$\coprod U_{\alpha} \rightarrow 1_{\calX}$ in $\calX$ such that each
$(\calX_{/ U_{\alpha}}, \calO_{\calX} | U_{\alpha} )$ is a $\calG$-scheme. Then
$(\calX, \calO_{\calX})$ is a $\calG$-scheme.

\item[$(3)$] Let $\Sch(\calG)_{\mathet}$ denote the subcategory of $\Sch(\calG)$ spanned by the
\etale morphisms. Then $\Sch(\calG)_{\mathet}$ is stable under small colimits in $\LGeo(\calG)^{op}_{\mathet}$; in particular, $\Sch(\calG)_{\mathet}$ admits small colimits.
\end{itemize}
\end{proposition}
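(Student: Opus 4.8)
The plan is to prove the three assertions in order, leveraging the structure theory of $\calG$-schemes built up in this section. The key technical input is Proposition \ref{scanh}, which describes the behavior of \etale morphisms in $\LGeo(\calG)^{op}$, together with the locality of the scheme condition (which is itself built from the locality of the \etale condition, Remark \ref{lopus}).

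\begin{proof}[Proof of Proposition \ref{sizem}]
We begin with assertion $(1)$. Suppose $f: (\calX, \calO_{\calX}) \rightarrow (\calY, \calO_{\calY})$ is \etale and $(\calY, \calO_{\calY})$ is a $\calG$-scheme. By part $(5)$ of Proposition \ref{scanh}, the \etale morphism $f$ is (up to equivalence) of the form $(\calY_{/V}, \calO_{\calY}|V) \rightarrow (\calY, \calO_{\calY})$ for some object $V \in \calY$. Choose a covering $\{ W_{\beta} \}$ of $\calY$ by objects for which $(\calY_{/W_{\beta}}, \calO_{\calY}|W_{\beta}) \simeq \Spec^{\calG} A_{\beta}$. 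The objects $\{ V \times W_{\beta} \}$ form a covering of $\calY_{/V} \simeq \calX$. For each $\beta$, the \etale morphism $(\calX_{/(V \times W_{\beta})}, \calO_{\calX}|(V \times W_{\beta})) \rightarrow (\calY_{/W_{\beta}}, \calO_{\calY}|W_{\beta}) \simeq \Spec^{\calG} A_{\beta}$ is classified by an object of the $\infty$-topos $\USpec A_{\beta}$; using Proposition \ref{scanh}, part $(5)$ again (applied to $\Spec^{\calG} A_{\beta}$), this $\infty$-topos is $\Shv( \Pro(\calG)^{\adm}_{/A_{\beta}} )$. It therefore suffices to show that an \etale morphism with target an affine $\calG$-scheme has source a $\calG$-scheme. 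This follows by descent: such an \etale morphism $( \calX, \calO_{\calX}) \rightarrow \Spec^{\calG} A_{\beta}$ is classified by an object $U$ of $\Shv( \Pro(\calG)^{\adm}_{/A_{\beta}} )$, and $U$ is covered by objects coming from the admissible morphisms $B \rightarrow A_{\beta}$ in $\Pro(\calG)$; for each such, the corresponding \etale morphism to $\Spec^{\calG} A_{\beta}$ is $\Spec^{\calG} B \rightarrow \Spec^{\calG} A_{\beta}$ by Example \ref{simpus} (and the description of the spectrum functor in Theorem \ref{scoo}). Hence $(\calX, \calO_{\calX})$ admits a covering by affine $\calG$-schemes, so is a $\calG$-scheme.

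Assertion $(2)$ follows by unwinding definitions: given an effective epimorphism $\coprod U_{\alpha} \rightarrow 1_{\calX}$ with each $(\calX_{/U_{\alpha}}, \calO_{\calX}|U_{\alpha})$ a $\calG$-scheme, we choose for each $\alpha$ a covering $\{ V_{\alpha,\beta} \}$ of $\calX_{/U_{\alpha}}$ witnessing the $\calG$-scheme condition. Each $V_{\alpha,\beta}$ corresponds to an object of $\calX$ equipped with a map to $U_{\alpha}$, and the collection $\{ V_{\alpha,\beta} \}$ is a covering of $\calX$ (since the $U_{\alpha}$ cover $\calX$ and effective epimorphisms compose). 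Moreover $(\calX_{/V_{\alpha,\beta}}, \calO_{\calX}|V_{\alpha,\beta})$ is equivalent to $( (\calX_{/U_{\alpha}})_{/V_{\alpha,\beta}}, (\calO_{\calX}|U_{\alpha})|V_{\alpha,\beta} )$, which is an affine $\calG$-scheme by hypothesis. Thus $(\calX, \calO_{\calX})$ is a $\calG$-scheme.

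For assertion $(3)$, by parts $(3)$ and $(4)$ of Proposition \ref{scanh} the inclusion $\LGeo(\calG)^{op}_{\mathet} \subseteq \LGeo(\calG)^{op}$ preserves small colimits and $\LGeo(\calG)^{op}_{\mathet}$ admits small colimits; we must check that $\Sch(\calG)_{\mathet}$ is closed under them. Let $p: K^{\triangleright} \rightarrow \Sch(\calG)_{\mathet}$ be a colimit diagram in $\LGeo(\calG)^{op}_{\mathet}$ whose restriction to $K$ lands in $\Sch(\calG)$; write $(\calX, \calO_{\calX})$ for the value of $p$ at the cone point. By part $(5)$ of Proposition \ref{scanh}, the \etale morphisms into $(\calX, \calO_{\calX})$ are classified by $\calX$, and the vertex maps $p(k) \rightarrow (\calX, \calO_{\calX})$ correspond to objects $X_k \in \calX$; moreover (by the description of colimits in $\LGeo^{op}_{\mathet}$ as colimits of the underlying $\infty$-topoi) the objects $\{ X_k \}_{k \in K}$ form a covering of $\calX$, i.e. $\coprod_k X_k \rightarrow 1_{\calX}$ is an effective epimorphism, and $(\calX_{/X_k}, \calO_{\calX}|X_k) \simeq p(k)$ is a $\calG$-scheme. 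By assertion $(2)$, $(\calX, \calO_{\calX})$ is a $\calG$-scheme, so it lies in $\Sch(\calG)_{\mathet}$; hence $\Sch(\calG)_{\mathet}$ is stable under small colimits in $\LGeo(\calG)^{op}_{\mathet}$, and in particular admits small colimits.
\end{proof}

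The step I expect to be the main obstacle is the reduction in assertion $(1)$ to the case of an \etale morphism over an affine $\calG$-scheme, and then showing that the source of such a morphism is itself a $\calG$-scheme: this requires knowing that the $\infty$-topos $\USpec A$ is generated under colimits by the objects arising from admissible morphisms $B \rightarrow A$ in $\Pro(\calG)$, and that for each such morphism the associated \etale map of $\calG$-structured $\infty$-topoi is identified with $\Spec^{\calG} B \rightarrow \Spec^{\calG} A$. The first point is the statement that $\Shv(\Pro(\calG)^{\adm}_{/A})$ is a sheaf $\infty$-topos on the site $\Pro(\calG)^{\adm}_{/A}$, hence generated by (sheafifications of) representables, and the second is exactly the content of Example \ref{simpus} combined with the explicit model for $\Spec^{\calG}$ from Theorem \ref{scoo}; once these are in hand, assertions $(2)$ and $(3)$ are essentially bookkeeping with effective epimorphisms and the colimit formalism of Proposition \ref{scanh}.
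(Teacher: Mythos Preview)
Your proof is correct and follows essentially the same strategy as the paper. The paper's proof is organized slightly differently: it observes that $(2)$ is immediate from the definitions, deduces $(3)$ from $(2)$ and Proposition \ref{scanh}, and then proves $(1)$ last, using $(2)$ to reduce immediately to the affine case. Your proof of $(1)$ implicitly invokes $(2)$ at the end (to pass from ``each $(\calX_{/(V\times W_\beta)},\ldots)$ is a $\calG$-scheme'' back to ``$(\calX,\calO_\calX)$ is a $\calG$-scheme''), so it would be cleaner to establish $(2)$ first; but this is only a matter of presentation, not substance.
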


\begin{proof}
Assertion $(2)$ follows immediately from the definitions, and assertion
$(3)$ follows immediately from $(2)$ and Proposition \ref{scanh}. Let us prove $(1)$.
In view of $(2)$, the assertion is local on the $\infty$-topos $\calY$; we may therefore assume without loss of generality that $(\calY, \calO_{\calY} ) = \Spec^{\calG} A$ for some $A \in \Pro(\calG)$.
Let $(\calX, \calO_{\calX}) \simeq (\calY_{/U}, \calO_{\calY} | U )$. 
Theorem \ref{scoo} allows us to identify $\calY$ with the $\infty$-category of
sheaves $\Shv( \Pro(\calG)_{/A}^{\adm} )$. Consequently, there exists an effective epimorphism
$\coprod_{\alpha} V_{\alpha} \rightarrow U$, where each $V_{\alpha} \in \Shv( \Pro(\calG)_{/A}^{\adm} )$ is the sheafification of the functor represented by some $B_{\alpha} \in \Pro(\calG)_{/A}^{\adm}$.
It now suffices to observe that there is an equivalence of
$( \calY_{/V_{\alpha}}, \calO_{\calY} | V_{\alpha} )$ with $\Spec^{\calG} B_{\alpha}$
(see Example \ref{simpus}).
\end{proof}

\begin{lemma}\label{spak}
Let $\calG$ be a geometry. Then the $\infty$-category $\Sch(\calG)_{\mathet}$ is generated under small colimits by the full subcategory spanned by the affine $\calG$-schemes.
\end{lemma}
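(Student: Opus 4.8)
The plan is to show that an arbitrary $\calG$-scheme $(\calX,\calO_{\calX})$ can be written as a colimit, \emph{taken in $\LGeo(\calG)^{op}_{\mathet}$}, of a diagram of affine $\calG$-schemes, with all transition maps étale. The starting point is the definition: there is a family $\{U_\alpha\}$ of objects of $\calX$ with $\coprod_\alpha U_\alpha \to 1_{\calX}$ an effective epimorphism and each $(\calX_{/U_\alpha},\calO_{\calX}|U_\alpha)$ an affine $\calG$-scheme. Setting $U_0 = \coprod_\alpha U_\alpha$, form the \Cech nerve $U_{\bullet}$ of the effective epimorphism $U_0 \to 1_{\calX}$; then $1_{\calX} \simeq |U_{\bullet}|$ in $\calX$. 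By Proposition \ref{scanh}(5), the $\infty$-category $(\LGeo(\calG)^{op}_{\mathet})_{/(\calX,\calO_{\calX})}$ is equivalent to $\calX$, and under this equivalence the diagram $U_{\bullet}\colon N(\Delta)^{op} \to \calX$ corresponds to a simplicial object of $\LGeo(\calG)^{op}_{\mathet}$ whose colimit is $(\calX,\calO_{\calX})$ itself — here one uses Proposition \ref{scanh}(3) and (4), which say that colimits in $\LGeo(\calG)^{op}_{\mathet}$ exist and agree with colimits computed in $\calX$ via this slice identification, so that $|U_{\bullet}| \simeq 1_{\calX}$ translates into $\colim_{[n]} (\calX_{/U_n},\calO_{\calX}|U_n) \simeq (\calX,\calO_{\calX})$.

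Next I would reduce each term $(\calX_{/U_n},\calO_{\calX}|U_n)$ to a colimit of affines. Each $U_n$ is a finite coproduct of objects of the form $U_{\alpha_0}\times_{1_{\calX}}\cdots\times_{1_{\calX}}U_{\alpha_n}$, and more to the point, $\calX_{/U_n}$-objects that are étale over the affine piece $(\calX_{/U_{\alpha}},\calO_{\calX}|U_{\alpha}) \simeq \Spec^{\calG}A_{\alpha}$ are, by Theorem \ref{scoo}, exactly sheaves on $\Pro(\calG)^{\adm}_{/A_{\alpha}}$; such a sheaf is a colimit of representables, and by Example \ref{simpus} the representable $B \in \Pro(\calG)^{\adm}_{/A_{\alpha}}$ corresponds to the affine $\calG$-scheme $\Spec^{\calG}B$ sitting étale over $\Spec^{\calG}A_{\alpha}$. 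Thus each $(\calX_{/U_n},\calO_{\calX}|U_n)$, being étale over a finite coproduct of affines, is a colimit in $\LGeo(\calG)^{op}_{\mathet}$ of affine $\calG$-schemes. (A finite coproduct of affine $\calG$-schemes is again a colimit, in $\LGeo(\calG)^{op}_{\mathet}$, of the individual affines; and it is in fact itself affine, but we only need the weaker colimit statement.)

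Finally I would assemble: $(\calX,\calO_{\calX})$ is a colimit over $N(\Delta)^{op}$ of the $(\calX_{/U_n},\calO_{\calX}|U_n)$, each of which is a colimit of affines, so by transitivity of colimits $(\calX,\calO_{\calX})$ lies in the smallest full subcategory of $\Sch(\calG)_{\mathet}$ that contains the affine $\calG$-schemes and is closed under small colimits. Since all the colimits occurring here are computed in $\LGeo(\calG)^{op}_{\mathet}$ and land in $\Sch(\calG)_{\mathet}$ by Proposition \ref{sizem}(3), this exhibits $\Sch(\calG)_{\mathet}$ as generated under small colimits by the affine $\calG$-schemes, which is the claim.

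The main obstacle I anticipate is the bookkeeping in the middle step: making precise, via Theorem \ref{scoo} and Example \ref{simpus}, that an arbitrary object $U \in \calX_{/U_\alpha}$ — equivalently an arbitrary sheaf on $\Pro(\calG)^{\adm}_{/A_\alpha}$ — really does arise as a colimit \emph{in $\LGeo(\calG)^{op}_{\mathet}$} of affine $\calG$-schemes $\Spec^{\calG}B$, and not merely as a colimit of objects of $\calX_{/U_\alpha}$. This requires carefully invoking Proposition \ref{scanh}(4)–(5) to transport the colimit decomposition $U \simeq \colim_{B} j(B)$ (a colimit of representable sheaves) from $\calX_{/U_\alpha} \simeq (\LGeo(\calG)^{op}_{\mathet})_{/\Spec^{\calG}A_\alpha}$ up to $\LGeo(\calG)^{op}_{\mathet}$ itself, and then identifying the resulting objects with $\Spec^{\calG}B$ using Example \ref{simpus}. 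Everything else is formal manipulation of colimits in an $\infty$-category.
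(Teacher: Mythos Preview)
Your proposal is correct and follows essentially the same approach as the paper: form the \v{C}ech nerve of an affine cover, reduce each simplicial level to objects \'etale over an affine, and use Theorem~\ref{scoo} together with the fact that sheaves on $\Pro(\calG)^{\adm}_{/A}$ are generated by representables (which correspond to affine $\calG$-schemes via Example~\ref{simpus}). The paper packages this slightly differently---working inside $\calX$ via the equivalence of Proposition~\ref{scanh}(5), defining $\calX^0$ as the objects $V$ with $(\calX_{/V},\calO_{\calX}|V)$ affine and showing the colimit-closure $\calX^1$ is all of $\calX$---but the substance is identical. One small slip: the $U_n$ are coproducts of products $U_{\alpha_0}\times\cdots\times U_{\alpha_n}$ indexed by $(n{+}1)$-tuples, which need not be \emph{finite} coproducts; this is harmless since you only need small colimits.
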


\begin{proof}
Let $(\calX, \calO_{\calX})$ be a $\calG$-scheme, and let
$\calX^{0} \subseteq \calX$ be the full subcategory spanned by those objects $V$ for which
$(\calX_{/V}, \calO_{\calX} |V )$ is \etale. Let $\calX^{1}$ be the smallest full subcategory of
$\calX$ which contains $\calX^{0}$ and is stable under small colimits. In view of Remark \toposref{postit}, it will suffice to show that $\calX^{1}$ contains every object $X \in \calX$.

Since $(\calX, \calO_{\calX})$ is a $\calG$-scheme, there exists a collection of objects
$U_{\alpha} \in \calX^{0}$ such that the induced map $\coprod_{\alpha} U_{\alpha} \rightarrow 1_{\calX}$ is an effective epimorphism. Let
$X_0 = \coprod_{ \alpha} ( U_{\alpha} \times X)$. Let $X_{\bigdot}$ 
$X_{\bigdot}$ be the simplicial object of $\calX$ given by the \Cech nerve of the
map $X_0 \rightarrow X$. Then
$X$ is equivalent to the geometric realization of $X_{\bigdot}$. It will therefore
suffice to show that each $X_{n}$ belongs to $\calX^{1}$. Since colimits in
$\calX$ are universal, $X_{n}$ can be identified with a coproduct of
objects having the form $U_{\alpha_0} \times \ldots \times U_{\alpha_{n}}  \times X$. It
will therefore suffice to show that every such product belongs to $\calX^{1}$.
We may therefore replace $X$ by $U_{\alpha_0} \times \ldots \times U_{\alpha_{n}} \times X$,
and thereby reduce to the case where $X$ admits a map $X \rightarrow U$, $U \in \calX^{0}$.
Replacing $\calX$ by $\calX_{/U}$, we may further reduce to the case
where $(\calX, \calO_{\calX}) \simeq \Spec^{\calG} A$ is affine. In this case, Theorem \ref{scoo} implies that we can identify $\calX$ with the $\infty$-category
$\Shv( \Pro(\calG)_{/A}^{\adm} )$. Consider the composition
$$ j: \Pro(\calG_{/A}^{\adm}) \rightarrow \Fun( (\Pro(\calG)_{/A}^{\adm})^{op}, \SSet)
\rightarrow \Shv( \Pro(\calG)_{/A}^{\adm}).$$
We now observe that for every admissible morphism $B \rightarrow A$ in $\Pro(\calG)$, Theorem \ref{scoo} allows us to identify the $\calG$-scheme $(\calX_{/j(B)}, \calO_{\calX} | j(B) )$
with $\Spec^{\calG} B$. Consequently, the map $j$ factors through $\calX^{0} \subseteq \calX$.
Since $\calX$ is generated under small colimits by the essential image of $j$, we conclude that
every object of $\calX$ belongs to $\calX^1$, as desired.
\end{proof}

\begin{proposition}\label{sizem2}
Let $\calG$ be a geometry. Then the $\infty$-category $\Sch(\calG)$ is locally small.
\end{proposition}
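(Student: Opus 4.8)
Since $\Sch(\calG)$ is a full subcategory of $\LGeo(\calG)^{op}$, it suffices to prove that $\bHom_{\LGeo(\calG)^{op}}(P,Q)$ is essentially small whenever $P$ and $Q$ are $\calG$-schemes. The plan is to reduce, using Lemma \ref{spak}, to the case in which $P$ is affine; then, by pulling back an affine cover of $Q$, to mapping spaces of objects étale over affine $\calG$-schemes; and finally to the affine-to-affine case, where the adjunction of Definition \ref{defspect} gives a manifestly essentially small answer.

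First I would record a consequence of (the proof of) Lemma \ref{spak}: every $\calG$-scheme $(\calX,\calO_\calX)$ is a small colimit, formed in $\LGeo(\calG)^{op}$, of affine $\calG$-schemes. Indeed, that proof exhibits $\calX$ as generated under small colimits by an essentially small family of objects $V$ — collect, over a small affine cover, the representable generators of each affine chart — and for each such $V$ one has $(\calX_{/V},\calO_\calX|V)\simeq\Spec^{\calG} B$ for a suitable $B\in\Pro(\calG)$ by Theorem \ref{scoo} and Example \ref{simpus}. Writing $1_\calX$ as a small colimit of such $V$'s and invoking the equivalence $(\LGeo(\calG)^{op}_{\mathet})_{/(\calX,\calO_\calX)}\simeq\calX$ of Proposition \ref{scanh}(5) together with Proposition \ref{scanh}(4), one obtains the asserted presentation; the same discussion shows more generally that any object of $\LGeo(\calG)^{op}$ which is étale over an affine $\calG$-scheme is a small colimit of affines. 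Since $\bHom_{\LGeo(\calG)^{op}}(-,Q)$ carries colimits to limits, applying the presentation to $P$ reduces us to proving $\bHom_{\LGeo(\calG)^{op}}(\Spec^{\calG} C,Q)$ essentially small for $C\in\Pro(\calG)$; and in the affine-to-affine case $\bHom_{\LGeo(\calG)^{op}}(\Spec^{\calG} C,\Spec^{\calG} D)=\bHom_{\LGeo(\calG)}(\Spec^{\calG} D,\Spec^{\calG} C)\simeq\bHom_{\Ind(\calG^{op})}(D,\Gamma_\calG\Spec^{\calG} C)$, which is essentially small because $\calG$ is essentially small, hence $\Ind(\calG^{op})$ is locally small. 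Combining these two observations, $\bHom_{\LGeo(\calG)^{op}}(R,\Spec^{\calG} D)$ is essentially small whenever $R$ is étale over an affine $\calG$-scheme.

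The remaining case — $\bHom_{\LGeo(\calG)^{op}}(\Spec^{\calG} C,Q)$ for $Q=(\calY,\calO_\calY)$ an arbitrary $\calG$-scheme — is the heart of the matter. Choose an affine étale cover of $Q$, so an effective epimorphism $\coprod_\beta V_\beta\to 1_\calY$ with each $(\calY_{/V_\beta},\calO_\calY|V_\beta)$ affine. A morphism $\Spec^{\calG} C\to Q$ is, étale-locally on its source, a morphism into one of these affine charts: pulling the cover back along the morphism produces an étale cover of $\Spec^{\calG} C$ whose members are étale over the charts, together with descent data, and conversely such coherent data recovers the morphism. Since $\Spec^{\calG} C\simeq(\USpec C,\calO_{\USpec C})$ with $\USpec C=\Shv(\Pro(\calG)^{\adm}_{/C})$ the $\infty$-topos of sheaves on an essentially small site (Theorem \ref{scoo}), the étale covers of $\USpec C$ can be refined by representable ones, and these form an essentially small, filtered system; for a fixed such cover the corresponding space of morphisms is assembled — via the fibre sequences of Remark \ref{unipop}, the identification of Proposition \ref{scanh}(5), and Proposition \ref{sizem} — as a small limit (a \Cech totalization) of products of mapping spaces $\bHom_{\LGeo(\calG)^{op}}(R,\Spec^{\calG} B_\beta)$ with $R$ étale over an affine, each essentially small by the previous paragraph. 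Taking the colimit over the system of covers then presents $\bHom_{\LGeo(\calG)^{op}}(\Spec^{\calG} C,Q)$ as an essentially small colimit of essentially small spaces, hence essentially small. I expect the bookkeeping in this last step — turning the slogan ``pull back the affine cover of $Q$'' into an honest expression of the mapping space as a small $(\mathrm{co})$limit — to be the main obstacle; it rests on the descent properties of the $\infty$-topos $\USpec C$ together with the structural facts about étale morphisms recorded in Proposition \ref{scanh} and Remark \ref{unipop}.
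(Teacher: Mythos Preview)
Your reduction to the case where the source $P$ is affine is correct (Proposition~\ref{scanh}(4) and Lemma~\ref{spak} justify writing any $\calG$-scheme as a small colimit in $\LGeo(\calG)^{op}$ of affines), but it is unnecessary: the paper proves the stronger statement that $\bHom_{\LGeo(\calG)^{op}}((\calX,\calO_\calX),(\calY,\calO_\calY))$ is essentially small for \emph{arbitrary} $(\calX,\calO_\calX)$ once $(\calY,\calO_\calY)$ is a $\calG$-scheme.

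The real issue is your final paragraph. You frame the remaining case as ``descent on the source'': range over \'etale covers of $\Spec^{\calG}C$, and for a fixed cover write the mapping space as a \Cech totalization of maps into fixed affines $\Spec^{\calG}B_\beta$, then take a colimit over covers. This doesn't work as stated. For a fixed cover $\{W_\gamma\}$ of the source, descent gives you $\bHom(\Spec^{\calG}C,Q)\simeq\lim_{[n]}\bHom(\text{(iterated fiber products of }W_\gamma\text{)},Q)$, but the target is still $Q$, not a fixed affine chart; the choice of which $V_\beta$ a piece factors through depends on the map itself, so you cannot package this as a \Cech totalization of $\bHom(R,\Spec^{\calG}B_\beta)$'s. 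The phrase ``colimit over the system of covers'' is a red herring---there is no such colimit in the correct argument.

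What actually makes this work is varying on the \emph{target}, and this is how the paper proceeds. One defines $\chi\colon\calY\to\Shv_{\widehat\SSet}(\calX)$ by $V\mapsto\big(U\mapsto\bHom((\calX_{/U},\calO_\calX|U),(\calY_{/V},\calO_\calY|V))\big)$ and proves that $\chi$ preserves small colimits. The proof is short: by universality of colimits in $\Shv_{\widehat\SSet}(\calX)$ one reduces to testing against representables, and then the fiber sequence of Remark~\ref{unipop} identifies $\chi(V_\alpha)\times_{\chi(V)}jU$ with the representable $j(f^\ast V_\alpha)$, whence the claim follows since $f^\ast$ preserves colimits. Granting this, the affines $V$ with $\chi(V)$ small generate $\calY$ under colimits (Lemma~\ref{spak}), so $\chi(1_\calY)$ is small. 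Your sketch can be repaired along these lines---the ``sheaf of maps to $Q$'' on the source topos is exactly $\chi(1_\calY)$, and the covering assertion you need is that $\coprod_\beta\chi(V_\beta)\to\chi(1_\calY)$ is an effective epimorphism---but this is the paper's argument in disguise, not a descent over covers of the source.
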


\begin{proof}
We will prove the following stronger claim:
\begin{itemize}
\item[$(\ast)$] Let $(\calX, \calO_{\calX})$ and $(\calY, \calO_{\calY} )$ be
objects of $\LGeo(\calG)^{op}$. Assume that $(\calY, \calO_{\calY})$ is
a $\calG$-scheme. Then the mapping space
$$ \bHom_{ \LGeo(\calG)^{op} }( (\calX, \calO_{\calX} ), (\calY, \calO_{\calY} ))$$
is essentially small.
\end{itemize}

Consider the composition
$$ \calX^{op} \times \calY
\simeq \LGeo(\calG)^{(\calX, \calO_{\calX})/}_{\mathet}
\times (\LGeo(\calG)^{(\calY, \calO_{\calY})/}_{\mathet})^{op}
\simeq \LGeo(\calG) \times \LGeo(\calG)^{op} \rightarrow \widehat{\SSet},$$
described more informally by the formula
$$ (U,V) \mapsto \bHom_{ \LGeo(\calG)^{op} }( (\calX_{/U}, \calO_{\calX} | U),
(\calY_{/V}, \calO_{\calY} | V) ).$$
Proposition \ref{scanh} implies that this functor preserves limits in the first variable, and therefore
determines a map $\chi: \calY \rightarrow \Shv_{ \widehat{\SSet} }(\calX)$.
Let $\calX'$ denote the full subcategory of $\Shv_{\widehat{\SSet}}(\calX)$ spanned by those
functors $F: \calX^{op} \rightarrow \widehat{\SSet}$ such that $F(U)$ is essentially small,
for each $U \in \calX$; according to Proposition \toposref{representable}, the Yoneda embedding determines an equivalence $\calX \rightarrow \calX'$. We can then reformulate $(\ast)$ as follows:

\begin{itemize}
\item[$(\ast')$] The object $\chi( 1_{\calY} ) \in \Shv_{\widehat{\SSet}}( \calX)$ belongs
to $\calX'$.
\end{itemize}

Let $\calY^{0}$ denote the full subcategory of $\calY$ spanned by those objects
$V \in \calY$ for which $\chi(V) \in \calX'$. 
Note that if $(\calY_{/V}, \calO_{\calY}|V) \simeq \Spec^{\calG} A$ is affine, then 
$$\chi(V)(U) \simeq \bHom_{\Pro(\calG)}(\Gamma( \calX_{/U}, \calO_{\calX}|U),A)$$
is essentially small for each $U \in \calX$, so that $V \in \calY^{0}$. 
To complete the proof, it will suffice to show that $\calY^{0} = \calY$. In view of Lemma \ref{spak}, it will suffice to show that $\calY^{0}$ is stable under small colimits in $\calY$.
Since $\calX'$ is stable under small colimits in $\Shv_{ \widehat{\SSet}}(\calX)$
(see Remark \toposref{quest}), it will suffice to show that the functor
$\chi$ preserves small colimits. We will prove the following more general assertion:

\begin{itemize}
\item[$(\ast'')$] Let $\{ V_{\alpha} \}$ be a small diagram in $\calY$ with colimit
$V \in \calY$, and let $\eta: F \rightarrow \chi(V)$ be a morphism in
$\Shv_{ \widehat{\SSet} }( \calX)$. Then the induced diagram
$\{ F \times_{ \chi(V) } \chi(V_{\alpha} ) \}$ has colimit $F \in \Shv_{ \widehat{\SSet}}(\calX)$.
\end{itemize}

The collection of $F \in \Shv_{\widehat{\SSet}}(\calX)$ which satisfy $(\ast'')$ is stable under
(not necessarily small) colimits, since colimits are universal in
$\Shv_{ \widehat{\SSet}}(\calX)$ (Remark \toposref{quest}). It will therefore suffice to prove
$(\ast'')$ in the special case where $F \in \Shv_{\widehat{\SSet}}(\calX)$ is a representable functor corresponding to some $U \in \calX$. We can then identify $\eta$ with a morphism
$( \calX_{/U}, \calO_{\calX}|U) \rightarrow (\calY_{/V}, \calO_{\calY}|V)$
in $\LGeo(\calG)^{op}$. Let $f^{\ast}: \calY_{/V} \rightarrow \calX_{/U}$ denote the underlying geometric morphism of $\infty$-topoi. Using Remark \ref{unipop}, we can identify
each $F \times_{ \chi(V) } \chi(V_{\alpha})$ with the functor represented by
$f^{\ast} V_{\alpha} \in \calX$. The desired result now follows from
Remark \toposref{quest} (and the fact that $f^{\ast}$ preserves small colimits).
\end{proof}

One respect in which our theory of $\calG$-schemes generalizes the classical theory of schemes is that we work with arbitrary $\infty$-topoi, rather than ordinary topological spaces. Our next result (which can be regarded as a converse to Proposition \ref{sizem}) shows that, for practical purposes, it often suffices to work with much more concrete objects (such as $1$-topoi):

\begin{theorem}\label{top4}
Let $\calG$ be an $n$-truncated geometry, for some $n \geq -1$. Suppose that
$(\calX, \calO_{\calX})$ is a $\calG$-scheme. Then there exists an $(n+1)$-localic $\calG$-scheme
$(\calY, \calO_{\calY})$, an $(n+2)$-connective object $U \in \calY$, and an equivalence
$$(\calX, \calO_{\calX} ) \simeq ( \calY_{/U}, \calO_{\calY} | U ).$$ 
\end{theorem}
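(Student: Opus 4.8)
The plan is to reduce the statement to a comparison between an arbitrary $\infty$-topos and its $(n+1)$-localic reflection, applied pointwise to the charts of the scheme. First I would recall from \cite{topoi} (Section \toposref{hyperstacks} and the theory of $m$-localic $\infty$-topoi) that every $\infty$-topos $\calX$ admits an $(n+1)$-localic reflection: there is an $(n+1)$-localic $\infty$-topos $\calY_0$ and a geometric morphism $\calX \to \calY_0$ which is an equivalence on $(n+1)$-truncated objects, and in fact $\calX$ is recovered from $\calY_0$ as $(\calY_0)_{/U}$ for a suitable $(n+2)$-connective object $U$ (this is the standard description of the relationship between $\calX$ and $\tau_{\leq n+1}\calX$ as topoi; the object $U$ is the $(n+2)$-connective cover of $1_{\calX}$ inside $\calY_0$). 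The hypothesis that $\calG$ is $n$-truncated guarantees, via the remark following Definition of $n$-truncated $\calG$-structures together with Proposition \toposref{eaa}, that every $\calG$-structure on any $\infty$-topos takes $n$-truncated values; hence the structure sheaf $\calO_{\calX}$ factors through the $(n+1)$-truncated objects and therefore descends along the reflection $\calX \to \calY_0$ to a $\calG$-structure $\calO_{\calY_0}$ on $\calY_0$, with $\calO_{\calX} \simeq (\calO_{\calY_0})|U$.

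The key steps, in order, would be: (1) construct $\calY_0$ and $U$ as above for the underlying $\infty$-topos of $\calX$, recording that $\calX \simeq (\calY_0)_{/U}$ as $\infty$-topoi and that $U$ is $(n+2)$-connective; (2) check that a functor $\calG \to \calX$ landing in $(n+1)$-truncated objects factors uniquely (up to equivalence) through $\calY_0$, using the universal property of the localic reflection on truncated objects, and that this factorization preserves finite limits and sends admissible covers to effective epimorphisms — so $\calO_{\calY_0}$ is genuinely a $\calG$-structure; the effective-epimorphism condition transports because the reflection $\calX \to \calY_0$ is left exact and the relevant objects are truncated, so an effective epi of truncated objects in $\calX$ comes from one in $\calY_0$; (3) verify that $(\calX, \calO_{\calX}) \simeq ((\calY_0)_{/U}, \calO_{\calY_0}|U)$ in $\LGeo(\calG)^{op}$, i.e. that this is an \etale morphism in the sense of Definition \ref{sabl} — conditions $(1)$ and $(2)$ there are exactly that the geometric morphism is \etale and that the comparison of structure sheaves is an equivalence, both of which hold by construction; (4) conclude that $(\calY_0, \calO_{\calY_0})$ is a $\calG$-scheme. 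For step (4), one pulls back the affine cover $\{U_\alpha\}$ of $(\calX,\calO_{\calX})$: by Proposition \ref{scanh}(5) the objects of $\calX \simeq (\calY_0)_{/U}$ are the \etale morphisms over $(\calY_0)_{/U}$, so each chart $(\calX_{/U_\alpha}, \calO_{\calX}|U_\alpha) \simeq \Spec^{\calG} A_\alpha$ exhibits a family of affine \etale opens; one then needs these to assemble into a cover of $(\calY_0, \calO_{\calY_0})$ itself, which requires knowing that $U$ being $(n+2)$-connective forces the $U_\alpha$ (or their images) to cover $1_{\calY_0}$, and here one invokes Corollary \toposref{tttroke}-style connectivity arguments or directly that $\tau_{\leq 0} U \simeq 1$ so the image of $\coprod U_\alpha \to 1_{\calY_0}$ is an effective epimorphism. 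Then Proposition \ref{sizem}(2) (locality of the $\calG$-scheme condition) finishes it.

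The main obstacle I expect is step (4): showing that the reflection $(\calY_0, \calO_{\calY_0})$ is again a $\calG$-scheme, rather than merely a $\calG$-structured $\infty$-topos admitting an \etale map from one. The charts $\Spec^{\calG} A_\alpha$ live over $(\calY_0)_{/U} = \calX$, and to see them as affine \etale opens of $\calY_0$ one uses Proposition \ref{scanh}(2): since $(\calX_{/U_\alpha}, \calO_{\calX}|U_\alpha) \to (\calY_0)_{/U} \to (\calY_0, \calO_{\calY_0})$ has middle map \etale, the composite is \etale iff the first is, giving the $U_\alpha$ as objects of $\calY_0$ with affine \etale slices. The genuinely delicate point is the covering condition: $\coprod_\alpha U_\alpha \to U$ is an effective epimorphism in $\calX$ by hypothesis, and one must promote this to $\coprod_\alpha U_\alpha \to 1_{\calY_0}$ being an effective epimorphism in $\calY_0$, which uses $(n+2)$-connectivity of $U$ (so that $U \to 1_{\calY_0}$ is itself an effective epimorphism, composition of effective epimorphisms being an effective epimorphism). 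Once that is in hand the application of Proposition \ref{sizem}(2) and the identification $(\calX,\calO_{\calX}) \simeq ((\calY_0)_{/U}, \calO_{\calY_0}|U)$ are formal, so the bulk of the work is this connectivity-and-descent bookkeeping at the level of $\infty$-topoi, all of which is available from \cite{topoi}.
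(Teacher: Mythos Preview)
Your overall architecture matches the paper's: take the $(n+1)$-localic reflection $\calY$ of $\calX$, descend the structure sheaf (which lands in $n$-truncated objects since $\calG$ is $n$-truncated), exhibit $\calX$ as $\calY_{/U}$ for an $(n+2)$-connective $U$, and then argue that $(\calY,\calO_\calY)$ is a $\calG$-scheme via the affine cover and Proposition \ref{sizem}. Step (4) in your outline is fine, and the paper handles it exactly as you describe.

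The gap is in step (1). You assert that ``$\calX$ is recovered from $\calY_0$ as $(\calY_0)_{/U}$ for a suitable $(n+2)$-connective object $U$'' as a ``standard description of the relationship between $\calX$ and $\tau_{\leq n+1}\calX$ as topoi.'' This is not a general fact about $\infty$-topoi: an arbitrary $\infty$-topos is not \etale over its $(n+1)$-localic reflection. The paper does not assume this; it proves it, and the proof uses the $\calG$-scheme structure in an essential way. Specifically, the paper lets $\calX^0 \subseteq \calX$ be the full subcategory of objects $V$ for which $\calY \to \calX_{/V}$ is \etale, observes that $\calX^0$ is stable under colimits (Proposition \ref{scanh}), and then invokes Lemma \ref{spak} to reduce to the case where $(\calX_{/V},\calO_\calX|V)$ is affine. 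For an affine $V$, the crucial input is that $\Spec^{\calG} A$ has an $(n+1)$-localic underlying $\infty$-topos when $\calG$ is $n$-truncated (this comes from Theorem \ref{scoo}: the site $\Pro(\calG)^{\adm}_{/A}$ is an $(n+1)$-category). One then compares $\calX_{/V}$ with $\calY_{/W}$ for $W$ the $n$-truncation of $V$ in $\calY$, uses Lemma \ref{tub} to see both sides are $(n+1)$-localic, and checks the induced map is an equivalence on $n$-truncated objects via Lemma \ref{upsight} (which says $\calX_{/V}$ is generated by objects built from the structure sheaf).

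So the real content of the theorem is precisely the \etaleness of $\calX$ over its localic reflection, and this requires the affine charts to already be $(n+1)$-localic. Your proposal treats this as background and focuses on the covering bookkeeping, which is the easy part. (A minor side note: the $(n+1)$-localic reflection is an equivalence on $n$-truncated objects, not $(n+1)$-truncated ones.)
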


Before giving the proof, we need a few preliminary results about \etale maps between $\infty$-topoi.

\begin{lemma}\label{camb}
Let $\calX$ be an $n$-localic $\infty$-topos for some $n \geq 0$ and let $U$ be an object of $\calX$. The following conditions are equivalent:
\begin{itemize}
\item[$(1)$] The pullback functor $f^{\ast}: \calX \rightarrow \calX_{/U}$ induces an equivalence
on $(n-1)$-truncated objects.
\item[$(2)$] The object $U \in \calX$ is $(n+1)$-connective.
\end{itemize}
\end{lemma}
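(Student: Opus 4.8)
\textbf{Proof plan for Lemma \ref{camb}.}

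The plan is to reduce both conditions to statements about $n$-truncated objects of the $\infty$-topos $\calX$ and then to translate them into the language of $n$-connectivity via the standard theory of the Postnikov/connectivity tower. First I would recall that an $n$-localic $\infty$-topos is determined by its underlying $(n-1)$-topos (its full subcategory of $(n-1)$-truncated objects), so that a geometric morphism between $n$-localic $\infty$-topoi is an equivalence if and only if it induces an equivalence on $(n-1)$-truncated objects; this reduces the ``if'' direction to showing that $(n+1)$-connectivity of $U$ forces $f^{\ast}$ to be an equivalence after restricting to $(n-1)$-truncated objects. For this I would use that $f^{\ast}$ and its right adjoint $f_{\ast}$ (pushforward along the étale map $\calX_{/U} \to \calX$) satisfy: $f^{\ast}$ is left exact and the unit map $\id \to f_{\ast} f^{\ast}$ on an $(n-1)$-truncated object $Z$ can be computed as $Z \to Z^{U}$ (the internal mapping object), whose fiber over a point is controlled by the mapping space $\bHom_{\calX}(U, Z)$. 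When $U$ is $(n+1)$-connective and $Z$ is $(n-1)$-truncated, this mapping space is contractible, so the unit is an equivalence on $(n-1)$-truncated objects; a parallel argument (or the fact that $f^{\ast}$ is essentially surjective onto $(n-1)$-truncated objects of $\calX_{/U}$, since every $(n-1)$-truncated object of $\calX_{/U}$ is pulled back from $\calX$ under this connectivity hypothesis) gives full faithfulness and essential surjectivity.

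Conversely, for the ``only if'' direction, I would argue contrapositively: if $U$ is not $(n+1)$-connective, then $\tau_{\leq n} U$ is not equivalent to the final object $1_{\calX}$, and one can exhibit an $(n-1)$-truncated object of $\calX_{/U}$ which does not lie in the essential image of $f^{\ast}$ (or on which $f^{\ast}$ fails to be full). The cleanest route is probably to test on the terminal object: the image under $f_{\ast}$ of the final object of $\calX_{/U}$ is $U$ itself, and if $f^{\ast}$ were an equivalence on $(n-1)$-truncated objects then $\tau_{\leq n-1} U \simeq \tau_{\leq n-1} 1_{\calX} = 1_{\calX}$, hence (combined with a short induction up the Postnikov tower using that $U \to 1_{\calX}$ is then $(n)$-connective and the homotopy sheaves vanish) $U$ would be $(n+1)$-connective. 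Here one must be slightly careful about the relation between $n$-connectivity of an object and $n$-connectivity of the map $U \to 1_{\calX}$; I would cite the relevant connectivity lemmas from \cite{topoi} (e.g.\ the characterization of $n$-connective morphisms via their truncations) rather than reprove them.

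The main obstacle I anticipate is keeping the indexing straight — the interplay between ``$n$-localic,'' ``$(n-1)$-truncated objects,'' ``$(n+1)$-connective,'' and the off-by-one shifts that appear when passing between an object $U$ and the morphism $U \to 1_{\calX}$. The substantive mathematical content (that $\bHom_{\calX}(U, Z)$ is contractible when $U$ is highly connective and $Z$ is truncated, and that an étale geometric morphism whose ``size'' object $U$ is connective induces an equivalence on sufficiently truncated subcategories) is a direct consequence of results already available in \cite{topoi}; so the proof should be short once the bookkeeping is pinned down. I would organize it as: (a) recall $n$-localic $\iff$ determined by $(n-1)$-truncated objects; (b) $(2) \Rightarrow (1)$ via the unit-map computation; (c) $(1) \Rightarrow (2)$ via testing on the final object and climbing the Postnikov tower.
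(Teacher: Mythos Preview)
Your $(2)\Rightarrow(1)$ direction is fine and matches the paper's approach (which simply cites the relevant HTT lemma on pullback along a connective map inducing an equivalence on truncated objects).

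The $(1)\Rightarrow(2)$ direction has a real gap. First a minor slip: the functor sending the terminal object of $\calX_{/U}$ to $U$ is $f_{!}$ (the forgetful functor), not $f_{\ast}$; the right adjoint $f_{\ast}$ preserves the terminal object. Correcting this, your argument does yield $\tau_{\leq n-1}U\simeq 1_{\calX}$, i.e.\ $U$ is $n$-connective. But the step from $n$-connective to $(n+1)$-connective is where the $n$-localic hypothesis enters, and your ``short induction up the Postnikov tower'' does not explain how. There is no further tower to climb: you need exactly one more vanishing, namely $\pi_n U=\ast$, and nothing in your outline produces it.

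The paper handles this by first replacing $U$ with its $n$-truncation $U'=\tau_{\leq n}U$ (using the already-proved direction to see that $(1)$ is unchanged), so one may assume $U$ is $n$-truncated. Then the $n$-localic hypothesis is used concretely: there exists an effective epimorphism $\phi:V\to U$ with $V$ $(n-1)$-truncated. Since $U$ is $n$-truncated, $\phi$ is an $(n-1)$-truncated morphism, hence an $(n-1)$-truncated object of $\calX_{/U}$; condition $(1)$ then forces $V\simeq U\times X$ for some $(n-1)$-truncated $X\in\calX$, so $\phi^{\ast}(\pi_n U)\simeq\pi_n V=\ast$, and since $\phi$ is an effective epimorphism this gives $\pi_n U=\ast$. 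Now $U$ is $(n-1)$-truncated, whence $\tau_{\leq n-1}(\calX_{/U})\simeq(\tau_{\leq n-1}\calX)_{/U}$, and $(1)$ forces $U$ to be final. This is the missing idea: you must use $n$-localicness to produce an $(n-1)$-truncated cover of $U$ and then invoke $(1)$ to trivialize $\pi_n U$.
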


\begin{proof}
The implication $(2) \Rightarrow (1)$ follows from Lemma \toposref{nicelemma}.
To prove the reverse implication, choose an $(n+1)$-connective $\alpha: U \rightarrow U'$, where
$U'$ is $n$-truncated. The implication $(2) \Rightarrow (1)$ guarantees that
$\tau_{\leq n-1} \calX_{/U} \simeq \tau_{\leq n-1} \calX_{/U'}$. Replacing $U$ by $U'$, we may reduce to the case where $U$ is $n$-truncated; we wish to show that $U$ is a final object of $\calX$.

Since $\calX$ is $n$-localic, there exists an effective epimorphism $\phi: V \rightarrow U$, where
$V$ is $(n-1)$-truncated. The assumption that $U$ is $n$-truncated implies that $\phi$ is 
$(n-1)$-truncated. Invoking $(1)$, we deduce that $V \simeq U \times X$ for some $(n-1)$-truncated object $X \in \calX$. It follows that we have an isomorphism $\phi^{\ast}(\pi_{n} U) \simeq
\pi_{n} V$ in the topos $\h{(\tau_{\leq 0} \calX_{/V})}$. Since $V$ is $(n-1)$-truncated, we deduce that
$\phi^{\ast}( \pi_{n} U) \simeq \ast$. Because $\phi$ is an effective epimorphism, we conclude that
$\pi_{n} U \simeq \ast$, so that $U$ is $(n-1)$-truncated. We then have an equivalence
$ \tau_{\leq n-1} \calX_{/U} \simeq ( \tau_{\leq n-1} \calX)_{/U}$, so that $(1)$ implies that the projection
$( \tau_{\leq n-1} \calX)_{/U} \rightarrow \tau_{\leq n-1} \calX$ is an equivalence. This implies that
$U$ is a final object of $\tau_{\leq n-1} \calX$ (hence a final object of $\calX$), as desired.
\end{proof}

\begin{lemma}\label{pretub}
Let $\calC$ be an $\infty$-category and $S$ a collection of morphisms in $\calC$.
Let $U$ an $S$-local object of $\calC$, let $\pi: \calC_{/U} \rightarrow \calC$ denote the projection, and let $T = \pi^{-1}(S)$. Then $T^{-1} \calC_{/U} = \calC_{/U} \times_{\calC} S^{-1} \calC$
(as full subcategories of $\calC_{/U}$).
\end{lemma}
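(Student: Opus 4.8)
The plan is to reduce the asserted equality of full subcategories of $\calC_{/U}$ to the statement that an object $(X, p \colon X \to U)$ of $\calC_{/U}$ is $T$-local if and only if the underlying object $X$ is $S$-local in $\calC$. Indeed, $T^{-1}\calC_{/U}$ is by definition the full subcategory of $\calC_{/U}$ spanned by the $T$-local objects, while $\calC_{/U} \times_{\calC} S^{-1}\calC$ is the full subcategory spanned by those $(X,p)$ whose image $X = \pi(X,p)$ lies in $S^{-1}\calC$; two full subcategories of $\calC_{/U}$ coincide precisely when they contain the same objects, so this reduction is immediate.

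The one tool I would invoke is the standard description of mapping spaces in a slice $\infty$-category (see \toposref{goodilk}): for any $(X,p) \in \calC_{/U}$ and any morphism $g \colon A \to B$ in $\calC$ equipped with a map $b \colon B \to U$, there is an essentially unique map $a \simeq b \circ g \colon A \to U$ together with an edge $\widetilde{g} \colon (A,a) \to (B,b)$ of $\calC_{/U}$ lying over $g$, and a commutative diagram
$$\xymatrix{
\bHom_{\calC_{/U}}((B,b),(X,p)) \ar[r]^-{\widetilde{g}^{\ast}} \ar[d] & \bHom_{\calC_{/U}}((A,a),(X,p)) \ar[d] \\
\bHom_{\calC}(B,X) \ar[r]^-{g^{\ast}} \ar[d]^-{p_{\ast}} & \bHom_{\calC}(A,X) \ar[d]^-{p_{\ast}} \\
\bHom_{\calC}(B,U) \ar[r]^-{g^{\ast}} & \bHom_{\calC}(A,U)
}$$
in which the left (resp. right) column is a fiber sequence exhibiting the top term as the homotopy fiber of $p_{\ast}$ over the point $b$ (resp. $a$). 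Moreover $\widetilde{g}$ belongs to $T = \pi^{-1}(S)$ whenever $g \in S$, and every edge of $T$ arises, up to equivalence, in this form for some $g \in S$.

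With this diagram in hand both implications are short. If $X$ is $S$-local, then for $g \in S$ the middle horizontal map $g^{\ast}$ is an equivalence; since $U$ is $S$-local by hypothesis, the bottom horizontal map $g^{\ast}$ is also an equivalence; passing to homotopy fibers over $b$ and $a$, we conclude that $\widetilde{g}^{\ast}$ is an equivalence, so $(X,p)$ is $T$-local. Conversely, suppose $(X,p)$ is $T$-local and fix $g \colon A \to B$ in $S$. For every $b \colon B \to U$ the map $\widetilde{g}^{\ast}$, i.e. the induced map on homotopy fibers over $b$, is an equivalence; since $U$ is $S$-local the bottom map $g^{\ast} \colon \bHom_{\calC}(B,U) \to \bHom_{\calC}(A,U)$ is an equivalence; and a map of spaces lying over an equivalence of base spaces which induces an equivalence on every homotopy fiber is itself an equivalence, so the middle map $g^{\ast} \colon \bHom_{\calC}(B,X) \to \bHom_{\calC}(A,X)$ is an equivalence. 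As $g$ ranges over $S$, this shows $X$ is $S$-local.

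I do not expect a serious obstacle here: the content is entirely the bookkeeping with the fiber-sequence description of slice mapping spaces, and the only place the hypothesis that $U$ be $S$-local is used is in verifying that the bottom row of the diagram consists of equivalences — which is needed in both directions. Note also that the argument uses only the membership relation $g \in S$, so no additional hypothesis on $S$ is required.
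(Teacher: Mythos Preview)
Your proof is correct and is essentially the same argument as the paper's. The paper organizes the same content into a single commutative square
\[
\xymatrix{ \bHom_{\calC}(B,X) \ar[r]^{p_\ast} \ar[d]^{g^\ast} & \bHom_{\calC}(B,U) \ar[d]^{g^\ast} \\ \bHom_{\calC}(A,X) \ar[r]^{p_\ast} & \bHom_{\calC}(A,U) }
\]
(after reducing to $S=\{g\}$) and reads off the slice mapping spaces as the horizontal homotopy fibers, but the logic is identical: the right vertical is an equivalence since $U$ is $S$-local, so the left vertical is an equivalence iff the induced maps on horizontal fibers (which are exactly the maps $\widetilde{g}^\ast$ for $\widetilde{g}\in T$) are equivalences.
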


\begin{proof}
It clearly suffices to prove Lemma \ref{pretub} in the special case where $S$ consists
of a {\em single} morphism $A \rightarrow B$ in $\calC$. Let $\eta: X \rightarrow U$
be an object of $\calC_{/U}$. We have a homotopy commutative diagram
$$ \xymatrix{ \bHom_{\calC}( B, X) \ar[r]^{\phi_{B}} \ar[d]^{\psi} & \bHom_{\calC}( B, U) \ar[d]^{\psi_0} \\
\bHom_{\calC}(A, X) \ar[r]^{\phi_{A}} & \bHom_{\calC}(A, U) }$$
By definition, $\eta$ belongs to $\calC_{/U} \times_{\calC} S^{-1} \calC$ if and only if $
\psi$ is a homotopy equivalence. Since $U$ is $S$-local, the map $\psi_0$ is a homotopy equivalence; thus $\eta \in \calC_{/U} \times_{\calC} S^{-1} \calC$ if and only if
$\psi$ induces an equivalence after passing to the homotopy fiber over any point $\delta: \bHom_{\calC}(B, U)$. If we identify $\delta$ with the corresponding commutative diagram
$$ \xymatrix{ A \ar[rr] \ar[dr]^{\alpha} & & \ar[dl]^{\beta} B \\
& U, & }$$
regarded as an element of $T$, then we can identify the induced map of homotopy fibers with the map
$$ \bHom_{ \calC_{/U}}( \beta, \eta) \rightarrow \bHom_{\calC_{/U}}( \alpha, \eta)$$
induced by composition with $\delta$. Consequently, $\eta \in \calC_{/U} \times_{\calC} S^{-1} \calC$ if and only if $\eta$ is $T$-local, as desired.
\end{proof}

\begin{lemma}\label{tub}
Let $\calX$ be an $n$-localic $\infty$-topos for some $n \geq 0$, and let $U$ be an object of $\calX$. The following conditions are equivalent:
\begin{itemize}
\item[$(1)$] The object $U$ is $n$-truncated.
\item[$(2)$] The $\infty$-topos $\calX_{/U}$ is $n$-localic.
\end{itemize}
\end{lemma}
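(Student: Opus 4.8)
The plan is to prove the equivalence $(1) \Leftrightarrow (2)$ by relating the $\infty$-topos $\calX_{/U}$ to a localization of $\calX$ and invoking the characterization of $n$-localic $\infty$-topoi. Recall that an $\infty$-topos $\calY$ is $n$-localic if and only if it is equivalent to an $n$-localic $\infty$-topos in the sense of \cite{topoi}, which can be tested by the property that $\calY$ is generated under colimits by its $(n-1)$-truncated objects, and that the subcategory of $(n-1)$-truncated objects forms an $n$-topos; more precisely we will use the characterization that $\calX$ is $n$-localic if and only if the truncation functor $\tau_{\leq n-1}$ exhibits $\calX$ as determined by the $(n-1)$-topos $\tau_{\leq n-1} \calX$ together with the structure of Postnikov towers.

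First I would prove the implication $(1) \Rightarrow (2)$. Assume $U$ is $n$-truncated. Since $\calX$ is $n$-localic, there is an $(n-1)$-truncated object $V$ together with an effective epimorphism $V \rightarrow U$; as $U$ is $n$-truncated this map is automatically $(n-1)$-truncated, so by descent $\calX_{/U}$ is generated under colimits by objects of the form $W \times_U V'$ where $V' \rightarrow U$ factors through something $(n-1)$-truncated. The cleaner route is to observe that the projection $\calX_{/U} \rightarrow \calX$ is étale, hence $\calX_{/U}$ is again an $\infty$-topos, and then to identify $\tau_{\leq n-1} \calX_{/U}$ explicitly: using Lemma \ref{pretub} applied to the localization $S^{-1}\calX = \tau_{\leq n-1}\calX$ (noting that any $n$-truncated, a fortiori the terminal, object is $S$-local only when... — here I must instead work with $U$ directly), we get that $\tau_{\leq n-1}\calX_{/U}$ is the pullback $\calX_{/U} \times_{\calX} \tau_{\leq n-1}\calX$, which is an $n$-topos because $\tau_{\leq n-1}\calX$ is. Combined with the fact that $\calX_{/U}$ is generated under colimits by $(n-1)$-truncated objects (since $\calX$ is $n$-localic and colimits in $\calX$ are universal, pulling a generating family back along $U \rightarrow 1_{\calX}$ yields a generating family for $\calX_{/U}$, and each generator, being $(n-1)$-truncated over an $n$-truncated base, is $(n-1)$-truncated), this shows $\calX_{/U}$ is $n$-localic.

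Next I would prove $(2) \Rightarrow (1)$. Suppose $\calX_{/U}$ is $n$-localic. Factor the canonical map $U \rightarrow 1_{\calX}$ through its $n$-truncation, obtaining $U \rightarrow \tau_{\leq n} U$. By $(1) \Rightarrow (2)$ (already established), $\calX_{/\tau_{\leq n} U}$ is $n$-localic. The induced map $\calX_{/U} \rightarrow \calX_{/\tau_{\leq n}U}$ is étale and corresponds to an object of $\calX_{/\tau_{\leq n}U}$ which is $(n+1)$-connective (its fibers over the base are the fibers of $U \rightarrow \tau_{\leq n}U$, which are $(n+1)$-connective). Now invoke Lemma \ref{camb}: for the $n$-localic $\infty$-topos $\calX_{/\tau_{\leq n}U}$ and the $(n+1)$-connective object representing this étale map, the pullback functor is an equivalence on $(n-1)$-truncated objects. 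Since both $\calX_{/U}$ and $\calX_{/\tau_{\leq n}U}$ are $n$-localic, an equivalence on $(n-1)$-truncated subcategories forces an equivalence of the $\infty$-topoi themselves (an $n$-localic $\infty$-topos is recovered from its $(n-1)$-truncated objects), so the étale map is an equivalence and hence the $(n+1)$-connective object is terminal. By Lemma \ref{camb} again (the equivalence $(1) \Leftrightarrow (2)$ there), this connective object being terminal forces $U \rightarrow \tau_{\leq n}U$ to be an equivalence, i.e. $U$ is $n$-truncated.

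The main obstacle I anticipate is the step asserting that an $n$-localic $\infty$-topos is determined by its subcategory of $(n-1)$-truncated objects, so that an equivalence on those subcategories lifts to an equivalence of $\infty$-topoi — this is where the precise definition of "$n$-localic" from \cite{topoi} must be used carefully, together with the fact that étale morphisms of $\infty$-topoi are themselves étale geometric morphisms (so $\calX_{/U} \rightarrow \calX_{/\tau_{\leq n}U}$ is of the expected form). A secondary subtlety is making sure Lemma \ref{pretub} is applied to the correct localization and the correct local object; one may need to first reduce to the case where $U$ itself is $n$-truncated in the $(1) \Rightarrow (2)$ direction before invoking it, exactly as in the proof of Lemma \ref{camb}.
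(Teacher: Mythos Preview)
Your argument for $(2) \Rightarrow (1)$ is essentially the same as the paper's: factor $U$ through its $n$-truncation, use the already-proved $(1) \Rightarrow (2)$ to conclude both slice topoi are $n$-localic, invoke Lemma \ref{camb} for the $(n+1)$-connective object, and use that an $n$-localic $\infty$-topos is recovered from its $(n-1)$-truncated objects. That part is fine.

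The direction $(1) \Rightarrow (2)$, however, has real gaps. First, Lemma \ref{pretub} does not apply to the localization $\tau_{\leq n-1}\calX \subseteq \calX$ with the object $U$, because that lemma requires $U$ to be $S$-local, i.e.\ $(n-1)$-truncated, whereas you only know $U$ is $n$-truncated. You noticed this, but then asserted anyway that $\tau_{\leq n-1}\calX_{/U}$ is the pullback $\calX_{/U} \times_{\calX} \tau_{\leq n-1}\calX$. This is false: the identity $U \xrightarrow{\id} U$ is $(-2)$-truncated, hence $(n-1)$-truncated, in $\calX_{/U}$, but $U$ itself need not be $(n-1)$-truncated in $\calX$. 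Second, the claim that ``pulling a generating family back along $U \to 1_{\calX}$ yields a generating family for $\calX_{/U}$'' is also false if read literally (pullback here is $V \mapsto V \times U$, and for instance when $\calX = \SSet$ and the generator is $\ast$, the object $\ast \times U \simeq 1_{\calX_{/U}}$ does not generate $\calX_{/U}$). Third, even if you establish that $\calX_{/U}$ is generated under colimits by $(n-1)$-truncated objects and that these form an $n$-topos, you have not explained why this suffices for $n$-localicness; that characterization requires care (one needs the localization to be topological, or equivalently to check the universal property against $n$-topoi).

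The paper proceeds quite differently for $(1) \Rightarrow (2)$. It first treats the case $\calX = \calP(\calC)$ for a small $n$-category $\calC$ with finite limits: using the identification $\calP(\calC)_{/U} \simeq \calP(\calC')$ with $\calC' = \calC \times_{\calP(\calC)} \calP(\calC)_{/U}$, a direct fiber-sequence computation shows $\calC'$ is again an $n$-category with finite limits, hence $\calP(\calC)_{/U}$ is $n$-localic. For general $\calX = S^{-1}\calP(\calC)$ with $S$ topological, the paper applies Lemma \ref{pretub} \emph{correctly}---here $U \in \calX$ is automatically $S$-local---to identify $\calX_{/U}$ with $T^{-1}\calP(\calC)_{/U}$, and then checks directly that $T$ is again topological. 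Your sketch does not engage with the topological-localization condition at all, and that is where the real content lies.
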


\begin{proof}
We begin by showing that $(1) \Rightarrow (2)$. Consider first the case where
$\calX = \calP(\calC)$, where $\calC$ is a small $n$-category which admits finite limits. 
Corollary \toposref{swapKK} implies that $\calP(\calC)_{/U}$ is equivalent to the
presheaf $\infty$-category $\Fun( {\calC'}^{op}, \SSet)$, where 
$\calC' = \calC \times_{ \calP(\calC) } \calP(\calC)_{/U}$. We note that the canonical projection
$p: \calC' \rightarrow \calC$ is a right fibration associated to the functor
$U: \calC^{op} \rightarrow \SSet$; in particular, $\calC'$ is essentially small.
For every pair of objects $x,y \in \calC'$, we have a fiber sequence
$$ \bHom_{\calC'}(x,y) \rightarrow \bHom_{\calC}(x,y) \rightarrow
\bHom_{\SSet}( U(y), U(x) ).$$
Since $U$ is $n$-truncated and $\calC$ is an $n$-category, we conclude that
$\bHom_{\calC'}(x,y)$ is $(n-1)$-truncated, so that $\calC'$ is equivalent to an $n$-category.
Using Propositions \toposref{needed17} and \toposref{yonedaprop}, we conclude that
$\calC'$ admits finite limits, so that the presheaf $\infty$-category
$\Fun( {\calC'}^{op}, \SSet)$ is an $n$-localic $\infty$-topos as desired.

We now prove that $(1) \Rightarrow (2)$ in general. Since $\calX$ is $n$-localic, we may assume without loss of generality that $\calX = S^{-1} \calP(\calC)$, where $\calC$ is a small $n$-category which admits finite limits, and $S$ is topological (see Definition \toposref{deftoploc}). 
Lemma \ref{pretub} allows us to identify $\calX_{/U}$ with $T^{-1} \calP(\calC)_{/U}$, where
$T$ is the inverse image of $S$ in $\calP(\calC)_{/U}$. The first part of the proof shows that
$\calP(\calC)_{/U}$ is $n$-localic. According to Proposition \toposref{useiron}, it will suffice to show that the strongly saturated class of morphisms $T$ is topological. 

Let $T_0 \subseteq T$ be the smallest strongly saturated class of morphisms in $\calP(\calC)_{/U}$ which is stable under pullbacks and contains every monomorphism belonging to $T$. 
We wish to show that $T_0 = T$. Consider an arbitrary diagram
$$ \xymatrix{ X \ar[dr] \ar[rr]^{f} & & Y \ar[dl] \\
& U & }$$
in $\calP(\calC)$, corresponding to an element of $T$. We then have a pullback diagram
in $\calP(\calC)_{/U}$:
$$ \xymatrix{ X \ar[r]^{f} \ar[d] & Y \ar[d] \\
X \times U \ar[r]^{f \times \id_{U}} & Y \times U. }$$
Since $T_0$ is stable under pullbacks, it will suffice to show that $f \times \id_{U}$ belongs to
$T_0$. Let $S_0$ be the collection of all morphisms $g \in S$ such that
$g \times \id_{U} \in T_0$. Then $S_0$ is strongly saturated, stable under pullbacks, and
contains every monomorphism in $S$. Since $S$ is topological, $S_0 = S$ so that
$f \in S$, as desired. This completes the proof of the implication $(1) \Rightarrow (2)$.

Now suppose that $(2)$ is satisfied. Choose an $(n+1)$-connective morphism
$U \rightarrow U'$, where $U'$ is $n$-truncated. The first part of the proof shows that
$\calX_{/U'}$ is $n$-localic. We may therefore replace $U$ by $U'$ and thereby reduce to the case where $U$ is $(n+1)$-connective; we wish to show that $U$ is a final object of $\calX$. Lemma \ref{camb} implies that the geometric morphism
$\pi^{\ast}: \calX \rightarrow \calX_{/U}$ induces an equivalence on $(n-1)$-truncated objects.
Since $\calX$ and $\calX_{/U}$ are both $n$-localic, we conclude that $\pi^{\ast}$ is an equivalence of $\infty$-categories, so that $U$ is a final object of $\calX$ as desired.
\end{proof}

\begin{lemma}\label{upsight}
Let $\calG$ be a geometry, and let $(\calX, \calO_{\calX})$ be an affine $\calG$-scheme.
Then $\calX$ is generated under small colimits by fiber products 
$1_{\calX} \times_{ \calO_{\calX}(X)} \calO_{\calX}(U)$, determined by
admissible morphisms $U \rightarrow X$ in $\calG$ and global sections
$1_{\calX} \rightarrow \calO_{\calX}(X)$. 
\end{lemma}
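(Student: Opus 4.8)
The plan is to reduce to the explicit model of Theorem \ref{scoo} and then do a short computation at the level of presheaves. Since the assertion concerns only affine $\calG$-schemes, I may assume $(\calX,\calO_{\calX})=(\USpec A,\calO_{\USpec A})$ for some $A\in\Pro(\calG)$. Thus $\calX=\Shv(\Pro(\calG)^{\adm}_{/A})$ is a sheaf $\infty$-topos for the Grothendieck topology of Notation \ref{scun}, and by construction $\calO_{\calX}$ is the composite $\calG\xrightarrow{\widetilde{\calO}_{\USpec A}}\calP(\Pro(\calG)^{\adm}_{/A})\xrightarrow{L}\calX$, where $L$ is a left exact sheafification functor (Remark \ref{sunn}). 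Write $j\colon\Pro(\calG)^{\adm}_{/A}\to\calX$ for the Yoneda embedding into presheaves followed by $L$. First I would record that $\calX$ is generated under small colimits by the essential image of $j$: this is the same observation used in the proof of Lemma \ref{spak}, namely that every presheaf is a small colimit of representables while $L$ preserves small colimits and is essentially surjective. Consequently it suffices to show that $j(B)$ is a fiber product of the asserted form, for every admissible morphism $B\to A$ in $\Pro(\calG)$.

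Next, given such a $B$, I would invoke Notation \ref{ilk} to fix a pullback square in $\Pro(\calG)$
$$\xymatrix{ B \ar[r] \ar[d] & j_{\calG}(U) \ar[d]^{j_{\calG}(f)} \\ A \ar[r]^{s} & j_{\calG}(X), }$$
where $f\colon U\to X$ is an admissible morphism of $\calG$ and $j_{\calG}\colon\calG\to\Pro(\calG)$ is the Yoneda embedding. The core of the argument is to identify the representable presheaf $\bHom_{\Pro(\calG)^{\adm}_{/A}}(-,B)$ with the fiber product $\mathbf 1\times_{\widetilde{\calO}_{\USpec A}(X)}\widetilde{\calO}_{\USpec A}(U)$ in $\calP(\Pro(\calG)^{\adm}_{/A})$, where $\mathbf 1$ is the terminal presheaf (represented by the terminal object $\id_A$ of $\Pro(\calG)^{\adm}_{/A}$), the map $\widetilde{\calO}_{\USpec A}(U)\to\widetilde{\calO}_{\USpec A}(X)$ is $\widetilde{\calO}_{\USpec A}(f)$, and $\mathbf 1\to\widetilde{\calO}_{\USpec A}(X)$ is the natural transformation picked out by $s$. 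This is checked by evaluating on an object $W\to A$: by Lemma \ref{capsule}(3) a morphism over $A$ between admissible objects is admissible, so $\bHom_{\Pro(\calG)^{\adm}_{/A}}(W,B)=\bHom_{\Pro(\calG)_{/A}}(W,B)$, and the pullback description of $B$ together with the identity $\widetilde{\calO}_{\USpec A}(V)(W)\simeq\bHom_{\Pro(\calG)}(W,j_{\calG}(V))$ (from the definition of $\widetilde{\calO}_{\USpec A}$) rewrites this as $\widetilde{\calO}_{\USpec A}(U)(W)\times_{\widetilde{\calO}_{\USpec A}(X)(W)}\mathbf 1(W)$.

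Finally I would apply $L$. Because $L$ is left exact it preserves this finite limit, so
$$j(B)\;\simeq\;L(\mathbf 1)\times_{L(\widetilde{\calO}_{\USpec A}(X))}L(\widetilde{\calO}_{\USpec A}(U))\;\simeq\;1_{\calX}\times_{\calO_{\calX}(X)}\calO_{\calX}(U),$$
using $L(\mathbf 1)\simeq 1_{\calX}$ and $L\circ\widetilde{\calO}_{\USpec A}=\calO_{\USpec A}=\calO_{\calX}$; here the map $1_{\calX}\to\calO_{\calX}(X)$ is $L(s)$, a global section, and $\calO_{\calX}(U)\to\calO_{\calX}(X)$ is $\calO_{\calX}(f)$ for the admissible morphism $f$ of $\calG$. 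Combined with the generation statement of the first paragraph, this proves the lemma. I expect the only real obstacle to be the bookkeeping in the presheaf-level identification — in particular verifying that the two legs of the pullback square $B=A\times_{j_{\calG}(X)}j_{\calG}(U)$ induce, after sheafification, exactly the global section $1_{\calX}\to\calO_{\calX}(X)$ and the structure-sheaf map $\calO_{\calX}(U)\to\calO_{\calX}(X)$ — but this is a routine unwinding of definitions rather than a genuine difficulty.
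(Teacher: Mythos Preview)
Your proof is correct and follows essentially the same route as the paper: reduce via Theorem \ref{scoo} to $(\USpec A,\calO_{\USpec A})$, observe that $\calX$ is generated by the sheafified representables $j(B)$ for admissible $B\to A$, and identify each $j(B)$ with the fiber product $1_{\calX}\times_{\calO_{\calX}(X)}\calO_{\calX}(U)$ using the defining pullback square from Notation \ref{ilk}. The paper compresses your second and third paragraphs into the single phrase ``unwinding the definitions''; your version spells out the presheaf-level identification and the appeal to left exactness of $L$, which is exactly what that unwinding amounts to.
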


\begin{proof}
In view of Theorem \ref{scoo}, we may suppose that $(\calX, \calO_{\calX}) =
( \USpec A, \calO_{ \USpec A})$, for some object $A \in \Pro(\calG)$.
The $\infty$-topos $\USpec A$ is a localization of the $\infty$-category
$\calX' = \Fun( (\Pro(\calG)^{\adm}_{/A})^{op}, \SSet)$, and $\calX'$ is generated under small colimits by
corepresentable functors $e_{\phi}$, where $\phi$ ranges over admissible morphisms
$B \rightarrow A$ in $\Pro(\calG)$. Let $\overline{e}_{\phi} \in \calX$ denote the image of
$e_{\phi}$ under the localization functor $\calX' \rightarrow \calX$. 

By definition, every admissible morphism $B \rightarrow A$ in $\Pro(\calG)$
fits into a pullback diagram
$$ \xymatrix{ B \ar[r] \ar[d] & j(U) \ar[d] \\
A \ar[r] & j(X) }$$
where $j: \calG \rightarrow \Pro(\calG)$ denotes the Yoneda embedding, and $U \rightarrow X$ is an admissible morphism in $X$. Unwinding the definitions, we deduce that
$\overline{e}_{\phi} \simeq 1_{\calX} \times_{ \calO_{\calX}(X) } \calO_{\calX}(U),$
where the section $1_{\calX} \rightarrow \calO_{\calX}(X)$ is determined by the map
$A \rightarrow j(X)$. We now conclude by observing that the functors $\overline{e}_{\phi}$
generate $\calX$ under small colimits.
\end{proof}

\begin{proof}[Proof of Theorem \ref{top4}]
Let $\tau_{\leq n} \calX$ denote the underlying $n$-topos of $\calX$ (that is, the full subcategory of
$\calX$ spanned by the $n$-truncated objects). We let $\calY$ denote the {\it $(n+1)$-localic reflection} of $\calX$. It is characterized up to equivalence by the following properties:
\begin{itemize}
\item[$(a)$] The $\infty$-topos $\calY$ is $(n+1)$-localic.
\item[$(b)$] There is a geometric morphism $\pi^{\ast}: \calY \rightarrow \calX$ which induces an equivalence of $\infty$-categories $\tau_{\leq n} \calY \rightarrow \tau_{\leq n} \calX$.
\end{itemize}
Since $\calG$ is $n$-truncated, the functor $\calO_{\calX}: \calG \rightarrow \calX$ automatically
factors through $\tau_{\leq n} \calX$. We may therefore assume without loss of generality that
$\calO_{\calX} = \pi^{\ast} \circ \calO_{\calY}$ for some $\calG$-structure $\calO_{\calY}: \calG \rightarrow \calY$ (which is determined uniquely up to equivalence). We have an evident morphism
$( \calY, \calO_{\calY}) \rightarrow (\calX, \calO_{\calX})$ in $\LGeo(\calG)$. 

We will show that $\pi^{\ast}$ is \etale: that is, that $\pi^{\ast}$ induces an equivalence
$\calY_{/U} \simeq \calX$ for some object $U \in \calX$. Lemma \ref{camb} will then imply that
$U$ is $(n+2)$-connective. In particular, $U \rightarrow 1_{\calY}$ is an effective epimorphism. Then Proposition \ref{sizem} will imply that $(\calY, \calO_{\calY})$ is a $\calG$-scheme, and the proof will be complete.

Let $\calX^0$ denote the full subcategory of $\calX$ spanned by those objects $V \in \calX$
such that the induced geometric morphism $\pi_{V}^{\ast}: \calY \rightarrow \calX_{/V}$ is \etale.
We wish to show that $\calX^{0} = \calX$. Proposition \ref{scanh} implies that $\calX^{0}$ is stable under small colimits in $\calX$. In view of Lemma \ref{spak}, it will suffice to show that
$\calX^{0}$ contains every object $V$ for which the
$\calG$-scheme $(\calX_{/V}, \calO_{\calX} | V)$ is affine. Assumption $(b)$ implies that
$\tau_{\leq n} V \simeq \pi^{\ast} W$, for some $n$-truncated object $W \in \calY$.
The truncation map $\alpha: V \rightarrow \pi^{\ast} W$ determines a homotopy commutative diagram of $\infty$-topoi
$$ \xymatrix{ \calY \ar[r]^{\pi^{\ast}} \ar[d] & \calX \ar[d] \\
\calY_{/W} \ar[r]^{\phi^{\ast} } & \calX_{/V}. }$$
To complete the proof, it will suffice to show that $\phi^{\ast}$ is an equivalence of $\infty$-categories.

By assumption, $(\calX_{/V}, \calO_{\calX}|V)$ is an affine $\calG$-scheme. Since the geometry $\calG$ is $n$-truncated, Theorem \ref{scoo} implies that $\calX_{/V}$ is $(n+1)$-localic. Since $\calY$ is
$(n+1)$-localic and $W$ is $n$-truncated, Lemma \ref{tub} implies that $\calY_{/W}$ is $(n+1)$-localic.
It will therefore suffice to show that $\phi^{\ast}$ induces an equivalence of
$(n+1)$-topoi $\phi^{\ast}_{\leq n}: \tau_{\leq n} \calY_{/W} \rightarrow \tau_{\leq n} \calX_{/V}$.
Since $W$ is $n$-truncated, we have canonical equivalences
$$\tau_{\leq n} \calY_{/W} = ( \tau_{\leq n} \calY)_{/W}
\simeq ( \tau_{\leq n} \calX)_{/ \pi^{\ast} W}
\simeq \tau_{\leq n} \calX_{/\pi^{\ast} W}.$$
Since the morphism $\alpha$ is $(n+1)$-connective, Lemma \toposref{nicelemma} implies that
the pullback functor $\tau_{\leq n} \calX_{ / \pi^{\ast} W } \rightarrow \tau_{\leq n} \calX_{/ V}$
is fully faithful. This proves that $\phi_{\leq n}^{\ast}$ is fully faithful.

To complete the proof, it will suffice to show that $\phi_{\leq n}^{\ast}$ is essentially surjective.
Let $\calC$ denote the essential image of $\phi_{\leq n}^{\ast}$. Since $\phi_{\leq n}^{\ast}$ is preserves finite limits, small colimits, and is fully faithful, the subcategory $\calC \subseteq \tau_{\leq n} \calX_{/V}$ is stable under finite limits and small colimits. Because $\phi^{\ast} \circ ( \calO_{\calY} | W) \simeq (\calO_{\calX} | V)$, the $\infty$-category $\calC$ contains the essential image of $\calO_{\calX} | V$. 
Since $( \calX_{/V}, \calO_{\calX} | V)$ is an affine $\calG$-scheme, Lemma \ref{upsight} implies that
$\calX_{/V}$ is generated under small colimits by $\calC \subseteq \calX_{/V}$. It follows that
$\tau_{\leq n} \calX_{/V}$ is generated under small colimits by $\calC$. Since
$\calC$ is stable under small colimits in $\tau_{\leq n} \calX_{/V}$, we deduce that
$\calC = \tau_{\leq n} \calX_{/V}$ as desired.
\end{proof}

We now discuss the behavior of the $\infty$-categories of $\calG$-schemes as $\calG$ varies.

\begin{proposition}
Let $f: \calG \rightarrow \calG'$ be a transformation of geometries, and
$\Spec^{\calG'}_{\calG}: \LGeo(\calG)^{op} \rightarrow \LGeo(\calG')^{op}$ the relative spectrum functor
constructed in \S \ref{relspec}. Then:
\begin{itemize}
\item[$(1)$] The functor $\Spec^{\calG'}_{\calG}$ preserves \etale morphisms. More precisely,
suppose that $(\calX, \calO_{\calX}) \in \LGeo(\calG)$. Let $U \in \calX$, let
$(\calX', \calO'_{\calX'}) \in \LGeo(\calG')$, and let $\alpha: (\calX', \calO'_{\calX'} \circ f) \rightarrow
(\calX, \calO_{\calX})$ be a morphism in $\LGeo(\calG)^{op}$ which exhibits
$(\calX', \calO'_{\calX'})$ as a relative spectrum of $(\calX, \calO_{\calX})$. Then the induced map
$$ ( \calX'_{/ \alpha^{\ast} U}, (\calO_{\calX'} | \alpha^{\ast} U) \circ f)
\rightarrow (\calX_{/U}, \calO_{\calX} |U)$$ exhibits
$(\calX'_{/ \alpha^{\ast} U}, \calO_{\calX'} | \alpha^{\ast} U)$ as a relative spectrum
of $(\calX_{/U}, \calO_{\calX} | U )$. 
\item[$(2)$] The relative spectrum functor $\Spec_{\calG}^{\calG'}$ carries
affine $\calG$-schemes to affine $\calG'$-schemes.
\item[$(3)$] The relative spectrum functor $\Spec_{\calG}^{\calG'}$ carries
$\calG$-schemes to $\calG'$-schemes.
\item[$(4)$] The relative spectrum functor $\Spec_{\calG}^{\calG'}$ carries
$\calG$-schemes which are locally of finite presentation to $\calG'$-schemes which are locally of finite presentation.
\end{itemize}
\end{proposition}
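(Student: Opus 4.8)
The plan is to prove the four assertions in the order stated, with $(1)$ carrying essentially all of the content and $(2)$--$(4)$ then following formally. For $(1)$ I would argue directly from the defining universal property of the relative spectrum together with the fiber sequence of Remark \ref{unipop}. Fix $(\calX, \calO_{\calX}) \in \LGeo(\calG)$, an object $U \in \calX$, and a morphism $\alpha\colon (\calX, \calO_{\calX}) \to (\calX', \calO_{\calX'} \circ f)$ in $\LGeo(\calG)$ exhibiting $(\calX', \calO_{\calX'})$ as a relative spectrum; let $\beta^{\ast}\colon \calX \to \calX'$ be its underlying morphism in $\LGeo$, so that $\beta^{\ast}$ preserves small colimits and finite limits, and set $\alpha^{\ast}U = \beta^{\ast}U$. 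By functoriality of the slice construction, $\alpha$ lifts to a morphism $(\calX_{/U}, \calO_{\calX}|U) \to (\calX'_{/\alpha^{\ast}U}, (\calO_{\calX'}|\alpha^{\ast}U) \circ f)$ fitting into a square with the two canonical \etale maps. For any $(\calY, \calO_{\calY}) \in \LGeo(\calG')$, I would then compare the fiber sequences produced by Remark \ref{unipop} in $\LGeo(\calG')$ and in $\LGeo(\calG)$: the base spaces $\bHom_{\LGeo(\calG')}((\calX', \calO_{\calX'}), (\calY, \calO_{\calY}))$ and $\bHom_{\LGeo(\calG)}((\calX, \calO_{\calX}), (\calY, \calO_{\calY}\circ f))$ are identified by the relative spectrum property, compatibly with the projections from the fibers. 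Over a point corresponding to a morphism with underlying geometric morphism $\phi^{\ast}\colon \calX' \to \calY$, the corresponding point downstairs has underlying geometric morphism $\phi^{\ast}\circ\beta^{\ast}$, so the two fibers are $\bHom_{\calY}(1_{\calY}, \phi^{\ast}(\alpha^{\ast}U))$ and $\bHom_{\calY}(1_{\calY}, (\phi^{\ast}\circ\beta^{\ast})(U)) = \bHom_{\calY}(1_{\calY}, \phi^{\ast}(\alpha^{\ast}U))$, i.e. the same space. Hence the induced map on total spaces is an equivalence, which is exactly the assertion that $(\calX'_{/\alpha^{\ast}U}, \calO_{\calX'}|\alpha^{\ast}U)$ is a relative spectrum of $(\calX_{/U}, \calO_{\calX}|U)$. (Alternatively, $(1)$ can be extracted from the explicit model $\Spec^{\calG'}_{\calG}(\calX, \calO) \simeq (\calX \times_{\calK} \calK^{\calG'}_{\calG}, \calO')$ of Proposition \ref{ingred}, using that \etale geometric morphisms are stable under base change and compatible with passage to slices.)

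For $(2)$, let $A \in \Pro(\calG)$. Since $\Spec^{\calG}$ is left adjoint to $\Gamma_{\calG}$ and $\Spec^{\calG'}_{\calG}$ is left adjoint to restriction along $f$, and since $\Gamma_{\calG}(\calZ, \calO_{\calZ}\circ f) = f^{\ast}\Gamma_{\calG'}(\calZ, \calO_{\calZ})$ where $f^{\ast}\colon \Fun^{\lex}(\calG', \SSet) \to \Fun^{\lex}(\calG, \SSet)$ denotes restriction along $f$, one obtains for every $(\calZ, \calO_{\calZ}) \in \LGeo(\calG')$ a chain of natural equivalences
$$\bHom_{\LGeo(\calG')}(\Spec^{\calG'}_{\calG}\Spec^{\calG}A,\, (\calZ, \calO_{\calZ})) \simeq \bHom_{\Fun^{\lex}(\calG,\SSet)}(A,\, f^{\ast}\Gamma_{\calG'}(\calZ, \calO_{\calZ})) \simeq \bHom_{\Fun^{\lex}(\calG',\SSet)}(f_{!}A,\, \Gamma_{\calG'}(\calZ, \calO_{\calZ})),$$
where $f_{!}$ is the left adjoint of $f^{\ast}$, which exists and takes values in $\Fun^{\lex}(\calG', \SSet)$ by Lemma \ref{sillyfunt}. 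By Yoneda, $\Spec^{\calG'}_{\calG}\Spec^{\calG}A \simeq \Spec^{\calG'}(f_{!}A)$, so it is an affine $\calG'$-scheme. Moreover $f_{!}$ carries the essential image of the Yoneda embedding $\calG \hookrightarrow \Pro(\calG)$ into that of $\calG' \hookrightarrow \Pro(\calG')$ (the object corepresenting evaluation at $A_{0} \in \calG$ is carried to the one corepresenting evaluation at $f(A_{0})$), which supplies the finitely presented refinement needed for $(4)$.

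Finally, $(3)$ and the remaining part of $(4)$ follow by combining $(1)$ and $(2)$. Given a $\calG$-scheme $(\calX, \calO_{\calX})$, pick a covering $\{U_{\alpha}\}$ of $\calX$ with $(\calX_{/U_{\alpha}}, \calO_{\calX}|U_{\alpha}) \simeq \Spec^{\calG}A_{\alpha}$, and write $(\calX', \calO_{\calX'}) = \Spec^{\calG'}_{\calG}(\calX, \calO_{\calX})$ with underlying morphism $\beta^{\ast}\colon \calX \to \calX'$ in $\LGeo$. As $\beta^{\ast}$ preserves small colimits and finite limits it carries the effective epimorphism $\coprod_{\alpha} U_{\alpha} \to 1_{\calX}$ to an effective epimorphism $\coprod_{\alpha}\beta^{\ast}U_{\alpha} \to 1_{\calX'}$, so $\{\beta^{\ast}U_{\alpha}\}$ covers $\calX'$; and by $(1)$, $(\calX'_{/\beta^{\ast}U_{\alpha}}, \calO_{\calX'}|\beta^{\ast}U_{\alpha}) \simeq \Spec^{\calG'}_{\calG}\Spec^{\calG}A_{\alpha}$, which by $(2)$ is affine (and of finite presentation if $A_{\alpha}$ lies in the image of $\calG \hookrightarrow \Pro(\calG)$, by $(4)$). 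Thus $(\calX', \calO_{\calX'})$ is a $\calG'$-scheme, locally of finite presentation when $(\calX, \calO_{\calX})$ is. I expect $(1)$ to be the main obstacle: the argument itself is short, but it requires carefully tracking the various geometric morphisms so that the fibers of the two fiber sequences are genuinely identified, and checking that a point of the base really pulls back, under the relative-spectrum identification, to the composite with $\beta^{\ast}$.
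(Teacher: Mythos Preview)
Your proof is correct and the overall structure matches the paper's. For $(2)$--$(4)$ you argue exactly as the paper does (composition of left adjoints for $(2)$, then combine $(1)$ and $(2)$ with the fact that $\beta^{\ast}$ preserves effective epimorphisms for $(3)$ and $(4)$).

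For $(1)$ your primary argument takes a different route. The paper simply invokes Lemma \ref{specred}: the square
\[
\xymatrix{ \calX \ar[r] \ar[d] & \calX_{/U} \ar[d] \\ \calX' \ar[r] & \calX'_{/\alpha^{\ast}U} }
\]
is a pushout in $\LGeo$ (\'etale morphisms are stable under base change of $\infty$-topoi), and Lemma \ref{specred} says that relative spectra are preserved by any such pushout, which finishes the argument in one line. Your approach instead verifies the universal property of the relative spectrum directly by comparing the two fiber sequences of Remark \ref{unipop} and matching base and fiber. This works, and indeed the two arguments are essentially dual to one another: Lemma \ref{specred} packages once and for all the ``untangling of definitions'' that you are doing by hand, and Remark \ref{unipop} is exactly the kind of fiber computation one would use to prove the pushout square above. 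Your parenthetical alternative (via the explicit model $(\calX \times_{\calK} \calK^{\calG'}_{\calG}, \calO')$ and base-change stability of \'etale morphisms) is essentially the paper's argument. The advantage of the paper's formulation is that Lemma \ref{specred} is already stated and proved separately, so no care about ``tracking geometric morphisms'' is needed at this point; the advantage of yours is that it avoids appealing to the pushout description of slices and works entirely at the level of mapping spaces.
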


\begin{proof}
Assertion $(1)$ follows from Lemma \ref{specred} and $(2)$ from the homotopy commutative diagram
$$ \xymatrix{ \Pro(\calG) \ar[r]^{ \Spec^{\calG} } \ar[d] & \LGeo(\calG)^{op} \ar[d]^{ \Spec_{\calG}^{\calG'} } \\ 
\Pro(\calG') \ar[r]^{ \Spec^{\calG'} } & \LGeo(\calG')^{op}. }$$
Assertions $(3)$ and $(4)$ follow from $(2)$, together with Lemma \ref{specred}.
\end{proof}

\begin{remark}\label{armur}
Let $\calG$ be a geometry. Suppose that the topology on
$\Pro(\calG)$ is {\it precanonical} (see Remark \ref{juna}). Then for any pair of objects,
$A, B \in \Pro(\calG)$, we have canonical homotopy equivalences
\begin{eqnarray*}
\bHom_{ \Sch(\calG)}( \Spec^{\calG} A, \Spec^{\calG} B) & \simeq & 
\bHom_{ \Pro(\calG)}( A, \Gamma_{\calG}( \Spec^{\calG} B) ) \\
& \simeq & \bHom_{\Pro(\calG)}( A, \Gamma_{\calG}( \USpec B, \calO_{\USpec B} ) \\
& \simeq & \bHom_{ \Pro(\calG)}( A, B).
\end{eqnarray*}
In other words, the functor $\Spec^{\calG}: \Pro(\calG) \rightarrow \Sch(\calG)$ is fully faithful.
\end{remark}

We close with a brief discussion of the existence of limits of $\calG$-schemes.

\begin{remark}\label{pre4}
Let $\calG$ be a geometry, and let $f: (\calX, \calO_{\calX}) \rightarrow (\calY, \calO_{\calY})$ be a morphism in $\LGeo(\calG)^{op}$. For every object $U \in \calY$, we have a pullback diagram
$$ \xymatrix{ ( \calX_{/ f^{\ast} U}, \calO_{\calX} | f^{\ast} U) \ar[r] \ar[d] & (\calY_{/U}, \calO_{\calY} | U ) \ar[d] \\
(\calX, \calO_{\calX}) \ar[r] & ( \calY, \calO_{\calY});}$$
this follows easily from Remark \ref{unipop}.
\end{remark}

\begin{proposition}\label{cuble}
Let $\calG$ be a geometry. Then:
\begin{itemize}
\item[$(1)$] The full subcategory $\Sch^{\fin}(\calG) \subseteq \Sch(\calG)$ is admits finite limits, and the inclusion $\Sch^{\fin}(\calG) \subseteq \LGeo(\calG)^{op}$ preserves finite limits.
\item[$(2)$] Suppose that the Grothendieck topology on $\Pro(\calG)$ is precanonical. Then
$\Sch(\calG)$ admits finite limits, and the inclusion $\Sch(\calG) \subseteq \LGeo(\calG)^{op}$ preserves finite limits.
\end{itemize}
\end{proposition}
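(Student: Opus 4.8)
The strategy is to reduce the statement about $\Sch(\calG)$ to the analogous statement about the ``ambient'' $\infty$-category $\LGeo(\calG)^{op}$, using the fact that $\calG$-schemes are stable under the relevant constructions. The key input is Proposition \ref{deeder}: a diagram $q: K^{\triangleleft} \to \LGeo(\calG)$ whose image in $\LGeo$ is a limit diagram and which carries each edge to a $p$-coCartesian (i.e. \etale) morphism is automatically a limit diagram and a $p$-limit diagram. Dually, this tells us that $\LGeo(\calG)^{op}$ admits pullbacks of \etale morphisms against arbitrary morphisms, and more relevantly that it admits finite products and in fact all finite limits when computed in a suitable way. In practice I would argue directly: $\LGeo(\calG)^{op}$ admits finite limits, because $\LGeo^{op} = \RGeom$ admits finite limits (it is the $\infty$-category of $\infty$-topoi, which has all small limits by \toposref{} material on the universal topos fibration), the projection $p: \LGeo(\calG)^{op} \to \LGeo^{op}$ is a Cartesian fibration (the opposite assertion to Proposition \ref{swupa}), and the fibers $\Struct_{\calG}(\calX)$ admit finite limits which are preserved by the pullback functors $\pi^{\ast}$ (since $\pi^{\ast}$ is left exact). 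A standard fact about (co)Cartesian fibrations then yields that $\LGeo(\calG)^{op}$ admits finite limits and that $p$ preserves them.

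\textbf{Key steps.} First I would record that $\LGeo(\calG)^{op}$ admits finite limits and that $p$ preserves them, as sketched above; this is essentially Proposition \ref{deeder} together with the general theory of limits in Cartesian fibrations (the dual of \toposref{}-style results used in the proof of that proposition). Second, for part $(1)$, given a finite diagram in $\Sch^{\fin}(\calG)$, I form its limit $(\calX, \calO_{\calX})$ in $\LGeo(\calG)^{op}$ and must check it is again a $\calG$-scheme locally of finite presentation. Using Proposition \ref{sizem}$(2)$, this is local on $\calX$, so it suffices to produce an \etale cover of $(\calX,\calO_{\calX})$ by affine $\calG$-schemes of finite presentation. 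For this I would cover each vertex of the diagram by affines of finite presentation, pass to \etale localizations, and use Remark \ref{pre4} (pullback-stability of \etale localizations along morphisms in $\LGeo(\calG)^{op}$) to reorganize: a limit of affine $\calG$-schemes of finite presentation over a finite diagram is an affine $\calG$-scheme of finite presentation, because $\Spec^{\calG}: \Pro(\calG) \to \LGeo(\calG)^{op}$ carries finite colimits in $\Pro(\calG)$ (equivalently finite limits in $\Pro(\calG)^{op} = \Ind(\calG^{op})$) to finite limits, being a left adjoint composed with the identification of Definition \ref{defspect}, and $\Ind(\calG^{op})$ has finite limits, with compact objects — the essential image of the Yoneda embedding of $\calG$ — stable under finite limits since $\calG$ has finite limits. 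Then the connecting \etale maps, pulled back via Remark \ref{pre4}, assemble into an \etale cover of $(\calX,\calO_{\calX})$ by such affines. Third, for part $(2)$, I run the same argument but now affine $\calG$-schemes are $\Spec^{\calG} A$ for arbitrary $A \in \Pro(\calG)$; the precanonicity hypothesis (Remark \ref{juna}) gives $\Gamma_{\calG}(\Spec^{\calG} A) \simeq A$ and, via Remark \ref{armur}, that $\Spec^{\calG}$ is fully faithful, which lets me identify $\Spec^{\calG}(\lim A_i)$ with the limit of the $\Spec^{\calG} A_i$ in $\LGeo(\calG)^{op}$ — here I use that $\Spec^{\calG}$ is a right adjoint and hence preserves whatever limits exist, and that finite limits in $\Pro(\calG)^{op}$ always exist. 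The local-affineness is then verified exactly as before.

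\textbf{Main obstacle.} The delicate point is the ``reorganization'' step: given a finite diagram of $\calG$-schemes with chosen affine \etale covers of the vertices, one must produce an honest affine \etale cover of the limit, and this requires knowing that the pullbacks of the covering objects (along the structure maps of the diagram, interpreted via the equivalence $(\LGeo(\calG)^{op}_{\mathet})_{/(\calX,\calO_{\calX})} \simeq \calX$ of Proposition \ref{scanh}$(5)$) still jointly cover, and that their pairwise ``intersections'' over the limit are again affine of the required type. The first is handled because effective epimorphisms in an $\infty$-topos are stable under pullback and $\pi^\ast$ preserves them; the second reduces, by Remark \ref{pre4} and the compatibility of $\Spec^{\calG}$ with finite limits, to the computation of a finite limit of affines, which is again affine (of finite presentation in case $(1)$) by the argument above. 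A secondary subtlety in case $(2)$, absent in case $(1)$, is that without precanonicity the functor $\Spec^{\calG}$ need not be fully faithful, so the identification of finite limits of affines cannot be made directly; this is exactly why the hypothesis is imposed, and once it is in force the argument parallels case $(1)$ verbatim.
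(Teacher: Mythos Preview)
Your approach has a genuine gap at the very first step. You assert that $\LGeo(\calG)^{op}$ admits finite limits, arguing via the Cartesian fibration $p: \LGeo(\calG)^{op} \to \LGeo^{op}$ and the claim that ``the fibers $\Struct_{\calG}(\calX)$ admit finite limits which are preserved by the pullback functors $\pi^{\ast}$.'' There are two problems. First, the fibers of this fibration are the opposites of the subcategories $\Struct^{\loc}_{\calG}(\calX)$, not of $\Struct_{\calG}(\calX)$. Second, and fatally, for your fibration argument to produce finite limits in $\LGeo(\calG)^{op}$ you would need finite \emph{colimits} in $\Struct^{\loc}_{\calG}(\calX)$ (limits in its opposite), not finite limits, and these do not exist in general. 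Take $\calG = \calG_{\Zar}$ and $\calX = \SSet$: then $\Struct^{\loc}_{\calG}(\calX)$ is the category of local commutative rings with local homomorphisms, and the tensor product of two local $A$-algebras is typically not local, so pushouts need not exist. Hence you cannot ``form the limit $(\calX,\calO_{\calX})$ in $\LGeo(\calG)^{op}$'' and then verify it lies in $\Sch(\calG)$; there is no a priori limit to form. Proposition~\ref{deeder}, which you invoke, concerns limit diagrams in $\LGeo(\calG)$ all of whose edges are $p$-coCartesian (i.e.\ \'etale); it says nothing about general finite limits in $\LGeo(\calG)^{op}$.

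The paper avoids this obstacle by never working directly in $\LGeo(\calG)^{op}$. Instead it passes to the image under the Yoneda embedding $j: \LGeo(\calG)^{op} \to \Fun(\LGeo(\calG), \widehat{\SSet})$, where finite limits always exist, and shows that the essential image of $\Sch^{\fin}(\calG)$ (respectively $\Sch(\calG)$) is stable under pullbacks there. Since $j$ is fully faithful and preserves limits, this gives the statement. The reduction to the affine case is carried out via Lemma~\ref{sado} (a representability criterion local on the base), not just Remark~\ref{pre4}; the precanonicality hypothesis in $(2)$, or the ``locally of finite presentation'' hypothesis in $(1)$, is used exactly to identify maps between affine $\calG$-schemes with maps in $\Pro(\calG)$, after which the pullback of affines is represented by $\Spec^{\calG}(B \times_{A} A')$. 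Your intuition that $\Spec^{\calG}$, being a right adjoint on the appropriate side, carries finite limits in $\Pro(\calG)$ to finite limits is correct and is essentially the endpoint of the paper's argument; what is missing from your proposal is a legitimate route to the affine situation, which the paper supplies via Yoneda and the descent Lemma~\ref{sado} rather than via a nonexistent ambient limit.
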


Before giving the proof, we need to establish a bit of notation. Let $\widehat{\SSet}$ denote the
$\infty$-category of spaces which are not necessarily small. We will say that a functor
$F: \LGeo(\calG) \rightarrow \widehat{\SSet}$ {\it satisfies descent} if, for every
object $(\calX, \calO_{\calX}) \in \LGeo(\calG)$, the composition
$$ \calX^{op} \simeq \LGeo(\calG)_{(\calX, \calO_{\calX})/}^{\mathet}
\rightarrow \LGeo(\calG) \stackrel{F}{\rightarrow} \widehat{\SSet}$$
preserves small limits. Let $\widehat{\Shv}(\LGeo(\calG)^{op})$
denote the full subcategory of $\Fun( \LGeo(\calG), \widehat{\SSet} )$ spanned by those functors which satisfy descent
(the $\infty$-category $\widehat{\Shv}(\LGeo(\calG)^{op})$ can be regarded as
an $\infty$-topos in a larger universe, but we will not need this fact). Proposition \ref{sizem2} implies that the Yoneda embedding $j: \LGeo(\calG)^{op} \rightarrow \Fun( \LGeo(\calG), \widehat{\SSet} )$
factors through $\widehat{\Shv}(\LGeo(\calG)^{op})$.

\begin{lemma}\label{junner}
Suppose given a commutative diagram
$$ \xymatrix{ F \ar[rr]^{\phi} \ar[dr] & & F' \ar[dl] \\
& j( \calX, \calO_{\calX} ) & }$$
in $\widehat{\Shv}(\LGeo(\calG)^{op})$. Suppose further that there exists a collection of objects $U_{\alpha} \in \calX$ with the following properties:
\begin{itemize}
\item[$(a)$] The objects $U_{\alpha}$ cover $\calX$; in other words, the map
$\coprod U_{\alpha} \rightarrow 1_{\calX}$ is an effective epimorphism.
\item[$(b)$] For each index $\alpha$, the induced map
$$F \times_{ j(\calX, \calO_{\calX}) } j(\calX_{/U_{\alpha}}, \calO_{\calX}|U_{\alpha})
\rightarrow F' \times_{ j(\calX, \calO_{\calX}) } j(\calX_{/U_{\alpha}}, \calO_{\calX}|U_{\alpha})$$
is an equivalence.
\end{itemize}
Then $\phi$ is an equivalence.
\end{lemma}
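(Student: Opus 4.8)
The plan is to reduce the statement to a purely local assertion about the $\infty$-topos $\calX$, using the fact that the Yoneda embedding $j \colon \LGeo(\calG)^{op} \to \widehat{\Shv}(\LGeo(\calG)^{op})$ is fully faithful (Proposition \ref{sizem2}) and that $\widehat{\Shv}(\LGeo(\calG)^{op})$ is an $\infty$-topos (in a larger universe), so colimits in it are universal and effective epimorphisms behave well. First I would recall, via Proposition \ref{scanh}(5), that for the object $(\calX, \calO_{\calX})$ the slice $\bigl(\LGeo(\calG)^{op}_{\mathet}\bigr)_{/(\calX,\calO_{\calX})}$ is canonically equivalent to $\calX$ itself; under the Yoneda embedding this produces, for each $U \in \calX$, an object $j(\calX_{/U}, \calO_{\calX}|U)$ equipped with a map to $j(\calX,\calO_{\calX})$, and the assignment $U \mapsto j(\calX_{/U}, \calO_{\calX}|U)$ is (by the descent property and Proposition \ref{scanh}(4)) a colimit-preserving functor $\calX \to \widehat{\Shv}(\LGeo(\calG)^{op})_{/j(\calX,\calO_{\calX})}$. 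In particular $1_{\calX} \mapsto j(\calX,\calO_{\calX})$, and the map $\coprod_\alpha U_\alpha \to 1_{\calX}$ of hypothesis $(a)$, being an effective epimorphism, gives that $\coprod_\alpha j(\calX_{/U_\alpha}, \calO_{\calX}|U_\alpha) \to j(\calX,\calO_{\calX})$ is an effective epimorphism in $\widehat{\Shv}(\LGeo(\calG)^{op})$.

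Next I would pull everything back along $\phi$. Since colimits in $\widehat{\Shv}(\LGeo(\calG)^{op})$ are universal, the fact that $\coprod_\alpha j(\calX_{/U_\alpha}, \calO_{\calX}|U_\alpha) \to j(\calX,\calO_{\calX})$ is an effective epimorphism implies the same for its base change
$$ \coprod_\alpha \bigl( F \times_{ j(\calX,\calO_{\calX})} j(\calX_{/U_\alpha}, \calO_{\calX}|U_\alpha)\bigr) \longrightarrow F,$$
and likewise with $F$ replaced by $F'$. Form the \Cech nerves of these two effective epimorphisms; since $F \to j(\calX,\calO_{\calX})$ and $F' \to j(\calX,\calO_{\calX})$ are both mapped to the same object, and since colimits are universal, the terms of each \Cech nerve are coproducts of iterated fiber products of the form $F \times_{j(\calX,\calO_{\calX})} j(\calX_{/U_{\alpha_0}} \times_{\calX} \cdots \times_{\calX} U_{\alpha_n}, \calO_{\calX}|\cdots)$. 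The map $\phi$ induces a map of \Cech nerves, and $\phi$ is an equivalence if and only if this map of simplicial objects is a levelwise equivalence (because each is the geometric realization of its own \Cech nerve).

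It therefore suffices to check the equivalence after base change to each $j(\calX_{/V}, \calO_{\calX}|V)$ where $V = U_{\alpha_0} \times_{\calX} \cdots \times_{\calX} U_{\alpha_n}$ is a finite product of the $U_\alpha$. Here is where I would invoke hypothesis $(b)$: the map $U_{\alpha_0} \times_{\calX} \cdots \times_{\calX} U_{\alpha_n} \to U_{\alpha_0}$ is a morphism in $\calX$, hence (by Proposition \ref{scanh}(5) and the transitivity of \etale maps, Proposition \ref{scanh}(2)) corresponds to an \etale morphism $(\calX_{/V}, \calO_{\calX}|V) \to (\calX_{/U_{\alpha_0}}, \calO_{\calX}|U_{\alpha_0})$, so $j(\calX_{/V}, \calO_{\calX}|V)$ factors as a base change of $j(\calX_{/U_{\alpha_0}}, \calO_{\calX}|U_{\alpha_0})$ over $j(\calX, \calO_{\calX})$. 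Using universality of colimits once more, the hypothesis $(b)$ at index $\alpha_0$ then yields that $\phi$ becomes an equivalence after base change along $j(\calX_{/V}, \calO_{\calX}|V) \to j(\calX,\calO_{\calX})$. Assembling the levelwise equivalences of \Cech nerves and passing to geometric realizations gives that $\phi$ is an equivalence. The main obstacle I anticipate is the bookkeeping in the \Cech-nerve descent argument: one must be careful that the various fiber products over $j(\calX,\calO_{\calX})$ interact correctly with the Yoneda embedding of slices, i.e. that $j(\calX_{/U}, \calO_{\calX}|U) \times_{j(\calX,\calO_{\calX})} j(\calX_{/W}, \calO_{\calX}|W) \simeq j(\calX_{/U\times_{\calX} W}, \calO_{\calX}|(U\times_{\calX} W))$, which follows from Remark \ref{pre4} (or Remark \ref{unipop}) together with the descent property, but needs to be stated cleanly before the argument runs.
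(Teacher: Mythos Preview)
Your argument and the paper's share the same core idea—\v{C}ech descent along the covering $\{U_\alpha\}$—but they are organized differently, and the difference matters.

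You work globally in $\widehat{\Shv}(\LGeo(\calG)^{op})$: you produce an effective epimorphism $\coprod_\alpha j(\calX_{/U_\alpha},\calO_{\calX}|U_\alpha) \to j(\calX,\calO_{\calX})$, base-change it along $F \to j(\calX,\calO_{\calX})$ and $F' \to j(\calX,\calO_{\calX})$, compare the two \v{C}ech nerves level by level, and pass to geometric realizations. This is clean, but it relies on $\widehat{\Shv}(\LGeo(\calG)^{op})$ being an $\infty$-topos in the larger universe (so that colimits are universal and groupoid objects are effective). The paper never establishes this, and it is not automatic: the descent condition cutting out $\widehat{\Shv}$ asks that certain functors preserve \emph{small} limits relative to the original universe, so after enlarging the universe it is not literally a sheaf condition for a Grothendieck topology on $\LGeo(\calG)^{op}$.

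The paper instead works pointwise, pushing the entire argument into a genuine $\infty$-topos. Fix a test object $(\calY,\calO_{\calY})$ and a point $\eta \in F'(\calY,\calO_{\calY})$; the map $F' \to j(\calX,\calO_{\calX})$ turns $\eta$ into a geometric morphism $\phi^{\ast}\colon \calX \to \calY$. Set $V_\alpha = \phi^{\ast} U_\alpha$, and consider the full subcategory $\calY^{0} \subseteq \calY$ of those $V$ for which the homotopy fiber $F(\calY_{/V},\calO_{\calY}|V) \times_{F'(\calY_{/V},\calO_{\calY}|V)} \{\eta\}$ is contractible. Since $F$ and $F'$ satisfy descent, $\calY^0$ is stable under small colimits in $\calY$; hypothesis $(b)$ shows it contains the sieve generated by the $V_\alpha$. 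The \v{C}ech nerve of $\coprod_\alpha V_\alpha \to 1_{\calY}$ (an effective epimorphism by $(a)$) then forces $1_{\calY} \in \calY^0$. All the descent machinery lives inside the $\infty$-topos $\calY$, where it is unquestionably available.

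Your version becomes the paper's once you evaluate at $(\calY,\calO_{\calY})$ and fix $\eta$—the passage from $\calX$ to $\calY$ via $\phi^{\ast}$ is exactly what replaces your appeal to universality of colimits in $\widehat{\Shv}$. So the content is the same; the gap is that you lean on an unproved $\infty$-topos structure where the paper does not.
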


\begin{proof}
Let $(\calY, \calO_{\calY})$ be an object of $\LGeo(\calG)$, and let
$\eta$ be a point of $F'( \calY, \calO_{\calY})$. We wish to show that the homotopy fiber
of the map $F( \calY, \calO_{\calY}) \rightarrow F'(\calY, \calO_{\calY})$ over $\eta$ is contractible.
Let $f$ be the functor given by the composition
$$ \calY^{op} \simeq \LGeo(\calG)_{(\calY, \calO_{\calY})/}^{\mathet}
\rightarrow \LGeo(\calG) \stackrel{F}{\rightarrow} \widehat{\SSet},$$
let $f': \calY^{op} \rightarrow \widehat{\SSet}$ be defined similarly, and let
$g: \calY^{op} \rightarrow \widehat{\SSet}$ be the functor described by the formula
$g(V) = f(V) \times_{ f'(V) } \{ \eta \}$ (where we identify $\eta$ with its image in
$F'( \calY_{/V}, \calO_{\calY} | V )$. Let $\calY^{0}$ denote the full subcategory of $\calY$ spanned by those objects $V \in \calY$ such that $g(V)$ is contractible. Since $g$ preserves small limits, $\calY^{0}$ is stable under small colimits in $\calY$. To complete the proof, it will suffice to show that
$\calY^{0}$ contains the final object $1_{\calY} \in \calY$.

The point $\eta$ determines a geometric morphism $\phi_{\ast}: \calY \rightarrow \calX$.
For each index $\alpha$, let $V_{\alpha} = \phi^{\ast} U_{\alpha} \in \calY$. 
Let $Y_0 = \coprod_{\alpha} V_{\alpha}$. Assumption $(a)$ guarantees that
the projection map $p: Y_0 \rightarrow 1_{\calY}$ is an effective epimorphism.
Let $Y_{\bigdot}$ be the simplicial object of $\calY$ given by the \Cech nerve of
$p$, so that 
$$Y_{n} \simeq \coprod_{ \alpha_0, \ldots, \alpha_n } V_{\alpha_0} \times \ldots \times V_{\alpha_n}.$$
Hypothesis
$(b)$ implies that $\calY^{0}$ contains the sieve generated by the objects $V_{\alpha}$;
in particular, it contains every product $V_{\alpha_0} \times \ldots \times V_{\alpha_n}$.
Since $\calY^{0}$ is stable under small colimits, it contains each $Y_{n}$, and therefore
contains the geometric realization $| Y_{\bigdot} | \simeq 1_{\calY}$ as desired.
\end{proof}

\begin{lemma}\label{sado}
Let $\alpha: F \rightarrow j(\calX, \calO_{\calX})$ be a morphism in
$\widehat{\Shv}(\LGeo(\calG)^{op})$. Suppose that there exists a family of objects
$\{ U_{\alpha} \in \calX \}$ with the following properties:
\begin{itemize}
\item[$(a)$] The objects $U_{\alpha}$ cover $\calX$; in other words, the map
$\coprod U_{\alpha} \rightarrow 1_{\calX}$ is an effective epimorphism.
\item[$(b)$] For each index $\alpha$, the fiber product
$$F \times_{ j(\calX, \calO_{\calX}) } j(\calX_{/U_{\alpha}}, \calO_{\calX}|U_{\alpha})$$
is representable by a $\calG$-scheme $(\calY_{\alpha}, \calO_{\calY_{\alpha}})$. 
\end{itemize}
Then $F$ is representable by a $\calG$-scheme $(\calY, \calO_{\calY})$.
\end{lemma}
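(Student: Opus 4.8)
\textbf{Proof plan for Lemma \ref{sado}.} The strategy is to glue the $\calG$-schemes $(\calY_\alpha, \calO_{\calY_\alpha})$ along their overlaps, using the descent property of $F$ together with the local nature of the $\calG$-scheme condition established in Proposition \ref{sizem}. First I would set $V_\alpha = \calY_\alpha$ regarded (via the representability hypothesis $(b)$) as an object of $\LGeo(\calG)^{op}_{\mathet}$ equipped with an \etale map to $(\calX, \calO_{\calX})$; more precisely, each $(\calY_\alpha, \calO_{\calY_\alpha})$ comes with a canonical \etale morphism to $(\calX, \calO_{\calX})$ whose underlying geometric morphism realizes $\calY_\alpha$ as $\calX_{/U_\alpha}$. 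Using Proposition \ref{scanh}$(5)$, the \etale maps into $(\calX, \calO_{\calX})$ are classified by $\calX$ itself, so I may think of the $V_\alpha$ as living in $\calX$ — but that is not quite the point; rather, I want to form a colimit of the $(\calY_\alpha, \calO_{\calY_\alpha})$ and their iterated overlaps in $\LGeo(\calG)^{op}_{\mathet}$.

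The key construction is the following. Let $U_0 = \coprod_\alpha U_\alpha \in \calX$, let $U_\bullet$ be the \Cech nerve of the effective epimorphism $U_0 \to 1_{\calX}$, and for each $n$ set $(\calX_n, \calO_{\calX}|U_n) = (\calX_{/U_n}, \calO_{\calX}|U_n)$. Because colimits in $\calX$ are universal and $U_n \simeq \coprod_{\alpha_0, \ldots, \alpha_n} U_{\alpha_0} \times \cdots \times U_{\alpha_n}$, hypothesis $(b)$ together with Remark \ref{pre4} shows that $F \times_{j(\calX, \calO_{\calX})} j(\calX_n, \calO_{\calX}|U_n)$ is representable by a $\calG$-scheme: indeed $F \times_{j(\calX,\calO_{\calX})} j(\calX_{/U_{\alpha_0} \times \cdots \times U_{\alpha_n}}, \calO_{\calX}|(\cdots))$ is an \etale localization of the $\calG$-scheme $(\calY_{\alpha_0}, \calO_{\calY_{\alpha_0}})$ — using Remark \ref{pre4} to pull back along the \etale map $U_{\alpha_0} \times \cdots \times U_{\alpha_n} \to U_{\alpha_0}$ — hence again a $\calG$-scheme by Proposition \ref{sizem}$(1)$, and a coproduct of such is a $\calG$-scheme by Proposition \ref{sizem}$(2)$. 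Call this $\calG$-scheme $(\calY_n, \calO_{\calY_n})$; these assemble into a simplicial object of $\Sch(\calG)_{\mathet}$ (all the face and degeneracy maps are \etale, since they cover base changes of \etale maps among the $U_n$). By Proposition \ref{sizem}$(3)$ this simplicial diagram has a colimit $(\calY, \calO_{\calY})$ in $\Sch(\calG)_{\mathet}$, computed as a colimit in $\LGeo(\calG)^{op}_{\mathet}$ (Proposition \ref{scanh}$(4)$).

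It remains to produce an equivalence $F \simeq j(\calY, \calO_{\calY})$. There is a natural map: the cone on the simplicial diagram $\{(\calY_n, \calO_{\calY_n})\}$ with cocone $(\calY, \calO_{\calY})$ maps, under $j$ and via the projections $(\calY_n, \calO_{\calY_n}) \to (\calX, \calO_{\calX})$, to a cocone $\{F \times_{j(\calX,\calO_{\calX})} j(\calX_n, \calO_{\calX}|U_n)\} \to F$; since $j$ satisfies descent (it factors through $\widehat{\Shv}(\LGeo(\calG)^{op})$ by Proposition \ref{sizem2}) and the $j(\calX_n, \calO_{\calX}|U_n) \to j(\calX, \calO_{\calX})$ exhibit $j(\calX, \calO_{\calX})$ as the colimit of the $j(\calX_n, \cdots)$ — this uses that $U_\bullet$ is the \Cech nerve of an effective epimorphism, so $|U_\bullet| \simeq 1_{\calX}$, combined with Proposition \ref{scanh}$(5)$ identifying the \etale localizations with slices of $\calX$ — one checks that $j$ applied to the cocone $\{(\calY_n,\cdots)\} \to (\calY,\cdots)$ and the induced map to $F$ are related by a map $\psi: j(\calY, \calO_{\calY}) \to F$ over $j(\calX, \calO_{\calX})$. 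Finally I would verify that $\psi$ is an equivalence using Lemma \ref{junner}: it suffices to check that $\psi$ becomes an equivalence after base change along each $j(\calX_{/U_\alpha}, \calO_{\calX}|U_\alpha) \to j(\calX, \calO_{\calX})$, and after this base change the left side becomes $j$ of the \etale localization $(\calY_{\alpha}, \calO_{\calY_\alpha})$ (since $(\calY, \calO_{\calY})$ pulls back to $(\calY_\alpha, \calO_{\calY_\alpha})$, which follows from the fact that $U_\bullet \times_{1_{\calX}} U_\alpha$ is split, i.e. the \Cech nerve of $U_0 \times U_\alpha \to U_\alpha$ has an extra degeneracy, so its colimit computation degenerates) while the right side is $F \times_{j(\calX,\calO_{\calX})} j(\calX_{/U_\alpha}, \calO_{\calX}|U_\alpha) \simeq j(\calY_\alpha, \calO_{\calY_\alpha})$ by hypothesis $(b)$, and these agree compatibly. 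Hence $\psi$ is an equivalence and $F$ is representable by the $\calG$-scheme $(\calY, \calO_{\calY})$.

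The main obstacle I anticipate is bookkeeping the naturality in the previous paragraph: producing the comparison map $\psi$ in a way that is manifestly compatible with the maps over $j(\calX, \calO_{\calX})$, and then carefully checking — via the extra-degeneracy argument — that base change of $(\calY, \calO_{\calY})$ along $U_\alpha \hookrightarrow \calX$ really recovers $(\calY_\alpha, \calO_{\calY_\alpha})$ rather than merely something equivalent after a further cover. Everything else (representability of the $(\calY_n, \calO_{\calY_n})$, existence of the colimit, the descent computation) is a direct application of Propositions \ref{sizem}, \ref{scanh}, \ref{sizem2} and Lemmas \ref{spak}, \ref{junner}.
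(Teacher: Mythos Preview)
Your approach is correct and essentially the same as the paper's: both build the \v{C}ech nerve of $\coprod U_\alpha \to 1_\calX$, represent each level by a $\calG$-scheme via Remark~\ref{pre4} and Proposition~\ref{sizem}, take the \'etale colimit, and verify representability via descent and Lemma~\ref{junner}. The paper packages this more cleanly by defining $\calX^0 \subseteq \calX$ to be the full subcategory of objects $U$ for which $F \times_{j(\calX,\calO_\calX)} j(\calX_{/U},\calO_\calX|U)$ is representable by a $\calG$-scheme, observing (via Remark~\ref{pre4}) that it is a sieve, and then proving the single assertion that $\calX^0$ is closed under \emph{all} small colimits---your extra-degeneracy computation is exactly the verification that makes the Lemma~\ref{junner} step work, which the paper's terse ``it follows from Lemma~\ref{junner}'' leaves implicit. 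One correction: the false start in your first paragraph is indeed wrong---$(\calY_\alpha,\calO_{\calY_\alpha})$ need not be \'etale over $(\calX,\calO_\calX)$---so it is good that you abandon it.
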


\begin{remark}\label{sadoprime}
In the situation of Lemma \ref{sado}, the $\calG$-schemes $(\calY_{\alpha}, \calO_{\calY_{\alpha}})$
form an \etale covering of $(\calY, \calO_{\calY})$. In particular, if each $(\calY_{\alpha}, \calO_{\calY_{\alpha}})$ is locally of finite presentation, then $(\calY, \calO_{\calY})$ is locally of finite presentation. 
\end{remark}

\begin{proof}
Let $\calX^{0}$ denote the full subcategory of $\calX$ spanned by those objects
$U$ for which the fiber product
$$F \times_{ j(\calX, \calO_{\calX}) } j(\calX_{/U}, \calO_{\calX}|U)$$
is representable by a $\calG$-scheme $X_U$. Remark \ref{pre4} implies that
$\calX^{0}$ is a sieve in $\calX$: that is, if $f: V \rightarrow U$ is a morphism in
$\calX$ and $U \in \calX^{0}$, then $V \in \calX^{0}$. We will prove:
\begin{itemize}
\item[$(\ast)$] The full subcategory $\calX^{0} \subseteq \calX$ is stable under small colimits.
\end{itemize}
Assuming $(\ast)$ for the moment, we can complete the argument as follows. By hypothesis, each $U_{\alpha}$ belongs to $\calX^{0}$. By $(\ast)$, the object $X_0 = \coprod_{\alpha} U_{\alpha}$ belongs to $\calX^{0}$. Let $X_{\bigdot}$ be the simplicial object of $\calX$ given by the
\Cech nerve of $X_0 \rightarrow 1_{\calX}$ (so that $X_{n} \simeq X_0^{n+1}$). Since
$\calX^{0}$ is a sieve, each $X_{n}$ belongs to $\calX^{0}$. Applying $(\ast)$ again, we deduce that
$1_{\calX} \simeq | X_{\bigdot} |$ belongs to $\calX^{0}$, which is equivalent to the desired assertion.

To prove $(\ast)$, let us consider a small diagram $p: K \rightarrow \calX^{0}$ and let
$U = \colim(p) \in \calX$; we wish to show that $U \in \calX^{0}$. Let $p': K \rightarrow \Sch(\calG)$ be the functor described by the formula $p'(v) = X_{p(v)}$. Using Remark \ref{pre4}, we deduce that $p'$ factors through $\Sch(\calG)_{\mathet}$. Proposition \ref{sizem} implies that $p'$ admits a colimit $(\calZ, \calO_{\calZ})$ in $\Sch(\calG)_{\mathet}$. Since $F$ satisfies descent, we obtain
a canonical map $j(\calZ, \calO_{\calZ}) \rightarrow F$. It follows from Lemma \ref{junner} that this map is an equivalence.
\end{proof}

\begin{proof}[Proof of Proposition \ref{cuble}]
Let $\calC$ denote the essential image of the composite map
$$ \Sch(\calG) \subseteq \LGeo(\calG)^{op} \stackrel{j}{\rightarrow} \Fun( \LGeo(\calG), \widehat{\SSet}),$$
and let $\calC^{\fin} \subseteq \calC$ be defined similarly. Assertions $(1)$ and $(2)$ can be reformulated as follows:
\begin{itemize}
\item[$(1')$] The full subcategory $\calC^{\fin} \subseteq \Fun( \LGeo(\calG), \widehat{\SSet})$ is stable under finite limits.
\item[$(2')$] If the Grothendieck topology on $\Pro(\calG)$ is precanonical, then the full subcategory $\calC \subseteq \Fun( \LGeo(\calG), \widehat{\SSet})$ is stable under finite limits.
\end{itemize}
We will prove $(1')$ and $(2')$ under the assumption that the Grothendieck topology on
$\Pro(\calG)$ is precanonical, indicating where necessary how to eliminate this hypothesis in the proof of $(1')$. We begin by observing that $\calC^{\fin} \subseteq \calC$ contains the final object of
$\Fun( \LGeo(\calG), \widehat{\SSet})$, since this final object is representable by the spectrum of the final object of $\Pro(\calG)$. It is therefore sufficient to show that $\calC^{\fin}$ and $\calC$ are stable under the formation of pullbacks.

Consider a pullback diagram
$$ \xymatrix{ F' \ar[r] \ar[d] & G' \ar[d]^{\alpha} \\
F \ar[r]^{\beta} & G }$$
in $\Fun( \LGeo(\calG), \widehat{\SSet})$, where $G'$, $G$ and $F$ belong to $\calC$ (or $\calC^{\fin})$. Then $G$ is representable by a $\calG$-scheme $(\calX, \calO_{\calX})$ (which is locally of finite presentation). We wish to show that $F' \in \calC$ (or $\calC^{\fin}$). 
In view of Lemma \ref{sado} (and Remark \ref{sadoprime}), the assertion is local on $\calX$; we may therefore assume without loss of generality that $(\calX, \calO_{\calX}) = \Spec^{\calG} A$ is affine
(where $A$ belongs to the essential image of the Yoneda embedding $\calG \rightarrow \Pro(\calG)$).

By assumption, the functor $G'$ is representable by a $\calG$-scheme $(\calY, \calO_{\calY})$. 
The desired conclusion is also local on $\calY$, so we may assume without loss of generality that $(\calY, \calO_{\calY}) = \Spec^{\calG} A'$ is affine (and $A'$ belongs to the essential image of the Yoneda embedding $\calG \rightarrow \Pro(\calG)$). Since the Grothendieck topology on $\Pro(\calG)$ is precanonical, Remark \ref{armur} implies that the map $\alpha: G' \rightarrow G$ is induced by a morphism $A' \rightarrow A$ in $\Pro(\calG)$ (in the proof of $(1')$, we must work a bit harder.
The universal property of $\Spec^{\calG} A$ implies that $\alpha: G' \rightarrow G$ is determined by
a point $\eta \in \calO_{\calY}(A)$. The construction of Theorem \ref{scoo} allows us to identify
$\calO_{\calY}(A)$ with the sheafification of the presheaf $(\calG_{/A'}^{\adm})^{op} \rightarrow \SSet$
described by the formula $A'' \mapsto \bHom_{\calG}(A'',A)$. It follows that after further localization on $\calY$, we may assume that $\alpha$ is induced by a map $A' \rightarrow A$ in $\calG$).

Using the same argument, we can suppose that $F$ is representable by the affine $\calG$-scheme $\Spec^{\calG} B$, where $B \in \Pro(\calG)$ (and belongs to the essential image of the Yoneda embedding $\calG \rightarrow \Pro(\calG)$), and $\beta$ is induced by a morphism
$B \rightarrow A$ in $\Pro(\calG)$. It follows that $F'$ is representable by the 
affine $\calG$-scheme $\Spec^{\calG} (B \times_{A} A')$, so that $F' \in \calC$
($F' \in \calC^{\fin}$), as desired.
\end{proof}

\subsection{The Functor of Points}\label{geo7}

In classical algebraic geometry, we can often understand algebraic varieties (or schemes) as arising as the solutions to {\em moduli problems}. For example, the $n$-dimensional projective space
$\mathbf{P}^{n}$ can be characterized as follows: it is universal among schemes over which
there is a line bundle $\calL$ generated by $(n+1)$ global sections. In particular, for any commutative ring $A$, the set $\Hom( \SSpec A, \mathbf{P}^{n})$ can be identified with the set of isomorphism classes of pairs $(L, \eta: A^{n+1} \rightarrow L)$ where $L$ is an invertible $A$-module and
$\eta$ is a surjective map of $A$ modules (such a pair is determined up to unique isomorphism
by the submodule $\ker(\eta) \subseteq A^{n+1})$. 

More generally, {\em any} scheme $X$ determines a covariant functor from commutative rings to sets, given by the formula
$$ A \mapsto \Hom( \SSpec A, X).$$
This functor determines $X$ up to canonical isomorphism. More precisely, the above construction yields a fully faithful embedding from the category of schemes to the presheaf category $\Fun( \Comm, \Set)$.
Consequently, it is possible to think of schemes as objects of $\Fun( \Comm, \Set)$, rather than the category of locally ringed spaces. This point of view is often valuable: frequently it is easier to describe the functor represented by a scheme $X$ than it is to give an explicit construction of $X$ as a locally ringed space. Moreover, this perspective becomes essential when we wish to study more general algebro-geometric objects, such as algebraic stacks.

Our goal in this section is to obtain an analogous understanding of the $\infty$-category
$\Sch(\calG)$ of $\calG$-schemes, where $\calG$ is an arbitrary geometry.
We begin by reviewing a bit of notation. Let $\SSet$ denote the $\infty$-category of (small) spaces, and $\widehat{\SSet}$ the larger $\infty$-category of spaces which are not necessarily small.
Fix a geometry $\calG$, and let $j: \LGeo(\calG)^{op} \rightarrow \Fun( \LGeo(\calG), \widehat{\SSet})$ denote the Yoneda embedding. The main result of this section is the following:

\begin{theorem}\label{skil}
Let $\calG$ be a geometry, and let $\phi$ denote the composite functor
$$ \Sch(\calG) \subseteq \LGeo(\calG)^{op}
\stackrel{j}{\rightarrow} \Fun( \LGeo(\calG), \widehat{\SSet} )
\stackrel{\circ \Spec^{\calG} }{\rightarrow} \Fun( \Ind(\calG^{op}), \widehat{\SSet} ),$$
where $\Spec^{\calG}: \Ind(\calG^{op}) \rightarrow \LGeo(\calG)$ denotes the absolute spectrum
functor constructed in \S \ref{abspec}. Then:
\begin{itemize}
\item[$(1)$] The essential image of $\phi$ is contained in the essential image of the inclusion
$$ \Fun( \Ind(\calG^{op}), \SSet) \subseteq \Fun( \Ind(\calG^{op}), \widehat{\SSet}).$$ 
\item[$(2)$] The functor $\phi$ is fully faithful.
\end{itemize}
\end{theorem}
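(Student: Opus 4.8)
The plan is to argue in two stages, corresponding to the two assertions. For assertion $(1)$, I would fix a $\calG$-scheme $(\calX, \calO_{\calX})$ and an object $A \in \Ind(\calG^{op})$, and show that the space
$$ \phi(\calX, \calO_{\calX})(A) \simeq \bHom_{\LGeo(\calG)^{op}}( \Spec^{\calG} A, (\calX, \calO_{\calX}) ) $$
is essentially small. This is exactly the statement $(\ast)$ established in the course of proving Proposition \ref{sizem2} (local smallness of $\Sch(\calG)$): the mapping space out of an affine $\calG$-scheme into any $\calG$-scheme is essentially small. So $(1)$ is essentially a citation of that proposition together with the identification of $\phi(\calX,\calO_{\calX})$ with the composite $A \mapsto \bHom(\Spec^{\calG} A, (\calX,\calO_{\calX}))$.

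For assertion $(2)$, the strategy is the usual one for ``functor of points'' theorems: the functor $\phi$ factors as $\Sch(\calG) \hookrightarrow \LGeo(\calG)^{op} \xrightarrow{\,j\,} \widehat{\Shv}(\LGeo(\calG)^{op}) \xrightarrow{\circ \Spec^{\calG}} \Fun(\Ind(\calG^{op}), \widehat{\SSet})$, where the first map is a fully faithful inclusion and $j$ is fully faithful by Yoneda; the content is that restriction along $\Spec^{\calG}$ loses no information on the essential image of $j|\Sch(\calG)$. Concretely, I would show that for $\calG$-schemes $(\calX,\calO_{\calX})$ and $(\calY,\calO_{\calY})$ the map
$$ \bHom_{\LGeo(\calG)^{op}}( (\calX,\calO_{\calX}), (\calY,\calO_{\calY}) ) \rightarrow \bHom_{\Fun(\Ind(\calG^{op}),\widehat{\SSet})}( \phi(\calX,\calO_{\calX}), \phi(\calY,\calO_{\calY}) ) $$
is a homotopy equivalence. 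One reduces to the case where $(\calX,\calO_{\calX}) = \Spec^{\calG} A$ is affine: both sides carry $(\calX,\calO_{\calX})$-valued points into sheaves on $\calX$ (via Proposition \ref{scanh}(5), $(\LGeo(\calG)^{op}_{\mathet})_{/(\calX,\calO_{\calX})} \simeq \calX$) and satisfy descent for the \'etale topology on $\calX$, so using Lemma \ref{junner} together with Lemma \ref{spak} (which says $\calX$ is generated under colimits by affine pieces $\Spec^{\calG} B$, $B \in \Pro(\calG)^{\adm}_{/A}$) it suffices to treat the affine case. When $(\calX,\calO_{\calX}) = \Spec^{\calG} A$, the left-hand side is $\bHom_{\Ind(\calG^{op})}(A, \Gamma_{\calG}(\calY,\calO_{\calY}))$ by the adjunction $\Spec^{\calG} \dashv \Gamma_{\calG}$ (Definition \ref{defspect}, Theorem \ref{scoo}), while the right-hand side is computed directly from the Yoneda lemma in $\Fun(\Ind(\calG^{op}),\widehat{\SSet})$ applied to the corepresentable object $\phi(\Spec^{\calG} A) = \bHom_{\Ind(\calG^{op})}(A, -)$ restricted along $\Spec^{\calG}$; one checks these agree, using that $\phi(\calY,\calO_{\calY})(B) = \bHom(\Spec^{\calG} B, (\calY,\calO_{\calY})) = \bHom_{\Ind(\calG^{op})}(B, \Gamma_{\calG}(\calY,\calO_{\calY}))$ again by Theorem \ref{scoo}, so that $\phi(\calY,\calO_{\calY})$ is the functor $\Ind(\calG^{op}) \to \widehat{\SSet}$ corepresented by no single object in general, but its value at corepresentables is controlled by $\Gamma_{\calG}$.

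I expect the main obstacle to be the descent bookkeeping in the reduction to the affine case: one must verify that $\phi(\calX,\calO_{\calX})$, as a functor on $\Ind(\calG^{op})$, together with the natural map into it from the representable $j(\calX,\calO_{\calX})$, satisfies the hypotheses of Lemma \ref{junner} with respect to a covering of $\calX$ by affine opens $U_\alpha$ with $(\calX_{/U_\alpha},\calO_{\calX}|U_\alpha)$ affine — in particular that the formation of $\phi$ commutes with the relevant fiber products $j(\calX_{/U_\alpha},\calO_{\calX}|U_\alpha) \times_{j(\calX,\calO_{\calX})} (-)$, which rests on the pullback-square description in Remark \ref{pre4} and the fact (Proposition \ref{scanh}) that $\LGeo(\calG)^{op}_{\mathet} \to \LGeo(\calG)^{op}$ preserves colimits. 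Once that gluing is in place, the affine computation is a formal consequence of the adjunction $\Spec^{\calG} \dashv \Gamma_{\calG}$ and Yoneda, so the heart of the argument is organizing the local-to-global passage cleanly rather than any delicate estimate.
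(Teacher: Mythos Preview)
Your overall strategy matches the paper's: assertion $(1)$ is exactly Proposition \ref{sizem2}, and for $(2)$ you fix $Y$, reduce to the case $X$ affine using that $\Sch(\calG)_{\mathet}$ is generated under small colimits by affine $\calG$-schemes (Lemma \ref{spak}), and then invoke Yoneda-type reasoning. But the reduction step has a genuine gap. What you need is that the functor $X \mapsto \bHom_{\Shv(\Pro(\calG))}(\phi X, \phi Y)$ carries small colimits in $\Sch(\calG)_{\mathet}$ to limits; equivalently, that $\phi$ restricted to $\LGeo(\calG)^{op}_{\mathet}$ preserves small colimits. This is the content of Lemma \ref{kokin} (whose proof rests on the universality of colimits in $\Shv_{\widehat{\SSet}}(\calY)$, via assertion $(\ast'')$ in the proof of Proposition \ref{sizem2}), and it is the step the paper invokes at exactly this point. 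Your citation of Lemma \ref{junner} is misplaced: that is a local-to-global principle for morphisms in $\widehat{\Shv}(\LGeo(\calG)^{op})$ lying over a fixed representable, not a statement about mapping spaces in $\Fun(\Ind(\calG^{op}),\widehat{\SSet})$. And Proposition \ref{scanh}(4) says only that $\LGeo(\calG)^{op}_{\mathet} \hookrightarrow \LGeo(\calG)^{op}$ preserves small colimits; it does not follow that the composite with Yoneda and with restriction along $\Spec^{\calG}$ does, since Yoneda embeddings do not preserve colimits.

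Your affine computation also contains a variance slip. The adjunction $\Spec^{\calG} \dashv \Gamma_{\calG}$ in $\LGeo(\calG)$ yields $\bHom_{\LGeo(\calG)}(\Spec^{\calG} B, Z) \simeq \bHom_{\Ind(\calG^{op})}(B,\Gamma_{\calG} Z)$, but $\phi(Y)(B) = \bHom_{\LGeo(\calG)^{op}}(\Spec^{\calG} B, Y) = \bHom_{\LGeo(\calG)}(Y, \Spec^{\calG} B)$ involves the opposite direction, so the formula $\phi(Y)(B) = \bHom_{\Ind(\calG^{op})}(B,\Gamma_{\calG} Y)$ is not what the adjunction gives. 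The paper handles the affine case in one word (``obvious''); the clean way to see it is that $j(\Spec^{\calG} A) \simeq \underline{A} \circ \Gamma_{\calG}$ as functors on $\LGeo(\calG)$ (by the adjunction), so that precomposition gives $(\circ \Gamma_{\calG}) \dashv (\circ \Spec^{\calG})$ and hence $\bHom(j(\Spec^{\calG} A), jY) \simeq \bHom(\underline{A}, \phi Y) = \phi(Y)(A)$, which is the left-hand side.
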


Assertion $(1)$ is equivalent to the statement that
the mapping space $\bHom_{ \LGeo(\calG) }( (\calX, \calO_{\calX}), \Spec^{\calG} A)$
is essentially small whenever $(\calX, \calO_{\calX})$ is a $\calG$ scheme and
$A \in \Pro(\calG)$; this follows from Proposition \ref{sizem2}. The proof of $(2)$ will occupy the remainder of this section. 

\begin{definition}
Let $\calG$ be a geometry.
We will say that a functor $F: \Ind(\calG^{op}) \rightarrow \widehat{\SSet}$ is {\it representable by a $\calG$-scheme} if $F$ belongs to the essential image of the functor $\phi$ appearing in
Theorem \ref{skil}. 
\end{definition}

Our first step is to isolate one key feature of representable functors $F: \Ind(\calG^{op}) \rightarrow \widehat{\SSet}$. Namely, they are {\em sheaves} with respect to the natural Grothendieck topology on $\Pro(\calG)$. However, we must exercise a bit of care because the $\infty$-category $\Pro(\calG)$ is not small.

\begin{definition}\label{sputt}
Let $\calG$ be a geometry, and let $F: \Ind(\calG^{op}) \rightarrow \widehat{\SSet}$ be a
functor. We will say that $F$ is a {\it sheaf} if, for every object $A \in \Ind(\calG^{op})$, the
composition
$$ \Ind(\calG^{op})_{A/} \rightarrow \Ind(\calG^{op}) \stackrel{F}{\rightarrow} \widehat{\SSet}$$
is a sheaf in the sense of Definition \ref{ury} (here we regard $(\Ind(\calG^{op})_{/A})^{op} \simeq
\Pro(\calG)_{/A}$ as endowed with the Grothendieck topology described in Notation \ref{scun}). 
We let $\widehat{\Shv}( \Pro(\calG) )$ denote the full subcategory of
$\Fun( \Ind(\calG^{op}), \widehat{\SSet})$ spanned by the sheaves, and set
$$\Shv(\Pro(\calG) ) = \widehat{\Shv}(\Pro(\calG)) \cap \Fun( \Ind(\calG^{op}), \SSet).$$
\end{definition}

The basic properties of sheaves are summarized in the following result:

\begin{proposition}\label{kumber}
Let $\calG$ be a geometry.
\begin{itemize}
\item[$(1)$] The inclusion $\widehat{\Shv}( \Pro(\calG) )
\subseteq \Fun( \Ind(\calG^{op}), \widehat{\SSet} )$ admits a left adjoint $\widehat{L}$.
\item[$(2)$] If $F \in \Fun( \Ind(\calG^{op}), \SSet )$, then $\widehat{L}F \in \widehat{\Shv}(\Pro(\calG))$
belongs to the essential image of the inclusion $\Shv( \Pro(\calG) ) \subseteq \widehat{\Shv}( \Pro(\calG) )$. 
\item[$(3)$] The inclusion $\Shv( \Pro(\calG) ) \subseteq \Fun( \Ind(\calG^{op}), \SSet)$ admits a left adjoint $L$.
\item[$(4)$] The $\infty$-category $\Shv( \Pro(\calG) )$ admits small limits and colimits. Moreover, a small diagram $p: K^{\triangleright} \rightarrow \Shv(\Pro(\calG))$ is a colimit diagram if and only if,
for every object $A \in \Pro(\calG)$, the composition
$$ K^{\triangleright} \rightarrow \Shv(\Pro(\calG)) \rightarrow \Shv( \Pro(\calG)_{/A}^{\adm} )$$
is a colimit diagram.
\item[$(5)$] The $\infty$-category $\widehat{\Shv}( \Pro(\calG) )$ admits limits and colimits. Moreover, a diagram $p: K^{\triangleright} \rightarrow \widehat{\Shv}(\Pro(\calG))$ is a colimit diagram if and only if,
for every object $A \in \Pro(\calG)$, the composition
$$ K^{\triangleright} \rightarrow \widehat{\Shv}(\Pro(\calG)) \rightarrow \widehat{\Shv}( \Pro(\calG)_{/A}^{\adm} )$$
is a colimit diagram.
\item[$(6)$] The inclusion $\Shv( \Pro(\calG) ) \subseteq \widehat{\Shv}( \Pro(\calG) )$ preserves small limits and colimits.
\end{itemize}
\end{proposition}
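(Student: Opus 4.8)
The essential observation is that the sheaf condition of Definition~\ref{sputt} is \emph{local on $\Pro(\calG)$}: for a functor $F\colon \Ind(\calG^{op}) \to \widehat{\SSet}$, whether $F$ is a sheaf depends only — and separately — on the family of restrictions $r_A(F)$, obtained by precomposing $F$ with the evident functors $(\Pro(\calG)^{\adm}_{/A})^{op} \to \Ind(\calG^{op})$, as $A$ ranges over the objects of $\Pro(\calG)$. By Remark~\ref{sunn} each site $\Pro(\calG)^{\adm}_{/A}$ is essentially small, and (by the discussion following that remark) $\widehat{\Shv}(\Pro(\calG)^{\adm}_{/A})$ is an accessible, left-exact localization of the presheaf $\infty$-category $\widehat{\calP}(\Pro(\calG)^{\adm}_{/A})$ with associated left adjoint $L_A$. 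Moreover, the description of the Grothendieck topology in Notation~\ref{scun}, combined with Remark~\ref{sunn}, shows that the covering sieves one must test on an arbitrary slice are all pulled back from admissible covers in $\calG$ itself; this is the mechanism by which one controls the set-theoretic size of the data entering the argument below.

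To prove $(1)$ and $(2)$ together, I would exhibit $\widehat{\Shv}(\Pro(\calG))$ as the full subcategory of objects local with respect to a suitable set $S$ of morphisms. Each localization $\widehat{\calP}(\Pro(\calG)^{\adm}_{/A}) \to \widehat{\Shv}(\Pro(\calG)^{\adm}_{/A})$ is, after enlarging the universe if necessary, the localization at a small set $S_A$ of morphisms; pushing these forward along a left adjoint to $r_A$ (left Kan extension along $(\Pro(\calG)^{\adm}_{/A})^{op}\to\Ind(\calG^{op})$) and collecting them — the point above letting one reduce to a small collection of $A$'s — produces a set $S$ such that $F$ is a sheaf if and only if $F$ is $S$-local. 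A transfinite small-object argument, carried out one universe at a time since $\Ind(\calG^{op})$ is large, then produces the reflection $\widehat{L}$, giving $(1)$. For $(2)$ one inspects this small-object argument: at each stage one attaches cells indexed by the small mapping spaces out of the (small) sources and targets of morphisms in $S$, and the colimits involved are formed objectwise in $\widehat{\SSet}$; since a small colimit of small spaces is small, $\widehat{L}F$ is objectwise small whenever $F$ is. Assertion $(3)$ is then formal: restricting $\widehat{L}$ to $\Fun(\Ind(\calG^{op}), \SSet)$ lands in $\Shv(\Pro(\calG))$ by $(2)$ and is left adjoint to the inclusion.

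For $(4)$, $(5)$, $(6)$: the presheaf $\infty$-categories $\Fun(\Ind(\calG^{op}), \SSet)$ and $\Fun(\Ind(\calG^{op}), \widehat{\SSet})$ admit all (small, resp.\ large) limits and colimits, computed objectwise. A limit of sheaves is a sheaf, since each $r_A$ — being precomposition — preserves limits and each $\widehat{\Shv}(\Pro(\calG)^{\adm}_{/A})$ is closed under limits inside presheaves; hence $\Shv(\Pro(\calG))$ and $\widehat{\Shv}(\Pro(\calG))$ are closed under limits, and the inclusion of the former in the latter preserves them, proving the limit halves of $(4)$, $(5)$, $(6)$. Colimits in the sheaf categories are computed by applying $L$, resp.\ $\widehat{L}$, to the objectwise colimit; the characterization via the restrictions to $\widehat{\Shv}(\Pro(\calG)^{\adm}_{/A})$ follows because the family $\{r_A\}$ is jointly conservative (taking $A = X$ with $\mathrm{id}_X$ recovers the value $F(X)$) and each $r_A$ preserves small colimits, which reduces the statement to the analogous fact for a left-exact localization of a presheaf topos (compare Proposition~\ref{selwus}). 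Finally the colimit half of $(6)$ follows from $(2)$: the colimit in $\widehat{\Shv}(\Pro(\calG))$ of an $\SSet$-valued diagram is $\widehat{L}$ applied to an objectwise colimit of small spaces, hence again $\SSet$-valued, hence coincides with the colimit computed in $\Shv(\Pro(\calG))$.

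The main obstacle — and the reason one cannot simply invoke the standard sheafification theorems of \cite{topoi} — is the set-theoretic one concentrated in $(2)$: since $\Ind(\calG^{op})$ is a large $\infty$-category, $\Fun(\Ind(\calG^{op}), \widehat{\SSet})$ is not a presheaf $\infty$-topos in the usual sense, so the entire construction must be bootstrapped from the essentially small sites $\Pro(\calG)^{\adm}_{/A}$ while keeping careful track, at each step of the small-object argument, of exactly which operations preserve smallness of values.
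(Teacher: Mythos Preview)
Your architecture is right---the sheaf condition is local over $\Pro(\calG)$, and the restrictions $r_A$ are jointly conservative---but the proof of $(1)$ has a genuine gap at precisely the point you flag as the main obstacle. The claim that one can ``reduce to a small collection of $A$'s'' when assembling the generating sets $S_A$ into a single set $S$ is not justified, and I do not see how to make it work. That the covering sieves on $\Pro(\calG)^{\adm}_{/A}$ are pulled back from admissible covers in $\calG$ is true, but the resulting descent condition still involves the values $F(U)$ for \emph{all} admissible $U \to A$ in $\Pro(\calG)$; these are not determined by the restriction of $F$ to $\calG$ or to any small subcategory. Concretely, the morphisms obtained by left Kan extending the generators of $S_A$ are of the form $(\text{covering sieve}) \hookrightarrow j(U)$ with $U$ ranging over all of $\Pro(\calG)$, a genuinely large collection. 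Without a small $S$, the small-object argument cannot be run at a small ordinal, and your proof of $(2)$---which depends on each stage of that induction being a small colimit---collapses with it. A secondary gap: your colimit arguments for $(4)$--$(6)$ presuppose that $r_A$ preserves colimits between the \emph{sheaf} categories, equivalently that $r_A \circ L \simeq L_A \circ r_A$; this is not automatic (it is Lemma~\ref{stulbin}) and deserves to be stated.

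The paper avoids seeking a global generating set altogether. It packages the family $\{r_A\}$ as a Cartesian fibration $p\colon \calX \to \Ind(\calG^{op})$ whose fiber over $A$ is the presheaf category on $\Pro(\calG)^{\adm}_{/A}$, and identifies $\Fun(\Ind(\calG^{op}), \SSet)$ with a full subcategory of the sections of $p$. The left adjoint is then produced by a general principle (Lemma~\ref{scur}): if each fiber admits a localization and these are compatible with the Cartesian transition functors, the sections inherit a localization, described fiberwise. The compatibility hypothesis is exactly Lemma~\ref{stulbin}. Assertion $(2)$ then reduces, again via Lemma~\ref{scur}, to the purely fiberwise statement that sheafification on an essentially small site $\calC$, computed in $\calP(\calC)$, already exhibits its target as a localization in $\Fun(\calC^{op}, \widehat{\SSet})$; this is Lemma~\ref{biggier}. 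No global small-object argument is ever needed.
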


\begin{warning}
The $\infty$-category $\Shv(\Pro(\calG) )$ is usually {\em not} an $\infty$-topos, because it is not
presentable. However, this is merely a technical annoyance. Note that the (very large)
$\infty$-category $\widehat{\Shv}( \Pro(\calG) )$ {\em can} be regarded as an $\infty$-topos
after a change of universe.
\end{warning}

The proof of Proposition \ref{kumber} will require a few preliminary results concerning the theory of sheaves. We first discuss the pushforward operation associated to a morphism between Grothendieck sites.

\begin{lemma}\label{stulbus}
Let $\calC$ be a nonempty $\infty$-category. Assume that for every pair of objects
$X,Y \in \calC$, there exists a product $X \times Y \in \calC$. Then $\calC$ is weakly contractible.
\end{lemma}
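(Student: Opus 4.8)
The statement is a standard fact: a category with finite products is filtered (hence, taking nerves, its nerve is weakly contractible), but here we want the $\infty$-categorical version phrased directly in terms of weak contractibility. The plan is to show that $\calC$ is \emph{filtered} in the sense of Definition \toposref{filtdef} (or equivalently, to verify the criterion of Proposition \toposref{ifilt}), and then to invoke the fact that a filtered $\infty$-category is weakly contractible (this is recorded in \cite{topoi}, e.g.\ Lemma \toposref{needed14} or the discussion preceding it). Concretely, I would use the formulation: $\calC$ is filtered if and only if, for every finite simplicial set $K$ and every map $f: K \to \calC$, the map $f$ extends to a map $K^{\triangleright} \to \calC$. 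So let $K$ be a finite simplicial set and $f: K \to \calC$ a diagram; we must produce a cocone.

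First I would reduce to the case where $K$ is finite and, more importantly, observe that since $K$ is finite it has only finitely many vertices $x_1, \dots, x_n$, mapping to objects $C_1, \dots, C_n \in \calC$. Using the hypothesis that binary products exist in $\calC$ (and that $\calC$ is nonempty), form the product $P = C_1 \times \cdots \times C_n$ by iterating; this requires only finitely many applications of the binary-product hypothesis. The projections $P \to C_i$ are the candidate components of a cocone, but of course a cocone under $f$ requires coherence with all the higher simplices of $K$, not just compatibility on vertices — so this naive attempt does not immediately finish the argument. The cleanest route is instead: the constant functor at $P$, together with the projection maps, does not literally give a cocone, but one shows that the $\infty$-category $\calC_{f/}$ of cocones under $f$ is nonempty by an inductive argument on a filtration of $K$ by its skeleta, or — more slickly — by citing that the existence of finite products already implies $\calC$ admits all finite limits is false, so one genuinely must argue that finite-product categories are filtered.

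The main obstacle, and the step I would spend the most care on, is exactly this passage from ``compatible choices on vertices'' to ``a genuine cocone in the $\infty$-categorical sense.'' The standard trick: to show $\calC_{f/} \neq \emptyset$ it suffices (by Joyal's theory, Proposition \toposref{qqq} or similar) to show that $\calC_{f/}$ is itself a nonempty $\infty$-category which again has the property that any two objects admit an ``upper bound''; but $\calC_{f/}$ need not have products. So instead I would argue directly: the functor $K \to \calC$ factors through the full subcategory on $\{C_1,\dots,C_n\}$, which is a finite $\infty$-category, and then use that $\calC$ admits a cocone under \emph{any} functor out of a finite $\infty$-category — this is precisely the content of the characterization of filtered $\infty$-categories, so the argument is really: (a) the extension property for finite diagrams $K^{\triangleright}$ is equivalent to the extension property for the finite-vertex-set diagrams, via induction up the skeleton of $K$, attaching one nondegenerate simplex at a time, each attachment being governed by an inner-horn-type lifting that is automatic plus one outer-horn filling that uses the existence of a product of the two relevant ``endpoint'' objects; (b) conclude $\calC$ is filtered; (c) cite that filtered $\infty$-categories are weakly contractible.

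\textbf{Summary of steps.} (1) Recall the characterization of filtered $\infty$-categories via extension of finite diagrams to cocones (Proposition \toposref{ifilt}). (2) Given finite $K$ and $f: K \to \calC$, induct on the skeletal filtration $\emptyset = K^{(-1)} \subseteq K^{(0)} \subseteq \cdots \subseteq K^{(m)} = K$; at each stage an extension problem over a cocone reduces to filling horns, where inner horns are filled because $\calC$ is an $\infty$-category and the single relevant outer lifting is supplied by choosing a product of the two objects at the ends of the attached simplex together with the appropriate projection maps. (3) Deduce $\calC$ is filtered. (4) Invoke the result of \cite{topoi} that a filtered $\infty$-category is weakly contractible. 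The bulk of the work is in step (2), the skeletal induction; everything else is a citation.
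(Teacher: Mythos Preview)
Your approach has a genuine gap: a nonempty $\infty$-category with binary products need not be filtered (nor cofiltered), so step (2) of your plan cannot succeed.

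First, a directional slip: the projections $P \to C_i$ from a product $P = C_1 \times \cdots \times C_n$ assemble into a \emph{cone} over the discrete diagram $\{C_i\}$, not a cocone; so at best they are evidence for cofilteredness, not for filteredness. But even after correcting the direction, the implication is false. Take $\calD = (\mathrm{Fin}_{\neq \emptyset})^{op}$, the opposite of the category of nonempty finite sets. This $\calD$ has binary products (they are disjoint unions in $\mathrm{Fin}_{\neq \emptyset}$) and is nonempty. If $\calD$ were filtered, then $\mathrm{Fin}_{\neq \emptyset}$ would be cofiltered; but the swap $\sigma$ on $\{1,2\}$ admits no weak equalizer, since any $h \colon Z \to \{1,2\}$ with $h = \sigma \circ h$ would force $\sigma$ to have a fixed point, impossible for nonempty $Z$. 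Hence $\calD$ is not filtered, and your skeletal induction cannot produce the required outer-horn fillings in general. (The analogous example of $\mathrm{Fin}_{\neq\emptyset}$ itself shows that ``binary products'' does not imply cofiltered either.) So the passage from ``products exist'' to ``every finite diagram admits a (co)cone'' really does fail at the parallel-pair step, exactly where your sketch becomes vague.

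The paper's proof avoids filteredness entirely and is a one-line homotopy argument. Fix any object $X \in \calC$. The forgetful functor $F \colon \calC_{/X} \to \calC$ admits a right adjoint $G$, sending $Y$ to the projection $Y \times X \to X$. Then $FG \simeq (-)\times X$, and the two projections give natural transformations
\[
\id_{\calC} \longleftarrow FG \longrightarrow T,
\]
where $T$ is the constant functor at $X$. Natural transformations induce simplicial homotopies on nerves, so the identity on $\calC$ is homotopic to a constant map, and $\calC$ is weakly contractible.
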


\begin{proof}
Fix an object $X \in \calC$. By assumption, the projection $F: \calC_{/X} \rightarrow \calC$ admits
a right adjoint $G$. Then we have natural transformations
$$ \id_{\calC} \leftarrow FG \rightarrow T,$$
where $T: \calC \rightarrow \calC$ is the constant functor taking the value $X$. This
proves that the identity map $\id_{\calC}$ is simplicially homotopic to a constant map, so that
$\calC$ is weakly contractible as desired.
\end{proof}

\begin{lemma}\label{facus}
Let $\calT$ and $\calT'$ be small $\infty$-categories equipped with Grothendieck topologies.
Assume that $\calT$ admits finite limits. Let $f: \calT \rightarrow \calT'$ be a functor with the following properties:
\begin{itemize}
\item[$(a)$] The functor $f$ preserves finite limits.
\item[$(b)$] For every collection of morphisms $\{ U_{\alpha} \rightarrow X\}$ which generate a covering sieve on $X \in \calT$, the resulting collection of morphisms $\{ fU_{\alpha} \rightarrow fX \}$ in $\calT'$ generates a covering sieve on $fX \in \calT'$.
\end{itemize}
Then:
\begin{itemize}
\item[$(1)$] For every $\infty$-category $\calC$, composition with
$f$ induces a functor $f_{\ast}: \Shv_{\calC}( \calT' ) \rightarrow \Shv_{\calC}(\calT)$.
\item[$(2)$] If $\calC = \SSet$, then $f_{\ast}$ is a geometric morphism of $\infty$-topoi.
\end{itemize}
\end{lemma}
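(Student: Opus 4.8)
The plan is to deduce both assertions from the universal property of sheafification on presheaf categories, using the functoriality of the Yoneda embedding and the fact that $f$ pulls covering sieves back to covering sieves. For $(1)$: given $\calO \in \Shv_{\calC}(\calT')$, I must show that $f_{\ast}\calO = \calO \circ f^{op}$ satisfies the sheaf condition of Definition \ref{ury} on $\calT$. So fix an object $X \in \calT$ and a covering sieve $\calT^{0}_{/X} \subseteq \calT_{/X}$; by definition of a Grothendieck topology, this sieve contains a collection of morphisms $\{ U_\alpha \to X \}$ generating it, and the essential point is that $\calO \circ f^{op}$ turns the colimit diagram $(\calT^{0}_{/X})^{\triangleleft} \to \calT \to \calC^{op}$ into a colimit in $\calC^{op}$. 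The cleanest route is: the colimit $\colim_{(\calT^{0}_{/X})^{op}} j(f(-))$ computed in $\calP(\calT')$ is the subobject of $j(fX)$ corresponding to the sieve generated by $\{ fU_\alpha \to fX\}$ (here one uses that in a presheaf $\infty$-category the colimit over a sieve is the corresponding subobject of the representable, cf.\ the discussion in the proof of Proposition \ref{selwus}); hypothesis $(b)$ says this is a \emph{covering} sieve on $fX$, and hypothesis $(a)$ (together with the fact that $f$ preserves the terminal object, so representables pull back correctly) identifies the generating morphisms appropriately. Since $\calO$ is a $\calC$-valued sheaf on $\calT'$, the map $\calO(fX) \to \calO(\text{that subobject})$ is an equivalence, which is precisely the statement that $f_\ast \calO$ is a sheaf on $\calT$.

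For the reduction to subobjects I would actually follow the argument of Proposition \ref{selwus} more directly: composition with the Yoneda embedding $j\colon \calT' \to \calP(\calT')$ gives an equivalence between $\Shv_{\calC}(\calT')$ (more precisely $\Shv_{\calC}(\Shv(\calT'))$) and limit-preserving functors $\calP(\calT')^{op} \to \calC$ satisfying the descent condition against covering sieves. Pulling back along the colimit-preserving functor $f_{!}\colon \calP(\calT) \to \calP(\calT')$ induced by $\calT \xrightarrow{f} \calT' \xrightarrow{j} \calP(\calT')$, any such functor $F$ restricts to a limit-preserving functor $\calP(\calT)^{op}\to \calC$; and the descent condition transfers because $f_{!}$ carries the subobject of $jX$ cut out by a covering sieve of $X$ to the subobject of $j(fX)$ cut out by the (covering, by $(b)$) sieve generated by the images. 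This shows $f_\ast$ lands in $\Shv_{\calC}(\calT)$, giving $(1)$.

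For $(2)$, with $\calC = \SSet$: I must produce a left adjoint $f^{\ast}$ to $f_{\ast}$ which preserves finite limits. Start from the left Kan extension $f_{!}\colon \calP(\calT) \to \calP(\calT')$ along $j\circ f$, whose right adjoint is restriction $f^{!}$; compose with the sheafification adjunction on $\calT'$ and the inclusion on $\calT$ to build $f^{\ast} = L_{\calT'} \circ f_{!} \circ (\text{inclusion})$ as a candidate left adjoint to $f_\ast = f^{!}|_{\Shv}$, checking the adjunction by the usual chain of natural equivalences of mapping spaces (this is where one verifies $f_\ast$ of a sheaf really is the restriction, already done in $(1)$). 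Left exactness of $f^{\ast}$ is the crux: $L_{\calT'}$ is left exact (sheafification is always left exact), and the inclusion is left exact, so it suffices that $f_{!}\colon \calP(\calT) \to \calP(\calT')$ preserves finite limits. Since $\calT$ admits finite limits, $f_{!}$ is the colimit-preserving extension of $\calT \xrightarrow{f} \calT' \xrightarrow{j}\calP(\calT')$, and by the argument of Lemma \ref{sillyfunt} (or the cited Lemma \toposref{kanspaz}) it is computed by precomposition with $f$ on presheaves: $f_{!} \simeq (\circ f)\colon \calP(\calT) = \Fun(\calT^{op},\SSet) \to \Fun({\calT'}^{op},\SSet)$ — wait, that is the wrong variance, so more carefully: $f_{!}(G) (T') = \colim_{(T,\, f(T)\to T')} G(T)$, and because $f$ preserves finite limits this colimit is filtered-by-finite-limits in the appropriate sense; the hypothesis $(a)$ is exactly what is needed for $f_{!}$ to preserve finite limits, by Lemma \toposref{kanspaz} as invoked in the proof of Lemma \ref{sillyfunt}. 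Putting these together shows $f^{\ast}$ is left exact, hence $f_{\ast}$ is a geometric morphism. \textbf{The main obstacle} will be the bookkeeping in showing $f_{!}\colon \calP(\calT)\to\calP(\calT')$ is left exact directly from hypothesis $(a)$ — one wants to avoid re-deriving Lemma \ref{sillyfunt} and instead cite it (or Lemma \toposref{kanspaz}) cleanly, being careful that the setup there (a functor $\calG \to \calG'$ of finitely-complete $\infty$-categories, left Kan extension to presheaves) matches the situation here with $(\calT, \calT', f)$.
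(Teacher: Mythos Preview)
Your approach to part $(2)$ is essentially the paper's: build $f^{\ast}$ as $L_{\calT'} \circ \overline{f}^{\ast}$ restricted to sheaves, where $\overline{f}^{\ast}\colon \calP(\calT)\to\calP(\calT')$ is left adjoint to precomposition with $f$, and observe that $\overline{f}^{\ast}$ is left exact because $f$ preserves finite limits. The paper cites Proposition \toposref{natash} for this last step rather than Lemma \ref{sillyfunt}, but the content is the same; your confusion over variance is harmless once you commit to the correct adjunction.

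There is, however, a genuine gap in part $(1)$. You assert that the colimit $\colim_{U\in \calT^{0}_{/X}} j(fU)$, taken in $\calP(\calT')$, \emph{is} the subobject of $j(fX)$ corresponding to the sieve ${\calT'}^{(1)}_{/fX}$ generated by the images $\{fU_\alpha\to fX\}$. You justify this by saying ``the colimit over a sieve is the corresponding subobject of the representable,'' but that fact applies to a sieve in $\calT'$, whereas your index category $\calT^{0}_{/X}$ is a sieve in $\calT$; its image under $f$ need not be a sieve in $\calT'_{/fX}$. What you actually need is that the induced functor $\calT^{0}_{/X}\to {\calT'}^{(1)}_{/fX}$ is \emph{cofinal}, so that the two colimits agree. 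This is exactly where the paper does the real work: invoking Joyal's cofinality criterion, one must show that for every $Y\to fX$ in the generated sieve, the fiber category $\calT^{0}_{/X} \times_{{\calT'}^{(1)}_{/fX}} ({\calT'}^{(1)}_{/fX})_{Y/}$ is weakly contractible. Nonemptiness follows because the sieve is generated by the images; the remaining contractibility is obtained by noting that hypothesis $(a)$ (preservation of finite limits) gives this fiber category pairwise products, and then applying the elementary Lemma \ref{stulbus} (a nonempty $\infty$-category with pairwise products is weakly contractible). Your sketch invokes hypothesis $(a)$ only vaguely (``identifies the generating morphisms appropriately''), but its actual role is to supply these products for the cofinality argument.
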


\begin{proof}
We first prove $(1)$. Let $\calO: {\calT'}^{op} \rightarrow \calC$ belong to $\Shv_{\calC}(\calT')$. We wish to show that $\calO \circ f$ belongs to $\Shv_{\calC}(\calT)$. In other words, we wish to show that
for every object $X \in \calT$ and every covering sieve $\calT^{(0)}_{/X} \subseteq \calT_{/X}$, the
functor $\calO \circ f$ exhibits $\calO(fX)$ as a limit of the diagram
$$ (\calT^{(0)}_{/X})^{op} \rightarrow \calT^{op} \stackrel{f}{\rightarrow} {\calT'}^{op}
\stackrel{\calO}{\rightarrow} \calC.$$
Let ${\calT'}^{(1)}_{/fX} \subseteq \calT'_{/fX}$ be the sieve generated by
$f \calT^{(0)}_{/X}$. Assumption $(b)$ implies that ${ \calT'}^{(1)}_{/fX}$ is a covering sieve
on $fX$. Since $\calO$ is a sheaf, the diagram
$$ ({\calT'}^{(1)}_{/fX})^{op} \rightarrow {\calT'}^{op} \stackrel{\calO}{\rightarrow} \calC$$
is a limit diagram. It will therefore suffice to show that the functor $f$ induces a cofinal map
$\calT^{(0)}_{/X} \rightarrow {\calT'}^{(1)}_{/fX}$. In view of Corollary \toposref{hollowtt}, 
we must show that for every morphism $Y \rightarrow fX$ belonging to ${\calT'}^{(1)}_{/fX}$, the
$\infty$-category $$\calC = \calT^{(0)}_{/X} \times_{ {\calT'}^{(1)}_{/fX} } {\calT'}^{(1)}_{Y/\, /fX}$$
is weakly contractible. Since ${\calT'}^{(1)}_{/fX}$ is generated by
$\calT^{(0)}_{/X}$, the $\infty$-category $\calC$ is nonempty. Assumption $(a)$ guarantees
that $\calC$ admits pairwise products. The contractibility of $\calC$ now follows from Lemma \ref{stulbus}.

We now prove $(2)$. Let $\overline{f}_{\ast}: \calP(\calT') \rightarrow \calP(\calT)$ be given by composition with $f$, and let $\overline{f}^{\ast}: \calP(\calT) \rightarrow \calP(\calT')$ be a left
adjoint to $\overline{f}_{\ast}$. Proposition \toposref{adjobs} implies that $\overline{f}^{\ast}$ fits into a homotopy commutative diagram
$$ \xymatrix{ \calT \ar[r]^{f} \ar[d] & \calT' \ar[d] \\
\calP(\calT) \ar[r]^{ \overline{f}^{\ast} } & \calP(\calT'), }$$
where the vertical arrows are Yoneda embeddings. Applying $(a)$ and Proposition \toposref{natash}, we conclude that $\overline{f}^{\ast}$ is left exact. We now observe
that $f_{\ast}$ has a left adjoint $f^{\ast}$ given by the composition
$$ \Shv(\calT) \subseteq \calP(\calT) \stackrel{ \overline{f}^{\ast}}{\rightarrow}
\calP(\calT') \stackrel{L}{\rightarrow} \Shv(\calT'),$$
where $L$ is a left adjoint to the inclusion $\Shv(\calT') \subseteq \calP(\calT')$ (and therefore left exact). As a composition of left exact functors, $f^{\ast}$ is left exact as desired.
\end{proof}

\begin{lemma}\label{stulbin}
Let $\calT$ be a small $\infty$-category equipped with a Grothendieck topology.
Let $X$ be an object of $\calT$, and let $\phi: \calP( \calT) \rightarrow \calP( \calT_{/X} )$
be given by composition with the projection $\calT_{/X} \rightarrow \calT$. 
Let $\alpha: F \rightarrow F'$ be a natural transformation in $\calP(\calT)$ which exhibits
$F'$ as a sheafification of $F$. Then $\phi(\alpha): \phi F \rightarrow \phi F'$ exhibits
$\phi F'$ as a sheafification of $\phi F$ (with respect to the induced Grothendieck topology on $\calT_{/X}$).
\end{lemma}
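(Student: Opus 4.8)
The plan is to deduce the statement from the general fact that sheafification can be characterized internally to a localization. Recall that $\phi: \calP(\calT) \to \calP(\calT_{/X})$ is given by composition with the projection $\pi: \calT_{/X} \to \calT$, which we know preserves finite limits; it is a morphism of $\infty$-categories of presheaves arising from a functor of sites, so by Lemma \ref{facus}(2) (applied with $\calT$ replaced by $\calT_{/X}$, $\calT'$ replaced by $\calT$, and $f = \pi$ — note $\calT_{/X}$ admits finite limits since $\calT$ does) the functor $\phi$ is actually a \emph{right adjoint} $f_\ast$ fitting into a geometric morphism of $\infty$-topoi. In particular $\phi$ restricts to a functor $\Shv(\calT) \to \Shv(\calT_{/X})$, and its left adjoint $\phi^\ast$ is left exact.

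First I would record the key consequence: since $\phi$ admits a left exact left adjoint $\phi^\ast$, for any presheaf $G \in \calP(\calT_{/X})$ the presheaf $\phi G'$ is a sheaf whenever $G' \in \Shv(\calT)$, and the unit and counit of the adjunction behave well with respect to localization. Concretely, let $L: \calP(\calT) \to \Shv(\calT)$ and $L_X: \calP(\calT_{/X}) \to \Shv(\calT_{/X})$ denote the sheafification functors. The claim amounts to the assertion that $\phi \circ L \simeq L_X \circ \phi$ as functors $\calP(\calT) \to \Shv(\calT_{/X}) \subseteq \calP(\calT_{/X})$, compatibly with the localization maps. To see this, note that $\phi$ carries the class $W$ of $L$-equivalences (maps inverted by $L$) into the class $W_X$ of $L_X$-equivalences: indeed, $W$ is the strongly saturated class generated by the monomorphisms $Y \hookrightarrow j(Z)$ corresponding to covering sieves on objects $Z \in \calT$, and $\phi$ preserves colimits (being a left adjoint, by Lemma \ref{facus}) and pullbacks (being left exact), hence carries this generating class — and therefore all of $W$ — into the analogous class $W_X$ on $\calT_{/X}$, using condition $(b)$ of the definition of the induced Grothendieck topology on $\calT_{/X}$. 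Given $\alpha: F \to F'$ exhibiting $F'$ as a sheafification of $F$, we have $\alpha \in W$ and $F' \in \Shv(\calT)$; hence $\phi(\alpha) \in W_X$ and $\phi F' \in \Shv(\calT_{/X})$, which is precisely the statement that $\phi(\alpha)$ exhibits $\phi F'$ as a sheafification of $\phi F$.

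The main obstacle is the verification that $\phi$ carries $L$-equivalences to $L_X$-equivalences, i.e.\ the bookkeeping identifying the generators of the strongly saturated class $W$ and checking that their $\phi$-images are $L_X$-equivalences. This reduces, via the universality of colimits and stability under pullback (Proposition \toposref{swimmm} or the analogous machinery used in the proof of Proposition \ref{lumm}), to the single observation that if $S \hookrightarrow j(Z)$ is the subobject associated to a covering sieve on $Z \in \calT$, then $\phi$ of this monomorphism is, after unwinding the definition of $\calP(\calT_{/X})$, a coproduct (indexed by $\bHom_{\calT}(Z,X)$ — or rather by the fiber of $\pi$) of the analogous subobjects associated to the pulled-back sieves on objects of $\calT_{/X}$, each of which is a covering sieve by the definition of the induced topology. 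This is elementary but requires care in matching up the relevant slice categories; everything else is a formal consequence of the adjunction $(\phi^\ast, \phi)$ and left-exactness of $\phi^\ast$.
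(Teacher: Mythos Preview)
Your overall strategy matches the paper's proof exactly: show that $\phi F'$ is a sheaf, then show $\phi$ carries the class $S$ of $L$-equivalences into the class $S_X$ of $L_X$-equivalences by checking on the generating monomorphisms $U \hookrightarrow j(Y)$ associated to covering sieves, using that $\phi$ preserves small colimits and finite limits so that $\phi^{-1}(S_X)$ is strongly saturated and stable under pullback. That framework is correct.

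The genuine gap is in your final verification. You claim that $\phi$ applied to the covering-sieve monomorphism $S \hookrightarrow j(Z)$ is ``a coproduct (indexed by $\bHom_{\calT}(Z,X)$) of the analogous subobjects.'' This is false. The presheaf $\phi(j(Z))$ sends an object $(X' \to X)$ of $\calT_{/X}$ to $\bHom_{\calT}(X',Z)$, which is entirely insensitive to the structure map $X' \to X$; it is \emph{not} the coproduct $\coprod_{g: Z \to X} j_X(Z,g)$, whose value at $(X' \xrightarrow{f} X)$ is instead $\coprod_{g} \mathrm{hofib}_{f}\bigl(g_*: \bHom_{\calT}(X',Z) \to \bHom_{\calT}(X',X)\bigr)$. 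These differ already in simple $1$-categorical examples (take $Z$ with several maps to $X$ and a map $X' \to Z$ that equalizes them). So your reduction to ``pulled-back sieves indexed by $\bHom_{\calT}(Z,X)$'' does not go through.

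The paper fixes this as follows. Rather than seeking a coproduct decomposition of $\phi(j(Y))$, write it (like any presheaf) as a colimit of representables $j_X(X')$ mapping to it. Since colimits in $\calP(\calT_{/X})$ are universal and $S_X$ is stable under colimits in the arrow category, it suffices to show that for every map $\beta: j_X(X') \to \phi(j(Y))$ the pullback $\phi(U) \times_{\phi(j(Y))} j_X(X') \to j_X(X')$ lies in $S_X$. But $\beta$ corresponds to a morphism $X' \to Y$ in $\calT$, and the pullback is the monomorphism classified by the sieve $\calT^{(0)}_{/Y} \times_{\calT_{/Y}} \calT_{/X'} \subseteq \calT_{/X'}$, which is covering in the induced topology. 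This is the step you need to replace your coproduct claim with.

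Two smaller points. First, Lemma \ref{facus} requires the source to admit finite limits, a hypothesis not present in Lemma \ref{stulbin}; you don't need it, since preservation of limits and colimits by $\phi$ is immediate (precomposition), and the fact that $\phi$ carries sheaves to sheaves follows directly from the definition of the induced topology on $\calT_{/X}$. Second, you call $\phi$ a right adjoint in one sentence and a left adjoint in the next; it is both (precomposition has Kan extensions on either side), but the reasoning should be stated cleanly.
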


\begin{proof}
Let $L: \calP( \calT) \rightarrow \Shv(\calT)$ denote a left adjoint to the inclusion, and let
$S$ be the collection of all morphisms $f$ in $\calP(\calT)$ such that $Lf$ is an equivalence.
Let $L_{X}: \calP( \calT_{/X} ) \rightarrow \Shv( \calT_{/X} )$ and $S_X$ be defined similarly.
It follows immediately from the definition that $F'_{X} \in \Shv( \calT_{/X} )$. To complete the proof, we must show that $\phi(\alpha) \in S_X$. Since $\alpha \in S$, it will suffice to show that
$\phi(S) \subseteq S_{X}$. 

Let $j: \calT \rightarrow \calP(\calT)$ denote the Yoneda embedding.
The functor $\phi$ preserves finite limits (in fact all limits) and small colimits. Consequently,
$\phi^{-1} S_X$ is a strongly saturated class of morphisms in $\calP(\calT)$ which is stable under pullbacks. Thus, in order to prove that $S \subseteq \phi^{-1} S_X$, it will suffice to show that
$\phi^{-1} S_X$ contains every monomorphism of the form $U \rightarrow j(Y)$, where
$U$ corresponds to a covering sieve $\calT^{(0)}_{/Y} \subseteq \calT_{/Y}$. 
In other words, we must show that the induced map $\phi(U) \rightarrow \phi(j(Y))$
belongs to $S_{X}$. Let $j_X: \calT_{/X} \rightarrow \calP( \calT_{/X} )$ denote the Yoneda embedding for $\calT_{/X}$. Since $S_{X}$ is stable under small colimits, it will suffice to show that
for every object $X' \rightarrow X$ of $\calT_{/X}$ and every map
$\beta: j_X(X') \rightarrow \phi j(Y)$, the induced map
$$ i: \phi(U) \times_{ \phi(j(Y) ) } j_X(X') \rightarrow j_{X}(X')$$
belongs to $S_X$. We can identify that map $\beta$ with a morphism
$X' \rightarrow Y$ in $\calT$. We now observe that $i$ is a monomorphism
classified by the sieve
$$\calT^{(0)}_{/Y} \times_{ \calT_{/Y} } \calT_{/X'} \subseteq \calT_{/X'}.$$
Since this sieve is a covering (with respect to the Grothendieck topology on $\calT_{/X'}$),
the morphism $i$ belongs to $S_X$ as desired.
\end{proof}

\begin{lemma}\label{scur}
Let $p: \calX \rightarrow \calY$ be a functor between $\infty$-categories,
$\calX^{0}$ a full subcategory of $\calX$, and $p_0 = p | \calX^{0}$. Assume that the following conditions are satisfied:
\begin{itemize}
\item[$(a)$] The functors $p$ and $p_0$ are Cartesian fibrations.
\item[$(b)$] The inclusion $\calX^{0} \subseteq \calX$ carries $p_0$-Cartesian morphisms
in $\calX^{0}$ to $p$-Cartesian morphisms in $\calX$.
\item[$(c)$] For every object $Y \in \calY$, the inclusion of fibers
$\calX^{0}_{Y} \subseteq \calX_{Y}$ admits a left adjoint.
\end{itemize}
Let $\calC$ denote the $\infty$-category $\Fun_{ \calY}( \calY, \calX)$ denote the
$\infty$-category of sections of $p$, and let $\calC^{0} = \Fun_{\calY}( \calY, \calX^{0}) \subseteq \calC$ be defined similarly. Then:
\begin{itemize}
\item[$(1)$] The inclusion $\calC^{0} \subseteq \calC$ admits a left adjoint.
\item[$(2)$] A morphism $X \rightarrow X'$ in $\calC$ exhibits
$X'$ as a $\calC^{0}$-localization of $X$ if and only if, for each object $Y \in \calY$, the induced map
$X(Y) \rightarrow X'(Y)$ exhibits $X'(Y)$ as a $\calX^{0}_{Y}$-localization of $X(Y) \in \calX_{Y}$.
\end{itemize}
\end{lemma}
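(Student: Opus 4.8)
The plan is to reduce everything to Proposition \toposref{testreflect}. By that result, in order to prove that $\calC^{0}$ is a localization of $\calC$ (assertion $(1)$) it suffices to produce, for each section $s \in \calC$ of $p$, a morphism $\alpha \colon s \to s'$ with $s' \in \calC^{0}$ which exhibits $s'$ as a $\calC^{0}$-localization of $s$. Since $\calC^{0}$-localizations are essentially unique, assertion $(2)$ will follow once we know in addition that {\em some} such $\alpha$ satisfies the pointwise condition of $(2)$; in fact the verification of the universal property given below applies verbatim to {\em any} morphism $\alpha$ whose value $\alpha_{Y}$ at each $Y$ exhibits $s'(Y)$ as an $\calX^{0}_{Y}$-localization of $s(Y)$, which is precisely the ``if'' direction of $(2)$, and the ``only if'' direction then follows by uniqueness and evaluation at each $Y$. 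So the whole problem comes down to the construction of $\alpha$ and the mapping-space computation.

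First I record the role of hypothesis $(b)$: it forces the pushforward functors of the Cartesian fibration $p$ to be compatible with $\calX^{0}$. Precisely, if $f \colon Y' \to Y$ is a morphism in $\calY$ and $f^{\ast} \colon \calX_{Y} \to \calX_{Y'}$ is a pushforward functor, then $f^{\ast}(\calX^{0}_{Y}) \subseteq \calX^{0}_{Y'}$: given $A \in \calX^{0}_{Y}$, a $p_{0}$-Cartesian morphism $\overline{A} \to A$ in $\calX^{0}$ over $f$ exists by $(a)$, is $p$-Cartesian in $\calX$ by $(b)$, and so identifies $\overline{A}$ with $f^{\ast} A$. Now choose, for each $Y$, a left adjoint $L_{Y} \colon \calX_{Y} \to \calX^{0}_{Y}$ to the fiber inclusion (hypothesis $(c)$), with unit $u^{Y}$. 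Using $(a)$, $(b)$, $(c)$ together with the compatibility just established, one assembles the $L_{Y}$ into a left adjoint $L \colon \calX \to \calX^{0}$ of the inclusion $i \colon \calX^{0} \hookrightarrow \calX$ {\em relative to $\calY$}: a functor over $\calY$ which restricts to $L_{Y}$ on each fiber and is equipped with a natural transformation $u \colon \id_{\calX} \to i \circ L$ such that each $u_{x}$ exhibits $Lx$ as an $\calX^{0}_{p(x)}$-localization of $x$. (Along a $p$-Cartesian edge $f^{\ast} x \to x$ the functor $L$ is governed by the factorization $L_{Y'} f^{\ast} x \to f^{\ast} L_{Y} x \to L_{Y} x$, the first map coming from the fact that $f^{\ast} L_{Y} x$ already lies in $\calX^{0}_{Y'}$; organizing the higher coherences is routine but lengthy, and can be phrased using relative Kan extensions.) We then set $s' = i \circ L \circ s \in \calC^{0}$ and let $\alpha = u \circ s \colon s \to s'$, so that $\alpha_{Y} = u^{Y}_{s(Y)}$ exhibits $s'(Y)$ as an $\calX^{0}_{Y}$-localization of $s(Y)$ for every $Y$.

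It remains to check that $\alpha$ exhibits $s'$ as a $\calC^{0}$-localization of $s$, i.e. that for every $Z \in \calC^{0}$ the precomposition map $\bHom_{\calC}(s', Z) \to \bHom_{\calC}(s, Z)$ is a homotopy equivalence. For this I use the standard description of mapping spaces in a section $\infty$-category: for sections $u, w$ of $p$,
$$ \bHom_{\calC}(u, w) \simeq \lim_{(f \colon Y' \to Y) \in \operatorname{TwAr}(\calY)} \bHom_{\calX_{Y'}}\bigl(u(Y'), f^{\ast} w(Y)\bigr), $$
the limit running over the twisted arrow $\infty$-category of $\calY$ (and $f^{\ast}$ a pushforward for $p$). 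Taking $w = Z$, the map induced by $\alpha$ is the limit of the maps $\bHom_{\calX_{Y'}}(s'(Y'), f^{\ast} Z(Y)) \to \bHom_{\calX_{Y'}}(s(Y'), f^{\ast} Z(Y))$ given by precomposition with $\alpha_{Y'} = u^{Y'}_{s(Y')}$. Since $Z(Y) \in \calX^{0}_{Y}$ and $f^{\ast}$ preserves $\calX^{0}$, we have $f^{\ast} Z(Y) \in \calX^{0}_{Y'}$, so each of these maps is a homotopy equivalence by the defining property of the localization $L_{Y'}$. Passing to the limit, $\bHom_{\calC}(s', Z) \to \bHom_{\calC}(s, Z)$ is a homotopy equivalence. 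As noted above, this argument used only that $\alpha$ is pointwise a localization, so it proves both $(1)$ and the ``if'' direction of $(2)$; the ``only if'' direction follows since any $\calC^{0}$-localization of $s$ is equivalent under $s$ to the one constructed above, hence pointwise a localization by evaluation at each $Y$.

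The one genuinely nonformal step is the coherent assembly of the fiberwise reflectors $L_{Y}$ into the relative left adjoint $L$ — equivalently, into the section $s'$ — in the second paragraph; this is exactly where hypothesis $(b)$, the compatibility of $\calX^{0}$ with the Cartesian transport of $p$, is indispensable (without it the structure maps of $s'$ need not land in $\calX^{0}$, and no relative reflector exists). Everything else — Proposition \toposref{testreflect}, the limit formula for mapping spaces in section categories, and the defining property of a localization — is formal.
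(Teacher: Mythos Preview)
The paper's own proof is a single sentence: ``This is a special case of Proposition \deformationref{reladjprop}.'' That result, on relative adjunctions, is exactly the machine you are building by hand in your second paragraph.

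Your approach is correct in outline, and the twisted-arrow computation in your third paragraph is a clean and valid verification of the universal property: since it uses only that $\alpha$ is pointwise a localization, it proves the ``if'' direction of $(2)$ outright, and the rest follows as you say. But the step you yourself flag as ``the one genuinely nonformal step'' --- coherently assembling the fiberwise reflectors $L_{Y}$ into a relative left adjoint $L$, equivalently producing the section $s'$ together with all of its higher coherence data --- is precisely the content of the cited proposition, and you have not actually carried it out. Asserting that it ``can be phrased using relative Kan extensions'' is a pointer to a strategy, not a proof; this is where the genuine work lives (and where hypothesis $(b)$ is used in an essential way, as you correctly note). So your argument is essentially an unpacking of what the paper's reference says, correct everywhere except that it stops short at exactly the place where that reference does the heavy lifting.
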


\begin{proof}
This is a special case of Proposition \deformationref{reladjprop}.
\end{proof}

\begin{lemma}\label{biggier}
Let $\calC$ be an essentially small $\infty$-category equipped with a Grothendieck topology.
Let $\alpha: F \rightarrow F'$ be a morphism in $\calP(\calC)$ be a morphism which
exhibits $F'$ as a $\Shv(\calC)$-localization of $F$ in $\calP(\calC)$. Then
$\alpha$ also exhibits $F'$ as a $\Shv_{ \widehat{\SSet}}( \calC)$-localization of
$F$ in $\Fun( \calC^{op}, \widehat{\SSet} )$.
\end{lemma}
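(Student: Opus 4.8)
The plan is to exploit the fact that both sheafification functors are localizations that invert the \emph{same} small set of morphisms, so the statement becomes a purely formal comparison of strongly saturated classes across a change of universe. Recall from \cite{topoi} that $\Shv(\calC)\subseteq\calP(\calC)$ is a topological localization: if $j\colon\calC\to\calP(\calC)$ denotes the Yoneda embedding and $S_0$ denotes the (small) set of monomorphisms $U\hookrightarrow j(Y)$, where $Y$ ranges over the objects of $\calC$ and $U\subseteq j(Y)$ ranges over the subobjects corresponding to covering sieves on $Y$, then $\Shv(\calC)$ is the full subcategory of $S_0$-local objects, and the class $S$ of $L$-equivalences is the strongly saturated class of small generation generated by $S_0$ in $\calP(\calC)$. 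After enlarging the universe so that $\calC$ is small relative to $\widehat{\SSet}$, the category $\Fun(\calC^{op},\widehat{\SSet})$ is again a presheaf $\infty$-category, and the same construction applied to the Grothendieck topology on $\calC$ produces $\Shv_{\widehat{\SSet}}(\calC)$, with associated class $\widehat{S}$ of $\widehat{L}$-equivalences equal to the strongly saturated class generated by the very same set $S_0$ — here one uses that a covering sieve, and hence the monomorphism $U\hookrightarrow j(Y)$ it determines (a map between small presheaves), depends only on $\calC$ and its topology, not on the ambient universe.

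First I would verify that $F'\in\Shv_{\widehat{\SSet}}(\calC)$. This is immediate from Definition \ref{ury}: the condition for a functor $\calC^{op}\to\widehat{\SSet}$ to be a sheaf asserts only that certain small cocones are colimit diagrams in $\widehat{\SSet}^{op}$, equivalently that certain small cones are limit diagrams in $\widehat{\SSet}$; and the inclusion $\SSet\hookrightarrow\widehat{\SSet}$ preserves small limits. Thus a presheaf of small spaces which is a sheaf in the sense of $\calP(\calC)$ is automatically a sheaf in the sense of $\Fun(\calC^{op},\widehat{\SSet})$.

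Next I would prove the inclusion of morphism classes $S\subseteq\widehat{S}$. The inclusion $i\colon\calP(\calC)\hookrightarrow\Fun(\calC^{op},\widehat{\SSet})$ is fully faithful and preserves small colimits, since a small colimit of presheaves of small spaces is computed pointwise and a small colimit of small spaces is small. Hence the induced functor on arrow $\infty$-categories is again fully faithful and colimit-preserving, so the class $\widehat{S}\cap\Mor(\calP(\calC))$ is stable under the formation of colimits in $\Fun(\Delta^1,\calP(\calC))$ (because $\widehat{S}$ is strongly saturated, hence so closed in $\Fun(\Delta^1,\Fun(\calC^{op},\widehat{\SSet}))$); it is also closed under retracts and has the two-out-of-three property. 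Therefore $\widehat{S}\cap\Mor(\calP(\calC))$ is a strongly saturated class of morphisms of $\calP(\calC)$, and since it contains $S_0$ it contains the strongly saturated class $S$ generated by $S_0$. In particular $\alpha\in S\subseteq\widehat{S}$, so $\widehat{L}\alpha$ is an equivalence. Combining this with the previous step, $\alpha\colon F\to F'$ is a morphism whose target lies in $\Shv_{\widehat{\SSet}}(\calC)$ and which becomes an equivalence after applying $\widehat{L}$; by the universal property of the localization $\widehat{L}$ this is precisely the assertion that $\alpha$ exhibits $F'$ as a $\Shv_{\widehat{\SSet}}(\calC)$-localization of $F$.

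The only point demanding genuine care is the universe bookkeeping underlying the first paragraph: one must make sure that $\Shv_{\widehat{\SSet}}(\calC)$ really is a topological localization of $\Fun(\calC^{op},\widehat{\SSet})$ with the \emph{same} generating set $S_0$ of small monomorphisms, and that $\calP(\calC)$ sits inside $\Fun(\calC^{op},\widehat{\SSet})$ as a subcategory closed under the small colimits relevant to strong saturation. Once these compatibilities are in place, the argument is entirely formal.
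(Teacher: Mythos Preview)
Your proof is correct and follows essentially the same route as the paper's: reduce to showing that $F'\in\Shv_{\widehat{\SSet}}(\calC)$ (which the paper calls ``obvious'') and that $\alpha$ lies in the class $\widehat{S}$ of $\widehat{L}$-equivalences, then verify the latter by observing that $\widehat{S}\cap\Mor(\calP(\calC))$ is a strongly saturated class containing the generating monomorphisms $S_0$ and hence contains $S$. The paper is terser and additionally remarks that $\widehat{S}\cap\Mor(\calP(\calC))$ is stable under pullbacks (relevant to the ``topological'' nature of the localization), but the logical skeleton is identical.
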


\begin{proof}
Without loss of generality, we may suppose that $\calC$ is small.
Let $L$ be a left adjoint to the inclusion $\Shv(\calC) \subseteq \calP(\calC)$, and let
$S$ be the collection of all morphisms $\beta$ in $\calP(\calC)$ such that
$L(\beta)$ is an equivalence. Let 
$$\widehat{S} \subseteq \Hom_{\sSet}( \Delta^1, \Fun( \calC^{op}, \widehat{\SSet} ))$$
be defined similarly. To prove the desired assertion, we must show that
$F' \in \Shv_{ \widehat{\SSet}}(\calC)$ and that $\alpha \in \widehat{S}$.
The first assertion is obvious; to prove the second, it will suffice to show that
$S \subseteq \widehat{S}$.

Let $S'$ be the collection of all morphisms in $\calP(\calC)$ which belong to
$\widehat{S}$. Then $S'$ is strongly saturated and stable under pullbacks; we wish to show that $S \subseteq S'$. For this, it suffices to observe that $S'$ contains every monomorphism
$\beta: \chi \rightarrow j(C)$, where $j: \calC \rightarrow \calP(\calC)$ is the Yoneda embedding,
$C \in \calC$ is an object, and $\chi$ is the subobject of $j(C)$ associated to a covering
sieve $\calC^{0}_{/C} \subseteq \calC_{/C}$.
\end{proof}

\begin{proof}[Proof of Proposition \ref{kumber}]
The proof is similar to that of Lemma \toposref{kumba}.
Let $\Fun^{\adm}( \Delta^1, \Ind(\calG^{op}))$ denote the full subcategory of $\Fun( \Delta^1, \Ind(\calG^{op}))$ spanned by the admissible morphisms in $\Ind(\calG^{op})$, and let
$e: \Fun^{\adm}( \Delta^1, \Ind(\calG^{op})) \rightarrow \Ind(\calG^{op})$ be given by evaluation at the vertex
$\{0\} \in \Delta^1$. Since the collection of admissible morphisms of $\Ind(\calG^{op})$ is stable under pushouts, the map $e$ is a coCartesian fibration.

We define a simplicial set $\calX$ equipped with a projection
$p: \calX \rightarrow \Ind(\calG^{op})$ so that the following universal property is satisfied: for every
simplicial set $K$, we have a natural bijection
$$ \Hom_{\Ind(\calG^{op})}(K, \calX) = \Hom_{\sSet}(
K \times_{ \Ind(\calG^{op}) } \Fun^{\adm}(\Delta^1, \Ind(\calG^{op})), \SSet ).$$
Then $\calX$ is an $\infty$-category, whose objects can be identified with pairs
$(A, F)$, where $X \in \Ind(\calG^{op})$ and $F: \Ind(\calG^{op})^{X/, \adm} \rightarrow \SSet$
is a functor. It follows from Corollary \toposref{skinnysalad} that the projection $p$ is a Cartesian fibration, and that
a morphism $(A,F) \rightarrow (A', F')$ is $p$-Cartesian if and only if, for every
admissible morphism $A \rightarrow B$, the canonical map
$F(B) \rightarrow F'(A' \coprod_{A} B)$ is an equivalence in $\SSet$.

For every admissible morphism $f$ in $\Ind(\calG^{op})$, classified by a map
$\Delta^1 \rightarrow \Ind(\calG^{op})$, the pullback
$$\Fun^{\adm}(\Delta^1, \Ind(\calG^{op})) \times_{ \Ind(\calG^{op}) } \Delta^1 \rightarrow \Delta^1$$
is a Cartesian fibration. Applying Corollary \toposref{skinnysalad} again, we conclude that the projection $\calX \times_{\Ind(\calG^{op})} \Delta^1$ is a coCartesian fibration. In other words, given an object $(A,F) \in \calX$ and an admissible morphism $f: A \rightarrow A'$ in $\Ind(\calG^{op})$, there exists a locally $p$-coCartesian morphism $\overline{f}: (A,F) \rightarrow (A',F')$ lifting $f$, which is characterized up to equivalence by the requirement that the map $F(B) \rightarrow F'(B)$
is an equivalence for every admissible morphism $A' \rightarrow B$. Invoking Corollary \toposref{grutt1}, we conclude that $\overline{f}$ is actually $p$-coCartesian.

Let $\calY = \Fun_{ \Ind(\calG^{op})}( \Ind(\calG^{op}), \calX)$ denote the $\infty$-category of sections of $p$. Unwinding the definition, we can identify $\calY$ with the $\infty$-category
$\Fun( \Fun^{\adm}( \Delta^1, \Ind(\calG^{op})),\SSet)$.
Let $\Ind(\calG^{op})'$ denote the essential image of the (fully faithful) diagonal embedding
$\Ind(\calG^{op}) \rightarrow \Fun^{\adm}( \Delta^1, \Ind(\calG^{op}))$. Consider the following conditions on a section $s: \Ind(\calG^{op}) \rightarrow \calX$ of $p$:
\begin{itemize}
\item[$(a)$] The functor $s$ carries admissible morphisms in $\Ind(\calG^{op})$ to $p$-coCartesian morphisms in $\calX$.
\item[$(b)$] Let $S: \Fun^{\adm}( \Delta^1, \Ind(\calG^{op})) \rightarrow \SSet$ be the functor
corresponding to $s$. Then, for every commutative diagram
$$ \xymatrix{ & B \ar[dr] & \\
A \ar[ur] \ar[rr] & & C }$$
of admissible morphisms in $\Pro(\calG)$, the induced map
$S(A \rightarrow C) \rightarrow S(B \rightarrow C)$ is an equivalence in $\widehat{\SSet}$.
\item[$(c)$] For every admissible morphism $A \rightarrow B$ in $\Pro(\calG)$, the canonical
map $S( \id_{A} ) \rightarrow S(A \rightarrow B)$ is an equivalence in $\SSet$.
\item[$(d)$] The functor $S$ is a left Kan extension of $S | \Ind(\calG^{op})'$.
\end{itemize}
Unwinding the definitions, we see that $(a) \Leftrightarrow (b) \Rightarrow (c) \Leftrightarrow (d)$.
Moreover, the implication $(c) \Rightarrow (b)$ follows by a two-out-of-three argument.
Let $\calY'$ denote the full subcategory of $\calY$ spanned by those sections which
satisfy the equivalent conditions $(a)$ through $(d)$; it follows from Proposition \toposref{lklk}
that composition with the diagonal embedding $\Ind(\calG^{op}) \rightarrow \Fun^{\adm}( \Delta^1, \Ind(\calG^{op}))$ induces an equivalence
$$ \theta: \calY' \rightarrow \Fun( \Ind(\calG^{op})', \SSet) \rightarrow \Fun( \Ind(\calG^{op}), \SSet).$$ 

Let $\calX_0$ denote the full subcategory of $\calX$ spanned by those objects
$(A, F)$, where $F: (\Ind(\calG^{op})^{\adm})^{A/} \rightarrow \SSet$ is a sheaf.
Let $\calY_0$ denote the full subcategory of $\calY$ spanned by those sections which
factor through $\calX_0$, and let $\calY'_0 = \calY' \cap \calY_0$. Unwinding the definitions, we see that $\theta$ restricts to an equivalence $\theta_0: \calY'_0 \rightarrow \Shv( \Pro(\calG) )$. 
Consequently, to prove $(3)$, it will suffice to show that the inclusion $\calY'_0 \subseteq \calY'$
admits a left adjoint. For this, we will show that the inclusion $\calY_0 \subseteq \calY$ admits a left adjoint $L$, and that $L \calY' \subseteq \calY'_0$. Since $p$ restricts to a
Cartesian fibration $p_0: \calX_0 \rightarrow \Ind(\calG^{op})$ (Lemma \ref{facus}), the first assertion follows from Lemma \ref{scur}. The second is then a translation of Lemma \ref{stulbin}.

Assertion $(1)$ can be proven using an analogous argument. Namely, we let
$\widehat{\calX}$, $\widehat{\calX}_0$, $\widehat{\calY}$, $\widehat{\calY}_0$, 
$\widehat{\calY}'$, and $\widehat{\calY}'_0$ be defined in the same way as
$\calX$, $\calX_0$, $\calY$, $\calY_0$, $\calY'$, and $\calY'_0$, with the exception that
the $\infty$-category $\SSet$ of small spaces is replaced by the larger $\infty$-category
$\widehat{\SSet}$ of all spaces. Then we have a commutative diagram
$$ \xymatrix{ \widehat{ \calY}'_0 \ar[r]^{i} \ar[d] & \widehat{\calY}' \ar[d] \\
\widehat{\Shv}( \Pro(\calG) ) \ar[r] & \Fun( \Ind(\calG^{op}), \widehat{\SSet} )}$$ 
where the vertical maps are categorical equivalences; it therefore suffices to show that
$i$ admits a left adjoint $\widehat{L}$, which follows from the arguments above. (We could also argue more directly by viewing $\widehat{\Shv}( \Pro(\calG) )$ as an $\infty$-topos of sheaves on a Grothendieck site in a larger universe.)

To prove $(2)$, it will suffice to show that if $\alpha: F \rightarrow F'$ is a morphism
in $\calY'$ which exhibits $F'$ as a $\calY'_0$-localization of $F$, then
$\alpha$ also exhibits $F'$ as a $\widehat{\calY}'_0$-localization of $F$
in $\widehat{\calY}'$. In view of Lemma \ref{scur}, it will suffice to test this assertion fiberwise
on $\Ind(\calG^{op})$, where it reduces to Lemma \ref{biggier}.

We now give the proof of $(4)$; the proof of $(5)$ is nearly identical and left to the reader. The case of limits is clear (since $\Shv( \Pro(\calG) )$ is stable under small limits in $\Fun( \Ind(\calG^{op}), \SSet)$). Since $\theta_0$ is an equivalence, the assertion regarding colimits can be reformulated as follows:
\begin{itemize}
\item[$(4')$] The $\infty$-category $\calY'_0$ admits small colimits. Moreover, a small diagram
$q: K^{\triangleright} \rightarrow \calY'_0$ is a colimit diagram if and only if, for every
object $A \in \Ind(\calG)^{op}$, the induced map $q_A: K^{\triangleright} \rightarrow
\Shv( \Pro(\calG)^{\adm}_{/A} )$ is a colimit diagram.
\end{itemize}
Since the map $p_0$ is a Cartesian fibration, Proposition \toposref{limiteval} immediately yields the following analogue of $(4')$:
\begin{itemize}
\item[$(4'')$] The $\infty$-category $\calY_0$ admits small colimits. Moreover, a small diagram
$q: K^{\triangleright} \rightarrow \calY_0$ is a colimit diagram if and only if, for every
object $A \in \Ind(\calG)^{op}$, the induced map $q_A: K^{\triangleright} \rightarrow
\Shv( \Pro(\calG)^{\adm}_{/A} )$ is a colimit diagram.
\end{itemize}
To deduce $(4')$ from $(4'')$, it suffices to observe that $\calY'_0$ is stable under
small colimits in $\calY_0$, since for every admissible morphism $f: A \rightarrow B$
in $\Ind(\calG^{op})$ the induced functor $f^{\ast}: \Shv( \Pro(\calG)^{\adm}_{/A})
\rightarrow \Shv( \Pro(\calG)^{\adm}_{/B} )$ is a geometric morphism of $\infty$-topoi
(Lemma \ref{facus}) and therefore preserves small colimits.

To prove $(6)$, we must show that
the inclusion $\Shv(\Pro(\calG)) \subseteq \widehat{\Shv}( \Pro(\calG) )$ preserves small limits and colimits. In the case of limits, this follows from the observation that the inclusion
$\Fun( \Ind(\calG)^{op}, \SSet) \rightarrow \Fun( \Ind(\calG)^{op}, \widehat{\SSet} )$ preserves small limits (which in turn follows from Proposition \toposref{limiteval}). To handle the case of colimits, it will suffice to show that the inclusion $\calX_0 \subseteq \widehat{\calX}_0$ preserves small colimits after passing to the fiber over every object $A \in \Ind(\calG^{op})$. In other words, we must show that
the inclusion $\Shv( \Pro(\calG)^{\adm}_{/A}) \subseteq \widehat{\Shv}( \Pro(\calG)^{\adm}_{/A} )$
preserves small colimits. This follows from Proposition \ref{selwus} and Remark \toposref{quest}.
\end{proof}

\begin{remark}\label{spittle}
Although the $\infty$-category $\Shv( \Pro(\calG) )$ is not an $\infty$-topos, it can be identified with the limit of a (large) diagram of $\infty$-topoi. It therefore has many features in common with $\infty$-topoi. For example, there is a good theory of effective epimorphisms in $\Shv( \Pro(\calG) )$: we say that a map $\alpha: F_0 \rightarrow F$ in $\Shv(\Pro(\calG) )$ is an effective epimorphism if it restricts to an effective epimorphism in each of the $\infty$-topoi $\Shv( \Pro(\calG)^{\adm}_{/A} )$ (equivalently, $\alpha$ is an effective epimorphism if it is an effective epimorphism in the larger $\infty$-category
$\widehat{\Shv}(\Pro(\calG))$, which is an $\infty$-topos in a larger universe). As in an ordinary $\infty$-topos, we have effective descent: namely, if $F_{\bigdot}$ denotes the \Cech nerve of $\alpha$, then
the canonical map $| F_{\bigdot} | \rightarrow F$ is an equivalence in $\Shv( \Pro(\calG) )$. 
\end{remark}

\begin{lemma}\label{bodkin}
Let $\calG$ be a geometry, let $(\calX, \calO_{\calX})$ be an object of
$\LGeo(\calG)$, and let $e: \LGeo(\calG) \rightarrow \widehat{\SSet}$ be the functor
corepresented by $e$. Then the composite map
$$ \Ind(\calG^{op}) \stackrel{ \Spec^{\calG} }{\rightarrow} \LGeo(\calG)
\stackrel{e}{\rightarrow} \widehat{\SSet} $$
belongs to $\widehat{ \Shv}( \Pro(\calG) )$.
\end{lemma}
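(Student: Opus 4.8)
The strategy is to unwind Definition \ref{sputt}, reducing the sheaf condition to a descent statement for $\Spec^{\calG}$ along admissible covers, and then to resolve that statement using the explicit model $\Spec^{\calG} U \simeq (\USpec U, \calO_{\USpec U})$ of Theorem \ref{scoo} together with the \'etale descent results of Proposition \ref{scanh}. Fix $A \in \Ind(\calG^{op})$ and write $F = e \circ \Spec^{\calG}$, so that for $B \in \Ind(\calG^{op})$ we have $F(B) \simeq \bHom_{\LGeo(\calG)^{op}}(\Spec^{\calG} B, (\calX, \calO_{\calX}))$, where $\LGeo(\calG)^{op}$ denotes the $\infty$-category of $\calG$-structured $\infty$-topoi. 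By Definition \ref{sputt} (and the description of the Grothendieck topology in Notation \ref{scun}) it suffices to show that for every object $U$ of $\Pro(\calG)^{\adm}_{/A}$ and every admissible family $\{ U_{\alpha} \to U \}$ generating a covering sieve $\calS$ on $U$, the canonical map $F(U) \to \lim_{(V \to U) \in \calS^{op}} F(V)$ is an equivalence; indeed, by the usual reduction to a generating pretopology, descent for these families implies the sheaf condition of Definition \ref{ury}, and by Notation \ref{scun} every generating covering family consists of admissible morphisms.

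The geometric content is the following. By Example \ref{simpus} each $\Spec^{\calG} U_{\alpha} \to \Spec^{\calG} U$ is \'etale, and under the equivalence $(\LGeo(\calG)^{op}_{\mathet})_{/\Spec^{\calG} U} \simeq \USpec U = \Shv(\Pro(\calG)^{\adm}_{/U})$ of Proposition \ref{scanh}(5) it is classified by the image $j(U_{\alpha}) \in \USpec U$ of the representable functor on $U_{\alpha}$ (as in the proof of Lemma \ref{spak}). Since $\{U_{\alpha} \to U\}$ generates a covering sieve, $\coprod_{\alpha} j(U_{\alpha}) \to 1_{\USpec U}$ is an effective epimorphism; transporting its \Cech nerve through the equivalence above yields a simplicial object $Z_{\bullet}$ of $\LGeo(\calG)^{op}_{\mathet}$ with $Z_n \simeq \coprod_{\alpha_0, \dots, \alpha_n} \Spec^{\calG}( U_{\alpha_0} \times_U \cdots \times_U U_{\alpha_n} )$ --- using that $\Spec^{\calG}$ preserves finite limits and that, by Lemma \ref{capsule}, these iterated fiber products remain admissible over $A$ --- whose colimit in $\LGeo(\calG)^{op}_{\mathet}$ is $\Spec^{\calG} U$. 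By Proposition \ref{scanh}(4), this colimit (and the coproducts appearing in each $Z_n$) are preserved by the inclusion $\LGeo(\calG)^{op}_{\mathet} \hookrightarrow \LGeo(\calG)^{op}$, so $\Spec^{\calG} U$ is also the colimit of $Z_{\bullet}$ in $\LGeo(\calG)^{op}$.

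Applying the limit-preserving functor $\bHom_{\LGeo(\calG)^{op}}(-, (\calX, \calO_{\calX}))$ now gives
$$ F(U) \simeq \lim_{[n] \in \Delta} \bHom_{\LGeo(\calG)^{op}}(Z_n, (\calX, \calO_{\calX})) \simeq \lim_{[n] \in \Delta} \prod_{\alpha_0, \dots, \alpha_n} F( U_{\alpha_0} \times_U \cdots \times_U U_{\alpha_n} ). $$
Since the iterated fiber products $U_{\alpha_0} \times_U \cdots \times_U U_{\alpha_n}$ are objects of the sieve $\calS$, and the \Cech nerve of $\{U_{\alpha} \to U\}$ is cofinal in $\calS$ in the sense that $\colim_{V \in \calS} h_V$ agrees with the colimit of the associated simplicial diagram of representables (compare the proof of Proposition \ref{selwus}), the right-hand side is precisely $\lim_{(V \to U) \in \calS^{op}} F(V)$. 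This establishes the required equivalence, completing the proof.

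The main obstacle is the middle step: correctly matching the site-theoretic descent condition of Definition \ref{ury} with the \Cech-nerve colimit in $\LGeo(\calG)^{op}$ produced from the effective epimorphism in $\USpec U$, and in particular verifying that the colimits involved --- which need not exist in the non-cocomplete $\infty$-category $\LGeo(\calG)^{op}$ --- do exist and are computed there as in $\LGeo(\calG)^{op}_{\mathet}$, via Propositions \ref{scanh}(4) and \ref{scanh}(5).
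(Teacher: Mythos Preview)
Your proof is correct and uses the same core ingredients as the paper (Theorem \ref{scoo}, Example \ref{simpus}, and parts (4) and (5) of Proposition \ref{scanh}), but the organization is more laborious than necessary.

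The paper's proof is shorter because it invokes Proposition \ref{selwus} once at the outset rather than unpacking Čech descent by hand. Specifically, the paper observes that the restriction of $\Spec^{\calG}$ to $\Ind(\calG^{op})^{\adm}_{A/}$ factors as
\[
\Ind(\calG^{op})^{\adm}_{A/} \xrightarrow{L \circ j} \Shv(\Pro(\calG)^{\adm}_{/A})^{op} \simeq \LGeo(\calG)^{\Spec^{\calG} A/}_{\mathet} \to \LGeo(\calG),
\]
where the first arrow is the sheafified Yoneda map. Proposition \ref{selwus} then says that to verify the sheaf condition for $e \circ \Spec^{\calG}$ on the site $\Pro(\calG)^{\adm}_{/A}$, it suffices to check that the induced functor $\Shv(\Pro(\calG)^{\adm}_{/A})^{op} \to \widehat{\SSet}$ preserves small limits; and this follows immediately from Proposition \ref{scanh}, since $e$ is corepresentable and the forgetful functor $\LGeo(\calG)^{\Spec^{\calG} A/}_{\mathet} \to \LGeo(\calG)$ preserves small limits.

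Your route essentially re-proves a special case of Proposition \ref{selwus} by hand: you exhibit $\Spec^{\calG} U$ as the colimit of a Čech nerve in $\LGeo(\calG)^{op}$ (this is exactly the limit-preservation statement above, applied to a specific limit diagram), and then argue separately that the Čech limit of $F$ agrees with the sieve limit. That last step is correct---both limits compute $\bHom_{\calP}(\chi_{\calS}, F|)$ since the geometric realization of the Čech nerve of $\coprod j(U_\alpha) \to j(U)$ is the sieve subobject $\chi_{\calS}$---but your parenthetical pointer to Proposition \ref{selwus} is slightly off: that proposition does not prove the Čech-versus-sieve comparison, it is precisely the structural device that lets one \emph{avoid} making that comparison. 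The ``usual reduction to a generating pretopology'' you invoke would also let you skip the sieve-versus-Čech step entirely, so the argument as written does a little more work than either shortcut requires.
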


\begin{proof}
Fix an object $A \in \Ind(\calG)^{op}$; we wish to show that the composition
$$ \Ind( \calG^{op} )_{A/}^{\adm}
\stackrel{f}{\rightarrow} \Ind(\calG^{op})
\stackrel{ \Spec^{\calG} }{\rightarrow} \LGeo(\calG) \stackrel{e}{\rightarrow} \widehat{\SSet}$$
is a sheaf on $\Ind(\calG^{op})_{A/}^{\adm}$. We note that the composition
$\Spec^{\calG} \circ f$ can also be written as a composition
$$ \Ind( \calG^{op})_{A/}^{\adm} \stackrel{g}{\rightarrow} \Shv( \Pro(\calG)_{/A}^{\adm})^{op}
\simeq \LGeo(\calG)_{\mathet}^{\Spec^{\calG} A/} \rightarrow \LGeo(\calG),$$
where $g$ is the (opposite of) the composition of the Yoneda embedding
$j: \Pro(\calG)_{/A}^{\adm} \rightarrow \Fun( \Pro(\calG)_{/A}^{\adm}, \SSet)$
with a left adjoint to the inclusion $\Shv( \Pro(\calG)_{/A}^{\adm})
\subseteq \Fun( \Pro(\calG)_{/A}^{\adm}, \SSet)$. In view of Proposition \ref{selwus}, it will suffice to show that the composition
$$ \LGeo(\calG)_{\mathet}^{\Spec^{\calG} A/} \rightarrow \LGeo(\calG)
\stackrel{e}{\rightarrow} \widehat{\SSet}$$
preserves small limits, which follows immediately from Proposition \ref{scanh}.
\end{proof}

\begin{lemma}\label{kokin}
Let $\calG$ be a geometry. Then the composition
$$ \LGeo(\calG)^{op}
\rightarrow \Fun( \LGeo(\calG), \widehat{\SSet} )
\rightarrow \Fun( \Ind(\calG^{op}), \widehat{\SSet} )$$
factors through a functor $\overline{\phi}: \LGeo(\calG)^{op} 
\rightarrow \widehat{\Shv}( \Pro(\calG) )$. Moreover, the composition
$$ \LGeo(\calG)^{op}_{\mathet} \subseteq \LGeo(\calG)^{op}
\stackrel{ \overline{\phi}}{\rightarrow} \widehat{\Shv}( \Pro(\calG) )$$
preserves small colimits.
\end{lemma}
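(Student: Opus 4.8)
\textbf{Proof proposal for Lemma \ref{kokin}.}

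The plan is to deduce both assertions from results already established, with almost no new calculation. For the first assertion, fix an object $(\calX, \calO_{\calX}) \in \LGeo(\calG)$, and let $e_{(\calX,\calO_{\calX})}: \LGeo(\calG) \rightarrow \widehat{\SSet}$ denote the functor it corepresents; the component of the composite functor at $(\calX,\calO_{\calX})$ is, by definition of the Yoneda embedding and the $\infty$-categorical adjunction for $\Spec^{\calG}$, precisely the composition $\Ind(\calG^{op}) \stackrel{\Spec^{\calG}}{\rightarrow} \LGeo(\calG) \stackrel{e_{(\calX,\calO_{\calX})}}{\rightarrow} \widehat{\SSet}$. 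This is exactly the functor shown to lie in $\widehat{\Shv}(\Pro(\calG))$ in Lemma \ref{bodkin}. Since membership in $\widehat{\Shv}(\Pro(\calG))$ is checked objectwise (it is a full subcategory of $\Fun(\Ind(\calG^{op}), \widehat{\SSet})$, cut out by a condition on the restriction along each $\Ind(\calG^{op})_{A/}$), it follows that $\overline{\phi}$ factors as claimed. One small point to verify carefully is the identification of the component functor: this is the content of Theorem \ref{skil} part $(1)$'s setup together with the fact that $\phi = (\circ \Spec^{\calG}) \circ j$, where the mapping-space identification $\bHom_{\LGeo(\calG)^{op}}(\Spec^{\calG} A, (\calX,\calO_{\calX})) \simeq \bHom_{\LGeo(\calG)}((\calX,\calO_{\calX}), \Spec^{\calG} A)$ (with the direction of the arrows as in $\LGeo(\calG)$) is simply unwound.

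For the second assertion, I would argue as follows. By Proposition \ref{kumber}, part $(5)$ (and part $(6)$), a diagram $K^{\triangleright} \rightarrow \widehat{\Shv}(\Pro(\calG))$ is a colimit diagram if and only if, for every $A \in \Pro(\calG)$, its image under the restriction-and-sheafify functor $\widehat{\Shv}(\Pro(\calG)) \rightarrow \widehat{\Shv}(\Pro(\calG)^{\adm}_{/A})$ is a colimit diagram. So it suffices to show: for each fixed $A \in \Pro(\calG)$, the composition
$$ \LGeo(\calG)^{op}_{\mathet} \stackrel{\overline{\phi}}{\rightarrow} \widehat{\Shv}(\Pro(\calG)) \rightarrow \widehat{\Shv}(\Pro(\calG)^{\adm}_{/A}) $$
preserves small colimits. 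Unwinding the definitions, the value of this composite functor on $(\calX, \calO_{\calX})$ is the sheafification of the presheaf on $\Pro(\calG)^{\adm}_{/A}$ sending an admissible morphism $B \rightarrow A$ to $\bHom_{\LGeo(\calG)^{op}}(\Spec^{\calG} B, (\calX,\calO_{\calX}))$. Since sheafification is a left adjoint and hence preserves colimits, and since $\widehat{\Shv}(\Pro(\calG)^{\adm}_{/A})$ is an $\infty$-topos (in a larger universe) in which colimits are computed objectwise on presheaves, it is enough to check that for each fixed admissible $B \rightarrow A$ the functor $(\calX,\calO_{\calX}) \mapsto \bHom_{\LGeo(\calG)^{op}}(\Spec^{\calG} B, (\calX,\calO_{\calX}))$ carries small colimits in $\LGeo(\calG)^{op}_{\mathet}$ to colimits in $\widehat{\SSet}$. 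This is the assertion that the corepresentable functor $e_{\Spec^{\calG}B}$, restricted to the subcategory $\LGeo(\calG)^{op}_{\mathet}$, preserves small colimits. But by Proposition \ref{scanh}, parts $(3)$ and $(4)$, small colimits in $\LGeo(\calG)^{op}_{\mathet}$ are computed in $\LGeo(\calG)^{op}$ and project to colimit diagrams in $\LGeo^{op}$; combined with the local-smallness and descent statements (Propositions \ref{sizem2} and the structure of the étale localization from Proposition \ref{scanh}$(5)$, which identifies $(\LGeo(\calG)^{op}_{\mathet})_{/(\calX,\calO_{\calX})}$ with $\calX$), one sees that $e_{\Spec^{\calG}B}|\LGeo(\calG)^{op}_{\mathet}$ indeed preserves small colimits — this is essentially a restatement of the fact that $\overline{\phi}$ has values in sheaves, applied fiberwise. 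In fact the cleanest route is to observe directly that the functor $\LGeo(\calG)^{op}_{\mathet} \rightarrow \widehat{\SSet}$ corepresented by $\Spec^{\calG} B$ is, by Lemma \ref{bodkin}'s proof, the restriction of a sheaf-valued functor along étale maps, and sheaves satisfy descent, so colimit-preservation on $\LGeo(\calG)^{op}_{\mathet}$ follows from Proposition \ref{scanh}$(4)$.

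I expect the main obstacle to be bookkeeping rather than conceptual: one must be careful about which universe one is working in (the target is $\widehat{\SSet}$ and $\widehat{\Shv}(\Pro(\calG))$, which are large, and $\widehat{\Shv}(\Pro(\calG))$ is an $\infty$-topos only after a further universe enlargement, as noted in the Warning following Proposition \ref{kumber}), and about the precise meaning of ``preserves small colimits'' for a functor between large $\infty$-categories where the source $\LGeo(\calG)^{op}_{\mathet}$ does admit all small colimits (Proposition \ref{scanh}$(3)$). The key technical input is the colimit-detection criterion of Proposition \ref{kumber}$(5)$, which reduces the global statement to a statement about each $\infty$-topos $\widehat{\Shv}(\Pro(\calG)^{\adm}_{/A})$, where colimits of presheaves are objectwise and sheafification is a left adjoint; after that reduction, the result is immediate from Proposition \ref{scanh}$(4)$ (colimits of étale morphisms are detected in $\LGeo^{op}$) together with the identification, already used in Lemma \ref{bodkin}, of the relevant corepresentable functor with a sheaf.
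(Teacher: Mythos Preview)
Your treatment of the first assertion is correct and matches the paper: it is a direct translation of Lemma~\ref{bodkin}.

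For the second assertion, your reduction via Proposition~\ref{kumber}(5) to each $\widehat{\Shv}(\Pro(\calG)^{\adm}_{/A})$ is also how the paper begins. But your next step---reducing to evaluation at each admissible $B \to A$---does not work. Your logic is that $\overline{\phi}_A$ factors as a presheaf-valued functor $P$ followed by sheafification $L$, and since $L$ preserves colimits and presheaf colimits are objectwise, it would suffice that $P$ preserve colimits, i.e.\ that each $(\calX,\calO_{\calX}) \mapsto \bHom_{\LGeo(\calG)^{op}}(\Spec^{\calG} B, (\calX,\calO_{\calX}))$ preserve small colimits in $\LGeo(\calG)^{op}_{\mathet}$. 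That implication is fine, but the hypothesis is false. Take $(\calX,\calO_{\calX})$ affine, covered by two proper open subobjects $U_1, U_2 \subseteq 1_{\calX}$, so that $(\calX,\calO_{\calX})$ is the pushout of $(\calX_{/U_1},\ldots)$ and $(\calX_{/U_2},\ldots)$ along their intersection in $\LGeo(\calG)^{op}_{\mathet}$. Set $\Spec^{\calG} B \simeq (\calX,\calO_{\calX})$; by Remark~\ref{unipop} the fiber of $\bHom(\Spec^{\calG} B, (\calX_{/U_i},\ldots)) \to \bHom(\Spec^{\calG} B, (\calX,\ldots))$ over the identity is $\Gamma(\calX, U_i)$, which is empty. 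So the identity does not lie in the image of the pushout of mapping spaces, and evaluation at $B$ fails to preserve this colimit. The presheaf colimit is not a sheaf here; sheafification is doing essential work and cannot be bypassed objectwise.

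The paper proceeds differently after the reduction: it identifies $\widehat{\Shv}(\Pro(\calG)^{\adm}_{/A})$ with $\Shv_{\widehat{\SSet}}(\calY)$ where $(\calY,\calO_{\calY}) = \Spec^{\calG} A$ (via Proposition~\ref{selwus} and Theorem~\ref{scoo}), and then shows that the composite $\calX \simeq (\LGeo(\calG)^{op}_{\mathet})_{/(\calX,\calO_{\calX})} \to \Shv_{\widehat{\SSet}}(\calY)$ preserves small colimits, invoking assertion $(\ast'')$ from the proof of Proposition~\ref{sizem2}. That assertion uses universality of colimits in the $\infty$-topos $\Shv_{\widehat{\SSet}}(\calX)$ and the fact that pullback along a geometric morphism preserves colimits---a genuinely topos-theoretic input absent from your objectwise reduction. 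Your closing ``cleanest route'' does not repair the gap: Lemma~\ref{bodkin} establishes the sheaf condition in the $A$-variable for fixed $(\calX,\calO_{\calX})$, which says nothing about colimit-preservation in the $(\calX,\calO_{\calX})$-variable.
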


\begin{proof}
The first assertion is merely a translation of Lemma \ref{bodkin}.
To prove the second, it will suffice to show that for every object $A \in \Pro(\calG)$, the
induced map
$$ \overline{\phi}_{A}: \LGeo( \calG)^{op}_{\mathet} \rightarrow \widehat{\Shv}( \Pro(\calG)_{/A}^{\mathet}) $$
preserves small colimits (Proposition \ref{kumber}). Using Proposition \ref{selwus} and Theorem \ref{scoo}, we can identify the target with $\Shv_{ \widehat{\SSet}}( \calY)$, where
$( \calY, \calO_{\calY} ) = \Spec^{\calG} A$.  

Let us consider an arbitrary small diagram
$\{ ( \calX_{\alpha}, \calO_{\calX_{\alpha}}) \}$ in $\LGeo(\calG)^{op}$, having
colimit $(\calX, \calO_{\calX})$. We then have canonical identifications
$$ (\calX_{\alpha}, \calO_{\calX_{\alpha}}) \simeq (\calX_{/U_{\alpha}}, \calO_{\calX} | U_{\alpha} )$$
for some diagram $\{ U_{\alpha} \}$ in $\calX$ having colimit $1_{\calX}$. To prove that
$\overline{\phi}_A$ preserves this colimit diagram, it will suffice to show that the composition
$$ \calX \simeq (\LGeo(\calG)_{\mathet}^{(\calX, \calO_{\calX})/})^{op}
\rightarrow \LGeo(\calG)^{op}_{\mathet} \stackrel{\overline{\phi}_A}{\rightarrow} \Shv_{\widehat{\SSet}}(\calY)$$
preserves small colimits. This follows from the proof of Proposition \ref{sizem2} (more specifically, assertion $(\ast'')$)
\end{proof}

We now return to our main result.

\begin{proof}[Proof of Theorem \ref{skil}]
In view of the first assertion of the theorem and Lemma \ref{kokin}, we may regard
$\phi$ as a functor $\Sch(\calG) \rightarrow \Shv( \Pro(\calG) )$. We wish to show that
this functor is fully faithful. In other words, we must show that for every pair of objects
$X,Y \in \Sch(\calG)$, the induced map
$$ \psi_{X,Y}: \bHom_{\Sch(\calG)}(X,Y) \rightarrow \bHom_{ \Shv(\Pro(\calG))}( \phi X, \phi Y).$$
is a homotopy equivalence. 

Let us regard $Y$ as fixed, and let $\Sch(\calG)^{0}_{\mathet}$ be the full subcategory of
$\Sch(\calG)_{\mathet}$ spanned by those objects $X$ for which $\psi_{X,Y}$ is a homotopy equivalence. Using Lemma \ref{kokin}, we conclude that $\Sch(\calG)^{0}_{\mathet}$ is
stable under small colimits in $\Sch(\calG)_{\mathet}$. We wish to show that
$\Sch(\calG)^{0}_{\mathet} = \Sch(\calG)_{\mathet}$. In view of Lemma \ref{spak}, it will suffice to show that $\Sch(\calG)^{0}_{\mathet}$ contains every {\em affine} $\calG$-scheme. In other words, we are reduced to proving that $\psi_{X,Y}$ is a homotopy equivalence when $X$ is affine, which is obvious.
\end{proof}

\begin{warning}
In the classical theory of schemes, a scheme $(X, \calO_X)$ is locally of finite presentation
(over the integer $\Z$, say) if and only if the associated functor $F_X: \Comm \rightarrow \Set$
preserves filtered colimits. The analogous assertion is false in the present setting:
if $(\calX, \calO_{\calX})$ is a $\calG$-scheme which is locally of finite presentation, then
the associated functor $\Ind(\calG^{op}) \rightarrow \SSet$ need not preserve filtered colimits
in general. However, it remains true if we assume that $\calX$ is $n$-localic for some
$n \geq 0$. We will give a more detailed discussion in a future paper.
\end{warning}

Theorem \ref{skil} allows us to identify $\Sch(\calG)$ with a full subcategory of
$\Shv( \Pro(\calG) ) \subseteq \Fun( \Ind(\calG^{op}), \SSet)$. We might now ask for a characterization of this subcategory. In other words, given a ``moduli functor'' $F: \Ind(\calG^{op}) \rightarrow \SSet$, under what conditions is $F$ representable by a $\calG$-scheme? We will return to this question in \cite{elliptic1}. For the time being, we will content ourselves with a few easy observations.

\begin{definition}\label{psab}
Let $\calG$ be a geometry, and let $\alpha: F' \rightarrow F$ be a natural transformation of functors $F,F' \in \Shv( \Pro(\calG) )$. We will say that $\alpha$ is {\it \etale} if
the following condition is satisfied:
\begin{itemize}
\item[$(\ast)$] Let $\beta: G \rightarrow F$ be a natural transformation in $\Shv(\Pro(\calG))$, where
$G$ is representable by a $\calG$-scheme $(\calX, \calO_{\calX})$. Then the fiber product
$G \times_{F} F'$ is representable by a $\calG$-scheme which is \etale over $(\calX, \calO_{\calX})$.
\end{itemize}
\end{definition}

\begin{remark}\label{saab}
In the situation of Definition \ref{psab}, it suffices to test condition $(\ast)$ in the case where
$G$ is representable by an {\em affine} $\calG$-scheme. For suppose that this weaker version of condition $(\ast)$ is satisfied, and let $G$ be representable by an arbitrary $\calG$-scheme
$(\calX, \calO_{\calX})$. Let $\calX^{0}$ denote the full subcategory of $\calX$ spanned by those objects $V$ for which $(\calX_{/V}, \calO_{\calX}|V)$ represents a functor $G_V$ for which
$G_{V} \times_{F} F'$ is representable by a $\calG$-scheme \etale over $(\calX_{/V}, \calO_{\calX}|V)$. Using Proposition \ref{scanh}, we deduce that $\calX^{0}$ is stable under small colimits in
$\calX$, so that $\calX = \calX^{0}$ by Lemma \ref{spak}. 
\end{remark}

We now record the basic properties of the class of \etale morphisms in $\Shv( \Pro(\calG) )$. 

\begin{proposition}
Let $\calG$ be a geometry.
\begin{itemize}
\item[$(1)$] Every equivalence in $\Shv( \Pro(\calG) )$ is \etale.
\item[$(2)$] Every pullback of an \etale morphism in $\Shv( \Pro(\calG) )$ is \etale.
\item[$(3)$] Let $\alpha: F \rightarrow G$ be a morphism in $\Shv( \Pro(\calG) )$, where
$G$ is representable by a $\calG$-scheme $(\calX, \calO_{\calX})$. Then $\alpha$ is \etale if and only if $F$ is representable by a $\calG$-scheme which is \etale over $(\calX, \calO_{\calX})$.
\item[$(4)$] Every retract of an \etale morphism in $\Shv( \Pro(\calG) )$ is \etale.
\item[$(5)$] Suppose given a commutative diagram
$$ \xymatrix{ & G \ar[dr]^{g} & \\
F \ar[ur]^{f} \ar[rr]^{h} & & H }$$
in $\Shv( \Pro(\calG) )$ such that $g$ is \etale. Then $f$ is \etale if and only if $h$ is \etale.
In particular, the collection of \etale morphisms is stable under composition.

\item[$(6)$] Let $f_{\alpha}: F_{\alpha} \rightarrow G$ be a collection of \etale morphisms in $\Shv( \Pro(\calG) )$. Suppose
that each $F_{\alpha}$ is representable by a $\calG$-scheme $(\calX_{\alpha}, \calO_{\calX_{\alpha}})$ and that the induced map $\coprod F_{\alpha} \rightarrow G$ is an effective epimorphism
(see Remark \ref{spittle}). Then $G$ is representable
by a $\calG$-scheme $(\calY, \calO_{\calY} )$. (Each map of $\calG$-schemes
$(\calX_{\alpha}, \calO_{\calX_{\alpha}}) \rightarrow (\calY, \calO_{\calY})$ is then automatically \etale, by virtue of
$(3)$).

\item[$(7)$] Let $\Shv(\Pro(\calG))_{\mathet}$ denote the subcategory of
$\Shv( \Pro(\calG) )$ spanned by the \etale morphisms. Then the $\infty$-category 
$\Shv( \Pro(\calG) )_{\mathet}$ admits small colimits, and the inclusion
$\Shv( \Pro(\calG) )_{\mathet} \subseteq \Shv( \Pro(\calG) )$ preserves small colimits.

\end{itemize}
\end{proposition}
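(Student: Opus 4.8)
The seven assertions about \'etale morphisms in $\Shv(\Pro(\calG))$ should be proved in essentially the order they are listed, since each one feeds into the next, and the key technical input is already available: the compatibility between $\Shv(\Pro(\calG))$ and the $\infty$-categories $\Shv(\Pro(\calG)^{\adm}_{/A})$ recorded in Proposition \ref{kumber}, the properties of \'etale morphisms of $\calG$-structured $\infty$-topoi in Proposition \ref{scanh}, the descent statement for $\calG$-schemes in Lemma \ref{sado} (and Remark \ref{sadoprime}), and the fact (Theorem \ref{skil}) that $\Spec^\calG$ identifies $\Sch(\calG)$ with a full subcategory of $\Shv(\Pro(\calG))$. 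Assertions $(1)$ and $(2)$ are formal: an equivalence obviously satisfies the condition of Definition \ref{psab}, and for a pullback one uses that the pullback of a pullback square is a pullback square together with the closure of the class of \'etale maps of $\calG$-schemes under base change (Proposition \ref{scanh}$(5)$, via Remark \ref{pre4}).

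For $(3)$, the ``if'' direction is immediate from the definition (and Proposition \ref{scanh}), so the content is the ``only if'': if $\alpha\colon F\to G$ is \'etale and $G$ is representable by $(\calX,\calO_\calX)$, then $F$ is representable by a $\calG$-scheme \'etale over $(\calX,\calO_\calX)$. Here I would argue locally on $\calX$, exactly as in Remark \ref{saab}: pick affine opens $U_\beta$ covering $\calX$, so that each $G_{U_\beta}$ is affine; by definition of \'etale each $F\times_G G_{U_\beta}$ is representable by a $\calG$-scheme \'etale over $(\calX_{/U_\beta},\calO_\calX|U_\beta)$, hence is a $\calG$-scheme by Proposition \ref{scanh}$(2)$ or by Proposition \ref{sizem}$(1)$; then invoke Lemma \ref{sado} to glue these to a $\calG$-scheme representing $F$, and Remark \ref{sadoprime} together with Remark \ref{lopus} (locality of \'etaleness) to conclude that $F$ is \'etale over $(\calX,\calO_\calX)$. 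Once $(3)$ is in hand, $(4)$ follows because a retract of $F\to G$ over a representable $G$ is again a retract of $\calG$-schemes, and the class of \'etale maps of $\calG$-structured $\infty$-topoi is closed under retracts (the Corollary to Proposition \ref{scanh}). For $(5)$, the ``two-out-of-three'' statement, I would test the assertion after base change along an arbitrary map $H'\to H$ with $H'$ representable affine (this is legitimate by Remark \ref{saab} applied to the relevant morphisms, since $g$ is \'etale so $G\times_H H'$ is a $\calG$-scheme); then the statement reduces to the corresponding fact for \'etale morphisms of $\calG$-structured $\infty$-topoi, which is Proposition \ref{scanh}$(2)$.

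Assertion $(6)$ is the substantive gluing statement and is essentially a restatement of Lemma \ref{sado}: given \'etale $f_\alpha\colon F_\alpha\to G$ with each $F_\alpha$ a $\calG$-scheme and $\coprod F_\alpha\to G$ an effective epimorphism in $\Shv(\Pro(\calG))$ (in the sense of Remark \ref{spittle}), one must produce objects $\{U_\alpha\in\calY\}$ covering the eventual $\calY$; but a priori $\calY$ does not yet exist. The way around this is the standard one: work with the \v Cech nerve $F_{\bullet}$ of $\coprod F_\alpha\to G$, note that effective descent holds in $\Shv(\Pro(\calG))$ (Remark \ref{spittle}), observe that each $F_n$ is a coproduct of fiber products of the $F_\alpha$ over $G$, hence --- using $(2)$ and $(3)$ --- is representable by a $\calG$-scheme, assemble the simplicial object $F_\bullet$ into a diagram in $\Sch(\calG)_{\mathet}$ using Remark \ref{pre4}, take its colimit $(\calY,\calO_\calY)$ in $\Sch(\calG)_{\mathet}$ (which exists by Proposition \ref{sizem}$(3)$), and finally apply Lemma \ref{junner}/Lemma \ref{sado} to identify $|F_\bullet|\simeq G$ with $j(\calY,\calO_\calY)$; the final parenthetical remark follows from $(3)$. \textbf{The main obstacle} is $(6)$: one has to be careful that the colimit $(\calY,\calO_\calY)$ of the simplicial diagram of $\calG$-schemes computed in $\LGeo(\calG)^{op}_{\mathet}$ actually represents $G$, which requires knowing that $\phi=j\circ\Spec^\calG$ restricted to \'etale morphisms preserves colimits (Lemma \ref{kokin}) and that $G$ is a sheaf (given). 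Finally $(7)$ follows by the same mechanism: $\Shv(\Pro(\calG))_{\mathet}$ contains $\Sch(\calG)_{\mathet}$ by $(3)$, and colimits of \'etale diagrams can be computed as in $(6)$; closure of $\Sch(\calG)_{\mathet}$ under these colimits is Proposition \ref{sizem}$(3)$, and the inclusion preserving colimits is Lemma \ref{kokin} combined with the fact that the inclusion $\Shv(\Pro(\calG))\subseteq\widehat{\Shv}(\Pro(\calG))$ preserves colimits (Proposition \ref{kumber}$(6)$).
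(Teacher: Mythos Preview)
Your treatment of $(1)$, $(2)$, $(4)$, $(5)$, and $(6)$ is essentially the paper's argument. Two points deserve comment.

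\textbf{On $(3)$: you have the two directions reversed in difficulty.} The ``only if'' direction (if $\alpha\colon F\to G$ is \'etale and $G$ is representable, then $F$ is representable \'etale over $(\calX,\calO_\calX)$) is \emph{immediate}: in Definition~\ref{psab} you may take the test object to be $G$ itself with $\beta=\id_G$, and then $F\simeq F\times_G G$ is representable \'etale by hypothesis. No gluing via Lemma~\ref{sado} is needed. The ``if'' direction is the one with a tiny bit of content: given that $F$ is representable \'etale over $(\calX,\calO_\calX)$, you must check that for \emph{every} representable $G'\to G$ the fiber product $F\times_G G'$ is representable \'etale, and this is exactly Remark~\ref{pre4}. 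Your argument is not wrong, just inverted and overcomplicated on one side.

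\textbf{On $(7)$: there is a genuine gap.} The subcategory $\Shv(\Pro(\calG))_{\mathet}$ has \emph{all} objects of $\Shv(\Pro(\calG))$, not only those representable by $\calG$-schemes, so a small diagram $\{F_\alpha\}$ in it need not lie in $\Sch(\calG)_{\mathet}$, and Proposition~\ref{sizem}$(3)$ and Lemma~\ref{kokin} do not directly apply. The real content of $(7)$ is: given such a diagram with \'etale transition maps and colimit $F$ computed in $\Shv(\Pro(\calG))$, show that each structure map $F_\alpha\to F$ is \'etale. The paper does this by reducing (via Remark~\ref{saab} and universality of colimits) to the case where $F$ is affine, say $\Spec^\calG A=(\calX,\calO_\calX)$; then the restriction of each $F_\alpha$ to the admissible slice over $A$ determines an object $U_\alpha\in\calX$ via Theorem~\ref{scoo}, and Proposition~\ref{kumber} gives $\colim U_\alpha\simeq 1_\calX$. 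One then invokes descent (Theorem~\toposref{charleschar}) in the very large $\infty$-topos $\widehat{\Shv}(\Pro(\calG))$ to conclude that each $F_\alpha$ is equivalent to the functor represented by $(\calX_{/U_\alpha},\calO_\calX|U_\alpha)$, hence representable \'etale. Only after this does one check the universal property of the colimit in the \'etale subcategory, by showing that $F\to G$ is \'etale if and only if each composite $F_\alpha\to G$ is (using $(3)$, Lemma~\ref{kokin}, and Remark~\ref{lopus}). Your sketch of $(7)$ does not contain this argument; ``colimits of \'etale diagrams can be computed as in $(6)$'' is not enough, since $(6)$ concerns the very special situation of a \v Cech nerve of an effective epimorphism from representable objects.
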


\begin{proof}
Assertions $(1)$ and $(2)$ follow immediately from the definition. 
The ``only if'' direction of $(3)$ follows immediately from the definition, and the
``if'' direction follows from Remark \ref{pre4}. To prove $(4)$, let us suppose that
$\alpha: F \rightarrow F'$ is a retract of an \etale morphism $\beta: G \rightarrow G'$. 
Let $F'_0$ be a functor representable by a $\calG$-scheme $(\calX, \calO_{\calX} )$. Let
$\calC$ denote the full subcategory of $\Shv( \Pro(\calG) )_{/F'_0}$ spanned by those functors which are representable by $\calG$-schemes which are \etale over $(\calX, \calO_{\calX})$. We wish to prove that $F \times_{F'} F'_0$ belongs to $\calC$. By assumption, $F \times_{F'} F'_0$ is a retract of
$G \times_{ G'} F'_0 \in \calC$. It will therefore suffice to show that $\calC$ is idempotent complete, which follows from the observation that $\calC \simeq \calX$.

We now prove $(5)$. Suppose first that $f$ is \etale; we wish to prove that $h$ is \etale.
Without loss of generality, we may suppose that $H$ is representable by a $\calG$-scheme
$(\calX, \calO_{\calX})$. Since $g$ is \etale, $G$ is representable by a $\calG$-scheme
$(\calX_{/U}, \calO_{\calX} | U )$. Since $f$ is \etale, we conclude that $F$ is representable
by $(\calX_{/V}, \calO_{\calX} | V)$ for some morphism $V \rightarrow U$ in $\calX$.

For the converse, let us suppose that $h$ is \etale; we wish to prove that $f$ is \etale. 
Consider a morphism $G_0 \rightarrow G$, where $G_0$ is representable by a $\calG$-scheme
$(\calX, \calO_{\calX})$; we wish to show that $F \times_G G_0$ is representable by a
$\calG$-scheme \etale over $(\calX, \calO_{\calX})$. Pulling back by the composite map
$G_0 \rightarrow G \rightarrow H$, we may assume that $H$ is representable by 
$(\calX, \calO_{\calX})$. Since $g$ and $h$ are \etale, the functors $F$ and $G$
are representable by $\calG$-schemes $(\calX_{/U}, \calO_{\calX} | U)$ and
$(\calX_{/V}, \calO_{\calX} | V)$, respectively. Then $f$ is induced by an \etale map 
of $\calG$-schemes, classified by a morphism $U \rightarrow V$ in $\calX$ (Remark \ref{unipop}), and therefore \etale (by virtue of $(3)$).

To prove $(6)$, let $F_0 = \coprod F_{\alpha} \in \Shv( \Pro(\calG) )$, so that $F_0$ is also representable by a $\calG$-scheme (Proposition \ref{kokin}). Let $F_{\bigdot}$ be the simplicial object of $\Shv( \Pro(\calG) )$ determined by the \Cech nerve of the effective epimorphism $F_0 \rightarrow G$. We observe that the map $F_0 \rightarrow G$ is \etale (this is a special case of $(7)$, but is also easy to check directly). Using assertions $(2)$ and $(5)$, we conclude that $F_{\bigdot}$
can be regarded as a simplicial object in $\Shv( \Pro(\calG) )_{\mathet}$. Since $F_0$ is representable by a $\calG$-scheme, assertion $(3)$ implies that each $F_{n}$ is representable by a $\calG$-scheme, so that we obtain a simplicial object $( \calX_{\bigdot}, \calO_{\calX_{\bigdot}})$ in 
$\Sch(\calG)_{\mathet}$. Let $(\calX, \calO_{\calX}) \in \Sch(\calG)$ denote the colimit of this diagram.
Lemma \ref{kokin} implies that $(\calX, \calO_{\calX})$ represents the functor
$| F_{\bigdot} | \simeq G \in \Shv( \Pro(\calG))$. 

To prove assertion $(7)$, let us consider a small diagram $\{ F_{\alpha} \}$ in
$\Shv( \Pro(\calG) )_{\mathet}$ having a colimit $F$ in $\Shv( \Pro(\calG) )$. We first claim that
each of the maps $F_{\alpha} \rightarrow F$ is \etale. Since colimits in $\Shv( \Pro(\calG) )$ are universal, we may assume without loss of generality that $F$ is representable by an
affine $\calG$-scheme $\Spec^{\calG} A = (\calX, \calO_{\calX})$ (see Remark \ref{saab}), so there is a canonical point $\eta \in F(A)$. For each index $\alpha$, let 
$G_{\alpha}: \Ind(\calG^{op})_{A/} \rightarrow \SSet$ denote the fiber of the map
$$F_{\alpha} | \Ind(\calG^{op})^{\adm}_{A/} \rightarrow F | \Ind( \calG^{op})^{\adm}_{A/}$$
over the point determined by $\eta$. Using Theorem \ref{scoo}, we can identify $G_{\alpha}$ with
an object $U_{\alpha} \in \calX$. Proposition \ref{kumber} implies that $\colim \{ U_{\alpha} \} \simeq 1_{\calX}$. Applying Theorem \toposref{charleschar} (in the very large $\infty$-topos $\widehat{\Shv}( \Pro(\calG) )$), we conclude that each of the diagrams
$$ \xymatrix{ F'_{\alpha} \ar[r] \ar[d] & F_{\alpha} \ar[d] \\
F' \ar[r] & F }$$
is a pullback square. However, since $\colim \{ U_{\alpha} \} \simeq 1_{\calX}$, the bottom horizontal map is an equivalence (Lemma \ref{kokin}). It follows that the upper horizontal map is also an equivalence, so that $F_{\alpha}$ is representable by a $\calG$-scheme which is \etale over
$(\calX, \calO_{\calX})$.

To complete the proof of $(7)$, it will suffice to show that a morphism $f: F \rightarrow G$
in $\Shv(\Pro(\calG))$ is \etale if and only if each of the composite maps $f_{\alpha}: F_{\alpha} \rightarrow
F \rightarrow G$ is \etale. The ``only if'' direction follows from the argument given above. Let us therefore suppose that each of the maps $f_{\alpha}$ is \etale; we wish to show that $f$ is \etale. Without loss of generality, we may assume that $G$ is representable by a
$\calG$-scheme $(\calY, \calO_{\calY})$. Then each $F_{\alpha}$ is representable by
a $\calG$-scheme $(\calY_{/V_{\alpha}}, \calO_{\calY} | V_{\alpha})$. Using assertion
$(3)$ and Lemma \ref{kokin}, we conclude that $F$ is representable by
a $\calG$-scheme $(\calX, \calO_{\calX})$, which is covered by the
$\calG$-schemes $(\calY_{/V_{\alpha}}, \calO_{\calY} | V_{\alpha} )$. The desired
result now follows from assertion $(3)$ and Remark \ref{lopus}.
\end{proof}


\subsection{Algebraic Geometry (Zariski topology)}\label{exzar}

Throughout this section, we fix a commutative ring $k$. Our goal is to show how to recover
the classical theory of $k$-schemes from our general formalism of geometries. More precisely, we will introduce a geometry $\calG_{\Zar}(k)$, such that $\calG_{\Zar}(k)$-structures on an $\infty$-topos
$\calX$ can be identified with ``sheaves of commutative local $k$-algebras'' on $\calX$. We begin
with a concrete discussion of sheaves of $k$-algebras.

\begin{definition}
Let $\calX$ be an $\infty$-topos. A {\it commutative $k$-algebra in $\calX$} is
a $k$-algebra object in the underlying topos $\h{(\tau_{\leq 0} \calX)}$ of discrete objects
of $\calX$. In other words, a commutative $k$-algebra in $\calX$ is a discrete object
$A \in \calX$, equipped with addition and multiplication maps
$$ A \times A \stackrel{+}{\rightarrow} A \quad \quad A \times A \stackrel{\times}{\rightarrow} A$$
and scalar multiplication maps $A \stackrel{\lambda}{\rightarrow} A$
for each $\lambda \in k$, which are required to satisfy the usual axioms for a commutative $k$-algebra. The commutative $k$-algebras in $\calX$ form a category, which we will denote by 
$\Comm_{k}(\calX)$. 
\end{definition}

\begin{example}
Let $\calX = \SSet$. Then $\Comm_{k}(\calX)$ can be identified with the usual category of commutative $k$-algebras. We will denote this (ordinary) category by $\Comm_{k}$.
\end{example}

Let $A$ be an arbitrary discrete object of an $\infty$-topos $\calX$. Then $A$ represents
a functor $e_{A}: \h{\calX}^{op} \rightarrow \Set$, described by the formula
$$ e_{A}(X) = \Hom_{ \h{\calX}}( X, A) \simeq \pi_0 \bHom_{\calX}( X,A).$$ 
According to Yoneda's lemma, giving a commutative $k$-algebra structure on $A$ is
equivalent to giving a commutative $k$-algebra structure on the functor $e_A$: in other words, producing a factorization
$$ \xymatrix{ & \Comm_{k} \ar[dd] \\
\h{\calX}^{op} \ar[ur]^{\overline{e}_A} \ar[dr]^{e_A} & \\
& \Set, }$$
where the vertical arrow denotes the evident forgetful functor.
In other words, we can identify commutative $k$-algebras in $\calX$ with functors
$\overline{e}_{A}: \h{\calX}^{op} \rightarrow \Comm_{k}$, whose underlying set-valued functor
is representable. Such a functor can be identified with a map of $\infty$-categories
$\widetilde{e}_{A}: \calX^{op} \rightarrow \Nerve(\Comm_{k})$. In view of
Proposition \toposref{representable}, the representability condition is equivalent to the
requirement that $\widetilde{e}_{A}$ preserve small limits. This proves the following result:

\begin{proposition}\label{skup}
Let $\calX$ be an $\infty$-topos. Then there is a canonical equivalence of $\infty$-categories
$$ \Shv_{ \Comm_{k} }(\calX) \simeq \Nerve \Comm_{k}(\calX),$$
where the left side is described in Definition \ref{swinging}.
\end{proposition}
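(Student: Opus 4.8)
The plan is to identify both sides of the asserted equivalence with the same subcategory of a functor category, namely the limit-preserving functors $\calX^{op} \to \Nerve(\Comm_k)$, and to do so by reducing all the content to the Yoneda lemma plus Proposition \toposref{representable}. First I would set up the forgetful functor $U \colon \Nerve(\Comm_k) \to \SSet$ (obtained from the ordinary forgetful functor $\Comm_k \to \Set$, noting that $\Comm_k$ is a $1$-category so its nerve lands among $0$-truncated spaces). Observe that $U$ preserves small limits: limits in $\Comm_k$ are computed on underlying sets, and the nerve functor preserves limits. Composition with $U$ therefore induces a functor
$$ \Fun(\calX^{op}, \Nerve(\Comm_k)) \longrightarrow \Fun(\calX^{op}, \SSet) $$
which carries $\Shv_{\Comm_k}(\calX) = \Fun^{\lex\text{-ish}}$ — more precisely the full subcategory of limit-preserving functors, per Definition \ref{swinging} — into $\Shv_{\SSet}(\calX) \simeq \calX$ (using that $\calX$ is an $\infty$-topos, so a limit-preserving functor $\calX^{op} \to \SSet$ is the same as an object of $\calX$, by Proposition \toposref{representable}).

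\textbf{Key steps.} The heart of the argument is to unwind the definition of $\Nerve(\Comm_k)(\calX) = \Comm_k(\h{(\tau_{\le 0}\calX)})$ exactly as in the discussion preceding the statement: a commutative $k$-algebra object $A$ in $\calX$ is a discrete object together with the algebra operations $+, \times, \lambda$ on $A$ satisfying the ring/$k$-algebra axioms. By the Yoneda lemma applied in the ordinary category $\h{\calX}$, equipping $A$ with such operations is the same as equipping the representable functor $e_A \colon \h{\calX}^{op} \to \Set$ with the structure of a functor $\overline{e}_A \colon \h{\calX}^{op} \to \Comm_k$ lifting $e_A$. Passing to $\infty$-categorical language (and using that $\Comm_k$ is a $1$-category, so $\Fun(\calX^{op}, \Nerve(\Comm_k))$ is a $1$-category and hence determined by its homotopy category), such a lift corresponds to an $\infty$-functor $\widetilde{e}_A \colon \calX^{op} \to \Nerve(\Comm_k)$ whose composite with $U$ is $e_A$ viewed in $\SSet$. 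Then invoke Proposition \toposref{representable}: a functor $\calX^{op} \to \SSet$ is representable (equivalently, comes from a discrete object, here $A$) if and only if it preserves small limits — so the representability constraint on the underlying $\SSet$-valued functor is precisely the condition that $\widetilde{e}_A$ be a $\Comm_k$-valued \emph{sheaf} in the sense of Definition \ref{swinging}. This produces a functor $\Nerve \Comm_k(\calX) \to \Shv_{\Comm_k}(\calX)$, and the construction $A \mapsto \widetilde{e}_A$ together with $F \mapsto F(1_\calX)$ (evaluation at a final object, which recovers the underlying discrete object since limit-preserving $\Rightarrow$ discrete-valued when the target is a $1$-category) are mutually inverse. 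One should also check naturality/functoriality, which is automatic from the Yoneda-lemma formulation, and that morphisms correspond: a natural transformation of $k$-algebra objects is the same as a natural transformation of the associated sheaves, again by Yoneda.

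\textbf{Main obstacle.} The genuinely delicate point is the bookkeeping around truncation: the definition of $\Comm_k(\calX)$ forces $A$ to be $0$-truncated, and one must verify that on the sheaf side the limit-preservation condition of Definition \ref{swinging} automatically forces the values to lie in $\tau_{\le 0}$ of wherever they land — this works because $\Nerve(\Comm_k)$ consists of $0$-truncated objects, being the nerve of an ordinary category, so \emph{every} functor into it is "discrete-valued" and there is no extra constraint to impose. A second, more technical obstacle is making the identification of "functor $\h{\calX}^{op} \to \Comm_k$ lifting a representable functor" with "$\infty$-functor $\calX^{op} \to \Nerve(\Comm_k)$ preserving limits" fully rigorous: this requires knowing that mapping into the nerve of a $1$-category only sees the homotopy category of the source, which is standard but should be cited (it follows from the adjunction between $\sCoNerve$ and $\Nerve$, or from the fact that $\Nerve$ of a $1$-category is $1$-truncated as an $\infty$-category). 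Once these coherence issues are dispatched, the equivalence is essentially the Yoneda lemma dressed in $\infty$-categorical clothing, and I would present it as such rather than constructing an explicit quasi-inverse by hand.
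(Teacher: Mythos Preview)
Your proposal is correct and follows essentially the same route as the paper: the paper's proof is the discussion immediately preceding the proposition, which uses the Yoneda lemma in $\h{\calX}$ to identify a $k$-algebra structure on a discrete object $A$ with a lift $\overline{e}_A \colon \h{\calX}^{op} \to \Comm_k$ of the representable functor $e_A$, passes to the $\infty$-categorical functor $\widetilde{e}_A \colon \calX^{op} \to \Nerve(\Comm_k)$, and then invokes Proposition \toposref{representable} to trade representability of the underlying set-valued functor for the limit-preservation condition defining $\Shv_{\Comm_k}(\calX)$. Your account is more explicit about the truncation and $1$-category coherence bookkeeping, but the argument is the same.
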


\begin{notation}
Let $\Comm_{k}^{\fin}$ denote the full subcategory of $\Comm_{k}$ spanned by those
commutative $k$-algebras which are of finite presentation; that is, $k$-algebras of the form
$$ k[x_1, \ldots, x_n] / ( f_1, \ldots, f_m ).$$
We let $\calG(k)$ denote the $\infty$-category $\Nerve( \Comm_{k}^{\fin} )^{op}$,
regarded as a {\em discrete geometry}.
\end{notation}

\begin{remark}
We can identify $\calG(k)$ with the (nerve of the) category of affine $k$-schemes
which are of finite presentation over $k$. 
\end{remark}

\begin{remark}\label{hunner}
The category $\Comm_{k}$ is compactly generated, and the compact objects of
$\Comm_{k}$ are precisely the finitely generated commutative $k$-algebras. Consequently, we have a canonical equivalence of $\infty$-categories $\Nerve(\Comm_{k}) \simeq \Ind( \calG(k)^{op})$. 
\end{remark}

Combining Proposition \ref{skup}, Remark \ref{tunner}, and Remark \ref{hunner},  we obtain the following:

\begin{proposition}\label{skuupp}
Let $\calX$ be an $\infty$-topos. Then there is a canonical equivalence of $\infty$-categories
$$ \Struct_{ \calG(k) }( \calX ) \simeq \Nerve \Comm_{k}(\calX).$$
\end{proposition}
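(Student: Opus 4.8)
The statement to prove is Proposition \ref{skuupp}: for every $\infty$-topos $\calX$, there is a canonical equivalence $\Struct_{\calG(k)}(\calX) \simeq \Nerve \Comm_k(\calX)$. Since $\calG(k)$ is a \emph{discrete} geometry, the plan is to chain together three equivalences already established in the excerpt, so that the proof reduces to bookkeeping rather than any new construction. The three links are: (i) because $\calG(k)$ is discrete, the Remark following Definition \ref{disker} gives $\Struct_{\calG(k)}(\calX) = \Struct^{\loc}_{\calG(k)}(\calX) = \Fun^{\lex}(\calG(k), \calX) \simeq \Shv_{\Ind(\calG(k)^{op})}(\calX)$ (the last equivalence being Remark \ref{tunner}, valid since $\calG(k) = \Nerve(\Comm_k^{\fin})^{op}$ is small and admits finite limits); (ii) Remark \ref{hunner} identifies $\Ind(\calG(k)^{op})$ with $\Nerve(\Comm_k)$, which yields $\Shv_{\Ind(\calG(k)^{op})}(\calX) \simeq \Shv_{\Comm_k}(\calX)$; (iii) Proposition \ref{skup} gives $\Shv_{\Comm_k}(\calX) \simeq \Nerve \Comm_k(\calX)$.

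\textbf{Key steps in order.} First I would invoke the Remark after Definition \ref{disker}: since the admissible morphisms in $\calG(k)$ are exactly the equivalences and the Grothendieck topology is trivial, the effective-epimorphism condition in Definition \ref{psyab} is vacuous, so $\Struct_{\calG(k)}(\calX)$ coincides with the full subcategory $\Fun^{\lex}(\calG(k), \calX)$ of left exact functors. Second, I would apply Remark \ref{tunner} (equivalently Remark \ref{switcheru}) with $\calG = \calG(k)$: this requires $\calG(k)$ to be a small $\infty$-category admitting finite limits, which holds because $\Comm_k^{\fin}$ admits finite colimits (pushouts of finitely presented $k$-algebras along finitely presented maps are finitely presented), hence $\calG(k) = \Nerve(\Comm_k^{\fin})^{op}$ admits finite limits. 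This gives the canonical equivalence $\Fun^{\lex}(\calG(k), \calX) \simeq \Shv_{\Ind(\calG(k)^{op})}(\calX)$. Third, I would cite Remark \ref{hunner} — which records that $\Comm_k$ is compactly generated with compact objects the finitely presented $k$-algebras, so that $\Nerve(\Comm_k) \simeq \Ind(\calG(k)^{op})$ — and transport along this equivalence of coefficient $\infty$-categories to rewrite $\Shv_{\Ind(\calG(k)^{op})}(\calX)$ as $\Shv_{\Comm_k}(\calX)$. Finally, Proposition \ref{skup} supplies the equivalence $\Shv_{\Comm_k}(\calX) \simeq \Nerve \Comm_k(\calX)$, and composing the four identifications completes the proof. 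One should also remark that each equivalence is natural in $\calX$ (with respect to pullback along geometric morphisms), so the resulting identification is genuinely canonical, not merely an abstract equivalence.

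\textbf{Main obstacle.} There is no deep obstacle here; the proposition is essentially a formal consequence of results already proved. The only point requiring a moment's care is verifying the hypotheses of Remark \ref{tunner} and Remark \ref{hunner} for $\calG(k)$ — namely that $\Comm_k^{\fin}$ is closed under finite colimits inside $\Comm_k$ (so $\calG(k)$ has finite limits) and that $\Comm_k^{\fin}$ is precisely the subcategory of compact objects of $\Comm_k$ (so the Ind-completion identification holds). Both are standard facts of commutative algebra over a fixed base ring $k$: finite presentation is preserved under tensor products and quotients by finitely generated ideals, and a filtered colimit argument shows a $k$-algebra is compact in $\Comm_k$ if and only if it is finitely presented. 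Once these are in hand, the proof is just the composition of the cited equivalences.
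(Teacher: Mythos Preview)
Your proof is correct and follows exactly the approach the paper takes: the statement is presented there as an immediate consequence of combining Proposition \ref{skup}, Remark \ref{tunner}, and Remark \ref{hunner}, which is precisely the chain of equivalences you assemble (together with the observation that $\Struct_{\calG(k)}(\calX) = \Fun^{\lex}(\calG(k),\calX)$ since $\calG(k)$ is discrete). Your additional verification that $\calG(k)$ admits finite limits and that $\Comm_k^{\fin}$ consists of the compact objects of $\Comm_k$ makes the argument more self-contained than the paper's one-line justification.
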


To recover the classical theory of schemes, we will view $\Nerve( \Comm_{k}^{fin})^{op}$
as a geometry in a slightly different way.

\begin{definition}\label{jin}
Let $k$ be a commutative ring. 
We define a geometry $\calG_{\Zar}(k)$ as follows:
\begin{itemize}
\item[$(1)$] The underlying $\infty$-category of $\calG_{\Zar}(k)$ is $\Nerve(\Comm_{k}^{\fin})^{op}$.
\item[$(2)$] Let $f$ be a morphism in $\calG_{\Zar}(k)$, which we can identify with a map
$A \rightarrow B$ of commutative $k$-algebras. Then $f$ is admissible if and only if there exists
an element $a \in A$ such that $f$ induces an isomorphism $A[ \frac{1}{a} ] \simeq B$.
\item[$(3)$] Suppose given a collection of admissible morphisms $\{ \phi_{\alpha}: U_{\alpha} \rightarrow X \}$ in $\calG_{\Zar}(k)$, corresponding to maps $A \rightarrow A[ \frac{1}{a_{\alpha} }]$
of commutative $k$-algebras. Then the admissible morphisms $\phi_{\alpha}$ generate a covering sieve on $X$ if and only if the elements $\{ a_{\alpha} \}$ generate the unit ideal of $A$.
\end{itemize}
\end{definition}

\begin{remark}
Condition $(3)$ of Definition \ref{jin} is equivalent to the requirement that the collection of maps
$\{ \phi_{\alpha}: U_{\alpha} \rightarrow X \}$ are jointly surjective, when viewed as maps of affine $k$-schemes. To see this, let $\mathfrak{a} \subseteq A$ denote the ideal generated by the elements
$\{ a_{\alpha} \}$. If $\mathfrak{a} = A$, then no prime ideal $\mathfrak{p}$ of $A$ can contain every
element $a_{\alpha}$, so that $\mathfrak{p}$ belongs to the image of some $\phi_{\alpha}$. 
Conversely, if $\mathfrak{a} \neq A$, then $\mathfrak{a}$ is contained in some prime ideal
$\mathfrak{p}$ of $A$, which does not belong to the image of any $\phi_{\alpha}$. See the proof of Lemma \ref{stu2} for more details. 
\end{remark}

\begin{remark}
Every admissible morphism in $\calG_{\Zar}(k)$ corresponds to an open immersion
in the category of affine $k$-schemes. However, the converse is false: not every open immersion
$U \rightarrow X$ of affine $k$-schemes arises from a localization morphism $A \rightarrow A[ \frac{1}{a} ]$. For example, let $k$ be an algebraically closed field, $E$ an elliptic curve over $k$,
$X = E - \{ x \}$ and $U = E - \{x,y \}$; here $x$ and $y$ denote closed points of $X$. Then $U$ is the nonvanishing locus of a regular function on $X$ if and only if the difference $x-y$ is torsion with respect to the group structure on $E$.

In Definition \ref{jin}, we can enlarge the class of admissible morphisms to include {\em all} open immersions between affine $k$-schemes of finite presentation; the resulting theory is the same.
We will prove a version of this assertion in \S \ref{app6}.
\end{remark}

\begin{remark}\label{testlo}
Let $X$ be a topological space, and let $\calO_{X}$ be a sheaf of commutative $k$-algebras on $X$
(in the usual sense). Using Proposition \ref{skuupp}, we can identify $\calO_X$ with a
$\calG(k)$-structure $\overline{\calO}_X: \calG(k) \rightarrow \Shv(X)$. The relationship between
$\calO_X$ and $\overline{\calO}_X$ can be described more precisely as follows: if $A$ is a commutative $k$-algebra of finite presentation, then $\overline{\calO}_X(A)$ is a sheaf (of spaces) on $X$ whose value on an open set
$U \subseteq X$ is the (discrete) set $\Hom_{\Comm_k}( A, \calO_X(U) )$ of $k$-algebra homomorphisms from $A$ to $\calO_X(U)$. 

We note that $\overline{\calO}_X$ belongs to $\Struct_{\calG_{\Zar}(k)}( \Shv(X) )$ if and only if, for every
collection of homomorphisms $\{ A \rightarrow A[ \frac{1}{a_{\alpha} }] \}$ such that 
the elements $a_{\alpha}$ generate the unit ideal in $A$, the induced map
$$ \coprod_{\alpha} \overline{\calO}_X( A[ \frac{1}{a_{\alpha} }] ) \rightarrow \overline{\calO}_X(A)$$
is an epimorphism of sheaves on $X$. Unwinding the definitions, this condition asserts that
for every open set $U$ and every $k$-algebra homomorphism
$$ \phi: A \rightarrow \calO_{X}(U),$$
if we define $U_{\alpha} \subseteq U$ to be the largest open subset over which the section
$\phi(a_{\alpha}) \in \calO_X(U)$ is invertible, then the open subsets $U_{\alpha}$ cover $U$.
In other words, at every point $x \in X$, at least one of the sections $\phi(a_{\alpha})$ is invertible.
This is equivalent to the requirement that every stalk $\calO_{X,x}$ be a {\em local} ring.
\end{remark}

\begin{remark}\label{testhigh}
Let $f: (X, \calO_X) \rightarrow (Y, \calO_Y)$ be a map of topological spaces ringed by commutative $k$-algebras. In other words, suppose we have a continuous map of topological spaces $f: X \rightarrow Y$, and a homomorphism $f^{\ast} \calO_Y \rightarrow \calO_X$ of sheaves of commutative $k$-algebras on $X$. As in Remark \ref{testlo}, we can identify $\calO_X$ and $\calO_Y$ with 
$\calG(k)$-structures $\overline{\calO}_X: \calG(k) \rightarrow \Shv(X)$ and
$\overline{\calO}_Y: \calG(k) \rightarrow \Shv(Y)$, respectively. The map
$f$ itself determines a morphism
$$ \overline{f}: ( \Shv(X), \overline{\calO}_X) \rightarrow ( \Shv(Y), \overline{\calO}_{Y} )$$
in the $\infty$-category $\LGeo( \calG(k) )^{op}$. Suppose furthermore that
$\calO_X$ and $\calO_Y$ are sheaves of local rings, so that
$(\Shv(X), \overline{\calO}_X)$ and $( \Shv(Y), \overline{\calO}_{Y} )$ belong to the subcategory
$\LGeo( \calG_{\Zar}(k) )^{op} \subseteq \LGeo( \calG(k) )^{op}$. The morphism
$\overline{f}$ belongs to $\LGeo( \calG_{\Zar}(k) )^{op}$ if and only if, for every
admissible morphism $A \rightarrow A[ \frac{1}{a} ]$ between commutative $k$-algebras of finite presentation, the diagram
$$ \xymatrix{ f^{\ast} \overline{\calO}_Y( A[ \frac{1}{a} ]) \ar[r] \ar[d] & \overline{\calO}_{X}(A[ \frac{1}{a} ]) \ar[d] \\
f^{\ast} \overline{\calO}_{Y}(A) \ar[r] & \overline{\calO}_{X}(A). }$$
Unwinding the definitions, this amounts to the following condition: let $U$ be an open subset of $Y$, and let $a \in \calO_{Y}(U)$. Then, for every point $x \in f^{-1} U \subseteq X$, the restriction
$f^{\ast}(a) \in \calO_X( f^{-1} U)$ is invertible at $x$ if and only if the section $a$ is invertible
at $f(x)$. In other words, $\overline{f}$ belongs to $\LGeo( \calG_{\Zar}(k) )^{op}$ if and only if
$f: (X, \calO_X) \rightarrow (Y, \calO_Y)$ is a morphism in the category of {\em locally} ringed spaces.
\end{remark}

\begin{definition}
Let $\calG$ be a geometry, $n \geq 0$ a nonnegative integer, and let $(\calX, \calO_{\calX}) \in \LGeo(\calG)$. We will say that $(\calX, \calO_{\calX} )$ is {\it $n$-localic} if the $\infty$-topos
$\calX$ is $n$-localic, in the sense of Definition \toposref{stuffera}. 
\end{definition}

We recall that an $\infty$-topos $\calX$ is $0$-localic if and only if $\calX$ is equivalent to the
$\infty$-category of sheaves (of spaces) of some locale (see \S \toposref{0topoi}). If $\calX$ has enough points, then we can identify this locale with the lattice of open subsets of a sober topological space $X$
(recall that a topological space $X$ is said to be {\it sober} if every irreducible subset of $X$ has a unique generic point). Combining this observation with Remarks \ref{testlo} and \ref{testhigh}, we obtain the following result:

\begin{proposition}\label{stum}
\begin{itemize}
\item[$(1)$] Let $\LGeo'$ denote the full subcategory of $\LGeo$ spanned by those
$0$-localic $\infty$-topoi with enough points, and let $\Top_{\sob}$ denote the category of
{\em sober} topological spaces and continuous maps. Then there is a canonical equivalence of $\infty$-categories 
$$ \Nerve( \Top_{\sob} ) \simeq (\LGeo')^{op}.$$

\item[$(2)$] Let $\LGeo'( \calG(k) )$ denote the full subcategory of
$\LGeo(\calG(k))$ spanned by those pairs $(\calX, \calO_{\calX} )$, where
$\calX$ is a $0$-localic $\infty$-topos with enough points. Let $\RingSpace_{k}$ denote the
ordinary category of pairs $(X, \calO_X)$, where $X$ is a sober topological space and
$\calO_X$ is a sheaf of commutative $k$-algebras on $X$. Then there is a canonical
equivalence of $\infty$-categories
$$ \Nerve( \RingSpace_{k} ) \simeq \LGeo'(\calG(k) )^{op}.$$

\item[$(3)$] Let $\LGeo'( \calG_{\Zar}(k) )$ denote the full subcategory of $\LGeo(\calG_{\Zar}(k))$
spanned by those pairs $(\calX, \calO_{\calX} )$ where $\calX$ is a $0$-localic $\infty$-topos with enough points, and let $\RingSpace_{k}^{\loc}$ denote the ordinary category of pairs
$(X, \calO_X)$ where $X$ is a sober topological space and $\calO_X$ a sheaf of commutative $k$-algebras on $X$ with local stalks (morphisms are required to induce local homomorphisms on each stalk). Then there is a canonical equivalence of $\infty$-categories
$$ \alpha: \Nerve( \RingSpace_{k}^{\loc} ) \simeq \LGeo'( \calG_{\Zar}(k) )^{op}.$$
\end{itemize}
\end{proposition}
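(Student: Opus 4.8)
The plan is to deduce all three assertions from the general machinery already developed, with assertion $(3)$ being the substantive one and assertions $(1)$ and $(2)$ serving as warm-ups. For $(1)$, the key input is the characterization of $0$-localic $\infty$-topoi as $\Shv(\calL)$ for a locale $\calL$ (reference \S\toposref{0topoi}), together with the fact that a locale with enough points is precisely the lattice of open subsets of a sober topological space; this identification is functorial, and on morphisms one checks that geometric morphisms of the sheaf $\infty$-topoi (which preserve small colimits and finite limits) between $0$-localic objects with enough points correspond exactly to continuous maps. The only thing to verify is that this correspondence is an equivalence of $\infty$-categories rather than merely a bijection on objects, which follows because $\LGeo'$ is, by construction, the full subcategory of a $1$-category-like piece of $\LGeo$ — its mapping spaces are discrete once we restrict to $0$-localic objects.

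For $(2)$, I would combine $(1)$ with Proposition \ref{skuupp}: a $\calG(k)$-structure on $\calX$ is the same as an object of $\Nerve\Comm_k(\calX)$, i.e. a sheaf of commutative $k$-algebras on $\calX$ in the classical sense (Proposition \ref{skup}). Since $\calG(k)$ is a \emph{discrete} geometry, we have $\Struct^{\loc}_{\calG(k)}(\calX) = \Struct_{\calG(k)}(\calX)$, so every morphism in $\LGeo(\calG(k))$ lying over a geometric morphism $\calX \to \calY$ is automatically accounted for by a morphism of sheaves of $k$-algebras, with no locality constraint. Restricting to $0$-localic $\calX$ with enough points and invoking Remark \ref{testlo} (the dictionary between $\calG(k)$-structures on $\Shv(X)$ and sheaves of $k$-algebras on $X$) and Remark \ref{testhigh} (the dictionary on morphisms) gives the desired equivalence $\Nerve(\RingSpace_k) \simeq \LGeo'(\calG(k))^{op}$.

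For $(3)$, I would argue that $\LGeo'(\calG_{\Zar}(k))$ is the (non-full) subcategory of $\LGeo'(\calG(k))$ obtained by: (a) restricting objects to those $(\calX, \calO_{\calX})$ for which $\calO_{\calX}$ is a $\calG_{\Zar}(k)$-structure — i.e. satisfies the effective-epimorphism condition of Definition \ref{psyab} for Zariski covers — and (b) restricting morphisms to the local ones. By Remark \ref{testlo}, condition (a) translates, for $\calX = \Shv(X)$ with $X$ sober, precisely into the requirement that every stalk $\calO_{X,x}$ be a local ring; this uses that $\Shv(X)$ has enough points so that the $\calG$-structure condition can be tested stalkwise (Remark \ref{henhun}). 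By Remark \ref{testhigh}, condition (b) — that a morphism in $\LGeo(\calG(k))$ belong to $\LGeo(\calG_{\Zar}(k))$, i.e. induce pullback squares on admissible morphisms — translates into the requirement that the induced maps on stalks be local homomorphisms, again testing stalkwise via Remark \ref{henhun}. Assembling these two translations gives a functor $\alpha: \Nerve(\RingSpace_k^{\loc}) \to \LGeo'(\calG_{\Zar}(k))^{op}$ which is essentially surjective (every $0$-localic object with enough points comes from a sober space) and fully faithful (the mapping-space computation reduces to the classical description of morphisms of locally ringed spaces, since all mapping spaces in sight are discrete).

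The main obstacle I anticipate is the careful bookkeeping in the ``stalkwise'' reductions: one must be sure that $0$-localic $\infty$-topoi with enough points really do have \emph{enough points} in the sense required by Remark \ref{henhun}, and that the effective epimorphism condition in an arbitrary $\infty$-topos genuinely reduces, upon taking stalks, to surjectivity of the corresponding maps of sheaves of sets (so that the local-ring condition on stalks is both necessary and sufficient). This is where the hypothesis ``enough points'' is doing real work, and where a naive argument might overlook the difference between a $\calG_{\Zar}(k)$-structure and a sheaf of local rings. Once this point is handled, the rest is a routine unwinding of Definitions \ref{jin}, \ref{psyab}, and the compatibility Remarks \ref{testlo} and \ref{testhigh}.
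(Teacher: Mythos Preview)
Your proposal is correct and follows essentially the same approach as the paper. The paper does not give a detailed proof of this proposition; it simply states that the result follows by ``combining this observation [that $0$-localic $\infty$-topoi with enough points correspond to sober spaces] with Remarks \ref{testlo} and \ref{testhigh}'', and your write-up is a faithful expansion of exactly that outline, invoking Proposition \ref{skuupp} for part $(2)$ and the stalkwise criteria of Remarks \ref{henhun}, \ref{testlo}, and \ref{testhigh} for part $(3)$.
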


We can now describe the relationship of our theory with classical algebraic geometry.

\begin{theorem}\label{stumm}
Let $\Sch_{k}$ denote the category of $k$-schemes, regarded as a full subcategory
of $\RingSpace_{k}^{\loc}$. Then the equivalence $\alpha$ of Proposition \ref{stum} induces
a fully faithful embedding
$$ \Nerve( \Sch_{k} ) \rightarrow \Sch( \calG_{\Zar}(k) ).$$
The essential image of this embedding consists of those $\calG_{\Zar}(k)$-schemes
$(\calX, \calO_{\calX} )$ for which the $\infty$-topos $\calX$ is $0$-localic.
\end{theorem}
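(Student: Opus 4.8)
The plan is to compare the classical functor-of-points description of $k$-schemes with the $\infty$-categorical one provided by Theorem \ref{skil}, reducing everything to the affine case. First I would set up the precise comparison on objects and on morphism spaces. Using Proposition \ref{stum}, the category $\RingSpace_k^{\loc}$ of locally ringed spaces is identified with the full subcategory of $\LGeo(\calG_{\Zar}(k))^{op}$ spanned by pairs $(\calX, \calO_{\calX})$ with $\calX$ a $0$-localic $\infty$-topos having enough points. A $k$-scheme is, by definition, such a locally ringed space $(X, \calO_X)$ which is locally isomorphic to $(\Spec A, \calO_{\Spec A})$ for $A$ a commutative $k$-algebra. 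The first key step is therefore to verify that under the equivalence $\alpha$, the affine $k$-scheme $(\Spec A, \calO_{\Spec A})$ (for $A$ of finite presentation over $k$) corresponds to the $\calG_{\Zar}(k)$-scheme $\Spec^{\calG_{\Zar}(k)} A$, where we regard $A \in \Comm_k^{\fin} \simeq \calG_{\Zar}(k)^{op}$. This should follow by comparing universal properties: $(\Spec A, \calO_{\Spec A})$ is universal among locally ringed spaces equipped with a map $A \to \Gamma(X, \calO_X)$, and $\Spec^{\calG_{\Zar}(k)} A$ is characterized (via Definition \ref{defspect}) as a right adjoint to the global-sections functor; Remark \ref{testlo} and Remark \ref{testhigh} make the identification of $\calG_{\Zar}(k)$-structures with sheaves of local $k$-algebras, and of local transformations with local ring homomorphisms, completely explicit, so the two universal properties match. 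One also needs the elementary fact that the $\infty$-topos $\Shv(\Spec A)$ is $0$-localic with enough points, which is classical. For general $A$ not of finite presentation one writes $A$ as a filtered colimit of finitely presented $k$-algebras and uses that $\Spec^{\calG}$ sends such colimits appropriately; but since a general $k$-scheme is only required to be locally of the form $\Spec A$ for \emph{some} commutative $k$-algebra, I would be slightly careful here, but note that the Zariski topology allows covering $\Spec A$ by standard opens $\Spec A[1/f]$, and every $k$-scheme admits an affine open cover, so locality reduces matters to comparing covers.

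Next I would address full faithfulness. Given two $k$-schemes $(X, \calO_X)$ and $(Y, \calO_Y)$, with images $(\calX, \calO_{\calX})$ and $(\calY, \calO_{\calY})$ in $\LGeo(\calG_{\Zar}(k))^{op}$, I must show that the map
$$ \Hom_{\Sch_k}((X, \calO_X), (Y, \calO_Y)) \longrightarrow \bHom_{\Sch(\calG_{\Zar}(k))}((\calX, \calO_{\calX}), (\calY, \calO_{\calY})) $$
is a homotopy equivalence; in particular the right-hand side is discrete. Because both sides are sheaves for the Zariski topology on $X$ (the right side by Proposition \ref{sizem2} together with the descent properties of $\LGeo(\calG)$, and the left side by the standard gluing of morphisms of schemes), and because an \'etale cover of $(\calX, \calO_{\calX})$ by affine $\calG$-schemes comes precisely from an affine open cover of $X$ (Proposition \ref{scanh}(5) identifies $(\LGeo(\calG)^{op}_{\mathet})_{/(\calX,\calO_{\calX})}$ with $\calX$), I can reduce to the case where $X = \Spec A$ is affine, and then further by covering $Y$ reduce to $Y = \Spec B$ affine as well — though here one must take care that morphisms $X \to Y$ do not respect the cover of $Y$, so the reduction is via the standard argument: pull back the affine cover of $Y$ along the candidate map and use descent on $X$. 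In the doubly affine case, $\Hom_{\Sch_k}(\Spec A, \Spec B) \simeq \Hom_{\Comm_k}(B, A)$ classically, while $\bHom_{\Sch(\calG)}(\Spec^{\calG} A, \Spec^{\calG} B) \simeq \bHom_{\Pro(\calG)}(B, A) \simeq \bHom_{\Comm_k}(B,A)$ by Theorem \ref{scoo} (noting that the Zariski topology on $\calG_{\Zar}(k)$ is precanonical, as in Remark \ref{juna}, so Remark \ref{armur} applies); one checks these agree. This handles $(2)$ of Theorem \ref{skil}'s analogue directly, essentially reproving it in this concrete setting.

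Finally I would characterize the essential image. That the image lands among $0$-localic $\calG_{\Zar}(k)$-schemes is clear from the construction, since $\Shv(\Spec A)$ is $0$-localic and this property is local (an $\infty$-topos with a cover by $0$-localic \'etale slices is $0$-localic, cf. Theorem \toposref{stuffera} and the discussion of $0$-localic topoi). Conversely, given a $\calG_{\Zar}(k)$-scheme $(\calX, \calO_{\calX})$ with $\calX$ $0$-localic: since $\calG_{\Zar}(k)$ is a discrete ($0$-truncated) geometry, Theorem \ref{top4} (or its $0$-truncated instance) tells us $\calX$ is automatically $1$-localic, but more to the point, a $0$-localic scheme is covered by affine $\calG_{\Zar}(k)$-schemes $\Spec^{\calG} A_\alpha$; each $\Shv(\Spec A_\alpha)$ is $0$-localic, and one must check that $\calX$ has enough points. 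Here I would use that $\calX$ admits an \'etale cover by the $\Shv(\Spec A_\alpha)$, each of which has enough points, and that this suffices for $\calX$ to have enough points — whence $(\calX, \calO_{\calX})$ lies in $\LGeo'(\calG_{\Zar}(k))$ and corresponds under $\alpha$ to a sober locally ringed space $(X, \calO_X)$ which is locally isomorphic to affine $k$-schemes, i.e. a $k$-scheme. The main obstacle I anticipate is the bookkeeping around the two reductions to the affine case — specifically, making the descent argument for morphism spaces rigorous when the source, not just the target, must be covered, and confirming that "$0$-localic with enough points" descends along \'etale covers; the underlying mathematics is routine, but threading the $\infty$-categorical descent statements (Proposition \ref{sizem2}, Proposition \ref{scanh}) through the comparison cleanly requires some care.
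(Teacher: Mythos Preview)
Your proposal is essentially correct, but you are doing far more work than necessary. The key point you have overlooked is that full faithfulness is \emph{automatic}: by Proposition~\ref{stum}, the functor $\alpha$ is already an equivalence from $\Nerve(\RingSpace_k^{\loc})$ onto a full subcategory of $\LGeo(\calG_{\Zar}(k))^{op}$, and $\Sch(\calG_{\Zar}(k))$ is a full subcategory of $\LGeo(\calG_{\Zar}(k))^{op}$. So once you know the image of a $k$-scheme lands in $\Sch(\calG_{\Zar}(k))$, full faithfulness is free. Your entire middle paragraph --- the descent argument reducing to the doubly affine case, the worry about discreteness of mapping spaces, the invocation of Remark~\ref{armur} --- is redundant.

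The paper's proof is correspondingly much shorter. The only content is: (i) the affine comparison (Lemma~\ref{stu2}: $\alpha$ carries $\SSpec A$ to $\Spec^{\calG_{\Zar}(k)} A$), which immediately implies that the image of any $k$-scheme is a $\calG_{\Zar}(k)$-scheme since the condition is local; and (ii) the converse direction, which proceeds exactly as you outline --- use that ``enough points'' is \'etale-local (Lemma~\ref{stu3}) to identify a $0$-localic $\calG_{\Zar}(k)$-scheme with a sober locally ringed space, then use the \'etale cover by affines to conclude it is a scheme (Lemma~\ref{stu4}). Your universal-property argument for the affine comparison is fine; the paper explicitly notes this route works but opts for a direct computation of the locale underlying $\USpec A$ via Theorem~\ref{scoo}.
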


The proof of Theorem \ref{stumm} will require a few preliminaries. 

\begin{lemma}\label{stu11}
Let $X$ be a topological space. Suppose that $X$ is locally sober: that is, $X$ is the union of its sober open subsets. Then $X$ is sober.
\end{lemma}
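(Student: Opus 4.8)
\textbf{Proof proposal for Lemma \ref{stu11}.} The plan is to verify the two defining properties of a sober space: that every irreducible closed subset has a generic point, and that such a point is unique. Recall that the uniqueness of generic points is equivalent to the $T_0$ separation axiom, and this is a local condition: if $X$ is covered by sober (in particular $T_0$) open sets, then two points with the same closure lie in a common open set $U$ of the cover (indeed, if $\overline{\{x\}} = \overline{\{y\}}$ and $x \in U$, then $y \in \overline{\{x\}}$; but $U$ is open and $x \in U$, so the only way to avoid $y \in U$ would be... — here one uses that $\overline{\{x\}} \cap U = \overline{\{x\} \cap U}$ computed in $U$, which forces $y \in U$), and $U$ being $T_0$ forces $x = y$.

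The substance of the argument is the existence of generic points. So I would start with an irreducible closed subset $Z \subseteq X$, and choose a sober open set $U$ in the given cover with $U \cap Z \neq \emptyset$. The key observations are: (i) $U \cap Z$ is closed in $U$; (ii) $U \cap Z$ is irreducible (a nonempty open subset of an irreducible space is irreducible, and $U \cap Z$ is open in $Z$); (iii) because $Z$ is irreducible, $U \cap Z$ is dense in $Z$, i.e.\ $\overline{U \cap Z} = Z$ where the closure is taken in $X$. By sobriety of $U$, the irreducible closed subset $U \cap Z$ of $U$ has a generic point $\eta \in U \cap Z$, meaning $\overline{\{\eta\}} \cap U = U \cap Z$ (closure computed in $X$ and intersected with $U$, or equivalently closure in $U$). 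Then I would conclude $\overline{\{\eta\}} = \overline{U \cap Z} = Z$: the inclusion $\overline{\{\eta\}} \subseteq Z$ holds since $\eta \in Z$ and $Z$ is closed, and conversely $Z = \overline{U \cap Z} \subseteq \overline{\overline{\{\eta\}} \cap U} \subseteq \overline{\{\eta\}}$ using (iii) and the fact that $U \cap Z \subseteq \overline{\{\eta\}}$. Hence $\eta$ is a generic point of $Z$.

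The main obstacle — such as it is — is bookkeeping with the two different ambient spaces for closure operations ($X$ versus $U$) and making sure the elementary topological identities (closure of a subset of $U$ intersected back with $U$, density of open subsets of irreducible spaces) are applied correctly; there is no deep content. Everything reduces to the standard facts that nonempty open subsets of irreducible spaces are irreducible and dense, and that $T_0$ is inherited from an open cover. I would write this out in a few lines, treating the uniqueness clause first and then the existence clause as above.
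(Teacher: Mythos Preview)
Your existence argument is identical to the paper's: pick a sober open $U$ meeting the irreducible closed set $Z$, observe $U\cap Z$ is irreducible closed in $U$ and dense in $Z$, take its generic point in $U$, and conclude it is a generic point of $Z$ in $X$. One small wobble in your uniqueness paragraph: from $\overline{\{x\}}=\overline{\{y\}}$ and $x\in U$ you want $y\in U$, but the direction you invoke, $y\in\overline{\{x\}}$, does not give this; use instead $x\in\overline{\{y\}}$, so the open neighborhood $U$ of $x$ must meet $\{y\}$, i.e.\ $y\in U$. The paper organizes uniqueness a bit more economically by reusing the same $U$: any generic point $y$ of $Z$ lies in every nonempty open subset of $Z$, in particular in $U\cap Z$, and then sobriety of $U$ forces $y=\eta$.
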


\begin{proof}
Let $K$ be an irreducible closed subset of $X$. Since $K$ is nonempty, there exists a sober
open subset $U \subseteq X$ such that $U \cap K \neq \emptyset$. Since $K$ is irreducible
conclude that $K$ is the closure of $U \cap K$ in $X$, and that $U \cap K$ is an irreducible closed subset of $U$. Since $U$ is sober, $U \cap K$ is the closure (in $U$) of some point $x \in U \cap K$.
Then $K$ is the closure of $\{x\}$ in $X$; in other words, $x$ is a generic point of $K$. Let
$y$ be another generic point of $K$, so that $K = \overline{ \{y\} }$. Since $K \cap U \neq \emptyset$, we must have $y \in K$, so that $y$ is a generic point of $K \cap U$ in $U$. Since $U$ is sober, we
conclude that $y = x$.
\end{proof}

\begin{lemma}\label{stu2}
Let $A$ be a commutative $k$-algebra. Then the functor $\alpha$ of Proposition \ref{stum} carries
the affine $k$-scheme $\SSpec A$ to the $\calG_{\Zar}(k)$-scheme $\Spec^{\calG_{\Zar}(k)} A$.
\end{lemma}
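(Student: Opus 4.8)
The plan is to verify the universal property: for every object $(\calX, \calO_{\calX}) \in \LGeo(\calG_{\Zar}(k))^{op}$ (equivalently, $\LGeo(\calG_{\Zar}(k))$), one must produce a natural homotopy equivalence
$$ \bHom_{\LGeo(\calG_{\Zar}(k))^{op}}\bigl( (\calX, \calO_{\calX}), \alpha(\SSpec A) \bigr) \simeq \bHom_{\Pro(\calG_{\Zar}(k))^{op}}\bigl( A, \Gamma_{\calG_{\Zar}(k)}(\calX, \calO_{\calX}) \bigr), $$
where we identify $A$ with its image in $\Ind(\calG_{\Zar}(k)^{op})$ (a compact object, since $A \in \Comm_k^{\fin}$). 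By Theorem \ref{scoo} and its reformulation, it is enough to exhibit $\alpha(\SSpec A)$ as an absolute spectrum $\Spec^{\calG_{\Zar}(k)} A$. First I would recall the classical fact that $\SSpec A$ enjoys the corresponding universal property in $\RingSpace_k^{\loc}$: for any sober-space-based locally ringed space $(X, \calO_X)$, maps $(X, \calO_X) \to (\SSpec A, \calO_{\SSpec A})$ in $\RingSpace_k^{\loc}$ correspond to $k$-algebra homomorphisms $A \to \Gamma(X, \calO_X)$. The content of the lemma is that this classical universal property, transported through the equivalence $\alpha$ of Proposition \ref{stum}(3), matches the universal property defining $\Spec^{\calG_{\Zar}(k)} A$ — but with the crucial caveat that $\LGeo(\calG_{\Zar}(k))$ contains $\infty$-topoi which are not $0$-localic, so the classical statement does not immediately suffice.

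The key steps, in order. (1) Using Remark \ref{testlo} and Proposition \ref{skuupp}, identify the $\calG_{\Zar}(k)$-structure $\alpha$ assigns to the ringed space $(\SSpec A, \calO_{\SSpec A})$ with the functor $\overline{\calO}_{\SSpec A}: \calG_{\Zar}(k) \to \Shv(\SSpec A)$ whose value on $B \in \Comm_k^{\fin}$ is the sheaf $U \mapsto \Hom_{\Comm_k}(B, \calO_{\SSpec A}(U))$; check that the global sections $\Gamma_{\calG_{\Zar}(k)}$ of this object recover $A$ itself as a pro-object, which is immediate since $\Gamma(\SSpec A, \calO_{\SSpec A}) = A$. (2) Construct the unit map $A \to \Gamma_{\calG_{\Zar}(k)}(\alpha(\SSpec A))$ — this is the identity on $A$ — and reduce, via Theorem \ref{scoo}, Proposition \ref{proscoo}, to checking that for every $(\calY, \calO_{\calY}) \in \LGeo(\calG_{\Zar}(k))$ the canonical map
$$ \bHom_{\LGeo(\calG_{\Zar}(k))}\bigl( \alpha(\SSpec A), (\calY, \calO_{\calY}) \bigr) \to \bHom_{\calY}\bigl( 1_{\calY}, \calO_{\calY}(A) \bigr) $$
is an equivalence. (3) Compare with the explicit model $\USpec A = \Shv(\Pro(\calG_{\Zar}(k))^{\adm}_{/A})$ of Theorem \ref{scoo}: since $\calG_{\Zar}(k)$ is a $0$-truncated (indeed discrete-valued) finitary geometry and $A$ is compact, the admissible slice $\Pro(\calG_{\Zar}(k))^{\adm}_{/A}$ is equivalent to the poset of basic open sets $U_a = \SSpec A[\frac1a]$ of the topological space $\SSpec A$, with covering sieves exactly as in the Zariski topology; hence $\Shv(\Pro(\calG_{\Zar}(k))^{\adm}_{/A}) \simeq \Shv(\SSpec A)$ as $\infty$-topoi, and under this equivalence the structure sheaf $\calO_{\USpec A}$ goes to $\overline{\calO}_{\SSpec A}$. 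Here the Grothendieck topology on $\Pro(\calG_{\Zar}(k))^{\adm}_{/A}$ should be checked to be precanonical (Remark \ref{juna}), so that no further sheafification intervenes. (4) Conclude by invoking Theorem \ref{scoo} (via Proposition \ref{proscoo}): the canonical transformation $A \to \Gamma_{\calG_{\Zar}(k)}(\USpec A, \calO_{\USpec A})$ is adjoint to an equivalence $\Spec^{\calG_{\Zar}(k)} A \xrightarrow{\sim} (\USpec A, \calO_{\USpec A}) \simeq \alpha(\SSpec A)$.

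The main obstacle will be step (3): identifying $\Pro(\calG_{\Zar}(k))^{\adm}_{/A}$ — which is defined purely in terms of pro-objects and admissible (localization) morphisms in $\Comm_k^{\fin}$ — with the combinatorial data of basic open subsets of $\SSpec A$, and verifying that the induced Grothendieck topology coincides with the Zariski topology and is precanonical. Concretely, one must show: (i) every admissible morphism $B \to A$ in $\Pro(\calG_{\Zar}(k))$ is, up to the equivalence defining admissibility, a filtered limit of localizations $A' \to A'[\frac1{a'}]$ base-changed to $A$, and reduces to a principal localization $A \to A[\frac1a]$ because $A$ is compact (use Remark \ref{proadmadm}); (ii) the lattice these objects form, together with the covering sieves of Notation \ref{scun}, is exactly the site of basic opens of $\SSpec A$ with the Zariski topology; this amounts to the classical facts that $D(a) \subseteq \bigcup D(a_\alpha)$ iff $a$ lies in the radical of the ideal $(a_\alpha)$ and that the elements $a_\alpha$ generate the unit ideal iff the $D(a_\alpha)$ cover, as flagged in the remark following Definition \ref{jin} and to be proven in detail here. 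Everything else is bookkeeping: the passage through Proposition \ref{stum}, Remark \ref{testlo}, and Remark \ref{testhigh} is routine once the site identification is in hand.
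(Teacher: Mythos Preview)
Your proposal is correct and follows essentially the paper's own approach: the paper also opts for the concrete route via Theorem \ref{scoo}, identifying $(\USpec A, \calO_{\USpec A})$ with $(\Shv(\SSpec A), \calO_{\SSpec A})$ by analyzing the site $\Pro(\calG_{\Zar}(k))^{\adm}_{/A}$ of localizations $A[\frac1a]$ and checking precanonicity. The paper supplies more detail at your step (3) --- explicitly computing the locale of saturated sieves as the lattice of radical ideals of $A$, identifying its points with prime ideals, and verifying it has enough points --- and note that the lemma is stated for \emph{arbitrary} $A$, not just $A \in \Comm_k^{\fin}$; your argument still goes through, since by Notation \ref{ilk} an admissible morphism over any $A \in \Pro(\calG_{\Zar}(k))$ is a pullback of some $j(B'[\frac1b]) \to j(B')$ and hence a principal localization $A \to A[\frac1a]$, no compactness of $A$ (nor Remark \ref{proadmadm}) required.
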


\begin{remark}
In the statement of Lemma \ref{stu2}, we have implicitly identified 
$\Pro(\calG_{\Zar}(k) )$ with the (opposite of the nerve of the) category of commutative $k$-algebras.
\end{remark}

\begin{proof}
It is possible to prove Lemma \ref{stu2} by showing that $\SSpec A$ and $\Spec^{\calG_{\Zar}(k)} A$ can be described by the same universal property. We opt instead for a more concrete approach, using the equivalence $\Spec^{\calG_{\Zar}(k)} A \simeq ( \USpec A, \calO_{ \USpec A} )$ supplied by Theorem \ref{scoo}. We begin by observing that 
$\Pro( \calG_{\Zar}(k) )^{\adm}_{/A} \simeq \Nerve(\calC)$, where $\calC$ is the opposite of the category of commutative
$A$-algebras having the form $A[ \frac{1}{a}]$, for some $a \in A$. We observe that the category $\calC$
is equivalent to a partially ordered set (in other words, there is at most one morphism between any pair of objects of $\calC$). It follows that $\USpec A \simeq \Shv( \calC )$ is a $0$-localic $\infty$-topos, and
is therefore determined by its underlying locale $\calU$ of subobjects of the unit object.

Our first goal is to describe the locale $\calU$ more explicitly. By definition, $\calU$ is given by the partially ordered set of sieves $\calC^{0} \subseteq \calC$ which are {\em saturated} in the following sense:
\begin{itemize}
\item[$(\ast)$] If $a$ is an element of $A$, $\{ b_{\alpha} \}$ a collection of elements of
$A[ \frac{1}{a} ]$ which generate the unit ideal in $A[ \frac{1}{a} ]$, and each
localization $A[ \frac{1}{a} ][ \frac{1}{b_{\alpha}} ]$ belongs to $\calC^{0}$, then
$A[\frac{1}{a} ] \in \calC^{0}$.
\end{itemize}
For every saturated sieve $\calC^{0} \subseteq \calC$, we let $I( \calC^{0} ) = \{ a \in A: A[ \frac{1}{a} ] \in \calC^{0} \}$. It is clear that $I(\calC^{0})$ determines
the sieve $\calC^{0}$. We observe that the set $I( \calC^{0} )$ has the following properties:
\begin{itemize}
\item[$(1)$] If $a \in I(\calC^{0})$ and $\lambda \in A$, then $\lambda a \in I(\calC^{0})$ (since $\calC^{0}$ is a sieve).
\item[$(2)$] If $a, b \in I(\calC^{0})$, then $a + b \in I(\calC^{0})$ (since $A[ \frac{1}{a+b}, \frac{1}{a}],
A[ \frac{1}{a+b}, \frac{1}{b}] \in \calC^{0}$, and the elements $a$ and $b$ generate the unit ideal
$A[ \frac{1}{a+b} ]$).
\item[$(3)$] If $a \in A$ and $a^{n} \in I(\calC^{0})$ for some $n > 0$, then $a \in I(\calC^{0})$
(since there is a canonical isomorphism $A[ \frac{1}{a} ] \simeq A[ \frac{1}{a^n} ]$). 
\end{itemize}
In other words, $I(\calC^{0})$ is a radical ideal of $A$. 

Conversely, if $I \subseteq A$ is any radical ideal, then we can define $\calC^{0}$ to be the collection of all commutative $A$-algebras which are isomorphic to $A[ \frac{1}{a} ]$, for some $a \in A$. Since
$I$ is closed under multiplication, we conclude that $\calC^{0}$ is a sieve on $\calC$. We claim
that $\calC^{0}$ is saturated and that $I = I( \calC^{0})$. We first prove the second claim in the following form: if $a \in A$ is such that $A[ \frac{1}{a} ] \in \calC^{0}$, then $a \in I$. For in this case, we have
an isomorphism of $A$-algebras $A[ \frac{1}{a} ] \simeq A[ \frac{1}{b} ]$ for some $b \in I$. It
follows that $b$ is invertible in $A[ \frac{1}{a} ]$, so that $\lambda b = a^n$ for some $\lambda \in A$,
$n > 0$. Since $I$ is a radical ideal, we conclude that $a \in I$ as desired.

We now claim that $\calC^{0}$ is saturated. For this, we must show that if
$\{ b_\alpha \}$ is a collection of elements of $A[ \frac{1}{a} ]$ which generate the unit ideal and
each $A[ \frac{1}{a} ][ \frac{1}{b_{\alpha}} ] \in \calC^{0}$, then $a \in I$. Without loss of generality,
we may assume that each $b_{\alpha}$ arises from an element of $A$ (which, by abuse of notation, we will continue to denote by $b_{\alpha}$). The condition that the $b_{\alpha}$ generate the unit ideal in $A[ \frac{1}{a} ]$ guarantee an equation of the form $\sum \lambda_{\alpha} b_{\alpha} = a^n$. 
Since each product $a b_{\alpha}$ belongs to $I$, we conclude that 
$a^{n+1} = \sum_{ \alpha} \lambda_{\alpha} a b_{\alpha} \in I$. Since $I$ is a radical ideal, we deduce that $a \in I$, as desired.

The above argument shows that we can identify the locale $\calU$ with the collection of all radical ideals in $A$, partially ordered by inclusion. Unwinding the definitions, we can identify
{\em points} of $\calU$ with proper radical ideals $\mathfrak{p} \subset A$ satisfying the following additional condition:
\begin{itemize}
\item If $\mathfrak{p}$ is contains the intersection $I \cap I'$ of two radical ideals of $A$, then
either $I \subseteq \mathfrak{p}$ or $I' \subseteq \mathfrak{p}$.
\end{itemize}
If $\mathfrak{p}$ fails to satisfy this condition, then we can choose $a \in I$, $b
\in I'$ such that $a, b \notin \mathfrak{p}$. The product $ab$ belongs to 
$I \cap I' \subseteq \mathfrak{p}$, so that $\mathfrak{p}$ is not prime. Conversely, suppose that
$\mathfrak{p}$ is prime. If $\mathfrak{p}$ fails to contain some element $a \in I$, then
for every $b \in I'$, the inclusion $ab \in I \cap I' \subseteq \mathfrak{p}$ guarantees that
$b \in \mathfrak{p}$, so that $I' \subseteq \mathfrak{p}$. This proves that the set of points of
$\calU$ can be identified with the Zariski spectrum $\SSpec A$ consisting of all prime ideals of $A$.
Note that the induced topology on $\SSpec A$ agrees with the usual Zariski topology: the closed subsets are exactly those of the form $\{ \mathfrak{p} \subseteq A: I \subseteq \mathfrak{p} \}$, where
$I$ is a radical ideal of $A$. 

We next claim that $\calU$ has enough points. Unwinding the definitions, this is equivalent to the assertion that if $I \subset I'$ is a proper inclusion of radical ideals of $A$, then there exists a prime
ideal of $A$ which contains $I$ but not $I'$. Replacing $A$ by $A/I$, we may reduce to the case
$I = 0$. Since $I \neq I'$, there exists a nonzero element $a \in I'$. Since $I = 0$ is a radical ideal,
$a$ is not nilpotent. Replacing $A$ by $A[ \frac{1}{a} ]$, we may reduce to the case $I' = A$.
We are therefore reduced to the following classical fact: every nonzero commutative ring contains a prime ideal.

Since $\calU$ has enough points, it can be identified with the collection of open subsets of
the topological space $\SSpec A$. In other words, we have a canonical equivalence of
$\infty$-topoi $\USpec A \simeq \Shv( \SSpec A)$. To complete the proof, it will suffice to show that
the structure sheaves of $\USpec A$ and $\SSpec A$ agree. In other words, we must show that the functor
$\calO_{ \USpec A}: \calG_{\Zar}(k) \rightarrow \Shv( \SSpec A)$ can be described by the formula
$$ \calO_{\USpec A}(R)(A[ \frac{1}{a}]) \simeq \Hom_{ \Comm_{k} }( R, A[ \frac{1}{a} ]).$$
By definition, $\calO_{ \USpec A}(R)$ is the {\em sheaf} associated to the presheaf described by this formula. The desired result now follows from the fact (already implicit in the statement of the lemma)
that this presheaf is already a sheaf.
\end{proof}

\begin{lemma}\label{stu1}
Let $(X, \calO_X)$ be a scheme. Then the topological space $X$ is sober.
\end{lemma}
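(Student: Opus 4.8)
\textbf{Proof plan for Lemma \ref{stu1}.}
The plan is to reduce the statement to the affine case, where it follows from Lemma \ref{stu11} and the classical description of the Zariski spectrum. First I would recall that, by definition, a scheme $(X, \calO_X)$ admits an open covering $\{ U_\alpha \subseteq X \}$ such that each $(U_\alpha, \calO_X | U_\alpha)$ is isomorphic, as a ringed space, to $(\SSpec A_\alpha, \calO_{\SSpec A_\alpha})$ for some commutative ring $A_\alpha$. In particular $X$ is the union of its open subsets $U_\alpha$, each of which is homeomorphic to the underlying topological space of an affine scheme $\SSpec A_\alpha$. By Lemma \ref{stu11}, it therefore suffices to show that each $U_\alpha$ is sober; that is, it suffices to prove the lemma in the special case where $(X, \calO_X) = (\SSpec A, \calO_{\SSpec A})$ is affine.

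It remains to verify that the topological space $\SSpec A$ is sober for any commutative ring $A$. This is the standard fact that every irreducible closed subset of $\SSpec A$ has a unique generic point, and I would prove it by the usual correspondence between closed subsets and radical ideals. Every closed subset of $\SSpec A$ has the form $V(I) = \{ \mathfrak{p} : I \subseteq \mathfrak{p} \}$ for a unique radical ideal $I$; such a subset is irreducible precisely when $I$ is prime. Indeed, if $V(I)$ is irreducible and $ab \in I$, then $V(I) \subseteq V((a)) \cup V((b))$, forcing $V(I) \subseteq V((a))$ or $V(I) \subseteq V((b))$, hence $a \in I$ or $b \in I$; conversely, if $\mathfrak{p}$ is prime and $V(\mathfrak{p}) = V(J) \cup V(J')$ with $J, J'$ radical, then $\mathfrak{p} \supseteq J \cap J' \supseteq JJ'$, so $\mathfrak{p} \supseteq J$ or $\mathfrak{p} \supseteq J'$, whence $V(\mathfrak{p})$ equals $V(J)$ or $V(J')$. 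Thus the irreducible closed subsets of $\SSpec A$ are exactly the sets $V(\mathfrak{p})$ with $\mathfrak{p}$ prime, and $V(\mathfrak{p}) = \overline{ \{ \mathfrak{p} \} }$, so $\mathfrak{p}$ is a generic point. Uniqueness of the generic point follows because $V(\mathfrak{p}) = V(\mathfrak{q})$ forces $\mathfrak{p} = \mathfrak{q}$ (both being the unique radical ideal cutting out that closed set, or equivalently $\mathfrak{p} = \bigcap_{\mathfrak{r} \in V(\mathfrak{p})} \mathfrak{r} = \mathfrak{q}$).

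Much of the affine computation here overlaps with the argument already carried out in the proof of Lemma \ref{stu2}, where the set of points of the relevant locale was identified with $\SSpec A$ together with its Zariski topology; I would point to that argument rather than repeat it in full. There is no serious obstacle in this lemma: the only thing to be careful about is that the definition of scheme used in the paper is the sheaf-theoretic one (a ringed space locally isomorphic to $(\SSpec A, \calO_{\SSpec A})$), so the reduction step genuinely needs Lemma \ref{stu11} to pass from the local soberness of an open cover to the soberness of $X$ itself, rather than invoking soberness as a hypothesis built into the definition.
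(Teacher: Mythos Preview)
Your proposal is correct and follows essentially the same approach as the paper: reduce to the affine case via Lemma \ref{stu11}, then invoke the soberness of $\SSpec A$. The paper simply cites Lemma \ref{stu2} for the affine case (where the locale-theoretic description identifies points with prime ideals), whereas you spell out the classical prime-ideal argument directly; you even note this overlap, so there is no substantive difference.
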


\begin{proof}
In view of Lemma \ref{stu11}, we may assume that $( X, \calO_X )$ is affine, so that $X$ is the
Zariski spectrum of some commutative ring $A$. The desired result now follows from Lemma \ref{stu2}.
\end{proof}

We next prove an analogue of Lemma \ref{stu11} in the context of $\infty$-topoi.

\begin{lemma}\label{stu3}
Let $\calX$ be an $\infty$-topos, and let $\{ U_{\alpha} \}$ be a collection of objects of
$\calX$ which cover the final object of $\calX$. Suppose that each
of the $\infty$-topoi $\calX_{/U}$ has enough points. Then $\calX$ has enough points.
\end{lemma}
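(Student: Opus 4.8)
The plan is to reduce the statement to the characterization of $\infty$-topoi with enough points given in Remark \toposref{notenough}, namely that $\calX$ has enough points if and only if the family of all points $x^{\ast}: \calX \rightarrow \SSet$ is jointly conservative: a morphism $f$ in $\calX$ is an equivalence whenever $x^{\ast}(f)$ is an equivalence for every point $x$. So first I would fix a morphism $f: Y \rightarrow Z$ in $\calX$ with the property that $x^{\ast}(f)$ is an equivalence for every point $x$ of $\calX$, and aim to show $f$ is an equivalence.

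The key observation is that, since equivalences can be checked locally on the $U_{\alpha}$ (the collection $\{U_{\alpha}\}$ covers the final object, so $\coprod U_{\alpha} \to 1_{\calX}$ is an effective epimorphism, and colimits in $\calX$ are universal, so $f$ is an equivalence iff each base change $f \times_{1_{\calX}} \mathrm{id}_{U_{\alpha}}$ is an equivalence in $\calX_{/U_{\alpha}}$), it suffices to show that the image of $f$ under each projection $\pi_{\alpha}^{\ast}: \calX \rightarrow \calX_{/U_{\alpha}}$ is an equivalence. By hypothesis each $\calX_{/U_{\alpha}}$ has enough points, so this in turn reduces to checking that $y^{\ast}(\pi_{\alpha}^{\ast} f)$ is an equivalence for every point $y^{\ast}: \calX_{/U_{\alpha}} \rightarrow \SSet$. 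The final step is then to observe that any such $y^{\ast}$ is of the form $x^{\ast}$ for some point $x^{\ast}: \calX \rightarrow \SSet$ of $\calX$: indeed, $y^{\ast}$ composed with the étale (hence geometric) morphism $\pi_{\alpha}^{\ast}: \calX \rightarrow \calX_{/U_{\alpha}}$ yields a geometric morphism $\calX \rightarrow \SSet$, i.e. a point $x^{\ast}$ of $\calX$, and by construction $x^{\ast} f \simeq y^{\ast}(\pi_{\alpha}^{\ast} f)$. Since $x^{\ast} f$ is an equivalence by our standing hypothesis, we are done.

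Assembling these steps: given $f$ with $x^{\ast} f$ an equivalence for all points $x$ of $\calX$, each composite $y^{\ast}(\pi_{\alpha}^{\ast} f)$ is an equivalence (as $y^{\ast} \circ \pi_{\alpha}^{\ast}$ is a point of $\calX$), so $\pi_{\alpha}^{\ast} f$ is an equivalence in $\calX_{/U_{\alpha}}$ because that $\infty$-topos has enough points, so $f$ itself is an equivalence because the $U_{\alpha}$ cover $1_{\calX}$ and colimits are universal. Hence the points of $\calX$ are jointly conservative, i.e. $\calX$ has enough points. I expect the only genuinely delicate point to be the identification of points of $\calX_{/U_{\alpha}}$ with (certain) points of $\calX$ — more precisely, making precise that precomposition with the geometric morphism $\pi_{\alpha}^{\ast}$ sends a point of $\calX_{/U_{\alpha}}$ to a point of $\calX$ and that this is compatible with the pullback of the morphism $f$; everything else is a routine application of universality of colimits in an $\infty$-topos together with Remark \toposref{notenough}.
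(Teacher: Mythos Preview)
Your proof is correct and follows essentially the same approach as the paper: both arguments rest on the observation that composing a point $y^{\ast}: \calX_{/U_{\alpha}} \rightarrow \SSet$ with the \'etale geometric morphism $\pi_{\alpha}^{\ast}: \calX \rightarrow \calX_{/U_{\alpha}}$ yields a point of $\calX$, together with the fact that equivalences in $\calX$ can be detected after pullback to the cover. The only cosmetic difference is that the paper phrases the argument in contrapositive form (start with a non-equivalence $f$, find an $\alpha$ where $\pi_{\alpha}^{\ast} f$ is not an equivalence, then find a point of $\calX_{/U_{\alpha}}$ detecting this), while you argue directly.
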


\begin{proof}
Let $f: X \rightarrow Y$ be a morphism of $\calX$ which is not an equivalence. We wish to show that there exists a point $\phi^{\ast}: \calX \rightarrow \SSet$ such that $\phi^{\ast}(f)$ is not an equivalence.
For each index $\alpha$, let $\psi_{\alpha}^{\ast}: \calX \rightarrow \calX_{/U_{\alpha}}$ be a left adjoint to the projection. Since the objects $\{ U_{\alpha} \}$ cover $\calX$, there exists an index $\alpha$ such that $\psi_{\alpha}^{\ast}(f)$ is not an equivalence in $\calX_{/U_{\alpha}}$. Since
$\calX_{/ U_{\alpha} }$ has enough points, there exists a geometric morphism
$\phi_{\alpha}^{\ast}: \calX_{/ U_{\alpha} } \rightarrow \SSet$ such that
$\phi_{\alpha}^{\ast} \psi^{\ast}_{\alpha}(f)$ is not an equivalence. We can therefore take
$\phi^{\ast} = \phi_{\alpha}^{\ast} \circ \psi^{\ast}_{\alpha}$.
\end{proof}

\begin{lemma}\label{stu4}
Suppose given a map of ringed topological spaces $f: (Y, \calO_Y) \rightarrow (X, \calO_X)$
satisfying the following conditions:
\begin{itemize}
\item[$(1)$] The underlying map of topological spaces $Y \rightarrow X$ is a surjective local homeomorphism.
\item[$(2)$] The map $f^{\ast} \calO_{X} \rightarrow \calO_Y$ is an isomorphism of sheaves on $Y$.
\end{itemize}
If $(Y, \calO_Y)$ is a scheme, then $(X, \calO_X)$ is also a scheme.
\end{lemma}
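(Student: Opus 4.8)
The plan is to use the fact that the property of being a scheme is local on the underlying topological space, together with the observation that over the locus where $f$ is an open embedding the hypothesis $f^{\ast}\calO_X \simeq \calO_Y$ lets us transport the scheme structure from $Y$ to $X$. So the whole argument reduces to producing, for each point of $X$, an open neighborhood $U$ such that $(U, \calO_X|U)$ is a scheme; once this is done, $X$ is covered by opens which are schemes, hence by affine opens, hence is itself a scheme (and it is automatically locally ringed, since each stalk of $\calO_X$ is, via $f$, isomorphic to a stalk of $\calO_Y$).

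First I would fix a point $x \in X$. Since $f$ is surjective, choose $y \in Y$ with $f(y) = x$; since $f$ is a local homeomorphism, choose an open neighborhood $V$ of $y$ in $Y$ such that $U := f(V)$ is open in $X$ and $f|_V \colon V \to U$ is a homeomorphism, so that $U$ is an open neighborhood of $x$. The pair $(V, \calO_Y|V)$ is an open subspace of the scheme $(Y, \calO_Y)$, hence is itself a scheme. It then remains to check that $f$ induces an isomorphism of ringed spaces $(V, \calO_Y|V) \simeq (U, \calO_X|U)$, which will show that $(U, \calO_X|U)$ is a scheme as well.

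The verification of this last point is where hypotheses $(1)$ and $(2)$ are used together, and it is the only step requiring real care (though it is elementary). The formation of the inverse-image sheaf commutes with restriction to open subsets, so $(f^{\ast}\calO_X)|V$ is canonically identified with the inverse image of $\calO_X|U$ along the homeomorphism $f|_V \colon V \to U$; and the inverse image of a sheaf along a homeomorphism is obtained by simple transport of structure, with no sheafification needed. Combining these identifications with the restriction to $V$ of the isomorphism $f^{\ast}\calO_X \simeq \calO_Y$ of hypothesis $(2)$, we conclude that $f|_V$ carries $\calO_X|U$ to $\calO_Y|V$, i.e.\ is an isomorphism of ringed spaces. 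Letting $x$ vary, the resulting open sets $U$ cover $X$ and each $(U, \calO_X|U)$ is a scheme, so $(X, \calO_X)$ is a scheme as desired. (Exactly the same argument applies verbatim if ``scheme'' is replaced by ``$k$-scheme'' throughout, carrying the $k$-algebra structures along.)
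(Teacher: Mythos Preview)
Your proof is correct and follows essentially the same approach as the paper: pick a preimage of each point, use the local homeomorphism to find an open over which $f$ is a homeomorphism, and transport the (affine) scheme structure across using hypothesis $(2)$. The only cosmetic difference is that the paper shrinks the open on $Y$ to be affine before transporting, whereas you transport the scheme structure first and then observe it yields affine opens; both are fine.
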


\begin{proof}
Let $x$ be a point of $X$. Since $f$ is surjective, we can choose a point $y \in Y$ such that
$f(y) = x$. Let $U$ be an open subset of $Y$ such that the restriction $f | U$ is a homeomorphism from $U$ onto $V$, where $V$ is an open subset of $X$ containing $x$. Since $Y$ is a scheme, we may
(after shrinking $U$ if necessary) suppose that $(U, \calO_Y | U)$ is an affine scheme. Then
$(V, \calO_X | V) \simeq (U, \calO_Y| U)$ is also an affine scheme.
\end{proof}

\begin{proof}[Proof of Theorem \ref{stumm}] 
The first assertion follows immediately from Lemma \ref{stu2}. To complete the proof, let us
consider an arbitrary $0$-localic $\calG_{\Zar}(k)$-scheme $(\calX, \calO_{\calX})$. 
Using Lemmas \ref{stu2} and \ref{stu3}, we conclude that $\calX$ has enough points; we may therefore assume that $\calX = \Shv(X)$, where $X$ is a (sober) topological space, and we can identify
$\calO_{\calX}$ with a sheaf of local rings $\calO_X$ on $X$. We wish to show that
$(X, \calO_X)$ is a scheme.

Since $(\calX, \calO_{\calX})$ is a $\calG_{\Zar}(k)$-scheme, there exists an
\etale surjection $\coprod \Spec^{\calG_{\Zar}(k)} A_{\alpha} \rightarrow ( \calX, \calO_{\calX} )$,
for some collection of commutative $k$-algebras $\{ A_{\alpha} \}$. This induces a map
of locally ringed spaces $(Y, \calO_Y) \rightarrow (X, \calO_X)$ satisfying the hypotheses of
Lemma \ref{stu4}, so that $(X, \calO_X)$ is a scheme as desired.
\end{proof}

\begin{warning}\label{jui}
Let $(\calX, \calO_{\calX})$ be a $\calG_{\Zar}(k)$-scheme. As explained in \S \ref{geo7}, we
can identify $(\calX, \calO_{\calX})$ with its underlying ``functor of points''
$F: \Nerve(\Comm_{k}) \rightarrow \SSet$, which is a Zariski sheaf on $\Comm_{k}$. If $(\calX, \calO_{\calX})$ is $0$-localic, then $F$ can be identified with the usual (set-valued) functor associated to the underlying $k$-scheme. In this case, $F$ is a sheaf with respect to many other Grothendieck topologies on $\Comm_{k}$ (for example, the flat topology). However, this is {\em not} true for a general
$\calG_{\Zar}(k)$-scheme $(\calX, \calO_{\calX})$, even if $(\calX, \calO_{\calX})$ is $1$-localic.
For this reason, we will generally not consider $\calG_{\Zar}(k)$-schemes which are not $0$-localic; if we want to allow more general underlying $\infty$-topoi, then it is important to switch from the Zariski topology on $\Comm_{k}$ to the \etale topology (see \S \ref{exet}).
\end{warning}

\subsection{Algebraic Geometry (\Etale topology)}\label{exet}

Throughout this section, we fix a commutative ring $k$. In \S \ref{exzar}, we explained how
to use our formalism of geometries to recover the classical theory of $k$-schemes: namely, they are precisely the $0$-localic $\calG_{\Zar}(k)$-schemes, where $\calG_{\Zar}(k)$ is the geometry of Definition \ref{jin}. As the notation suggests, the collection of admissible morphisms and admissible coverings in $\calG_{\Zar}(k)$ is specifically geared towards the study of the Zariski topology on commutative rings (and the associated notion of a {\em local} commutative ring). In this section, we wish to describe an analogous geometry $\calG_{\mathet}(k)$ which is instead associated to the \etale topology on commutative rings. Our main result is Theorem \ref{sup}, which asserts that
$\calG_{\mathet}(k)$-schemes are closely related to the usual theory of Deligne-Mumford stacks over $k$.

\begin{warning}
The definitions presented in this section are slightly nonstandard, in that we do not require our
algebraic spaces or Deligne-Mumford stacks to satisfy any separatedness conditions. These can always be imposed later, if so desired.
\end{warning}

We begin by reviewing some definitions.

\begin{notation}\label{sint}
Let $A$ be a commutative ring. We let $\Comm_{A}^{\mathet}$ denote the category of all
commutative $A$-algebras which are \etale over $A$. We regard $(\Comm_{A}^{\mathet})^{op}$ as equipped with the following Grothendieck topology: a collection of \etale morphisms $\{ B \rightarrow B_{\alpha} \}$ is a {\it covering} if there exists a finite collection of indices $\{ \alpha_1, \ldots, \alpha_n \}$ such that the map $B \rightarrow \prod_{1 \leq i \leq n} B_{\alpha_n}$ is faithfully flat.
An {\it \etale sheaf of sets} on $A$ is a functor $\calF: \Comm_{A}^{\mathet} \rightarrow \Set$
which is a sheaf with respect to this topology: that is, for every \etale covering
$\{ B \rightarrow B_{\alpha} \}$, the associated diagram
$$\xymatrix{ \calF(B) \ar[r] & \prod_{\alpha} \calF(B_{\alpha}) \ar@<.4ex>[r] \ar@<-.4ex>[r] & 
\prod_{\alpha, \beta} \calF(B_{\alpha} \otimes_{B} B_{\beta})}$$
is an equalizer. We let $\Shv_{\Set}^{\mathet}(A)$ denote the full subcategory of
$\Fun( \Comm_{A}^{\mathet}, \Set)$ spanned by the sheaves of sets on $A$.

Let $\phi: A \rightarrow A'$ be a homomorphism of commutative rings. Composition
with the base change functor $B \mapsto B \otimes_{A} A'$ induces a pushforward functor
$\phi_{\ast}: \Shv_{\Set}^{\mathet}(A') \rightarrow \Shv_{\Set}^{\mathet}(A)$. This pushforward functor has a left adjoint, which we will denote by $\phi^{\ast}$.

Let us now fix a commutative ground rink $k$. Suppose that $A$ is a commutative $k$-algebra
and that $\calF \in \Shv_{\Set}^{\mathet}(A)$. We define a functor
$\widehat{\calF}: \Comm_{k} \rightarrow \Set$ by the following formula:
$$ \widehat{\calF}(B) = \{ (\phi, \eta): \phi \in \Hom_{k}(A,B), \eta \in (\phi^{\ast} \calF)(B) \}.$$
\end{notation}

\begin{example}
Let $k$ be a commutative ring, $A$ a commutative $k$-algebra, and let $\calF_0 \in \Shv_{\Set}^{\mathet}(A)$ be a final object (so that $\calF_0(B) \simeq \ast$ for every \etale $A$-algebra $B$). Then
$\widehat{\calF_0}: \Comm_{k} \rightarrow \Set$ can be identified with the functor $\underline{A}$ 
corepresented by $A$: that is, we have canonical bijections
$$ \widehat{\calF_0}(B) \simeq \underline{A}(B) \simeq \Hom_{k}(A, B).$$
More generally, if $\calF$ is any object of $\Shv_{\Set}^{\mathet}(A)$, then we have a unique
map $\calF \rightarrow \calF_0$ in $\Shv_{\Set}^{\mathet}(A)$, which induces a natural transformation
$\widehat{\calF} \rightarrow \underline{A}$.
\end{example}

\begin{definition}\label{grag}
Let $k$ be a commutative ring, $A$ a commutative $k$-algebra, and 
$F: \Comm_{k} \rightarrow \Set$ any functor equipped with a natural transformation
$\alpha: F \rightarrow \underline{A}$. We will say that $\alpha$ {\it exhibits $F$ as
an algebraic space \etale over $\SSpec A$} it there exists an object
$\calF \in \Shv_{\Set}^{\mathet}(A)$ and an isomorphism $F \simeq \widehat{\calF}$ in
$\Fun( \Comm_{k}, \Set)_{/ \underline{A} }$. 
\end{definition}

\begin{remark}\label{kiu}
Definition \ref{grag} is slightly more general than the usual definition of an algebraic space \etale over $\SSpec A$, as found in \cite{knutson}. Let $A$ be a commutative ring and let $\calF \in \Shv_{\Set}^{\mathet}(A)$. Choosing a set of sections $\eta_{\alpha} \in \calF( A_{\alpha} )$ which generate
$\calF$, we obtain an effective epimorphism
$$ \coprod \underline{A}_{\alpha} \rightarrow \widehat{\calF}$$
in the category of sheaves of sets on $\Comm_{k}$. However, the maps
$\underline{A}_{\alpha} \rightarrow \widehat{\calF}$ need not be relatively representable by schemes.
However, this {\em is} true if there exists a monomorphism $\widehat{\calF} \rightarrow \underline{B}$, for some commutative $k$-algebra $B$. For in this case, each fiber product
$$ \underline{A}_{\alpha} \times_{ \widehat{\calF} } \underline{A}_{\beta}
\simeq \underline{A}_{\alpha} \times_{ \underline{B} } \underline{A}_{\beta}
\simeq \underline{ A_{\alpha} \otimes_{B} A_{\beta} }$$
is representable by an affine $k$-scheme.

In the general case, each fiber product $\calF_{\alpha,\beta} = \underline{A}_{\alpha} \times_{ \widehat{\calF} } \underline{A}_{\beta}$ is again relative algebraic space \etale over $A$ (in the sense of Definition \ref{grag}), which admits a monomorphism $\calF_{\alpha, \beta} \hookrightarrow
\underline{ A_{\alpha} \otimes_{k} A_{\beta} }$. It follows that $\calF_{\alpha, \beta}$ {\em is}
an algebraic space in the more restrictive sense of \cite{knutson} (so that $\calF$ can be described as the
quotient of an \etale equivalence relation in this more restrictive setting).
\end{remark}

\begin{notation}
We will abuse notation by identifying the corepresentable functor $\underline{A}: \Comm_{k} \rightarrow \Set$ with the induced space-valued functor $\Nerve( \Comm_{k} ) \rightarrow \SSet$; in this context we can identify $\underline{A}$ with the functor corepresented by $A$ in the $\infty$-category
$\Nerve( \Comm_{k} )$. 

Given a functor $F: \Nerve( \Comm_{k} ) \rightarrow \SSet$ and a natural transformation
$\alpha: F \rightarrow \underline{A}$, we will say that $\alpha$ exhibits $F$ as an algebraic space
\etale over $\SSpec A$ if $F(B)$ is a discrete object of $\SSet$ for every commutative $k$-algebra $B$
(in other words, $\pi_{i}( F(B), x) \simeq \ast$ for every $i > 0$ and every base point $x \in F(B)$),
and the induced functor
$$ \Comm_{k} \simeq \h{ \Nerve(\Comm_{k} )} \rightarrow \h{\SSet} \stackrel{\pi_0}{\rightarrow} \Set$$
is an algebraic space \etale over $\SSpec A$, in the sense of Definition \ref{grag}.
\end{notation}

\begin{definition}\label{spei}
Let $k$ be a commutative ring. The {\it \etale topology} on $\Nerve(\Comm_{k}^{op})$ is defined as follows:
given a commutative $k$-algebra $A$ and a sieve $\calC^{0} \subseteq \Nerve( \Comm_{k}^{op})_{/A}$, we will say that $\calC^{0}$ is {\it covering} if it contains a finite collection of \etale morphisms
$\{ A \rightarrow A_{i} \}_{1 \leq i \leq n}$ such that the map $A \rightarrow \prod_{1 \leq i \leq n} A_i$
is faithfully flat.
\end{definition}

\begin{remark}
We let $\Shv( \Comm_{k}^{op} )$ denote the full subcategory of 
$\Fun( \Nerve( \Comm_{k}^{op}), \SSet)$ spanned by those functors which are sheaves with respect to the
\etale topology of Definition \ref{spei}. Though $\Shv( \Nerve(\Comm_{k})^{op} )$ is not an $\infty$-topos
(because $\Comm_{k}$ is not a small category), it nevertheless behaves like one for practical purposes; for example, there is a good theory of effective epimorphisms in $\Shv( \Nerve( \Comm_{k} )^{op} )$
(see \S \ref{geo7}).
\end{remark}

\begin{definition}\label{spud}
Let $k$ be a commutative ring. We will say that a natural transformation
$\alpha: F \rightarrow F'$ in $\Fun( \Nerve(\Comm_{k}), \SSet)$ {\it exhibits
$F$ as a relative algebraic space \etale over $F'$} if the following condition is satisfied:
\begin{itemize}
\item[$(\ast)$] Let $A$ be a commutative $k$-algebra and $\eta \in F'(A)$, classifying a map
$\underline{A} \rightarrow F'$. Then the induced map
$F \times_{F'} \underline{A} \rightarrow \underline{A}$ exhibits $F \times_{F'} \underline{A}$
as an algebraic space \etale over $\SSpec A$.
\end{itemize}

A {\it Deligne-Mumford stack over $k$} is a functor
$F: \Nerve( \Comm_{k} ) \rightarrow \SSet$ satisfying the following conditions:
\begin{itemize}
\item[$(1)$] The functor $F$ is a sheaf with respect to the \etale topology of Definition \ref{spei}.
\item[$(2)$] There exists a small collection of commutative $k$-algebras $\{ A_{\alpha} \}$ and
points $\eta_{\alpha} \in F(A_{\alpha})$ with the following properties: 
\begin{itemize}
\item[$(a)$] For each index $\alpha$, the induced map $\underline{A}_{\alpha} \rightarrow F$
is a relative algebraic space \etale over $F$.
\item[$(b)$] The induced map
$$ \coprod_{\alpha} \underline{A_{\alpha}} \rightarrow F$$
is an effective epimorphism in $\Shv( \Nerve(\Comm_{k} )^{op} )$.
\end{itemize}
\end{itemize}
\end{definition}

\begin{remark}\label{twab}
Let us compare Definition \ref{spud} with the usual definitions of Deligne-Mumford stack, as found (for example) in \cite{stacks}. To begin, we consider here functors $F$ which take values in the
$\infty$-category $\SSet$ of spaces. However, this results in no additional generality:
we have required the existence of an effective epimorphism
$$ \phi: \coprod_{\alpha} \underline{A}_{\alpha} \rightarrow F$$
where the domain $\coprod_{\alpha} \underline{A}_{\alpha}$ is discrete and the fibers of
$\phi$ are discrete. Consequently, $F$ is a $1$-truncated object of
$\Shv( \Nerve( \Comm_{k} )^{op} )$, and therefore takes values in the $\infty$-category
$\tau_{\leq 1} \SSet$ of $1$-truncated spaces, which is equivalent to the $2$-category of small
groupoids. 

The other principal difference in our definition is that we allow more freedom in our definition of an algebraic space (see Remark \ref{kiu}).
\end{remark}

\begin{remark}\label{sue}
Consider a pullback diagram
$$ \xymatrix{ F_0 \ar[r] \ar[d]^{\alpha_0} & F \ar[d]^{\alpha} \\
F'_0 \ar[r] & F'}$$
in $\Shv( \Nerve(\Comm_{k})^{op})$. If $\alpha$ exhibits $F$ as a relative algebraic space \etale over $F'$, then $\alpha_0$ exhibits $F_0$ as a relative algebraic space \etale over $F$.
\end{remark}

We now reformulate the theory of Deligne-Mumford stacks using our language of geometries.

\begin{definition}
Let $k$ be a commutative ring. We define a geometry $\calG_{\mathet}(k)$ as follows:
\begin{itemize}
\item[$(1)$] The underlying $\infty$-category is $\calG_{\mathet}(k) = \Nerve( \Comm_{k}^{\fin})^{op}$, the (nerve of the) category of affine $k$-schemes of finite presentation.
\item[$(2)$] A morphism $f$ in $\calG_{\mathet}(k)$ is admissible if and only if the corresponding morphism $A \rightarrow B$ is an \etale map of commutative $k$-algebras.
\item[$(3)$] The Grothendieck topology on $\calG_{\mathet}(k)$ is the (restriction of the) \etale topology described in Definition \ref{spei}.  
\end{itemize}
\end{definition}

\begin{remark}
As in Remark \ref{hunner}, we have a canonical equivalence
$\Ind( \calG_{\mathet}(k)^{op} ) \simeq \Nerve( \Comm_{k} )$. 
\end{remark}

\begin{remark}\label{obba}
Let $\calO$ be a left exact functor from $\calG_{\mathet}(k) \simeq \calG(k)$ to $\SSet$,
corresponding to a commutative $k$-algebra $A$. Then $\calO$ defines a $\calG_{\mathet}(k)$-structure on $\SSet$ if and only if the following condition is satisfied:
\begin{itemize}
\item[$(\ast)$] For every commutative $k$-algebra $B$ of finite presentation, and every
finite collection of \etale morphisms $\{ B \rightarrow B_{\alpha} \}$ which induce a faithfully flat
map $B \rightarrow \prod_{\alpha} B_{\alpha}$, the induced map
$$ \coprod \calO(B_{\alpha}) \rightarrow \calO( B)$$
is an effective epimorphism. In other words, every map of commutative $k$-algebras
$B \rightarrow A$ factors through some $B_{\alpha}$.
\end{itemize}
This definition is equivalent to the requirement that $A$ be a strictly Henselian local ring.

More generally, let $\calX$ be an $\infty$-topos with enough points. Then a
$\calG_{\mathet}(k)$-structure on $\calX$ can be identified with a commutative
$k$-algebra $\calA$ in the underlying topos $\h{(\tau_{\leq 0} \calX)}$ such that,
for every point $x$ of $\calX$, the stalk $\calA_{x}$ is strictly Henselian (see Remark \ref{henhun}).
We may therefore refer informally to $\calG_{\mathet}(k)$-structures on $\calX$
as {\it strictly Henselian sheaves of $k$-algebras on $\calX$}. 
\end{remark}

\begin{remark}\label{clump}
We can identify the $\infty$-category $\Pro(\calG_{\mathet}(k))$ with
(the nerve of) the category $\Comm^{op}_{k}$ of affine $k$-schemes. Under this identification, a morphism $A \rightarrow B$ in $\Comm_{k}$ is admissible (in the sense of Notation \ref{ilk})
if and only if it is \etale in the usual sense (this follows from the observation that every \etale homomorphism of $k$-algebras is the pushout of an \etale homomorphism between finitely presented $k$-algebras; for a stronger version of this assertion we refer the reader to Proposition \ref{swimm}),
and the Grothendieck topology on $\Pro( \calG_{\mathet}(k))$ reduces to the \etale topology of Notation \ref{sint}.
\end{remark}

\begin{theorem}\label{sup}
Let $k$ be a commutative ring, and let $F: \Nerve( \Comm_{k} ) \rightarrow \SSet$ be a functor.
The following conditions are equivalent:
\begin{itemize}
\item[$(1)$] The functor $F$ is a Deligne-Mumford stack over $k$ (in the sense of Definition \ref{spud}).
\item[$(2)$] The functor $F$ is representable by a $\calG_{\mathet}(k)$-scheme $(\calX, \calO_{\calX})$ such that $\calX$ is $1$-localic.
\end{itemize}
\end{theorem}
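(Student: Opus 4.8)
The plan is to prove the equivalence $(1) \Leftrightarrow (2)$ by exploiting the ``functor of points'' description of $\calG_{\mathet}(k)$-schemes furnished by Theorem \ref{skil}, together with the identification $\Ind(\calG_{\mathet}(k)^{op}) \simeq \Nerve(\Comm_k)$ of Remark \ref{clump}. Under these identifications, a $\calG_{\mathet}(k)$-scheme $(\calX, \calO_{\calX})$ corresponds to a functor $F: \Nerve(\Comm_k) \rightarrow \SSet$ which, by the proof of Theorem \ref{skil}, is automatically a sheaf for the \etale topology (using Remark \ref{clump} to match the Grothendieck topology on $\Pro(\calG_{\mathet}(k))$ of Notation \ref{scun} with the \etale topology of Definition \ref{spei}). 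So both conditions describe \etale sheaves on $\Comm_k$; the content is to match up the respective ``local structure'' conditions.

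First I would prove $(2) \Rightarrow (1)$. Given a $1$-localic $\calG_{\mathet}(k)$-scheme $(\calX, \calO_{\calX})$, choose a covering $\{U_\alpha\}$ of $\calX$ with $(\calX_{/U_\alpha}, \calO_{\calX}|U_\alpha) \simeq \Spec^{\calG_{\mathet}(k)} A_\alpha$ affine. By Theorem \ref{skil} and Remark \ref{armur} (the \etale topology on $\Comm_k$ is precanonical, since representable presheaves are \etale sheaves by faithfully flat descent), the affine piece $\Spec^{\calG_{\mathet}(k)} A_\alpha$ represents the corepresentable functor $\underline{A_\alpha}$. The map $\coprod U_\alpha \to 1_{\calX}$ being an effective epimorphism translates, via Lemma \ref{kokin} and the fact that $\phi$ preserves effective epimorphisms between \etale-local objects, into the statement that $\coprod \underline{A_\alpha} \to F$ is an effective epimorphism in $\Shv(\Nerve(\Comm_k)^{op})$. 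The key remaining point is that each map $\underline{A_\alpha} \to F$ is a relative algebraic space \etale over $F$: this is where the $1$-localic hypothesis enters. By Proposition \ref{scanh}(5), pulling back along a point $\underline{B} \to F$ gives an object of $\calX$ (really of $\calX_{/V}$ for suitable $V$), and one checks using Remark \ref{obba} (stalks of $\calO_{\calX}$ are strictly Henselian) and the fact that \etale maps of $\calG_{\mathet}(k)$-schemes correspond to \etale maps of $\infty$-topoi together with equivalences of structure sheaves, that the fiber product $\underline{A_\alpha} \times_F \underline{B}$ is $0$-truncated and arises as $\widehat{\calF}$ for an \etale sheaf $\calF$ on $B$ — here one invokes Theorem \ref{stumm}-style comparison, or more directly the $0$-localic case combined with the structure of \etale morphisms into $\calX_{/V}$. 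The $1$-localic hypothesis guarantees the fibers of $\underline{A_\alpha} \to F$ are $0$-truncated (an $n$-localic $\infty$-topos has $(n-1)$-truncated ``\etale overcategory fibers'' in the appropriate sense), which is exactly the discreteness required in Definition \ref{spud}.

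For $(1) \Rightarrow (2)$, I would run the argument in reverse: given a Deligne-Mumford stack $F$ with atlas $\{\eta_\alpha \in F(A_\alpha)\}$, each relative algebraic space $\underline{A_\alpha} \times_F \underline{A_\beta}$ is, by Remark \ref{kiu}, an algebraic space in the classical sense, hence (by the classical theory, or by Theorem \ref{stumm} applied to the Zariski-local charts and then glued) representable by a $0$-localic $\calG_{\mathet}(k)$-scheme. The effective epimorphism $\coprod \underline{A_\alpha} \to F$ then exhibits $F$ as a colimit (geometric realization of the \Cech nerve) of affine $\calG_{\mathet}(k)$-schemes in $\Shv(\Pro(\calG_{\mathet}(k)))$; by Proposition (the $\calG$-scheme gluing result, part $(6)$/$(7)$ of the \etale-morphism proposition in \S \ref{geo7}) $F$ is representable by a $\calG_{\mathet}(k)$-scheme $(\calX, \calO_{\calX})$, and $\calX$ is a colimit in $\LGeo_{\mathet}^{op}$ of $0$-localic (even $1$-localic, being étale over affines which are $1$-localic by Theorem \ref{scoo} since $\calG_{\mathet}(k)$ is $1$-truncated) $\infty$-topoi along \etale maps with $0$-truncated fibers; such a colimit is $1$-localic. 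One should verify this last ``$1$-localic is preserved'' claim carefully using $(3')$ of Proposition \ref{scanh} (colimits in $\LGeo(\calG)^{op}_{\mathet}$ are computed in $\LGeo^{op}_{\mathet}$) and the characterization of $n$-localic $\infty$-topoi via their $\calX_{/U}$.

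The main obstacle I anticipate is the careful bookkeeping of truncation levels: matching the condition ``$\calX$ is $1$-localic'' with the condition ``the diagonal of $F$ is relatively representable by algebraic spaces with $0$-truncated fibers.'' This requires knowing precisely how the localic level of an $\infty$-topos interacts with its \etale overcategory $\LGeo(\calG)^{op}_{\mathet})_{/(\calX,\calO_{\calX})} \simeq \calX$ (Proposition \ref{scanh}(5)) and with \Cech-nerve colimits along \etale maps. Concretely: an \etale map $(\calX_{/U}, \calO_{\calX}|U) \to (\calX,\calO_{\calX})$ between $1$-localic objects corresponds to $U \in \calX$, and the map is ``relatively $0$-truncated'' iff $U$ is $0$-truncated in $\calX$ (by Lemma \ref{tub}, $\calX_{/U}$ is then also $1$-localic); assembling the atlas requires that the \Cech nerve of a $0$-truncated cover stays within $0$-truncated objects, so the realization is computed at the level of $1$-topoi. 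This truncation-counting step — essentially a translation between Definition \ref{spud}(2)(a)--(b) and the localic condition, mediated by Lemma \ref{camb} and Lemma \ref{tub} — is the technical heart of the proof; everything else is formal transport along the equivalences already established. Once that dictionary is set up, both implications follow by the gluing/descent machinery of \S \ref{geo6} and \S \ref{geo7} exactly as sketched, together with the classical comparison for algebraic spaces from \cite{knutson}.
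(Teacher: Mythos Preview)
Your overall architecture matches the paper's: both directions run through the functor-of-points embedding (Theorem \ref{skil}), an affine atlas, the \Cech nerve, and the \etale gluing machinery of \S \ref{geo7}. You have also correctly identified the technical crux as the truncation bookkeeping that links ``$\calX$ is $1$-localic'' to ``the atlas maps have discrete fibers.'' Where you diverge from the paper is in how that bookkeeping is carried out, and your version has a real gap.

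First, the paper isolates a single clean dictionary (Lemma \ref{surpuk}): for $F$ representable by $(\calX,\calO_{\calX})$, a map $F' \to F$ exhibits $F'$ as a relative algebraic space \etale over $F$ if and only if $F'$ is represented by $(\calX_{/U}, \calO_{\calX}|U)$ with $U$ \emph{discrete} in $\calX$. This lemma (together with its corollary on $2$-out-of-$3$ for such maps) does all the work you describe loosely as ``checking the fiber product is $0$-truncated and arises as $\widehat{\calF}$''; you should prove it once and then invoke it repeatedly, rather than re-deriving it in each direction.

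Second, and more seriously, your argument for $1$-localicness in $(1)\Rightarrow(2)$ is not the paper's and is not quite right as stated. You claim the \Cech-nerve pieces are $0$-localic $\calG_{\mathet}(k)$-schemes, citing Theorem \ref{stumm}; but that theorem concerns the Zariski geometry, and in the \etale geometry even affine spectra are only $1$-localic (the \etale site of a ring is not a poset). You then try to salvage this with a general ``colimits of $1$-localic along \etale maps with discrete fibers stay $1$-localic'' principle, which you acknowledge needs proof. The paper avoids this entirely: after building $(\calX,\calO_{\calX})$ as the realization, it applies Theorem \ref{top4} to produce a $1$-localic $(\calY,\calO_{\calY})$ and a $2$-connective $U \in \calY$ with $(\calX,\calO_{\calX}) \simeq (\calY_{/U}, \calO_{\calY}|U)$, then shows $U$ is final by proving it is also $1$-truncated. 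That last step is checked pointwise using Remark \ref{unipop}: the homotopy fibers of $F(A) \to \bHom(\Spec^{\calG_{\mathet}(k)} A, (\calY,\calO_{\calY}))$ are $1$-truncated because $F(A)$ is $1$-truncated (Remark \ref{twab}) and the target is $1$-truncated by Lemma \ref{psyward}. This is cleaner and sidesteps the localic-preservation question you flagged as the obstacle.

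For $(2)\Rightarrow(1)$ your sketch is closer to the paper's, but again the discreteness of the $U_{\alpha}$ is obtained via Lemma \ref{psyward}: $F(B)$ is $1$-truncated because $\calX$ is $1$-localic, $F_{\alpha}(B)$ is discrete because $A_{\alpha}$ is an ordinary ring, so the homotopy fibers (which compute $U_{\alpha}$ via Remark \ref{unipop}) are discrete. That is the precise mechanism by which ``$1$-localic'' enters, not an abstract statement about \etale overcategory fibers.
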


The proof will require a number of preliminaries.

\begin{lemma}\label{psyward}
Let $\calG$ be an $n$-truncated geometry, $\calX$ an $n$-localic $\infty$-topos, and $\calO_{\calX}: \calG \rightarrow \calX$ a $\calG$-structure on $\calX$. Then
$(\calX, \calO_{\calX})$ is an $n$-truncated object of $\LGeo(\calG)^{op}$.
\end{lemma}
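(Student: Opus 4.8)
The plan is to prove Lemma \ref{psyward} by unwinding the definition of an $n$-truncated object in $\LGeo(\calG)^{op}$ and reducing it to a statement about mapping spaces. Recall that $(\calX, \calO_{\calX})$ is $n$-truncated in $\LGeo(\calG)^{op}$ if and only if, for every object $(\calY, \calO_{\calY}) \in \LGeo(\calG)^{op}$, the mapping space $\bHom_{\LGeo(\calG)^{op}}((\calY, \calO_{\calY}), (\calX, \calO_{\calX}))$ is $n$-truncated; equivalently, the mapping space $\bHom_{\LGeo(\calG)}((\calX, \calO_{\calX}), (\calY, \calO_{\calY}))$ is $n$-truncated. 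So the goal reduces to showing this mapping space is $n$-truncated for all $\calG$-structured $\infty$-topoi $(\calY, \calO_{\calY})$.

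First I would recall the fiber sequence describing mapping spaces in $\LGeo(\calG)$: using Remark \toposref{goodilk} (or the general structure of $\LGeo(\calG)$ as a category fibered over $\LGeo$ with fibers $\Struct^{\loc}_{\calG}(-)$), there is a fiber sequence relating $\bHom_{\LGeo(\calG)}((\calX, \calO_{\calX}), (\calY, \calO_{\calY}))$ to the mapping space $\bHom_{\LGeo}(\calX, \calY)$ of the underlying $\infty$-topoi (with fiber over a geometric morphism $\pi^{\ast}$ given by a mapping space $\bHom_{\Struct_{\calG}(\calY)}(\pi^{\ast}\calO_{\calX}, \calO_{\calY})$ inside the $\infty$-category of local transformations). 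It therefore suffices to check two things: (a) that $\bHom_{\LGeo}(\calX, \calY)$ is $n$-truncated, and (b) that each mapping space in $\Struct^{\loc}_{\calG}(\calY)$ between the relevant objects is $n$-truncated. For (a), since $\calX$ is $n$-localic, the space of geometric morphisms $\Fun^{\ast}(\calY, \calX)$ — or rather $\bHom_{\LGeo}(\calX, \calY)$ in the appropriate variance — is $n$-truncated: this is essentially the defining property of $n$-localic $\infty$-topoi (see Definition \toposref{stuffera} and the surrounding discussion), namely that $\Fun^{\ast}(\calZ, \calX)$ depends only on the underlying $n$-topos of $\calZ$ and hence is an $n$-truncated space for every $\calZ$.

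For (b), the key observation is that $\Struct_{\calG}(\calY)$ is a full subcategory of $\Fun(\calG, \calY)$, and since $\calG$ is $n$-truncated (every mapping space in $\calG$ is $(n-1)$-truncated... actually the relevant input is that $\calG$ is equivalent to an $(n+1)$-category, or more precisely $n$-truncated in the sense used in the preceding sections) and $\calX$ is $n$-localic, every $\calG$-structure on $\calX$ factors through $\tau_{\leq n}\calX$ (this is noted in the proof of Theorem \ref{top4}, and follows from Proposition \toposref{eaa}). Hence the objects $\pi^{\ast}\calO_{\calX}$ take values in $n$-truncated objects of $\calY$. I would then invoke the fact that a mapping space in $\Fun(\calG, \calY)$ between functors can be computed as a limit (an end) of mapping spaces in $\calY$; if the target functor takes $n$-truncated values then each such mapping space $\bHom_{\calY}(-, \calO_{\calY}(X))$ is $n$-truncated, and a limit of $n$-truncated spaces is $n$-truncated. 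The passage to the subcategory $\Struct^{\loc}_{\calG}(\calY)$ only restricts morphisms, so the mapping spaces there are subspaces (unions of connected components of homotopy fibers) of the ones in $\Fun(\calG, \calY)$, hence still $n$-truncated.

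The main obstacle I anticipate is bookkeeping around the precise form of the fiber sequence for mapping spaces in $\LGeo(\calG)$ and making sure the truncatedness estimates compose correctly through that fiber sequence — in particular, being careful that ``$n$-truncated base, $n$-truncated fibers'' yields ``$n$-truncated total space,'' which is true for the relevant range (and uses that $n \geq -1$, as hypothesized) but requires a small argument via the long exact sequence of homotopy groups of a fibration. A secondary subtlety is confirming that $\calO_{\calX}$ genuinely factors through $\tau_{\leq n}\calX$ given only that $\calG$ is $n$-truncated and $\calX$ is $n$-localic; this should follow from the remark after the definition of $n$-truncated geometry together with the fact that in an $n$-localic $\infty$-topos the $n$-truncated objects form the whole underlying $n$-topos, but it is worth stating explicitly. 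Modulo these points the argument is routine, and I expect the write-up to be short.
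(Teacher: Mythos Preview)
Your approach is essentially identical to the paper's: decompose the mapping space in $\LGeo(\calG)^{op}$ via the forgetful map to $\LGeo^{op}$, use $n$-localicness of $\calX$ to bound the base, and use $n$-truncatedness of $\calG$ to bound the fibers. The paper's proof is just a terser version of exactly this.

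There is one small logical slip in your part (b). You argue that $\pi^{\ast}\calO_{\calX}$ takes values in $n$-truncated objects of $\calY$ (using $n$-localicness of $\calX$ to get $\calO_{\calX}$ factoring through $\tau_{\leq n}\calX$, then pushing forward), but then you invoke the principle that mapping spaces into an $n$-truncated \emph{target} are $n$-truncated. You never actually verify that the target $\calO_{\calY}$ takes $n$-truncated values. The fix is immediate and in fact simpler than what you wrote: since $\calG$ is an $n$-truncated geometry, \emph{every} $\calG$-structure on \emph{every} $\infty$-topos is $n$-truncated (this is the content of the remark citing Proposition \toposref{eaa}), so in particular $\calO_{\calY}$ is. You do not need $n$-localicness of $\calX$ at all for part (b); that hypothesis is used only for (a). The paper's proof compresses (b) to the single clause ``which is $n$-truncated since the geometry $\calG$ is $n$-truncated,'' relying on exactly this remark.

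Your worry about the fiber-sequence bookkeeping (``$n$-truncated base and $n$-truncated fibers implies $n$-truncated total space'') is a non-issue and the paper just uses it silently.
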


\begin{proof}
Let $(\calY, \calO_{\calY})$ be any object of $\LGeo(\calG)^{op}$. We have a canonical map
$$ \phi: \bHom_{ \LGeo(\calG)^{op} }( (\calY, \calO_{\calY} ),
(\calX, \calO_{\calX}) ) \rightarrow \bHom_{ \LGeo^{op} }( \calY, \calX).$$
Since $\calX$ is $n$-localic, we can identify the right side with the space of
geometric morphisms of $n$-topoi from $\tau_{\leq n} \calY$ to $\tau_{\leq n} \calX$, which is $n$-truncated. It will therefore suffice to show that the homotopy fibers of $\phi$ are $n$-truncated. But the homotopy fiber of $\phi$ over a point $f_{\ast}: \calY \rightarrow \calX$ can be identified with
$\bHom_{ \Struct_{\calG}(\calY)}( f^{\ast} \calO_{\calX}, \calO_{\calY})$, which is $n$-truncated
since the geometry $\calG$ is $n$-truncated.
\end{proof}

\begin{lemma}\label{surpuk}
Let $\alpha: F' \rightarrow F$ be a morphism in $\Shv( \Nerve(\Comm_{k})^{op} )$, and suppose
that $F$ is representable by a $\calG_{\mathet}(k)$-scheme $(\calX, \calO_{\calX})$. The following conditions are equivalent:
\begin{itemize}
\item[$(1)$] The map $\alpha$ exhibits $F'$ as a relative algebraic space \etale over $F$.
\item[$(2)$] The functor $F'$ is representable by a $\calG_{\mathet}(k)$-scheme
$(\calY, \calO_{\calY})$, and $\alpha$ induces an equivalence
$(\calY, \calO_{\calY}) \simeq (\calX_{/U}, \calO_{\calX}|U)$ for some discrete object
$U \in \calX$.
\end{itemize}
\end{lemma}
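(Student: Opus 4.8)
\textbf{Proof plan for Lemma \ref{surpuk}.} The strategy is to reduce to the affine case and then recognize both conditions as recording the same data, namely a discrete object of $\USpec A$ for some commutative $k$-algebra $A$. I would first prove the implication $(2) \Rightarrow (1)$, which is the easier direction: if $F'$ is representable by $(\calX_{/U}, \calO_{\calX}|U)$ for a discrete $U \in \calX$, then by Theorem \ref{skil} (the functor of points) and the description of $\Spec^{\calG}$ via $\USpec$ in Theorem \ref{scoo}, checking that $\alpha$ exhibits $F'$ as a relative algebraic space \'etale over $F$ is local on $F$ in the sense of Definition \ref{spud}$(\ast)$; so by Remark \ref{sado} (or directly by Remark \ref{sadoprime}) we may assume $F$ is representable by an affine $\calG_{\mathet}(k)$-scheme $\Spec^{\calG_{\mathet}(k)} A = (\calX, \calO_{\calX})$ with $A \in \Comm_k$. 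Then $\calX \simeq \USpec A \simeq \Shv(\Pro(\calG_{\mathet}(k))^{\adm}_{/A})$, and a discrete object $U$ of this $\infty$-topos is (after sheafification) an \'etale sheaf of sets on $A$ in the sense of Notation \ref{sint}; unwinding the identification $\Pro(\calG_{\mathet}(k)) \simeq \Nerve(\Comm_k^{op})$ of Remark \ref{clump} shows that the functor of points of $(\calX_{/U}, \calO_{\calX}|U)$ is exactly $\widehat{\calF}$ for the corresponding $\calF \in \Shv^{\mathet}_{\Set}(A)$, which is the definition of an algebraic space \'etale over $\SSpec A$ (Definition \ref{grag}).

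For the converse $(1) \Rightarrow (2)$, I would again reduce to the affine case. By Remark \ref{sado} it suffices to show $F'$ is representable locally on $\calX$, so — using that $(\calX, \calO_{\calX})$ is a $\calG_{\mathet}(k)$-scheme and Lemma \ref{spak} — we may assume $(\calX, \calO_{\calX}) = \Spec^{\calG_{\mathet}(k)} A$ is affine. Now condition $(1)$ says $F' \simeq \widehat{\calF}$ for some $\calF \in \Shv^{\mathet}_{\Set}(A)$. Choosing generating sections $\eta_\alpha \in \calF(A_\alpha)$, we get an effective epimorphism $\coprod \underline{A}_\alpha \to \widehat{\calF}$ in $\Shv(\Nerve(\Comm_k)^{op})$; each $\underline{A}_\alpha$ is representable by the affine $\calG_{\mathet}(k)$-scheme $\Spec^{\calG_{\mathet}(k)} A_\alpha$, and the fiber products $\underline{A}_\alpha \times_{\widehat{\calF}} \underline{A}_\beta$ are, by Remark \ref{kiu}, themselves algebraic spaces admitting monomorphisms into affine schemes — hence representable by $\calG_{\mathet}(k)$-schemes. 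Part $(6)$ of the Proposition following Definition \ref{psab} (on \'etale morphisms in $\Shv(\Pro(\calG))$) then shows $F'$ is representable by a $\calG_{\mathet}(k)$-scheme $(\calY, \calO_{\calY})$. To see that the resulting map $(\calY, \calO_{\calY}) \to (\calX, \calO_{\calX})$ is \'etale with $\calY \simeq \calX_{/U}$ for a \emph{discrete} $U$, I would argue as follows: the map is \'etale by part $(3)$ of that same Proposition (since $F' \to F$ is \'etale in the sense of Definition \ref{psab}), so by Proposition \ref{scanh}$(5)$ there is $U \in \calX$ with $(\calY, \calO_{\calY}) \simeq (\calX_{/U}, \calO_{\calX}|U)$; and $U$ must be discrete because $F'(B) = \widehat{\calF}(B)$ is a discrete space for every $k$-algebra $B$, which — tracing through Remark \ref{unipop} and the identification $\calX \simeq \USpec A$ — forces $U$ to be $0$-truncated in $\calX$.

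The main obstacle I anticipate is the careful identification, in the affine case, of ``discrete object of $\USpec A$'' with ``\'etale sheaf of sets on $A$,'' together with matching the two functors-of-points $\widehat{\calF}$ and the functor represented by $(\calX_{/U}, \calO_{\calX}|U)$. This requires pinning down precisely how the Grothendieck topology on $\Pro(\calG_{\mathet}(k))^{\adm}_{/A}$ of Notation \ref{scun} restricts, via Remark \ref{clump}, to the \'etale topology of Notation \ref{sint} on $\Comm^{\mathet}_A$, and then invoking Theorem \ref{scoo} to compare structure sheaves; the discreteness bookkeeping (that $\pi_i F'(B) = \ast$ for $i>0$ corresponds exactly to $U$ being $0$-truncated) is where one must be most attentive. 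A secondary subtlety is that $\Shv(\Pro(\calG_{\mathet}(k)))$ is not an $\infty$-topos, so the effective-epimorphism and descent arguments must be run in the sense of Remark \ref{spittle} rather than by citing $\infty$-topos theory directly; but the Proposition following Definition \ref{psab} is designed to handle exactly this, so no genuinely new input is needed beyond what the excerpt provides.
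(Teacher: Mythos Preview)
Your direction $(2) \Rightarrow (1)$ is correct and essentially matches the paper's argument: reduce to the affine case $\Spec^{\calG_{\mathet}(k)} A$, identify a discrete object $U \in \calX \simeq \Shv((\Comm_A^{\mathet})^{op})$ with an \'etale sheaf of sets $\calF$ on $A$ via Remark \ref{clump} and Theorem \ref{scoo}, and then use Remark \ref{unipop} to identify the functor represented by $(\calX_{/U}, \calO_{\calX}|U)$ with $\widehat{\calF}$.

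For $(1) \Rightarrow (2)$, however, you take a detour the paper avoids, and that detour has a gap. The paper's proof is completely symmetric with the other direction: after reducing to the affine case, condition $(1)$ hands you an \'etale sheaf of sets $\calF$ on $A$, and the \emph{same} dictionary you invoked for $(2) \Rightarrow (1)$ produces a discrete object $U \in \calX$ directly. Remark \ref{unipop} then identifies the functor represented by $(\calX_{/U}, \calO_{\calX}|U)$ with $\widehat{\calF} \simeq F'$, and you are done. There is no need to build the representing scheme by choosing generating sections, forming an effective epimorphism from $\coprod \underline{A}_\alpha$, and appealing to descent.

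The gap in your detour is this: to apply part $(6)$ of the Proposition following Definition \ref{psab}, you must know that each map $\underline{A}_\alpha \to \widehat{\calF}$ is \'etale in the sense of Definition \ref{psab}. That definition demands representability of the fiber product against \emph{every} $\calG_{\mathet}(k)$-scheme mapping to $\widehat{\calF}$, not merely against the other $\underline{A}_\beta$. You only checked the pairwise fiber products $\underline{A}_\alpha \times_{\widehat{\calF}} \underline{A}_\beta$ via Remark \ref{kiu}, which is insufficient. One could try to close this by an inductive bootstrap, but that effectively rebuilds the direct identification the paper uses, so the simpler route is to recognize that the ``discrete object $\leftrightarrow$ \'etale sheaf of sets'' correspondence is already a two-way street and use it in both directions.
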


\begin{proof}
We first show that $(1) \Rightarrow (2)$. The problem is local on $\calX$, so we may assume without loss of generality that $( \calX, \calO_{\calX} ) \simeq \Spec^{\calG_{\mathet}} A$ is affine.
In this case, condition $(1)$ implies that $F' \simeq \widehat{\calF}$, where
$\calF$ is an \etale sheaf (of sets) on $\Comm_{A}^{\mathet}$. Unwinding the definitions (see Remark \ref{clump}), we can identify $\calF$ with a discrete object $U$ of the $\infty$-topos
$\Shv( \Pro( \calG_{\mathet}(k))^{\adm}_{/A} )$. Theorem \ref{scoo} allows us to identify this
$\infty$-topos $\calX$, and Remark \ref{unipop} implies that the $\calG_{\mathet}(k)$-scheme
$( \calX_{/U}, \calO_{\calX}|U)$ represents the functor $F'$.

The reverse implication is proven using exactly the same argument.
Now suppose that $(2)$ is satisfied. We wish to show that $F'$ is a relative algebraic space \etale over $F$. Using Remark \ref{pre4}, we can reduce to the case where
$F$ is representable by the affine scheme $\Spec^{\calG_{\mathet}(k)} A$, for some
commutative $k$-algebra $A$. Using Theorem \ref{scoo} and Remark \ref{clump}, we can identify the discrete object $U \in \calX$ with a sheaf of sets $\calF$ on $\Comm_{A}^{\mathet}$. Remark \ref{unipop} now furnishes an equivalence $F' \simeq \widehat{\calF}$.
\end{proof}

\begin{corollary}\label{simj}
Suppose given a commutative diagram
$$ \xymatrix{ & F' \ar[dr]^{\beta} & \\
F \ar[ur]^{\alpha} \ar[rr]^{\gamma} & & F'' }$$
in $\Shv( \Nerve(\Comm_{k})^{op} )$, where $\beta$ exhibits $F'$ as a relative algebraic space \etale over $F''$. Then $\alpha$ exhibits $F$ as a relative algebraic space \etale over $F'$ if and only if $\gamma$ exhibits $F$ as a relative algebraic space \etale over $F''$.
\end{corollary}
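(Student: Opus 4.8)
The plan is to deduce Corollary \ref{simj} from Lemma \ref{surpuk} by a direct application of Proposition \ref{scanh}, exactly as the analogous statement for $\LGeo(\calG)^{op}$ was handled. First I would reduce to the case where $F''$ is representable by a $\calG_{\mathet}(k)$-scheme: since both conditions in the statement (that $\alpha$ exhibits $F$ as a relative algebraic space \'etale over $F'$, and that $\gamma$ exhibits $F$ as a relative algebraic space \'etale over $F''$) are defined by testing against arbitrary maps $\underline{A} \to F''$ from corepresentable functors, and since the condition is local (by Remark \ref{saab}, which applies verbatim in $\Shv(\Nerve(\Comm_k)^{op})$), it suffices to treat the case $F'' \simeq \Spec^{\calG_{\mathet}(k)} A$, i.e.\ $F''$ is representable by an affine $\calG_{\mathet}(k)$-scheme $(\calZ, \calO_{\calZ})$.

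In this situation, the hypothesis that $\beta$ exhibits $F'$ as a relative algebraic space \'etale over $F''$ lets me invoke the implication $(1) \Rightarrow (2)$ of Lemma \ref{surpuk}: $F'$ is representable by a $\calG_{\mathet}(k)$-scheme $(\calY, \calO_{\calY})$ which is equivalent to $(\calZ_{/U}, \calO_{\calZ}|U)$ for some discrete object $U \in \calZ$, and the map $(\calY, \calO_{\calY}) \to (\calZ, \calO_{\calZ})$ is the canonical \'etale projection. Now suppose $\gamma$ exhibits $F$ as a relative algebraic space \'etale over $F''$; then by the same lemma $F$ is representable by a $\calG_{\mathet}(k)$-scheme $(\calX, \calO_{\calX}) \simeq (\calZ_{/V}, \calO_{\calZ}|V)$ for a discrete $V \in \calZ$, and the composite $(\calX, \calO_{\calX}) \to (\calZ, \calO_{\calZ})$ is again the canonical \'etale morphism. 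The morphism $\alpha: F \to F'$ is then induced by a morphism $V \to U$ in $\calZ$ (by Remark \ref{unipop}), and the induced map of $\calG_{\mathet}(k)$-schemes $(\calZ_{/V}, \calO_{\calZ}|V) \to (\calZ_{/U}, \calO_{\calZ}|U)$ is \'etale by part $(2)$ of Proposition \ref{scanh} (the two-out-of-three property for \'etale morphisms, applied to the triangle with the two legs down to $(\calZ, \calO_{\calZ})$). Finally $V \to U$ is a morphism of discrete objects, so $V$ is a discrete object of $\calY \simeq \calZ_{/U}$, and the ``if'' direction of Lemma \ref{surpuk} shows $\alpha$ exhibits $F$ as a relative algebraic space \'etale over $F'$. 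Conversely, if $\alpha$ exhibits $F$ as a relative algebraic space \'etale over $F'$, then $F$ is representable by $(\calY_{/W}, \calO_{\calY}|W)$ for a discrete $W \in \calY$; composing the two canonical \'etale projections $(\calY_{/W}) \to (\calY) \to (\calZ)$ and using that \'etale morphisms are stable under composition (Proposition \ref{scanh}), together with the fact that $W$, being discrete in $\calY \simeq \calZ_{/U}$ with $U$ discrete, corresponds to a discrete object of $\calZ$ (here I use that for a discrete $U$ the pullback $\calZ_{/U} \to \calZ$ preserves and reflects discreteness), we conclude via Lemma \ref{surpuk} that $\gamma$ exhibits $F$ as a relative algebraic space \'etale over $F''$.

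The main obstacle I anticipate is the bookkeeping around discreteness: I must check that the object $W \in \calY \simeq \calZ_{/U}$ obtained in the last step is discrete \emph{as an object of $\calZ$}, not merely relatively discrete over $U$. This follows because $U$ itself is $0$-truncated, so $\calZ_{/U} \to \calZ$ carries $0$-truncated objects to $0$-truncated objects (a map $W \to U$ with both source and target discrete has discrete fibers and discrete target, hence discrete total space); I would spell this out using the truncatedness calculus of \cite{topoi}. Apart from this, the argument is entirely formal once Lemma \ref{surpuk} and Proposition \ref{scanh} are in hand; the essential content is simply that the notion of relative algebraic space \'etale over a $\calG_{\mathet}(k)$-scheme is, by Lemma \ref{surpuk}, the same as the notion of an \'etale morphism of $\calG_{\mathet}(k)$-schemes whose source is $1$-truncated over the target, and \'etale morphisms already satisfy two-out-of-three.
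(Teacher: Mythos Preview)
Your proposal is correct and follows essentially the same route as the paper: reduce to the case where $F''$ is affine, then use Lemma \ref{surpuk} to translate both hypotheses into statements about discrete objects in a slice topos, and finish with the observation that an object which is discrete in $\calZ_{/U}$ for $U$ discrete is itself discrete in $\calZ$ (and conversely). The paper does not invoke Proposition \ref{scanh} at all here; the \'etale-ness is already packaged into condition $(2)$ of Lemma \ref{surpuk}, so your appeal to two-out-of-three is harmless but redundant.

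Two small points of imprecision. First, your blanket reduction to $F''$ affine is not quite justified as written: the condition on $\alpha$ is tested against maps $\underline{A} \to F'$, not $\underline{A} \to F''$. The paper handles the two directions separately---for the ``if'' direction it explicitly pulls back along the composite $\underline{A} \to F' \to F''$ to reduce to $F''$ affine, which is the argument you need. Second, Remark \ref{saab} concerns Definition \ref{psab} (\'etale morphisms in $\Shv(\Pro(\calG))$), not Definition \ref{spud} (relative algebraic spaces \'etale); the locality principle you want is simply the defining condition $(\ast)$ of Definition \ref{spud} together with Remark \ref{sue} on stability under pullback. Neither issue affects the substance of your argument.
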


\begin{proof}
First we prove the ``only if'' direction. We wish to show that $\gamma$ exhibits
$F$ as a relative algebraic space \etale over $F''$. Without loss of generality, we
may suppose that $F''$ is representable by an affine $\calG_{\mathet}(k)$-scheme
$\Spec^{\calG_{\mathet}}(k) A = (\calX, \calO_{\calX} )$. Applying Lemma \ref{surpuk} to the morphism $\beta$, we conclude that $F'$ is representable by $(\calX_{/U}, \calO_{\calX}|U)$ for some discrete object $U \in \calX$. Applying Lemma \ref{surpuk} again, we conclude that
$F$ is representable by $(\calX_{/V}, \calO_{\calX} | V)$ for some discrete object
$V \in \calX_{/U}$. Since $U$ is discrete, we conclude that $V$ is discrete when viewed as an object of $\calX$, so the desired result follows from Lemma \ref{surpuk}.

Let us now prove the ``if'' direction. We must show that for every pullback diagram
$$ \xymatrix{ F_0 \ar[r]^{\alpha_0} \ar[d] & \underline{A} \ar[d] \\
F \ar[r]^{\alpha} & F', }$$
the map $\alpha_0$ exhibits $F_0$ as a relative algebraic space \etale over $A$.
Pulling back by the composite map $\underline{A} \rightarrow F' \rightarrow F'',$
we may assume without loss of generality that $F''$ is representable by the affine
$\calG_{\mathet}(k)$-scheme $\Spec^{\calG_{\mathet}(k)} A = (\calX, \calO_{\calX})$. 
Applying Lemma \ref{surpuk}, we conclude that $F$ and $F'$ are representable
by $( \calX_{/U}, \calO_{\calX}|U)$ and $(\calX_{/V}, \calO_{\calX}|V)$, respectively, for some pair of discrete objects $U, V \in \calX$. Using Remark \ref{unipop}, we can identify
$\alpha$ with the map induced by a morphism $f: U \rightarrow V$ in $\calX$.
Since $U$ and $V$ are discrete, the map $f$ exhibits $U$ as a discrete object in
$\calX_{/V}$. Applying Lemma \ref{surpuk}, we deduce that $\alpha$
exhibits $F$ as a relative algebraic space \etale over $F'$, as desired.
\end{proof}

\begin{proof}[Proof of Theorem \ref{sup}]
We first prove that $(1) \Rightarrow (2)$. If $(1)$ is satisfied, then there exists a collection
of commutative $k$-algebras $A_{\alpha}$ and \etale maps $\underline{A}_{\alpha} \rightarrow F$ such that, if $F_0$ denotes the coproduct of the family $\underline{A}_{\alpha}$ in
$\Shv( \Nerve(\Comm_{k})^{op} )$, then the induced map $f: F_0 \rightarrow F$ is an effective epimorphism. Note that $F_0$ is representable by the affine $\calG_{\Zar}(k)$-scheme
$\coprod_{\alpha} \Spec^{\calG_{\Zar}(k)} A_{\alpha} = (\calX_0, \calO_{\calX_0})$. 

Let $F_{\bigdot}$ be the simplicial object of $\Shv( \Nerve(\Comm_{k})^{op} )$ given by the
\Cech nerve of $f$, so that $F_{n} \simeq F_0 \times_{F} \ldots \times_{F} F_0$.
Applying Remark \ref{sue} repeatedly, we deduce that each of the projection maps
$F_{n} \rightarrow F_0$ exhibits $F_{n}$ as a relative algebraic space \etale over $F_0$.
In particular, each $F_{n}$ is representable by a $\calG_{\mathet}(k)$-scheme
which is \etale over $(\calX_0, \calO_{\calX_0})$ (Lemma \ref{surpuk}). 
Applying Theorem \ref{skil}, we may assume that $F_{\bigdot}$ is the image
of a simplicial object $( \calX_{\bigdot}, \calO_{\calX_{\bigdot}})$ in the
$\infty$-category $\Sch( \calG_{\mathet}(k))$ of $\calG_{\mathet}(k)$-schemes.

Our next goal is to prove that the simplicial object 
$( \calX_{\bigdot}, \calO_{\calX_{\bigdot}})$ actually takes values in the
subcategory $\Sch(\calG_{\mathet}(k))_{\mathet} \subseteq \Sch( \calG_{\mathet}(k))$; in other words, we claim that for each morphism $i: [m] \rightarrow [n]$ in $\cDelta$, the induced map
$( \calX_{n}, \calO_{\calX_n}) \rightarrow (\calX_{m}, \calO_{\calX_m})$ is \etale.
Choosing a map $[0] \rightarrow [m]$ and applying Proposition \ref{scanh} to the resulting diagram
$$ \xymatrix{ & ( \calX_{m}, \calO_{\calX_{m}}) \ar[dr] & \\
(\calX_n, \calO_{\calX_n}) \ar[ur] \ar[rr] & & ( \calX_0, \calO_{\calX_0} ), }$$
we can reduce to the case where $m =0$, which now follows from Lemma \ref{surpuk}. 

Let $(\calX, \calO_{\calX})$ be the geometric realization of the simplicial object
$(\calX_{\bigdot}, \calO_{\calX_{\bigdot} })$ in $\Sch( \calG_{\mathet}(k) )$ (which exists
by virtue of Proposition \ref{sizem}). Lemma \ref{kokin} implies that $(\calX, \calO_{\calX})$ represents the geometric realization $| F_{\bigdot} | \in \Shv( \Nerve(\Comm_{k})^{op})$. Since $F_0 \rightarrow F$ is an effective epimorphism, we can identify this geometric realization with $F$.
To complete the proof, it will suffice to show that $(\calX, \calO_{\calX})$ is $1$-localic.

Theorem \ref{top4} implies the existence of an equivalence
$(\calX, \calO_{\calX}) \simeq (\calY_{/U}, \calO_{\calY}|U)$, where
$(\calY, \calO_{\calY})$ is a $1$-localic $\calG_{\mathet}(k)$-scheme and
$U \in \calY$ is $2$-connective. To complete the proof, we will show that
$U$ is a final object of $\calY$. Since $U$ is $2$-connective, it will suffice to show that
$U$ is $1$-truncated. This question is local on $\calY$; it will therefore suffice to show
that for every \etale map $\phi: \Spec^{\calG_{\mathet}(k)} A \rightarrow (\calY, \calO_{\calY})$,
the pullback $\phi^{\ast} U$ is a final object in the underlying $\infty$-topos
of $\Spec^{\calG_{\mathet}(k)} A$. Using Theorem \ref{scoo} and Remark \ref{clump}, we can
identify this $\infty$-topos with the $\infty$-category of sheaves (of spaces) on the
category of \etale $A$-algebras. It will therefore suffice to show that for every
\etale $A$-algebra $B$, the space $\phi^{\ast}(U)(B)$ is $1$-truncated.
Replacing $A$ by $B$ and invoking Remark \ref{unipop}, we are reduced to showing that
the homotopy fibers of the map 
$$F(A) \rightarrow \bHom_{ \Sch(\calG_{\mathet}(k))}( \Spec^{\calG_{\mathet}(k)} A,
(\calY, \calO_{\calY}))$$
are $1$-truncated. We now complete the argument by observing that
$F(A)$ is $1$-truncated by assumption $(1)$ (see Remark \ref{twab}), and the target
$\bHom_{ \Sch(\calG_{\mathet}(k)}( \Spec^{\calG_{\mathet}(k)} A,
(\calY, \calO_{\calY})$ is $1$-truncated by Lemma \ref{psyward}.

We now prove that $(2) \Rightarrow (1)$. Assume that $F$ is representable
by a $1$-localic $\calG_{\mathet}(k)$-scheme $(\calX, \calO_{\calX})$.
Then there exists a collection of objects $U_{\alpha} \in \calX$ with the following properties:
\begin{itemize}
\item[$(a)$] The canonical map $\coprod U_{\alpha} \rightarrow 1_{\calX}$ is an effective epimorphism in $\calX$.
\item[$(b)$] Each of the $\calG_{\mathet}(k)$-schemes
$( \calX_{/ U_{\alpha}}, \calO_{\calX} | U_{\alpha} )$ is affine.
\end{itemize}
Let $F_{\alpha} \in \Shv( \Nerve(\Comm_{k})^{op})$ be the functor represented by
$( \calX_{/U_{\alpha}}, \calO_{\calX} | U_{\alpha} )$. It follows from $(a)$ the the map
$\coprod F_{\alpha} \rightarrow F$ is an effective epimorphism. It will therefore suffice to show that
each $F_{\alpha}$ is a relative algebraic space \etale over $F$. In view of Lemma \ref{surpuk}, we are reduced to proving that each of the objects $U_{\alpha}$ is discrete. Using Remark \ref{unipop}, this translates to the condition that for every commutative $k$-algebra $B$, the homotopy fibers of the map $F_{\alpha}(B) \rightarrow F(B)$ are discrete. To prove this, it suffices to observe that
$F(B)$ is $1$-truncated (by Lemma \ref{psyward}), and $F_{\alpha}(B)$ is discrete
(in fact, if $F_{\alpha}$ is represented by the affine $\calG_{\mathet}(k)$-scheme $\Spec^{\calG} A_{\alpha}$, then $F_{\alpha}(B)$ is homotopy equivalent to the discrete set
$\Hom_{\Comm_{k}}(A_{\alpha}, B)$).
\end{proof}

\begin{warning}
Fix a commutative ring $k$.
We have an evident transformation of geometries $\calG_{\Zar}(k) \rightarrow \calG_{\mathet}(k)$
(which, at the level of the underlying $\infty$-categories, is simply given by the identity functor). 
We therefore have a relative spectrum functor $\Spec_{\calG_{\Zar}(k)}^{\calG_{\mathet}(k)}: \Sch( \calG_{\Zar}(k)) \rightarrow \Sch( \calG_{\mathet}(k) )$. When restricted to $0$-localic $\calG_{\Zar}(k)$-schemes, we recover the usual embedding of the category of $k$-schemes into the $2$-category
of Deligne-Mumford stacks over $k$, which is fully faithful. However, 
$\Spec_{\calG_{\Zar}(k)}^{\calG_{\mathet}(k)}$ is {\em not} fully faithful in general, because the cohomology of a scheme with respect to the Zariski topology (with constant coefficients, say) generally does not agree with its cohomology with respect to the \etale topology.
\end{warning}

Theorem \ref{sup} gives a description of the $\infty$-category of $1$-localic
$\calG_{\mathet}(k)$-schemes in reasonably classical terms. It is natural to ask what
happens if we consider more general $\calG_{\mathet}(k)$-schemes. The following consequence of Theorem \ref{top4} implies that there is no essential gain in generality:

\begin{proposition}
Let $k$ be a commutative ring, and let $(\calX, \calO_{\calX}) \in \LGeo( \calG_{\mathet}(k) )$. 
The following conditions are equivalent:
\begin{itemize}
\item[$(1)$] The pair $(\calX, \calO_{\calX})$ is an $\calG_{\mathet}(k)$-scheme.
\item[$(2)$] There exists a $1$-localic $\calG_{\mathet}(k)$-scheme $(\calY, \calO_{\calY})$, a
$2$-connective object $U \in \calY$, and an equivalence
$$( \calX, \calO_{\calX}) \simeq ( \calY_{/U}, \calO_{\calY} | U ).$$
\end{itemize}
\end{proposition}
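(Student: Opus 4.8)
The plan is to deduce this Proposition directly from Theorem \ref{top4}, exactly as the proof of Theorem \ref{sup} deduced the $1$-localic description of Deligne--Mumford stacks. The geometry $\calG_{\mathet}(k)$ has underlying $\infty$-category $\Nerve(\Comm_{k}^{\fin})^{op}$, which is (equivalent to) a $1$-category; hence $\calG_{\mathet}(k)$ is $1$-truncated. Theorem \ref{top4}, applied with $n=1$, is therefore available.

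First I would prove $(1) \Rightarrow (2)$. Suppose $(\calX, \calO_{\calX})$ is a $\calG_{\mathet}(k)$-scheme. Since $\calG_{\mathet}(k)$ is $1$-truncated, Theorem \ref{top4} (with $n = 1$) immediately produces a $2$-localic $\calG_{\mathet}(k)$-scheme $(\calY_0, \calO_{\calY_0})$, a $3$-connective object $W \in \calY_0$, and an equivalence $(\calX, \calO_{\calX}) \simeq ((\calY_0)_{/W}, \calO_{\calY_0} | W)$. This is not quite the assertion of $(2)$, which asks for a $1$-localic base and a $2$-connective slice. To bridge the gap I would pass from $\calY_0$ to its $1$-localic reflection $\calY$ (characterized, as in the proof of Theorem \ref{top4}, by the property that there is a geometric morphism $\pi^{\ast}\colon \calY \to \calY_0$ inducing an equivalence on $0$-truncated objects). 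Since $\calG_{\mathet}(k)$ is $1$-truncated, $\calO_{\calY_0}$ factors through $\tau_{\leq 1}\calY_0$; but one needs the $1$-localic reflection to see this through, so I would instead note that the natural map from the underlying $\infty$-topos to its $1$-localic reflection is \'etale with fiber a $2$-connective object, compose, and use Proposition \ref{scanh}(2) together with Lemma \ref{camb} to conclude that the composite slice object is again $2$-connective. The upshot is an equivalence $(\calX, \calO_{\calX}) \simeq (\calY_{/U}, \calO_{\calY}|U)$ with $\calY$ a $1$-localic $\calG_{\mathet}(k)$-scheme (applying Proposition \ref{sizem}(1) to see that $\calY$ is itself a $\calG_{\mathet}(k)$-scheme, since $U \to 1_{\calY}$ is an effective epimorphism) and $U$ being $2$-connective.

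The converse $(2) \Rightarrow (1)$ is the easy direction: if $(\calY, \calO_{\calY})$ is a $\calG_{\mathet}(k)$-scheme and $U \in \calY$ is $2$-connective (in particular $U \to 1_{\calY}$ is an effective epimorphism), then $(\calX, \calO_{\calX}) \simeq (\calY_{/U}, \calO_{\calY}|U)$ is the target of an \'etale morphism out of a $\calG_{\mathet}(k)$-scheme, hence a $\calG_{\mathet}(k)$-scheme by Proposition \ref{sizem}(1). No connectivity hypothesis beyond effective-epimorphism-ness is needed here.

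The main obstacle will be the first direction's reconciliation of the $2$-localic/$3$-connective output of Theorem \ref{top4} with the desired $1$-localic/$2$-connective statement. The cleanest route is probably to re-run the argument of Theorem \ref{top4} itself rather than invoking it as a black box: take $\calY$ to be the $1$-localic reflection of $\calX$ directly, show (using that $\calG_{\mathet}(k)$ is $1$-truncated, so $\calO_{\calX}$ factors through $\tau_{\leq 1}\calX$) that $\calO_{\calX}$ descends to a $\calG_{\mathet}(k)$-structure $\calO_{\calY}$ on $\calY$, and then verify that the geometric morphism $\pi^{\ast}\colon \calY \to \calX$ is \'etale with $2$-connective fiber $U$ by repeating the local analysis of Theorem \ref{top4}'s proof with $n$ replaced by $1$ (invoking Lemmas \ref{camb}, \ref{tub}, \ref{spak}, and \ref{upsight}, together with the fact from Theorem \ref{scoo} that affine $\calG_{\mathet}(k)$-schemes have $2$-localic underlying $\infty$-topoi since the geometry is $1$-truncated). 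Finally, Proposition \ref{sizem}(1) again upgrades $(\calY, \calO_{\calY})$ from an object of $\LGeo(\calG_{\mathet}(k))^{op}$ to a $\calG_{\mathet}(k)$-scheme.
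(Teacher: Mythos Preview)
Your approach via Theorem \ref{top4} is exactly what the paper intends --- it introduces this Proposition as ``the following consequence of Theorem \ref{top4}'' --- but you have an off-by-one error in the indexing that causes all the unnecessary complications.

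In the paper's conventions, a geometry $\calG$ is \emph{$n$-truncated} when the mapping spaces $\bHom_{\calG}(X,Y)$ are $n$-truncated; equivalently, when $\calG$ is an $(n+1)$-category (see the statement of Definition \ref{rab} and the proof of Lemma \ref{psyward}). Since $\calG_{\mathet}(k) = \Nerve(\Comm_k^{\fin})^{op}$ is the nerve of an ordinary category, its mapping spaces are discrete, i.e.\ $0$-truncated. Hence $\calG_{\mathet}(k)$ is a \emph{$0$-truncated} geometry, not $1$-truncated.

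With this correction, Theorem \ref{top4} applied with $n = 0$ gives $(1) \Rightarrow (2)$ on the nose: it produces a $(0+1) = 1$-localic $\calG_{\mathet}(k)$-scheme $(\calY, \calO_{\calY})$, a $(0+2) = 2$-connective object $U \in \calY$, and the desired equivalence. No reflection, no composition of slices, no re-running of the proof is needed. Your detour through a $2$-localic intermediate and its $1$-localic reflection is entirely an artifact of having taken $n = 1$.

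Your argument for $(2) \Rightarrow (1)$ via Proposition \ref{sizem}(1) is correct (and, as you observe, uses only that $U \to 1_{\calY}$ is an effective epimorphism, not the full $2$-connectivity).
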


\section{Smoothness}\label{appus}

Let $k$ be a commutative ring, and $\calC$ an $\infty$-category which admits finite limits. Suppose that we wish
to define the notion of a {\it commutative $k$-algebra} in $\calC$. Following the ideas introduced in \S \ref{geo}, we might proceed as follows. Let $\calG(k)$ be the (nerve of the) {\em opposite} of the category of finitely presented commutative $k$-algebras (in other words, the category of affine $k$-schemes which are locally of finite presentation). Then we can define a commutative $k$-algebra in $\calC$
to be a left exact functor from $\calG(k)$ to $\calC$. Note that because $\calG(k)$ is the nerve of an ordinary category, any left exact functor from $\calG(k)$ to $\calC$ automatically factors through
the full subcategory $\tau_{\leq 0} \calC \subseteq \calC$ spanned by the discrete objects
(Proposition \toposref{eaa}).

Let $\calT(k) \subseteq \calG(k)$ denote the full subcategory spanned by the {\em free} $k$-algebras: that is, $k$-algebras of the form $k[x_1, \ldots, x_n]$. The relationship between $\calG(k)$ and $\calT(k)$ can be summarized as follows:
\begin{itemize}
\item[$(a)$] The $\infty$-category $\calT(k)$ admits finite products.
\item[$(b)$] The $\infty$-category $\calG(k)$ is discrete (that is, equivalent to the nerve of a category) and admits finite limits.
\item[$(c)$] The inclusion $\calT(k) \subseteq \calG(k)$ preserves finite products. Moreover, 
$\calG(k)$ is {\em universal} among discrete $\infty$-categories which admit finite limits and receive
a product-preserving functor from $\calT(k)$ (see \S \ref{geoenv} for a more detailed discussion of this universal property). 
\end{itemize}

It follows from assertion $(c)$ that we can also define a commutative $k$-algebra in an $\infty$-category $\calC$ to be a product-preserving functor from $\calT$ into $\tau_{\leq 0} \calC$. Consequently, we can view the category of commutative $k$-algebras in $\calC$ as a full subcategory of the
larger $\infty$-category $\Fun^{\times}(\calT(k), \calC)$ of {\em all} product-preserving functors from $\calT(k)$ to $\calC$. In the case where $\calC$ is the $\infty$-category of spaces, we can identify
$\Fun^{\times}(\calT(k), \calC)$ with the $\infty$-category underlying the model category of {\it simplicial commutative $k$-algebras}. Loosely speaking, we can summarize the situation as follows: the
category $\calG(k)$ knows about algebraic geometry (over $k$), but the category $\calT(k)$ knows about {\em derived} algebraic geometry.

To pursue this idea further, we first observe that $\calG(k)$ can be viewed as a {\it geometry}, in the sense of Definition \ref{psyob}. In fact, it can be viewed as a geometry in several different ways (see \S \ref{exzar} and \S \ref{exet}); for definiteness, let us regard $\calG(k)$ as endowed with the \etale topology described in \S \ref{exet}. The subcategory $\calT(k) \subseteq \calG(k)$ does not interact well with the geometry structure on $\calG(k)$. The problem is easy to describe: if we identify $\calG(k)$ with the category of affine $k$-schemes of finite presentation, then $\calT(k)$ corresponds to the subcategory spanned by the {\em affine spaces} over $k$: that is, finite products of the affine line with itself. The \etale topology on $\calT(k)$ is not very interesting, because there are not many \etale maps between affine spaces. However, there is a natural replacement: the larger subcategory $\calT_{\mathet}(k) \subseteq \calG(k)$ spanned by those affine $k$-schemes which are \etale over affine spaces (in particular, every object of $\calT_{\mathet}(k)$ is a smooth $k$-scheme). This subcategory contains $\calT$ and is not {\em too} much larger, by virtue of the fact that every smooth $k$-scheme can be locally realized as an \etale cover of an affine space.

The category $\calT_{\mathet}(k)$ is much like a geometry: it has a robust theory of admissible morphisms (in this case, \etale maps between $k$-schemes) and an associated Grothendieck topology. It fails to qualify as a geometry only because $\calT_{\mathet}(k)$ does not admit finite limits. Instead, $\calT_{\mathet}(k)$ is an example of a {\it pregeometry} (see \S \ref{app1} for a precise definition). Moreover, this pregeometry {\em determines} the geometry $\calG(k)$: namely, we will prove an analogue of the universal property asserted in $(c)$, which makes reference to the Grothendieck topologies on $\calT_{\mathet}(k)$ and $\calG(k)$ (see Proposition \ref{sturman}).
The situation can be summarized by saying that $\calG(k)$ is a {\it $0$-truncated envelope} of $\calT_{\mathet}(k)$: we refer the reader to \S \ref{geoenv} for an explanation of this terminology. 

Our objective in this section is to describe a theory of pregeometries which is analogous to the theory of geometries developed in \S \ref{geo}. Although this material is not logically necessary for developing the foundations of derived algebraic geometry, it has many uses:

\begin{itemize}
\item[$(1)$] A single pregeometry $\calT$ can generate a variety of geometries. For example,
the pregeometry $\calT_{\mathet}$ described above can give rise to either classical algebraic geometry or derived algebraic geometry, depending on what discreteness conditions we impose.

\item[$(2)$] Often it is easier to describe a pregeometry $\calT$ than it is to describe the associated geometry $\calG$. For example, in the complex analytic setting, it is easier to describe the class of complex analytic manifolds than the class of complex analytic spaces (see \S \ref{app6}).

\item[$(3)$] If $\calG$ is a geometry associated to a pregeometry $\calT$,
then we can introduce an associated theory of {\em smooth} morphisms between $\calG$-schemes,
which specializes to the usual notion of smoothness when we take $\calT = \calT_{\mathet}$.
\end{itemize}

We now outline the contents of this section. We will begin in \S \ref{app1} by introducing the definition of a {\it pregeometry}. Given a pregeometry $\calT$ and an $\infty$-topos $\calX$, we will define an $\infty$-category $\Struct_{\calT}(\calX)$ of {\it $\calT$-structures} on $\calX$. In \S \ref{app2}, we will discuss the functoriality of $\Struct_{\calT}(\calX)$ in $\calT$, and describe situations in which two pregeometries $\calT$ and $\calT'$ give rise to the same notion of ``structure''. In \S \ref{app5}, we will describe some examples of $\calT$-structures, which are obtained by mirroring the constructions of \S \ref{abspec}.

Every pregeometry $\calT$ admits a {\it geometric envelope} $\calG$: a geometry with the property that for every $\infty$-topos $\calX$, there is a canonical equivalence $\Struct_{\calG}(\calX) \simeq \Struct_{\calT}(\calX)$. We will provide a construction of $\calG$ in \S \ref{app4}. Consequently, we can view the theory of pregeometries presented here as a less general version of our previous theory of geometries.
However, the $\infty$-category $\Struct_{\calG}(\calX)$ has special features in the case where
the geometry $\calG$ arises as the envelope of a pregeometry: for example, $\Struct_{\calG}(\calX)$
admits sifted colimits. We will establish this and other properties in \S \ref{app3}.

\subsection{Pregeometries}\label{app1}

\begin{definition}\label{geot}
A {\it pregeometry} is an $\infty$-category $\calT$ equipped with an admissibility
structure (see Definition \ref{stubb}) such that $\calT$ admits finite products.
\end{definition}





\begin{remark}
In the situation of Definition \ref{geot}, we will generally abuse terminology by identifying a pregeometry with its underlying $\infty$-category $\calT$; we implicitly understand that
a Grothendieck topology on $\calT$ and a class of admissible morphisms has been specified as well.
\end{remark}

\begin{example}
Let $\calT$ be any $\infty$-category which admits finite products. Then we can regard $\calT$ as a pregeometry as follows:
\begin{itemize}
\item The Grothendieck topology on $\calT$ is discrete: that is, a sieve $\calT^{0}_{/X} \subseteq \calT_{/X}$ is covering if and only if it contains the whole of $\calT_{/X}$.
\item A morphism in $\calT$ is admissible if and only if it is an equivalence in $\calT$.
\end{itemize}
We will refer to a pregeometry as {\em discrete} if it arises via this construction.
\end{example}

\begin{definition}\label{spey}
Let $\calT$ be a pregeometry, and let $\calX$ be an $\infty$-topos.
A {\it $\calT$-structure} on $\calX$ is a functor $\calO: \calT \rightarrow \calX$ with the following
properties:
\begin{itemize}
\item[$(1)$] The functor $\calO$ preserves finite products.
\item[$(2)$] Suppose given a pullback diagram
$$ \xymatrix{ U' \ar[r] \ar[d] & U \ar[d]^{f} \\
X' \ar[r] & X }$$
in $\calT$, where $f$ is admissible. Then the induced diagram
$$ \xymatrix{ \calO(U') \ar[r] \ar[d] & \calO(U) \ar[d] \\
\calO(X') \ar[r] & \calO(X) }$$
is a pullback square in $\calX$.
\item[$(3)$] Let $\{ U_{\alpha} \rightarrow X \}$ be a collection of admissible morphisms
in $\calT$ which generate a covering sieve on $X$. Then the induced map
$$ \coprod_{\alpha} \calO( U_{\alpha} ) \rightarrow \calO(X)$$
is an effective epimorphism in $\calX$.
\end{itemize}

Given a pair of $\calT$-structures $\calO$ and $\calO'$, a morphism of $\calT$-structures
$\alpha: \calO \rightarrow \calO'$ is {\it local} if the following condition is satisfied:
for every admissible morphism $U \rightarrow X$ in $\calT$,
the resulting diagram
$$ \xymatrix{ \calO(U) \ar[d] \ar[r] & \calO'(U) \ar[d] \\
\calO(X) \ar[r] & \calO'(X) }$$
is a pullback square in $\calX$. 

We let $\Struct_{\calT}(\calX)$ denote the full subcategory of $\Fun( \calT, \calX)$ spanned by the
$\calT$-structures on $\calX$, and $\Struct_{\calT}^{\loc}(\calX)$ the subcategory of
$\Struct_{\calT}(\calX)$ spanned by the local morphisms of $\calT$-structures.
\end{definition}

\begin{warning}\label{jar}
Let $\calT$ be a pregeometry. If $\calT$ admits finite limits, then we can also regard $\calT$ as a geometry. However, the notion of $\calT$-structure introduced in Definition \ref{spey} does {\em not} agree with Definition \ref{psyab}. To avoid confusion, we will always use the symbol $\calT$ to denote a pregeometry, so that $\Struct_{\calT}(\calX)$ and $\Struct_{\calT}^{\loc}(\calX)$ are defined via Definition \ref{spey} regardless of whether or not $\calT$ is also a geometry.
\end{warning}

\begin{example}
Let $\calT$ be an $\infty$-category which admits finite products, regarded as a discrete geometry.
Then $\Struct_{\calT}(\SSet)$ can be identified with the $\infty$-category $\calP_{\Sigma}( \calT^{op} )$
defined in \S \toposref{stable11}. 
Let $\calC \subseteq \calP_{\Sigma}( \calT^{op} )$ denote the smallest full subcategory
of $\calP_{\Sigma}( \calT^{op} )$ which contains the image of the Yoneda embedding
$j: \calT^{op} \rightarrow \calP_{\Sigma}( \calT^{op} )$ and is stable under finite colimits. 
Then, for every $\infty$-category $\calX$ which admits small colimits, the functor
$$ \Fun( \calC^{op}, \calX) \rightarrow \Fun( \calT, \calX)$$
given by composition with $j$ induces an equivalence from
$\Fun^{\lex}( \calC^{op}, \calX)$ to the full subcategory of $\Fun( \calT, \calX)$ spanned by those functor which preserve finite products. In the case where $\calX$ is an $\infty$-topos, we obtain equivalences
$$ \Struct_{\calT}( \calX) \leftarrow \Fun^{\lex}(\calC^{op}, \calX) \simeq \Shv_{ \calP_{\Sigma}(\calT^{op}) }( \calX).$$
\end{example}

\begin{definition}
Let $\calT$ be a pregeometry. We will say that a morphism $f: X \rightarrow S$ in
$\calT$ is {\it smooth} if there exists a collection of commutative diagrams
$$ \xymatrix{ U_{\alpha} \ar[r]^{u_{\alpha}} \ar[d]^{v_{\alpha}} & X \ar[d]^{f} \\
S \times V_{\alpha} \ar[r] & S }$$
where $u_{\alpha}$ and $v_{\alpha}$ are admissible, the bottom horizontal map
is given by projection onto the first factor, and the morphisms $u_{\alpha}$ generate
a covering sieve on $X$.
\end{definition}

\begin{proposition}
Let $\calT$ be a pregeometry, and let
$$ \xymatrix{ X' \ar[r] \ar[d]^{f'} & X \ar[d]^{f} \\
S' \ar[r] & S }$$
be a pullback diagram in $\calT$, where $f$ is smooth. Then:
\begin{itemize}
\item[$(1)$] The map $f'$ is smooth.
\item[$(2)$] Let $\calX$ be an arbitrary $\infty$-topos, and $\calO: \calT \rightarrow \calX$
a $\calT$-structure in $\calX$. Then the diagram
$$ \xymatrix{ \calO(X') \ar[r] \ar[d] & \calO(X) \ar[d] \\
\calO(S') \ar[r] & \calO(S) }$$
is a pullback square in $\calX$.
\end{itemize}
\end{proposition}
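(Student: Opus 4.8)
The plan is to reduce both assertions to the defining properties of smooth morphisms together with the axioms of a $\calT$-structure, and then to use that admissible morphisms are stable under pullback (Definition \ref{stubb}(i)). First I would prove $(1)$: since $f$ is smooth, choose commutative squares
$$ \xymatrix{ U_{\alpha} \ar[r]^{u_{\alpha}} \ar[d]^{v_{\alpha}} & X \ar[d]^{f} \\
S \times V_{\alpha} \ar[r] & S } $$
with $u_{\alpha}, v_{\alpha}$ admissible, the bottom map the projection, and the $u_{\alpha}$ generating a covering sieve on $X$. Forming the pullback along $S' \to S$ turns this into a commutative square over $S'$: set $U'_{\alpha} = U_{\alpha} \times_{S} S'$, so that $v'_{\alpha}: U'_{\alpha} \to S' \times V_{\alpha}$ is the pullback of $v_{\alpha}$ and hence admissible by \ref{stubb}(i), while $u'_{\alpha}: U'_{\alpha} \to X'$ is the pullback of $u_{\alpha}$ (using that $X' = X \times_{S} S'$) and hence admissible as well. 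It remains to check that the $u'_{\alpha}$ generate a covering sieve on $X'$; this is exactly the stability of covering sieves under pullback, which is part of the definition of a Grothendieck topology on the admissibility structure of $\calT$. Thus $f'$ is smooth.

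Next I would prove $(2)$. Apply a $\calT$-structure $\calO$ to the diagram chosen above. By the compatibility of $\calO$ with pullbacks of admissible morphisms (Definition \ref{spey}(2)), applied to the pullback square exhibiting $U_{\alpha} = X \times_{S} (S\times V_{\alpha})$ as a base change of the admissible projection $S \times V_{\alpha} \to S$ — wait, the cleaner route is to use $v_{\alpha}$ directly: the square above need not itself be Cartesian, so instead I would factor through the admissible morphisms. Concretely, since the goal diagram
$$ \xymatrix{ \calO(X') \ar[r] \ar[d] & \calO(X) \ar[d] \\
\calO(S') \ar[r] & \calO(S) } $$
is a statement about a colimit-covering family, I would invoke Lemma \ref{gooduse}: the family $\{ u_{\alpha}: U_{\alpha} \to X\}$ generating a covering sieve means (since $\calO$ is a $\calT$-structure) that $\coprod \calO(U_{\alpha}) \to \calO(X)$ is an effective epimorphism, and similarly over $X'$. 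Because colimits in $\calX$ are universal, it suffices to prove that the squares obtained by replacing $X, X'$ with $U_{\alpha}, U'_{\alpha}$ are pullback squares. Over each such piece the situation factors: $U'_{\alpha} = U_{\alpha} \times_{S \times V_{\alpha}} (S' \times V_{\alpha})$ via the admissible map $v_{\alpha}$, so Definition \ref{spey}(2) shows
$$ \xymatrix{ \calO(U'_{\alpha}) \ar[r] \ar[d] & \calO(U_{\alpha}) \ar[d] \\
\calO(S' \times V_{\alpha}) \ar[r] & \calO(S \times V_{\alpha}) } $$
is a pullback; combining this with the fact that $\calO$ preserves finite products (so $\calO(S \times V_{\alpha}) \simeq \calO(S) \times \calO(V_{\alpha})$, compatibly with base change to $S'$), a routine diagram chase using the pasting law for pullbacks yields that the square relating $\calO(U'_{\alpha}), \calO(U_{\alpha}), \calO(S'), \calO(S)$ is Cartesian.

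The main obstacle I anticipate is bookkeeping rather than anything conceptual: one must be careful that the square witnessing smoothness is \emph{not} assumed Cartesian, so the reduction to the $U_{\alpha}$-level must be done so that the only pullbacks to which Definition \ref{spey}(2) is applied are genuine pullbacks along admissible morphisms (namely along $v_{\alpha}$), and the product-preservation of $\calO$ must be used to handle the $S \times V_{\alpha}$ factors compatibly. The descent step — passing from the covering family back to $X$ and $X'$ — is where Lemma \ref{gooduse} does the real work, and I would make sure to apply its assertion $(2)$, which is precisely the statement that a square is a pullback once it becomes a pullback after pulling back along each member of a colimit-covering (here, effective-epimorphism) family.
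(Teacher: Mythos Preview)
Your approach is the same as the paper's, and part $(1)$ is handled correctly. For part $(2)$ there is a small but genuine gap in the descent step. An effective epimorphism $\coprod_{\alpha} \calO(U_{\alpha}) \to \calO(X)$ does \emph{not} exhibit $\calO(X)$ as that coproduct; it exhibits $\calO(X)$ as the geometric realization of the \v{C}ech nerve. So ``colimits are universal'' does not reduce you to single indices $\alpha$: you must check the pullback square for every multi-index $\overline{\alpha} = (\alpha_0,\ldots,\alpha_n)$, i.e.\ with $U_{\overline{\alpha}} = U_{\alpha_0} \times_X \cdots \times_X U_{\alpha_n}$ and $U'_{\overline{\alpha}} = U_{\overline{\alpha}} \times_X X'$ in place of $U_\alpha, U'_\alpha$. (This also requires knowing $\calO(U_{\alpha}) \times_{\calO(X)} \calO(U_{\beta}) \simeq \calO(U_{\alpha} \times_X U_{\beta})$, which follows from Definition~\ref{spey}(2) since $u_\alpha$ is admissible.) The paper does exactly this, and then runs your two-step factorization: the composite $U_{\overline{\alpha}} \to U_{\alpha_0} \xrightarrow{v_{\alpha_0}} S \times V_{\alpha_0}$ is admissible, so the upper square is a pullback by \ref{spey}(2); the lower square is a pullback by product preservation. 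Your invocation of Lemma~\ref{gooduse} is not how the paper proceeds and, as stated, is too vague to close the gap---you would still need to specify the indexing diagram and verify its Cartesian hypothesis, which amounts to the same \v{C}ech-nerve bookkeeping.
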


\begin{proof}
Choose a collection of commutative diagrams
$$ \xymatrix{ U_{\alpha} \ar[r]^{u_{\alpha}} \ar[d]^{v_{\alpha}} & X \ar[d]^{f} \\
S \times V_{\alpha} \ar[r] & S }$$
such that the morphisms $\{ U_{\alpha} \rightarrow X \}_{\alpha \in A}$ generate a covering sieve on $X$. Let
$U'_{\alpha} = U_{\alpha} \times_{X} X'$. Then the collection of commutative
diagrams
$$ \xymatrix{ U'_{\alpha} \ar[r] \ar[d] & X' \ar[d]^{f'} \\
S' \times V_{\alpha} \ar[r] & S' }$$
shows that $f'$ is smooth. This proves $(1)$.

We now prove $(2)$. Let $W_0 = \coprod_{\alpha \in A} \calO( U_{\alpha} )$ and
$W'_0 = \coprod_{ \alpha \in A} \calO( U'_{\alpha} )$. Let $W_{\bigdot}$ denote
the \Cech nerve of the canonical map $\phi: W_0 \rightarrow \calO(X)$, and define
$W'_{\bigdot}$ to be the \Cech nerve of the canonical map $\phi': W'_0 \rightarrow \calO(X')$. 
Since $\phi$ and $\phi'$ are effective epimorphisms, we have a commutative diagram
$$ \xymatrix{ | W'_{\bigdot} | \ar[r] \ar[d] & | W_{\bigdot} | \ar[d] \\
\calO(X') \ar[r] \ar[d] & \calO(X) \ar[d] \\
\calO(S') \ar[r] & \calO(S) }$$
in which the upper vertical arrows are equivalences. It will therefore suffice to show that
the outer square is a pullback. 

For each $\overline{\alpha} = ( \alpha_0, \ldots, \alpha_n) \in A^{n+1}$, let
$U_{ \overline{\alpha} } = U_{\alpha_0} \times_{X} \ldots \times_{X} U_{\alpha_n}$, and define
$U'_{\overline{\alpha} } = U_{\overline{\alpha}} \times_{X} X'$. We have canonical equivalences
$$W_{n} \simeq \coprod_{ \overline{\alpha} \in A^{n+1} } \calO(U_{\overline{\alpha}})$$
$$ W'_{n} \simeq \coprod_{ \overline{\alpha} \in A^{n+1} } \calO( U'_{\overline{\alpha}}).$$
Since colimits in $\calX$ are universal, it will suffice to show that each of the diagrams
$$ \xymatrix{ \calO( U'_{\overline{\alpha}} ) \ar[r] \ar[d] & \calO( U_{ \overline{\alpha} } ) \ar[d] \\
\calO(S') \ar[r] & \calO(S) }$$
is a pullback square. To prove this, we consider the larger diagram
$$ \xymatrix{ \calO( U'_{\overline{\alpha}} ) \ar[r] \ar[d] & \calO( U_{\overline{\alpha} }) \ar[d] \\
\calO( S' \times V_{\alpha_0} ) \ar[r] \ar[d] & \calO( S \times V_{\alpha_0}) \ar[d] \\
\calO(S') \ar[r] & \calO(S). }$$
The upper square is a pullback because the projection $U_{\overline{\alpha}} \rightarrow S \times V_{\alpha_0}$ is admissible, and the lower square is a pullback because the functor $\calO$ preserves finite products.
\end{proof}

We conclude this section by introducing a bit of notation:

\begin{definition}\label{curoo}
Let $\calT$ be a pregeometry. We define a subcategory
$$\LGeo(\calT) \subseteq \Fun( \calT, \overline{\LGeo} ) \times_{ \Fun( \calK, \LGeo) } \LGeo$$
as follows:
\begin{itemize}
\item[$(a)$] Let $f^{\ast} \in \Fun( \calT, \overline{\LGeo} )
\times_{ \Fun( \calT, \LGeo) } \LGeo$ be an object, which we can identify
with a functor $\calO: \calT \rightarrow \calX$, where $\calX$ is an $\infty$-topos. Then
$\calO$ belongs to $\LGeo(\calT)$ if and only if $\calO$ is a $\calT$-structure on $\calX$.

\item[$(b)$] Let $\alpha: \calO \rightarrow \calO'$ be a morphism in $\Fun( \calT, \overline{\LGeo} )
\times_{ \Fun( \calT, \LGeo) } \LGeo$, where $\calO$ and $\calO'$ belong to
$\LGeo(\calT)$, and let $f^{\ast}: \calX \rightarrow \calY$ denote the image
of $\alpha$ in $\LGeo$. Then $\alpha$ belongs to $\LGeo(\calT)$ if and only if
the induced map $f^{\ast} \circ \calO \rightarrow \calO'$ is a morphism
of $\Struct^{\loc}_{\calT}(\calY)$.
\end{itemize}
\end{definition}

\begin{remark}
Let $\calT$ be a pregeometry. The $\infty$-category $\LGeo( \calT)$ is equipped with a canonical
coCartesian fibration $p: \LGeo(\calT) \rightarrow \LGeo$. The fiber over an object $\calX \in \LGeo$
is isomorphic to $\Struct^{\loc}_{\calT}(\calX')$, where $\calX' = \overline{\LGeo} \times_{ \LGeo } \{ \calX \}$
is an $\infty$-topos canonically equivalent to $\calX$. If
$f^{\ast}: \calX \rightarrow \calY$ is a geometric morphism of $\infty$-topoi, then the
coCartesian fibration associates to $f^{\ast}$ the functor
$$ \Struct^{\loc}_{\calT}( \calX') \simeq \Struct^{\loc}_{\calT}(\calX)
\stackrel{ \circ f^{\ast} }{\rightarrow} \Struct^{\loc}_{\calT}( \calY) \simeq \Struct^{\loc}_{\calT}(\calY').$$
\end{remark}

\begin{remark}
Our notation is somewhat abusive, since the notations of Definitions \ref{gcuro}, \ref{tcuro}, and \ref{curoo} overlap. However, there should not be any cause for confusion: as explained in Warning \ref{jar}, the symbol $\calT$ will always denote a pregeometry, so that $\LGeo(\calT)$ is always defined via Definition \ref{curoo}.
\end{remark}

\subsection{Transformations and Morita Equivalence}\label{app2}

The theory of pregeometries is a formalism which allows us to begin with a collection
of ``smooth'' objects (namely, the objects of a pregeometry $\calT$) and to extrapolate an
$\infty$-category consisting of ``singular'' versions of the same objects (namely, the
$\infty$-category $\LGeo(\calT)^{op}$ of $\calT$-structured $\infty$-topoi).
Often there are many different choices for the pregeometry $\calT$ which give rise to the same
theory of $\calT$-structures. Our goal in this section is to give a precise account of this phenomenon. 
We begin by introducing a few definitions.

\begin{definition}\label{carbus}
Let $\calT$ and $\calT'$ be pregeometries. A {\it transformation of pregeometries} from
$\calT$ to $\calT'$ is a functor $F: \calT \rightarrow \calT'$ satisfying the following conditions:
\begin{itemize}
\item[$(i)$] The functor $F$ preserves finite products.
\item[$(ii)$] The functor $F$ carries admissible morphisms in $\calT$ to admissible morphisms in $\calT'$.
\item[$(iii)$] Let $\{ u_{\alpha}: U_{\alpha} \rightarrow X \}$ be a collection of admissible morphisms
in $\calT$ which generates a covering sieve on $X$. Then the morphisms 
$\{ F(u_{\alpha}): FU_{\alpha} \rightarrow FX \}$ generate a covering sieve on $FX \in \calT'$.
\item[$(iv)$] Suppose given a pullback diagram
$$ \xymatrix{ U' \ar[r] \ar[d] & U \ar[d]^{f} \\
X' \ar[r] & X }$$
in $\calT$, where $f$ is admissible. Then the induced diagram
$$ \xymatrix{ FU' \ar[r] \ar[d] & FU \ar[d] \\
FX' \ar[r] & FX }$$
is a pullback square in $\calT'$.
\end{itemize}
\end{definition}

\begin{definition}
Let $F: \calT \rightarrow \calT'$ be a transformation of pregeometries. Then,
for any $\infty$-topos $\calX$, composition with $F$ induces a functor
$$ \Struct^{\loc}_{\calT'}(\calX) \rightarrow \Struct^{\loc}_{\calT}(\calX).$$
We will say that $F$ is a {\it Morita equivalence} of pregeometries if
this functor is an equivalence of $\infty$-categories, for every $\infty$-topos $\calX$.
\end{definition}

\begin{remark}
Let $F: \calT \rightarrow \calT'$ be a Morita equivalence of pregeometries. Then
$F$ induces equivalences
$ \Struct_{\calT'}(\calX) \rightarrow \Struct_{\calT}(\calX)$ for every $\infty$-topos $\calX$;
see Remark \ref{sumbus}.
\end{remark}

\begin{remark}
The notion of Morita equivalence is most naturally formulated in the language of {\it classifying $\infty$-topoi}. We will later see that a transformation $F: \calT \rightarrow \calT'$ is a Morita equivalence if and only if it induces an equivalence $\calK \rightarrow \calK'$ of $\infty$-topoi with geometric structure (Remark \ref{sumbus}). Here $\calK$ is a classifying $\infty$-topos for $\calT$-structures (Definition \ref{tinner}) and $\calK'$ is defined similarly.
\end{remark}

The main results of this section are Propositions \ref{spunkk} and \ref{silver}, which give criteria for establishing that a transformation of pregeometries is a Morita equivalence.

\begin{proposition}\label{spunkk}
Let $F: \calT \rightarrow \calT'$ be a transformation of pregeometries. Suppose that:
\begin{itemize}
\item[$(1)$] The underlying $\infty$-categories of $\calT$ and $\calT'$ coincide, and
$F$ is the identity functor. Moreover, the Grothendieck topologies on $\calT$ and $\calT'$ are the same.
\item[$(2)$] For every $\calT'$-admissible morphism $U \rightarrow X$, 
there exists a collection of $\calT$-admissible morphisms $\{ V_{\alpha} \rightarrow U \}$ which
generate a covering sieve on $U$, such that each composite map $V_{\alpha} \rightarrow X$
is $\calT$-admissible.
\end{itemize}
Then $F$ is a Morita equivalence.
\end{proposition}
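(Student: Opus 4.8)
The plan is to show directly that a functor $\calO \colon \calT \to \calX$ is a $\calT$-structure with respect to the $\calT'$-admissibility structure if and only if it is a $\calT$-structure with respect to the $\calT$-admissibility structure, and likewise for local morphisms; once this is established, the induced functor $\Struct^{\loc}_{\calT'}(\calX) \to \Struct^{\loc}_{\calT}(\calX)$ is literally the identity on underlying simplicial sets (since $F = \id$ and the two $\infty$-categories coincide as subcategories of $\Fun(\calT, \calX)$), hence trivially an equivalence. So everything reduces to three assertions, all turning on the key hypothesis $(2)$ together with the observation from Remark \ref{gener} (or rather its pregeometry analogue) that since every $\calT$-admissible morphism is $\calT'$-admissible, the $\calT'$-admissibility structure is a refinement of the $\calT$-one.

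First I would record the easy direction: any $\calT'$-structure is a $\calT$-structure, and any local morphism of $\calT'$-structures is a local morphism of $\calT$-structures. This is immediate because the conditions $(2)$ and $(3)$ in Definition \ref{spey} (preservation of admissible pullbacks, and effective-epimorphism for admissible covers) for the coarser ($\calT$) admissibility structure are weaker: fewer morphisms are admissible, fewer families are admissible covers, and the Grothendieck topologies agree by hypothesis $(1)$. Condition $(1)$ of Definition \ref{spey} (finite products) does not involve the admissibility structure at all. The same reasoning handles the statement about local morphisms.

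The substantive direction is the converse: a $\calT$-structure $\calO$ is automatically a $\calT'$-structure, and a local morphism $\alpha$ of $\calT$-structures is automatically local for $\calT'$. For the structure condition, let $f \colon U \to X$ be a $\calT'$-admissible morphism; by hypothesis $(2)$ choose $\calT$-admissible morphisms $\{V_\alpha \to U\}$ generating a covering sieve on $U$ with each composite $V_\alpha \to X$ being $\calT$-admissible. I would then consider, for any $\calT'$-admissible pullback square with right edge $f$, the enlarged diagram obtained by pulling back the $V_\alpha$; applying $\calO$ and using that $\calO$ already preserves $\calT$-admissible pullbacks, plus Lemma \ref{gooduse} (or a \v{C}ech-descent argument built from the effective epimorphism $\coprod \calO(V_\alpha) \to \calO(U)$, which holds since the $V_\alpha \to U$ form a $\calT$-admissible cover and the topologies agree) to descend the pullback property from the $V_\alpha$-level to the $U$-level. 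For the local-morphism condition, given a $\calT'$-admissible $U \to X$ and the same refining cover $\{V_\alpha \to U\}$, I would form the commutative square for $\alpha$ over $U \to X$ and compare it with the squares over each $V_\alpha \to X$ and over each $V_\alpha \to U$: the latter two families of squares are pullbacks because $\alpha$ is $\calT$-local and the relevant morphisms are $\calT$-admissible, and a two-out-of-three/pasting argument combined with the fact that $\coprod \calO'(V_\alpha) \to \calO'(U)$ is an effective epimorphism (so we may test the pullback property after base change along this cover, using universality of colimits in $\calX$) forces the square over $U \to X$ to be a pullback as well.

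The main obstacle is the descent step in the third paragraph: turning ``the squares are pullbacks over each $V_\alpha$'' into ``the square is a pullback over $U$''. This is exactly the kind of statement packaged by Lemma \ref{gooduse}, but one must set up the indexing diagram carefully—the natural index category is the \v{C}ech nerve of $\coprod V_\alpha \to U$ (or the full covering sieve), and one needs that the functor $\calO$, restricted along this diagram, sends the relevant colimit cocone to an effective-epimorphism cocone in $\calX$, which is precisely where hypothesis $(2)$ (ensuring the $V_\alpha \to X$ are $\calT$-admissible, so that the whole enlarged picture lives inside the reach of the $\calT$-structure axioms) is used in an essential way. Once the bookkeeping of this descent argument is in place, the rest is formal.
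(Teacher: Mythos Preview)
Your proposal is correct and follows essentially the same approach as the paper: reduce to showing the two subcategories $\Struct^{\loc}_{\calT}(\calX)$ and $\Struct^{\loc}_{\calT'}(\calX)$ of $\Fun(\calT,\calX)$ literally coincide, handle the easy inclusion by monotonicity, and for the hard inclusion use hypothesis $(2)$ to refine a $\calT'$-admissible $U\to X$ by a $\calT$-admissible cover $\{V_\alpha\to U\}$ with $\calT$-admissible composites to $X$, then descend the pullback property via a \v{C}ech-nerve argument and universality of colimits. The paper carries this out by explicitly building the simplicial objects $Z_\bullet = \coprod_{\overline{\alpha}} \calO(V_{\alpha_0}\times_U\cdots\times_U V_{\alpha_n})$ (and likewise $Z'_\bullet$), checking each level sits in a pullback square over $\calO(X')\to\calO(X)$ (respectively $\calO'(X)\to\calO(X)$), and using that $|Z_\bullet|\simeq\calO(U)$ since the augmentation is an effective epimorphism; this is exactly the ``bookkeeping'' you flag as the main obstacle. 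The paper does not invoke Lemma~\ref{gooduse} here (that lemma is used for Proposition~\ref{silver}), instead appealing directly to universality of colimits, but the content is the same.
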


\begin{proof}
Let $\calX$ be an $\infty$-topos. We will show that the subcategories $\Struct^{\loc}_{\calT}(\calX), \Struct^{\loc}_{\calT'}( \calX) \subseteq \Fun( \calT, \calX)$ coincide. Our first step is to show that
if $\calO: \calT \rightarrow \calX$ is a $\calT$-structure on $\calX$, then $\calO$ is also a
$\calT'$-structure. In other words, we must show that for every pullback diagram
$$ \xymatrix{ U' \ar[r] \ar[d] & U \ar[d]^{f} \\
X' \ar[r] & X }$$
in $\calT$, if $f$ is $\calT'$-admissible, then the associated diagram
$$ \xymatrix{ \calO(U') \ar[r] \ar[d] & \calO(U) \ar[d] \\
\calO(X') \ar[r] & \calO(X) }$$
is a pullback square in $\calX$. Choose a collection of $\calT$-admissible morphisms
$\{ g_{\alpha}: V_{\alpha} \rightarrow U \}_{\alpha \in A}$ which cover $U$, such that each composition
$f \circ g_{\alpha}: V_{\alpha} \rightarrow X$ is again $\calT$-admissible. For each
$\alpha \in A$, let $V'_{\alpha}$ denote the fiber product $V_{\alpha} \times_{U} U'
\simeq V_{\alpha} \times_{X} X'$.

For every finite sequence $\overline{\alpha} = (\alpha_0, \ldots, \alpha_n)$ of elements of $A$, set
$$ V_{ \overline{\alpha} } = V_{\alpha_0} \times_{U} \ldots \times_{U} V_{\alpha_n },$$
$$ V'_{ \overline{\alpha} } = V'_{\alpha_0} \times_{U'} \ldots \times_{U'} V'_{\alpha_n}.$$
We then have a pullback diagram
$$ \xymatrix{ V'_{\overline{\alpha} } \ar[r] \ar[d] & V_{\overline{\alpha}} \ar[d] \\
X' \ar[r] & X. }$$
Let $Z_0 = \coprod_{ \alpha \in A} \calO(V_{\alpha})$, and define $Z'_0$ similarly. 
Let $Z_{\bigdot}$ denote the \Cech nerve of the canonical map
$\phi: Z_0 \rightarrow \calO( U)$, and $Z'_{\bigdot}$ the \Cech nerve of the canonical
map $\phi': Z'_0 \rightarrow \calO( U')$. Since each map $g_{\alpha}$ is $\calT$-admissible, we can identify each $Z_{n}$ with the coproduct $\coprod_{ \overline{\alpha} \in A^{n+1} } \calO( V_{\overline{\alpha}} )$
and each $Z'_{n}$ with the coproduct $\coprod_{ \overline{\alpha} \in A^{n+1}} \calO( V'_{\overline{\alpha} })$. It follows that each diagram
$$ \xymatrix{ Z'_{n} \ar[r] \ar[d] & Z_{n} \ar[d] \\
\calO(X') \ar[r] & \calO(X) }$$
is a pullback square in $\calX$. Since colimits in $\calX$ are universal, we deduce that the
outer rectangle in the diagram
$$ \xymatrix{ | Z'_{\bigdot} | \ar[r] \ar[d] & | Z_{\bigdot} | \ar[d] \\
\calO(U') \ar[r] \ar[d] & \calO(U) \ar[d] \\
\calO(X') \ar[r] & \calO(X) }$$
is a pullback square. Because
the morphisms $\{ V_{\alpha} \rightarrow U \}$ form a covering of $U$, the maps
$\phi$ and $\phi'$ are effective epimorphisms. It follows that the upper vertical maps in the preceding diagram are equivalences, so that the lower square is a pullback as well. This completes the proof
that $\calO$ is a $\calT'$-structure on $\calX$. 

We now show that every morphism $f: \calO' \rightarrow \calO$ in $\Struct^{\loc}_{\calT}(\calX)$ belongs to $\Struct^{\loc}_{\calT'}(\calX)$. In other words, we must show that if $f: U \rightarrow X$ is 
$\calT$-admissible, then the diagram
$$ \xymatrix{ \calO'(U) \ar[r] \ar[d] & \calO(U) \ar[d] \\
\calO'(X) \ar[r] & \calO(X) }$$
is a pullback square in $X$. Choose a covering $\{ g_{\alpha}: V_{\alpha} \rightarrow U \}_{\alpha \in A}$
and define $Z_{\bigdot}$ as in the first part of the proof, but now set
$Z'_0 = \coprod_{ \alpha \in A} \calO'( V_{\alpha} )$ and let $Z'_{\bigdot}$ be \Cech nerve
of the map $Z'_0 \rightarrow \calO'(U)$. As above, we deduce that each
$$ \xymatrix{ Z'_{n} \ar[r] \ar[d] & Z_{n} \ar[d] \\
\calO'(X) \ar[r] & \calO(X) }$$
is a pullback square. Since colimits in $\calX$ are universal, the outer rectangle in the diagram
$$ \xymatrix{ | Z'_{\bigdot} | \ar[r] \ar[d] & | Z_{\bigdot} | \ar[d] \\
\calO'(U) \ar[r] \ar[d] & \calO(U) \ar[d] \\
\calO'(X) \ar[r] & \calO(X) }$$
is a pullback square, and the upper vertical arrows are equivalences. It follows that the lower square
is a pullback as well, as desired.
\end{proof}

\begin{lemma}\label{turbine}
Let $\calX$ be an $\infty$-topos. Suppose given a finite collection of small diagrams
$\{ f_i: \calJ_i \rightarrow \calX \}_{i \in I}$. Let $\calJ = \prod_{i \in I} \calJ_i$, and for
each $i \in I$ let $g_i$ denote the composition $\calJ \rightarrow \calJ_i \stackrel{f_i}{\rightarrow} \calX$.
Let $f \in \Fun( \calJ, \calX)$ be a product of the functors $\{ g_i \}_{i \in I}$. Then
the canonical map
$$ \colim(f) \rightarrow \prod_{i \in I} \colim( g_i) \rightarrow \prod_{i \in I} \colim(f_i)$$
is an equivalence in $\calX$.
\end{lemma}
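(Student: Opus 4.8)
The plan is to prove Lemma \ref{turbine} by an induction on the cardinality of the index set $I$, reducing the general case to the case $|I| = 2$, which is the crux of the argument. When $I = \emptyset$ the product $\calJ$ is the terminal category $\Delta^0$, both sides are the terminal object $1_{\calX}$, and the statement is trivial. For the inductive step, if we write $I = I' \sqcup \{i_0\}$, then $\calJ \simeq \calJ' \times \calJ_{i_0}$ where $\calJ' = \prod_{i \in I'} \calJ_i$, and we can regard $f$ as a product of two functors: the pullback to $\calJ$ of a product of the $g_i$ for $i \in I'$ (which by the inductive hypothesis has colimit equivalent to $\prod_{i \in I'} \colim(f_i)$), and $g_{i_0}$. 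So it suffices to treat the binary case.

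For the binary case, I would argue as follows. Suppose $f_1 \colon \calJ_1 \to \calX$ and $f_2 \colon \calJ_2 \to \calX$ are given, $\calJ = \calJ_1 \times \calJ_2$, $g_i$ is the composite $\calJ \to \calJ_i \xrightarrow{f_i} \calX$, and $f = g_1 \times g_2$. The key input is that in an $\infty$-topos colimits are universal (this is part of the characterization of $\infty$-topoi recalled in Theorem \toposref{charleschar}, already invoked repeatedly in the excerpt, e.g.\ in the proof of Lemma \ref{gooduse}). First I would compute $\colim(g_1)$: since $g_1$ is pulled back from $f_1$ along the projection $p_1 \colon \calJ = \calJ_1 \times \calJ_2 \to \calJ_1$, and $\calJ_2$ is weakly contractible only if — no: more carefully, $\colim_{\calJ} (f_1 \circ p_1)$ need not be $\colim f_1$ in general. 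So instead I would proceed by the universality argument directly on $f$. Fix $\calJ_1$; for each object $J_2 \in \calJ_2$ the restriction of $f$ to $\calJ_1 \times \{J_2\}$ is the functor $J_1 \mapsto f_1(J_1) \times f_2(J_2)$, whose colimit (again by universality, since $\times f_2(J_2)$ is a pullback functor $\calX \to \calX_{/ \ast}$ composed with forgetting, i.e.\ colimits commute with the functor $- \times f_2(J_2)$) is $\colim(f_1) \times f_2(J_2)$. Taking the colimit over $\calJ_2$ of the resulting diagram $J_2 \mapsto \colim(f_1) \times f_2(J_2)$, and using universality once more to pull the constant factor $\colim(f_1)$ out, yields $\colim(f_1) \times \colim(f_2)$. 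Since $\colim_{\calJ_1 \times \calJ_2} f$ is computed as an iterated colimit $\colim_{\calJ_2} \colim_{\calJ_1} (f|_{\calJ_1 \times \{-\}})$ (by Fubini for colimits, \toposref{} — the fact that colimits over a product are iterated colimits), we obtain the desired equivalence, and one checks that the equivalence so produced is the canonical comparison map appearing in the statement.

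The one subtlety I expect to be the main obstacle is bookkeeping: making sure that the equivalence constructed via ``compute the inner colimit, then the outer colimit, each time invoking universality'' actually coincides with the canonical natural map $\colim(f) \to \prod_{i} \colim(f_i)$ induced by the universal properties of the colimits and the product, rather than merely being some abstract equivalence between the two objects. This is handled by observing that the comparison maps are natural transformations of functors (in the diagrams $f_i$) and that at each stage the universality equivalence is the canonical one; since all constructions are functorial, the composite is canonical. I would also need to be slightly careful that ``colimits are universal'' is being applied in the form: for any morphism $X \to Y$ in $\calX$ and any diagram $h \colon \calK \to \calX_{/Y}$, the colimit of $X \times_Y h$ is $X \times_Y \colim(h)$; taking $Y = 1_{\calX}$ recovers the statement that $- \times X$ preserves colimits, which is exactly what is used. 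No further results beyond Theorem \toposref{charleschar} and elementary manipulation of colimits in an $\infty$-category are needed.
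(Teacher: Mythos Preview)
Your argument is correct and follows essentially the same strategy as the paper: both proceed by induction on $|I|$ and both rely on the single key input that in an $\infty$-topos the functor $-\times X$ preserves colimits (universality of colimits). The paper packages the induction slightly differently, extending each $f_i$ to a colimit cone on $\calJ_i^{\triangleright}$, considering the product functor on $\prod_i \calJ_i^{\triangleright}$, and showing it is a left Kan extension of its restriction to $\prod_i \calJ_i$ via a filtration that adjoins one cone point at a time; your Fubini-for-colimits argument is the same computation in different clothing.
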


\begin{proof}
Choose colimit diagrams $\overline{f}_i: \calJ_i^{\triangleright} \rightarrow \calX$
extending $f_i$, for each $i \in I$. Let $\overline{\calJ} = \prod_{i \in I} \calJ_i^{\triangleright}$, let
$\overline{g}_i: \overline{\calJ} \rightarrow \calX$ be the composition of $\overline{f}_i$ with
the projection $\overline{\calJ} \rightarrow \calJ_{i}^{\triangleright}$, and let
$\overline{f} \in \Fun( \overline{\calJ}, \calX)$ be a product of the functors
$\{ \overline{g}_i \}_{i \in I}$. It will suffice to show that $\overline{f}$ is a left
Kan extension of $f = \overline{f} | \calJ$.

Without loss of generality, we may suppose that $I = \{ 1, \ldots, n \}$ for some nonnegative integer
$n$. For $0 \leq i \leq n$, let $\overline{\calJ}(i)$ denote the product
$$ \calJ_{1}^{\triangleright} \times \ldots \times \calJ_{i}^{\triangleright} \times
\calJ_{i+1} \times \ldots \times \calJ_{n}$$
so that we have a filtration
$$ \calJ = \overline{\calJ}(0) \subseteq \overline{\calJ}(1) \subseteq \ldots
\subseteq \overline{\calJ}(n) = \overline{\calJ}.$$
In view of Proposition \toposref{acekan}, it will suffice to show that
$\overline{f} | \overline{\calJ}(i)$ is a left Kan extension of
$\overline{f} | \overline{\calJ}(i-1)$ for each $0 < i \leq n$. Since $f_i$ is a colimit diagram, 
this follows from the fact that colimits in $\calX$ are stable under products.
\end{proof}

\begin{lemma}\label{spaz}
Let $\calT \rightarrow \Delta^1$ be an (essentially small) correspondence from a pregeometry
$\calT_0 = \calT \times_{ \Delta^1} \{0\}$ to another pregeometry
$\calT_1 = \calT \times_{ \Delta^1} \{1\}$. Assume that $\calT$ satisfies
conditions $(i)$ and $(ii)$ of Proposition \ref{prespaz}, together with the following additional condition:
\begin{itemize}
\item[$(iii)$] The inclusion $\calT_1 \subseteq \calT$ preserves finite products.
\end{itemize}
(This condition is automatically satisfied if $\calT$ is the correspondence associated to a functor
$\calT_0 \rightarrow \calT_1$.) Let $\calX$ be an $\infty$-topos, and let
$F: \Fun( \calT_0, \calX) \rightarrow \Fun( \calT_1, \calX)$ be given by left Kan extension along
the correspondence $\calT$. Then $F$ carries $\calT_0$-structures on $\calX$ to
$\calT_1$-structures on $\calX$.
\end{lemma}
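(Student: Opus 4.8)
\textbf{Proof plan for Lemma \ref{spaz}.} The plan is to adapt the argument already used in Proposition \ref{prespaz} to the pregeometry setting, where the role of ``left exactness'' is weakened to ``finite-product preservation.'' Let $\calO_0 \colon \calT_0 \to \calX$ be a $\calT_0$-structure, choose a left Kan extension $\calO \colon \calT \to \calX$ of $\calO_0$ along the inclusion $\calT_0 \subseteq \calT$, and set $\calO_1 = F(\calO_0) = \calO \mathbin{|} \calT_1$. We must verify the three conditions of Definition \ref{spey} for $\calO_1$: preservation of finite products, the pullback property along admissible morphisms, and the effective-epimorphism condition for admissible coverings. The last two conditions are essentially local along the correspondence $\calT$ in exactly the way treated in Proposition \ref{prespaz}: the key technical input is the identity $(\ast')$ proved there (with the same proof, since that proof of $(\ast')$ only used that $\calO_0$ preserves pullbacks by admissible morphisms and the cofinality of the pullback functor $\phi\colon \calT_0^{/X_1} \to \calT_0^{/U_1}$, neither of which requires left exactness of $\calO_0$). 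So I would first observe that the proof of assertion $(1)$ in Proposition \ref{prespaz} goes through verbatim to establish that $\calO_1$ satisfies conditions $(2)$ and $(3)$ of Definition \ref{spey}, using hypotheses $(i)$ and $(ii)$ on $\calT$.

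The new ingredient is condition $(1)$: that $\calO_1$ preserves finite products. This is where hypothesis $(iii)$ enters. First I would handle the empty product: the final object of $\calT_1$ coincides with the final object of $\calT$ (by $(iii)$), and $\calO$, being a left Kan extension of a finite-product-preserving functor, sends the final object of $\calT_0$ — which by $(iii)$ is also the final object of $\calT$ — to a final object of $\calX$; one checks the colimit computing $\calO$ on the final object of $\calT_1$ is taken over the category $\calT_0^{/1}$ which is $\calT_0$ itself, and this colimit is the final object since $\calO_0$ preserves the empty product. For binary products, let $X_1, Y_1 \in \calT_1$. By the pointwise formula for left Kan extension, $\calO_1(X_1 \times Y_1) \simeq \colim_{\calT_0^{/(X_1 \times Y_1)}} \calO_0$, and similarly for $\calO_1(X_1)$ and $\calO_1(Y_1)$. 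The crucial point is that the canonical functor $\calT_0^{/X_1} \times \calT_0^{/Y_1} \to \calT_0^{/(X_1 \times Y_1)}$, sending a pair of objects over $X_1$ and $Y_1$ to their product over $X_1 \times Y_1$ — which makes sense and is again an object of $\calT_0$ because $\calT_0$ admits finite products — is cofinal. Granting cofinality, I would then compute
$$ \calO_1(X_1 \times Y_1) \simeq \colim_{(U,V) \in \calT_0^{/X_1} \times \calT_0^{/Y_1}} \calO_0(U \times V) \simeq \colim_{(U,V)} \bigl( \calO_0(U) \times \calO_0(V) \bigr),$$
using that $\calO_0$ preserves finite products, and then Lemma \ref{turbine} identifies this with $\bigl( \colim_U \calO_0(U) \bigr) \times \bigl( \colim_V \calO_0(V) \bigr) \simeq \calO_1(X_1) \times \calO_1(Y_1)$, with the identification being the canonical comparison map.

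The main obstacle I anticipate is verifying the cofinality of the functor $\calT_0^{/X_1} \times \calT_0^{/Y_1} \to \calT_0^{/(X_1 \times Y_1)}$. By Theorem \toposref{hollowcofinal} (or the analogous cofinality criterion cited elsewhere in the excerpt, e.g.\ Corollary \toposref{hollowtt}), it suffices to show that for each object $W \to X_1 \times Y_1$ of $\calT_0^{/(X_1 \times Y_1)}$, the comma $\infty$-category $\bigl(\calT_0^{/X_1} \times \calT_0^{/Y_1}\bigr)_{W/}$ is weakly contractible. An object of this comma category is a triple $(U \to X_1, V \to Y_1, W \to U \times V)$ compatible with the structure maps; one checks that such triples, together with the projections $W \to X_1$ and $W \to Y_1$, can be organized so that this comma category has an initial object, namely the one corresponding to $U = X_1$, $V = Y_1$ (lifted to objects of $\calT_0$ if necessary, which is where one must be slightly careful since $X_1, Y_1$ lie in $\calT_1$ not $\calT_0$ — this is precisely why we take the slices $\calT_0^{/X_1}$ over objects of $\calT$ rather than of $\calT_0$, and one uses hypothesis $(i)$-style pullback existence implicitly through the correspondence structure). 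Actually, here the cleanest route is to note that $\calT_0^{/X_1} \times \calT_0^{/Y_1} \simeq \calT_0^{/X_1} \times_{\calT_0} \calT_0 \times_{\calT_0} \calT_0^{/Y_1}$ and to invoke the universal property of the product $X_1 \times Y_1$ in $\calT$ together with the fact that $\calT_0 \to \calT$ creates the relevant slices; I would spell this out carefully, as it is the one genuinely non-formal step. Everything else — the empty-product case, the application of Lemma \ref{turbine}, and the import of conditions $(2)$ and $(3)$ from Proposition \ref{prespaz} — is routine.
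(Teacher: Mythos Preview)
Your overall strategy matches the paper's: reduce to Proposition \ref{prespaz} for conditions $(2)$ and $(3)$ of Definition \ref{spey}, and prove finite-product preservation separately using Lemma \ref{turbine} together with a cofinality argument. The paper organizes the cofinality step via an auxiliary $\infty$-category $\calJ$ of limit diagrams over the correspondence, but unwinding that construction gives exactly the product functor $\prod_i \calT_0^{/X_i} \to \calT_0^{/X}$ you describe, so the two routes are essentially the same.

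The gap is in your verification of cofinality. Your proposed initial object of the comma category $\bigl(\calT_0^{/X_1} \times \calT_0^{/Y_1}\bigr)_{W/}$, namely $(X_1, Y_1)$, does not exist: as you yourself note, $X_1$ and $Y_1$ lie in $\calT_1$, not $\calT_0$. Your fallback sketch is too vague to constitute a proof. The correct initial object is $(W \to X_1,\, W \to Y_1,\, \delta\colon W \to W \times W)$, where the structure maps $W \to X_i$ are the composites of $W \to X_1 \times Y_1$ with the projections and $\delta$ is the diagonal. More conceptually: the product functor $\prod_i \calT_0^{/X_i} \to \calT_0^{/\prod X_i}$ admits a \emph{left adjoint}, sending $W \to \prod X_i$ to the tuple $(W \to X_i)_i$; a functor with a left adjoint is cofinal by Theorem \toposref{hollowtt}. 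This is precisely the argument the paper uses (phrased via the intermediate $\calJ$), and once you state it this way the cofinality is immediate---no delicate slice analysis or appeal to hypothesis $(i)$ is needed here.
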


\begin{proof}
Let $\calO_0: \calT_0 \rightarrow \calX$ be a $\calT_0$-structure, and let
$\calO: \calT \rightarrow \calX$ be a left Kan extension of $\calO_0$. We
wish to show that $\calO_1 = \calO | \calT_1$ is a $\calT_1$-structure on $\calX$.
In view of Proposition \ref{prespaz}, it will suffice to show that
$\calO_1$ preserves finite products.

Let $I$ be a finite set, and let $\{ X_i \}_{i \in I}$ be a finite collection of objects of $\calT_1$. We regard $I$ as a discrete simplicial set, and the collection $\{ X_i \}_{i \in I}$ as a diagram $p:I \rightarrow \calT_1$. Extend this
to a limit diagram $\overline{p}: I^{\triangleleft} \rightarrow \calT_1$. Set $X = \overline{p}(v)$, where
$v$ denotes the cone point of $I^{\triangleleft}$, so that $X \simeq \prod_{i \in I} X_i$ in
$\calT_1$ (and also in $\calT$, by virtue of assumption $(iii)$). Let $\calJ$ denote the full subcategory of 
$$ \Fun_{\Delta^1}( I^{\triangleleft} \times \Delta^1, \calT) \times_{
 \Fun( I^{\triangleleft} \times \{1\}, \calT_1) } \{ \overline{p} \}$$
spanned by those functors $F$ such that $F | ( I^{\triangleleft} \times \{0\} )$ is a limit diagram in $\calT_0$. Let $\phi: \calJ \rightarrow \calT_0^{/X}$ be given by evaluation at $v$. The functor $\phi$
admits a left adjoint, and is therefore cofinal (by Theorem \toposref{hollowtt}). Let $f$ denote the composite functor
$$ \calJ \stackrel{\phi}{\rightarrow} \calT_0^{/X} \rightarrow \calT_0 \stackrel{\calO_0}{\rightarrow} \calX.$$
Using Proposition \toposref{lklk}, we deduce that evaluation at the points of $i$ induces a
trivial Kan fibration $\phi_i: \calJ \rightarrow \prod_{i \in I} \calT_0^{/X_i}$. 
For each $i \in I$, let $f_i$ denote the composition
$$ \calT_0^{/X_i} \rightarrow \calT_0 \stackrel{\calO_0}{\rightarrow} \calX.$$
We have a homotopy commutative diagram
$$ \xymatrix{ \colim(f) \ar[r] \ar[d]^{\psi'} & \calO_1(X) \ar[d]^{\psi} \\
\prod_{i \in I} \colim(f_i) \ar[r] & \prod_{i \in I} \calO_1(X_i). }$$
Since $\calO$ is a left Kan extension of $\calO_0$, the horizontal
arrows are equivalences in $\calX$. Consequently, to show that $\psi$ is an equivalence,
it will suffice to show that $\psi'$ is an equivalence, which follows from Lemma \ref{turbine}.
\end{proof}

\begin{proposition}\label{silver}
Let $\calT_0 \subseteq \calT$ be pregeometries satisfying the following conditions:
\begin{itemize}
\item[$(1)$] The $\infty$-category $\calT_0$ is a full subcategory of $\calT$, which is stable under finite products.
\item[$(2)$] A morphism $f: U \rightarrow X$ in $\calT_0$ is admissible in
$\calT_0$ if and only if it is admissible in $\calT$.
\item[$(3)$] A collection of admissible morphisms $\{ f_{\alpha}: U_{\alpha} \rightarrow X \}$ in
$\calT_0$ generate a covering sieve on $X$ in $\calT_0$ if and only if they generate a covering sieve
on $X$ in $\calT$.
\item[$(4)$] Suppose given a pullback diagram
$$ \xymatrix{ U' \ar[r] \ar[d] & U \ar[d]^{f} \\
X' \ar[r] & X }$$
in $\calT$, where $f$ is admissible. If $U$ and $X'$ belong to $\calT_0$, then $U'$ belongs to $\calT_0$.
\item[$(5)$] For every $X \in \calT$, there exists a collection of admissible morphisms
$\{ U_{\alpha} \rightarrow X \}$ which generates a covering sieve on $X$, such that each
$U_{\alpha} \in \calT_0$.
\end{itemize}
Then the inclusion $\calT_0 \subseteq \calT$ is a Morita equivalence.
\end{proposition}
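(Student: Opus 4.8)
The plan is to show that restriction along the inclusion $i: \calT_0 \hookrightarrow \calT$ induces an equivalence $\Struct^{\loc}_{\calT}(\calX) \to \Struct^{\loc}_{\calT_0}(\calX)$ for every $\infty$-topos $\calX$, by exhibiting an explicit inverse given by right Kan extension. First I would observe that conditions $(1)$--$(5)$ guarantee that $i$ is a transformation of pregeometries: $(1)$ gives compatibility with finite products, $(2)$ with admissibility, $(3)$ with the topology, and $(4)$ with the pullback condition $(iv)$ of Definition \ref{carbus}. So composition with $i$ already carries $\Struct^{\loc}_{\calT}(\calX)$ into $\Struct^{\loc}_{\calT_0}(\calX)$ (by Definition \ref{spey}, unwinding the definitions); the content is that this functor is an equivalence.

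The key step is to analyze the right Kan extension functor $G: \Fun(\calT_0, \calX) \to \Fun(\calT, \calX)$ along $i$. Given $\calO_0 \in \Struct^{\loc}_{\calT_0}(\calX)$, I want to show $G(\calO_0)$ is a $\calT$-structure and that the counit $G(\calO_0)|\calT_0 \to \calO_0$ is an equivalence. The crucial input is condition $(5)$: for each $X \in \calT$, the admissible covering $\{U_\alpha \to X\}$ with $U_\alpha \in \calT_0$ is cofinal (in fact, using $(4)$, the $\infty$-category $(\calT_0)^{\adm}_{/X}$ of admissible morphisms to $X$ with source in $\calT_0$ is closed under the relevant fiber products, hence filtered-like, and the associated sieve is covering), so that $G(\calO_0)(X)$ can be computed as a limit over $(\calT_0)_{/X}$ which, by locality of $\calO_0$ and the descent/effective-epimorphism conditions, reduces to a \v{C}ech-type limit that stabilizes. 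This is morally the same computation that appears in the proof of Proposition \ref{prespaz} and in Lemma \ref{gooduse}: one writes $G(\calO_0)(X)$ as a colimit (in $\calX^{op}$) over the \v{C}ech nerve of $\coprod_\alpha \calO_0(U_\alpha) \to G(\calO_0)(X)$, checks each term using that $\calO_0$ preserves admissible pullbacks, and uses universality of colimits in $\calX$. I would verify, in order: (a) $G(\calO_0)$ preserves finite products — here I would use a product-of-limits argument as in Lemma \ref{turbine}; (b) $G(\calO_0)$ carries admissible pullbacks in $\calT$ to pullbacks in $\calX$ — using conditions $(2)$, $(4)$ to reduce to the subcategory $\calT_0$; (c) $G(\calO_0)$ satisfies the covering/effective-epimorphism axiom — using condition $(3)$ and $(5)$; and (d) the counit restricted to $\calT_0$ is an equivalence, which is where condition $(5)$ (together with the sheaf condition on $\calO_0$) does the real work.

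Concretely, I expect to package (b)--(d) by invoking Proposition \ref{prespaz} applied to the correspondence $\calM \to \Delta^1$ associated to the functor $i: \calT_0 \to \calT$; conditions $(2)$ and $(4)$ of the present Proposition are precisely hypotheses $(i)$ and $(ii)$ of Proposition \ref{prespaz}, and Lemma \ref{spaz} (whose hypothesis $(iii)$ holds by condition $(1)$) upgrades ``$\calT_0$-structure'' to ``$\calT_1$-structure'' for the left Kan extension; a dual argument handles the right Kan extension direction, or one checks directly that left and right Kan extension along $i$ agree on local structures because of the cofinality supplied by $(5)$. Then parts $(1)$, $(2)$, $(3)$ of Proposition \ref{prespaz} give that the Kan extension functor preserves locality of objects and morphisms, and condition $(\star)$ of Proposition \ref{prespaz}(3) together with condition $(5)$ forces the unit and counit to be equivalences.

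The main obstacle will be step (d) — proving that the counit $G(\calO_0)|\calT_0 \to \calO_0$ is an equivalence, equivalently that $G(\calO_0)(U) \simeq \calO_0(U)$ for $U \in \calT_0$. Naively $G(\calO_0)(U)$ is a limit over all of $(\calT)_{/U}$, which is much larger than $(\calT_0)_{/U}$; one must show the inclusion $(\calT_0)^{\adm}_{/U} \hookrightarrow (\calT)_{/U}$ is cofinal enough to compute the limit, using condition $(5)$ to cover any object of $(\calT)_{/U}$ by objects of $\calT_0$ and then the sheaf axiom for $\calO_0$ (with respect to the topology on $\calT_0$, related to that on $\calT$ by condition $(3)$) to glue. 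This is genuinely a \v{C}ech-descent argument of the kind carried out in Lemma \ref{spak} and in the proof of Proposition \ref{prespaz}; I would follow that template, using geometric realizations of \v{C}ech nerves and the universality of colimits in the $\infty$-topos $\calX$, and I would isolate the key cofinality as a separate lemma modeled on Lemma \ref{stulbus}/Lemma \ref{facus}.
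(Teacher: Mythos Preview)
Your overall strategy---construct an inverse to restriction via Kan extension and verify both composites are equivalences---matches the paper's, and you correctly identify Lemma \ref{spaz} and Proposition \ref{prespaz} as the relevant tools. But you have the direction reversed, and this is a genuine gap, not a cosmetic slip.

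The paper uses \emph{left} Kan extension, not right. Condition $(5)$ supplies admissible morphisms $U_\alpha \to X$ with $U_\alpha \in \calT_0$; these populate the \emph{over}category $(\calT_0)_{/X}$, which is what computes the left Kan extension as a \emph{colimit}. Right Kan extension would require objects of the undercategory $(\calT_0)_{X/}$, i.e.\ maps $X \to Y$ with $Y \in \calT_0$, and nothing in the hypotheses produces those. Correspondingly, the \v{C}ech computation in an $\infty$-topos expresses $\calO(X)$ as the geometric realization (a colimit in $\calX$) of the \v{C}ech nerve of the effective epimorphism $\coprod_\alpha \calO(U_\alpha) \to \calO(X)$, not as a totalization; your phrase ``colimit in $\calX^{op}$'' points the wrong way. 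Note that Lemma \ref{spaz}, which you cite, is explicitly about left Kan extension---so you would have discovered this upon unpacking it.

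You also misidentify the main obstacle. Since $\calT_0 \hookrightarrow \calT$ is fully faithful, left (or right) Kan extension followed by restriction is automatically the identity; your step (d) is free. The genuine content is the \emph{other} direction: showing that every $\calT$-structure $\calO$ on $\calX$ coincides with the left Kan extension of $\calO|\calT_0$. This is the paper's claim $(\ast)$: using the cover from $(5)$ and the effective-epimorphism axiom, one identifies $\calO(X)$ with the geometric realization of a simplicial object built from $\calO_0$-values, and then a cofinality argument (close to what you sketch, but for colimits) identifies this with the left Kan extension. The morphism-level statement is handled by a parallel \v{C}ech/pullback argument, along the lines you indicate.
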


\begin{proof}
Let $\calX$ be an $\infty$-topos. We define a subcategory $\Struct'_{\calT}(\calX) \subseteq \Fun(\calT, \calX)$ as follows:
\begin{itemize}
\item[$(a)$] A functor $\calO: \calT \rightarrow \calX$ belongs to $\Struct'_{\calT}(\calX)$ if and only if 
$\calO | \calT_0 \in \Struct_{\calT_0}(\calX)$, and $\calO$ is a left Kan extension of
$\calO | \calT_0$.
\item[$(b)$] A natural transformation $\alpha: \calO \rightarrow \calO'$ between objects of $\Struct'_{\calT}(\calX)$ belongs to $\Struct'_{\calT}( \calX)$ if and only if the induced transformation
$\calO | \calT_0 \rightarrow \calO' | \calT_0$ belongs to $\Struct^{\loc}_{\calT_0}(\calX)$. 
\end{itemize}
Proposition \toposref{lklk} implies that the restriction functor
$\Struct'_{\calT}(\calX) \rightarrow \Struct^{\loc}_{\calT_0}( \calX)$ is a trivial Kan fibration.
It will therefore suffice to show that $\Struct^{\loc}_{\calT}(\calX) = \Struct'_{\calT}(\calX)$.
We first show that the equality holds at the level of objects. Lemma \ref{spaz}
implies that every object of $\Struct'_{\calT}(\calX)$ belongs to $\Struct^{\loc}_{\calT}(\calX)$.

To prove the reverse inclusion, we begin by studying an arbitrary functor $\calO: \calT \rightarrow \calX$. Fix an
object $X \in \calX$. Using $(5)$, we can choose a covering of
$X$ by admissible morphisms $\{ u_{\alpha}: U_{\alpha} \rightarrow X \}_{\alpha \in A}$,
where each $U_{\alpha} \in \calT_0$. Let $\cDelta_{+}^{A}$ denote the category
whose objects are finite linearly ordered sets $I$ equipped with a map $I \rightarrow A$, and
whose morphisms are commutative diagrams
$$ \xymatrix{ I \ar[rr]^{f} \ar[dr] & & J \ar[dl] \\
& A & }$$
where $f$ is an order-preserving map. Let $\cDelta_{+}^{A, \leq n}$ denote the full subcategory
of $\cDelta_{+}^{A}$ spanned by those objects whose underlying linearly ordered set $I$ has
cardinality $\leq n+1$, and $\cDelta^{A}$ the subcategory spanned by those objects whose underlying linearly ordered set $I$ is nonempty. The admissible morphisms $u_{\alpha}$ determine a 
map $U^{+}_{\leq 0}: \Nerve( \cDelta_{+}^{A, \leq 0} )^{op} \rightarrow \calX$. We let
$u^{+}: \Nerve( \cDelta_{+}^{A} )^{op} \rightarrow \calX$ be a right Kan extension of $u^{+}_{\leq 0}$, so that
$u^{+}$ associates to an ordered sequence $\overline{\alpha} = ( \alpha_0, \ldots, \alpha_n )$ of elements of $A$ the iterated fiber product
$$ u^{+}_{\overline{\alpha}} = U_{\alpha_0} \times_{X} \times \ldots \times_{X} U_{\alpha_n}. $$
Finally, let $u = u^{+} | \Nerve( \cDelta^{A} )^{op}$. Condition $(4)$ guarantees that
$u$ takes values in $\calT_0 \subseteq \calT$.
We will prove the following:
\begin{itemize}
\item[$(\ast)$] Suppose that $\calO_0 = \calO | \calT_0 \in \Struct_{\calT_0}(\calX)$. Then
$\calO$ is a left Kan extension of $\calO_0$ at $X$ if and only if 
$\calO \circ u^{+}$ exhibits $\calO(X)$ as a colimit of the diagram $\calO \circ u$.
\end{itemize}

Let $U_{\bigdot}: \Nerve(\cDelta)^{op} \rightarrow \calX_{/\calO(X)}$ be obtained by left
Kan extension of $\calO \circ U$ along the canonical projection
$\Nerve( \cDelta^{A})^{op} \rightarrow \Nerve( \cDelta)^{op}$, so that we can
identify each $U_{n}$ with the coproduct $\coprod_{ \overline{\alpha} \in A^{n+1} }
\calO( U_{\overline{\alpha} } )$. Then we can identify $\colim (\calO \circ u)$ with the geometric realization of $U_{\bigdot}$. 

Let $\calT'$ denote the full subcategory of $\calT_{/X}$ spanned by those morphisms
$g: V \rightarrow X$ which belong to the sieve generated by the maps
$\{ U_{\alpha} \rightarrow X \}$, and such that $V \in \calT_0$. Since $\calO_0$ is a sheaf
with respect to the topology on $\calT_0$, the restriction $\calO_0 | ( \calT_{/X} \times_{\calT} \calT_0)$
is a left Kan extension of $\calO_0 | \calT'$. In view of Lemma \toposref{kan0}, it will suffice to show
that $U_{\bigdot}$ induces a cofinal map from $\Nerve( \cDelta )^{op} \rightarrow \calT'$.
According to Corollary \toposref{hollowtt}, we must show that for every morphism
$V \rightarrow X$ in $\calT'$, the $\infty$-category
$Y = \Nerve( \cDelta )^{op} \times_{ \calT' } \calT'_{V/}$ is a weakly contractible simplicial set.
We observe that the projection $Y \rightarrow \Nerve( \cDelta)^{op}$ is a left fibration, classified
by a simplicial object $Y_{\bigdot}$ in the $\infty$-category $\SSet$ of spaces. In view of Corollary
\toposref{needka}, it will suffice to show that the geometric realization $| Y_{\bigdot} |$ is weakly contractible. We note that $Y_{\bigdot}$ can be identified with a \Cech nerve of the projection
$Y_0 \rightarrow \ast$. Since $\SSet$ is an $\infty$-topos, we are reduced to showing that $p$ is an effective epimorphism. In other words, we must show that the space $Y_0$ is nonempty; this follows from
our assumption that the map $W \rightarrow X$ belongs to the sieve generated by the morphisms
$\{ U_{\alpha} \rightarrow X\}$. This completes the proof of $(\ast)$.

Suppose now that $\calO \in \Struct_{\calT}(\calX)$. Replacing $\calT_0$ by $\calT$ and
applying the proof of $(\ast)$, we conclude that $\calO \circ U^{+}$ is a colimit diagram.
Invoking $(\ast)$, we deduce that $\calO \in \Struct'_{\calT}( \calX)$. This completes the proof
that $\Struct_{\calT}(\calX)$ and $\Struct'_{\calT}(\calX)$ have the same objects.

It is obvious that every morphism of $\Struct^{\loc}_{\calT}(\calX)$ is also a morphism of $\Struct'_{\calT}(\calX)$. It remains to show that every morphism of $\Struct'_{\calT}( \calX)$ belongs to
$\Struct^{\loc}_{\calT}(\calX)$. Let $\alpha: \calO' \rightarrow \calO$ be a natural transformation in
$\Fun( \calT, \calX)$, where $\calO$ and $\calO'$ are $\calT$-structures on $\calX$.
Suppose further that the induced map
$\calO' | \calT_0 \rightarrow \calO | \calT_0$ is a morphism of $\Struct^{\loc}_{\calT_0}(\calX)$. We
wish to show that $\alpha$ belongs to $\Struct^{\loc}_{\calT}(\calX)$. For this, we must show that if
$U \rightarrow X$ is an admissible morphism in $\calT$, then the diagram $\tau:$
$$ \xymatrix{ \calO'(U) \ar[r] \ar[d] & \calO(U) \ar[d] \\
\calO'(X) \ar[r] & \calO(X) }$$
is a pullback square. 

We begin by treating the special case where $U \in \calT_0$. 
Choose an admissible covering $\{ V_{\alpha} \rightarrow X \}_{\alpha \in A}$, where
each $V_{\alpha}$ belongs to $\calT_0$. Let
$\{ W_{\alpha} \rightarrow U \}_{\alpha \in A}$ be the induced covering, where
$W_{\alpha} \simeq U \times_{X} V_{\alpha}$. Let $v, w: \Nerve( \cDelta^{A})^{op} \rightarrow \calX$
be defined as in the proof of $(\ast)$ using the functor $\calO$, and let
$v',w': \Nerve( \cDelta^{A} )^{op} \rightarrow \calX$ be defined using the functor $\calO'$. 
We then have a commutative diagram of functors
$$ \xymatrix{ w' \ar[r] \ar[d] & w \ar[d] \\
v' \ar[r] & v, }$$
which is a pullback square in virtue of our assumption on $\alpha$. Moreover, the vertical arrows
carry morphisms in $\cDelta^{A}$ to pullback squares in $\calX$. Using Lemma \ref{gooduse}, we deduce that the induced diagram
$$ \xymatrix{ \colim w' \ar[r] \ar[d] & \colim w \ar[d] \\
\colim v' \ar[r] & \colim v }$$
is a pullback square in $\calX$. Combining this with $(\ast)$, we deduce that $\tau$ is a pullback square as desired.

We now treat the general case. Choose an admissible covering $\{ V_{\alpha} \rightarrow U \}_{\alpha \in A}$ by objects of $\calT_0$. Let $v,v': \Nerve( \cDelta^{A} )^{op} \rightarrow \calX$ be defined as in the proof of $(\ast)$, using the functors $\calO$ and $\calO'$ respectively. For each
$\overline{\alpha} \in \cDelta^{A}$, we have a commutative diagram
$$ \xymatrix{ v'( \overline{\alpha}) \ar[r] \ar[d] & v( \overline{\alpha} ) \ar[d] \\
\calO'(X) \ar[r] & \calO(X). }$$
The special case treated above guarantees that this is a pullback square. Since colimits in
$\calX$ are universal, we obtain a pullback square
$$ \xymatrix{ \colim v' \ar[r] \ar[d] & \colim v \ar[d] \\
\calO'(X) \ar[r] & \calO(X). }$$
Assertion $(\ast)$ allows us to identify this square with $\tau$ and complete the proof.

\end{proof}

\subsection{$\infty$-Categories of $\calT$-Structures}\label{app3}

Our goal in this section is to study the $\infty$-category $\Struct_{\calT}(\calX)$, where $\calT$ is a pregeometry and $\calX$ an $\infty$-topos. Proposition \ref{sku} implies the
existence of an equivalence $\Struct_{\calT}(\calX) \simeq \Struct_{\calG}(\calX)$, where
$\calG$ is a geometric envelope of $\calT$ (see \S \ref{app4}); consequently, many of the results of \S \ref{geo4} can be applied to $\Struct_{\calT}(\calX)$. For example, Remark \ref{swame} implies that$\Struct^{\loc}_{\calT}(\calX)$ admits filtered colimits. However, we can prove a stronger result
in the setting of pregeometries: 

\begin{proposition}
Let $\calT$ be a pregeometry, and $\calX$ an $\infty$-topos. Then:
\begin{itemize}
\item[$(i)$] The $\infty$-category $\Struct^{\loc}_{\calT}(\calX)$ admits sifted colimits.
\item[$(ii)$] The inclusion $\Struct^{\loc}_{\calT}(\calX) \subseteq \Fun( \calT, \calX)$ preserves
sifted colimits.
\end{itemize}
\end{proposition}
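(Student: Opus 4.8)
The plan is to reduce the statement to a pointwise (or rather diagram-wise) verification, using the description of $\Struct_{\calT}(\calX)$ as a full subcategory of $\Fun(\calT,\calX)$ cut out by three conditions: preservation of finite products, preservation of pullbacks along admissible morphisms, and the effective-epimorphism (covering) condition. First I would recall that sifted colimits in $\Fun(\calT,\calX)$ are computed objectwise, and that sifted colimits in the $\infty$-topos $\calX$ commute with finite products (this is one of the defining characterizations of sifted colimits, together with the fact that colimits in $\calX$ are universal). Consequently, given a sifted diagram $p \colon \calI \to \Struct_{\calT}(\calX)$ and its colimit $\calO \in \Fun(\calT,\calX)$ (formed objectwise), condition $(1)$ of Definition~\ref{spey} is immediate, and condition $(2)$ — preservation of admissible pullbacks — follows because a finite limit diagram (here a pullback square) in $\Fun(\calT,\calX)$ taken objectwise is again a pullback, and pullbacks in $\calX$ commute with sifted colimits (again by universality of colimits, exactly as in the proof of Lemma~\ref{gooduse}).

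Next I would handle the covering condition $(3)$ and the locality of the colimit cocone. For $(3)$: given an admissible covering $\{U_\alpha \to X\}$, each $\colim_{\calI} p(i)(U_\alpha) \to \colim_\calI p(i)(X)$ is a (transfinite) colimit of effective epimorphisms; since effective epimorphisms in $\calX$ are stable under colimits (Proposition~\toposref{hintdescent1} and the fact that colimits are universal), the induced map $\coprod_\alpha \calO(U_\alpha) \to \calO(X)$ is again an effective epimorphism. This establishes $(a)$: $\calO$ is a $\calT$-structure. For the local cocone maps $p(i) \to \calO$ and for part $(ii)$ of the proposition — that the inclusion $\Struct^{\loc}_{\calT}(\calX) \subseteq \Fun(\calT,\calX)$ preserves sifted colimits — the key point is to show that if every edge $p(i) \to p(j)$ of a sifted diagram is a local morphism, then each $p(i) \to \colim p$ is local, and conversely that a cocone which is a colimit in $\Fun(\calT,\calX)$ through local morphisms is a colimit in $\Struct^{\loc}_{\calT}(\calX)$. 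This reduces, for each admissible $U \to X$, to showing that the class of natural transformations $\calF \to \calF'$ in $\Fun(\calT,\calX)$ which are ``pullback squares at $U\to X$'' is stable under sifted colimits — which is precisely the content of (a sifted-diagram version of) Lemma~\ref{gooduse}, using that pullbacks commute with sifted colimits in $\calX$.

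The one structural subtlety, which I expect to be the main obstacle, is the bookkeeping for \emph{general} sifted colimits rather than just filtered ones: filtered colimits commute with all finite limits in $\calX$, but a sifted colimit only commutes with finite \emph{products} on the nose — compatibility with finite limits (e.g. the pullbacks in conditions $(2)$ and $(c)$) must instead be extracted from universality of colimits, exactly the maneuver used in Lemma~\ref{gooduse}, which expresses the relevant pullback as a colimit of pullbacks of objects in the diagram. So the cleanest route is to prove once and for all a lemma of the following shape: given a sifted $\infty$-category $\calI$, an $\infty$-topos $\calX$, and a natural transformation $\alpha\colon \calF \to \calF'$ between $\calI$-indexed diagrams in $\calX$ such that $\alpha(i)\to\alpha(j)$ is a pullback square for every edge of $\calI$, the square $\calF(i) \to \colim \calF$, $\calF'(i) \to \colim \calF'$ is a pullback for each $i$ (and the obvious converse statement about detecting colimits). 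This is the sifted analogue of Lemma~\ref{gooduse}, proved by the same argument — factoring through $(\colim\calF')\times_{\colim\calF'(i)}$-type constructions and passing to the colimit using universality — together with the fact that $\calI$ being sifted makes $\calI \to \calI\times\calI$ cofinal, which is what lets the finite-product parts go through.

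Finally I would assemble these pieces: $(i)$ follows by constructing the objectwise colimit $\calO$ in $\Fun(\calT,\calX)$, verifying $(1)$–$(3)$ as above to see $\calO\in\Struct_{\calT}(\calX)$, checking via the sifted Lemma~\ref{gooduse} that the cocone legs $p(i)\to\calO$ are local, and then invoking the detection statement of the same lemma to conclude the cocone is a colimit in $\Struct^{\loc}_{\calT}(\calX)$; part $(ii)$ is then exactly the observation that this colimit was computed objectwise, i.e.\ preserved by the inclusion into $\Fun(\calT,\calX)$. The reduction to finite products plus universality of colimits is what makes the sifted (not merely filtered) generality available, and nothing beyond the results already cited in this section of the paper — and Lemma~\ref{gooduse} in particular — is needed.
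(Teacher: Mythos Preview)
Your approach is essentially the same as the paper's: form the colimit $\calO$ objectwise in $\Fun(\calT,\calX)$, verify conditions (1)--(3) of Definition~\ref{spey}, and then use Lemma~\ref{gooduse} to show the cocone legs are local and to detect the colimit in $\Struct^{\loc}_{\calT}(\calX)$. Two remarks are worth making.

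First, there is no need for a ``sifted analogue'' of Lemma~\ref{gooduse}: that lemma is already stated for an \emph{arbitrary} small indexing $\infty$-category $\calC$, and its proof uses only descent (Theorem~\toposref{charleschar}) and universality of colimits, not any finiteness or siftedness of $\calC$. So you can invoke it directly, both for condition~(2) (exactly as the paper does: expand the square into a larger diagram with one square coming from $F(C)\in\Struct_{\calT}(\calX)$ and one from Lemma~\ref{gooduse}) and for the locality of the cocone legs and of $\beta$ (parts~(b) and~(c) in the paper's organization). Your early phrasing ``pullbacks in $\calX$ commute with sifted colimits'' is false as stated --- you correctly walk this back later --- but the repair is just to cite Lemma~\ref{gooduse} as written, not to prove a new lemma.

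Second, your argument for condition~(3) is actually a bit cleaner than the paper's. You observe that $\coprod_\alpha \calO(U_\alpha) \to \calO(X)$ is a colimit (over the sifted diagram) of effective epimorphisms $\coprod_\alpha F(C)(U_\alpha)\to F(C)(X)$, and effective epimorphisms form the left class of a factorization system in $\calX$, hence are closed under colimits in $\Fun(\Delta^1,\calX)$. The paper instead first establishes the locality of the cocone legs (part~(b)), then uses this to base-change along $F(C)(X)\to\calO(X)$ and reduce to the covering condition for each $F(C)$. Both work; yours avoids the forward reference to~(b).
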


\begin{proof}
Let $\calC$ be a (small) sifted $\infty$-category, let $F: \calC \rightarrow \Struct^{\loc}_{\calT}(\calX)$
be a diagram, and let $\calO$ be a colimit of $F$ in the $\infty$-category $\Fun( \calT, \calX)$.
We must show:
\begin{itemize}
\item[$(a)$] The functor $\calO$ belongs to $\Struct^{\loc}_{\calT}(\calX)$.
\item[$(b)$] For every object $C \in \calC$, the canonical map $\alpha_{C}: F(C) \rightarrow \calO$
is local.
\item[$(c)$] Given any object $\calO' \in \Struct^{\loc}_{\calT}(\calX)$ and a morphism
$\beta: \calO \rightarrow \calO'$ in $\Fun( \calT, \calX)$, if each composition
$\beta \circ \alpha_{C}$ is local, then $\beta$ is local.
\end{itemize}

Assertions $(b)$ and $(c)$ follow immediately from Lemma \ref{gooduse}.
To prove $(a)$, we must show that $\calO$ satisfies conditions $(1)$, $(2)$ and $(3)$
of Definition \ref{spey}. Since $\calC$ is sifted, the formation of finite products in
$\calX$ is compatible with $\calC$-indexed colimits. It follows immediately that $\calO$ preserves
finite products, which proves $(1)$. 

To prove $(2)$, we must show that for every pullback diagram
$$ \xymatrix{ U' \ar[r] \ar[d] & U \ar[d]^{f} \\
X' \ar[r] & X }$$ 
in $\calT$ such that $f$ is admissible, the associated diagram
$$ \xymatrix{ \calO(U') \ar[r] \ar[d] & \calO(U) \ar[d] \\
\calO(X') \ar[r] & \calO(X) }$$
is a pullback in $\calX$. In view of Lemma \ref{gooduse}, it suffices to show that
each of the diagrams
$$ \xymatrix{ F(C)(U') \ar[r] \ar[d] & \calO(U) \ar[d] \\
F(C)(X') \ar[r] & \calO(X) }$$
is a pullback square. These diagrams can be enlarged to commutative rectangles
$$ \xymatrix{ F(C)(U') \ar[r] \ar[d] & F(C)(U) \ar[d] \ar[r] & \calO(U) \ar[d] \\
F(C)(X') \ar[r] & F(C)(X) \ar[r] & \calO(X) }$$
where the left square is a pullback because $F(C) \in \Struct_{\calT}(\calX)$, and
the right square is a pullback by Lemma \ref{gooduse}.

It remains to show that $\calO$ satisfies condition $(3)$. Let
$X$ be an object of $\calT$ and let $\{ U_{\alpha} \rightarrow X \}$ be a collection
of admissible morphisms which generate a covering sieve on $X$. We wish to show that
the induced map $\coprod_{\alpha} \calO( U_{\alpha} ) \rightarrow \calO(X)$ is an effective
epiomorphism in $\calX$. Since $\calO(X) \simeq \colim_{C} F(C)(X)$, we have
an effective epimorphism $\coprod_{C} F(C)(X) \rightarrow \calO(X)$. It will
therefore suffice to show that, for each $C \in \calC$, the induced map
$$ \coprod_{\alpha} ( \calO(U_{\alpha}) \times_{ \calO(X) } F(C)(X) ) \rightarrow F(C)(X)$$
is an effective epimorphism. Using $(b)$, we can identify the left side with
$\coprod_{\alpha} F(C)(U_{\alpha})$, so that the desired result follows from
the assumption that $F(C) \in \Struct^{\loc}_{\calT}(\calX)$.
\end{proof}

We now study the behavior of $\calT$-structures under truncation.

\begin{definition}\label{bluha}
Let $\calT$ be a pregeometry, and $\calX$ an $\infty$-topos, and $n \geq -1$ an integer.
A $\calT$-structure $\calO$ on $\calX$ is {\it $n$-truncated} if, for
every object $X \in \calT$, the image $\calO(X)$ is an $n$-truncated object of $\calX$.
We let $\Struct_{\calT}^{\leq n}( \calX)$ denote the full subcategory of
$\Struct_{\calT}(\calX)$ spanned by the $n$-truncated $\calT$-structures on $\calX$.

We will say that $\calT$ is {\it compatible with $n$-truncations} if, for every
$\infty$-topos $\calX$, every $\calT$-structure $\calO: \calT \rightarrow \calX$, and
every admissible morphism $U \rightarrow X$ in $\calT$, the induced diagram
$$ \xymatrix{ \calO(U) \ar[r] \ar[d] & \tau_{\leq n} \calO(U) \ar[d] \\
\calO(X) \ar[r] & \tau_{\leq n} \calO(X) }$$
is a pullback square in $\calX$.
\end{definition}

\begin{proposition}\label{rubb}
Let $n \geq -1$ be an integer, $\calT$ a pregeometry which is compatible with $n$-truncations, 
$\calX$ an $\infty$-topos, and $\calO$ an object of $\Struct_{\calT}(\calX)$. Then:
\begin{itemize}
\item[$(1)$] The composition $\tau_{\leq n} \circ \calO$ is a
$\calT$-structure on $\calX$, where $\tau_{\leq n}: \calX \rightarrow \calX$ denotes
the $n$-truncation functor.

\item[$(2)$] The canonical map $\alpha: \calO \rightarrow ( \tau_{\leq n} \circ \calO )$ is local.

\item[$(3)$] For every object $\calO' \in \Struct^{\leq n}_{\calT}(\calX)$, composition with 
$\alpha$ induces a homotopy equivalence
$$ \bHom_{ \Struct^{\loc}_{\calT}(\calX) }( \tau_{\leq n} \circ \calO, \calO' ) \rightarrow
\bHom_{ \Struct^{\loc}_{\calT}(\calX) }( \calO, \calO' ).$$
\item[$(4)$] Composition with $\tau_{\leq n}$ induces a functor
$\Struct^{\loc}_{\calT}(\calX) \rightarrow \Struct_{\calT}^{\leq n}(\calX)
\cap \Struct_{\calT}^{\loc}(\calX)$, which is left adjoint to the inclusion
$\Struct^{\leq n}_{\calT}(\calX)  \cap \Struct_{\calT}^{\loc}(\calX) \subseteq \Struct^{\loc}_{\calT}(\calX)$.
\end{itemize}
\end{proposition}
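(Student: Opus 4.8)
The plan is to treat the four assertions in order, deriving $(4)$ formally from $(1)$, $(2)$, $(3)$. Throughout I will use two standard facts about an $\infty$-topos $\calX$: the $n$-truncation functor $\tau_{\leq n}: \calX \to \calX$ preserves finite limits, and the unit map $Y \to \tau_{\leq n} Y$ is $(n+1)$-connective (hence, since $n \geq -1$, an effective epimorphism); moreover the $(n+1)$-connective morphisms and the $n$-truncated morphisms form a factorization system on $\calX$, so $(n+1)$-connective maps are stable under composition and base change, a composite $g \circ f$ with $f$ and $g\circ f$ both $(n+1)$-connective forces $g$ to be $(n+1)$-connective, and a morphism which is both $(n+1)$-connective and $n$-truncated is an equivalence. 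I will also use that effective epimorphisms are of effective descent, i.e. that pullback along an effective epimorphism is conservative.

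For $(1)$ I must verify the three conditions of Definition \ref{spey} for $\tau_{\leq n} \circ \calO$. Preservation of finite products is immediate since both $\tau_{\leq n}$ and $\calO$ preserve them. For the effective-epimorphism condition: given an admissible covering $\{U_\alpha \to X\}$, the map $\coprod_\alpha \calO(U_\alpha) \to \calO(X)$ is an effective epimorphism, hence so is its composite with $\calO(X) \to \tau_{\leq n}\calO(X)$; since each $\calO(U_\alpha) \to \tau_{\leq n}\calO(X)$ factors through $\tau_{\leq n}\calO(U_\alpha)$, this composite factors as $\coprod_\alpha \calO(U_\alpha) \to \coprod_\alpha \tau_{\leq n}\calO(U_\alpha) \to \tau_{\leq n}\calO(X)$, so the second map is an effective epimorphism. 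The pullback condition is the one point requiring care: given a pullback square in $\calT$ with $U \to X$ (and hence $U' \to X'$) admissible, I would combine the hypothesis that $\calT$ is compatible with $n$-truncations — applied to the admissible morphisms $U \to X$ and $U' \to X'$ — with the given pullback $\calO(U') \simeq \calO(U) \times_{\calO(X)} \calO(X')$ to obtain a natural equivalence $\tau_{\leq n}\calO(U') \times_{\tau_{\leq n}\calO(X')} \calO(X') \simeq \tau_{\leq n}\calO(U) \times_{\tau_{\leq n}\calO(X)} \calO(X')$ over $\calO(X')$. This exhibits the canonical map $\tau_{\leq n}\calO(U') \to \tau_{\leq n}\calO(U) \times_{\tau_{\leq n}\calO(X)} \tau_{\leq n}\calO(X')$ as becoming an equivalence after base change along the effective epimorphism $\calO(X') \to \tau_{\leq n}\calO(X')$; by effective descent it is an equivalence, which is the desired condition. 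Assertion $(2)$ of the proposition is then just a restatement of the compatibility hypothesis of Definition \ref{bluha}.

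The heart of the argument is $(3)$. Since $n$-truncation in the functor $\infty$-category $\Fun(\calT, \calX)$ is computed objectwise, the map $\alpha : \calO \to \tau_{\leq n}\calO$ exhibits $\tau_{\leq n}\calO$ as the reflection of $\calO$ into the full subcategory of objectwise $n$-truncated functors; hence for $\calO' \in \Struct^{\leq n}_{\calT}(\calX)$ composition with $\alpha$ is an equivalence $\bHom_{\Fun(\calT,\calX)}(\tau_{\leq n}\calO, \calO') \to \bHom_{\Fun(\calT,\calX)}(\calO, \calO')$. It remains to check this carries the subspace of local transformations $\tau_{\leq n}\calO \to \calO'$ onto that of local transformations $\calO \to \calO'$, i.e. that a transformation $\beta : \calO \to \calO'$, factored (essentially uniquely) as $\calO \xrightarrow{\alpha} \tau_{\leq n}\calO \xrightarrow{\beta'} \calO'$, is local iff $\beta'$ is. If $\beta'$ is local then $\beta = \beta' \circ \alpha$ is local, since $\alpha$ is local by $(2)$ and local transformations are closed under composition (an immediate pullback-pasting argument). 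For the converse, fix an admissible $U \to X$; using locality of $\beta$ to identify $\calO(U) \simeq \calO(X) \times_{\calO'(X)} \calO'(U)$ together with the naturality of $\alpha$, the composite $\calO(U) \to \tau_{\leq n}\calO(U) \to \tau_{\leq n}\calO(X) \times_{\calO'(X)} \calO'(U)$ is the base change of $\alpha_X : \calO(X) \to \tau_{\leq n}\calO(X)$ along $\calO'(U) \to \calO'(X)$, hence $(n+1)$-connective. Since $\alpha_U$ is $(n+1)$-connective too, the cancellation property of the factorization system shows $\tau_{\leq n}\calO(U) \to \tau_{\leq n}\calO(X) \times_{\calO'(X)} \calO'(U)$ is $(n+1)$-connective; as its source and target are both $n$-truncated, it is an equivalence, so $\beta'$ is local. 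I expect this locality/connectivity bookkeeping — keeping track of exactly which maps are $(n+1)$-connective and applying the factorization system correctly — to be the main obstacle.

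Finally, for $(4)$: by $(1)$ and the definition of $n$-truncatedness, $\tau_{\leq n} \circ \calO$ lies in $\Struct^{\leq n}_{\calT}(\calX) \cap \Struct^{\loc}_{\calT}(\calX)$, and $\tau_{\leq n} \circ (-)$ carries local morphisms to local morphisms: given a local $\beta : \calO_1 \to \calO_2$, the map $\alpha_{\calO_2} \circ \beta : \calO_1 \to \tau_{\leq n}\calO_2$ is local, and the equivalence "$\gamma$ local $\iff$ its induced map on $n$-truncations is local" proved in $(3)$ shows $\tau_{\leq n}\beta$ is local. Thus $\tau_{\leq n} \circ (-)$ defines a functor $\Struct^{\loc}_{\calT}(\calX) \to \Struct^{\leq n}_{\calT}(\calX) \cap \Struct^{\loc}_{\calT}(\calX)$, and the maps $\alpha_{\calO}$ assemble into a natural transformation from the identity to the inclusion of this functor. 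Assertion $(3)$ says precisely that $\alpha$ induces equivalences on the relevant mapping spaces, so by the standard recognition criterion for reflective localizations (e.g. Proposition \toposref{testreflect}) this exhibits $\tau_{\leq n} \circ (-)$ as left adjoint to the inclusion $\Struct^{\leq n}_{\calT}(\calX) \cap \Struct^{\loc}_{\calT}(\calX) \subseteq \Struct^{\loc}_{\calT}(\calX)$.
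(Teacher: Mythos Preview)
Your argument is correct, and its overall shape matches the paper's: $(2)$ is a tautology, $(4)$ follows formally from $(1)$--$(3)$, and the work goes into verifying the pullback condition in $(1)$ and the ``locality descends along $\alpha$'' direction of $(3)$. One caveat: your opening claim that $\tau_{\leq n}$ preserves finite limits is false (it preserves finite \emph{products}, Lemma~\toposref{slurpy}, but not general pullbacks). Fortunately you never use this --- your effective-descent argument for the pullback condition in $(1)$ carefully avoids it --- but you should correct the misstatement.

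The genuine difference from the paper is in the technical lemma used for both nontrivial steps. The paper isolates a single observation (Lemma~\ref{swimple}): given a commutative rectangle
\[
\xymatrix{ U \ar[r] \ar[d] & \tau_{\leq n} U \ar[r] \ar[d] & U' \ar[d] \\
X \ar[r] & \tau_{\leq n} X \ar[r] & X' }
\]
with $U'$, $X'$ $n$-truncated, if the left and outer squares are pullbacks then so is the right square. This handles the pullback condition in $(1)$ (stack the compatibility square on the left of the $\calO$-pullback square) and the hard direction of $(3)$ (left square = compatibility, outer square = locality of $\beta$) by the same stroke. Your approach instead uses two separate tools: conservativity of pullback along the effective epimorphism $\calO(X') \to \tau_{\leq n}\calO(X')$ for $(1)$, and the $((n{+}1)\text{-connective}, n\text{-truncated})$ factorization system cancellation for $(3)$. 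Both routes are sound; the paper's has the virtue of a single reusable lemma, while yours makes the role of connectivity more explicit. Note that the ``cancellation property'' you invoke --- if $gf$ and $f$ are $(n{+}1)$-connective then so is $g$ --- does hold (factor $g$ and use uniqueness of the factorization of $gf$), but is not a general feature of factorization systems; in your situation you could also argue more directly that both $f$ and $gf$ exhibit their targets as $\tau_{\leq n}\calO(U)$, forcing $g$ to be an equivalence.
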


The proof is based on the following lemma:

\begin{lemma}\label{swimple}
Let $\calX$ be an $\infty$-topos and $n \geq -2$. Suppose given a commutative diagram
$$ \xymatrix{ U \ar[r] \ar[d] & \tau_{\leq n} U \ar[r] \ar[d] & U' \ar[d] \\
X \ar[r] & \tau_{\leq n} X \ar[r] & X', }$$
in $\calX$, where $U'$ and $X'$ are $n$-truncated. If the outer square and the left square
are pullback diagrams, then the right square is a pullback diagram as well.
\end{lemma}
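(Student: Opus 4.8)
The plan is to deduce the lemma from the conservativity of pullback along a sufficiently connective morphism. Write $\pi \colon X \to \tau_{\leq n} X$ for the truncation map, and set $P = \tau_{\leq n} X \times_{X'} U'$. The commutativity of the right-hand square supplies maps $\tau_{\leq n} U \to \tau_{\leq n} X$ and $\tau_{\leq n} U \to U'$ which agree after composition to $X'$, hence (by the universal property of the pullback) a comparison map $g \colon \tau_{\leq n} U \to P$ over $\tau_{\leq n} X$. Proving the lemma amounts to showing that $g$ is an equivalence: indeed, $g$ is an equivalence precisely when the right-hand square is a pullback.

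First I would check that $P$ is an $n$-truncated object of $\calX$: it is a limit of the $n$-truncated objects $\tau_{\leq n} X$, $X'$, $U'$, and the full subcategory of $n$-truncated objects of $\calX$ is stable under limits. Since $\tau_{\leq n} U$ is $n$-truncated by construction, $g$ may be regarded as a morphism in $\tau_{\leq n} \calX_{/\tau_{\leq n} X}$.

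Now I would invoke the fact that $\pi$ is $(n+1)$-connective, which is a standard property of the $n$-truncation map. By Lemma \toposref{nicelemma}, the pullback functor $\pi^{\ast} \colon \tau_{\leq n} \calX_{/\tau_{\leq n} X} \to \tau_{\leq n} \calX_{/X}$ is fully faithful, and hence reflects equivalences. It therefore suffices to show that $\pi^{\ast}(g)$ is an equivalence. This is where the hypotheses enter: applying $\pi^{\ast}$ to $\tau_{\leq n} U \to \tau_{\leq n} X$ yields $X \times_{\tau_{\leq n} X} \tau_{\leq n} U$, which is equivalent to $U$ because the left-hand square is a pullback; applying $\pi^{\ast}$ to $P \to \tau_{\leq n} X$ yields $X \times_{\tau_{\leq n} X} (\tau_{\leq n} X \times_{X'} U') \simeq X \times_{X'} U'$, which is equivalent to $U$ because the outer rectangle is a pullback. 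Unwinding the construction of $g$, the morphism $\pi^{\ast}(g)$ is identified, over $X$ and compatibly with the maps to $U'$, with the evident equivalence $X \times_{\tau_{\leq n} X} (\tau_{\leq n} X \times_{X'} U') \simeq X \times_{X'} U'$; in particular it is an equivalence, which completes the proof.

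The only real bookkeeping is to keep track of the commuting triangles carefully enough to see both that $g$ is a morphism over $\tau_{\leq n} X$ and that $\pi^{\ast}(g)$ is this canonical equivalence rather than some other self-map of $U$; this is routine, but it is the step that actually consumes the commutativity of the given diagram. I do not anticipate any genuine obstacle beyond this.
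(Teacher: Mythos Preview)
Your proof is correct, but it follows a different route than the paper's. You reduce to checking that the comparison map $g$ becomes an equivalence after pulling back along the $(n+1)$-connective map $\pi\colon X \to \tau_{\leq n} X$, invoking Lemma \toposref{nicelemma} for conservativity and using both the left square and the outer square to identify $\pi^{\ast}(\tau_{\leq n} U)$ and $\pi^{\ast}(P)$ with $U$. The paper instead shows directly that the composite $U \to \tau_{\leq n} U \to P = U' \times_{X'} \tau_{\leq n} X$ exhibits $P$ as an $n$-truncation of $U$: since $U' \times_{X'}(-)\colon \calX_{/X'} \to \calX_{/U'}$ preserves small colimits and finite limits, Proposition \toposref{compattrunc} says it commutes with $\tau_{\leq n}$, and the outer square identifies $U$ with $U' \times_{X'} X$. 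What the paper's argument buys is the observation that the left-square hypothesis is actually redundant (it follows from the outer square together with the conclusion, by pasting); your argument uses both squares symmetrically but is perhaps easier to verify step by step. Both rely on standard facts from \cite{topoi}, just different ones.
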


\begin{proof}
We wish to show that the canonical map $\alpha: U \mapsto U' \times_{X'} \tau_{\leq n} X$
exhibits $U' \times_{X'} \tau_{\leq n} X$ as an $n$-truncation of $U$ in the $\infty$-topos
$\calX$. Because $U'$ is $n$-truncated, it will suffice to show that
$\alpha$ exhibits $U' \times_{X'} \tau_{\leq n} X$ as an $n$-truncation of $U$ in
the $\infty$-topos $\calX_{/U'}$ (this follows from Lemma \toposref{trunccomp}). 
This follows from Proposition \toposref{compattrunc}, since the functor
$U' \times_{ X' } \bigdot$ preserves small colimits and finite limits.
\end{proof}

\begin{proof}[Proof of Proposition \ref{rubb}]
Assertion $(2)$ is simply a reformulation of the condition that $\calT$ is compatible with
$n$-truncations, and assertion $(4)$ follows immediately from $(1)$, $(2)$ and $(3)$. It
will therefore suffice to prove $(1)$ and $(3)$. 

To prove $(1)$, we must show that $\tau_{\leq n} \circ \calO$ satisfies the conditions
of Definition \ref{spey}:
\begin{itemize}
\item The functor $\tau_{\leq n} \circ \calO$ preserves finite products. This follows
from the fact that $\calO$ and $\tau_{\leq n}$ preserve finite products (the second
assertion follows from Lemma \toposref{slurpy}).
\item The functor $\tau_{\leq n} \circ \calO$ preserves pullbacks by admissible morphisms.
Suppose given a pullback diagram 
$$\xymatrix{ U' \ar[r] \ar[d] & U \ar[d] \\
X' \ar[r] & X }$$
in $\calT$, where the vertical arrows are admissible. We wish to show that the right square
appearing in the diagram
$$ \xymatrix{ \calO(U') \ar[r] \ar[d] & \tau_{\leq n} \calO(U') \ar[r] \ar[d] & \tau_{\leq n} \calO(U) \ar[d] \\
\calO(X') \ar[r] & \tau_{\leq n} \calO(X') \ar[r] & \tau_{\leq n} \calO(X) }$$
is a pullback square. The left square is a pullback because $\calT$ is compatible with $n$-truncations.
In view of Lemma \ref{swimple}, it will suffice to show that the outer square is a pullback. For this,
we consider the diagram
$$ \xymatrix{ \calO(U') \ar[r] \ar[d] & \calO(U) \ar[d] \ar[r] & \tau_{\leq n} \calO(U) \ar[d] \\
\calO(X') \ar[r] & \calO(X) \ar[r] & \tau_{\leq n} \calO(X). }$$
The left square is a pullback diagram because $\calO \in \Struct_{\calT}(\calX)$, and the right
square is a pullback diagram because $\calT$ is compatible with $n$-truncations.
It follows that the outer square is a pullback diagram, as desired.
\item The functor $\tau_{\leq n} \circ \calO$ carries every covering sieve
$\{ U_{\alpha} \rightarrow X \}$ to an effective epimorphism
$$ \coprod \tau_{\leq n} \calO(U_{\alpha} ) \rightarrow \tau_{\leq n} \calO(X).$$
To prove this, we consider the commutative diagram
$$ \xymatrix{ \coprod \calO(U_{\alpha} ) \ar[r] \ar[d] \ar[dr]^{\alpha} & \calO(X) \ar[d] \\
\coprod \tau_{\leq n} \calO(U_{\alpha}) \ar[r] & \tau_{\leq n} \calO(X). }$$
It will suffice to show that the map $\alpha$ is an effective epiomorphism. 
The upper horizontal morphism is an effective epimorphism since
$\calO \in \Struct_{\calT}(\calX)$, and the right vertical morphism is an effective
epimorphism since $n \geq -1$.
\end{itemize}

To prove $(3)$, we consider a map $\alpha: \calO \rightarrow \calO'$ in $\Struct^{\loc}_{\calT}(\calX)$. 
Let $U \rightarrow X$ be an admissible morphism in $\calT$, and consider the induced diagram
$$ \xymatrix{ \calO(U) \ar[r] \ar[d] & \tau_{\leq n} \calO(U) \ar[r] \ar[d] & \calO'( U) \ar[d] \\
\calO(X) \ar[r] & \tau_{\leq n} \calO(X) \ar[r] & \calO'(X). }$$
We wish to show that the right square is a pullback. Here the outer square is a pullback
since $\alpha$ is a transformation of $\calT$-structures, and the left square is a pullback
since $\calT$ is compatible with $n$-truncations. The desired result now follows from
Lemma \ref{swimple}.
\end{proof}

The following result is often useful in verifying the hypotheses of Proposition \ref{rubb}:

\begin{proposition}\label{rabb}
Let $n \geq -1$ and let $\calT$ be a pregeometry. Suppose that every admissible morphism in
$\calT$ is $(n-1)$-truncated. Then $\calT$ is compatible with $n$-truncations.
\end{proposition}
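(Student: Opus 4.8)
Let $n \geq -1$ and let $\calT$ be a pregeometry. Suppose that every admissible morphism in $\calT$ is $(n-1)$-truncated. Then $\calT$ is compatible with $n$-truncations.

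\medskip

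The plan is to unwind Definition \ref{bluha}: given an $\infty$-topos $\calX$, a $\calT$-structure $\calO \colon \calT \to \calX$, and an admissible morphism $f \colon U \to X$ in $\calT$, I must show that the square
$$ \xymatrix{ \calO(U) \ar[r] \ar[d] & \tau_{\leq n} \calO(U) \ar[d] \\
\calO(X) \ar[r] & \tau_{\leq n} \calO(X) } $$
is a pullback in $\calX$. The first step is to identify where the hypothesis enters: because $f$ is admissible and every admissible morphism is $(n-1)$-truncated, the diagonal $\delta \colon U \to U \times_X U$ is a section of an admissible morphism, hence admissible (Remark \ref{sagewise}), so $f$ is in fact an $(n-1)$-truncated morphism of $\calT$. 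I would then argue that $\calO(f) \colon \calO(U) \to \calO(X)$ is an $(n-1)$-truncated morphism of $\calX$. The cleanest route is to express $(n-1)$-truncatedness of $f$ via the condition that the diagonal maps in the Čech tower of $f$ are eventually equivalences — more precisely, $f$ is $(n-1)$-truncated iff the relative diagonal $U \to U \times_X U$ is $(n-2)$-truncated, and iterating, iff some iterated relative diagonal is an equivalence; since $\calO$ preserves finite products and pullbacks along admissible morphisms (condition $(2)$ of Definition \ref{spey}, applicable here because all the maps in sight are admissible by Remark \ref{sagewise}), it carries this tower of admissible pullbacks to the corresponding tower for $\calO(f)$, so $\calO(f)$ is $(n-1)$-truncated in $\calX$.

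The second step is a purely topos-theoretic lemma: if $g \colon A \to B$ is an $(n-1)$-truncated morphism in an $\infty$-topos $\calX$, then the square
$$ \xymatrix{ A \ar[r] \ar[d]^{g} & \tau_{\leq n} A \ar[d]^{\tau_{\leq n} g} \\
B \ar[r] & \tau_{\leq n} B } $$
is a pullback. This should follow from the interaction of truncation with $(n-1)$-truncated morphisms: the fiber of $\tau_{\leq n} g$ over a point of $B$ is the $n$-truncation of the corresponding fiber of $g$ (truncation functors commute with base change in an $\infty$-topos, cf. Proposition \toposref{compattrunc}), but that fiber is already $(n-1)$-truncated, hence $n$-truncated, so the comparison map $\mathrm{fib}(g) \to \mathrm{fib}(\tau_{\leq n} g)$ is an equivalence after pulling back to $B$; since this holds after base change along the effective epimorphism $B \to \tau_{\leq n} B$ — or more directly by working in $\calX_{/\tau_{\leq n} B}$ and invoking that $\tau_{\leq n}$ there is computed by the pullback-stable formula — the square is a pullback. (This is essentially Lemma \ref{swimple} run in reverse, or can be extracted from the same circle of ideas; alternatively one can cite Proposition \toposref{compattrunc} directly.)

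Combining the two steps gives the result. The main obstacle is the second step — pinning down the precise statement that an $(n-1)$-truncated morphism is ``cartesian over its $n$-truncation.'' One has to be careful that $\tau_{\leq n}$ is being applied to objects, not morphisms, so the naive fiberwise argument needs the base-change-compatibility of truncation, which is a genuine feature of $\infty$-topoi and not of general presentable $\infty$-categories; I would lean on Proposition \toposref{compattrunc} (the same result invoked in the proof of Lemma \ref{swimple}) to make this rigorous. The first step, verifying that admissibility forces $\calO(f)$ to be $(n-1)$-truncated, is routine once one observes via Remark \ref{sagewise} that all the relevant diagonals are admissible and hence in the domain of the pullback-preservation property of $\calO$.
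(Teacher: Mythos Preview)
Your proposal is correct and follows exactly the paper's approach: the paper isolates your two steps as Lemma \ref{atop} (an admissible $(n-1)$-truncated morphism is carried by $\calO$ to an $(n-1)$-truncated morphism, proved by the same induction on iterated diagonals you sketch) and Lemma \ref{btop} (an $(n-1)$-truncated morphism in an $\infty$-topos sits in a pullback square over its $n$-truncation). Your justification of Step 2 is a bit impressionistic; the paper proves it by invoking Lemma \toposref{nicelemma} to produce an $(n-1)$-truncated map over $\tau_{\leq n} X$ whose pullback to $X$ recovers $U \to X$, then checks it is the $n$-truncation using Lemmas \toposref{trunccomp} and \toposref{slurpy}.
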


We first need some preliminary results.

\begin{lemma}\label{atop}
Let $\calT$ be a pregeometry, and let $\calO: \calT \rightarrow \calX$ be a $\calT$-structure on
an $\infty$-topos $\calX$. If $\alpha: U \rightarrow X$ is an $n$-truncated admissible morphism 
in $\calT$, then the induced map $\calO(U) \rightarrow \calO(X)$ is again $n$-truncated.
\end{lemma}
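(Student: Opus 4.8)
The claim is that if $\alpha \colon U \to X$ is an $n$-truncated admissible morphism in a pregeometry $\calT$, then $\calO(U) \to \calO(X)$ is $n$-truncated in $\calX$ for any $\calT$-structure $\calO$. The natural strategy is induction on $n$, using the standard characterization: a morphism $f$ is $n$-truncated if and only if the diagonal $\delta_f \colon U \to U \times_X U$ is $(n-1)$-truncated, with the base case $n = -2$ being that $f$ is an equivalence. So first I would set up the induction. For the base case $n = -2$: if $\alpha$ is an equivalence then $\calO(\alpha)$ is an equivalence since $\calO$ is a functor — nothing to prove. (One might also want $n = -1$, i.e. $\alpha$ a monomorphism, handled by the inductive step once the $n=-2$ case is in hand.)

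For the inductive step, suppose the result holds for $(n-1)$-truncated admissible morphisms, and let $\alpha \colon U \to X$ be $n$-truncated and admissible. The key point is that the diagonal $\delta \colon U \to U \times_X U$ is then $(n-1)$-truncated, and moreover — this is the crucial input — $\delta$ is again admissible: this is precisely Remark \ref{sagewise}, which says that the diagonal of an admissible morphism is admissible (it is a section of a projection, hence admissible by the section-of-admissible principle, condition $(ii)$ of Definition \ref{stubb}). Now I want to compute $\calO$ of the diagonal. Since $\alpha$ is admissible, axiom $(2)$ of Definition \ref{spey} applies to the pullback square defining $U \times_X U$: the functor $\calO$ carries it to a pullback square in $\calX$, so $\calO(U \times_X U) \simeq \calO(U) \times_{\calO(X)} \calO(U)$. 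Under this identification, $\calO(\delta)$ is the diagonal of the map $\calO(\alpha) \colon \calO(U) \to \calO(X)$ in $\calX$. By the inductive hypothesis applied to the $(n-1)$-truncated admissible morphism $\delta$, the map $\calO(\delta)$ is $(n-1)$-truncated. Hence the diagonal of $\calO(\alpha)$ is $(n-1)$-truncated, which by the recursive characterization of truncated morphisms in the $\infty$-topos $\calX$ means exactly that $\calO(\alpha)$ is $n$-truncated.

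The two technical facts I would need to cite cleanly are: (a) the recursive characterization of $n$-truncated morphisms via diagonals (a standard fact about $\infty$-topoi, presumably available from \cite{topoi}, e.g. in the discussion of truncated objects/morphisms), and (b) the compatibility of $\calO(\alpha)$'s diagonal with $\calO$ applied to $\alpha$'s diagonal, which just needs that $\calO$ sends the relevant pullback square (formed from an admissible morphism) to a pullback square — i.e. axiom $(2)$ of Definition \ref{spey} — together with functoriality of forming diagonals. The main obstacle, such as it is, is purely bookkeeping: making sure the pullback square $U \times_X U$ really is ``a pullback by an admissible morphism'' in the sense of Definition \ref{spey}$(2)$ so that $\calO$ preserves it, and verifying via Remark \ref{sagewise} that $\delta$ is admissible so the inductive hypothesis applies to it. Neither is genuinely hard. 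I expect the whole argument to be short: set up the induction, invoke Remark \ref{sagewise}, invoke Definition \ref{spey}$(2)$, invoke the diagonal characterization of truncatedness, done.
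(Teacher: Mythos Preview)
Your proposal is correct and matches the paper's proof essentially line for line: induction on $n$ with base case $n=-2$, the recursive characterization of $n$-truncatedness via the diagonal (the paper cites Lemma \toposref{trunc} for this), Remark \ref{sagewise} for admissibility of $\delta$, and Definition \ref{spey}(2) to identify $\calO(U \times_X U)$ with $\calO(U) \times_{\calO(X)} \calO(U)$.
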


\begin{proof}
We work by induction on $n$. If $n = -2$, then $\alpha$ is an equivalence and the result is obvious.
If $n > -2$, then it will suffice to show that the canonical map
$$ \calO(U) \rightarrow \calO(U) \times_{ \calO(X) } \calO(U) \simeq \calO( U \times_{X} U )$$
is $(n-1)$-truncated (Lemma \toposref{trunc}). In view of the inductive hypothesis, it will suffice to show that
the map $U \rightarrow U \times_{X} U$ is an $(n-1)$-truncated admissible morphism in
$\calX$. The $(n-1)$-truncatedness follows from Lemma \toposref{trunc}, and the
admissibility from Remark \ref{sagewise}.
\end{proof}

\begin{lemma}\label{btop}
Let $\calX$ be an $\infty$-topos, $n \geq -1$ an integer, and let $f: U \rightarrow X$ be an $(n-1)$-truncated morphism in $\calX$. Then the induced diagram
$$ \xymatrix{ U \ar[r] \ar[d] & \tau_{\leq n} U \ar[d] \\
X \ar[r]^{g} & \tau_{\leq n} X}$$
is a pullback square in $\calX$. 
\end{lemma}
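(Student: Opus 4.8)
Lemma \ref{btop} asserts that for an $(n-1)$-truncated morphism $f \colon U \to X$ in an $\infty$-topos $\calX$, the square obtained by applying $n$-truncation is a pullback. The plan is to reduce this to a statement about the fibers of $f$, which are $(n-1)$-truncated objects, and then invoke the good behavior of truncation functors in an $\infty$-topos.

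First I would observe that the assertion is local over $\tau_{\leq n} X$: since colimits in $\calX$ are universal and $\tau_{\leq n} X$ is generated under colimits by objects mapping to it, it suffices to check that for every map $T \to \tau_{\leq n} X$ the induced square stays a pullback after base change along $T$. More usefully, I would work in the slice $\calX_{/X}$, where $f$ is an $(n-1)$-truncated object. Here the key input is the compatibility of truncation with the left-exact localization $\calX_{/\tau_{\leq n} X} \to \calX_{/X}$ (the base-change functor $X \times_{\tau_{\leq n} X} (-)$), exactly as in the proof of Lemma \ref{swimple}: that functor preserves small colimits and finite limits, so by Proposition \toposref{compattrunc} it commutes with $n$-truncation. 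Concretely, $\tau_{\leq n}^{\calX_{/X}}(f) $ is computed by pulling back $\tau_{\leq n}^{\calX_{/\tau_{\leq n}X}}$ of the image of $f$.

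The next step is to identify $\tau_{\leq n} U$ with the pullback $X \times_{\tau_{\leq n} X} \tau_{\leq n} U$ — equivalently, to show the unit map $U \to X \times_{\tau_{\leq n} X} \tau_{\leq n} U$ exhibits the target as the $n$-truncation of $U$. Since $f \colon U \to X$ is $(n-1)$-truncated, the object $U \in \calX_{/X}$ is $(n-1)$-truncated, hence automatically $n$-truncated, so $\tau_{\leq n}^{\calX_{/X}} U \simeq U$. Under the localization $\calX_{/X} \leftarrow \calX_{/\tau_{\leq n}X}$ described above, the image of $U$ (viewed over $\tau_{\leq n} X$) has $n$-truncation $\tau_{\leq n} U$ computed in $\calX$, because the projection $\calX_{/\tau_{\leq n}X} \to \calX$ is itself an étale geometric morphism and hence commutes with $n$-truncation (Lemma \toposref{trunccomp}). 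Chasing these identifications, the fact that $U$ is already $n$-truncated over $X$ forces $U \simeq X \times_{\tau_{\leq n}X} \tau_{\leq n} U$, which is precisely the assertion that the displayed square is a pullback.

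I expect the main obstacle to be bookkeeping rather than conceptual: one must be careful about which $\infty$-topos each truncation functor is computed in (the ambient $\calX$, the slice $\calX_{/X}$, or $\calX_{/\tau_{\leq n}X}$) and invoke the correct compatibility lemma (Lemma \toposref{trunccomp}, Proposition \toposref{compattrunc}) at each juncture. A cleaner alternative, which I would pursue if the slice manipulations become unwieldy, is to argue directly on fibers: check the square is a pullback after pulling back along each point $x^{\ast}$ of $\calX$ (reducing to $\SSet$, where it is elementary), or more robustly, verify that for every object $T$ the map $T \times_{\tau_{\leq n} X} X \to T$ recovers the fiber product by noting both sides compute the same $n$-truncation using that $f$ has $(n-1)$-truncated fibers. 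Either route is short; the essential content is the interaction of $n$-truncation with left-exact, colimit-preserving base change, already isolated in Lemma \ref{swimple}.
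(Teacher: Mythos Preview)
Your approach is the same as the paper's---work in the slices $\calX_{/X}$ and $\calX_{/\tau_{\leq n} X}$ and use the compatibility of truncation with the change-of-slice functors (Lemma \toposref{trunccomp}, Proposition \toposref{compattrunc})---but there is a genuine gap. Applying Proposition \toposref{compattrunc} to $U$, regarded over $\tau_{\leq n} X$, yields
\[
X \times_{\tau_{\leq n} X} \tau_{\leq n} U \;\simeq\; \tau_{\leq n}^{\calX_{/X}}\bigl( X \times_{\tau_{\leq n} X} U \bigr),
\]
and you then want the right-hand side to be $U$. But $g^{\ast}U = X \times_{\tau_{\leq n} X} U$ is \emph{not} $U$: the unit map $U \to X \times_{\tau_{\leq n} X} U$ is generally not an equivalence. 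What makes the argument close is that the projection $X \times_{\tau_{\leq n} X} U \to U$ is a pullback of $g \colon X \to \tau_{\leq n} X$, hence $(n+1)$-connective; since $U$ is already $(n-1)$-truncated over $X$, this projection exhibits $U$ as the $n$-truncation of $X \times_{\tau_{\leq n} X} U$ over $X$. You never invoke the $(n+1)$-connectivity of $g$, and without it the ``chasing these identifications'' step is unjustified. This is precisely the content the paper extracts from Lemma \toposref{nicelemma} (which says $g^{\ast}$ is an equivalence on $(n-1)$-truncated objects) and Lemma \toposref{slurpy}.

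Two smaller points: calling $g^{\ast} \colon \calX_{/\tau_{\leq n} X} \to \calX_{/X}$ a ``localization'' is incorrect---it is a left exact, colimit-preserving functor, but not in general fully faithful (it becomes an equivalence only after restricting to $(n-1)$-truncated objects, which is again the missing nicelemma input). And the sentence ``identify $\tau_{\leq n} U$ with the pullback $X \times_{\tau_{\leq n} X} \tau_{\leq n} U$'' should read ``identify $U$ with the pullback''; presumably a slip. Your fallback of reducing to $\calX = \SSet$ via points would work when $\calX$ has enough points, but an arbitrary $\infty$-topos need not.
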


\begin{proof}
Lemma \toposref{nicelemma} implies that the pullback functor
$g^{\ast}: \calX_{/ \tau_{\leq n} X} \rightarrow \calX_{ / X}$ induces an equivalence
when restricted to $(n-1)$-truncated objects. Consequently, there exists a pullback diagram
$$ \xymatrix{ U \ar[d] \ar[r]^{g} & V \ar[d]^{f} \\
X \ar[r] & \tau_{\leq n} X,}$$
where the morphism $f$ is $(n-1)$-truncated. To complete the proof, it will suffice to show that
this diagram exhibits $V$ as an $n$-truncation of $U$ in $\calX$. Using Lemma \toposref{trunccomp}, we see that this is equivalent to showing that $g$ exhibits $V$ as an $n$-truncation of $U$ in
$\calX_{/ \tau_{\leq n} X}$. Since $f$ is $n$-truncated, it will suffice to show that $g$ induces an
equivalence
$$ \tau_{\leq n}^{ \calX_{/ \tau_{\leq n} X} } U \rightarrow \tau_{ \leq n}^{ \calX_{/ \tau_{\leq n} X}} V,$$
which follows immediately from Lemma \toposref{slurpy} (applied in the $\infty$-topos
$\calX_{/ \tau_{\leq n} X}$).
\end{proof}

\begin{proof}[Proof of Proposition \ref{rabb}]
Let $\calX$ be an $\infty$-topos, $\calO$ a $\calT$-structure on $\calX$, and
$U \rightarrow X$ an admissible morphism in $\calT$. We wish to show that the diagram
$$ \xymatrix{ \calO(U) \ar[r] \ar[d] & \tau_{\leq n} \calO(U) \ar[d] \\
\calO(X) \ar[r] & \tau_{\leq n} \calO(X) }$$
is a pullback square. This follows immediately from Lemmas \ref{atop} and
\ref{btop}.
\end{proof}

\subsection{Geometric Envelopes}\label{geoenv}\label{app4}

Let $\calT$ be a pregeometry. Our goal in this section is to introduce a geometry
$\calG$ which is ``freely generated by $\calT$'', so that for every $\infty$-topos $\calX$ we have a canonical equivalence of $\infty$-categories $\Struct_{\calG}(\calX) \simeq \Struct_{\calT}(\calX)$.

\begin{definition}\label{skan}
Let $\calT$ be a pregeometry. For any $\infty$-category $\calC$ which admits finite limits, we let
$\Fun^{\adm}(\calT, \calC)$ denote the full subcategory of $\Fun(\calT, \calC)$ spanned by those functors $f: \calT \rightarrow \calC$ with the following properties:
\begin{itemize}
\item[$(a)$] The functor $f$ preserves finite products.
\item[$(b)$] Let $$\xymatrix{ U' \ar[r] \ar[d] & U \ar[d] \\
X' \ar[r] & X }$$ be a pullback diagram in $\calT$ such that the vertical morphisms are admissible. Then
$$ \xymatrix{ fU' \ar[r] \ar[d] & fU \ar[d] \\
fX' \ar[r] & fX }$$
is a pullback diagram in $\calC$. 
\end{itemize}

We will say that a functor
$f: \calT \rightarrow \calG$ {\it exhibits $\calG$ as a geometric envelope of $\calT$} if
the following conditions are satisfied:
\begin{itemize}
\item[$(1)$] The $\infty$-category $\calG$ is idempotent complete and admits finite limits.
\item[$(2)$] The functor $f$ belongs to $\Fun^{\adm}(\calT, \calG)$.
\item[$(3)$] For every idempotent complete $\infty$-category $\calC$ which admits finite limits, composition
with $f$ induces an equivalence of $\infty$-categories
$$ \Fun^{\lex}(\calG, \calC) \rightarrow \Fun^{\adm}(\calT, \calC).$$
Here $\Fun^{\lex}(\calG, \calC)$ denotes the full subcategory of $\Fun(\calG, \calC)$ spanned by those functors which preserve finite limits.
\end{itemize}

In this case, we regard $\calG$ as endowed with the coarsest geometry structure such that
$f$ is a transformation of pregeometries (see Remark \ref{gener}).
\end{definition}

\begin{remark}
In the situation of Definition \ref{skan}, we will abuse terminology by saying that
$\calG$ is a {\it geometric envelope of $\calT$}; in this case, the functor $f: \calT \rightarrow \calG$
is implicitly understood to be specified.
\end{remark}

Let $\calT$ be a pregeometry. The universal property demanded of a geometric envelope $\calG$ of $\calT$ ensures that $\calG$ is determined uniquely up to equivalence, provided that $\calG$ exists. The existence is a consequence of the following result:

\begin{lemma}\label{spuk}
Let $\calT$ be a pregeometry. Then there exists a geometric envelope
$f: \calT \rightarrow \calG$. Moreover, $f$ is fully faithful.
\end{lemma}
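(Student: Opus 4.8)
\textbf{Proof proposal for Lemma \ref{spuk}.}

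The plan is to construct $\calG$ as a suitable full subcategory of a presheaf category. Specifically, let $\calP_{\Sigma}(\calT^{op})$ denote the $\infty$-category of product-preserving functors $\calT \to \SSet$ (equivalently, the sifted-colimit completion of $\calT^{op}$, as in \S\toposref{stable11}), and let $j \colon \calT^{op} \to \calP_{\Sigma}(\calT^{op})$ be the Yoneda embedding. The key observation is that $\calT$-admissibility should be encoded by forcing certain squares to be pullbacks. So I would let $S$ be the strongly saturated class of morphisms in $\calP_{\Sigma}(\calT^{op})$ generated (under colimits and pullback, or rather as needed to define a left exact localization) by the morphisms which measure the failure of the admissible pullback squares of Definition \ref{skan}(b): for each pullback square in $\calT$ with admissible vertical legs, the canonical comparison morphism $j(U') \to j(X') \times_{j(X)} j(U)$ in $\calP_{\Sigma}(\calT^{op})$ (note $\calP_{\Sigma}$ admits finite limits). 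One checks that inverting these morphisms is an accessible left exact localization — this is where one uses that there is only a small amount of data (the class is of small generation) and that the relevant diagrams are finite limit diagrams, so left-exactness of the localization can be arranged by the standard techniques of \S\toposref{stable11} / Corollary \toposref{hunterygreen}-style arguments. Then set $\calC_0 = S^{-1}\calP_{\Sigma}(\calT^{op})$ and let $\calG$ be the idempotent completion of the full subcategory of $\calC_0$ generated under finite limits by the image of $j$, with $f \colon \calT \to \calG$ the evident composite.

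The main steps, in order: (1) verify $\calP_{\Sigma}(\calT^{op})$ admits finite limits and that $j$ is fully faithful and product-preserving (standard, from \S\toposref{stable11}); (2) identify the correct generating class $S$ and verify that the localization $L \colon \calP_{\Sigma}(\calT^{op}) \to \calC_0$ is left exact and accessible — this is the technical heart; (3) check that $L \circ j$ belongs to $\Fun^{\adm}(\calT, \calC_0)$, i.e.\ it preserves finite products (immediate, since $L$ preserves finite products and hence finite limits, and $j$ does) and carries admissible pullback squares to pullback squares (immediate by construction of $S$); (4) prove the universal property (3) of Definition \ref{skan}. For step (4), given an idempotent-complete $\calC$ with finite limits, a functor in $\Fun^{\adm}(\calT, \calC)$ extends essentially uniquely to a sifted-colimit-preserving functor $\calP_{\Sigma}(\calT^{op}) \to \Ind(\calC)$ (by the universal property of $\calP_{\Sigma}$), and condition (b) on the original functor translates precisely into the statement that this extension inverts the class $S$, hence factors through $\calC_0$; restricting to the finite-limit closure of $j(\calT)$ and using idempotent completeness of $\calC$ (together with the fact that left exact functors out of $\calG$ land in $\calC \subseteq \Ind(\calC)$ after the closure argument, cf.\ the proof of Proposition \ref{snubber}) gives the equivalence $\Fun^{\lex}(\calG, \calC) \simeq \Fun^{\adm}(\calT, \calC)$. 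Finally (5), full faithfulness of $f$ follows because $f = L \circ j$ and $L$ restricted to $j(\calT)$ is an equivalence onto its image: no morphism of $S$ has source or target a compact object $j(X)$ becoming non-invertible, by a compactness argument as in Remark \ref{proadmadm} — more precisely, the morphisms inverted involve comparison maps of finite limits of representables, and on corepresentable functors $j(X)$ the localization map $j(X) \to L j(X)$ is an equivalence.

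The main obstacle I expect is step (2): showing the localization is \emph{left exact}. Inverting an arbitrary small class of morphisms in $\calP_{\Sigma}(\calT^{op})$ produces an accessible localization, but left-exactness is a genuine condition that must be checked against the combinatorial structure of $\calT$. The saving grace is that the generating morphisms come from \emph{finite limit diagrams} in $\calT$ (pullback squares along admissible maps), and the admissibility axioms of Definition \ref{stubb} — closure of admissible morphisms under pullback, under composition (axiom (ii)), and the resulting stability properties (Remark \ref{sagewise}) — are exactly what is needed to guarantee that the class of squares being forced to be Cartesian is itself stable under the operations required for a left-exact localization (pullback, iterated fiber products, etc.). One packages this the way \cite{topoi} handles topological localizations: exhibit $S$ as generated by monomorphism-like data closed under pullback, then invoke the relevant stability result. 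If a direct verification proves cumbersome, an alternative is to build $\calG$ by a transfinite iteration — freely adjoin finite limits to $\calT$, then localize to impose the admissible-square relations, then idempotent-complete, checking at each stage that admissibility data propagates — but the presheaf-localization approach above is cleaner and is the one I would write up.
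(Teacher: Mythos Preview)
The paper's proof is a one-line citation of Proposition \toposref{cupper1} from \cite{topoi}, a general construction that freely adjoins finite limits to an $\infty$-category while preserving a prescribed class of finite-limit diagrams (with full faithfulness included). Your proposal is essentially an attempt to rederive that construction in this case, and the outline can be made to work, but not quite as written.

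The genuine gap is in step (4), and it is a variance issue rather than the left-exactness concern you emphasize. Your Yoneda embedding $j$ has source $\calT^{op}$, so a pullback square in $\calT$ becomes a \emph{pushout} square in $\calT^{op}$, and the relevant comparison morphism in $\calP_\Sigma(\calT^{op})$ is $j(X')\amalg_{j(X)} j(U) \to j(U')$, not the fiber-product comparison you wrote. More seriously, to extend $g \in \Fun^{\adm}(\calT,\calC)$ via the universal property of $\calP_\Sigma$ you must extend $g^{op}\colon \calT^{op}\to\calC^{op}$ into a target admitting sifted colimits; your proposed target $\Ind(\calC)$ is on the wrong side of the variance and need not have sifted colimits anyway. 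The correct target is $\Ind(\calC^{op}) = \Pro(\calC)^{op}$: since $\calC$ has finite limits, $\calC^{op}$ has finite colimits, so $\Ind(\calC^{op})$ has all small colimits, and the extension goes through. After that, the argument you sketch (factoring through the localization, restricting to $\calG^{op}$, landing back in $\calC^{op}$ by idempotent-completeness and closure under finite colimits) is correct. Your worry about left-exactness of $L$ in step (2) turns out to be unnecessary: you only need $L$ to preserve colimits, which is automatic for a left adjoint, since finite limits in $\calG$ are opposite to finite colimits in $\calC_0$; and the representables $j(X)$ are already $S$-local by Yoneda (mapping a pushout of representables into a representable computes exactly the original pullback in $\calT$), which gives full faithfulness of $f$ directly.
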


\begin{proof}
This is a special case of Proposition \toposref{cupper1}.
\end{proof}

\begin{remark}\label{ludda2}
In the situation of Lemma \ref{spuk}, the $\infty$-category $\calG$ is generated under finite limits and retracts by the essential image of $f$. In other words, if $\calG_0 \subseteq \calG$ is a full subcategory which is stable under retracts and finite limits and which contains the essential image of $f$, then $\calG_0 = \calG$.
\end{remark}

We will now show that the geometric envelope of a pregeometry $\calT$ loses no information about $\calT$-structures on $\infty$-topoi.

\begin{proposition}\label{sku}
Let $\calT$ be a pregeometry and $f: \calT \rightarrow \calG$ a functor which exhibits $\calG$ as a geometric envelope of $\calT$. Then, for every $\infty$-topos $\calX$, composition with
$f$ induces an equivalences of $\infty$-categories
$$\Struct_{\calG}(\calX) \rightarrow \Struct_{\calT}(\calX) \quad \Struct^{\loc}_{\calG}(\calX) \rightarrow \Struct_{\calT}^{\loc}(\calX).$$
\end{proposition}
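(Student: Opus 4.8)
The plan is to reduce both equivalences to the universal property of the geometric envelope (Definition \ref{skan}) together with a characterization of $\calG$-structures and $\calT$-structures that makes them visibly match under composition with $f$. The key observation is that the conditions defining $\Struct_{\calG}(\calX)$ and $\Struct_{\calT}(\calX)$ have two parts: a ``left exactness-type'' condition (preservation of finite limits, resp. finite products and pullbacks along admissible morphisms) and a ``sheaf'' condition (covering sieves go to effective epimorphisms). First I would handle the left-exactness part. By Remark \ref{switcheru} (and Proposition \toposref{cupper1}/Remark \ref{ludda2}), composition with $f$ induces an equivalence $\Fun^{\lex}(\calG, \calX) \to \Fun^{\adm}(\calT, \calX)$: this is just condition $(3)$ of Definition \ref{skan} applied with $\calC = \calX$, using that $\calX$ admits finite limits and is idempotent complete. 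So a left exact functor $\calO: \calG \to \calX$ is determined by its restriction $\calO \circ f \in \Fun^{\adm}(\calT, \calX)$, and conversely every element of $\Fun^{\adm}(\calT, \calX)$ extends.

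Next I would check that, under this equivalence, the sheaf conditions correspond. A left exact $\calO: \calG \to \calX$ is a $\calG$-structure iff for every admissible covering $\{U_\alpha \to X\}$ in $\calG$ the map $\coprod \calO(U_\alpha) \to \calO(X)$ is an effective epimorphism; a product-and-pullback-preserving $\calO_0: \calT \to \calX$ is a $\calT$-structure iff the analogous condition holds for admissible coverings in $\calT$. Since $\calG$ carries the coarsest geometry structure making $f$ a transformation of pregeometries, the class of admissible morphisms of $\calG$ and its Grothendieck topology are \emph{generated} by $f$ in the sense of Remark \ref{gener}. The essential point is then: if $\calO = \calO_0$ after extension and $\calO_0$ satisfies the $\calT$-sheaf condition, then $\calO$ satisfies the $\calG$-sheaf condition for the generated topology. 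This requires showing that the collection of covering sieves on which $\calO$ behaves correctly is closed under the operations used to generate the topology from the admissible coverings coming from $\calT$ — stability under pullback (using that $\calO$ is left exact and colimits in $\calX$ are universal, cf. the argument in Theorem \ref{gen} and Lemma \ref{gooduse}) and local character of effective epimorphisms (Proposition \toposref{hintdescent1}). Conversely the $\calG$-sheaf condition restricts to the $\calT$-sheaf condition since the admissible coverings of $\calT$ map to admissible coverings of $\calG$.

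Having matched objects, I would match morphisms to get the equivalence $\Struct^{\loc}_{\calG}(\calX) \to \Struct^{\loc}_{\calT}(\calX)$. A morphism $\alpha: \calO \to \calO'$ of $\calG$-structures is local iff for every admissible $U \to X$ in $\calG$ the square
$$\xymatrix{ \calO(U) \ar[r] \ar[d] & \calO'(U) \ar[d] \\ \calO(X) \ar[r] & \calO'(X) }$$
is a pullback; for $\calT$-structures the same with admissible morphisms of $\calT$. One direction (restriction from $\calG$ to $\calT$) is immediate. For the converse, one argues that the class of morphisms $U \to X$ of $\calG$ for which the square is a pullback is stable under the operations generating the admissible morphisms of $\calG$: it contains equivalences, is stable under composition (a two-out-of-three argument for pullback squares), stable under pullback (left exactness of $\calO$ and $\calO'$, plus Lemma \ref{gooduse}), and stable under retracts. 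Since by Remark \ref{ludda2} the admissible morphisms of $\calG$ are generated from those of $\calT$ under these operations, locality over $\calT$ forces locality over $\calG$. It follows that the functor on underlying $\infty$-categories is an equivalence and that it restricts to an equivalence on the subcategories of local morphisms.

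The main obstacle I anticipate is the bookkeeping in the sheaf-condition step: one must be careful that the ``generation'' of the Grothendieck topology on $\calG$ from that of $\calT$ is exactly captured by closure of the good class of sieves under pullback and composition, and that effective epimorphisms in $\calX$ have the requisite local/descent properties — this is where the hypotheses ``colimits are universal'' and the characterization of $\infty$-topoi (Theorem \toposref{charleschar}) enter, in the same spirit as the proof of Proposition \ref{prespaz} and Theorem \ref{gen}. Once that verification is in place, everything else is a formal consequence of the universal property in Definition \ref{skan}.
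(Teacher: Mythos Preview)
Your proposal is correct and follows essentially the same approach as the paper: reduce to the equivalence $\Fun^{\lex}(\calG,\calX)\simeq\Fun^{\adm}(\calT,\calX)$ from the universal property, then check that the sheaf conditions and the locality conditions correspond because both the Grothendieck topology and the class of admissible morphisms on $\calG$ are \emph{generated} by those of $\calT$. The paper packages the sheaf-condition step slightly more cleanly---it defines an auxiliary Grothendieck topology $\calV'$ on $\calG$ (a sieve is covering iff $\calO$ carries it to an effective epimorphism) and observes that since $\calV'$ contains the image of the topology on $\calT$ and $\calV$ is the coarsest such, $\calV\subseteq\calV'$---but this is the same argument as your closure-under-operations verification; also, your citation of Remark \ref{ludda2} for the generation of admissible morphisms is a slip (that remark concerns generation of objects under finite limits and retracts), the relevant input is Remark \ref{gener}.
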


\begin{proof}
We have a commutative diagram of $\infty$-categories
$$ \xymatrix{ \Struct^{\loc}_{\calG}(\calX) \ar[r] \ar[d] & \Struct_{\calT}^{\loc}(\calX) \ar[d] \\
\Struct_{\calG}(\calX) \ar[r] \ar[d] & \Struct_{\calT}(\calX) \ar[d] \\
\Fun^{\lex}(\calG, \calX) \ar[r]^{F} & \Fun^{\adm}(\calT, \calX) }$$
Here the functor $F$ is an equivalence, and the vertical arrows are inclusions of subcategories.
It will therefore suffice to show that these subcategories correspond to one another under the equivalence $F$. In other words, we must show the following:
\begin{itemize}
\item[$(1)$] Let $\calO: \calG \rightarrow \calX$ be a left exact functor such that
$\calO \circ f$ is a $\calT$-structure on $\calX$. Then $\calO$ is a $\calG$-structure on $\calX$.

\item[$(2)$] Let $\alpha: \calO \rightarrow \calO'$ be a natural transformation between 
$\calG$-structures on $\calX$, and suppose that the induced map
$\calO \circ f \rightarrow \calO' \circ f$ is a morphism of $\Struct^{\loc}_{\calT}(\calX)$. Then
$\alpha$ is a morphism of $\Struct^{\loc}_{\calG}(\calX)$.
\end{itemize}

We first prove $(1)$. Let $\calU$ denote the Grothendieck topology on $\calT$, and
$\calV$ the induced Grothendieck topology on $\calG$. We define a new Grothendieck topology $\calV'$ on $\calG$ as follows: a collection of morphisms $\{ V_{\alpha} \rightarrow Y\}$ is a covering with respect to $\calG$ if and only if the induced map $\coprod_{\alpha} \calO( V_{\alpha}) \rightarrow \calO(Y)$ is an effective epimorphism in $\calX$. We wish to prove that $\calO$ is a $\calG$-structure on $\calX$: in other words, that $\calV'$ is a refinement of $\calV$. Since $\calV$ is generated by 
$\calU$, it will suffice to show that $\calV$ contains all coverings of the form
$\{ f U_{\alpha} \rightarrow f X \}$, where $\{ U_{\alpha} \rightarrow X \}$ is a collection of morphisms
in $\calT$ which is covering with respect to the topology $\calU$. This follows from our assumption that
$\calO \circ f$ is a $\calT$-structure.

The proof of $(2)$ is similar. Let us say that a morphism $U \rightarrow X$ in $\calG$ is {\it special} if
the diagram
$$ \xymatrix{ \calO(U) \ar[r] \ar[d] & \calO'(U) \ar[d] \\
\calO(X) \ar[r] & \calO'(X) }$$
is a pullback square in $\calX$. The following assertions are all easy to verify:
\begin{itemize}
\item[$(a)$] Every equivalence in $\calG$ is special.
\item[$(b)$] Let $$\xymatrix{ U \ar[rr]^{r} \ar[dr]^{q} & & V \ar[dl]^{p} \\
& X & }$$
be a diagram in $\calG$, and suppose that $p$ is special. Then $q$ is special if and only if $r$ is special.
\item[$(c)$] Let $$ \xymatrix{ U' \ar[r] \ar[d]^{u'} & U \ar[d]^{u} \\
X' \ar[r] & X }$$
be a pullback diagram in $\calG$. If $u$ is special, then $u'$ is also special.
\end{itemize}
We wish to show that every admissible morphism in $\calG$ is special. Since the collection of
admissible morphisms in $\calG$ is generated (in the sense of Remark \ref{gener}) by morphisms of the form $f(u)$, where $u$ is an admissible morphism of $\calT$, it will suffice to show that each $f(u)$ is special. This is just a translation of our assumption that the induced map $\calO \circ f \rightarrow \calO' \circ f$ is a morphism of $\Struct^{\loc}_{\calT}(\calX)$.
\end{proof}

\begin{definition}\label{schde}
Let $\calT$ be a pregeometry. A {\it $\calT$-scheme} is a $\calT$-structured $\infty$-topos
$( \calX, \calO_{\calX})$ with the following property:
\begin{itemize}
\item[$(\ast)$] Choose a geometric envelope $f: \calT \rightarrow \calG$ and an equivalence $\calO_{\calX} \simeq \calO'_{\calX} \circ f$, where $\calO'_{\calX} \in \Struct_{\calG}(\calX)$ (the existence
and uniqueness of $\calO'$ up to equivalence follow from Proposition \ref{sku}). Then
$(\calX, \calO'_{\calX})$ is a $\calG$-scheme.
\end{itemize}
We let $\Sch(\calT)$ denote the full subcategory of $\LGeo(\calT)^{op}$ spanned by the
$\calT$-schemes. We will say that a $\calT$-scheme $(\calX, \calO_{\calX})$ is {\it affine} if the corresponding $\calG$-scheme $( \calX, \calO'_{\calX})$ is affine.
\end{definition}

We now introduce a variation on Definition \ref{skan}:

\begin{definition}\label{skaan}
Let $\calT$ be a pregeometry and $n \geq -1$ an integer. We will say that a functor
$f: \calT \rightarrow \calG$ {\it exhibits $\calG$ as an $n$-truncated geometric envelope}
if the following conditions are satisfied:
\begin{itemize}
\item[$(1)$] The $\infty$-category $\calG$ admits finite limits, and is equivalent to an
$n$-category (in other words, each of the mapping spaces $\bHom_{\calG}(X,Y)$ is
$n$-truncated).
\item[$(2)$] The functor $f$ belongs to $\Fun^{\adm}(\calT, \calG)$.
\item[$(3)$] For every $n$-category $\calC$ which admits finite limits, composition with $f$ induces an equivalence of $\infty$-categories
$$ \Fun^{\lex}(\calG, \calC) \rightarrow \Fun^{\adm}(\calT, \calC).$$
Here $\Fun^{\lex}(\calG, \calC)$ denotes the full subcategory of $\Fun(\calG, \calC)$ spanned by those functors which preserve finite limits.
\end{itemize}
In this case, we regard $\calG$ as endowed with the coarsest geometry structure such that
$f$ is a transformation of pregeometries (see Remark \ref{gener}).
\end{definition}

\begin{remark}
By convention, we will refer to a geometric envelope of a pregeometry $\calT$ (in the sense of Definition \ref{skan}) as an {\it $\infty$-truncated geometric envelope}.
\end{remark}

It is clear from the definition that an $n$-truncated geometric envelope of a pregeometry $\calT$ is uniquely determined up to equivalence if it exists. The existence follows immediately from
Lemma \ref{spuk}, Proposition \ref{snubber}, and the following elementary observation:

\begin{lemma}\label{good1}
Let $\calT$ be a pregeometry, let $f: \calT \rightarrow \calG'$ exhibit $\calG'$ as an
geometric envelope of $\calT$, and let $g: \calG' \rightarrow \calG$ exhibit $\calG$ as an
$n$-stub of the geometry $\calG'$. Then $g \circ f$ exhibits $\calG$ as an $n$-truncated
geometric envelope of $\calT$.
\end{lemma}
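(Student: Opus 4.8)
The claim is that if $f\colon\calT\to\calG'$ exhibits $\calG'$ as a geometric envelope of $\calT$ and $g\colon\calG'\to\calG$ exhibits $\calG$ as an $n$-stub of $\calG'$, then $g\circ f$ exhibits $\calG$ as an $n$-truncated geometric envelope of $\calT$. I would verify the three conditions of Definition \ref{skaan} in turn, most of which follow by composing the corresponding properties of $f$ and $g$. The only real content is the universal property, condition $(3)$.

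First, condition $(1)$: $\calG$ admits finite limits and is equivalent to an $n$-category. This is immediate from Definition \ref{rab}$(1)$--$(2)$ applied to $g$ (together with Lemma \ref{sull}, which guarantees $\calG$ is idempotent complete once it is an $n$-category with finite limits, though this is not strictly needed here). Second, condition $(2)$: $g\circ f\in\Fun^{\adm}(\calT,\calG)$. We know $f\in\Fun^{\adm}(\calT,\calG')$ by Definition \ref{skan}$(2)$, and $g$ preserves finite limits by Definition \ref{rab}$(2)$. Since the conditions $(a)$ (preservation of finite products) and $(b)$ (preservation of pullbacks along admissible morphisms) defining $\Fun^{\adm}$ are phrased purely in terms of limit diagrams, composing with the finite-limit-preserving functor $g$ carries $f$ to a functor still in $\Fun^{\adm}(\calT,\calG)$. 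One should also check that the geometry structure on $\calG$ induced by $g\circ f$ (the coarsest one making $g\circ f$ a transformation of pregeometries) agrees with the one induced by $g$ from $\calG'$; this is a routine chase through Remark \ref{gener}, using that admissible morphisms and covering sieves in $\calG'$ are generated by those of $\calT$, and then those in $\calG$ are generated by their images under $g$.

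The heart of the argument is condition $(3)$: for every $n$-category $\calC$ admitting finite limits, composition with $g\circ f$ induces an equivalence $\Fun^{\lex}(\calG,\calC)\to\Fun^{\adm}(\calT,\calC)$. I would factor this as the composite
$$\Fun^{\lex}(\calG,\calC)\xrightarrow{\circ\, g}\Fun^{\lex}(\calG',\calC)\xrightarrow{\circ\, f}\Fun^{\adm}(\calT,\calC).$$
The second arrow is an equivalence by the universal property of the geometric envelope, Definition \ref{skan}$(3)$, applied with the $\infty$-category $\calC$ (which is idempotent complete by Lemma \ref{sull}, since it is an $n$-category with finite limits — this is exactly where that lemma is needed). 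The first arrow is an equivalence by Definition \ref{rab}$(3)$, which says precisely that composition with $g$ induces an equivalence $\Fun^{\lex}(\calG,\calC)\to\Fun^{\lex}(\calG',\calC)$ for every finite-limit $\infty$-category $\calC$ equivalent to an $(n+1)$-category — and an $n$-category is in particular an $(n+1)$-category. Composing two equivalences gives the desired equivalence.

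The main (mild) obstacle is bookkeeping rather than mathematics: one must make sure the hypotheses of Definition \ref{rab}$(3)$ and Definition \ref{skan}$(3)$ are met by the \emph{same} test category $\calC$, which requires observing that an $n$-category with finite limits is automatically idempotent complete (Lemma \ref{sull}) and is automatically an $(n+1)$-category. Beyond that, one must confirm that the equivalences of $\infty$-categories fit together into a single equivalence implemented by precomposition with $g\circ f$, which is immediate since precomposition is functorial. I expect the proof to be two or three short paragraphs, essentially the paragraph above together with the verification of $(1)$ and $(2)$.
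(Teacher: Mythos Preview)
Your proposal is correct. The paper does not supply a proof for this lemma, introducing it only as an ``elementary observation'' that follows from the definitions together with Lemma \ref{spuk} and Proposition \ref{snubber}; your argument is exactly the routine verification the paper has in mind, including the observation (via Lemma \ref{sull}) that an $n$-category with finite limits is idempotent complete, so that the test category $\calC$ simultaneously satisfies the hypotheses of Definition \ref{skan}(3) and Definition \ref{rab}(3).
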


The role of $n$-truncated geometric envelopes is explicated by the following result:

\begin{proposition}\label{sulw}
Let $\calT$ be a pregeometry, $n \geq -1$ an integer, and let $f: \calT \rightarrow \calG$ exhibit
$\calG$ as an $n$-truncated geometric envelope of $\calT$. Then for every $\infty$-topos $\calX$, composition with $f$ induces a equivalences of $\infty$-categories
$$ \Struct_{\calG}(\calX) \rightarrow \Struct_{\calT}^{\leq n}(\calX)$$
$$ \Struct_{\calG}^{\loc}(\calX) \rightarrow \Struct_{\calT}^{\loc, \leq n}(\calX).$$
\end{proposition}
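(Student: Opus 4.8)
The plan is to reduce Proposition \ref{sulw} to two previously established facts: the equivalence $\Struct_{\calG}(\calX) \simeq \Struct_{\calT}(\calX)$ coming from a (full, $\infty$-truncated) geometric envelope (Proposition \ref{sku}), and the characterization of $n$-stubs of geometries and their structures (Proposition \ref{lubba1}). Concretely, I would factor the $n$-truncated geometric envelope $f: \calT \rightarrow \calG$ as $\calT \stackrel{f'}{\rightarrow} \calG' \stackrel{g}{\rightarrow} \calG$, where $f'$ exhibits $\calG'$ as an (ordinary, $\infty$-truncated) geometric envelope of $\calT$ and $g$ exhibits $\calG$ as an $n$-stub of $\calG'$ (as transformations of geometries); this factorization is legitimate by Lemma \ref{good1}, which is exactly how the existence of $n$-truncated envelopes was deduced. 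The two factors can then be analyzed separately.

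\textbf{Key steps.} First, apply Proposition \ref{sku} to $f'$: composition with $f'$ gives equivalences $\Struct_{\calG'}(\calX) \rightarrow \Struct_{\calT}(\calX)$ and $\Struct^{\loc}_{\calG'}(\calX) \rightarrow \Struct^{\loc}_{\calT}(\calX)$. Second, apply Proposition \ref{lubba1} to the transformation $g: \calG' \rightarrow \calG_{\leq n} = \calG$: composition with $g$ gives equivalences $\Struct_{\calG}(\calX) \rightarrow \Struct^{\leq n}_{\calG'}(\calX)$ and $\Struct_{\calG}^{\loc}(\calX) \rightarrow \Struct^{\leq n}_{\calG'}(\calX) \cap \Struct^{\loc}_{\calG'}(\calX)$. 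Third, I would check that the equivalence $\Struct_{\calG'}(\calX) \simeq \Struct_{\calT}(\calX)$ of the first step carries the full subcategory $\Struct^{\leq n}_{\calG'}(\calX)$ to $\Struct^{\leq n}_{\calT}(\calX)$ and carries the subcategory $\Struct^{\leq n}_{\calG'}(\calX) \cap \Struct^{\loc}_{\calG'}(\calX)$ to $\Struct^{\loc, \leq n}_{\calT}(\calX)$. This is essentially bookkeeping: an object $\calO' \in \Struct_{\calG'}(\calX)$ is $n$-truncated if and only if $\calO'(X)$ is $n$-truncated for every $X$ in the essential image of $f'$ — since by Remark \ref{ludda2} that essential image generates $\calG'$ under finite limits and retracts, and $n$-truncated objects of $\calX$ are closed under finite limits and retracts — and $\calO'(X)$ for $X = f'(T)$ is precisely $(\calO' \circ f')(T)$. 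So $n$-truncatedness of $\calO'$ is equivalent to $n$-truncatedness of the corresponding $\calT$-structure $\calO' \circ f'$, which is the definition of $\Struct^{\leq n}_{\calT}(\calX)$. Composing the displayed equivalences then yields the two asserted equivalences $\Struct_{\calG}(\calX) \rightarrow \Struct^{\leq n}_{\calT}(\calX)$ and $\Struct_{\calG}^{\loc}(\calX) \rightarrow \Struct^{\loc, \leq n}_{\calT}(\calX)$, where $\Struct^{\loc, \leq n}_{\calT}(\calX)$ denotes $\Struct^{\leq n}_{\calT}(\calX) \cap \Struct^{\loc}_{\calT}(\calX)$ (the notation of the statement).

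\textbf{Main obstacle.} The only genuine point requiring care is the third step — that the envelope equivalence matches up the truncation conditions on both sides, and, more subtly, that it respects the \emph{local} subcategories after intersecting with the $n$-truncated ones. For the truncation matching I would lean on Remark \ref{ludda2} as above; for the local part, the key is that Proposition \ref{sku} already identifies $\Struct^{\loc}_{\calG'}(\calX)$ with $\Struct^{\loc}_{\calT}(\calX)$ as subcategories, so intersecting both sides with the (matched) $n$-truncated objects is automatic once the object-level identification is in place. A secondary technical nuisance is simply confirming that the composite of the two equivalences agrees with "composition with $f = g \circ f'$" up to the canonical homotopy — but this is immediate from functoriality of the restriction-along-a-functor construction. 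I expect no serious difficulty beyond organizing these compatibilities cleanly.
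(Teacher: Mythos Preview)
Your proposal is correct and follows essentially the same approach as the paper: factor $f$ through a geometric envelope $\calG'$ and then an $n$-stub (via Lemma \ref{good1}), apply Proposition \ref{sku} and Proposition \ref{lubba1} to the two factors, and verify the one nontrivial compatibility---that $n$-truncatedness of a $\calG'$-structure is detected on the essential image of $\calT$---using Remark \ref{ludda2}. The paper organizes the same ingredients into a commutative diagram rather than as a composition of equivalences, but the content is identical.
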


\begin{proof}
Without loss of generality, we may assume that $f$ factors as a composition
$$ \calT \stackrel{f'}{\rightarrow} \calG' \stackrel{f''}{\rightarrow} \calG,$$
where $f'$ exhibits $\calG'$ as a geometric envelope of $\calT$, and $f''$ exhibits
$\calG$ as an $n$-stub of the geometry $\calG'$. We have a commutative diagram
$$ \xymatrix{ \Struct^{\loc}_{\calG}(\calX) \ar[r]^{\phi_0} \ar[d] & \Struct_{\calG'}^{\loc, \leq n}(\calX)
\ar[r] \ar[d] & \Struct_{\calT}^{\loc, \leq n} \ar[d] \\
\Struct_{\calG}(\calX) \ar[r] & \Struct_{\calG'}^{\leq n}(\calX) \ar[r]^{\phi} & \Struct_{\calT}^{\leq n}(\calX). }$$
Proposition \ref{lubba1} implies that the horizontal arrows on the left are equivalences of $\infty$-categories. It will therefore suffice to show that the functors $\phi$ and $\phi_0$ are
equivalences. We will give the argument for $\phi$; the proof for $\phi_0$ is identical.
The functor $\phi$ fits into a commutative diagram
$$ \xymatrix{ \Struct_{\calG'}^{\leq n}(\calX) \ar[r]^{\phi} \ar[d] & \Struct^{\leq n}_{\calT}(\calX) \ar[d] \\
\Struct_{\calG'}(\calX) \ar[r] & \Struct_{\calT}(\calX) }$$
where the vertical arrows are inclusions of full subcategories, and the bottom horizontal arrow is an equivalence of $\infty$-categories by Proposition \ref{sku}. To complete the proof, it will suffice to show
that if $\calO: \calG' \rightarrow \calX$ is a $\calG'$-structure such that $\calO \circ f'$ is $n$-truncated, then $\calO$ is itself $n$-truncated. Let $\calG'_0$ denote the full subcategory of $\calG'$ spanned by those objects $U$ such that $\calO(U) \subseteq \calX$ is $n$-truncated. Since $\calO$ is left exact, 
the subcategory $\calG'_0 \subseteq \calG'$ is stable under finite limits. Our assumption that
$\calO \circ f'$ is $n$-truncated guarantees that $\calG'_0$ contains the essential image of $f'$.
It follows from Remark \ref{ludda2} that $\calG'_0 = \calG'$, so that $\calO$ is $n$-truncated as desired.
\end{proof}

The theory of pregeometries can also be described in the language of classifying $\infty$-topoi:

\begin{definition}\label{tinner}
Let $\calT$ be a pregeometry and let $-1 \leq n \leq \infty$. We will say that a functor
$\calO: \calT \rightarrow \calK$ is a {\it universal $n$-truncated $\calT$-structure} if the following conditions are satisfied:
\begin{itemize}
\item[$(1)$] The $\infty$-category $\calK$ is an $\infty$-topos, and the functor $\calO$ is an $n$-truncated $\calT$-structure on $\calK$.
\item[$(2)$] For every $\infty$-topos $\calX$, composition with $\calO$ induces an equivalence of $\infty$-categories
$$ \Struct_{\calK}(\calX) \rightarrow \Struct_{\calT}^{\leq n}(\calX).$$
\end{itemize}
In this case, we will say that $\calK$ is a {\it classifying $\infty$-topos for $n$-truncated $\calT$-structures}. In the case $n= \infty$, we will simply say that $\calO$ is 
a {\it universal $\calT$-structure} and that $\calK$ is a {\it classifying $\infty$-topos for $\calT$-structures}. The equivalence of $(2)$ then determines a factorization system on each
$\infty$-category $\Struct_{\calK}(\calX)$, so that $\calK$ is endowed with a geometric structure.
\end{definition}

Let $\calT$ be a pregeometry and let $-1 \leq n \leq \infty$. It is clear that a universal $n$-truncated $\calT$-structure is uniquely determined up to equivalence, if it exists. The existence follows from Proposition \ref{sulw} together with the corresponding result for geometries (Proposition \ref{usebil}).

\begin{remark}\label{sumbus}
Let $f: \calT \rightarrow \calT'$ be a transformation of pregeometries, and choose
universal structures $\calT \rightarrow \calK$, $\calT' \rightarrow \calK'$.
The composition $\calT \rightarrow \calT' \rightarrow \calK'$ is classified by a
geometric morphism of $\infty$-topoi $\phi^{\ast}: \calK \rightarrow \calK'$.
Unwinding the definitions, we deduce that $f$ is a Morita equivalence if and only if
$\phi^{\ast}$ is an equivalence of $\infty$-topoi which identifies the geometric structures
on $\calK$ and $\calK'$. In particular, the assertion that $\phi^{\ast}$ is an equivalence
guarantees that, for any $\infty$-topos $\calX$, the restriction map
$$ \Struct_{\calT'}(\calX) \simeq \Fun^{\ast}( \calK', \calX)
\rightarrow \Fun^{\ast}( \calK, \calX) \rightarrow \Struct_{\calT}(\calX)$$ is
an equivalence of $\infty$-categories.
\end{remark}

We conclude this section by establishing a criterion which guarantees that the collection of
$\calT$-schemes is closed under the formation of truncations.

\begin{proposition}\label{sableware}
Let $\calT$ be a pregeometry having a geometric envelope $\calG$ and let $n \geq -1$.
Assume the following:
\begin{itemize}
\item[$(a)$] The pregeometry $\calT$ is compatible with $n$-truncations.
\item[$(b)$] For every admissible morphism $f: A \rightarrow B$ in $\Ind( \calG^{op})$, if
$A$ is $n$-truncated, then $B$ is also $n$-truncated.
\end{itemize}
Let $\calX$ be an $\infty$-topos and $\calO_{\calX}: \calG \rightarrow \calX$ a $\calG$-structure on $\calX$, and choose a local transformation $\calO_{\calX} \rightarrow \calO'_{\calX}$ in
$\Struct_{\calG}^{\loc}(\calX)$ which induces an equivalence $\tau_{\leq n} \calO_{\calX}(U)
\simeq \calO'_{\calX}(U)$ for each $U$ lying in the essential image of $\calT$ (so that
$\calO'_{\calX}$ is an $n$-truncated $\calG$-structure on $\calX$). 
Then:
\begin{itemize}
\item[$(1)$] Let $f: A \rightarrow \Gamma(\calX; \calO_{\calX} )$ be a morphism in
$\Ind(\calG^{op})$ which determines an equivalence $\Spec^{\calG} A \simeq (\calX, \calO_{\calX})$
in $\LGeo(\calG)$.
Then the induced map $\tau_{\leq n} A \rightarrow \Gamma( \calX; \calO'_{\calX})$
determines an equivalence $\Spec^{\calG}( \tau_{\leq n} A) \simeq (\calX, \calO'_{\calX})$
in $\LGeo(\calG)$.
\item[$(2)$] If $(\calX, \calO_{\calX})$ is an affine $\calG$-scheme, then $(\calX, \calO'_{\calX})$ is an affine $\calG$-scheme.
\item[$(3)$] If $(\calX, \calO_{\calX})$ is a $\calG$-scheme, then $(\calX, \calO'_{\calX})$ is
a $\calG$-scheme.
\end{itemize}
\end{proposition}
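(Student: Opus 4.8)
The plan is to prove assertion $(1)$ and then deduce $(2)$ and $(3)$ formally. The guiding idea is that both $(\calX,\calO'_{\calX})$ and $\Spec^{\calG}(\tau_{\leq n}A)$ are values of a single ``$n$-truncated spectrum'' functor, obtained from $\Spec^{\calG}$ by composing with a reflection onto $n$-truncated structures.

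First I would show that the full subcategory $\LGeo(\calG)^{\leq n}\subseteq\LGeo(\calG)$ spanned by the $n$-truncated $\calG$-structures is reflective, with a left adjoint $L_{n}$ acting on objects by $(\calX,\calO_{\calX})\mapsto(\calX,\tau_{\leq n}\circ\calO_{\calX})$. Because $\tau_{\leq n}$ is a left exact localization endofunctor, $\tau_{\leq n}\circ\calO_{\calX}$ is again left exact; that it is a $\calG$-structure, and that the truncation transformation $\calO_{\calX}\to\tau_{\leq n}\circ\calO_{\calX}$ is local, both follow from hypothesis $(a)$ by applying Proposition \ref{rubb} to $\calO_{\calX}\circ f$ and transporting the conclusion back to $\calG$-structures via Proposition \ref{sku} (using that a left exact functor $\calG\to\calX$ is a $\calG$-structure precisely when its restriction along $f$ is a $\calT$-structure). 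The universal property of $L_{n}$ I would verify fiberwise over the coCartesian fibration $p\colon\LGeo(\calG)\to\LGeo$ of Proposition \ref{swupa}: a morphism out of $(\calX,\calP)$ lying over a geometric morphism $g^{\ast}\colon\calX\to\calY$ is the same as a local transformation $g^{\ast}\circ\calP\to\calO_{\calY}$; since $g^{\ast}$ is left exact and colimit preserving it commutes with $\tau_{\leq n}$ (Proposition \toposref{compattrunc}) and carries local transformations to local transformations (Theorem \ref{gen}), so when $\calO_{\calY}$ is $n$-truncated the required identification $\bHom(\tau_{\leq n}(g^{\ast}\calP),\calO_{\calY})\simeq\bHom(g^{\ast}\calP,\calO_{\calY})$ is exactly Proposition \ref{rubb}(3), transported via Proposition \ref{sku}.

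Next, $\Gamma_{\calG}$ carries $n$-truncated $\calG$-structures to $n$-truncated objects of $\Ind(\calG^{op})$ (the global-sections functor $\bHom_{\calX}(1_{\calX},-)$ preserves $n$-truncated objects, and truncation in $\Ind(\calG^{op})\simeq\Fun^{\lex}(\calG,\SSet)$ is detected pointwise, cf.\ the end of \S\ref{geo4}), so it restricts to $\Gamma^{\leq n}_{\calG}\colon\LGeo(\calG)^{\leq n}\to\Ind(\calG^{op})^{\leq n}$. Composing the adjunctions $\Spec^{\calG}\dashv\Gamma_{\calG}$, $L_{n}\dashv(\text{inclusion})$ and $\tau_{\leq n}\dashv(\text{inclusion})$ shows that $\Gamma^{\leq n}_{\calG}$ has a left adjoint $\Spec^{\calG,\leq n}$, together with a natural equivalence $L_{n}(\Spec^{\calG}B)\simeq\Spec^{\calG,\leq n}(\tau_{\leq n}B)$. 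Now the given local transformation $\calO_{\calX}\to\calO'_{\calX}$ factors through $\calO_{\calX}\to\tau_{\leq n}\calO_{\calX}$ (as $\calO'_{\calX}$ is $n$-truncated), and the induced map $\tau_{\leq n}\calO_{\calX}\to\calO'_{\calX}$ is an equivalence on the essential image of $\calT$ by hypothesis, hence on all of $\calG$ (both sides are left exact and the image of $\calT$ generates $\calG$ under finite limits and retracts, Remark \ref{ludda2}); therefore $(\calX,\calO'_{\calX})\simeq L_{n}(\calX,\calO_{\calX})\simeq L_{n}(\Spec^{\calG}A)\simeq\Spec^{\calG,\leq n}(\tau_{\leq n}A)$. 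Finally, $\Spec^{\calG,\leq n}(\tau_{\leq n}A)=L_{n}(\Spec^{\calG}(\tau_{\leq n}A))$ equals $\Spec^{\calG}(\tau_{\leq n}A)$ itself, because the latter is already $n$-truncated: every admissible morphism of $\Pro(\calG)$ with target $\tau_{\leq n}A$ has $n$-truncated source by hypothesis $(b)$, which is precisely the hypothesis of Corollary \ref{tabletime}. Unwinding the adjunction units and counits identifies the resulting equivalence $(\calX,\calO'_{\calX})\simeq\Spec^{\calG}(\tau_{\leq n}A)$ with the one determined by the canonical map $\tau_{\leq n}A\to\Gamma(\calX;\calO'_{\calX})$, proving $(1)$.

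Assertion $(2)$ is the special case of $(1)$ in which $(\calX,\calO_{\calX})$ is itself affine. For $(3)$, one chooses a cover $\{U_{\alpha}\}$ of $\calX$ with each $(\calX_{/U_{\alpha}},\calO_{\calX}|U_{\alpha})$ affine; the pullback functor $\pi^{\ast}\colon\calX\to\calX_{/U_{\alpha}}$ is left exact and colimit preserving, hence commutes with $\tau_{\leq n}$ and with formation of the local transformation, so $\calO'_{\calX}|U_{\alpha}\simeq\tau_{\leq n}(\calO_{\calX}|U_{\alpha})$ and $(\calX_{/U_{\alpha}},\calO_{\calX}|U_{\alpha})$ again satisfies the hypotheses of the proposition; applying $(2)$ to it shows $(\calX_{/U_{\alpha}},\calO'_{\calX}|U_{\alpha})$ is affine, so $(\calX,\calO'_{\calX})$ is a $\calG$-scheme. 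I expect the main obstacle to be the first step — promoting objectwise $n$-truncation of structure sheaves to an honest reflection onto $n$-truncated $\calG$-structured $\infty$-topoi — since it requires interlocking the coCartesian fibration structure of $\LGeo(\calG)\to\LGeo$ with the pointwise universal property of truncation and, at several places (here and in the localization argument for $(3)$), the compatibility of the pullback functors $g^{\ast}$ with $\tau_{\leq n}$; once $L_{n}$ is available, the rest is formal manipulation of adjunctions together with the single geometric input of Corollary \ref{tabletime}.
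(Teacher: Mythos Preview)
Your proposal is correct and follows essentially the same approach as the paper: both arguments rest on Corollary \ref{tabletime} (to see that $\Spec^{\calG}(\tau_{\leq n}A)$ is already $n$-truncated) and on Propositions \ref{rubb} and \ref{sku}/\ref{sulw} (to identify maps out of $(\calX,\calO'_{\calX})$ with maps out of $(\calX,\calO_{\calX})$ when the target is $n$-truncated). The only difference is organizational: you package the fiberwise truncation adjunction as a global reflection $L_{n}$ on $\LGeo(\calG)$ and then manipulate adjunctions, whereas the paper performs the same mapping-space comparison directly without naming the reflection.
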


\begin{proof}
The implication $(1) \Rightarrow (2)$ is obvious, and the implication $(2) \Rightarrow (3)$ follows by working locally on $\calX$. We will prove $(1)$. According to Corollary \ref{tabletime}, the spectrum $\Spec^{\calG}( \tau_{\leq n} A)$ is $n$-truncated. It will therefore suffice to show that
the canonical map
$$ \bHom_{ \LGeo(\calG) }( (\calX, \calO'_{\calX}), (\calY, \calO_{\calY}))
\rightarrow \bHom_{ \LGeo(\calG) }( \Spec^{\calG} (\tau_{\leq n} A), (\calY, \calO_{\calY}))
\simeq \bHom_{ \Ind(\calG^{op})}( \tau_{\leq n} A, \Gamma( \calY, \calO_{\calY}) )$$
is a homotopy equivalence for every $n$-truncated $\calG$-structure
$\calO_{\calY}: \calG \rightarrow \calY$. Since $\Gamma(\calY, \calO_{\calY})$ is 
$n$-truncated, we can identify the target with 
$$\bHom_{ \Ind(\calG^{op})}( A, \Gamma(\calY, \calO_{\calY}))
\simeq \bHom_{ \LGeo(\calG)}( (\calX, \calO_{\calX} ), (\calY, \calO_{\calY}) ).$$
Consequently, we just need to show that for every geometric morphism 
$\pi^{\ast}: \calX \rightarrow \calY$, the induced map
$$ \bHom_{ \Struct^{\loc}_{\calG}( \calY)}( \pi^{\ast} \calO'_{\calX}, \calO_{\calY} )
\rightarrow \bHom_{ \Struct^{\loc}_{\calG}(\calY)}( \pi^{\ast} \calO_{\calX}, \calO_{\calY} )$$
is a homotopy equivalence. This follows from Propositions \ref{sulw} and \ref{rubb}.
\end{proof}

\subsection{Smooth Affine Schemes}\label{app5}

Let $\calT$ be a pregeometry. We think of the objects of $\calT$ as being smooth geometric objects of some kind (such as algebraic varieties or manifolds), and of $\calT$-structured $\infty$-topoi 
$(\calY, \calO_{\calY}: \calT \rightarrow \calY)$ as being (possibly) singular geometric objects of the same type. These perspectives are connected as follows: if $X$ is an object of $\calT$, then
we think of $\calO_{\calY}(X) \in \calY$ as the sheaf of ``$X$-valued functions on $\calY$''.
To make this idea more precise, we would like to be able to extract from $X$
another $\calT$-structured $\infty$-topos $\Spec^{\calT} X$ such that we have natural homotopy equivalences
$$ \bHom_{\calY}( U, \calO_{\calY}(X) ) \rightarrow \bHom_{ \LGeo(\calT) }( \Spec^{\calT} X,
(\calY_{/U}, \calO_{\calY} | U ) )$$
for each object $U \in \calY$. Our goal in this section is to produce (by explicit construction)
a $\calT$-structured $\infty$-topos $\Spec^{\calT} X$ with this universal property. 

\begin{remark}
Replacing $(\calY, \calO_{\calY})$ by $(\calY_{/U}, \calO_{\calY}|U)$ in the discussion above,
we see that it suffices to verify the universal property of $\Spec^{\calT} X$ in the case 
where $U$ is final in $\calY$.
\end{remark}

\begin{remark}\label{suskine}
Let $\calT$ be a pregeometry, and choose a geometric envelope $f: \calT \rightarrow \calG$.
Let $\Spec^{\calT}$ denote the composition
$$ \calT \stackrel{f}{\rightarrow} \calG \rightarrow \Pro(\calG) \stackrel{ \Spec^{\calG} }
\Sch(\calG) \simeq \Sch(\calT).$$
It follows from Proposition \ref{sku} that, for every object $X \in \calT$, the 
$\calT$-scheme $\Spec^{\calT} X$ has the desired universal property. The reader who is content with this description of $\Spec^{\calT}$ can safely skip the remainder of this section. Our efforts will be directed to providing a more direct construction of $\Spec^{\calT} X$, which does not make reference to the geometric envelope $\calG$: instead, we will mimic the constructions of \S \ref{abspec} using
$\calT$ in place of $\calG$.
\end{remark}

For the remainder of this section, we fix a pregeometry $\calT$ and an object $X \in \calT$.
Let $\calT_{/X}^{\adm}$ denote the full subcategory of $\calT_{/X}$ spanned by the admissible
morphisms $U \rightarrow X$. We regard $\calT_{/X}^{\adm}$ as endowed with the Grothendieck topology induced by the Grothendieck topology on $\calT$. Let $L: \calP( \calT_{/X}^{\adm} ) \rightarrow
\Shv( \calT_{/X}^{\adm} )$ denote a left adjoint to the inclusion of $\Shv( \calT_{/X}^{\adm} )$ into
$\calP( \calT_{/X}^{\adm} )$. We let $\calO_{X}$ denote the composition
$$ \calT \stackrel{j}{\rightarrow} \calP( \calT) \rightarrow \calP( \calT_{/X}^{\adm} )
\stackrel{L}{\rightarrow} \Shv( \calT_{/X}^{\adm} ),$$
where $j$ denotes the Yoneda embedding. 

\begin{remark}
Suppose that the topology on $\calT$ is {\it precanonical}: that is, that every object $Y \in \calT$
represents a sheaf on $\calT$. Then the composite map
$$ \calT \stackrel{j}{\rightarrow} \calP(\calT) \rightarrow \calP( \calT_{/X}^{\adm} )$$
already factors through $\Shv( \calT_{/X}^{\adm} )$. This composition can therefore be identified with
$\calO_{X}$. 
\end{remark}

\begin{proposition}\label{psyk}
Let $\calT$ be a pregeometry containing an object $X$. Then the functor
$\calO_{X}: \calT \rightarrow \Shv( \calT_{/X}^{\adm} )$ is a $\calT$-structure
on the $\infty$-topos $\Shv( \calT_{/X}^{\adm} )$.
\end{proposition}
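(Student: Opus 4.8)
The plan is to verify the three conditions in Definition \ref{spey} directly for the functor $\calO_X \colon \calT \to \Shv(\calT_{/X}^{\adm})$. Throughout I will write $j \colon \calT \to \calP(\calT)$ for the Yoneda embedding, $r \colon \calP(\calT) \to \calP(\calT_{/X}^{\adm})$ for the restriction functor along the projection $\pi \colon \calT_{/X}^{\adm} \to \calT$, and $L$ for the sheafification functor on $\calT_{/X}^{\adm}$. Since $L$ is left exact and $r$ preserves all limits (it is a restriction of functors), it suffices to work with the composite $L \circ r \circ j$. First, $j$ preserves finite products and finite limits do not enter, but more to the point, $j$ sends finite products in $\calT$ to products in $\calP(\calT)$; the functor $r$ preserves products; and $L$, being a left exact localization, preserves finite products. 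Hence condition $(1)$, that $\calO_X$ preserves finite products, is immediate.

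For condition $(2)$, I must show that $\calO_X$ carries a pullback square in $\calT$ along an admissible morphism to a pullback square in $\Shv(\calT_{/X}^{\adm})$. The key point is that the Yoneda embedding $j$ preserves whatever limits exist, but $\calT$ need not have all pullbacks; however, a pullback of an admissible morphism does exist by axiom $(i)$ of Definition \ref{stubb}, and I claim $j$ preserves it. Actually the cleaner route is this: $j$ always preserves limits that exist in $\calT$, so $j$ sends the given pullback square to a pullback square in $\calP(\calT)$; then $r$ preserves pullbacks; then $L$, being left exact, preserves the pullback. So condition $(2)$ reduces to the observation that $j$ preserves the pullback square in $\calT$, which holds because the Yoneda embedding preserves all existing limits. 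I should double-check that the relevant limit genuinely exists in $\calT$ (it does, being a pullback of an admissible morphism), so that this argument applies.

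Condition $(3)$ is the substantive one and I expect it to be the main obstacle. Given admissible morphisms $\{U_\alpha \to Y\}$ in $\calT$ generating a covering sieve on $Y$, I must show $\coprod_\alpha \calO_X(U_\alpha) \to \calO_X(Y)$ is an effective epimorphism in $\Shv(\calT_{/X}^{\adm})$. The idea is to test this on sections: a section of $\calO_X(Y)$ over an admissible $U \to X$ is, after sheafification, locally represented by a map $U \to j(Y)$ in $\calP(\calT_{/X}^{\adm})$, which amounts (by Yoneda applied in $\calT$, using that $U \to X$ is an object of $\calT_{/X}^{\adm}$ and that $\calO_X$ unwinds to mapping spaces in $\calT$) to a morphism $U \to Y$ in $\calT$. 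I then want to pull back the cover $\{U_\alpha \to Y\}$ along this morphism; the fiber products $U_\alpha \times_Y U$ exist because the morphisms $U_\alpha \to Y$ are admissible (axiom $(i)$ of Definition \ref{stubb}), and the resulting maps $U_\alpha \times_Y U \to U$ are again admissible, and they generate a covering sieve on $U$ (this is part of the axioms for an admissibility structure, stability of coverings under pullback). Moreover each $U_\alpha \times_Y U \to X$ is admissible by composition with $U \to X$, so $U_\alpha \times_Y U$ is an object of $\calT_{/X}^{\adm}$. This exhibits the given section of $\calO_X(Y)$, locally on $U$ with respect to the topology on $\calT_{/X}^{\adm}$, as coming from a section of $\calO_X(U_\alpha)$ for some $\alpha$. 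That is precisely the statement that $\coprod_\alpha \calO_X(U_\alpha) \to \calO_X(Y)$ is an effective epimorphism of sheaves. The delicate bookkeeping is in matching the two sheafifications — the one defining $\calO_X$ as a sheaf on $\calT_{/X}^{\adm}$, and the local-surjectivity condition for effective epimorphisms — and in checking that the pulled-back cover $\{U_\alpha \times_Y U \to U\}$ is indeed a covering in the induced topology on $\calT_{/X}^{\adm}$, which I will deduce from the fact that the topology on $\calT_{/X}^{\adm}$ is by definition induced from that on $\calT$ and that covering sieves pull back to covering sieves.
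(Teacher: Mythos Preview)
Your proposal is correct and follows essentially the same approach as the paper: verify conditions $(1)$ and $(2)$ by noting that the Yoneda embedding preserves all limits that exist in $\calT$, that restriction along $\pi$ preserves limits, and that $L$ is left exact; then verify $(3)$ by representing a local section of $\calO_X(Y)$ over $U \in \calT_{/X}^{\adm}$ by a morphism $U \to Y$ in $\calT$, pulling back the admissible cover, and observing the result is an admissible cover of $U$. The paper's proof is slightly terser (it treats $(1)$ and $(2)$ in one sentence), but the substance is identical.
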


\begin{proof}
The Yoneda embedding $j: \calT \rightarrow \calP(\calT)$ preserves all limits which
exist in $\calT$ (Proposition \toposref{yonedaprop}), the functor
$\calP( \calT) \rightarrow \calP( \calT_{/X}^{\adm} )$ preserves small limits, and the
localization functor $L: \calP( \calT_{/X}^{\adm}) \rightarrow \Shv( \calT_{/X}^{\adm} )$
is left exact. It follows that $\calO_{X}$ preserves all finite limits which exist in $\calT$.
In particular, $\calO_{X}$ preserves finite products and pullbacks by admissible morphisms.
To complete the proof, it will suffice to show that if $\{ V_{\alpha} \rightarrow Y \}$ is
a collection of admissible morphisms which generate a covering sieve on an object
$Y \in \calT$, then the induced map $\coprod \calO_{X}(V_{\alpha}) \rightarrow \calO_{X}(Y)$
is an effective epimorphism in $\Shv( \calT_{/X}^{\adm})$. Let $U \in \calT_{/X}^{\adm}$, and let
$\eta \in \pi_0 \calO_{X}(Y)(U)$; we wish to show that, locally on $U$, the section
$\eta$ belongs to the image of $\pi_0 \calO_{X}(V_{\alpha})(U)$ for some index $\alpha$. Without loss of generality, we may suppose that $\eta$ arises from a map $U \rightarrow Y$ in $\calT$. Then
the fiber products $U_{\alpha} = V_{\alpha} \times_{Y} U$ form an admissible cover of $U$, and
each $\eta_{\alpha} = \eta | U_{\alpha} \in \pi_0 \calO_{X}(Y)(U_{\alpha})$ lifts to
$\pi_0 \calO_{X}(V_{\alpha})(U_{\alpha})$.
\end{proof}

Let $\calT$ be a pregeometry containing an object $X$. We will denote the
$\calT$-structured $\infty$-topos $( \Shv( \calT^{\adm}_{/X}), \calO_{X} )$ by
$\Spec^{\calT} X$. 

\begin{warning}
We now have two definitions for $\Spec^{\calT} X$: the first given in Remark \ref{suskine} using a geometric envelope for $\calT$, and the second via the direct construction above. We will show eventually that these definitions are (canonically) equivalent to one another; this is a consequence of Proposition \ref{und} below. In the meanwhile, we will use the second of these definitions.
\end{warning}

\begin{definition}
Let $\calT$ be a pregeometry. We will say that a $\calT$-scheme $(\calX, \calO_{\calX})$
is {\it smooth} if there exists a collection of objects $\{ U_{\alpha} \}$ of $\calX$ with the following properties:
\begin{itemize}
\item[$(a)$] The objects $\{ U_{\alpha} \}$ cover $\calX$: that is, the canonical map
$\coprod_{\alpha} U_{\alpha} \rightarrow 1_{\calX}$ is an effective epimorphism, where
$1_{\calX}$ denotes the final object of $\calX$. 
\item[$(b)$] For every index $\alpha$, the $\calT$-structured $\infty$-topos
$(\calX_{/U_{\alpha}}, \calO_{\calX} | U_{\alpha} )$ is equivalent to
$\Spec^{\calT} X_{\alpha}$ for some object $X_{\alpha} \in \calT$.
\end{itemize}
\end{definition}

By construction, the object $\calO_{X}(X)$ has a canonical point when evaluated at
$\id_{X} \in \calT^{\adm}_{/X}$; we will denote this point by $\eta_{X}$. Our goal is to prove that $\Spec^{\calT} X$ is universal among $\calT$-structured $\infty$-topoi equipped with such a point. More precisely, we will prove the following:

\begin{proposition}\label{und}
Let $\calT$ be a pregeometry containing an object $X$. Let
$( \calY, \calO_{\calY} )$ be an arbitrary $\calT$-structured $\infty$-topos, and let
$\Gamma: \calY \rightarrow \SSet$ denote the global sections functor
(that is, the functor co-represented by the final object $1_{\calY}$). Then
evaluation at the point $\eta_{X}$ induces a homotopy equivalence
$$ \theta: \bHom_{ \LGeo(\calT)}( \underline{X}, ( \calY, \calO_{\calY} ) )
\rightarrow \Gamma \calO_{\calY}(X).$$
\end{proposition}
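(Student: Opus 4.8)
The statement $\Spec^{\calT} X$ as constructed directly (via sheaves on $\calT^{\adm}_{/X}$) has the same universal property as the description via a geometric envelope (Remark \ref{suskine}); so a natural strategy is to essentially \emph{replay} the proof of Proposition \ref{proscoo} (the absolute version of the spectrum construction for geometries), making only the modifications needed to work with a pregeometry $\calT$ and products instead of a geometry $\calG$ and finite limits. Concretely, I would first reduce, as in the remark preceding the statement and in the analogous passage for $\Spec^{\calG}$, to the case where $U = 1_{\calY}$ is final in $\calY$; the general case then follows by replacing $(\calY, \calO_{\calY})$ with $(\calY_{/U}, \calO_{\calY}|U)$.

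\textbf{Key steps.} (1) Set up the analogue of the comparison diagram from the proof of Proposition \ref{proscoo}: form the $\infty$-category $\calI_0 = (\{X\} \times \Delta^1) \coprod_{\{X\} \times \{1\}} (\calT^{\adm}_{/X} \times \{1\})$ and its essential image $\calI$ in $\calT^{\adm}_{/X} \times \Delta^1$, and let $\calC$ be the full subcategory of $\Fun(\calT^{\adm}_{/X} \times \Delta^1, \calY)$ spanned by functors $F$ which are right Kan extensions of $F|\calI$ (equivalently: for every admissible $U \to X$, the square relating $F(U,0), F(U,1), F(X,0), F(X,1)$ is a pullback) and such that $F(X,0)$ is final in $\calY$. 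Using Proposition \toposref{lklk} as in the original proof, the restriction maps $\calC \to \Fun^0(\calI, \calY) \to \Fun^0(\calI_0, \calY)$ are trivial Kan fibrations, and a fiber-product construction identifies the relevant Kan complex with $\Gamma \calO_{\calY}(X)$. (2) The inclusion $\calT^{\adm}_{/X} \times \{0\} \hookrightarrow \calT^{\adm}_{/X} \times \Delta^1$ induces a functor $f \colon \calC_0 \to \Fun(\calT^{\adm}_{/X}, \calY)$; show its essential image lies in the full subcategory $\Fun^0(\calT^{\adm}_{/X}, \calY)$ of functors preserving finite limits and carrying admissible coverings to effective epimorphisms. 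Here the key point is that $\calO_{\calY}$ preserves \emph{finite products} and pullbacks by admissible morphisms (the pregeometry axioms), so that $U \mapsto \calO_{\calY}(U) \times_{\calO_{\calY}(X)} 1_{\calY}$ has the required exactness — this is where the argument differs from Proposition \ref{proscoo}, and one uses Lemma \ref{gooduse} / the universality of colimits exactly as there. (3) Fit $\theta$ into a homotopy pullback square over $\Fun^{\ast}(\Spec^{\calT} X \to \calY)$, observing via Proposition \toposref{natash} and Proposition \toposref{igrute} (plus the definition of the Grothendieck topology on $\calT^{\adm}_{/X}$, exactly as in the proof of Proposition \ref{proscoo}) that the base map $\Fun^{\ast}(\Shv(\calT^{\adm}_{/X}), \calY) \to \Fun^0(\calT^{\adm}_{/X}, \calY)$ is an equivalence. (4) Reduce to checking $\theta$ on fibers over each geometric morphism $g^{\ast} \colon \Shv(\calT^{\adm}_{/X}) \to \calY$: using the pregeometry analogue of Lemma \ref{blahann} (left Kan extension of $g^{\ast} \circ j$ along $\calT^{\adm}_{/X} \to \calT$, giving $g^{\ast} \circ \calO_X$), identify the fiber of $\theta$ with the inclusion of those natural transformations $g^{\ast} \circ \calO_X \to \calO_{\calY}$ which are local (morphisms of $\Struct^{\loc}_{\calT}(\calY)$) into all natural transformations $g^{\ast} \circ j \to$ (the relevant presheaf). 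That this inclusion is a homotopy equivalence — i.e., that the localization condition matches condition $(i)$ on the Kan-extension side — is a direct application of Proposition \ref{prespaz} with $\calT_0 = \calT^{\adm}_{/X}$, $\calT_1 = \calT$, exactly as in the original proof.

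\textbf{Main obstacle.} The genuinely new issue, compared to Proposition \ref{proscoo}, is step (2): verifying that the assignment $U \mapsto \calO_{\calY}(U) \times_{\calO_{\calY}(X)} 1_{\calY}$, restricted to $\calT^{\adm}_{/X}$, defines an object of $\Fun^0(\calT^{\adm}_{/X}, \calY)$ — in particular that it preserves \emph{finite limits} of $\calT^{\adm}_{/X}$, not merely products. Here one must use that in $\calT^{\adm}_{/X}$ all morphisms are admissible (Remark \ref{switchtop}/\ref{sagewise}) and that $\calO_{\calY}$ is a $\calT$-structure — so that finite limits in $\calT^{\adm}_{/X}$ are built from the terminal object $\id_X$ and pullbacks along admissible maps, both of which $\calO_{\calY}$ respects after the base change by $1_{\calY} \to \calO_{\calY}(X)$. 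I would also need to double-check the sheaf condition on $\calT^{\adm}_{/X}$ transports correctly through $L$, but that is handled precisely as in Proposition \ref{psyk} together with Proposition \ref{selwus}. Everything else is a faithful transcription of the proof of Proposition \ref{proscoo}, and I expect no further difficulties.
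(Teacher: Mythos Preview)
Your proposal is correct and follows essentially the same route as the paper's proof: the same $\calI_0$/$\calI$/$\calC$ setup, the same pullback diagram fitting $\theta$ over the equivalence $\theta'$ (via Propositions \toposref{natash} and \toposref{igrute}), and the same fiberwise reduction finished off by the pregeometry version of Lemma \ref{blahann} (the paper's Lemma \ref{blaha}) and Proposition \ref{prespaz}. Your identification of the one new point---that all morphisms in $\calT^{\adm}_{/X}$ are admissible, so that the relevant finite limits are exactly a final object plus admissible pullbacks, both preserved by the $\calT$-structure $\calO_{\calY}$---is exactly how the paper handles it (conditions $(a)$, $(b)$, $(c)$ there); no appeal to Lemma \ref{gooduse} is needed at that step.
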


Before giving the proof, we must establish a few preliminaries.

\begin{lemma}\label{tucca}
Let $f: \calC \rightarrow \calD$ be a functor between small $\infty$-categories, and let
$j_{\calC}: \calC \rightarrow \calP(\calC)$ and $j_{\calD}: \calD \rightarrow \calP(\calD)$ denote the 
Yoneda embeddings, and let $G$ denote the composition
$$ \calD \stackrel{j_{\calD}}{\rightarrow} \calP(\calD) \stackrel{\circ f}{\rightarrow} \calP(\calC).$$
Then there exists a canonical natural transformation 
$$ \alpha: j_{\calC} \rightarrow G \circ f $$
which exhibits $G$ as a left Kan extension of $j_{\calC}$ along $f$.
\end{lemma}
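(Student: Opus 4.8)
The plan is to exhibit $G = (- \circ f) \circ j_{\calD}$, together with the transformation $\alpha$, as a \emph{pointwise} left Kan extension, by combining the colimit formula for left Kan extensions (\cite{topoi}) with the fibration-theoretic description of mapping spaces. First I would construct $\alpha$ itself. Write $f^{\ast} \colon \calP(\calD) \to \calP(\calC)$ for the restriction functor $K \mapsto K \circ f$; by the universal property of presheaf $\infty$-categories, $f^{\ast}$ admits a left adjoint $L$ with $L \circ j_{\calC} \simeq j_{\calD} \circ f$. Since $G = f^{\ast} \circ j_{\calD}$, we obtain an equivalence $G \circ f \simeq f^{\ast} L j_{\calC}$, and I let $\alpha$ be the composite of the adjunction unit $j_{\calC} \to f^{\ast} L j_{\calC}$ with this equivalence; concretely $\alpha_{c} \colon \bHom_{\calC}(-, c) \to \bHom_{\calD}(f(-), f(c))$ is the map of presheaves induced by the action of $f$ on mapping spaces.

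Next I would verify the universal property via the pointwise criterion for left Kan extensions: it suffices to show that for each $d \in \calD$ the cocone determined by $\alpha$ exhibits $G(d) \in \calP(\calC)$ as a colimit of the diagram $\calC_{/d} \to \calC \xrightarrow{j_{\calC}} \calP(\calC)$, where $\calC_{/d} := \calC \times_{\calD} \calD_{/d}$. Since colimits in $\calP(\calC)$ are computed objectwise, evaluating at each $c \in \calC$ reduces this to the statement that the canonical map
$$ \colim_{(c', \phi \colon f(c') \to d) \in \calC_{/d}} \bHom_{\calC}(c, c') \longrightarrow \bHom_{\calD}(f(c), d) $$
is a homotopy equivalence. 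I would prove this by a chain of standard identifications. The functor $c' \mapsto \bHom_{\calC}(c, c')$ on $\calC_{/d}$ classifies the left fibration $\calC_{/d} \times_{\calC} \calC_{c/} \to \calC_{/d}$, so the colimit on the left is the realization $| \calC_{/d} \times_{\calC} \calC_{c/} | = | \calC_{c/} \times_{\calD} \calD_{/d} |$. The projection $\calC_{c/} \times_{\calD} \calD_{/d} \to \calC_{c/}$ is a right fibration, pulled back from $\calD_{/d} \to \calD$, classified by the presheaf on $\calC_{c/}$ carrying $(x, c \to x)$ to $\bHom_{\calD}(f(x), d)$; hence the realization equals the colimit of that presheaf over $(\calC_{c/})^{op}$. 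Finally $(c, \id_{c})$ is a terminal object of $(\calC_{c/})^{op}$, so the colimit is the value of the presheaf there, namely $\bHom_{\calD}(f(c), d)$. Unwinding the identifications shows the resulting equivalence is the one induced by $\alpha$.

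The step I expect to be the main obstacle is the bookkeeping: checking that the chain of fibration-theoretic identifications is genuinely compatible with the prescribed transformation $\alpha$, so that we obtain a left Kan extension of $j_{\calC}$ along $f$ \emph{realized by} $\alpha$ rather than merely an abstract equivalence $G \simeq \mathrm{Lan}_{f}(j_{\calC})$. If this proves awkward, an alternative route is to deduce the lemma from Lemma \ref{haberspaz} applied to the bifunctor $F(c, d) = \bHom_{\calD}(f(c), d)$: here the associated functor $f^{1}$ is $G$, while the associated functor $f^{0}$ is $f$ followed by the fully faithful Yoneda embedding $j' \colon \calD \to \calP(\calD^{op})^{op}$, so Lemma \ref{haberspaz} gives an equivalence $G \simeq \mathrm{Lan}_{j' \circ f}(j_{\calC}) \circ j'$. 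By transitivity of left Kan extensions and the fact that $\mathrm{Lan}_{j'}(H) \circ j' \simeq H$ for a fully faithful $j'$, this reduces to $G \simeq \mathrm{Lan}_{f}(j_{\calC})$, after which only the same compatibility-with-$\alpha$ verification remains.
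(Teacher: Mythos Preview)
Your proposal is correct and close in spirit to the paper's argument, but the endgame differs. Both constructions of $\alpha$ amount to the same thing: the map $\bHom_{\calC}(-,c) \to \bHom_{\calD}(f(-),f(c))$ induced by $f$ (the paper builds it explicitly at the level of simplicial categories, you obtain it from the unit of $f_{!} \dashv f^{\ast}$). Both then reduce to a pointwise check. The divergence is in how that check is carried out: the paper observes that $e_{C} \circ j_{\calC} \simeq \bHom_{\calC}(C,-)$ is itself a left Kan extension of $\ast$ along the inclusion $\{C\} \hookrightarrow \calC$, and invokes transitivity of left Kan extensions along the triangle $\Delta^{0} \xrightarrow{i} \calC \xrightarrow{f} \calD$ to reduce to the case where the source is a point, leaving only a ``simple calculation.'' You instead compute the pointwise colimit formula directly via the fibration-theoretic coYoneda identification, landing on the terminal object $(c,\id_{c})$ of $(\calC_{c/})^{op}$. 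Your route is slightly more hands-on but entirely self-contained; the paper's route is cleaner structurally but defers the content to the base case. The compatibility concern you flag is real in both approaches and is the part the paper suppresses as a ``simple calculation''; your explicit simplicial-level description of $\alpha$ and the fibration chain make this verification straightforward, so it is not a genuine obstacle. Your alternative via Lemma~\ref{haberspaz} is also sound (there is no circularity, since that lemma is proved earlier using only results from \cite{topoi}), though it leads to the same compatibility bookkeeping in the end.
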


\begin{proof}
Let $\overline{\calC} = \Sing | \sCoNerve[\calC] |$ be a fibrant replacement for
the simplicial category $\sCoNerve[\calC]$, and let $\overline{\calD}$ be defined likewise.
By definition, the Yoneda embedding $j_{\calC}$ classifies the composite map
$$u: 
\sCoNerve[\calC \times \calC^{op}] \rightarrow 
\sCoNerve[\calC] \times \sCoNerve[\calC]^{op} 
\rightarrow \overline{\calC} \times \overline{\calC}^{op} \stackrel{h}{\rightarrow} \Kan,$$
where $\Kan$ denotes the simplicial category of Kan complexes and the functor
$h$ is given by the formula $(X,Y) \mapsto \bHom_{ \overline{\calC} }( Y, X)$.
Similarly, the composition $G \circ f$ classifies the composition
$$v: \sCoNerve[\calC \times \calC^{op}] \rightarrow 
\sCoNerve[\calC] \times \sCoNerve[\calC]^{op} 
\rightarrow \overline{\calD} \times \overline{\calD}^{op} \stackrel{h'}{\rightarrow} \Kan,$$
where $h'$ is given by the formula $(X,Y) \mapsto \bHom_{ \overline{\calD}}( Y, X)$. 
The evident maps $\bHom_{\overline{\calC}}(X,Y) \rightarrow \bHom_{\overline{\calD}}(fX, fY)$
determine a natural transformation $u \rightarrow v$ of simplicial functors, which gives rise to a natural transformation $\alpha: j_{\calC} \rightarrow G \circ f$. 

We claim that $\alpha$ exhibits $G$ as a left Kan extension of $j_{\calC}$ along $f$. 
Since colimits in $\calP(\calC) = \Fun( \calC^{op}, \SSet)$ are computed pointwise, it will suffice
to show that for every object $C \in \calC$, the map $\alpha$ exhibits
$e_{fC} \circ j_{\calD} \circ f$ as a left Kan extension of $e_{C} \circ j_{\calC}$ along
$f$, where $e_{C}: \calP(\calC) \rightarrow \SSet$ and $e_{fC}: \calP(\calD) \rightarrow \SSet$
are given by evaluation at $C \in \calC$ and $fC \in \calD$, respectively. 

We have a commutative diagram
$$ \xymatrix{ & \calC \ar[dr]^{f} & \\
\Delta^0 \ar[ur]^{i} \ar[rr] & & \calD, }$$
where $i$ denotes the inclusion $\{ C \} \hookrightarrow \calC$. Since the formation of left 
Kan extensions is transitive, to prove the result for $f$, it will suffice to prove it for
the functors $f$ and $f \circ i$. In other words, we may assume that $\calC \simeq \Delta^0$ consists
of a single vertex $C$. The result now follows from a simple calculation.
\end{proof}

\begin{lemma}\label{blaha}
Let $\calT$ be a pregeometry containing an object $X$. Let
$j$ denote the composition
$$ \calT^{\adm}_{/X} \rightarrow \calP( \calT^{\adm}_{/X}) \stackrel{L}{\rightarrow} \Shv( \calT^{\adm}_{/X}),$$
where $L$ denotes a left adjoint to the inclusion $\Shv( \calT^{\adm}_{/X} ) \subseteq
\calP( \calT^{\adm}_{/X} )$. 
Then there is a canonical natural transformation $\alpha: j \rightarrow \calO_{X} | \calT^{\adm}_{/X}$,
which exhibits $\calO_{X}$ as a left Kan extension of $j$ along the projection
$\calT^{\adm}_{/X} \rightarrow \calT$.
\end{lemma}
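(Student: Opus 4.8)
The plan is to deduce this from Lemma \ref{tucca}, in exactly the way Lemma \ref{blahann} is deduced from Lemma \ref{haberspaz} in the setting of geometries. Write $\pi: \calT^{\adm}_{/X} \to \calT$ for the projection, so that $\calO_{X} | \calT^{\adm}_{/X}$ in the statement of the lemma means $\calO_{X} \circ \pi$. Let $j_{0}: \calT^{\adm}_{/X} \to \calP(\calT^{\adm}_{/X})$ and $j_{\calT}: \calT \to \calP(\calT)$ be the Yoneda embeddings, and let $L$ be a left adjoint to the inclusion $\Shv(\calT^{\adm}_{/X}) \subseteq \calP(\calT^{\adm}_{/X})$. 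The functor $\calP(\calT) \to \calP(\calT^{\adm}_{/X})$ appearing in the definition of $\calO_{X}$ is restriction along $\pi$ (indeed, for $Y \in \calT$ the object $\calO_{X}(Y)$ is, by construction, the sheafification of the presheaf $(U \to X) \mapsto \bHom_{\calT}(U,Y)$, which is the restriction of $j_{\calT}(Y)$ along $\pi$); thus, unwinding the definition,
$$ \calO_{X} = L \circ G \quad\text{where}\quad G := \bigl( \calT \xrightarrow{\ j_{\calT}\ } \calP(\calT) \xrightarrow{\ \circ\,\pi\ } \calP(\calT^{\adm}_{/X}) \bigr), $$
while $j = L \circ j_{0}$ in the notation of the lemma.

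First I would apply Lemma \ref{tucca} to the functor $\pi$ (the relevant $\infty$-categories are essentially small, since $\calT$ is, so one may pass to small models): it supplies a canonical natural transformation $\alpha_{0}: j_{0} \to G \circ \pi$ which exhibits $G$ as a left Kan extension of $j_{0}$ along $\pi$. Then I would push this forward along $L$. Being a left adjoint, $L$ preserves all small colimits, hence in particular the colimits that compute the left Kan extension along $\pi$ (each indexed by the essentially small comma $\infty$-category $\calT^{\adm}_{/X} \times_{\calT} \calT_{/Y}$, $Y \in \calT$); by the standard compatibility of left Kan extensions with colimit-preserving functors, used exactly as in the proofs of Lemmas \ref{sillyfunt} and \ref{blahann}, it follows that $L \circ G$ is a left Kan extension of $L \circ j_{0}$ along $\pi$, with structural natural transformation $L(\alpha_{0})$. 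Since $L \circ G = \calO_{X}$ and $L \circ j_{0} = j$, we may take $\alpha = L(\alpha_{0})$, and the lemma follows.

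There is no genuine obstacle here; the content of the lemma is carried entirely by Lemma \ref{tucca}. The only points to spell out in a full write-up are the two bookkeeping identifications above — $\calO_{X} = L \circ G$ and $j = L \circ j_{0}$, both immediate from the definitions — together with the general fact that composing a left Kan extension diagram with a functor that preserves the pertinent colimits again yields a left Kan extension diagram. If one wished to avoid citing Lemma \ref{tucca}, one could instead reprove it in the present situation by reducing, as there, to a pointwise computation over a single object of $\calT$; but that is strictly more work, and I would not pursue it.
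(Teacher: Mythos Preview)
Your proposal is correct and follows exactly the paper's approach: the paper's proof is the single sentence ``Because the localization functor $L$ preserves small colimits (and therefore left Kan extensions), this follows immediately from Lemma \ref{tucca}.'' You have simply unpacked the identifications $\calO_{X} = L \circ G$ and $j = L \circ j_{0}$ that make this citation go through.
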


\begin{proof}
Because the localization functor $L$ preserves
small colimits (and therefore left Kan extensions), this follows immediately from Lemma \ref{tucca}.
\end{proof}

\begin{proof}[Proof of Proposition \ref{und}]
Let $\calO_0$ denote the composition
$$ \calT^{\adm}_{/X} \rightarrow \calT \stackrel{\calO_{\calY}}{\rightarrow} \calY.$$
Let $\calI_0$ denote the simplicial set
$$ ( \{X\} \times \Delta^1 ) \coprod_{ \{X \} \times \{1\} } ( \calT^{\adm}_{/X} \times \{1\} ),$$
and let $\calI$ denote the essential image of $\calI_0$ in $\calT^{\adm}_{/X} \times \Delta^1.$
Since the inclusion $\calI_0 \subseteq \calI$ is a categorical equivalence, the
induced map
$$ \Fun( \calI, \calY) \rightarrow \Fun( \calI_0, \calY)$$
is a trivial Kan fibration. 

Let $\calC$ denote the
the full subcategory of $\Fun( \calT^{\adm}_{/X} \times \Delta^1, \calY)$ spanned by those functors
$F$ which satisfy the following conditions:
\begin{itemize}
\item[$(i)$] The functor $F$ is a right Kan extension of $F | \calI$. More concretely, 
for every admissible morphism $U \rightarrow X$, the diagram
$$ \xymatrix{ F(U, 0) \ar[r] \ar[d] & F( U, 1) \ar[d] \\
F(X,0) \ar[r] & F(X,1) }$$
is a pullback diagram in $\calY$.
\item[$(ii)$] The object $F(X, 0)$ is final in $\calY$.
\end{itemize}

Using Proposition \toposref{lklk}, we deduce that the forgetful
functors
$$ \calC \rightarrow \Fun^{0}( \calI, \calY) \rightarrow \Fun^{0}( \calI_0, \calY)$$
are trivial Kan fibrations, where $\Fun^{0}( \calI, \calY)$ and $\Fun^{0}( \calI_0, \calY)$
denote the full subcategories of $\Fun( \calI, \calY)$ and $\Fun( \calI_0, \calY)$ spanned
by those functors $F$ which satisfy condition $(ii)$. Form a pullback diagram
$$ \xymatrix{ \calC_0 \ar[r] \ar[d] & Z \ar[d] \ar[r] & \{ \calO_0 \} \ar[d] \\
\calC \ar[r] & \Fun^{0}( \calI_0, \calY) \ar[r] & \Fun( \calT^{\adm}_{/X} \times \{1\}, \calY). }$$
Then $Z$ is a Kan complex, which we can identify with the space
$\Gamma \calO_{\calY}(X)$. The projection map $\calC_0 \rightarrow Z$ is a trivial Kan fibration, so that $\calC_0$ is also a Kan complex which we can identify with $\Gamma \calO_{\calY}(X)$.  

The inclusion $\calT^{\adm}_{/X} \times \{0\} \subseteq \calT^{\adm}_{/X} \times \Delta^1$
induces a functor $\psi: \calC_0 \rightarrow \Fun( \calT^{\adm}_{/X}, \calY )$. In terms of the identification
above, we can view this functor as associating to each global section $1_{\calY} \rightarrow \calO_{\calY}(X)$ the functor
$$U \mapsto \calO_{\calY}(U) \times_{ \calO_{\calY}(X) } 1_{\calY}.$$
It follows that the essential image of $\psi$ belongs to $\Fun^{(0)}( \calT^{\adm}_{/X}, \calY)$, where
$\Fun^{(0)}( \calT^{\adm}_{/X}, \calY) \subseteq \Fun( \calT^{\adm}_{/X}, \calY)$ is the full subcategory
spanned by those functors $F$ which satisfy the following conditions:
\begin{itemize}
\item[$(a)$] The functor $F$ carries $X$ to a final object of $\calY$.
\item[$(b)$] The functor $F$ preserves pullback squares
(since every pullback square in $\calT^{\adm}_{/X}$ gives rise to an admissible pullback
square in $\calT$). 
\item[$(c)$] For every covering $\{ U_{\alpha} \rightarrow V \}$ of an object
$V \in \calT_{/X}$, the induced map $\coprod F(U_{\alpha}) \rightarrow F(V)$ is an effective
epimorphism in $\calY$.
\end{itemize}

The map $\theta$ fits into a homotopy pullback diagram
$$ \xymatrix{ \bHom_{ \LGeo(\calT)}( \underline{X}, (\calY, \calO_{\calY}) ) \ar[r]^-{\theta} \ar[d] & \calC_0 \ar[d]^{\psi} \\
\Fun^{(0)}( \Shv( \calT^{\adm}_{/X}), \calY) \ar[r]^-{\theta'} & \Fun^{(0)}( \calT^{\adm}_{/X}, \calY).}$$
Here $\Fun^{(0)}( \Shv( \calT^{\adm}_{/X} ), \calY)$ denotes the full subcategory of
$\Fun( \Shv(\calT^{\adm}_{/X} ), \calY)$ spanned by those functors which preserve small colimits and finite limits, and $\theta'$ is induced by composition with the map
$$ \calT^{\adm}_{/X} \stackrel{j}{\rightarrow} \calP( \calT^{\adm}_{/X} ) 
\stackrel{L}{\rightarrow} \Shv( \calT^{\adm}_{/X} ),$$
where $j$ is the Yoneda embedding and $L$ is a left adjoint to the inclusion
$\Shv( \calT^{\adm}_{/X} ) \subseteq \calP( \calT^{\adm}_{/X} )$. Using
Propositions \toposref{natash} and \toposref{igrute}, we deduce that $\theta'$ is an equivalence
of $\infty$-categories. Consequently, to show that $\theta$ is a homotopy equivalence, it will suffice to show
that it induces a homotopy equivalence after passing to the fiber over every geometric morphism
$f^{\ast}: \Shv( \calT^{\adm}_{/X} ) \rightarrow \calY$. In other words, we must show that
the canonical map
$$ \bHom_{ \Struct^{\loc}_{\calT}( \calY )}( f^{\ast} \circ \calO_{X}, \calO_{\calY} )
\rightarrow \bHom_{ \Fun'( \calT^{\adm}_{/X}, \calY) }( i \circ f^{\ast} \circ \calO_{X}, i \circ \calO_{\calY})$$
is a homotopy equivalence, where $i: \calT^{\adm}_{/X} \rightarrow \calT$ denotes the projection
and $\Fun'( \calT^{\adm}_{/X}, \calY)$ denotes the subcategory of
$\Fun( \calT^{\adm}_{/X}, \calY)$ spanned by those morphisms
which correspond to functors $\calT^{\adm}_{/X} \times \Delta^1 \rightarrow \calY$ satisfying $(i)$.
This follows immediately from Lemmas \ref{blaha} and Proposition \ref{prespaz}.
\end{proof}

\section{Examples of Pregeometries}\label{app6}

In \S \ref{exzar} and \S \ref{exet}, we described some examples of $0$-truncated geometries $\calG$ and their relationship with classical algebraic geometry. In this section, we will see that each of these geometries arises as the $0$-truncated geometric envelope of a pregeometry $\calT$.
Consequently, we can identify the $\infty$-category $\Sch(\calG)$ with the full subcategory of
$\Sch(\calT)$ spanned by the $0$-truncated $\calT$-schemes. We can then view the
entire $\infty$-category $\Sch(\calT)$ as ``derived version'' of the theory of $\calG$-schemes.

As the simplest instance of the above paradigm, we consider a commutative ring $k$ and
let $\calG(k)$ denote the {\em discrete} geometry consisting of affine $k$-schemes of finite presentation. Then $\Ind( \calG(k)^{op} )$ can be identified with (the nerve of) the category
$\Comm_{k}$ of commutative $k$-algebras. We can identify $\calG(k)$ with the $0$-truncated geometric envelope of the full subcategory $\calT \subseteq \calG(k)$ spanned by the affine spaces over $k$. The (discrete) pregeometry $\calT$ has a geometric envelope $\calG$, so that
$\SCR_{k} = \Ind( \calG^{op} )$ is a presentable $\infty$-category with $\tau_{\leq 0} \SCR_{k} \simeq \Nerve( \Comm_{k} )$. We can therefore view objects of $\SCR_{k}$ as ``generalized'' $k$-algebras.
In fact, we can be quite a bit more precise: objects of $\SCR_{k}$ can be identified with {\em simplicial commutative $k$-algebras}. We will review the theory of simplicial commutative $k$-algebras in
\S \ref{escr}.

The $\infty$-category $\SCR_{k}$ admits various Grothendieck topologies, generalizing several of the natural topologies on the ordinary category $\Comm_{k}$ of commutative $k$-algebras. In \S \ref{derzar} and \S \ref{juet} we will consider the Zariski and \etale topologies. To each of these topologies we can associate a pregeometry $\calT$, which gives rise to the notion of a $\calT$-scheme. In the case of the Zariski topology, we obtain the theory of
derived $k$-schemes outlined in the introduction to this paper.

Using a similar method, we can produce derived versions of other classical geometric theories.
In \S \ref{secondcomp} we will outline a theory of derived complex analytic spaces (we will introduce a derived theory of rigid analytic spaces in \cite{ellipticloop}. Finally, in \S \ref{derdiff} we discuss a derived version of the theory of smooth manifolds, which is essentially equivalent to the theory described \cite{spivak}.

\subsection{Simplicial Commutative Rings}\label{escr}

To pass from classical algebraic geometry to derived algebraic geometry, we must replace the category of commutative rings with some homotopy-theoretic generalization. In this section, we will describe one such generalization: the $\infty$-category $\SCR$ of simplicial commutative rings.

\begin{definition}\label{hutip}
Let $k$ be a commutative ring. We let
$\Poly_{k}$ denote the full subcategory of $\Comm_{k}$ spanned by those commutative $k$-algebras of the form $k[x_1, \ldots, x_n]$, for $n \geq 0$. We let $\SCR_{k}$ denote the $\infty$-category
$\calP_{\Sigma}( \Nerve \Poly_{k} )$ (see \S \toposref{stable11} for an explanation of this notation, or Remark \ref{humba} for a summary).
We will refer to $\SCR_{k}$ as {\it the $\infty$-category of simplicial commutative $k$-algebras}. In the special case where $k$ is the ring $\Z$ of integers, we will denote $\SCR_{k}$ simply by $\SCR$.
\end{definition}

\begin{remark}\label{humba}
Let $k$ be a commutative ring.
In view of Proposition \toposref{smearof}, the $\infty$-category $\SCR_{k}$ can be characterized up to equivalence by the following properties:
\begin{itemize}
\item[$(1)$] The $\infty$-category $\SCR_{k}$ is presentable.
\item[$(2)$] There exists a coproduct-preserving, fully faithful functor
$\phi: \Nerve(\Poly_{k}) \rightarrow \SCR_{k}$.
\item[$(3)$] The essential image of $\phi$ consists of compact, projective objects of $\SCR_{k}$ which generate $\SCR_{k}$ under sifted colimits.
\end{itemize}
\end{remark}

Our choice of terminology is motivated by the following observation, which follows immediately from Corollary \toposref{smokerr}:

\begin{itemize}
\item[$(\ast)$] Let $\bfA$ be the ordinary category of simplicial commutative $k$-algebras, regarded
as a simplicial model category in the usual way (see Proposition \toposref{sutcoat} or \cite{homotopicalalgebra}), and let $\bfA^{\degree}$ denote the full subcategory spanned by the fibrant-cofibrant objects. Then there is a canonical equivalence of $\infty$-categories
$\Nerve( \bfA^{\degree}) \rightarrow \SCR_{k}$.
\end{itemize}

\begin{remark}\label{humple}
Using $(\ast)$, we deduce that the $\infty$-category of discrete objects of $\SCR_{k}$
is canonically equivalent with (the nerve of) the category $\Comm_{k}$ of commutative $k$-algebras. We will generally abuse notation and not distinguish between commutative $k$-algebras and the corresponding discrete objects of $\SCR_{k}$. In particular, we will view the
polynomial algebras $k[x_1, \ldots, x_n]$ as objects of $\SCR_{k}$; these objects constitute a class of compact, projective generators for $\SCR_{k}$.
\end{remark}

\begin{remark}\label{tweeze}
The forgetful functor from $\bfA$ to simplicial sets determines a functor $\theta: \SCR_{k} \rightarrow \SSet$.
This functor is also given more directly by the composition
$$ \SCR_{k} = \calP_{\Sigma}( \Nerve \Poly_{k}) \subseteq
\Fun( (\Nerve \Poly_k)^{op}, \SSet) \rightarrow \SSet,$$
where the final map is given by evaluation on the object $k[x] \in \Poly_{k}$. 
We will often abuse notation by identifying an object $A \in \SCR_{k}$ with its image under the functor
$\theta$. In particular, to every simplicial commutative $k$-algebra $A$ we can associate a collection of homotopy groups $\pi_{\ast} A$ (the additive structure on $A$ implies that the homotopy groups
$\pi_{\ast}(A,x)$ do not depend on the choice of a basepoint $x \in A$; unless otherwise specified, we take the base point to be the additive identity in $A$).

Note that the functor $\theta$ is {\em conservative}: a map $f: A \rightarrow B$ of simplicial
commutative $k$-algebras is an equivalence if and only if $\theta(f)$ is a homotopy equivalence of spaces. This follows from the observation that every object of $\Poly_{k}$ can be obtained as the coproduct of a finite number of copies of $k[x]$.
\end{remark}

\begin{remark}\label{mcguin}
The geometric realization functor from simplicial sets to (compactly generated) topological spaces preserves products, and therefore carries simplicial commutative $k$-algebras to topological commutative $k$-algebras. Combining this observation with $(\ast)$, we can extract from
every object $A \in \SCR_{k}$ an underlying topological space equipped with a commutative $k$-algebra structure, such that the addition and multiplication operations are given by continuous maps.
\end{remark}

\begin{remark}\label{latersense}
Let us analyze the structure of $\pi_{\ast} A$, where $A$ is an object of $\SCR_{k}$. We will think of
$A$ as given by a topological commutative $k$-algebra (see Remark \ref{mcguin}). The addition map $+: A \times A \rightarrow A$ preserves the base point of $A$, and therefore determines a commutative group structure on each homotopy group $\pi_{i} A$. These group structures coincide with the usual group structure for $i > 0$.

We can identify $\pi_{n} A$ with the set of homotopy classes of maps of pairs $([0,1]^{n}, \bd [0,1]^{n}) \rightarrow (A,0)$. Given a pair of elements $x \in \pi_{m} A$,
$y \in \pi_{n} A$, we can use the multiplication in $A$ to extract a map
$$( [0,1]^{m+n}, \bd [0,1]^{m+n}) \rightarrow (A,0),$$
which represents an element $xy \in \pi_{m+n} A$. This multiplication on $\pi_{\ast} A$ is additive in each variable, associative, and commutative in the graded sense: we have
we have $xy = (-1)^{mn} yx \in \pi_{m+n} A$ (the sign results from the fact that the natural map from
the sphere $S^{m+n}$ to itself given by permuting the coordinates has degree $(-1)^{mn}$).
Consequently, $\pi_{\ast} A$ has the structure of a graded commutative ring. In particular,
$\pi_0 A$ has the structure of a commutative ring, and each $\pi_{i} A$ the structure of a module over $\pi_0 A$.

Let us fix a pair of points $a,b \in A$, and analyze the map
$$ \phi: \pi_{n}(A,a) \times \pi_{n}(A,b) \rightarrow \pi_{n}(A,ab)$$
induced by the multiplication in $A$. If $n=0$, this map is simply given by the multiplication
on $\pi_0 A$. If $n > 0$, then this map is necessarily a group homomorphism. We therefore have
$$ \phi(x,y) = \phi(x,0) + \phi(0,y) = \psi(b) x + \psi(a) y,$$
where $\psi: A \rightarrow \pi_0 A$ is the map which collapses every path component of $A$ to a point.
\end{remark}

\begin{remark}\label{bull}
Let $A$ be a simplicial commutative $k$-algebra. The homotopy groups $\pi_{\ast} A$ can be identified with the homotopy groups of the mapping space
$\bHom_{ \SCR_{k}}( k[x], A)$. In particular, we have a canonical bijection
$$ \Hom_{ \h{\SCR_{k}}}( k[x], A) \rightarrow \pi_0 A$$
given by applying the functor $\pi_0$ and evaluating at the element $x$.
More generally, evaluation separately on each variable induces a homotopy equivalence
$$ \bHom_{\SCR_{k}}( k[x_1, \ldots, x_n], A) \simeq \bHom_{ \SCR_{k}}( k[x], A)^{n}$$
and a bijection $\Hom_{ \h{\SCR_{k}}}( k[x_1, \ldots, x_n], A) \simeq (\pi_0 A)^{n}$.
\end{remark}

\begin{remark}\label{snapple}
Let $f: k[x_1, \ldots, x_n] \rightarrow k[y_1, \ldots, y_m]$ be a map of polynomial rings, given by
$$ x_i \mapsto f_{i}(y_1, \ldots, y_m).$$
For any simplicial commutative ring $A$, composition with $f$ induces a map of spaces
$$ \bHom_{ \SCR_{k}}( k[y_1, \ldots, y_m], A) \rightarrow \bHom_{ \SCR_{k}}( k[x_1, \ldots, x_n], A).$$
Passing to homotopy groups at some point $\eta \in \bHom_{\SCR_{k}}( k[y_1, \ldots, y_m], A)$, we get a map
$(\pi_{\ast} A)^{m} \rightarrow (\pi_{\ast} A)^{n}$. For $\ast = 0$, this map is given by
$$(a_1, \ldots, a_m) \mapsto ( f_1( a_1, \ldots, a_m), \ldots, f_n(a_1, \ldots, a_m)).$$
For $\ast > 0$, it is given instead by the action of the Jacobian matrix
$[\frac{\bd f_i}{\bd y_j}]$ (which we regard as a matrix taking values in $\pi_0 A$ using the
morphism $\eta$). This follows from repeated application of Remark \ref{latersense}.
\end{remark}

The following result is an immediate consequence of Proposition \toposref{surottt} and its proof:

\begin{proposition}\label{spakk}
Let $j: \Nerve \Poly_{k} \rightarrow \SCR_{k}$ denote the Yoneda embedding. Let
$\calC$ be an $\infty$-category which admits small sifted colimits, and
$\Fun_{\Sigma}( \SCR_{k}, \calC)$ the full subcategory of $\Fun(\SCR_{k}, \calC)$ spanned by those functors which preserve sifted colimits. Then:
\begin{itemize}
\item[$(1)$] Composition with $j$ induces an equivalence of $\infty$-categories
$$ \Fun_{\Sigma}( \SCR_{k}, \calC) \rightarrow \Fun( \Nerve \Poly_{k}, \calC).$$
\item[$(2)$] A functor $F: \SCR_{k} \rightarrow \calC$ belongs to $\Fun_{\Sigma}( \SCR_{k}, \calC)$ if and only if $F$ is a left Kan extension of $F \circ j$ along $j$.
\item[$(3)$] Suppose $\calC$ admits finite coproducts, and let $F: \SCR_{k} \rightarrow \calC$ preserve sifted colimits. Then $F$ preserves finite coproducts if and only if $F \circ j$ preserves finite coproducts.
\end{itemize}
\end{proposition}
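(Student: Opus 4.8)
The statement is essentially a combination of the universal property of $\calP_{\Sigma}$ (namely $\SCR_{k} = \calP_{\Sigma}(\Nerve\Poly_{k})$) with standard facts about left Kan extensions along the Yoneda embedding. I would derive all three parts from the results already cited in the excerpt, principally Proposition \toposref{surottt} (and Corollary \toposref{smokerr} / Proposition \toposref{smearof} for the identification of $\SCR_{k}$). The key point is that $\Nerve\Poly_{k}$ is a small $\infty$-category with finite coproducts, its image under $j$ consists of compact projective generators of $\SCR_{k}$, and $\SCR_{k}$ is generated under sifted colimits by that image (Remark \ref{humba}).

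First I would prove $(1)$. Since $\SCR_{k} = \calP_{\Sigma}(\Nerve\Poly_{k})$, the universal property of $\calP_{\Sigma}$ (Proposition \toposref{surottt}) says precisely that for any $\infty$-category $\calC$ admitting small sifted colimits, restriction along $j$ induces an equivalence from the full subcategory of $\Fun(\SCR_{k},\calC)$ spanned by sifted-colimit-preserving functors onto $\Fun(\Nerve\Poly_{k},\calC)$. One must also invoke the fact that $\Nerve\Poly_{k}$ is idempotent complete --- or rather that every object of $\calP_{\Sigma}(\Nerve\Poly_{k})$ is a sifted colimit of objects in the image of $j$ --- so that there is no discrepancy between $\calP_{\Sigma}$ of $\Nerve\Poly_{k}$ and $\calP_{\Sigma}$ of its idempotent completion; but since $\Poly_{k}$ is a genuine category of polynomial rings this is automatic. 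So $(1)$ is a direct citation.

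Next, $(2)$: a functor $F\colon \SCR_{k}\to\calC$ preserves sifted colimits if and only if it is a left Kan extension of $F\circ j$ along $j$. The ``only if'' direction follows because the left Kan extension of $F\circ j$ along $j$ agrees with $F$ on the generators and both preserve sifted colimits, hence agree everywhere (every object of $\SCR_{k}$ being a sifted colimit of generators); alternatively this is part of the statement of Proposition \toposref{surottt}, which characterizes the essential image of the restriction functor in $(1)$ exactly as the left Kan extensions. The ``if'' direction requires knowing that $j$ is ``fully faithful with image consisting of compact projective objects'', so that left Kan extension along $j$ automatically lands in the sifted-colimit-preserving functors --- again this is built into the cited proposition. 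For $(3)$: assume $\calC$ has finite coproducts and $F$ preserves sifted colimits. If $F$ preserves finite coproducts then trivially so does $F\circ j$. Conversely, suppose $F\circ j$ preserves finite coproducts. Fix $A, B\in\SCR_{k}$; write each as a sifted colimit of objects $A_{\alpha}$, $B_{\beta}$ in the image of $j$. In an $\infty$-category with all small colimits, the coproduct $A\sqcup B = \colim_{\alpha}\colim_{\beta}(A_{\alpha}\sqcup B_{\beta})$, and this double colimit is sifted (a product of sifted categories is sifted, and one can reduce to a single sifted diagram). Since $F$ preserves sifted colimits and $F\circ j$ preserves the coproducts $A_{\alpha}\sqcup B_{\beta}$, we get $F(A\sqcup B)\simeq \colim F(A_{\alpha}\sqcup B_{\beta})\simeq \colim (F(A_{\alpha})\sqcup F(B_{\beta})) \simeq F(A)\sqcup F(B)$, using that finite coproducts commute with sifted colimits in $\calC$ as well (which holds since $F$'s target $\calC$ has finite coproducts and we only need the relevant colimits to exist --- here one should be slightly careful and phrase things using that $\SCR_{k}$ has all small colimits, so the double colimit genuinely exists there, and $F$ preserves the sifted part). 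This last point --- making the ``finite coproducts commute with sifted colimits'' bookkeeping precise without assuming $\calC$ is cocomplete --- is the only place requiring care, and it is exactly the content of the proof of Proposition \toposref{surottt}(3), so I would simply reference that proof. In short, the whole statement is a transcription of Proposition \toposref{surottt} and its proof into the notation $\SCR_{k} = \calP_{\Sigma}(\Nerve\Poly_{k})$, and I expect no real obstacle beyond citing it correctly.
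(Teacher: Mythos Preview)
Your proposal is correct and takes exactly the same approach as the paper: the paper's entire proof is the single sentence ``The following result is an immediate consequence of Proposition \toposref{surottt} and its proof,'' which is precisely the citation you identified. Your additional unpacking of how parts $(1)$--$(3)$ follow from that proposition is accurate but more detailed than what the paper provides.
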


\begin{remark}
All of the results of this section can be generalized without essential change to the case where $k$ is a simplicial commutative ring, not assumed to be discrete. This does not really lead to any additional generality, since the universal base ring $k = \Z$ is already discrete.
\end{remark}

The $\infty$-category $\SCR_{k}$ is closely related to the $\infty$-category $(\EInfty)^{\conn}_{k/}$
of connective $E_{\infty}$-algebras over $k$ (here we identify $k$ with the corresponding discrete $E_{\infty}$-rings). To see this, we recall that full subcategory of $( \EInfty)^{\conn}_{k/}$ spanned by the
discrete objects is equivalent to the (nerve of the) category $\Comm_{k}$ of commutative $k$-algebras, via the functor $A \mapsto \pi_0 A$ (this follows immediately from Proposition \symmetricref{umberhilt}). Choosing a homotopy inverse to this equivalence and restricting to polynomial algebras over $k$, we obtain a functor
$$ \theta_0: \Nerve \Poly_{k} \rightarrow ( \EInfty)^{\conn}_{k/}.$$
Using Proposition \ref{spakk}, we deduce that $\theta_0$ is equivalent to a composition
$$ \Nerve \Poly_{k} \rightarrow \SCR_{k} \stackrel{\theta}{\rightarrow} ( \EInfty)^{\conn}_{k/},$$
where the functor $\theta$ preserves small sifted colimits.

\begin{proposition}\label{skillmon}
Let $k$ be a commutative ring, and let $\theta: \SCR_{k} \rightarrow (\EInfty)^{\conn}_{k/}$ be
the functor defined above. Then:
\begin{itemize}
\item[$(1)$] The functor $\theta$ preserves small limits and colimits.
\item[$(2)$] The functor $\theta$ is conservative.
\item[$(3)$] The functor $\theta$ admits both left and right adjoints.
\item[$(4)$] If $k$ is a $\Q$-algebra, then $\theta$ is an equivalence of $\infty$-categories.
\end{itemize}
\end{proposition}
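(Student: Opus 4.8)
The plan is to prove the four assertions about the comparison functor $\theta \colon \SCR_{k} \to (\EInfty)^{\conn}_{k/}$ in sequence, using the sifted-colimit description of $\SCR_{k}$ from Proposition \ref{spakk} as the main technical tool.

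First I would treat $(1)$. For colimits, recall that $\theta$ preserves sifted colimits by construction, so it remains only to check that $\theta$ preserves finite coproducts; by part $(3)$ of Proposition \ref{spakk} this reduces to checking that $\theta_0 = \theta \circ j$ preserves finite coproducts on $\Nerve \Poly_{k}$. This is concrete: the coproduct of $k[x_1,\ldots,x_m]$ and $k[y_1,\ldots,y_n]$ in $\SCR_{k}$ is $k[x_1,\ldots,x_m,y_1,\ldots,y_n]$ (a compact projective generator), and the coproduct in $(\EInfty)^{\conn}_{k/}$ of the corresponding discrete $E_\infty$-algebras is computed by the relative tensor product $k[x_{\bullet}] \otimes_{k} k[y_{\bullet}]$, which is again discrete and equals the polynomial algebra on the union of the variables (since $\operatorname{Tor}$ vanishes for polynomial algebras over $k$); so $\theta_0$ preserves finite coproducts. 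Preservation of all small colimits then follows since every small colimit is built from finite coproducts and sifted colimits (Corollary \toposref{smokerr}-style decomposition). For limits: $\theta$ is a right adjoint — see $(3)$ below — hence preserves all small limits; alternatively one checks directly that the forgetful functors to $\SSet$ on both sides detect limits and are compatible via $\theta$ (Remark \ref{tweeze}).

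Next, $(2)$: conservativity. Here the cleanest argument uses the forgetful functor to spaces. By Remark \ref{tweeze} the functor $\SCR_{k} \to \SSet$, given by evaluation at $k[x]$, is conservative; and the corresponding forgetful functor $(\EInfty)^{\conn}_{k/} \to \SSet$ is also conservative (detection of equivalences on homotopy groups). Since $\theta_0(k[x])$ is the $E_\infty$-algebra whose underlying space is that of $k[x]$, and $\theta$ preserves sifted colimits while $k[x]$ generates $\SCR_{k}$ under sifted colimits, the square relating $\theta$ to the two forgetful functors commutes; a map $f$ in $\SCR_{k}$ with $\theta(f)$ an equivalence therefore has underlying map in $\SSet$ an equivalence, hence is an equivalence. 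Part $(3)$: $\theta$ preserves small colimits by $(1)$ and both $\infty$-categories are presentable (Remark \ref{humba}; presentability of $(\EInfty)^{\conn}_{k/}$ from \cite{symmetric}), so by the adjoint functor theorem (Corollary \toposref{adjointfunctor}) $\theta$ admits a right adjoint; and $\theta$ preserves small limits by $(1)$ plus accessibility, so it admits a left adjoint as well.

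Finally, $(4)$: when $k$ is a $\Q$-algebra, $\theta$ is an equivalence. The strategy is to exhibit a quasi-inverse, or equivalently to show $\theta$ is fully faithful and essentially surjective. Both sides are generated under sifted colimits by compact projective objects — the polynomial algebras on one side, and (by rational homotopy theory / the HKR-type result) the free connective $E_\infty$-$k$-algebras $k\{x_1,\ldots,x_n\}$ on the other; and over a $\Q$-algebra the free $E_\infty$-algebra on a polynomial generator agrees with the free simplicial commutative algebra, i.e. $\theta_0$ lands among (and detects) these compact projective generators and induces an equivalence between the full subcategories of compact projectives. Since $\theta$ preserves sifted colimits and carries a set of compact projective generators to a set of compact projective generators inducing equivalences on mapping spaces between generators, Proposition \ref{spakk} (applied on both sides) upgrades this to an equivalence $\SCR_{k} \simeq (\EInfty)^{\conn}_{k/}$. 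The main obstacle I expect is precisely this last step: verifying that rationally the free $E_\infty$-algebra functor agrees with the free simplicial-commutative-algebra functor on generators (that the $E_\infty$-operad and the commutative operad are rationally equivalent as far as free algebras on projectives are concerned), and that this comparison is compatible with the mapping-space computations of Remark \ref{bull} — this requires invoking the rational formality / rational triviality of symmetric group homology with $\Q$-coefficients, and care is needed because the statement genuinely fails without the $\Q$-algebra hypothesis.
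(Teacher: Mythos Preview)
Your overall strategy matches the paper's, but there is a logical circularity in how you handle limits. You defer preservation of limits in $(1)$ to the claim that $\theta$ is a right adjoint, pointing to $(3)$; but your proof of the existence of a left adjoint in $(3)$ invokes the adjoint functor theorem using that $\theta$ preserves limits from $(1)$. So as written, the argument for limits is circular. The paper breaks this loop by proving preservation of limits and conservativity together, \emph{directly}: it shows that the forgetful functor $\phi\colon (\EInfty)^{\conn}_{k/} \to \SSet$ is conservative and limit-preserving, and then identifies $\phi \circ \theta$ with the evaluation-at-$k[x]$ functor $\psi'\colon \SCR_k \to \SSet$, which obviously preserves limits and is conservative.

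Your parenthetical ``alternative'' is exactly this argument, and in $(2)$ you gesture at the required identification $\phi \circ \theta \simeq \psi'$, but one non-obvious ingredient is missing. To pass from agreement on $\Nerve \Poly_k$ to agreement on all of $\SCR_k$ via Proposition~\ref{spakk}, you need both functors to preserve sifted colimits; for $\psi'$ this is clear, but for $\phi \circ \theta$ you need that the forgetful functor $\phi$ from connective $E_\infty$-$k$-algebras to spaces preserves sifted colimits. This is true but not trivial --- the paper assembles it from a chain of results about forgetful functors through $\Mod_k^{\conn}$ and connective spectra --- and you should cite or prove it rather than leave it implicit. Once that is in place, your argument for $(2)$, $(3)$, and $(4)$ (the last via vanishing of $\HH_i(\Sigma_n; M^{\otimes n})$ over $\Q$, exactly as in the paper) goes through.
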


\begin{proof}
We first prove $(1)$. To prove that $\theta$ preserves small colimits, it will suffice
to show that $\theta_0$ preserves finite coproducts (Proposition \ref{spakk}). 
Since coproducts in $( \EInfty)^{\conn}_{k/}$ are computed by relative tensor products
over $k$, this follows from the fact that every polynomial algebra $k[x_1, \ldots, x_n]$ is flat
as a $k$-module. 

To complete the proof of $(1)$, let us consider the functor
Consider the functor $\phi: ( \EInfty)^{\conn}_{k/} \rightarrow \SSet$
defined by the composition
$$ ( \EInfty)^{\conn}_{k/} \simeq \CAlg( \Mod^{\conn}_{k}) \rightarrow
\Mod^{\conn}_{k} \rightarrow (\Spectra)_{\geq 0} \stackrel{\Omega^{\infty}}{\rightarrow} \SSet.$$
Using Corollary \symmetricref{slimycamp2} and Corollary \monoidref{goop}, we deduce that
$\phi$ is conservative and preserves small limits. It will therefore suffice to show that $\psi= \phi \circ \theta$ is conservative and preserves small limits. Let $\psi': \SCR_{k} \rightarrow \SSet$ be given by evaluation on $k[x] \in \Poly_{k}$.
The functor $\psi'$ obviously preserves small limits, and is conservative by Remark \ref{tweeze}.
To complete the proof of $(1)$ and $(2)$, it will suffice to show
that $\psi$ and $\psi'$ are equivalent. The functor $\psi'$ obviously preserves small sifted colimits.
Combining Proposition \stableref{denkmal}, Corollary \monoidref{gloop}, and Corollary \symmetricref{filtfemme}, we conclude that $\psi: \SCR_{k} \rightarrow \SSet$ preserves small sifted colimits as well.
In view of Proposition \ref{spakk}, it will suffice to show that the composite functors
$\psi \circ j, \psi' \circ j: \Nerve(\Poly_{k}) \rightarrow \SSet$ are equivalent. We now simply observe that both of these compositions can be identified with the functor which associates to each polynomial
ring $k[x_1, \ldots, x_n]$ its underlying set of elements, regarded as a discrete space.

The implication $(1) \Rightarrow (3)$ follows immediately from Corollary \toposref{adjointfunctor}.
Let us prove $(4)$. Suppose that $k$ is a $\Q$-algebra. Then, for every $n \geq 0$, every
flat $k$-module $M$, and every $i > 0$, the homology group $\HH_{i}( \Sigma_n; M^{\otimes n})$
vanishes. It follows that the symmetric power $\Sym^{n}_{k}(M) \in \Mod_{k}^{\conn}$ is
discrete, so that the $E_{\infty}$-algebra $\Sym^{\ast}_{k}(k^{m})$ can be identified with
the (discrete) polynomial ring $k[x_1, \ldots, x_m]$. Using Proposition \symmetricref{baseprops},
we conclude that the essential image of $\theta_0$ consists of compact projective objects of
$(\EInfty)^{\conn}_{k/}$ which generate $(\EInfty)^{\conn}_{k}$ under colimits, so that
$\theta$ is an equivalence by Proposition \toposref{protus}.
\end{proof}

\begin{remark}\label{ugher}
Maintaining the notation of the proof of Proposition \ref{skillmon}, we observe that
the proof gives a natural identification
$$ \bHom_{ \SCR_{k}}( k[x], A) \simeq \phi( \theta(A) ).$$
In particular, the homotopy groups $\pi_{\ast} A$ are canonically isomorphic to the homotopy
groups $\pi_{\ast} \theta(A)$ of the associated $E_{\infty}$-ring $\theta(A)$.
\end{remark}

\begin{remark}
Let $k$ be a commutative ring, and let $\theta: \SCR_{k} \rightarrow ( \EInfty)^{\conn}_{k/}$ be as in Proposition \ref{skillmon}. Then $\theta$ admits a right adjoint $G$. Let $A$ be a connective
$E_{\infty}$-algebra over $k$. The underlying space of the simplicial commutative $k$-algebra
$G(A)$ can be identified with
$$ \bHom_{\SCR_{k}}( k[x], G(A) ) \simeq \bHom_{ (\EInfty)^{\conn}_{k/}}( k[x], A).$$
Note that this is generally {\em different} from the underlying space $\phi(A) \in \SSet$
(with notation as in the proof of Proposition \ref{skillmon}), because the discrete $k$-algebra
$k[x]$ generally does not agree with the symmetric algebra $\Sym^{\ast}_{k}(k) \in (\EInfty)^{\conn}_{k/}$ (though they do coincide whenever $k$ is a $\Q$-algebra). We can think of
$\bHom_{(\EInfty)^{\conn}_{k/}}( k[x], G(A) )$ as the space of ``very commutative'' points $A$, which generally differs from the space $\phi(A) \simeq \bHom_{ (\EInfty)^{\conn}_{k/}}( \Sym^{\ast}_{k}(k), A)$
of all points of $A$. The difference between these spaces can be regarded as a measure of the failure of $k[x]$ to be free as an $E_{\infty}$-algebra over $k$, or as a measure of the failure of
$\Sym^{\ast}_{k}(k)$ to be flat over $k$.

We can interpret the situation as follows. The affine line $\SSpec k[x]$ has the structure of
{\em ring scheme}: in other words, $k[x]$ is a commutative $k$-algebra object in the
category $\Poly_{k}^{op}$. Since the functor $\theta_0: \Nerve \Poly_{k} \rightarrow (\EInfty)^{\conn}_{k/}$ preserves finite coproducts, we can also view $k[x]$ as a commutative $k$-algebra object in
the opposite $\infty$-category of $(\EInfty)^{\conn}_{k/}$. In other words, the functor
$( \EInfty)^{\conn}_{k/} \rightarrow \SSet$ corepresented by $k[x]$ can naturally be lifted to
a functor taking values in a suitable $\infty$-category of ``commutative $k$-algebras in $\SSet$'':
this is the $\infty$-category $\SCR_{k}$ of simplicial commutative $k$-algebras, and the lifting
is provided by the functor $G$.

The composition $T = \theta \circ G$ has the structure of a comonad on $(\EInfty)^{\conn}_{k/}$.
Using the Barr-Beck theorem (Corollary \monoidref{usualbb}), we can identify $\SCR_{k}$ with
the $\infty$-category of $T$-comodules in $(\EInfty)^{\conn}_{k/}$. In other words, the $\infty$-category
$\SCR_{k}$ arises naturally when we attempt to correct the disparity between the
$E_{\infty}$-algebras $k[x]$ and $\Sym^{\ast}_{k}(k)$ (which is also measured by the failure of
$T$ to be the identity functor). In the $\infty$-category $\SCR_{k}$, the polynomial ring
$k[x]$ is both flat over $k$ {\em and} free (Remark \ref{bull}).

The functor $\theta$ also admits a left adjoint $F$, and the composition
$T' = \theta \circ F$ has the structure of a monad on $( \EInfty)^{\conn}_{k/}$. We can use
Corollary \monoidref{usualbb} to deduce that $\SCR_{k}$ is equivalent to the
$\infty$-category of $T'$-modules in $( \EInfty)^{\conn}_{k/}$, but this observation seems to be of a more formal (and less useful) nature.
\end{remark}

\begin{corollary}\label{tubble}
Suppose given a pushout diagram
$$ \xymatrix{ A \ar[r] \ar[d] & B \ar[d] \\
A' \ar[r] & B' }$$
in the $\infty$-category of simplicial commutative rings. Then there exists a convergent spectral
sequence
$$ E^{p,q}_{2} = \Tor_{p}^{\pi_{\ast} A}( \pi_{\ast} B, \pi_{\ast} A')_{q} \Rightarrow
\pi_{p+q} B'.$$
\end{corollary}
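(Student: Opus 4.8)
The plan is to reduce the statement to the analogous ``Tor'' (or Künneth) spectral sequence for module spectra over a connective $E_1$-ring, and to obtain that spectral sequence from a free simplicial resolution.

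First I would pass from $\SCR$ to the world of $E_\infty$-rings. Let $\theta \colon \SCR \to (\EInfty)^{\conn}$ be the functor of Proposition \ref{skillmon}. Since $\theta$ preserves small colimits (Proposition \ref{skillmon}(1)), it carries the given pushout square to a pushout square of connective $E_\infty$-rings, so $\theta(B') \simeq \theta(B) \otimes_{\theta(A)} \theta(A')$, the coproduct of $\theta(B)$ and $\theta(A')$ in the $\infty$-category of connective $E_\infty$-algebras over $\theta(A)$. As for commutative algebra objects in any symmetric monoidal $\infty$-category, this coproduct has underlying $\theta(A)$-module the relative tensor product $\theta(B) \otimes_{\theta(A)} \theta(A')$ formed in $\Mod_{\theta(A)}$. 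Finally, Remark \ref{ugher} identifies $\pi_{\ast} \theta(C)$ with $\pi_{\ast} C$ for each $C \in \SCR$; thus $\pi_{\ast} B' \cong \pi_{\ast}(\theta(B) \otimes_{\theta(A)} \theta(A'))$ as graded modules over $\pi_{\ast} \theta(A) \cong \pi_{\ast} A$, compatibly with $\pi_{\ast}\theta(B) \cong \pi_{\ast} B$ and $\pi_{\ast}\theta(A') \cong \pi_{\ast} A'$.

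Next I would invoke the Tor spectral sequence for modules: for a connective $E_1$-ring $R$, a right $R$-module $M$, and a left $R$-module $N$, there is a convergent spectral sequence with $E^{p,q}_2 = \Tor^{\pi_{\ast} R}_p(\pi_{\ast} M, \pi_{\ast} N)_q$ abutting to $\pi_{p+q}(M \otimes_R N)$; since $R$, $M$, $N$ are connective this is a first-quadrant spectral sequence, so strong convergence is automatic. Taking $R = \theta(A)$, $M = \theta(B)$, $N = \theta(A')$ and combining with the identifications above produces exactly the spectral sequence in the statement.

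It remains to explain the Tor spectral sequence for module spectra, which is the main technical input; I would either cite it from the monoidal/commutative papers in this series or reconstruct it as follows. One chooses a simplicial object $P_{\bullet}$ of $\Mod_R$ with each $P_p$ a free $R$-module (a coproduct of shifts $R[k_i]$), equipped with an augmentation $|P_{\bullet}| \to M$ which is an equivalence, chosen so that the associated normalized complex $N\pi_{\ast} P_{\bullet}$ is a free resolution of $\pi_{\ast} M$ as a graded $\pi_{\ast} R$-module; such a $P_\bullet$ exists by the standard inductive procedure (map a free module onto a generating set of the relevant homotopy module, pass to the fiber, and repeat), the key point being that $\pi_{\ast}$ of a free $R$-module is the corresponding free graded $\pi_{\ast} R$-module. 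Then $M \otimes_R N \simeq |P_{\bullet} \otimes_R N|$ because geometric realization commutes with $(-) \otimes_R N$, and $\pi_{\ast}(P_p \otimes_R N) \cong \pi_{\ast} P_p \otimes_{\pi_{\ast} R} \pi_{\ast} N$ because $P_p$ is free. The spectral sequence of the skeletal filtration of the simplicial $R$-module $P_{\bullet} \otimes_R N$ then has $E^2_{p,q} = H_p(\pi_q(P_{\bullet} \otimes_R N)) \cong H_p(N\pi_{\ast} P_{\bullet} \otimes_{\pi_{\ast} R} \pi_{\ast} N)_q \cong \Tor^{\pi_{\ast} R}_p(\pi_{\ast} M, \pi_{\ast} N)_q$ and abuts to $\pi_{p+q}|P_{\bullet} \otimes_R N| \cong \pi_{p+q}(M \otimes_R N)$. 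The hard part is precisely the construction of $P_{\bullet}$ with $N\pi_{\ast}P_{\bullet}$ a free resolution of $\pi_{\ast} M$; this is exactly why one works in the stable (module) setting rather than directly in $\SCR$, where the nonlinearity of polynomial algebras obstructs the corresponding argument on homotopy groups.
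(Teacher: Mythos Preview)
Your proposal is correct and follows essentially the same approach as the paper. The paper's proof is exactly your first two paragraphs: apply Proposition \ref{skillmon} (with $k=\Z$) so that $\theta$ carries the pushout to a pushout of connective $E_\infty$-algebras, identify this with the relative tensor product, use Remark \ref{ugher} for the homotopy groups, and then cite the Tor spectral sequence for module spectra (recorded in the companion paper as Proposition \monoidref{siiwe}); your third paragraph's reconstruction of that spectral sequence is a correct sketch of what that reference provides.
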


\begin{proof}
Applying Proposition \ref{skillmon} in the case $k = \Z$, we obtain a pushout diagram
of connective $E_{\infty}$-algebras. The desired result now follows from Remark
\ref{ugher} and Proposition \monoidref{siiwe} (since pushouts of $E_{\infty}$-rings are computed by relative tensor products).
\end{proof}

\begin{notation}\label{cumby}
Given a diagram of simplicial commutative $k$-algebras
$$ A_0 \leftarrow A \rightarrow A_1,$$
we let $A_0 \otimes_{A} A_1$ denote the pushout of $A_0$ and $A_1$ over $A$
in the $\infty$-category $\SCR_{k}$. In view of the fact that
pushouts of $E_{\infty}$-algebras are computed by relative tensor products, this notation is compatible with the functor $\theta: \SCR_{k} \rightarrow (\EInfty)^{\conn}_{k/}$ of Proposition \ref{skillmon}.
\end{notation}

\begin{warning}
Notation \ref{cumby} introduces some danger of confusion in combination with the convention of identifying commutative $k$-algebras with the associated discrete objects of $\SCR_{k}$.
Namely, if we are given a diagram
$$ A_0 \leftarrow A \rightarrow A_1$$
of {\em discrete} commutative $k$-algebras, then the relative tensor product
$A_0 \otimes_{A} A_1$ in the $\infty$-category $\SCR_{k}$ generally does not coincide with the analogous pushout in the ordinary category $\Comm_{k}$. To avoid confusion, we will sometimes denote this latter pushout by $\Tor^{A}_0( A_0, A_1)$. However, these two notions of tensor product are closely related: we have a canonical isomorphism $\pi_0( A_0 \otimes_{A} A_1) \simeq \Tor^{A}_0(A_0, A_1)$. In fact, Corollary \ref{tubble} implies the existence of canonical isomorphisms
$$ \pi_{n}( A_0 \otimes_{A} A_1) \simeq \Tor^{A}_{n}( A_0, A_1) $$
for each $n \geq 0$. It follows that $A_0 \otimes_{A} A_1$ is discrete (and therefore equivalent to
the ordinary commutative $k$-algebra $\Tor^{A}_0(A_0, A_1)$) if and only if each of the higher $\Tor$-groups $\Tor_{i}^{A}( A_0, A_1)$ vanish; this holds in particular if either $A_0$ or $A_1$ is flat over $A$.
\end{warning}

\begin{remark}\label{distrib}
Let $A$ be a simplicial commutative $k$-algebra, and let $B, C, C' \in (\SCR_{k})_{A/}$. Then
the canonical map $B \otimes_{A} (C \times C') \rightarrow (B \otimes_A C) \times (B \otimes_A C')$
is an equivalence in $\SCR_{k}$. To see this, we invoke Proposition \ref{skillmon} to reduce
to the corresponding assertion for $E_{\infty}$-algebras, which follows from the fact that
the relative tensor product functor $M \mapsto B \otimes_{A} M$ is exact (as a functor
from $A$-modules to $B$-modules).
\end{remark}

\begin{proposition}\label{umber}
Let $f: A \rightarrow B$ be a morphism of simplicial commutative $k$-algebras, and let
$a \in \pi_0 A$ such that $f(a)$ is invertible in $\pi_0 B$. The following conditions are equivalent:
\begin{itemize}
\item[$(1)$] For every object $C \in \SCR_{k}$, composition with
$f$ induces a homotopy equivalence $$\bHom_{\SCR_{k}}(B, C) \rightarrow \bHom^{0}_{\SCR_{k}}( A, C),$$ where $\bHom^{0}_{\SCR_{k}}(A,C)$ denotes the union
of those connected components of $\bHom_{\SCR_{k}}(A,C)$ spanned by those maps
$g: A \rightarrow C$ such that $g(a)$ is invertible in $\pi_0 C$.
\item[$(2)$] For every nonnegative integer $n$, the map
$$ \pi_{n} A \otimes_{ \pi_0 A} (\pi_{0} A)[\frac{1}{a}] \rightarrow \pi_{n} B$$
is an isomorphism of abelian groups.
\end{itemize}
Moreover, given $A$ and $a \in \pi_0 A$, there exists a morphism $f: A \rightarrow B$
satisfying both $(1)$ and $(2)$.
\end{proposition}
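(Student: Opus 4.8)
The plan is to construct an explicit candidate $B_{0}$ for the ``localization of $A$ at $a$'', to check directly that $B_{0}$ satisfies both $(1)$ and $(2)$, and then to show that for an arbitrary $f\colon A\to B$ as in the statement, each of $(1)$ and $(2)$ forces the canonical comparison map $B_{0}\to B$ to be an equivalence; this simultaneously yields $(1)\Leftrightarrow(2)$ and the final existence assertion. To build $B_{0}$: using Remark~\ref{bull}, pick a morphism $k[x]\to A$ in $\SCR_{k}$ carrying $x$ to $a\in\pi_{0}A$, and set $B_{0}=A\otimes_{k[x]}k[x,x^{-1}]$, the pushout in $\SCR_{k}$ along the standard inclusion $k[x]\hookrightarrow k[x,x^{-1}]$ of discrete objects. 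Since $f$ carries $a$ to a unit of $\pi_{0}B$, the universal property of the pushout supplies a canonical map $c\colon B_{0}\to B$ compatible with the maps from $A$.

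First I would verify $(2)$ for $B_{0}$, by applying the spectral sequence of Corollary~\ref{tubble} to the defining pushout: since $\pi_{\ast}k[x]=k[x]$ and $\pi_{\ast}k[x,x^{-1}]=k[x,x^{-1}]$ are discrete and $k[x,x^{-1}]$ is flat over $k[x]$, all higher $\Tor$-terms vanish, the spectral sequence degenerates, and we obtain $\pi_{n}B_{0}\simeq\pi_{n}A\otimes_{k[x]}k[x,x^{-1}]\simeq\pi_{n}A\otimes_{\pi_{0}A}(\pi_{0}A)[\frac{1}{a}]$, the last step because $k[x]\to\pi_{0}A$ sends $x\mapsto a$ and base change commutes with localization. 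The edge morphism of the spectral sequence identifies this isomorphism with the map on homotopy induced by $A\to B_{0}$.

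The heart of the argument is the verification of $(1)$ for $B_{0}$, which comes down to computing $\bHom_{\SCR_{k}}(k[x,x^{-1}],C)$ for an arbitrary $C\in\SCR_{k}$. I would present $k[x,x^{-1}]=k[x,y]/(xy-1)$ as the pushout in $\Comm_{k}$ of $k\leftarrow k[z]\to k[x,y]$, where $z\mapsto 0$ and $z\mapsto xy-1$ respectively; since $k[x,y]$ is free, hence flat, over $k[z]$, this is also the pushout in $\SCR_{k}$ (cf.\ the discussion of tensor products in Notation~\ref{cumby}). Applying $\bHom_{\SCR_{k}}(-,C)$ and using Remark~\ref{bull} (so that $\bHom_{\SCR_{k}}(k[x,y],C)\simeq |C|\times|C|$ and $k$ is initial, where $|C|$ denotes the underlying space of $C$), we identify $\bHom_{\SCR_{k}}(k[x,x^{-1}],C)$ with the homotopy fibre over $0$ of the map $\mu\colon |C|\times|C|\to|C|$, $(\alpha,\beta)\mapsto\alpha\beta-1$, formed using the topological commutative ring structure on $C$ (Remark~\ref{mcguin}). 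The composite of the inclusion of this homotopy fibre with the first projection $|C|\times|C|\to|C|$ is exactly the map induced by $k[x]\hookrightarrow k[x,x^{-1}]$, and I claim it is a homotopy equivalence onto the union of those components of $|C|$ on which the class of $x$ in $\pi_{0}C$ is invertible. Indeed, over a point $\alpha_{0}$ in a non-invertible component the equation $\alpha_{0}\beta=1$ has no solution in $\pi_{0}C$, so the fibre is empty; over a point $\alpha_{0}$ in an invertible component the self-map $\beta\mapsto\alpha_{0}\beta$ of $|C|$ induces, on $\pi_{0}$ and on each higher homotopy group, multiplication by the unit $\psi(\alpha_{0})\in\pi_{0}C$ (Remark~\ref{latersense}), hence is a weak equivalence, so $\beta\mapsto\alpha_{0}\beta-1$ is too and the fibre is contractible. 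Feeding this computation into the pushout square defining $B_{0}$ gives $\bHom_{\SCR_{k}}(B_{0},C)\simeq\bHom^{0}_{\SCR_{k}}(A,C)$, naturally in $C$; thus $B_{0}$ satisfies $(1)$.

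It then remains to run the comparison. If $(1)$ holds for $B$, then $B$ and $B_{0}$ corepresent the same functor $C\mapsto\bHom^{0}_{\SCR_{k}}(A,C)$, so $c$ is an equivalence by Yoneda and $(2)$ for $B$ follows from $(2)$ for $B_{0}$. If instead $(2)$ holds for $B$, I would show $c$ is an equivalence by checking it is an isomorphism on every $\pi_{n}$: both sides are identified with $\pi_{n}A\otimes_{\pi_{0}A}(\pi_{0}A)[\frac{1}{a}]$ compatibly with the maps from $\pi_{n}A$, every element of the source has the form $a^{-m}\bar{y}$ with $\bar{y}$ the image of some $y\in\pi_{n}A$, and $c$ carries such an element to the corresponding element of the target; surjectivity is then immediate, and injectivity follows from a short chase using that $a$ acts invertibly on both sides and that $\pi_{n}A\to\pi_{n}B$ and $\pi_{n}A\to\pi_{n}B_{0}$ are the localization maps. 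Since the underlying-space functor on $\SCR_{k}$ is conservative (Remark~\ref{tweeze}), $c$ is an equivalence, and $(1)$ for $B$ follows from $(1)$ for $B_{0}$; moreover $A\to B_{0}$ witnesses the existence statement. The one step demanding genuine care is the identification of $\bHom_{\SCR_{k}}(k[x,x^{-1}],C)$ in the third paragraph; the remaining steps are either formal or routine computations with homotopy groups.
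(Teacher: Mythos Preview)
Your proof is correct, but it organizes the argument differently from the paper. Both proofs construct the same candidate $B_{0}=A\otimes_{k[x]}k[x,x^{-1}]$, verify $(2)$ for $B_{0}$ via the Tor spectral sequence, and obtain $(1)\Rightarrow(2)$ by Yoneda. The difference lies in how $(1)$ is established. You prove $(1)$ for $B_{0}$ directly by presenting $k[x,x^{-1}]$ as a derived pushout and explicitly computing the homotopy fibres of $(\alpha,\beta)\mapsto\alpha\beta-1$ over $|C|$; you then deduce $(2)\Rightarrow(1)$ by checking that $c\colon B_{0}\to B$ is a $\pi_{\ast}$-isomorphism. The paper instead proves the abstract implication $(2)\Rightarrow(1)$ first: condition $(2)$ makes $\pi_{\ast}B$ flat over $\pi_{\ast}A$, so the codiagonal $B\otimes_{A}B\to B$ is an equivalence (hence $A\to B$ is an epimorphism in $\SCR_{k}$, and $\bHom(B,C)\hookrightarrow\bHom(A,C)$ is an inclusion of components), and a further flat base-change shows any $h\colon A\to C$ with $h(a)$ invertible factors through $B$. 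Property $(1)$ for $B_{0}$ then follows from $(2)$ for $B_{0}$, with no explicit fibre computation needed. Your route is more concrete and self-contained; the paper's is shorter and isolates the conceptual reason (flatness forces $f$ to be an epimorphism) behind the universal property.
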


\begin{proof}
We first show that $(2) \Rightarrow (1)$. Form a pushout diagram
$$ \xymatrix{ A \ar[r]^{f} \ar[d]^{f} & B \ar[d]^{g'} \\
B \ar[r]^{g} & B'. }$$
Using Corollary \ref{tubble} (and the observation that $\pi_{\ast} B$ is flat over
$\pi_{\ast} A$, by virtue of $(2)$), we conclude that $\pi_{\ast} B' \simeq \pi_{\ast} B$, so that
the maps $g$ and $g'$ are both equivalences. In other words, the map $f$ is a monomorphism
in $\SCR_{k}^{\op}$. Thus, for every simplicial commutative $k$-algebra $C$, we can identify
$\bHom_{\SCR_{k}}(B, C)$ with a union of connected components
$\bHom^{1}_{\SCR_{k}}( A, C) \subseteq \bHom_{\SCR_{k}}(A,C)$. Since
$f(a)$ is invertible in $\pi_{\ast} B$, we must have
$\bHom^{1}_{\SCR_{k}}(A, C) \subseteq \bHom^{0}_{ \SCR_{k}}(A,C)$. To complete the proof
that $(2) \Rightarrow (1)$, it will suffice to verify the reverse inclusion. In other words, we must show that if $h: A \rightarrow C$ is a morphism such that $h(a)$ is invertible in $\pi_{\ast} C$, then
$h$ factors through $f$ (up to homotopy). For this, we form a pushout diagram
$$ \xymatrix{ A \ar[r]^{h} \ar[d]^{f} & C \ar[d]^{f'} \\
B \ar[r] & C'. }$$
It now suffices to show that $f'$ is an equivalence, which follows immediately
from Corollary \ref{tubble} (again using the flatness of $\pi_{\ast} B$ over $\pi_{\ast} A$).

We now prove the final assertion. Fix $A \in \SCR_{k}$ and $a \in \pi_0 A$.
Using Remark \ref{bull}, we can choose a map $k[x] \rightarrow A$
carrying $x$ to $a \in \pi_0 A$. We now form a pushout diagram
$$ \xymatrix{ k[x] \ar[r] \ar[d] & A \ar[d]^{f} \\
k[x,x^{-1}] \ar[r] & B. }$$
It follows immediately from Corollary \ref{tubble} that $f$ satisfies $(2)$. The first
part of the proof shows that $f$ also satisfies $(1)$.

We now prove that $(1) \Rightarrow (2)$. Let $f: A \rightarrow B$ satisfy $(1)$, and let
$f': A \rightarrow B$ satisfy both $(1)$ and $(2)$. Since $f$ and $f'$ both satisfy $(1)$,
we must have $f \simeq f'$ in $( \SCR_{k})_{A/}$, so that $f$ satisfies $(2)$ as well.
\end{proof}

We will say that a morphism $f: A \rightarrow B$ {\it exhibits $B$ as a localization of $A$
with respect to $a \in \pi_0 A$} if the equivalent conditions of Proposition \ref{umber} are satisfied.
It follows from characterization $(1)$ of Proposition \ref{umber} that $B$ is then determined up
to equivalence by $A$ and $a$. We will denote the localization $B$ by $A[ \frac{1}{a}]$. 

\begin{proposition}\label{presweet}
Let $k$ be a commutative ring, $A$ a compact object of $\SCR_{k}$, and
$a \in \pi_0 A$. Then $A[\frac{1}{a}]$ is also a compact object of $\SCR_{k}$.
\end{proposition}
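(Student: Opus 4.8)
The plan is to reduce compactness of $A[\frac{1}{a}]$ to compactness of finitely many polynomial rings, using the explicit pushout description of localizations extracted in (the proof of) Proposition \ref{umber} together with the standard fact that the compact objects of an $\infty$-category are closed under finite colimits (see \cite{topoi}).

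First I would invoke Remark \ref{bull} to choose a map $k[x] \to A$ carrying $x$ to $a \in \pi_0 A$. The last step in the proof of Proposition \ref{umber} shows that the pushout $A \otimes_{k[x]} k[x,x^{-1}]$, formed in $\SCR_k$, satisfies conditions $(1)$ and $(2)$ of that proposition; since these conditions determine the localization up to equivalence, this identifies $A[\frac{1}{a}] \simeq A \otimes_{k[x]} k[x,x^{-1}]$. In particular $A[\frac{1}{a}]$ is a pushout — hence a finite colimit — of the three objects $A$, $k[x]$, and $k[x,x^{-1}]$ of $\SCR_k$, so it suffices to show each of them is compact. Here $A$ is compact by hypothesis and $k[x] \in \Poly_k$ is one of the distinguished compact projective generators of $\SCR_k$ (Remark \ref{humple}); thus everything comes down to showing that the discrete ring $k[x,x^{-1}]$ is a compact object of $\SCR_k$.

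For that I would in turn realize $k[x,x^{-1}]$ itself as a finite colimit of polynomial rings. Consider the span $k \leftarrow k[z] \to k[x,y]$ with the left map the augmentation $z \mapsto 0$ and the right map $z \mapsto xy - 1$, and form the pushout $k \otimes_{k[z]} k[x,y]$ in $\SCR_k$. A short degree argument (valid over an arbitrary commutative ring $k$) shows $xy - 1$ is a nonzerodivisor in $k[x,y]$, so tensoring the free resolution $0 \to k[z] \xrightarrow{z} k[z] \to k \to 0$ with $k[x,y]$ remains exact; hence $\Tor_i^{k[z]}(k, k[x,y]) = 0$ for $i > 0$ and $\Tor_0^{k[z]}(k, k[x,y]) \cong k[x,x^{-1}]$. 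Feeding this into the spectral sequence of Corollary \ref{tubble} — whose $E_2$-page is concentrated in the single bidegree $(0,0)$, since all three rings are discrete and the higher $\Tor$ groups vanish — shows that $k \otimes_{k[z]} k[x,y]$ is discrete and canonically isomorphic to $k[x,x^{-1}]$, i.e. the derived pushout coincides with the naive one (compare the Warning after Corollary \ref{tubble}). As $k$, $k[z]$, and $k[x,y]$ all lie in $\Poly_k$ and are therefore compact, their pushout $k[x,x^{-1}]$ is a finite colimit of compact objects, hence compact, which finishes the argument.

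The only step carrying real content — and the one I would flag as the main (if modest) obstacle — is the identification of the derived tensor product $k \otimes_{k[z]} k[x,y]$ with the ordinary quotient $k[x,x^{-1}]$, i.e. the vanishing of higher homotopy; this is precisely the subtlety warned about after Corollary \ref{tubble}, and it rests on the regularity of $xy - 1$ in $k[x,y]$ (the hyperbola is a ``transverse'' hypersurface in the affine plane, so no derived corrections appear). Everything else is formal manipulation with compact objects and finite colimits.
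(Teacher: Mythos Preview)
Your proof is correct and follows essentially the same route as the paper: both express $A[\frac{1}{a}]$ as the pushout $A \otimes_{k[x]} k[x,x^{-1}]$, reduce to compactness of $k[x,x^{-1}]$, and realize the latter as a pushout $k \otimes_{k[z]} k[x,y]$ whose derived and classical versions agree because the relevant element of $k[x,y]$ is a nonzerodivisor. The only cosmetic difference is that the paper sends $z \mapsto 1$ and $z \mapsto xy$ where you send $z \mapsto 0$ and $z \mapsto xy-1$, which is the same square up to the substitution $z \mapsto z+1$.
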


\begin{proof}
Using Proposition \ref{umber}, we deduce the existence of a commutative diagram
$$ \xymatrix{ k[x] \ar[r]^{f} \ar[d] & A \ar[d] \\
k[x,x^{-1}] \ar[r] & A[ \frac{1}{a} ] }$$
such that $f(x) = a$. Using Corollary \ref{tubble} (and the flatness of the vertical maps), we deduce that this diagram is a pushout square in $\SCR_{k}$. Since the collection of compact objects
of $\SCR_{k}$ is stable under finite colimits (Corollary \toposref{tyrmyrr}), it will suffice to show
that $k[x]$ and $k[x,x^{-1}]$. The first of these statements is obvious. For the second, we consider the commutative diagram
$$ \xymatrix{ k[z] \ar[r]^{z \mapsto 1} \ar[d]^{z \mapsto xy} & k \ar[d] \\
k[x,y] \ar[r] & k[x, x^{-1}]. }$$
Because $k[z]$, $k$, and $k[x,y]$ are compact objects of $\SCR_{k}$, it will suffice to show that this diagram is a pushout square in $\SCR_{k}$. Since it is evidently a pushout square in the category of
ordinary commutative rings, it is sufficient (by virtue of the spectral sequence of Corollary \ref{tubble}) to show that the groups $\Tor_{i}^{k[z]}(k, k[x,y])$ vanish for $i > 0$. This follows from the observation
that $xy$ is a not a zero divisor in the commutative ring $k[x,y]$.
\end{proof}

We conclude this section by recording the following observation, which can be deduced immediately from characterization $(1)$ of Proposition \ref{umber} (or characterization $(2)$, together with
Corollary \ref{tubble}):  

\begin{proposition}\label{axe}
Suppose given commutative diagram
$$ \xymatrix{ A \ar[r]^{f} \ar[d]^{g} & B \ar[d] \\
A' \ar[r]^{f'} & B' }$$
of simplicial commutative $k$-algebras. Suppose that $f$ exhibits $B$ as the localization
of $A$ with respect to $a \in \pi_0 A$. Then the following conditions are equivalent:
\begin{itemize}
\item[$(a)$] The map $f'$ exhibits $B'$ as the localization of $A'$ with respect to $g(a) \in \pi_0 A'$.
\item[$(b)$] The above diagram is a pushout square.
\end{itemize}
\end{proposition}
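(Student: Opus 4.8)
The plan is to establish the equivalence $(a) \Leftrightarrow (b)$ by testing both conditions against the corepresentable functors $\bHom_{\SCR_{k}}(-, C)$, using characterization $(1)$ of Proposition \ref{umber}. Recall that a commuting square in $\SCR_{k}$ is a pushout square if and only if, for every $C \in \SCR_{k}$, applying $\bHom_{\SCR_{k}}(-, C)$ carries it to a pullback square of spaces; so both $(a)$ and $(b)$ can be reformulated as statements about the mapping spaces out of $B'$.

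The technical core is a computation that uses neither hypothesis. By characterization $(1)$ of Proposition \ref{umber} applied to $f$, for every $C \in \SCR_{k}$ the map $f^{\ast}\colon \bHom_{\SCR_{k}}(B, C) \to \bHom_{\SCR_{k}}(A, C)$ is equivalent to the inclusion of the summand $\bHom^{0}_{\SCR_{k}}(A, C)$ consisting of those maps $A \to C$ carrying $a$ to an invertible element of $\pi_0 C$. I would then form the fibre product over $\bHom_{\SCR_{k}}(A, C)$ with $\bHom_{\SCR_{k}}(A', C)$; since a map $h\colon A' \to C$ precomposed with $g$ carries $a$ to $h(g(a))$, this shows that the projection
$$ p\colon \bHom_{\SCR_{k}}(B, C) \times_{\bHom_{\SCR_{k}}(A, C)} \bHom_{\SCR_{k}}(A', C) \longrightarrow \bHom_{\SCR_{k}}(A', C)$$
is an equivalence onto the summand $\bHom^{0}_{\SCR_{k}}(A', C)$ of maps carrying $g(a) \in \pi_0 A'$ to an invertible element.

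With this in hand the proof finishes quickly. The square of the proposition induces a natural map $\nu\colon \bHom_{\SCR_{k}}(B', C) \to \bHom_{\SCR_{k}}(B, C) \times_{\bHom_{\SCR_{k}}(A, C)} \bHom_{\SCR_{k}}(A', C)$ with $p \circ \nu = (f')^{\ast}$, and $(b)$ is exactly the assertion that $\nu$ is an equivalence for all $C$. On the other hand $(f')^{\ast}$ automatically lands in $\bHom^{0}_{\SCR_{k}}(A', C)$ (the element $f'(g(a))$ is the image of the invertible element $f(a) \in \pi_0 B$ under $\pi_0 B \to \pi_0 B'$), and condition $(a)$ — via characterization $(1)$ of Proposition \ref{umber} for $f'$ — says precisely that $(f')^{\ast}$ corestricts to an equivalence $\bHom_{\SCR_{k}}(B', C) \to \bHom^{0}_{\SCR_{k}}(A', C)$ for all $C$. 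Since $p$ is an equivalence onto this same summand, the identity $p \circ \nu = (f')^{\ast}$ forces $\nu$ to be an equivalence if and only if $(f')^{\ast}$ is such a corestricted equivalence; this yields $(a) \Leftrightarrow (b)$.

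I do not expect a genuine obstacle: the only delicate point is the bookkeeping of connected components and the verification that $\nu$ is the map induced by the given (commuting) square, so that $p \circ \nu = (f')^{\ast}$ holds. If one prefers to avoid corepresentable functors, the same result follows on homotopy groups: characterization $(2)$ of Proposition \ref{umber} identifies $\pi_{\ast} B$ with the flat $\pi_{\ast} A$-algebra $\pi_{\ast}A \otimes_{\pi_0 A}(\pi_0 A)[\frac{1}{a}]$, so the spectral sequence of Corollary \ref{tubble} degenerates to give $\pi_{\ast}(B \otimes_A A') \cong \pi_{\ast} B \otimes_{\pi_{\ast} A} \pi_{\ast} A' \cong \pi_{\ast} A' \otimes_{\pi_0 A'}(\pi_0 A')[\frac{1}{g(a)}]$, and comparing this with characterization $(2)$ for $f'$ (using that equivalences in $\SCR_{k}$ are detected on homotopy groups, Remark \ref{tweeze}) yields both implications.
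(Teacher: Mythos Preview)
Your proof is correct and follows exactly the approach the paper indicates: the paper simply records that the result ``can be deduced immediately from characterization $(1)$ of Proposition \ref{umber} (or characterization $(2)$, together with Corollary \ref{tubble})'' without writing out the details, and you have supplied precisely those details for both routes. The corepresentable-functor argument you give is the natural unpacking of the characterization-$(1)$ route, and your alternative via flatness and the Tor spectral sequence is the characterization-$(2)$ route.
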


\subsection{Derived Algebraic Geometry (Zariski topology)}\label{derzar}

Let $k$ be a commutative ring. In this section, we will introduce an $\infty$-category
$\Sch_{\Zar}^{\der}(k)$ of {\it derived $k$-schemes}, which includes the (nerve of the) usual category of $k$-schemes as a full subcategory. 

Our first step is to introduce a geometry $\calT_{\Zar}(k)$. 
Let $\Comm_{k}^{\Zar}$ denote the category of all commutative $k$-algebras of the form
$k[x_1, \ldots, x_n, \frac{1}{f(x_1, \ldots, x_n)}]$, where $f$ is a polynomial with coefficients in $k$.
We let $\calT_{\Zar}(k)$ denote the $\infty$-category $\Nerve( \Comm_{k}^{\Zar})^{op}$; we can identify
$\calT_{\Zar}(k)$ with a full subcategory of the (nerve of the) category of affine $k$-schemes: namely, those affine $k$-schemes which appear as the complement of a hypersurface in some affine space $\Aff^{n}_{k}$. If $A$ is a commutative $k$-algebra which belongs to $\Comm^{k}_{\Zar}$, then we let $\SSpec A$ denote the corresponding object of $\calT_{\Zar}(k)$ (this notation is potentially in conflict with that of \S \ref{abspec}, but will hopefully not lead to any confusion).

We regard $\calT_{\Zar}(k)$ as a pregeometry as follows:
\begin{itemize}
\item A morphism $\SSpec A \rightarrow \SSpec B$ is admissible if and only if
it induces an isomorphism of commutative rings $B[ \frac{1}{b}] \simeq A$, for some element $b \in B$.

\item A collection of admissible morphisms $\{ \SSpec B[\frac{1}{b_{\alpha}}] \rightarrow \SSpec B \}$ generates a covering sieve on $\SSpec B \in \Open_{k}$ if and only if the elements $\{ b_{\alpha} \}$ generate the unit ideal of $B$.
\end{itemize}

\begin{remark}\label{defmod}
It is possible to make several variations on the definition of $\calT_{\Zar}(k)$ without changing the resulting theory of $\calT_{\Zar}(k)$-structures. For example, we can define a pregeometry $\calT'_{\Zar}(k)$ as follows:
\begin{itemize}
\item The underlying $\infty$-category of $\calT'_{\Zar}(k)$ agrees with that of $\calT_{\Zar}(k)$.
\item The Grothendieck topology on $\calT'_{\Zar}(k)$ agrees with that of $\calT_{\Zar}(k)$.
\item A morphism $\SSpec A \rightarrow \SSpec B$ is $\calT'_{\Zar}(k)$-admissible if and only if
it is an open immersion (in the sense of classical algebraic geometry over $k$).
\end{itemize}
There is an evident transformation of pregeometries $\calT_{\Zar}(k) \rightarrow \calT'_{\Zar}(k)$. 
Proposition \ref{spunkk} implies that this transformation is a Morita equivalence. 

We can define a larger pregeometry $\calT''_{\Zar}(k)$ as follows:
\begin{itemize}
\item The underlying $\infty$-category of $\calT''_{\Zar}(k)$ is the nerve of the category of all $k$-schemes which appear as open subschemes of some affine space $\Aff^{n}_{k} \simeq \SSpec k[x_1, \ldots, x_n]$. 
\item A morphism in $\calT''_{\Zar}(k)$ is admissible if and only if it is an open immersion of $k$-schemes.
\item A collection of admissible morphisms $\{ j_{\alpha}: U_{\alpha} \rightarrow X \}$ generates a covering sieve
on $X \in \calT''_{\Zar}(k)$ if and only if every point of $X$ lies in the image of some $j_{\alpha}$.
\end{itemize}

We have a fully faithful inclusion $\calT'_{\Zar}(k) \subseteq \calT''_{\Zar}(k)$, which is a transformation of pregeometries. Proposition \ref{silver} guarantees that this inclusion is a Morita equivalence, so that the theories of $\calT'_{\Zar}(k)$-structures and $\calT''_{\Zar}(k)$-structures are equivalent. However, we note that this equivalence does {\em not} restrict to an equivalence between the theory of $\calT'_{\Zar}(k)$-schemes and the theory of $\calT''_{\Zar}(k)$-schemes (the latter class is strictly larger, and does not compare well with the scheme theory of classical algebraic geometry).
\end{remark}

\begin{remark}\label{recor}
The pair of admissible morphisms 
$$ k[x, \frac{1}{1-x}] \stackrel{\alpha}{\leftarrow} k[x] \stackrel{\beta}{\rightarrow} k[x, x^{-1}]$$
in $\Comm_{k}^{\Zar}$ generates a covering sieve on $k[x]$. In fact, this admissible covering
together with the empty covering of the zero ring $0 \in \Comm_{k}^{\Zar}$ 
generate the Grothendieck topology on $\calT_{\Zar}(k)$. To prove this, let $\calT$ be another pregeometry with the same underlying $\infty$-category as $\calT_{\Zar}(k)$ and the same admissible morphisms, such that $\alpha$ and $\beta$ generate a covering sieve on $k[x] \in \calT$, and the empty sieve is a covering of $0 \in \calT$. We will show that $\calT_{\Zar}(k) \rightarrow \calT$ is a transformation of pregeometries.

Let $R \in \Comm_{k}^{\Zar}$, and let $\{ x_{\alpha} \}_{\alpha \in A}$ be a collection of elements of $R$ which generate the unit ideal. We wish to show that the maps $S = \{ R \rightarrow R[ \frac{1}{x_{\alpha}} ] \}$ generate a $\calT$-covering sieve on $R$. Without loss of generality, we may suppose that
$A = \{ 1, \ldots, n\}$ for some nonnegative integer $n$; we work by induction on $n$. Write
$1 = x_1 y_1 + \ldots + x_n y_n$. Replacing each $x_i$ by the product $x_i y_i$, we may suppose
that $1 = x_1 + \ldots + x_n$. If $n=0$, then $R \simeq 0$ and $S$ is a covering sieve by hypothesis. If $n=1$, then $S$ contains an isomorphism and therefore generates a covering sieve. If $n=2$, we have a map $\phi: k[x] \rightarrow R$ given by $x \mapsto x_1$. Then $S$ is obtained from the admissible covering $\{ \alpha, \beta \}$ by base change along $\phi$, and therefore generates a covering sieve. Suppose finally that $n > 2$, and set $y = x_2 + \ldots + x_n$. The inductive hypothesis implies that the maps $R[\frac{1}{y}] \leftarrow R \rightarrow R[ \frac{1}{x_1}]$ generate a $\calT$-covering sieve on $R$. It will therefore suffice to show that the base change of
$S$ to $R[ \frac{1}{y} ]$ and $R[ \frac{1}{x_1}]$ generate covering sieve, which again follows from the inductive hypothesis.

Let $\calX$ be an $\infty$-topos, and let $\calO \in \Fun^{\adm}( \calT_{\Zar}(k), \calX)$.
The above analysis shows that $\calO$ is a $\calT_{\Zar}(k)$-structure on $\calX$ if and only if the following conditions are satisfied:
\begin{itemize}
\item[$(1)$] The object $\calO(0)$ is initial in $\calX$.
\item[$(2)$] The map $\calO( k[x,x^{-1}] ) \coprod \calO( k[x, \frac{1}{1-x}]) \rightarrow \calO( k[x] )$
is an effective epimorphism.
\end{itemize}
\end{remark}

Let $\calG_{\Zar}^{\der}(k)$ denote the {\em opposite} of the full subcategory of $\SCR_{k}$ spanned by the compact objects. As before, if $A \in \SCR_{k}$ is compact then we let $\SSpec A$ denote the corresponding object of $\calG_{\Zar}^{\der}(k)$. We will view $\calG_{\Zar}^{\der}(k)$ as a geometry via an analogous prescription:

\begin{itemize}
\item[$(a)$] A morphism $f: \SSpec A \rightarrow \SSpec B$ in $\calG_{\Zar}^{\der}(k)$ is {\it admissible} if and only if
there exists an element $b \in \pi_0 B$ such that $f$ carries $b$ to an invertible element in
$\pi_0 A$, and the induced map $B[ \frac{1}{b} ] \rightarrow A$ is an equivalence
in $\SCR_{k}$. 

\item[$(b)$] A collection of admissible morphisms $\{\SSpec B[\frac{1}{b_{\alpha}} \rightarrow \SSpec B \}$ 
generates a covering sieve on $B$ if and only if the elements $b_{\alpha}$ generate the unit ideal in the commutative ring $\pi_0 B$.
\end{itemize}

We can identify $\Comm_{k}^{\Zar}$ with the full subcategory of $\SCR_{k}$ spanned by
those objects $A$ such that $\pi_0 A \in \Comm_{k}^{\Zar}$ and $\pi_{i} A$ vanishes for $i > 0$.
Proposition \ref{presweet} implies that every object of $\Comm_{k}^{\Zar}$ is compact in $\SCR_{k}$. We can therefore identify $\calT_{\Zar}(k) \simeq \Nerve( \Comm_{k}^{\Zar})^{op}$ with a full subcategory of
$\calG^{\der}_{\Zar}(k)$. The main result of this section is the following:

\begin{proposition}\label{sturma}
The above identification exhibits $\calG^{\der}_{\Zar}(k)$ as a geometric envelope of $\calT_{\Zar}(k)$.
\end{proposition}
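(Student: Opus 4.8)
The plan is to verify directly that the inclusion $\calT_{\Zar}(k) \hookrightarrow \calG^{\der}_{\Zar}(k)$ satisfies the three conditions of Definition \ref{skan}. Condition $(1)$ is immediate: $\calG^{\der}_{\Zar}(k)$ is by construction the opposite of the $\infty$-category of compact objects of $\SCR_{k} = \calP_{\Sigma}(\Nerve \Poly_{k})$, and this $\infty$-category of compact objects is idempotent complete and stable under finite colimits (Corollary \toposref{tyrmyrr}), so $\calG^{\der}_{\Zar}(k)$ is idempotent complete and admits finite limits. Condition $(2)$ — that the inclusion lies in $\Fun^{\adm}(\calT_{\Zar}(k), \calG^{\der}_{\Zar}(k))$ — is also routine: it preserves finite products because for $A = k[x_{1},\dots,x_{m},\frac1f]$ and $B = k[y_{1},\dots,y_{n},\frac1g]$ in $\Comm_{k}^{\Zar}$ the derived tensor product $A \otimes_{k} B$ agrees with the ordinary one (both factors being flat over $k$) and equals $k[x_{1},\dots,y_{n},\frac{1}{fg}] \in \Comm_{k}^{\Zar}$; and it preserves admissible pullbacks by Proposition \ref{axe}, since a pushout $A \otimes_{B} B[\frac1b]$ in $\SCR_{k}$ along a localization coincides with the corresponding pushout $A[\frac{1}{\bar b}]$ computed in $\Comm_{k}^{\Zar}$.

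The substance of the proof is condition $(3)$: for every idempotent complete $\infty$-category $\calC$ admitting finite limits, composition with the inclusion induces an equivalence $\Fun^{\lex}(\calG^{\der}_{\Zar}(k), \calC) \to \Fun^{\adm}(\calT_{\Zar}(k), \calC)$. I would factor the inclusion as $(\Nerve \Poly_{k})^{op} \xrightarrow{\,j\,} \calT_{\Zar}(k) \hookrightarrow \calG^{\der}_{\Zar}(k)$, where $(\Nerve \Poly_{k})^{op}$ is the subcategory of affine spaces equipped with the trivial admissibility structure. Since $\SCR_{k}^{\omega}$ is freely generated under finite colimits and retracts by $\Poly_{k}$ (Proposition \ref{spakk} together with Corollary \toposref{tyrmyrr}), restriction along the composite $j$ then inclusion already identifies $\Fun^{\lex}(\calG^{\der}_{\Zar}(k), \calC)$ with $\Fun^{\times}((\Nerve \Poly_{k})^{op}, \calC)$, the $\infty$-category of finite-product-preserving functors. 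It therefore suffices to prove that restriction along $j$ induces an equivalence $\Fun^{\adm}(\calT_{\Zar}(k), \calC) \to \Fun^{\times}((\Nerve \Poly_{k})^{op}, \calC)$. Essential surjectivity is then formal: given a finite-product-preserving $F_{0}$ on affine spaces, its essentially unique left exact extension $\overline{F}_{0} \colon \calG^{\der}_{\Zar}(k) \to \calC$ restricts along $\calT_{\Zar}(k) \hookrightarrow \calG^{\der}_{\Zar}(k)$ to an object of $\Fun^{\adm}(\calT_{\Zar}(k),\calC)$ (a left exact functor preserves finite products and all pullbacks) whose further restriction along $j$ recovers $F_{0}$.

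Thus everything comes down to full faithfulness of restriction along $j$, equivalently to showing that any $F \in \Fun^{\adm}(\calT_{\Zar}(k), \calC)$ is determined, together with its natural transformations, by its restriction to the affine spaces. The mechanism is to exhibit each object of $\calT_{\Zar}(k)$ as an iterated limit of affine spaces along admissible morphisms: one has the admissible pullback square expressing $\SSpec k[x_{1},\dots,x_{n},\frac1f]$ as $\Aff^{n} \times_{\Aff^{1}} \SSpec k[t,t^{-1}]$, where $\SSpec k[t,t^{-1}] \to \Aff^{1}$ is admissible and $\Aff^{n} \to \Aff^{1}$ is classified by $f$, so it remains only to pin down $F(\SSpec k[t,t^{-1}])$; this last, together with the compatibility of $F$ with the admissibility structure and with the multiplication and unit maps of the affine line, is governed by Proposition \ref{umber}, which characterizes the localization $k[t,t^{-1}] = k[t][\frac1t]$ by a universal property and identifies $\SSpec k[t,t^{-1}]$ with the ``locus of invertible elements'' of $\Aff^{1}$. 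I expect this to be the main obstacle: one must check carefully that an arbitrary admissible functor — not merely one of the form $\bHom_{\SCR_{k}}(-,A)$ — is forced to carry $\SSpec k[t,t^{-1}]$ to the appropriate finite limit built out of $F(\Aff^{1})$. It will be convenient to first treat the case $\calC = \SSet$, and more generally $\calC$ a presheaf $\infty$-category (where both sides are computed pointwise and $\Fun^{\adm}(\calT_{\Zar}(k),\SSet)$ can be compared directly with $\SCR_{k} = \calP_{\Sigma}(\Nerve \Poly_{k})$), and then to deduce the general case by embedding $\calC$ fully faithfully into a presheaf $\infty$-category preserving finite limits, using that both $\Fun^{\lex}(\calG^{\der}_{\Zar}(k),-)$ and $\Fun^{\adm}(\calT_{\Zar}(k),-)$ detect the property of landing in $\calC$ already on the subcategory of affine spaces. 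Finally I would invoke Remark \ref{recor} to confirm that the geometry structure induced on $\calG^{\der}_{\Zar}(k)$ is the coarsest one for which the inclusion is a transformation of pregeometries, as required by the convention in Definition \ref{skan}.
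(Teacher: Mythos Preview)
Your overall plan matches the paper's: reduce condition~$(3)$ to showing that restriction along $(\Nerve\Poly_k)^{op}\hookrightarrow\calT_{\Zar}(k)$ induces an equivalence $\Fun^{\adm}(\calT_{\Zar}(k),\calC)\to\Fun^{\times}((\Nerve\Poly_k)^{op},\calC)$, and then observe that the only obstruction is pinning down $F(\SSpec k[t,t^{-1}])$. But there are two genuine gaps.

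First, your proposed treatment of the obstacle does not work. Proposition~\ref{umber} characterizes $k[t,t^{-1}]$ by a universal property \emph{inside $\SCR_k$}; it tells you that the particular admissible functor $\bHom_{\SCR_k}(-,A)$ sends $\SSpec k[t,t^{-1}]$ to the units of $A$, but it says nothing about an arbitrary $F\in\Fun^{\adm}(\calT_{\Zar}(k),\calC)$. Your reduction to $\calC=\SSet$ via a Yoneda embedding does not circumvent this: even for $\calC=\SSet$, an admissible functor $G$ with $G|_{\Poly_k^{op}}$ corresponding to $A\in\SCR_k$ is not \emph{a priori} of the form $\bHom_{\SCR_k}(-,A)$, so you still must argue directly that $G(\SSpec k[t,t^{-1}])$ is the subspace of units in $G(\Aff^1)$. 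The paper handles this with Lemma~\ref{mainpoint}, which works in any $\calC$ with finite limits: one uses the admissible pullback $\SSpec k[t,t^{-1}]\times_{\Aff^1}\SSpec k[t,t^{-1}]\simeq\SSpec k[t,t^{-1}]$ to see that $F(\SSpec k[t,t^{-1}])\to F(\Aff^1)$ is a monomorphism, and then the multiplicative monoid structure on $\Aff^1$ (preserved by $F$) together with the admissible pullback of $\SSpec k[t,t^{-1}]$ along the multiplication map to show that any map $C\to F(\Aff^1)$ classifying an invertible element factors through $F(\SSpec k[t,t^{-1}])$. This is the missing idea.

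Second, invoking Remark~\ref{recor} does not establish that the admissibility structure you have \emph{defined} on $\calG^{\der}_{\Zar}(k)$ coincides with the coarsest one making the inclusion a transformation of pregeometries. Remark~\ref{recor} only identifies generators for the topology on $\calT_{\Zar}(k)$. What remains to prove is that every admissible morphism $\SSpec A[\frac{1}{a}]\to\SSpec A$ in $\calG^{\der}_{\Zar}(k)$ is a pullback of $\SSpec k[x,x^{-1}]\to\SSpec k[x]$ (via a map $k[x]\to A$ classifying $a$; this uses Proposition~\ref{axe}), and that every covering $\{\SSpec A[\frac{1}{a_i}]\to\SSpec A\}_{1\le i\le n}$ with $\sum a_ib_i=1$ in $\pi_0 A$ is pulled back from a covering in $\calT_{\Zar}(k)$ (via a map from $k[x_1,\dots,x_n,y_1,\dots,y_n,\frac{1}{\sum x_iy_i}]$). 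These are the paper's steps~$(2a)$ and~$(2b)$, and they require separate arguments.
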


The proof of Proposition \ref{sturma} will require a few preliminaries. Let $\calC$ be
a category which admits finite products. Recall that a {\it monoid object} of $\calC$ is
an object $C \in \calC$ equipped with a unit map
$1_{\calC} \rightarrow C$ (here $1_{\calC}$ denotes a final object of $\calC$) and
a multiplication map $C \times C \rightarrow C$, which satisfy the usual unit and associativity axioms for monoids. Equivalently, a monoid object of $\calC$ is an object $C \in \calC$ together with a
monoid structure on each of the sets $\Hom_{\calC}(D, C)$, depending functorially on $D$.

Let $C \in \calC$ be a monoid object. A {\it unit subobject} of $C$ is a map
$i: C^{\times} \rightarrow C$ with the following property: for every object $D \in \calC$, composition
with $i$ induces a bijection from $\Hom_{\calC}(D,C^{\times})$ to the set of invertible elements of the monoid $\Hom_{\calC}(D, C)$. In particular, $i$ is a monomorphism (so we are justified in describing
$C^{\times}$ as a {\it subobject} of $C$), and $C^{\times}$ is uniquely determined up to canonical isomorphism.

Now suppose that $\calC$ is an $\infty$-category which admits finite products.
A {\it homotopy associative monoid object} of $\calC$ is a monoid object of the homotopy category
$\h{\calC}$. Given a homotopy associative monoid object $C \in \calC$, a {\it unit subobject} 
of $C$ in $\calC$ is a {\em monomorphism} $i: C^{\times} \rightarrow C$ which is a unit subobject of
$C$ in the homotopy category $\h{\calC}$. In other words, a unit subobject of $C$ is a final object
of $\calC^{0}_{/C}$, where $\calC^{0}_{/C}$ denotes the full subcategory of $\calC_{/C}$ spanned by those morphisms $D \rightarrow C$ which correspond to invertible elements of the monoid
$\pi_0 \bHom_{\calC}(D,C)$. From this description, it is clear that $C^{\times}$ and the map
$i: C^{\times} \rightarrow C$ are determined by $C$ up to equivalence, if they exist.

\begin{example}\label{truebad}
The affine line $\SSpec k[x]$ is a homotopy associative monoid object of the $\infty$-category
$\calT_{\Zar}(k)$, with respect to the {\em multiplicative} monoid structure determined by the maps
$$ \SSpec k[x] \leftarrow \SSpec k $$
$$ x \mapsto 1$$
$$ \SSpec k[x] \leftarrow \SSpec k[x_0,x_1] \simeq \SSpec k[x] \times \SSpec k[x]$$
$$ x \mapsto x_0 x_1.$$
Consequently, if $\calC$ is any $\infty$-category which admits finite products and $f: \calT_{\Zar}(k) \rightarrow \calC$ is a functor which preserves finite products, then $f( \SSpec k[x] ) \in \calC$
inherits the structure of a homotopy associative monoid object of $\calC$. (In fact, the object $f( \SSpec k[x] )$ inherits the structure of a monoid object of $\calC$ itself: the associativity of multiplication holds not only up to homotopy, but up to coherent homotopy. However, this is irrelevant for our immediate purposes.)
\end{example}

\begin{lemma}\label{mainpoint}
Let $\calC$ be an $\infty$-category which admits finite limits, and let
$f: \calT_{\Zar}(k) \rightarrow \calC$ be a functor which belongs to
$\Fun^{\adm}( \calT_{\Zar}(k), \calC)$. Then the induced map
$$ \alpha: f( \SSpec k[x,x^{-1}] ) \rightarrow f( \SSpec k[x] )$$ 
is a unit subobject of the homotopy associative monoid object $f( \SSpec k[x] ) \in \calC$
(see Example \ref{truebad}).
\end{lemma}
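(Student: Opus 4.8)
The plan is to verify that $\alpha\colon f(\SSpec k[x,x^{-1}])\to f(\SSpec k[x])$ has the universal property of a unit subobject, namely that for every object $C\in\calC$, composition with $\alpha$ induces a bijection from $\pi_0\bHom_{\calC}(C, f(\SSpec k[x,x^{-1}]))$ onto the set of invertible elements of the monoid $\pi_0\bHom_{\calC}(C, f(\SSpec k[x]))$, and moreover that $\alpha$ is a monomorphism in $\calC$. First I would record that the inclusion $k[x]\hookrightarrow k[x,x^{-1}]$ exhibits $k[x,x^{-1}]$ as the localization of $k[x]$ at $x$; in particular, in $\calT_{\Zar}(k)$ the morphism $\SSpec k[x,x^{-1}]\to\SSpec k[x]$ is admissible, and it fits into the pullback squares that encode the multiplicative monoid structure. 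The key algebraic observation is the pushout square
\[
\xymatrix{ k[x] \ar[r] \ar[d] & k[x,x^{-1}] \ar[d] \\
k[x,x^{-1}] \ar[r] & k[x,x^{-1}] }
\]
(localizing twice at $x$ is the same as localizing once), and more importantly the square expressing that a pair $(a,b)$ of "points" multiplies to an invertible point precisely when each of $a,b$ is invertible: concretely, the diagram
\[
\xymatrix{ \SSpec k[x,x^{-1}] \ar[r] \ar[d] & \SSpec k[x] \ar[d] \\
\SSpec k[x_0, x_1, x_0^{-1}, x_1^{-1}] \ar[r] & \SSpec k[x_0, x_1] }
\]
is a pullback in $\calT_{\Zar}(k)$ (this is the scheme-theoretic statement that the preimage of the unit group of $\mathbf{G}_a$ under multiplication $\mathbf{G}_a\times\mathbf{G}_a\to\mathbf{G}_a$ is $\mathbf{G}_m\times\mathbf{G}_m$), together with the fact that $\SSpec k[x,x^{-1}]\to\SSpec k[x]$ is admissible.

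Next I would apply the hypothesis that $f\in\Fun^{\adm}(\calT_{\Zar}(k),\calC)$: by definition $f$ preserves finite products and carries pullback squares along admissible morphisms to pullback squares in $\calC$. Applying $f$ to the diagrams above, I get that $\alpha$ is a monomorphism in $\calC$ (since it is the pullback of an admissible morphism along the diagonal $\SSpec k[x]\to\SSpec k[x]\times\SSpec k[x]$, following Remark~\ref{sagewise}, and $f$ preserves this pullback), and that for any $C\in\calC$ a map $C\to f(\SSpec k[x])\times f(\SSpec k[x])\simeq f(\SSpec k[x_0,x_1])$ factors through $f(\SSpec k[x,x^{-1}])$ after multiplication if and only if it factors through $f(\SSpec k[x_0,x_1,x_0^{-1},x_1^{-1}])$. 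Translating into $\pi_0$ of mapping spaces — using that $\pi_0\bHom_{\calC}(C,f(\SSpec k[x]))$ carries the monoid structure of Example~\ref{truebad} and that the multiplication map on this monoid corresponds to the multiplication morphism of $\SSpec k[x]$ — this says exactly that an element $u\in\pi_0\bHom_{\calC}(C,f(\SSpec k[x]))$ lifts through $\alpha$ if and only if it is invertible, and that such a lift is unique because $\alpha$ is a monomorphism.

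The main obstacle, I expect, is the bookkeeping needed to match the abstract monoid structure on $\pi_0\bHom_{\calC}(C,f(\SSpec k[x]))$ (transported from the monoid-object structure on $\SSpec k[x]$ in $\calT_{\Zar}(k)$) with the condition of factoring through $f(\SSpec k[x,x^{-1}])$; in particular one must check that "being invertible in the monoid $\pi_0\bHom_{\calC}(C,f(\SSpec k[x]))$" is detected by the single pullback diagram above rather than requiring a two-sided inverse condition that is not obviously captured. This is handled by noting that in $\calT_{\Zar}(k)$ the unit subobject of $\SSpec k[x]$ is visibly $\SSpec k[x,x^{-1}]$ (classical algebraic geometry), so the required pullback squares genuinely exist there, and then transporting everything across $f$. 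Once these compatibilities are in place, the conclusion is immediate: $f(\SSpec k[x,x^{-1}])\to f(\SSpec k[x])$ is a monomorphism which represents the invertible elements functorially, hence is a unit subobject of the homotopy associative monoid object $f(\SSpec k[x])$, as desired.
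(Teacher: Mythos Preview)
Your approach is essentially the same as the paper's, and the key multiplication pullback square (the preimage of $\mathbf{G}_m$ under $\mathbf{G}_a \times \mathbf{G}_a \to \mathbf{G}_a$ is $\mathbf{G}_m \times \mathbf{G}_m$) is indeed the heart of the argument. However, there are two specific ingredients you gesture at but do not actually supply, and without them your ``if and only if'' does not follow from the pullback square alone.

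First, the direction ``factors through $X_0 = f(\SSpec k[x,x^{-1}]) \Rightarrow$ invertible'' is not a consequence of the multiplication pullback. The paper handles this by observing that $\SSpec k[x,x^{-1}]$ carries a group structure in $\calT_{\Zar}(k)$ (the inversion is $x \mapsto x^{-1}$), compatible with the monoid structure on $\SSpec k[x]$; since $f$ preserves finite products, $X_0$ is a homotopy associative group object mapping to $X$, so anything factoring through it is invertible. You never mention the inversion map, and your pullback square by itself says nothing about this direction.

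Second, for ``invertible $\Rightarrow$ factors through $X_0$'', your use of the pullback is correct, but it requires knowing that the unit $e\colon 1_{\calC} \to X$ factors through $X_0$: if $p$ has inverse $p'$, then $pp' \simeq e$, and only once $e$ lifts to $X_0$ does the pullback give that $(p,p')$ lifts to $X_0 \times X_0$. The paper makes this explicit via the factorization $\SSpec k \xrightarrow{x \mapsto 1} \SSpec k[x,x^{-1}] \to \SSpec k[x]$ in $\calT_{\Zar}(k)$. Your appeal to ``the unit subobject of $\SSpec k[x]$ is visibly $\SSpec k[x,x^{-1}]$ in classical algebraic geometry, so the required pullback squares exist there'' is too vague to substitute for these two concrete facts; once you name them, the argument is complete. (Two minor points: your pullback square is drawn upside down relative to the multiplication maps, and the invocation of Remark~\ref{sagewise} for the monomorphism claim is off --- the correct argument is exactly the localization pushout you already wrote down, which after applying $f$ gives $X_0 \times_X X_0 \simeq X_0$.)
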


\begin{proof}
To simplify the notation, we let $X = f( \Spec k[x] ) \in \calC$ and let $X_0 = f( \Spec k[x,x^{-1}] ) \in \calC$. We have a pullback diagram in $\Open_{k}$
$$ \xymatrix{ \SSpec k[x, x^{-1}] \ar[r]^{\id} \ar[d]^{\id} & \SSpec k[x, x^{-1} ] \ar[d] \\
\SSpec k[x,x^{-1}] \ar[r] & \SSpec k[x], }$$
where the vertical arrows are admissible. Since $f$ belongs to $\Fun^{\adm}( \calT_{\Zar}(k), \calC)$, the induced diagram
$$ \xymatrix{ X_0 \ar[r]^{\id} \ar[d]^{\id} & X_0 \ar[d] \\
X_0 \ar[r] & X }$$
is a pullback square in $\calC$. This proves that $\alpha$ is a monomorphism.

We observe that the homotopy associative monoid structure on $\SSpec k[x]$ described in
Example \ref{truebad} determines a homotopy associative monoid structure on the subobject
$\SSpec k[x,x^{-1}]$. Moreover, this homotopy associative monoid structure is actually a homotopy associative {\em group} structure: the inversion map from $\SSpec k[x,x^{-1}]$ is induced by the ring involution which exchanges $x$ with $x^{-1}$. Since $f$ preserves finite products, we conclude
that $X_0$ inherits the structure of a homotopy associative group object of $\calC$, and that
$\alpha$ is compatible with the homotopy associative monoid structure. It follows that if
$p: C \rightarrow X$ is a morphism in $\calC$ which factors through $X_0$ up to homotopy, then
$p$ determines an invertible element of the monoid $\pi_0 \bHom_{\calC}(C, X)$. To complete the proof, we need to establish the converse of this result. Let us therefore assume that 
$p: C \rightarrow X$ is a morphism in $\calC$ which determines an invertible element of
$\pi_0 \bHom_{\calC}(C,X)$; we wish to show that $p$ factors (up to homotopy) through
$\alpha$.

Let $p': C \rightarrow X$ represent a multiplicative inverse to $p$ in 
$\pi_0 \bHom_{\calC}(C,X)$. We wish to show that the product map
$(p,p'): C \times C \rightarrow X \times X$ factors (up to homotopy) through
the monomorphism $\alpha \times \alpha: X_0 \times X_0 \rightarrow X \times X$.
We observe that the multiplication map
$\SSpec k[x] \times \SSpec k[x] \rightarrow \SSpec k[x]$ fits into a pullback diagram
$$ \xymatrix{ \SSpec k[x,x^{-1}] \times \SSpec k[x,x^{-1}] \ar[r] \ar[d] & \SSpec k[x] \times \SSpec k[x] \ar[d] \\
\SSpec k[x,x^{-1}] \ar[r] & \SSpec k[x] }$$
in which the horizontal morphisms are admissible. Since $f \in \Fun^{\adm}( \calT_{\Zar}(k), \calC)$, we conclude that the induced diagram
$$ \xymatrix{ X_0 \times X_0 \ar[r] \ar[d] & X \times X \ar[d] \\
X_0 \ar[r] & X }$$
is a pullback square in $\calC$, where the vertical arrows are given by multiplication.
It will therefore suffice to show that the product map $pp': C \times C \rightarrow X$ factors (up to homotopy) through $X_0$. By definition, this product map is homotopic to the composition
$$ C \times C \rightarrow 1_{\calC} \stackrel{u}{\rightarrow} X,$$
where $u: 1_{\calC} \rightarrow X$ is the unit map. It therefore suffices to show that
$u$ factors through $\alpha$. The desired factorization is an immediate consequence of the commutativity of the following diagram in $\calT_{\Zar}(k)$: 
$$ \xymatrix{ \Spec k \ar[rr]^{x \mapsto 1} \ar[dr]^{x \mapsto 1} & & \Spec k[x,x^{-1}] \ar[dl]^{x \mapsto x} \\
& \Spec k[x]. & }$$
\end{proof}

\begin{proof}[Proof of Proposition \ref{sturma}]
We must prove the following:
\begin{itemize}
\item[$(1)$] For every $\infty$-category idempotent complete $\calC$ which admits finite limits, the restriction map
$$\Fun^{\lex}(\calG_{\Zar}^{\der}(k), \calC) \rightarrow \Fun^{\adm}(\calT_{\Zar}(k), \calC)$$ is an equivalence of $\infty$-categories.
\item[$(2)$] The collections of admissible morphisms and admissible coverings in $\calG_{\Zar}^{\der}(k)$ are generated by admissible morphisms and admissible coverings in the full subcategory $\calT_{\Zar}(k) \subseteq \calG_{\Zar}^{\der}(k)$.
\end{itemize}

We begin with the proof of $(1)$. We have a commutative diagram
$$ \xymatrix{ \Fun^{\lex}(\calG_{\Zar}^{\der}(k), \calC) \ar[rr] \ar[dr]^{u} & & \Fun^{\adm}(\calT_{\Zar}(k), \calC) \ar[dl]^{v} \\
& \Fun^{\pi}( \Nerve(\Poly_{k}), \calC), & }$$
where $\Fun^{\pi}( \Nerve( \Poly_k), \calC)$ denotes the full subcategory of $\Fun( \Nerve(\Poly_{k}), \calC)$ spanned by those functors which preserve finite products. Using Propositions \ref{spakk} and \toposref{lklk}, we deduce that $u$ is an equivalence of $\infty$-categories.
It will therefore suffice to show that $v$ is an equivalence of $\infty$-categories. In view of
Proposition \toposref{lklk}, it will suffice to show the following:
\begin{itemize}
\item[$(a)$] Every functor $f_0: \Nerve( \Poly_{k}) \rightarrow \calC$ which preserves finite products
admits a right Kan extension $f: \calT_{\Zar}(k) \rightarrow \calC$.
\item[$(b)$] A functor $f: \calT_{\Zar}(k) \rightarrow \calC$ belongs to $\Fun^{\adm}( \calT_{\Zar}(k), \calC)$ if and only if $f_0 = f | \Nerve( \Poly_{k})$ preserves products, and $f$ is a right Kan extension of $f_0$.
\end{itemize}
Assertion $(a)$ and the ``if'' direction of $(b)$ follow immediately from Proposition \ref{spakk}.
To prove the reverse direction, let us suppose that $f: \calT_{\Zar}(k) \rightarrow \calC$ belongs to
$\Fun^{\adm}(\calT_{\Zar}(k), \calC)$. Let $f_0 = f | \Nerve( \Poly_{k})$, and let $f': \calT_{\Zar}(k) \rightarrow \calC$
be a right Kan extension of $f_0$ (whose existence is guaranteed by $(a)$). The
identification $f | \Nerve(\Poly_k) = f' | \Nerve(\Poly_k)$ extends to a natural transformation $\alpha: f \rightarrow f'$, which is unique up to homotopy. We wish to show that $\alpha$ is an equivalence.
Fix an arbitrary object $\SSpec R\in \calT_{\Zar}(k)$, where $R = k[x_1, \ldots, x_n, \frac{1}{p(x_1, \ldots, x_n)}]$. We have a pullback diagram
$$ \xymatrix{ \SSpec R \ar[r] \ar[d] & \SSpec k[p,p^{-1}]  \ar[d] \\
\SSpec k[x_1, \ldots, x_n] \ar[r] & \SSpec k[p] }$$
where the vertical arrows are admissible. Since $f$ and $f'$ belong to $\Fun^{\adm}(\calT_{\Zar}(k), \calC)$, 
the map $\alpha_{R}$
will be an equivalence provided that $\alpha_{\SSpec k[x_1, \ldots, x_n]}$, $\alpha_{\SSpec k[p]}$, and $\alpha_{\SSpec k[p,p^{-1}]}$ are equivalences. The first two cases are evident, and the third follows from Lemma \ref{mainpoint}.

Let us now prove $(2)$. Let $\calG$ denote a geometry whose underlying $\infty$-category coincides with $\calG_{\Zar}^{\der}(k)$, such that the inclusion $\calT_{\Zar}(k) \rightarrow \calG$ is a transformation of pregeometries. We must show:
\begin{itemize}
\item[$(2a)$] Every admissible morphism of $\calG_{\Zar}^{\der}(k)$ is an admissible morphism of $\calG$.
\item[$(2b)$] Every collection of $\calG_{\Zar}^{\der}(k)$-admissible morphisms $\{ \SSpec A[ \frac{1}{a_{\alpha}} ] \rightarrow \SSpec A \}$ which generates a $\calG_{\Zar}^{\der}(k)$-covering sieve also generates a $\calG$-covering sieve.
\end{itemize}

We first prove $(2a)$. Let $A$ be a compact object of $\SCR_{k}$, and let $a \in \pi_0 A$. We wish to show that the associated map $u: \SSpec A[ \frac{1}{a} ] \rightarrow \SSpec A$ is $\calG$-admissible.
Let $p: k[x] \rightarrow A$ be a morphism which carries $x \in \pi_0 k[x]$ to $a \in \pi_0 A$.
According to Proposition \ref{axe}, we have a pullback diagram
$$ \xymatrix{ \SSpec A[\frac{1}{a}] \ar[r]^{u} \ar[d] & \SSpec A \ar[d]^{p} \\
\SSpec k[x, x^{-1}] \ar[r]^{u'} & \SSpec k[x] }$$
in $\calG$. It will therefore suffice to show that $u'$ is $\calG$-admissible, which follows from our assumption that $\phi$ is a transformation of pregeometries.

We now prove $(2b)$. Without loss of generality, we may suppose that the covering has the form
$ \{ f_{i}: \SSpec A[ \frac{1}{a_{i}}] \rightarrow \SSpec A \}_{1 \leq i \leq n}$, where the elements $a_i \in \pi_0 A$ generate the unit ideal in $A$. We therefore have an equation of the form
$$ a_1 b_1 + \ldots + a_n b_n = 1$$
in the commutative ring $\pi_0 A$. Let $B = k[ x_1, \ldots, x_n, y_1, \ldots, y_n, \frac{1}{x_1 y_1 + \ldots + x_n y_n } ] \in \SCR_{k}$. Using Remark \ref{bull} and Proposition \ref{umber}, we deduce the existence of a morphism $B \rightarrow A$
in $\SCR_{k}$ carrying each $x_i \in \pi_0 B$ to $a_i \in \pi_0 A$, and each $y_i \in \pi_0 B$ to
$b_i \in \pi_0 A$. Using Proposition \ref{axe}, we deduce that each $f_{i}$ fits into a pullback diagram
$$ \xymatrix{ \SSpec A[ \frac{1}{a_i}] \ar[r]^{f_i} \ar[d] & \SSpec A \ar[d] \\
\SSpec B[ \frac{1}{x_i} ] \ar[r]^{g_i} & \SSpec B. }$$
It therefore suffices to show that the maps $\{ g_i: \SSpec B[ \frac{1}{x_i} ] \rightarrow \SSpec B \}$
determine a $\calG$-covering of $\SSpec B$. Since $\phi$ is a transformation of pregeometries, 
it suffices to show that the maps $g_i$ determine an $\calT_{\Zar}(k)$-covering of $\SSpec B$, which
follows from the observation that the elements $x_i$ generate the unit ideal of $B$.
\end{proof}

\begin{corollary}\label{struba}
For each $n \geq 0$, let $\SCR_{k}^{\leq n}$ denote the full subcategory of
$\SCR_{k}$ spanned by the $n$-truncated objects, and let $\calG^{\der,\leq n}_{\Zar}(k)$ denote the opposite of the full subcategory of $\SCR_{k}^{\leq n}$ spanned by the compact objects. Then
the functor $\phi: \calT_{\Zar}(k) \rightarrow \calG^{\der, \leq n}_{\Zar}(k)$ exhibits $\calG^{\der,\leq n}_{\Zar}(k)$ as an $n$-truncated
geometric envelope of $\calT_{\Zar}(k)$.
\end{corollary}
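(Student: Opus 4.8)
The plan is to deduce Corollary \ref{struba} from Proposition \ref{sturma} together with the general machinery of $n$-stubs developed in \S\ref{app4}. The key structural fact is Lemma \ref{good1}: if $f \colon \calT_{\Zar}(k) \to \calG'$ exhibits $\calG'$ as a geometric envelope and $g \colon \calG' \to \calG$ exhibits $\calG$ as an $n$-stub of $\calG'$, then $g \circ f$ exhibits $\calG$ as an $n$-truncated geometric envelope. By Proposition \ref{sturma} we may take $\calG' = \calG^{\der}_{\Zar}(k)$, which is by definition the opposite of the full subcategory of $\SCR_k$ spanned by the compact objects. So the whole corollary reduces to a single identification: the functor $\calG^{\der}_{\Zar}(k) \to \calG^{\der, \leq n}_{\Zar}(k)$ (induced by the truncation $\tau_{\leq n} \colon \SCR_k \to \SCR_k^{\leq n}$, after passing to compact objects and opposite categories) exhibits $\calG^{\der, \leq n}_{\Zar}(k)$ as an $n$-stub of the geometry $\calG^{\der}_{\Zar}(k)$ in the sense of the relevant definition (the one preceding Proposition \ref{lubba1}).

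First I would verify the underlying statement about $\infty$-categories, namely that the functor on compact objects exhibits $\calG^{\der, \leq n}_{\Zar}(k)$ as an $n$-stub of $\calG^{\der}_{\Zar}(k)$ in the sense of Definition \ref{rab}. This is essentially Corollary \toposref{hunterygreen} applied to $\calC = \SCR_k = \Ind(\calG^{\der}_{\Zar}(k)^{op})$: the truncation functor $\tau_{\leq n}$ preserves compact objects, $\tau_{\leq n}\SCR_k$ is compactly generated, and the induced functor on compact objects has the required universal property among $(n+1)$-categories admitting finite limits. In fact this is precisely the content of Proposition \ref{snubber} and the computation in its proof, specialized to $\calG = \calG^{\der}_{\Zar}(k)$; one should also invoke Lemma \ref{sull} to know the resulting $\infty$-category is idempotent complete (or simply note that $(n+1)$-categories with finite limits are automatically idempotent complete). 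Second, I would check the condition on admissible morphisms and Grothendieck topologies: by the definition of $n$-stub of a geometry, the class of admissible morphisms and the topology on $\calG^{\der, \leq n}_{\Zar}(k)$ must be those generated by the transformation $\calG^{\der}_{\Zar}(k) \to \calG^{\der, \leq n}_{\Zar}(k)$ in the sense of Remark \ref{gener}. But the topology and admissibility on $\calG^{\der, \leq n}_{\Zar}(k)$ as defined in the corollary are spelled out exactly as in the definition of $\calG^{\der}_{\Zar}(k)$: admissible morphisms are the localizations $B[\frac{1}{b}] \to A$, and coverings are generated by jointly unit-ideal-generating families. Since both geometries sit inside $\SCR_k$ and $\pi_0$ is unchanged by $n$-truncation (for $n \geq 0$), these descriptions match the generated structures. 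This is straightforward once one writes out the definitions, using that every admissible morphism out of an $n$-truncated object again has $n$-truncated source and target.

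The main obstacle I expect is precisely this last compatibility check — verifying that the admissibility structure and topology on $\calG^{\der, \leq n}_{\Zar}(k)$ described in the corollary agree with the ones \emph{generated} by the transformation from $\calG^{\der}_{\Zar}(k)$. One direction is automatic (the generated structure is coarsest, hence coarser than the one described). For the other direction one must show that every admissible morphism $B[\frac{1}{b}] \to A$ with $A$, $B$ compact $n$-truncated is a retract of (the $n$-truncation of) an admissible morphism of $\calG^{\der}_{\Zar}(k)$, and likewise for coverings; this amounts to observing that if $f \colon B \to A$ exhibits $A$ as $B[\frac{1}{b}]$ in $\SCR_k$ and both are $n$-truncated, then the truncation of the universal localization $B \to B[\frac{1}{b}]$ in $\SCR_k$ recovers $A$ — which holds because $\pi_i(B[\frac 1b]) = \pi_i B \otimes_{\pi_0 B} \pi_0 B[\frac 1b]$ by Proposition \ref{umber}, so $\tau_{\leq n}$ of the localization is the localization of $\tau_{\leq n} B$ when $B$ is already $n$-truncated. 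For coverings, the analogous statement follows because the covering condition only refers to $\pi_0$, which is preserved by $\tau_{\leq n}$. Once these points are assembled, Lemma \ref{good1} applies directly and completes the proof. I would also remark that, via Proposition \ref{sulw}, this corollary has the consequence that $\Struct_{\calG^{\der, \leq n}_{\Zar}(k)}(\calX) \simeq \Struct^{\leq n}_{\calT_{\Zar}(k)}(\calX)$ for every $\infty$-topos $\calX$, which is the form in which the result will be used later.
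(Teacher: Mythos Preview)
Your approach is correct and matches the paper's exactly: the paper's proof reads in its entirety ``Combine Lemma \ref{good1}, Proposition \ref{sturma}, and the proof of Proposition \ref{snubber}.'' You have supplied the details behind this one-liner, invoking precisely the same three ingredients.

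One small remark: your extended discussion of verifying that the admissibility structure and topology on $\calG^{\der,\leq n}_{\Zar}(k)$ ``as defined in the corollary'' agree with the generated ones is unnecessary, because the corollary does not define any admissibility structure explicitly. By Definition \ref{skaan}, the $n$-truncated geometric envelope automatically carries the structure generated by the functor from $\calT_{\Zar}(k)$; and the $n$-stub of a geometry automatically carries the structure generated by the functor from $\calG^{\der}_{\Zar}(k)$. These two coincide since $\calG^{\der}_{\Zar}(k)$ itself carries the structure generated by $\calT_{\Zar}(k)$ (this is part (2) of Proposition \ref{sturma}) and generation is transitive. So this step is free, and your detailed analysis using Proposition \ref{umber} and $\pi_0$-stability, while correct, is not required.
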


\begin{proof}
Combine Lemma \ref{good1}, Proposition \ref{sturma}, and the proof of Proposition \ref{snubber}. 
\end{proof}

\begin{remark}\label{stabbb}
Applying Corollary \ref{struba} in the case where $n=0$ and $k$ is the ring $\Z$ of integers, we recover the geometry $\calG_{\Zar}$ of Definition \ref{jin}. We may therefore identify a
$0$-truncated $\calT_{\Zar}(\Z)$-structure on an $\infty$-topos $\calX$ with a local sheaf of commutative rings on $\calX$, as explained in \S \ref{exzar}.
\end{remark}

\begin{definition}
Let $k$ be a commutative ring.
A {\it derived $k$-scheme} is a pair $(\calX, \calO_{\calX})$, where
$\calX$ is an $\infty$-topos, $\calO_{\calX}: \calT_{\Zar}(k) \rightarrow \calX$ is an
$\calT_{\Zar}(k)$-structure on $\calX$, and the pair $(\calX, \calO_{\calX})$ is an
$\calT_{\Zar}(k)$-scheme in the sense of Definition \ref{schde}. 

Fix $n \geq 0$. We will say that a derived scheme
$(\calX, \calO_{\calX})$ is {\it $n$-truncated} if the $\calT_{\Zar}(k)$-structure
$\calO_{\calX}$ is $n$-truncated (Definition \ref{bluha}). We will say that $(\calX, \calO_{\calX})$ is 
{\it $n$-localic} if the $\infty$-topos $\calX$ is $n$-localic (Definition \toposref{stuffera}).
\end{definition}

Combining Remark \ref{stabbb} with Theorem \ref{stumm}, we obtain the following relationship between classical and derived algebraic geometry:

\begin{proposition}\label{tooper}
Let $k$ be a commutative ring, and let $\Sch^{\leq 0}_{\leq 0}( \calT_{\Zar}(k) )$ denote the full subcategory of $\Sch( \calT_{\Zar}(k) )$ spanned by the
$0$-localic, $0$-truncated derived $k$-schemes. Then $\Sch^{\leq 0}_{\leq 0}( \calT_{\Zar}(k) )$
is canonically equivalent to (the nerve of) the category of $k$-schemes.
\end{proposition}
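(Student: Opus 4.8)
The plan is to reduce the statement to Theorem \ref{stumm} by producing an equivalence $\Sch^{\leq 0}_{\leq 0}(\calT_{\Zar}(k)) \simeq \{\,0\text{-localic }\calG_{\Zar}(k)\text{-schemes}\,\}$. The first thing to check is that the geometry $\calG^{\der,\leq 0}_{\Zar}(k)$ of Corollary \ref{struba} coincides with the geometry $\calG_{\Zar}(k)$ of Definition \ref{jin}: the $0$-truncated objects of $\SCR_k$ form the ordinary category $\Comm_k$, the compact objects among them are exactly the finitely generated (equivalently, finitely presented) $k$-algebras, and a morphism between discrete objects of $\calG^{\der}_{\Zar}(k)$ is admissible precisely when it is a localization $B \to B[\frac{1}{b}]$ (which is then discrete, being flat over $B$), with the same covering sieves as in Definition \ref{jin}; this is the argument of Remark \ref{stabbb} run over an arbitrary base ring. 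By Corollary \ref{struba} the canonical functor $\phi\colon \calT_{\Zar}(k)\to \calG^{\der,\leq 0}_{\Zar}(k)=\calG_{\Zar}(k)$ exhibits $\calG_{\Zar}(k)$ as the $0$-truncated geometric envelope of $\calT_{\Zar}(k)$, and by Lemma \ref{good1} it factors as $\calT_{\Zar}(k)\xrightarrow{f'}\calG^{\der}_{\Zar}(k)\xrightarrow{f''}\calG_{\Zar}(k)$, where $f'$ is the untruncated geometric envelope of Proposition \ref{sturma} and $f''$ exhibits $\calG_{\Zar}(k)$ as the $0$-stub of $\calG^{\der}_{\Zar}(k)$.

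Next I would promote the structure-sheaf comparison to a comparison of ambient $\infty$-categories. By Proposition \ref{sulw} with $n=0$, composition with $\phi$ induces, for every $\infty$-topos $\calX$, equivalences $\Struct_{\calG_{\Zar}(k)}(\calX)\xrightarrow{\sim}\Struct^{\leq 0}_{\calT_{\Zar}(k)}(\calX)$ and $\Struct^{\loc}_{\calG_{\Zar}(k)}(\calX)\xrightarrow{\sim}\Struct^{\loc}_{\calT_{\Zar}(k)}(\calX)\cap\Struct^{\leq 0}_{\calT_{\Zar}(k)}(\calX)$, functorially in $\calX$. As in the proof of Proposition \ref{swame} (via Corollary \toposref{usefir}, applied to the induced map of coCartesian fibrations over $\LGeo$) these assemble into an equivalence $\LGeo(\calG_{\Zar}(k))\xrightarrow{\sim}\LGeo^{\leq 0}(\calT_{\Zar}(k))$, where $\LGeo^{\leq 0}(\calT_{\Zar}(k))\subseteq\LGeo(\calT_{\Zar}(k))$ is the full subcategory spanned by the $0$-truncated $\calT_{\Zar}(k)$-structured $\infty$-topoi. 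Since this equivalence lies over $\LGeo$, it preserves the underlying $\infty$-topos, hence the property of being $0$-localic, and it is compatible with passage to slices $\calX_{/U}$.

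The key step is to show that this equivalence carries $\Sch(\calG_{\Zar}(k))$ onto the full subcategory of $\LGeo^{\leq 0}(\calT_{\Zar}(k))$ spanned by the $\calT_{\Zar}(k)$-schemes, i.e.\ the $0$-truncated derived $k$-schemes. Given a $0$-truncated $\calT_{\Zar}(k)$-structure $\calO_{\calX}$, let $\widetilde{\calO}_{\calX}$ be its essentially unique extension to a $\calG^{\der}_{\Zar}(k)$-structure along $f'$ (Proposition \ref{sku}); arguing as in the last paragraph of the proof of Proposition \ref{sulw} (the full subcategory of objects $U$ with $\widetilde{\calO}_{\calX}(U)$ $0$-truncated is stable under finite limits and retracts and contains the image of $f'$, hence is everything by Remark \ref{ludda2}), $\widetilde{\calO}_{\calX}$ is automatically $0$-truncated and corresponds to $\calO_{\calX}$ under the equivalence above. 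By Definition \ref{schde}, $(\calX,\calO_{\calX})$ is a $\calT_{\Zar}(k)$-scheme iff $(\calX,\widetilde{\calO}_{\calX})$ is a $\calG^{\der}_{\Zar}(k)$-scheme, so it remains to identify the $0$-truncated $\calG^{\der}_{\Zar}(k)$-schemes with the $\calG_{\Zar}(k)$-schemes under restriction along $f''$ — which by Proposition \ref{lubba1} is a fully faithful embedding $\LGeo(\calG_{\Zar}(k))\hookrightarrow\LGeo(\calG^{\der}_{\Zar}(k))$ with essential image the $0$-truncated objects and left adjoint $\Spec^{\calG_{\Zar}(k)}_{\calG^{\der}_{\Zar}(k)}$. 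Being a scheme is local on the $\infty$-topos and both notions are compatible with slices, so I would reduce to the affine case. Here the hypotheses of Proposition \ref{sableware} hold with $n=0$: $\calT_{\Zar}(k)$ is compatible with $0$-truncations since its admissible morphisms are open immersions of affine schemes, hence $(-1)$-truncated (Proposition \ref{rabb}); and for discrete $A\in\SCR_k=\Ind(\calG^{\der}_{\Zar}(k)^{op})$ the localization $A[\frac{1}{a}]$ is discrete because $\pi_n(A[\frac{1}{a}])\cong\pi_n A\otimes_{\pi_0 A}(\pi_0 A)[\frac{1}{a}]$ vanishes for $n>0$ (Proposition \ref{umber}). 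Now if $(\calX,\calO''_{\calX})\simeq\Spec^{\calG_{\Zar}(k)}A$ for a discrete $k$-algebra $A$, the homotopy-commutative square relating $\Spec^{\calG^{\der}_{\Zar}(k)}$, $\Spec^{\calG_{\Zar}(k)}$ and $\Spec^{\calG_{\Zar}(k)}_{\calG^{\der}_{\Zar}(k)}$ (from the proof in \S\ref{geo6} that the relative spectrum preserves schemes, whose left vertical map is induced by $0$-truncation), together with the fact that on $0$-truncated objects $\Spec^{\calG_{\Zar}(k)}_{\calG^{\der}_{\Zar}(k)}$ is inverse to the embedding of Proposition \ref{lubba1}, identifies the corresponding $\calG^{\der}_{\Zar}(k)$-structured $\infty$-topos with $\Spec^{\calG^{\der}_{\Zar}(k)}A$, an affine $\calG^{\der}_{\Zar}(k)$-scheme (note $\Spec^{\calG^{\der}_{\Zar}(k)}A$ is $0$-truncated for $A$ discrete by Corollary \ref{tabletime}); conversely, if $(\calX,\widetilde{\calO}_{\calX})\simeq\Spec^{\calG^{\der}_{\Zar}(k)}B$ with $\widetilde{\calO}_{\calX}$ $0$-truncated, Proposition \ref{sableware}(1) lets us replace $B$ by $\tau_{\leq 0}B$, so the corresponding $\calG_{\Zar}(k)$-structured $\infty$-topos is $\Spec^{\calG_{\Zar}(k)}(\tau_{\leq 0}B)$, an affine $\calG_{\Zar}(k)$-scheme. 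Globalizing, $\Sch^{\leq 0}_{\leq 0}(\calT_{\Zar}(k))$ is equivalent to the full subcategory of $\Sch(\calG_{\Zar}(k))$ spanned by the $0$-localic $\calG_{\Zar}(k)$-schemes, which Theorem \ref{stumm} identifies with $\Nerve(\Sch_k)$.

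The main obstacle is precisely this last reconciliation: the definition of a $\calT_{\Zar}(k)$-scheme is forced to pass through the \emph{untruncated} envelope $\calG^{\der}_{\Zar}(k)$, and the ``scheme'' condition is only manifestly functorial along the relative spectrum, so bridging it to the geometry $\calG_{\Zar}(k)$ of classical algebraic geometry requires carefully assembling Proposition \ref{sableware}, the compatibility of absolute spectra with transformations of geometries, and Proposition \ref{lubba1}. The remaining ingredients — the identification $\calG^{\der,\leq 0}_{\Zar}(k)=\calG_{\Zar}(k)$, the structure-sheaf equivalence of Proposition \ref{sulw}, and the final appeal to Theorem \ref{stumm} — are essentially bookkeeping.
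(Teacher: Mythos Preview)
Your proposal is correct and follows the same overall strategy as the paper: identify $\calG^{\der,\leq 0}_{\Zar}(k)$ with $\calG_{\Zar}(k)$ via Corollary \ref{struba}/Remark \ref{stabbb}, and then invoke Theorem \ref{stumm}. The paper gives no argument beyond citing those two results, treating the passage from ``$0$-truncated $\calT_{\Zar}(k)$-scheme'' (defined via the untruncated envelope $\calG^{\der}_{\Zar}(k)$) to ``$\calG_{\Zar}(k)$-scheme'' as immediate; you supply a careful justification of precisely this step using Proposition \ref{sableware}, Proposition \ref{lubba1}, and Corollary \ref{tabletime}, which is correct and fills in what the paper elides.
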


\begin{warning}\label{gumber}
In view of Proposition \ref{tooper}, we can
imagine the theory of derived schemes as generalizing classical scheme theory in two different ways:
\begin{itemize}
\item[$(a)$] The ``underlying space'' of a derived scheme $(\calX, \calO_{\calX})$ is an $\infty$-topos $\calX$, which need not be $0$-localic (and so need not come from any underlying topological space).
\item[$(b)$] The structure sheaf $\calO_{\calX}$ of a derived scheme $(\calX, \calO_{\calX})$ can be identified with a sheaf of simplicial commutative rings on $\calX$, which need not be discrete.
\end{itemize}
Generalization $(b)$ leads to a well-behaved theory, but generalization $(a)$ often leads to pathologies (even in the case of a discrete structure sheaf: see Warning \ref{jui}). For this reason, we will generally never consider derived schemes which are not $0$-localic. When this restriction needs to be lifted (for example, in the study of algebraic stacks), it is better to work with the \etale topology
described in \S \ref{juet}.
\end{warning}

\begin{remark}\label{simpletime}
Every admissible morphism in $\calT_{\Zar}(k)$ is a monomorphism. It follows from Proposition \ref{rabb} that the pregeometry $\calT_{\Zar}(k)$ is compatible with $n$-truncations, for
each $0 \leq n \leq \infty$. In particular, to every
$\calT_{\Zar}(k)$-structure $\calO_{\calX}$ on an $\infty$-topos $\calX$ we can associate
an $n$-truncated $\calT_{\Zar}(k)$-structure $\tau_{\leq n} \calO_{\calX}$; when $n=0$, we can view this as a local sheaf of commutative rings on $\calX$ (Remark \ref{stabbb}). Using Proposition \ref{sableware}, we conclude that each $(\calX, \tau_{\leq n} \calO_{\calX})$ is a derived $k$-scheme.
In particular, if $\calX$ is $0$-localic, then we can identify $(\calX, \pi_0 \calO_{\calX})$ with
an ordinary $k$-scheme (Theorem \ref{stumm}); we will refer to this as the {\it underlying ordinary
scheme of $(\calX, \calO_{\calX})$}.
\end{remark}

\begin{remark}\label{dunse}
Let $\calX$ be an $\infty$-topos. Proposition \ref{sturma} implies the existence of a fully faithful embedding $\theta: \Struct_{\calT_{\Zar}(k)}(\calX) \rightarrow \Fun^{\lex}( \calG_{\Zar}^{\der}(k), \calX) \simeq \Shv_{\SCR_{k}}(\calX)$. We will often invoke this equivalence implicitly, and identify a $\calT_{\Zar}(k)$-structure on $\calX$ with the associated sheaf of simplicial commutative $k$-algebras on $\calX$. We will say that a sheaf of simplicial commutative $k$-algebras $\calO$ on $\calX$ is
{\it local} if belongs to the essential image of this embedding.

We will also abuse notation by identifying
a $\calT_{\Zar}(k)$-structure $\calO$ on $\calX$ with the underlying object $\calO( k[x] ) \in \calX$.
Note that both these identifications are compatible with the truncation functors $\tau_{\leq n}$, for each $n \geq 0$ (see Remark \ref{downta}).
\end{remark}

\begin{remark}
The condition that a sheaf $\calO$ of sheaf of commutative $k$-algebras be local depends only on
the underlying sheaf $\pi_0 \calO$ of ordinary commutative $k$-algebras. More precisely,
let $\calX$ be an $\infty$-topos and $\calO$ an object of $\Fun^{\adm}(\calT_{\Zar}(k), \calX)$. Then:
\begin{itemize}
\item[$(1)$] For each $n \geq 0$, the composition
$$ \tau_{\leq n} \calO: \calT_{\Zar}(k) \stackrel{\calO}{\rightarrow} \calX 
\stackrel{ \tau_{\leq n}}{\rightarrow} \calX$$
belongs to $\Fun^{\adm}( \calT_{\Zar}(k), \calX)$.
\item[$(2)$] The sheaf $\calO$ belongs to $\Struct_{ \calT_{\Zar}(k)}(\calX)$ if and only if
$\tau_{\leq n} \calO$ belongs to $\Struct_{ \calT_{\Zar}(k)}(\calX)$.
\end{itemize}
Assertion $(1)$ follows immediately from Remark \ref{downta}, and $(2)$ follows from
Proposition \toposref{pi00detects}.
\end{remark}

\begin{remark}\label{jikker}
Let $A$ be a simplicial commutative $k$-algebra, and consider the affine derived scheme
$(\calX, \calO_{\calX}) = \Spec^{\calG_{\Zar}^{\der}(k)} A$. The underlying $\infty$-topos
$\calX$ can be identified with $\Shv(X)$, where $X$ is the Zariski spectrum of the ordinary commutative ring $\pi_0 A$. This follows from Proposition \ref{sableware}, but can also be deduced from the explicit construction provided by Theorem \ref{scoo}, since the $\infty$-category
of admissible $A$-algebras in $\SCR_{k}$ is equivalent to the $\infty$-category of
admissible $\pi_0 A$-algebras in $\SCR_{k}$ (this follows from Proposition \ref{umber}).
In other words, the topology of derived schemes is no more complicated than the topology of ordinary schemes, so long as we confine our attention to $0$-localic derived schemes.
\end{remark}

We conclude this section by proving a converse to Remark \ref{simpletime}:

\begin{theorem}\label{swill}
Let $\calX = \Shv(X)$ be a $0$-localic $\infty$-topos, and let $\calO_{\calX}$ be a sheaf of simplicial commutative $k$-algebras on $\calX$, viewed (via Proposition \ref{sturma}) as an object of
$\Fun^{\adm}( \calT_{ \Zar}(k), \calX)$. Then $(\calX, \calO_{\calX})$ is a derived $k$-scheme if and only if the following conditions are satisfied:
\begin{itemize}
\item[$(1)$] The truncation $(X, \pi_0 \calO_{\calX} )$ is a $k$-scheme.
\item[$(2)$] For each $i > 0$, $\pi_{i} \calO_{\calX}$ is a quasi-coherent sheaf of
$\pi_0 \calO_{\calX}$-modules.
\item[$(3)$] The structure sheaf $\calO_{\calX}$ is hypercomplete, when regarded as an object
of $\calX$ (see \S \toposref{hyperstacks}). 
\end{itemize}
\end{theorem}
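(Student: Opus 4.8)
The plan is to reduce both implications to the affine case and then analyze the structure sheaf of an affine derived $k$-scheme $\Spec^{\calG_{\Zar}^{\der}(k)} A$ by means of the explicit model for $\USpec A$ supplied by Theorem \ref{scoo}. First I would record two elementary facts about hypercompleteness in an $\infty$-topos $\calX$: every truncated object is hypercomplete (a $(-1)$-connective, in fact $\infty$-connective, map into an $n$-truncated object is inverted after applying $\tau_{\leq n}$), the subcategory of hypercomplete objects is closed under small limits, and the canonical map $Z \to \lim_n \tau_{\leq n} Z$ is always $\infty$-connective. Consequently an object $Z \in \calX$ is hypercomplete if and only if its Postnikov tower converges, i.e. $Z \simeq \lim_n \tau_{\leq n} Z$. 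I would also use that hypercompleteness (see \S \toposref{hyperstacks}) and quasi-coherence of a sheaf of modules are local on $\calX$, and that restriction to a slice $\calX_{/U}$ commutes with truncation; together with Remark \ref{lopus} and Proposition \ref{sizem}, this means that for conditions $(2)$ and $(3)$ it suffices to treat the affine case, while condition $(1)$ will be obtained directly from Remark \ref{simpletime} (for $n = 0$ and $\calX$ $0$-localic, $(\calX,\pi_0\calO_{\calX})$ is the underlying ordinary $k$-scheme).

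Affine analysis (the ``only if'' direction). Suppose $(\calX,\calO_{\calX}) \simeq \Spec^{\calG_{\Zar}^{\der}(k)} A$. By Remark \ref{jikker} the underlying $\infty$-topos is $\Shv(\SSpec \pi_0 A)$; since the Zariski topology on $\Pro(\calG_{\Zar}^{\der}(k))$ is precanonical, Theorem \ref{scoo} (together with Remark \ref{juna}) identifies $\calO_{\calX}$ on the basis of distinguished affine opens $D(a) = \SSpec A[\frac1a]$ with the presheaf $D(a) \mapsto A[\frac1a]$, that is, with the underlying space of $A[\frac1a]$. Using Proposition \ref{umber} (so that $\pi_i A[\frac1a] \simeq \pi_i A \otimes_{\pi_0 A} (\pi_0 A)[\frac1a]$, and in particular the stalk of $\calO_{\calX}$ at a prime $\mathfrak p$ is $A_{\mathfrak p}$), I get that $\pi_0 \calO_{\calX}$ is the structure sheaf of $\SSpec \pi_0 A$ and that $\pi_i \calO_{\calX}$ is the quasi-coherent sheaf associated to the $\pi_0 A$-module $\pi_i A$, which gives $(1)$ (via Theorem \ref{stumm}/Remark \ref{simpletime}) and $(2)$. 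For $(3)$: localization commutes with truncation, so $(\tau_{\leq n} A)[\frac1a] \simeq \tau_{\leq n}(A[\frac1a])$, whence $\lim_n \tau_{\leq n}(A[\frac1a]) \simeq A[\frac1a]$ because every space is the limit of its Postnikov tower. Since the functor of sections over $D(a)$ preserves limits and a sheaf on $\calX$ is determined by its restriction to the basis $\{D(a)\}$ underlying the construction of $\USpec A$, the map $\calO_{\calX} \to \lim_n \tau_{\leq n}\calO_{\calX}$ is an equivalence on the basis, hence an equivalence; by the criterion above $\calO_{\calX}$ is hypercomplete. This establishes the ``only if'' direction.

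The ``if'' direction. Assume $(1)$, $(2)$, $(3)$. After passing to an affine open cover of the $k$-scheme $(X,\pi_0\calO_{\calX})$ (Proposition \ref{sizem}, plus locality of the three conditions), I may assume $X = \SSpec R^0$ with $\pi_0 \calO_{\calX} = \widetilde{R^0}$ and $\pi_i \calO_{\calX} = \widetilde{M_i}$ for quasi-coherent $R^0$-modules $M_i$, $i > 0$. Set $A := \Gamma(\calX,\calO_{\calX}) \in \SCR_k$ and consider the counit $\epsilon \colon \Spec^{\calG_{\Zar}^{\der}(k)} A \to (\calX,\calO_{\calX})$ of the adjunction of Definition \ref{defspect}. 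First, $\pi_q A = M_q$ (with $M_0 = R^0$): since $\calO_{\calX}$ is hypercomplete, $\Gamma(\calX,-)$ carries the equivalence $\calO_{\calX} \simeq \lim_n \tau_{\leq n}\calO_{\calX}$ to an equivalence, so there is a convergent descent spectral sequence $E_2^{p,q} = H^p(X;\pi_q \calO_{\calX}) \Rightarrow \pi_{q-p}\Gamma(\calX,\calO_{\calX})$ (the $\lim^1$-terms of the tower vanish because each of its homotopy groups is eventually constant), and it degenerates because $H^p(X;\widetilde{M_q}) = 0$ for $p > 0$ by the classical vanishing of quasi-coherent cohomology on an affine scheme. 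Hence $\pi_q A = H^0(X;\pi_q\calO_{\calX}) = M_q$. In particular $\SSpec \pi_0 A = \SSpec R^0 = X$, so by Remark \ref{jikker} the underlying $\infty$-topos map of $\epsilon$ is an equivalence; it remains to see that the induced map of structure sheaves is one. Again it suffices to check this on the basis of distinguished affine opens $D(a)$, where $\epsilon$ induces the natural map $A[\frac1a] \to \Gamma(D(a),\calO_{\calX})$ (this is what the explicit construction in Theorem \ref{scoo} and the proof of Proposition \ref{proscoo} record). Repeating the spectral-sequence argument over $D(a)$, and using $(2)$ to compute $\pi_q\Gamma(D(a),\calO_{\calX}) = M_q \otimes_{R^0} R^0[\frac1a]$ and Proposition \ref{umber} to identify this with $\pi_q A[\frac1a]$, shows this map is an isomorphism on all homotopy groups, hence an equivalence. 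Therefore $(\calX,\calO_{\calX}) \simeq \Spec^{\calG_{\Zar}^{\der}(k)} A$ is an affine derived $k$-scheme, and the general case follows.

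I expect the main obstacle to be the hypercompleteness bookkeeping in the ``if'' direction: making the descent spectral sequence $H^p(X;\pi_q\calO_{\calX}) \Rightarrow \pi_{q-p}\Gamma(\calX,\calO_{\calX})$, and its analogue over each $D(a)$, both well-defined and convergent is exactly where hypothesis $(3)$ is used, and one must unwind the explicit construction of $\Spec^{\calG}$ (Theorem \ref{scoo} and Proposition \ref{proscoo}) carefully enough to know that the comparison map produced by $\epsilon$ really is the obvious map $A[\frac1a] \to \Gamma(D(a),\calO_{\calX})$ on the basis, so that the homotopy-group computation applies to it.
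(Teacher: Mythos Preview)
Your ``only if'' direction is essentially the paper's argument: reduce to the affine case, use the explicit model of Theorem \ref{scoo} to identify $\calO_{\calX}$ on distinguished opens with $D(a) \mapsto A[\frac1a]$, read off conditions $(1)$ and $(2)$, and conclude $(3)$ by exhibiting $\calO_{\calX}$ as the limit of its truncations.

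In the ``if'' direction, however, there is a genuine gap. Your preliminary claim that the canonical map $Z \to \lim_n \tau_{\leq n} Z$ is always $\infty$-connective (and hence that an object is hypercomplete if and only if its Postnikov tower converges) is \emph{false} in a general $\infty$-topos: hypercompleteness and Postnikov completeness are distinct notions, and there exist hypercomplete $\infty$-topoi in which Postnikov towers fail to converge. Since your computation of $A = \Gamma(\calX,\calO_{\calX})$ and of $\Gamma(D(a),\calO_{\calX})$ rests on passing from $\calO_{\calX}$ to $\lim_n \tau_{\leq n}\calO_{\calX}$ via this claim, the convergence of your descent spectral sequence is not justified.

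The paper circumvents this by reversing the order of operations. Rather than take global sections of $\calO_{\calX}$ directly, it sets $A_{\leq n} = \Gamma(\calX;\tau_{\leq n}\calO_{\calX})$ for each $n$; since $\tau_{\leq n}\calO_{\calX}$ is truncated, the spectral sequence for \emph{this} object converges unconditionally and yields $\pi_i A_{\leq n} \simeq M_i$ for $i \leq n$. One then shows, for each $n$ separately, that the induced map $\Spec^{\calG^{\der}_{\Zar}(k)} A_{\leq n} \to (\calX,\tau_{\leq n}\calO_{\calX})$ is an equivalence. Only now does one set $A = \lim_n A_{\leq n}$ and write $\Spec A = (\calX,\calO'_{\calX})$; the first half of the proof gives $\calO'_{\calX} \simeq \lim_n \tau_{\leq n}\calO'_{\calX}$, and the level-by-level equivalences yield $\tau_{\leq n}\calO'_{\calX} \simeq \tau_{\leq n}\calO_{\calX}$. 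Thus one obtains a map $\psi\colon \calO_{\calX} \to \lim_n \tau_{\leq n}\calO_{\calX} \simeq \calO'_{\calX}$ which is $\infty$-connective \emph{because its truncations have been explicitly identified}, not by any abstract principle. Hypothesis $(3)$ is invoked only at this final step, to conclude that the $\infty$-connective map $\psi$ between hypercomplete objects is an equivalence. Your strategy becomes correct once rerouted through this level-by-level construction of $A$.
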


\begin{proof}
First suppose that $(\calX, \calO_{\calX})$ is a derived $k$-scheme. We will prove that $(1)$, $(2)$, and $(3)$ are satisfied. Assertion $(1)$ follows immediately from Remark \ref{simpletime}. The remaining assertions are local on $X$ (for assertion $(3)$, this follows from Remark \toposref{suchlike}), so we may
assume without loss of generality that $( \calX, \calO_{\calX})$ is
an affine derived $k$-scheme, given by the spectrum of a simplicial commutative $k$-algebra
$A$. Then we can identify $X$ with the set of prime ideals in the commutative ring $\pi_0 A$, with
a basis of open sets given by $U_{f} = \{ \mathfrak{p} \subseteq \pi_0 A: f \notin \mathfrak{p} \}$. 
Using Theorem \ref{scoo} and Proposition \ref{precon}, we can identify
$\calO_{\calX}$ with the $\SCR_{k}$-valued sheaf described by the formula
$$ U_{f} \mapsto A[f^{-1}].$$
In particular, $\pi_{i} \calO_{\calX}$ is the sheafification of the presheaf of $\pi_0 A$-modules
described by the formula $U_{f} \mapsto ( \pi_i A)[f^{-1}]$, which is the quasi-coherent sheaf
associated to $\pi_{i} A$; this proves $(2)$. To prove $(3)$, choose a Postnikov tower
$$ \ldots \rightarrow \tau_{\leq 2} A \rightarrow \tau_{\leq 1} A \rightarrow \tau_{\leq 0} A,$$
for $A$, and let
$$ \ldots \rightarrow \calO^{\leq 2}_{\calX} \rightarrow \calO^{\leq 1}_{\calX} \rightarrow \calO^{\leq 0}_{\calX} $$
be the associated $\SCR_{k}$-valued sheaves on $\calX$. Using the formula above, we conclude that the canonical map $\calO_{\calX} \rightarrow \lim \{ \calO^{\leq n}_{\calX} \}$ is an equivalence.
To prove $(3)$, it will therefore suffice to show that each $\calO^{\leq n}_{\calX}$ is hypercomplete.
It now suffices to observe that $\calO^{\leq n}_{\calX}$ is $n$-truncated, by Corollary \ref{tabletime}.

We now prove the converse. Suppose that $(1)$, $(2)$, and $(3)$ are satisfied; we wish to prove that
$(\calX, \calO_{\calX})$ is a derived $k$-scheme. The assertion is local on $X$, so we may assume without loss of generality that $(X, \pi_0 \calO_{\calX}) = \SSpec R$ is an affine $k$-scheme. Applying
$(2)$, we conclude that each $\pi_i \calO_{\calX}$ is the quasi-coherent sheaf associated to an
$R$-module $M_i$. For each $n \geq 0$, let $A_{\leq n} \in \SCR_{k}$ denote the global sections
$\Gamma( \calX; \tau_{\leq n} \calO_{\calX} )$. There is a convergent spectral sequence
$$ E_{2}^{p,q} = \HH^{p}(X; \pi_{q} (\tau_{\leq n} \calO_{\calX})) \Rightarrow \pi_{q-p} A_{\leq n}.$$
Since $X$ is affine, the quasi-coherent sheaves $\pi_{i} \calO_{\calX}$ have no cohomology
in positive degrees, and the above spectral sequence degenerates to yield isomorphisms
$$ \pi_{i} A_{\leq n} \simeq \begin{cases} M_{i} & \text{if } i \leq n \\ 0 & \text{otherwise.} \end{cases}$$
In particular, $\pi_0 A_{\leq n} \simeq R$.

Fix $n \geq 0$, and let $( \calX_{n}, \calO_{\calX_{n}} )$ be the spectrum of $A_{\leq n}$.
The equivalence $A_{n} \simeq \Gamma( \calX; \tau_{\leq n} \calO_{\calX} )$ induces a map
$\phi_{n}: (\calX_{n}, \calO_{\calX_{n}}) \rightarrow (\calX, \tau_{\leq n} \calO_{\calX} )$ in $\LGeo( \calG_{\Zar}^{\der}(k))$. The above argument shows that the induced geometric morphism
$\phi_{n}^{\ast}: \calX_{n} \rightarrow \calX$ is an equivalence of $\infty$-topoi, and that
$\phi_{n}$ induces an isomorphism of quasi-coherent sheaves
$\phi_{n}^{\ast} (\pi_{i} \calO_{\calX_n}) \simeq \pi_{i} \calO_{\calX}$ for $0 \leq i \leq n$.
Since the structure sheaves on both sides are $n$-truncated, we conclude that $\phi_{n}$ is an equivalence.

Let $A \in \SCR_{k}$ denote the inverse limit of the tower
$$ \ldots \rightarrow A_{\leq 2} \rightarrow A_{\leq 1} \rightarrow A_{\leq 0},$$
so that $\pi_0 A \simeq R$. We can therefore identify the spectrum of $A$ with
$( \calX, \calO'_{\calX})$. The first part of the proof shows that $\calO'_{\calX}$ is the inverse limit
of its truncations 
$$\tau_{ \leq n} \calO'_{\calX} \simeq \phi_{n}^{\ast} \calO_{ \calX_{n}} \simeq
\tau_{ \leq n} \calO_{\calX}.$$
Passing to the inverse limit, we obtain a map
$$\psi: \calO_{\calX} \rightarrow \lim \{ \tau_{\leq n} \calO_{\calX} \} \simeq \calO'_{\calX}.$$
By construction, $\psi$ induces an isomorphism on all (sheaves of) homotopy groups, and is
therefore $\infty$-connective. The sheaf $\calO'_{\calX}$ is hypercomplete (since it is an inverse limit of truncated objects of $\calX$), and the sheaf $\calO_{\calX}$ is hypercomplete by assumption $(3)$.
It follows that $\psi$ is an equivalence, so that $(\calX, \calO_{\calX}) \simeq \Spec^{\calG^{\der}_{\Zar}} A$ is an affine derived $k$-scheme as desired.
\end{proof}

\begin{remark}
It is not difficult to formulate an analogue of Theorem \ref{swill} in the case where the
underlying $\infty$-topos $\calX$ is not $0$-localic. We have refrained from doing so, because we do not wish to discuss the appropriate generalization of the theory of quasi-coherent sheaves. The proof in general involves no additional difficulties: as in the argument given above, it immediately reduces to the case of affine (and therefore $0$-localic) schemes. We leave the details to the reader.
\end{remark}

\subsection{Derived Algebraic Geometry (\Etale topology)}\label{juet}

In this section, we present a variant on the theory of derived schemes, using the \etale topology in place of the Zariski topology.

\begin{definition}\label{juy}
Let $k$ be a commutative ring. We let
$\Comm_{k}^{\smooth}$ denote the full subcategory of $\Comm_{k}$ spanned by those commutative $k$-algebras $A$ for which there exists an \etale map $k[x_1, \ldots, x_n] \rightarrow A$.
We define a pregeometry $\calT_{\mathet}(k)$ as follows:
\begin{itemize}
\item[$(1)$] The underlying $\infty$-category of $\calT_{\mathet}(k)$ is
$\Nerve( \Comm_{k}^{\smooth})^{op}$.
\item[$(2)$] A morphism in $\calT_{\mathet}(k)$ is admissible if and only if the corresponding
map of $k$-algebras $A \rightarrow B$ is \etale.
\item[$(3)$] A collection of admissible morphisms $\{ A \rightarrow A_{\alpha} \}$ in
$\calT_{\mathet}(k)$ generates a covering sieve on $A$ if and only if there exists a finite
set of indices $\{ \alpha_{i} \}_{1 \leq i \leq n}$ such that the induced map
$A \rightarrow \prod_{1 \leq i \leq n} A_{\alpha_i}$ is faithfully flat.
\end{itemize}
\end{definition}

\begin{remark}
As in Remark \ref{defmod}, it is possible to make several variations on Definition
\ref{juy} which give rise to Morita-equivalent pregeometries. For example, 
Proposition \ref{silver} shows that we could enlarge $\calT_{\mathet}(k)$ to include
{\em all} smooth affine $k$-schemes, or even all smooth $k$-schemes.
\end{remark}

We now describe a geometric envelope for the pregeometry $\calT_{\mathet}(k)$.
First, we need to introduce a few definitions.

\begin{definition}\label{juh}
Let $f: A \rightarrow B$ be a morphism of simplicial commutative rings. We will say that $f$ is {\it \etale} if the following conditions are satisfied:
\begin{itemize}
\item[$(1)$] The underlying map $\pi_0 A \rightarrow \pi_0 B$ is an \etale map of ordinary commutative rings.
\item[$(2)$] For each $i > 0$, the induced map $\pi_{i} A \otimes_{ \pi_0 A} \pi_0 B \rightarrow \pi_{i} B$ is an isomorphism of abelian groups.
\end{itemize}
\end{definition}

\begin{remark}\label{than}
Let $f: A \rightarrow B$ be a map of simplicial commutative rings, and suppose that $A$ is {\em discrete}. Then $f$ is \etale if and only if $B$ is discrete, and $f$ is \etale in the sense of classical commutative algebra.
\end{remark}

\begin{remark}
Let $f: A \rightarrow B$ be a map of simplicial commutative rings. Then $f$ is \etale in the sense of
Definition \ref{juh} if and only if the induced map between the underlying $E_{\infty}$-rings 
(see \S \ref{escr}) is \etale, in the sense of Definition \deformationref{defet}.
\end{remark}

\begin{remark}
The collection of \etale morphisms between simplicial commutative rings contains all equivalences
and is closed under composition and the formation of retracts.
\end{remark}

\begin{notation}
If $A$ is a simplicial commutative $k$-algebra, we let $(\SCR_{k})_{A/}^{\mathet}$ denote the
full subcategory of $(\SCR_{k})_{A/}$ spanned by the \etale morphisms $f: A \rightarrow B$.
\end{notation}

\begin{lemma}\label{thanh}
Suppose given a pushout diagram
$$ \xymatrix{ A \ar[r] \ar[d]^{f} & A' \ar[d]^{f'} \\
B \ar[r] & B' }$$
of simplicial commutative rings. If $f$ is \etale, then $f'$ is also \etale.
\end{lemma}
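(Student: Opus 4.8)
<br>

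The plan is to reduce the statement to the corresponding assertion about $E_\infty$-rings and invoke the machinery already available. Recall from \S\ref{escr} that there is a functor $\theta\colon \SCR_k \to (\EInfty)^{\conn}_{k/}$ which, by Proposition \ref{skillmon}, preserves small colimits and is conservative; and that a morphism $f$ of simplicial commutative rings is \'etale precisely when $\theta(f)$ is \'etale as a map of connective $E_\infty$-rings. (More precisely, this last point should be extracted from Remark \ref{ugher}, which identifies $\pi_\ast A$ with $\pi_\ast\theta(A)$ compatibly with ring structures, so that conditions $(1)$ and $(2)$ of Definition \ref{juh} for $f$ translate into the analogous conditions for $\theta(f)$.) Thus the first step is: given the pushout square in the statement, apply $\theta$ to obtain a pushout square of connective $E_\infty$-rings, since $\theta$ preserves pushouts.

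The second step is to apply the corresponding stability result for \'etale morphisms of $E_\infty$-rings. The excerpt refers to Definition \deformationref{defet} for \'etale morphisms of $E_\infty$-rings, so the base-change stability of \'etale maps in that setting is available from \cite{deformation}; invoking it gives that the map $\theta(f')\colon \theta(A')\to\theta(B')$ is \'etale. The third step is to transfer this conclusion back to $\SCR_k$: since $\theta(f')$ is \'etale, and since ``$f'$ \'etale'' is by definition equivalent to ``$\theta(f')$ \'etale'' (via the identification of homotopy groups in Remark \ref{ugher}), we conclude that $f'$ is \'etale, which is exactly what we want.

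Alternatively—and this may be cleaner—one can argue directly at the level of homotopy groups using the spectral sequence of Corollary \ref{tubble}. Writing the pushout as $B' \simeq B\otimes_A A'$, Corollary \ref{tubble} provides a convergent spectral sequence $E^{p,q}_2 = \Tor^{\pi_\ast A}_p(\pi_\ast B, \pi_\ast A')_q \Rightarrow \pi_{p+q} B'$. Condition $(2)$ of Definition \ref{juh} for $f$ says that $\pi_\ast B \simeq \pi_\ast A \otimes_{\pi_0 A} \pi_0 B$, i.e. $\pi_\ast B$ is flat over $\pi_\ast A$ (it is obtained by base change along $\pi_0 A\to\pi_0 B$, which is flat since $\pi_0 A\to\pi_0 B$ is \'etale). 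Flatness makes the spectral sequence degenerate, yielding $\pi_\ast B' \simeq \pi_\ast B \otimes_{\pi_0 A}\pi_0 A' \simeq \pi_\ast A'\otimes_{\pi_0 A'}(\pi_0 B\otimes_{\pi_0 A}\pi_0 A')$. It remains to check that $\pi_0 B' \simeq \pi_0 B\otimes_{\pi_0 A}\pi_0 A'$ is \'etale over $\pi_0 A'$ — but this is the classical statement that \'etale morphisms of ordinary commutative rings are stable under base change — and then condition $(2)$ for $f'$ reads off directly from the displayed formula for $\pi_\ast B'$.

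The main obstacle is bookkeeping rather than conceptual: one must be careful that the base-change identification $\pi_0 B' \cong \pi_0 B \otimes_{\pi_0 A} \pi_0 A'$ genuinely holds (i.e. that $\pi_0$ of the pushout in $\SCR_k$ agrees with the pushout in $\Comm_k$, which follows from $\pi_0(A_0\otimes_A A_1)\simeq\Tor^A_0(A_0,A_1)$ as recorded after Notation \ref{cumby}), and that the flatness of $\pi_\ast B$ over $\pi_\ast A$ is correctly invoked to kill the higher $\Tor$ terms — essentially the same flatness argument used in the proof of Proposition \ref{umber}. I would present the direct homotopy-group argument as the main proof, since it keeps everything inside $\SCR_k$ and uses only results already established in \S\ref{escr}.
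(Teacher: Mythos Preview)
Your second approach---degenerating the spectral sequence of Corollary~\ref{tubble} via flatness of $\pi_\ast B$ over $\pi_\ast A$, then invoking classical base-change stability of \'etale maps---is exactly the paper's proof. One slip: the degeneration yields $\pi_\ast B' \simeq \pi_\ast B \otimes_{\pi_\ast A} \pi_\ast A'$ (tensor over $\pi_\ast A$, not over $\pi_0 A$); after substituting $\pi_\ast B \simeq \pi_\ast A \otimes_{\pi_0 A} \pi_0 B$ you reach your final formula $\pi_\ast A' \otimes_{\pi_0 A'} (\pi_0 B \otimes_{\pi_0 A} \pi_0 A')$, which is correct.

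Your first approach (transport to $E_\infty$-rings via $\theta$ and quote the analogous stability result from \cite{deformation}) is valid but circuitous: it outsources the work to a parallel theory, whereas the direct argument stays entirely within the tools of \S\ref{escr} and is what the paper records. You are right to prefer the second.
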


\begin{proof}
Since every \etale morphism of ordinary commutative rings is flat, the spectral sequence
of Corollary \ref{tubble} degenerates and yields an isomorphism
$\pi_{\ast} B' \simeq (\pi_{\ast} A') \otimes_{ \pi_{\ast} A} (\pi_{\ast} B)$. The result now follows from the analogous assertion for ordinary commutative rings.
\end{proof}

\begin{proposition}\label{swimm}
Let $k$ be a commutative ring, and let $\phi: A \rightarrow B$ be a morphism of simplicial commutative $k$-algebras. The following conditions are equivalent:
\begin{itemize}
\item[$(1)$] The map $\phi$ is \etale.
\item[$(2)$] There exists a pushout diagram
$$ \xymatrix{ k[x_1, \ldots, x_n] \ar[r] \ar[d]^{\phi_0} & A \ar[d]^{\phi} \\
k[y_1, \ldots, y_n, \Delta^{-1}] \ar[r] & B }$$
in $\SCR_{k}$, where $\phi_0(x_i) = f_i(y_1, \ldots, y_n)$ and
$\Delta$ denotes the determinant of the Jacobian matrix
$$[ \frac{ \bd f_i}{\bd y_j}]_{1 \leq i,j \leq n}$$ (in particular, $\phi_0$ is a map
of smooth discrete commutative $k$-algebras).
\end{itemize}
\end{proposition}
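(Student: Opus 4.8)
The plan is to prove the two implications separately, reducing the simplicial situation to classical commutative algebra at the level of $\pi_0$, and to isolate the existence of a standard \'etale presentation as the one external input.

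The implication $(2) \Rightarrow (1)$ is the easy one. Given the pushout square of $(2)$, it suffices by Lemma \ref{thanh} to check that $\phi_0 \colon k[x_1,\ldots,x_n] \to k[y_1,\ldots,y_n,\Delta^{-1}]$ is \'etale. Since both rings are discrete, Remark \ref{than} reduces this to showing $\phi_0$ is \'etale in the classical sense. This is standard: $\phi_0$ is of finite presentation, $\Omega_{k[y,\Delta^{-1}]/k[x]}$ is the cokernel of multiplication by the Jacobian matrix $[\partial f_i/\partial y_j]$ and hence vanishes because $\Delta = \det[\partial f_i/\partial y_j]$ is invertible, and the same invertibility lets one solve (uniquely) the system $f_i(y)=c_i$ along any square-zero extension by Hensel's lemma, so $\phi_0$ is formally \'etale, hence \'etale (hence also flat). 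Lemma \ref{thanh} then shows the pushout $\phi$ is \'etale.

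For $(1) \Rightarrow (2)$, suppose $\phi\colon A \to B$ is \'etale, and set $A_0 = \pi_0 A$, $B_0 = \pi_0 B$, so that $A_0 \to B_0$ is an \'etale morphism of finitely presented ordinary $k$-algebras. I will invoke the classical structure theory: after enlarging the number of variables, $A_0 \to B_0$ fits into a pushout square of ordinary commutative rings with top map $k[x_1,\ldots,x_n]\to A_0$, $x_i \mapsto c_i$ for suitable $c_i \in A_0$, left map $\phi_0\colon k[x_1,\ldots,x_n]\to k[y_1,\ldots,y_n,\Delta^{-1}]$, $x_i \mapsto f_i(y)\in k[y_1,\ldots,y_n]$, and $\Delta = \det[\partial f_i/\partial y_j]$ (one obtains such a square from a standard \'etale presentation $B_0 = A_0[z_1,\ldots,z_m]/(g_1,\ldots,g_m)$ with invertible Jacobian by promoting the finitely many coefficients of the $g_i$ that lie in $A_0$ to extra variables). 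Using Remark \ref{bull}, lift $(c_1,\ldots,c_n)\in(\pi_0 A)^n$ to a map $\psi\colon k[x_1,\ldots,x_n]\to A$ in $\SCR_k$, and form the pushout $B' = A\otimes_{k[x_1,\ldots,x_n]} k[y_1,\ldots,y_n,\Delta^{-1}]$ of $\psi$ along $\phi_0$; by the previous paragraph $\phi_0$ is \'etale, so $A \to B'$ is \'etale by Lemma \ref{thanh}. Since $\phi_0$ is flat, the spectral sequence of Corollary \ref{tubble} degenerates and yields $\pi_\ast B' \simeq \pi_\ast A \otimes_{k[x]} k[y,\Delta^{-1}] \simeq \pi_\ast A \otimes_{A_0} B_0$; comparing with Definition \ref{juh}$(2)$ for $\phi$, we get $\pi_0 B' \simeq B_0 \simeq \pi_0 B$ and the homotopy groups of $B'$ and $B$ match abstractly. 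To get an honest comparison, build $\rho\colon k[y_1,\ldots,y_n,\Delta^{-1}]\to B$ in $\SCR_k$: by Remark \ref{bull} and Proposition \ref{umber} this is the datum of $d_i\in\pi_0 B = B_0$ with $\Delta(d)$ invertible, and we take $d_i$ to be the images of $y_i$, so $\Delta(d)=\Delta$ is a unit in $B_0$. The composites $\rho\circ\phi_0$ and $\phi\circ\psi$ from $k[x_1,\ldots,x_n]$ to $B$ agree on $\pi_0$ (both send $x_i$ to the image of $c_i$, since $f_i(d)=c_i$ in $B_0$), hence are homotopic by Remark \ref{bull}; a choice of homotopy determines an $A$-algebra map $B' \to B$ out of the pushout. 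This map is an isomorphism on $\pi_0$, and since $B'$ and $B$ are \'etale over $A$ the isomorphisms $\pi_\ast A\otimes_{A_0}\pi_0(-)\xrightarrow{\sim}\pi_\ast(-)$ are natural in the $A$-algebra, so $B'\to B$ is an isomorphism on all homotopy groups, hence an equivalence by conservativity of the underlying-space functor (Remark \ref{tweeze}). This produces the square in $(2)$.

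The main obstacle lies entirely in the hard direction, in two places: (i) the classical fact that an arbitrary \'etale morphism of finitely presented commutative rings can be written \emph{globally} as a pushout of $k[x_1,\ldots,x_n]\to k[y_1,\ldots,y_n,\Delta^{-1}]$ --- the device of adjoining variables to absorb the coefficients of a standard \'etale presentation is what makes this possible; and (ii) the naturality of the relative-flatness isomorphism $\pi_\ast A\otimes_{\pi_0 A}\pi_0 C\xrightarrow{\sim}\pi_\ast C$ for \'etale $A$-algebras $C$, which is what upgrades the $\pi_0$-isomorphism $B'\to B$ to an equivalence. Both are elementary given what is already available (Corollary \ref{tubble}, Lemma \ref{thanh}, Definition \ref{juh}); alternatively one may run the entire argument on connective $E_\infty$-rings via Proposition \ref{skillmon} and import the corresponding statement from the deformation-theoretic treatment of \'etale morphisms.
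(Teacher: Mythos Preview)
Your proof is correct and follows essentially the same approach as the paper: both directions invoke Lemma \ref{thanh} and Remark \ref{than} for $(2)\Rightarrow(1)$, and for $(1)\Rightarrow(2)$ both use the classical structure theorem, promote the coefficients of a standard \'etale presentation to extra polynomial variables, lift the resulting map $k[x_1,\ldots,x_n]\to\pi_0 A$ to $A$ via Remark \ref{bull}, form the pushout, and verify the comparison map to $B$ is an equivalence using flatness (Corollary \ref{tubble}) together with the \'etaleness of $\phi$. The only cosmetic difference is that the paper assembles the commutative square directly (using that the lifted relations $\overline{f}_i$ vanish in $\pi_0 B$), whereas you build the map out of the pushout by separately constructing $\rho$ and checking compatibility on $\pi_0$.
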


\begin{proof}
The implication $(2) \Rightarrow (1)$ follows from Lemma \ref{thanh} and Remark \ref{than}.
For the converse, suppose that $\phi$ is \etale. The structure theorem for \etale morphisms of ordinary commutative rings implies the existence of an isomorphism
$$ \pi_0 B \simeq (\pi_0 A)[ y_1, \ldots, y_m]/(f_1, \ldots, f_m),$$
such that the determinant of the Jacobian matrix $[ \frac{ \bd f_i}{\bd y_j}]_{1 \leq i,j \leq m}$
is invertible in $\pi_0 B$. Let $\{ a_i \}_{ 1 \leq i \leq k}$ be the nonzero coefficients appearing
in the polynomials $f_i$. Using Remark \ref{bull}, we can choose a commutative diagram
$$ \xymatrix{ k[x_1, \ldots, x_k] \ar[r]^-{g_0} \ar[d] & A \ar[d]^{\phi} \\
k[x_1, \ldots, x_k, y_1, \ldots, y_m] \ar[r]^-{g_1} & B}$$
where $g_0$ carries each $x_i$ to $a_i \in \pi_0 A$. 
Choose, for each $1 \leq i \leq m$, a polynomial $\overline{f}_i \in k[y_1, \ldots, y_m, x_1, \ldots, x_k]$
lifting $f_i$, so that $g_1( \overline{f}_i) = 0 \in \pi_0 B$. 
Let $\Delta \in k[y_1, \ldots, y_m, x_1, \ldots, x_k]$ be the determinant of the Jacobian
matrix $[ \frac{ \bd \overline{f}_i}{\bd y_j}]_{1 \leq i,j \leq n}$.
Using Remark \ref{bull} and Proposition \ref{umber}, we deduce the existence of a commutative diagram
$$ \xymatrix{ k[x_1, \ldots, x_k, z_1, \ldots, z_m] \ar[d]^{h} \ar[r]^-{\epsilon} & k[x_1, \ldots, x_k] \ar[r] & A \ar[d]^{\phi} \\
k[x_1, \ldots, x_k, y_1, \ldots, y_m,\Delta^{-1}] \ar[rr]^-{g_1} & & B }$$
where $h(z_i) = \overline{f}_i$ and $\epsilon(z_i) = 0$ for $1 \leq i \leq m$.
We claim that the outer square appearing in this diagram is a pushout. To see this, form a
pushout diagram
$$ \xymatrix{ k[x_1, \ldots, x_k, z_1, \ldots, z_m] \ar[r] \ar[d]^{\phi_0} & A \ar[d] \\
k[x_1, \ldots, x_k, y_1, \ldots, y_m,\Delta^{-1}] \ar[r] & B' }$$
so that we have a canonical map $\psi: B' \rightarrow B$; we wish to show that $\psi$ is an
equivalence. Since $\phi_0$ is flat, the spectral sequence of Corollary \ref{tubble}
implies the existence of a canonical isomorphism
$$\pi_{i} B' \simeq (\pi_{i} A) \otimes_{ k[x_1, \ldots, x_k, z_1, \ldots, z_m] } 
k[x_1, \ldots, x_k, y_1, \ldots, y_m, \Delta^{-1}]
\simeq (\pi_i A) \otimes_{ \pi_0 A} (\pi_0 B).$$
Combining this with our assumption that $\phi$ is \etale, we conclude that
$\psi$ induces an isomorphism $\pi_{i} B' \simeq \pi_{i} B$ for each $i \geq 0$, as desired.
\end{proof}

\begin{corollary}\label{sweetup}
Let $k$ be a commutative ring, $A$ a compact object of $\SCR_{k}$, and
$f: A \rightarrow B$ an \etale map. Then $B$ is a compact object of $\SCR_{k}$. 
\end{corollary}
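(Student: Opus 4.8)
The plan is to deduce this directly from the structure theorem for \'etale morphisms established above. By Proposition \ref{swimm}, the \'etale map $f\colon A \to B$ fits into a pushout square in $\SCR_k$ with vertices $k[x_1, \ldots, x_n]$, $A$, $k[y_1, \ldots, y_n, \Delta^{-1}]$, and $B$, in which the map $k[x_1, \ldots, x_n] \to A$ is a morphism into $A$ and $\Delta$ is an element of the polynomial ring $k[y_1, \ldots, y_n] = \pi_0 k[y_1, \ldots, y_n]$. In particular, $B$ is the colimit of the diagram $k[y_1, \ldots, y_n, \Delta^{-1}] \leftarrow k[x_1, \ldots, x_n] \to A$, which is a finite colimit. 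Since the collection of compact objects of $\SCR_k$ is stable under finite colimits (Corollary \toposref{tyrmyrr}), it will suffice to show that each of the three terms $k[y_1, \ldots, y_n, \Delta^{-1}]$, $k[x_1, \ldots, x_n]$, and $A$ is a compact object of $\SCR_k$.

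The object $A$ is compact by assumption. The polynomial rings $k[x_1, \ldots, x_n]$ and $k[y_1, \ldots, y_n]$ are compact (in fact compact and projective), since they are finite coproducts of copies of $k[x]$ and therefore lie in the essential image of the generating functor $\Nerve \Poly_k \to \SCR_k$ described in Remark \ref{humba}. Finally, $k[y_1, \ldots, y_n, \Delta^{-1}]$ is obtained from the compact object $k[y_1, \ldots, y_n]$ by inverting the element $\Delta \in \pi_0 k[y_1, \ldots, y_n]$ (in the sense of Proposition \ref{umber}), and is therefore compact by Proposition \ref{presweet}. Assembling these observations yields the desired conclusion.

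There is no real obstacle here beyond correctly importing the ingredients: the substance is entirely concentrated in the \'etale structure theorem (Proposition \ref{swimm}) and in the compactness of localizations (Proposition \ref{presweet}), both of which are already available. The only point that requires a moment's care is to observe that a pushout square is a \emph{finite} colimit, so that stability of the class of compact objects under finite colimits---rather than under arbitrary colimits---is all that is needed.
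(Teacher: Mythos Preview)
Your proof is correct and follows essentially the same strategy as the paper: use Proposition~\ref{swimm} to exhibit $B$ as a pushout, invoke stability of compact objects under finite colimits (Corollary~\toposref{tyrmyrr}), and handle the localization $k[y_1,\ldots,y_n,\Delta^{-1}]$ via Proposition~\ref{presweet}. Your argument is in fact slightly more direct than the paper's: after applying Proposition~\ref{swimm}, the paper reduces to the case where $A$ is a polynomial ring and $B$ is a discrete \'etale $A$-algebra, then re-applies the classical structure theory to write $B$ as a further pushout before invoking Proposition~\ref{presweet}; you observe instead that the object $k[y_1,\ldots,y_n,\Delta^{-1}]$ produced directly by Proposition~\ref{swimm} is already a localization of a polynomial ring, so Proposition~\ref{presweet} applies immediately.
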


\begin{proof}
Corollary \toposref{tyrmyrr} asserts that the collection of compact objects of
$\SCR_{k}$ is stable under finite colimits. Using Proposition \ref{swimm}, we may
assume without loss of generality that $A = k[y_1, \ldots, y_m]$ and that $R$ is a discrete
$A$-algebra. The classical structure theory for \etale morphisms implies the existence of
an isomorphism $R \simeq A[x_1, \ldots, x_n, \Delta^{-1}]/(f_1, \ldots, f_n)$, where
$\Delta$ is the determinant of the Jacobian matrix $[ \frac{ \bd f_i}{\bd x_j}]_{1 \leq i,j \leq n}$.
We have a pushout diagram of ordinary commutative rings
$$ \xymatrix{ A[ z_1, \ldots, z_n] \ar[r]^{f} \ar[d]^{g} & A \ar[d] \\
A[x_1, \ldots, x_n, \Delta^{-1}] \ar[r] & R, }$$
where $f(z_i) = 0$ and $g(z_i) = f_i$ for $1 \leq i \leq n$. The map $f$ is \etale and therefore flat, so the spectral sequence of Corollary \ref{tubble} implies that the above diagram is also a pushout
square in $\SCR_{k}$. It will therefore suffice to show that $A$, $A[z_1, \ldots, z_n]$, and
$A[x_1, \ldots, x_n, \Delta^{-1}]$ are compact objects ofq $\SCR_{k}$. In the first two cases, this is clear; in the third, it follows from Proposition \ref{presweet}.
\end{proof}

\begin{proposition}\label{sweetdown}
Let $k$ be a commutative ring, $f: A \rightarrow B$ an \etale morphism in $\SCR_{k}$, and let
$R$ another object in $\SCR_{k}$. 
$$ \xymatrix{ \bHom_{\SCR_{k}}( B, R) \ar[r] \ar[d]^{\phi} & \Hom_{\Comm_{k}}( \pi_0 B, \pi_0 R) \ar[d] \\
\bHom_{\SCR_{k}}(A, R) \ar[r] & \Hom_{\Comm_{k}}( \pi_0 A, \pi_0 R) }$$
is a homotopy pullback square.
\end{proposition}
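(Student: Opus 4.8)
The plan is to first reduce to a universal case using Proposition \ref{swimm}, and then verify the assertion there by an explicit computation with homotopy groups, using the Jacobian description of Remark \ref{snapple}.

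First I would observe that both functors $R' \mapsto \bHom_{\SCR_{k}}(R', R)$ and $R' \mapsto \Hom_{\Comm_{k}}(\pi_0 R', \pi_0 R)$, from $\SCR_{k}^{\op}$ to spaces, carry pushout squares in $\SCR_{k}$ to pullback squares: the first because mapping spaces carry colimits in the first variable to limits, and the second because $\pi_0 \colon \SCR_{k} \to \Comm_{k}$ preserves pushouts (see the discussion following Notation \ref{cumby}) while $\Hom_{\Comm_{k}}(-, \pi_0 R)$ carries colimits to limits. By Proposition \ref{swimm}, the étale map $f$ fits into a pushout square
$$ \xymatrix{ k[x_1, \ldots, x_n] \ar[r] \ar[d]_{\phi_0} & A \ar[d]^{f} \\ k[y_1, \ldots, y_n, \Delta^{-1}] \ar[r] & B, } $$
with $\phi_0(x_i) = f_i(y_1, \ldots, y_n)$ and $\Delta = \det[\frac{\bd f_i}{\bd y_j}]$. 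Applying the two functors above, the square in the statement of the proposition is obtained from the analogous square for $\phi_0$ by forming the pullback along the maps induced by $k[x_1, \ldots, x_n] \to A$; a routine manipulation of iterated homotopy pullbacks shows that if the square for $\phi_0$ is a homotopy pullback, then so is the square for $f$. I am therefore reduced to the case $A = k[x_1, \ldots, x_n]$, $B = k[y_1, \ldots, y_n, \Delta^{-1}]$, $f = \phi_0$.

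In this case I would make the four corners explicit. By Remark \ref{bull} (and Remark \ref{tweeze}), $\bHom_{\SCR_{k}}(A, R) \simeq R^{n}$ (the $n$th power of the underlying space of $R$), so that $\Hom_{\Comm_{k}}(\pi_0 A, \pi_0 R) = (\pi_0 R)^{n} = \pi_0(R^{n})$ and the bottom map of the square is the truncation $\tau \colon R^{n} \to (\pi_0 R)^{n}$. By Proposition \ref{umber}, $B = k[y_1, \ldots, y_n, \Delta^{-1}]$ is the localization of $k[y_1, \ldots, y_n]$ with respect to $\Delta$; it is discrete, with $\pi_0 B$ the classical localization. Hence $\bHom_{\SCR_{k}}(B, R)$ is the union of those path components of $R^{n}$ spanned by maps $g$ with $g(\Delta)$ invertible in $\pi_0 R$, the set $\Hom_{\Comm_{k}}(\pi_0 B, \pi_0 R)$ is the corresponding subset of $(\pi_0 R)^{n}$, the top map is again truncation, and both vertical maps are induced by the polynomial map $\underline{f} = (f_1, \ldots, f_n)$. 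Since the two maps whose pullback we are forming have discrete targets, the homotopy pullback decomposes as a coproduct over points $z \in (\pi_0 R)^{n}$ with $\Delta(z)$ invertible, and the comparison map from $\bHom_{\SCR_{k}}(B,R)$ reduces, over each such $z$, to the map $\tau^{-1}(z) \to \tau^{-1}(\underline{f}(z))$ of path components of $R^{n}$ induced by $\underline{f}$.

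The main obstacle is exactly this last point: showing that, whenever $\Delta(z)$ is invertible in $\pi_0 R$, the map $\underline{f} \colon \tau^{-1}(z) \to \tau^{-1}(\underline{f}(z))$ is an equivalence. Both spaces are connected, so the map is a bijection on $\pi_0$; and by Remark \ref{snapple} its effect on $\pi_i$ for $i > 0$ (at any basepoint) is multiplication by the Jacobian matrix $[\frac{\bd f_i}{\bd y_j}](z)$, which is invertible over $\pi_0 R$ precisely because its determinant $\Delta(z)$ is a unit. By Whitehead's theorem, such a map is an equivalence. This finishes the universal case, and hence, by the reduction above, the proof.
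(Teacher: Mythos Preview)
Your proof is correct and follows essentially the same route as the paper: reduce via Proposition \ref{swimm} to the standard presentation $A = k[x_1,\ldots,x_n]$, $B = k[y_1,\ldots,y_n,\Delta^{-1}]$, then identify the mapping spaces with powers of the underlying space of $R$ and invoke Remark \ref{snapple} to see that the Jacobian gives an isomorphism on higher homotopy groups. The paper phrases the endgame as computing the homotopy fiber $F$ of $\phi$ over a fixed $\eta$ via the fiber sequence $F \to (X^n)' \stackrel{\psi}{\to} X^n$ and reading off from the long exact sequence that $F$ is discrete, whereas you decompose both sides over components $\tau^{-1}(z)$ and check $\underline{f}$ is an equivalence on each; these are equivalent organizations of the same computation. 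Your cube argument for the reduction step is also correct (and more explicit than the paper's bare ``we may suppose'').
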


\begin{proof}
Using Proposition \ref{swimm}, we may suppose that
$A = k[x_1, \ldots, x_n]$ and $B = k[y_1, \ldots, y_n, \Delta^{-1}]$, with 
$f(x_i) = f_i( y_1, \ldots, y_n)$ and $\Delta$ the determinant of the Jacobian matrix
$[ \frac{ \bd f_i}{\bd y_j} ]_{1 \leq i,j \leq n}$.

Fix a morphism $\eta: A \rightarrow R$. We wish to show that
the homotopy fiber $F$ of $\phi$ over the point $\eta$ is homotopy equivalent to the
discrete set $\Hom_{\Comm_{A}}( B, \pi_0 R)$. Let $X = \bHom_{\SCR_{k}}( k[t], R)$ be the underlying space of $R$. Using Remark \ref{bull} and Proposition \ref{umber}, we conclude that
$F$ fits into a homotopy fiber sequence
$$ F \rightarrow (X^{n})' \stackrel{\psi}{\rightarrow} X^n$$
where $(X^{n})'$ is the union of those connected components of $X^{n}$ corresponding
to $n$-tuples $(r_1, \ldots, r_n) \in (\pi_0 R)^{n}$ where the polynomial $\Delta$ does not vanish,
and $\psi$ is induced by the collection polynomials $\{ f_i \}_{1 \leq i \leq n}$. 
In particular, for any base point $\widetilde{\eta}$ in $F$ and $i > 0$, we have a long exact sequence
$$ \ldots \pi_{i}(F,\widetilde{\eta}) \rightarrow (\pi_{i} R)^{n} \stackrel{g_i}{\rightarrow} (\pi_{i} R)^{n} \rightarrow \ldots.$$
Using Remark \ref{snapple} and the invertibility of $\Delta$ in $\pi_0 R$, we conclude that $g_i$ is an isomorphism for $i > 0$. This proves that $F$ is homotopy equivalent to a discrete space: namely, 
the fiber of the map $\pi_0 (X^{n})' \stackrel{\pi_0 \psi}{\rightarrow} \pi_0 X^{n}$.
This fiber can be identified with $\Hom_{\Comm_{A}}( B, \pi_0 R)$ by construction.
\end{proof}

From this, we can deduce the following analogue of Theorem \deformationref{turncoat} (note that the proof in this context is considerably easier):

\begin{corollary}\label{sweepdown}
Let $f: A \rightarrow B$ be a morphism of simplicial commutative $k$-algebras which
induces an isomorphism $\pi_0 A \simeq \pi_0 B$, and let
$F: (\SCR_{k})_{A/} \rightarrow (\SCR_{k})_{B/}$ be the functor
described by the formula $A' \mapsto A' \otimes_{A} B$. Then
$F$ induces an equivalence of $\infty$-categories 
$F^{\mathet}: (\SCR_{k})_{A/}^{\mathet} \rightarrow (\SCR_{k})_{B/}^{\mathet}$.
\end{corollary}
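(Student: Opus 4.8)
The plan is to show that $F^{\mathet}$ is both fully faithful and essentially surjective; full faithfulness will come from Proposition \ref{sweetdown}, and essential surjectivity from the explicit pushout presentation of \'etale morphisms in Proposition \ref{swimm}. First I would check that $F^{\mathet}$ is even well-defined: if $A'$ is \'etale over $A$, then $A' \otimes_A B$ is \'etale over $B$ by Lemma \ref{thanh}, so base change along $f$ does carry $(\SCR_{k})_{A/}^{\mathet}$ into $(\SCR_{k})_{B/}^{\mathet}$.

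For full faithfulness, fix \'etale $A$-algebras $A'$ and $A''$. Applying Proposition \ref{sweetdown} to the \'etale morphism $A \to A'$ with $R = A''$ gives a homotopy pullback square relating $\bHom_{\SCR_{k}}(A',A'')$, $\bHom_{\SCR_{k}}(A,A'')$, $\Hom_{\Comm_{k}}(\pi_0 A', \pi_0 A'')$, and $\Hom_{\Comm_{k}}(\pi_0 A, \pi_0 A'')$; taking the homotopy fiber over the structure map $A \to A''$ identifies $\bHom_{(\SCR_{k})_{A/}}(A', A'')$ with the discrete set $\Hom_{\Comm_{\pi_0 A}}(\pi_0 A', \pi_0 A'')$, naturally in $A'$ and $A''$. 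In other words $\pi_0$ realizes $(\SCR_{k})_{A/}^{\mathet}$ as (equivalent to) a full subcategory of the ordinary category $\Comm_{\pi_0 A}^{\mathet}$ of \'etale $\pi_0 A$-algebras, and the same holds with $A$ replaced by $B$. Since $\pi_0$ carries pushouts to $\Tor_0$ (Corollary \ref{tubble}), there is a natural identification $\pi_0(A' \otimes_A B) \simeq \pi_0 A' \otimes_{\pi_0 A} \pi_0 B$, so that $F^{\mathet}$ is intertwined, via these two embeddings, with the base-change functor $\Comm_{\pi_0 A}^{\mathet} \to \Comm_{\pi_0 B}^{\mathet}$ along the isomorphism $\pi_0 A \xrightarrow{\sim} \pi_0 B$. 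As the latter is an equivalence and both copies of $\pi_0$ are fully faithful, $F^{\mathet}$ is fully faithful.

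For essential surjectivity, let $B'$ be \'etale over $B$. By Proposition \ref{swimm} there is a pushout square in $\SCR_{k}$ exhibiting $B' \simeq B \otimes_{k[x_1,\dots,x_n]} k[y_1,\dots,y_n,\Delta^{-1}]$, where $k[\vec x] \to k[\vec y,\Delta^{-1}]$ is standard \'etale and $g_0\colon k[\vec x] \to B$ is some map. By Remark \ref{bull}, $\bHom_{\SCR_{k}}(k[\vec x], C)$ is the $n$-fold power of the underlying space of $C$, functorially in $C$; since $\pi_0 A \to \pi_0 B$ is surjective, the induced map of spaces $\bHom_{\SCR_{k}}(k[\vec x], A) \to \bHom_{\SCR_{k}}(k[\vec x], B)$ meets the path component of $g_0$, so we may choose $\widetilde{g}_0\colon k[\vec x] \to A$ with $f \circ \widetilde{g}_0 \simeq g_0$. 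Set $A' = A \otimes_{k[\vec x],\widetilde{g}_0} k[\vec y,\Delta^{-1}]$, which is \'etale over $A$ by Lemma \ref{thanh}. Then associativity of relative tensor products together with the homotopy $f \circ \widetilde{g}_0 \simeq g_0$ gives $A' \otimes_A B \simeq B \otimes_{k[\vec x],g_0} k[\vec y,\Delta^{-1}] \simeq B'$, so $F^{\mathet}(A') \simeq B'$.

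The main point requiring care is not conceptual but organizational: one must verify that the pointwise identifications of mapping spaces produced by Proposition \ref{sweetdown} assemble into genuine functors $\pi_0\colon (\SCR_{k})_{A/}^{\mathet} \to \Comm_{\pi_0 A}^{\mathet}$ and $\pi_0\colon (\SCR_{k})_{B/}^{\mathet} \to \Comm_{\pi_0 B}^{\mathet}$, and that $F^{\mathet}$ together with base change on $\pi_0$ fits into a square of functors commuting up to natural equivalence. Once this is set up, the conclusion is immediate. (Alternatively one can avoid invoking the categorical square by combining the essential surjectivity established above with the pointwise computation of mapping spaces, but the packaging via the two $\pi_0$ embeddings is the cleanest.)
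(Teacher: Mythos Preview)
Your proof is correct and follows essentially the same approach as the paper: full faithfulness via Proposition \ref{sweetdown}, and essential surjectivity via the pushout presentation of Proposition \ref{swimm} together with Remark \ref{bull} to lift the map $k[x_1,\dots,x_n] \to B$ through $A$. The paper's proof is terser---it simply asserts that Proposition \ref{sweetdown} implies full faithfulness without spelling out the embedding into $\Comm_{\pi_0 A}^{\mathet}$---but your elaboration via the $\pi_0$ functors is a correct and clean way to unpack that citation, and the essential surjectivity argument is identical to the paper's.
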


\begin{proof}
Proposition \ref{sweetdown} implies that $F^{\mathet}$ is fully faithful. To prove that $F^{\mathet}$ is fully faithful, we consider
an arbitrary \etale map $B \rightarrow B'$. Proposition \ref{swimm} implies the existence of a pushout diagram 
$$ \xymatrix{ k[x_1, \ldots, x_n] \ar[r]^{\eta} \ar[d] & B \ar[d] \\
k[y_1, \ldots, y_n, \Delta^{-1}] \ar[r] & B'.}$$
Using Remark \ref{bull}, we can lift $\eta$ to a map $\widetilde{\eta}: k[x_1, \ldots, x_n] \rightarrow A$.
Form another pushout diagram
$$ \xymatrix{ k[x_1, \ldots, x_n] \ar[r]^{\widetilde{\eta}} \ar[d] & A \ar[d] \\
k[y_1, \ldots, y_n, \Delta^{-1}] \ar[r] & A'.}$$
Then $F^{\mathet}(A') \simeq B'$.
\end{proof}

\begin{definition}
Let $k$ be a commutative ring. We define a geometry $\calG_{\mathet}^{\der}(k)$ as follows:
\begin{itemize}
\item[$(1)$] The underlying $\infty$-category of $\calG_{\mathet}^{\der}(k)$ coincides with
$\calG_{\Zar}^{\der}(k)$, the (opposite of) the $\infty$-category of {\em compact} objects of
$\SCR_{k}$.
\item[$(2)$] A morphism in $\calG_{\Zar}^{\der}(k)$ is admissible if and only if the corresponding map of simplicial commutative $k$-algebras is \etale.
\item[$(3)$] A collection of admissible morphisms $\{ A \rightarrow A_{\alpha} \}$ in $\calG_{\Zar}^{\der}(k)$ generates a covering sieve on $A$ if and only if there exists a finite set of indices
$\{ \alpha_{i} \}_{1 \leq i \leq n}$ such that the induced map
$\pi_0 A \rightarrow \prod_{1 \leq i \leq n} \pi_0 A_{\alpha_i}$ is faithfully flat.
\end{itemize}
\end{definition}

\begin{remark}\label{swimble}
The proof of Proposition \ref{snubber} shows that the truncation functor
$\pi_0: \SCR_{k} \rightarrow \Nerve( \Comm_{k} )$ induces a transformation of
geometries $\calG_{\mathet}^{\der}(k) \rightarrow \calG_{\mathet}(k)$, which exhibits
$\calG_{\mathet}(k)$ as a $0$-stub of $\calG_{\mathet}^{\der}(k)$.
\end{remark}

If $A$ is a smooth commutative $k$-algebra, then $A$ is compact when regarded as a discrete object
of $\SCR_{k}$. By a mild abuse of notation, we may use this observation to identify
$\calT_{\mathet}(k)$ with a full subcategory of $\calG_{\mathet}^{\der}(k)$. Our main result can then be stated as follows:

\begin{proposition}\label{sturman}
Let $k$ be a commutative ring. The inclusion $\calT_{\mathet}(k) \subseteq \calG_{\mathet}^{\der}(k)$ exhibits $\calG_{\mathet}^{\der}(k)$ as a geometric envelope of $\calT_{\mathet}(k)$.
\end{proposition}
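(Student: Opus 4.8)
The proof will follow the template of Proposition \ref{sturma}. It suffices to verify two assertions: $(1)$ for every idempotent complete $\infty$-category $\calC$ which admits finite limits, the restriction map $\Fun^{\lex}(\calG_{\mathet}^{\der}(k), \calC) \to \Fun^{\adm}(\calT_{\mathet}(k), \calC)$ is an equivalence; and $(2)$ the classes of admissible morphisms and admissible coverings of $\calG_{\mathet}^{\der}(k)$ are generated, in the sense of Remark \ref{gener}, by those of $\calT_{\mathet}(k)$.

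For $(1)$ I would use the same factorization
$$ \xymatrix{ \Fun^{\lex}(\calG_{\mathet}^{\der}(k), \calC) \ar[rr] \ar[dr]_{u} & & \Fun^{\adm}(\calT_{\mathet}(k), \calC) \ar[dl]^{v} \\ & \Fun^{\pi}(\Nerve(\Poly_k), \calC) & } $$
appearing in the proof of Proposition \ref{sturma} (here $\Fun^{\pi}$ denotes the finite-product-preserving functors). Since $\calG_{\mathet}^{\der}(k)$ has the same underlying $\infty$-category as $\calG_{\Zar}^{\der}(k)$, the functor $u$ is an equivalence exactly as there (via Propositions \ref{spakk} and \toposref{lklk}), so it remains to show that $v$ is an equivalence. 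By Proposition \toposref{lklk} this amounts to showing that every product-preserving $f_0 \colon \Nerve(\Poly_k) \to \calC$ admits a right Kan extension $f \colon \calT_{\mathet}(k) \to \calC$, and that $f \in \Fun^{\adm}(\calT_{\mathet}(k), \calC)$ if and only if $f_0 = f\mid \Nerve(\Poly_k)$ preserves products and $f$ is a right Kan extension of $f_0$. Existence of right Kan extensions and the ``if'' direction follow as in Proposition \ref{sturma}, now invoking the structure theorem for \'etale morphisms (Proposition \ref{swimm}) in place of the explicit description of $\calT_{\Zar}(k)$. The substance is the ``only if'' direction: given $f \in \Fun^{\adm}(\calT_{\mathet}(k), \calC)$, the canonical natural transformation $f \to f'$, with $f'$ a right Kan extension of $f_0$, must be shown to be an equivalence. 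Fix $\SSpec B \in \calT_{\mathet}(k)$, with $B$ \'etale over $A = k[x_1, \dots, x_n]$. Proposition \ref{swimm} presents $\SSpec B$ as the pullback of $\SSpec k[y_1, \dots, y_n, \Delta^{-1}] \to \SSpec k[x_1, \dots, x_n] \leftarrow \SSpec A$ along admissible morphisms, and $\SSpec k[y_1, \dots, y_n, \Delta^{-1}]$ is in turn the pullback of $\SSpec k[\Delta, \Delta^{-1}] \to \SSpec k[\Delta] \leftarrow \SSpec k[y_1, \dots, y_n]$ along admissible morphisms (localizations are \'etale, hence admissible in $\calT_{\mathet}(k)$). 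Since $f$ and $f'$ carry admissible pullback squares to pullback squares, this reduces the claim to checking that $f \to f'$ is an equivalence on affine spaces, where it is trivial, and on $\SSpec k[\Delta, \Delta^{-1}]$. For the latter I note that the restriction of any $f \in \Fun^{\adm}(\calT_{\mathet}(k), \calC)$ to the full subcategory $\calT_{\Zar}(k) \subseteq \calT_{\mathet}(k)$ lies in $\Fun^{\adm}(\calT_{\Zar}(k), \calC)$ (pullbacks along Zariski open immersions agree in the two categories), so Lemma \ref{mainpoint} applies and identifies both $f(\SSpec k[\Delta, \Delta^{-1}])$ and $f'(\SSpec k[\Delta, \Delta^{-1}])$ with the unit subobject of the homotopy associative monoid $f(\SSpec k[\Delta]) = f_0(\SSpec k[x])$; the map between them is then the canonical equivalence.

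For $(2)$, let $\calG$ be a geometry on the same underlying $\infty$-category as $\calG_{\mathet}^{\der}(k)$ for which $\calT_{\mathet}(k) \to \calG$ is a transformation of pregeometries. First, given an \'etale map $A \to B$ of compact objects, Proposition \ref{swimm} exhibits it as a pullback of the \'etale map $k[x_1, \dots, x_n] \to k[y_1, \dots, y_n, \Delta^{-1}]$ of smooth discrete $k$-algebras, which is $\calT_{\mathet}(k)$-admissible hence $\calG$-admissible; since $\calG$-admissible morphisms are stable under pullback, $A \to B$ is $\calG$-admissible. For the covering statement I may assume the covering is a finite family $\{A \to A_{\alpha_i}\}_{1 \le i \le r}$ with $\pi_0 A \to \prod_i \pi_0 A_{\alpha_i}$ faithfully flat. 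Applying Proposition \ref{swimm} to each $A \to A_{\alpha_i}$ and assembling the resulting polynomial rings into a single $R = k[x^{(1)}_{\bullet}, \dots, x^{(r)}_{\bullet}]$ equipped with a map $R \to A$, one obtains $A_{\alpha_i} \simeq A \otimes_{R} R_i$ where $R_i$ is a localized polynomial ring \'etale over $R$, hence an object of $\calT_{\mathet}(k)$. Since $R_i$ is flat over $R$ we have $\pi_0 A_{\alpha_i} \simeq \pi_0 A \otimes_{R} R_i$, and each $\SSpec R_i \to \SSpec R$ is an open immersion with image $V_i$, so the faithful-flatness hypothesis says precisely that $\SSpec \pi_0 A \to \SSpec R$ factors through the open set $\bigcup_i V_i$. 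Covering $\bigcup_i V_i$ by principal opens $\SSpec R[\tfrac{1}{h_s}]$ yields a Zariski covering $\{A \to A[\tfrac{1}{\overline{h}_s}]\}$ of $A$, which is a $\calG$-covering by the argument of part $(2b)$ of Proposition \ref{sturma}. It then suffices to check that the pullback of $\{A \to A_{\alpha_i}\}$ to each $A[\tfrac{1}{\overline{h}_s}]$ is a $\calG$-covering; but this pullback is $\{A[\tfrac{1}{\overline{h}_s}] \to A[\tfrac{1}{\overline{h}_s}] \otimes_{R[\frac{1}{h_s}]} R_i[\tfrac{1}{h_s}]\}_i$, and over $R[\tfrac{1}{h_s}]$ the family $\{R[\tfrac{1}{h_s}] \to R_i[\tfrac{1}{h_s}]\}_i$ is now genuinely jointly surjective on spectra (their images exhaust $\SSpec R[\tfrac{1}{h_s}] \subseteq \bigcup_i V_i$), hence an \'etale covering and so a $\calT_{\mathet}(k)$-covering, whose image under the transformation $\calT_{\mathet}(k) \to \calG$ and subsequent pullback along $R[\tfrac{1}{h_s}] \to A[\tfrac{1}{\overline{h}_s}]$ is a $\calG$-covering.

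The main obstacle is assertion $(2)$, and specifically reconciling the two formulations of ``covering'': a covering in $\calG_{\mathet}^{\der}(k)$ is a faithful-flatness condition on $\pi_0$, whereas in $\calT_{\mathet}(k)$ it is a faithful-flatness condition among smooth $k$-algebras, and bridging them requires the structure theory of \'etale morphisms (Proposition \ref{swimm}) together with the geometric bookkeeping with images of open immersions sketched above, the Zariski case being inherited from Proposition \ref{sturma}. By contrast, the universal property $(1)$ is a fairly mechanical adaptation of the Zariski argument, the only new observation being that Lemma \ref{mainpoint} remains available after restricting to the Zariski subpregeometry.
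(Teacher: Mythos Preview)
Your proof is correct and follows essentially the same strategy as the paper's, reducing both assertions to the Zariski case via the structure theorem for \'etale morphisms (Proposition \ref{swimm}) and invoking Lemma \ref{mainpoint} for the unit-subobject identification. One small slip: in part $(2)$ the maps $\SSpec R_i \to \SSpec R$ are \'etale but not in general open immersions (the standard \'etale presentation of Proposition \ref{swimm} can have degree $>1$); however, you only use that their images $V_i$ are open, which holds for any \'etale map, so the argument is unaffected.
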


We will postpone the proof of Proposition \ref{sturman} for a moment, and develop some consequences.

\begin{corollary}\label{supp}
The inclusion $\calT_{\mathet}(k) \subseteq \calG_{\mathet}(k)$ exhibits
$\calG_{\mathet}(k)$ as a $0$-truncated geometric envelope of $\calT_{\mathet}(k)$.
\end{corollary}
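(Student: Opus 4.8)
The plan is to obtain Corollary \ref{supp} by assembling three results already in hand, exactly as in the proof of the Zariski analogue (Corollary \ref{struba}): Proposition \ref{sturman}, Remark \ref{swimble}, and Lemma \ref{good1}.

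First I would invoke Proposition \ref{sturman}, which says that the inclusion $\calT_{\mathet}(k) \hookrightarrow \calG_{\mathet}^{\der}(k)$ exhibits $\calG_{\mathet}^{\der}(k)$ as a geometric envelope — i.e., an $\infty$-truncated geometric envelope — of the pregeometry $\calT_{\mathet}(k)$. Next, Remark \ref{swimble} supplies a transformation of geometries $\pi_0 \colon \calG_{\mathet}^{\der}(k) \to \calG_{\mathet}(k)$, induced by the truncation functor $\SCR_{k} \to \Nerve(\Comm_{k})$, which exhibits $\calG_{\mathet}(k)$ as a $0$-stub of $\calG_{\mathet}^{\der}(k)$; in particular, the class of admissible morphisms and the Grothendieck topology on $\calG_{\mathet}(k)$ agree with those generated by $\pi_0$ in the sense of Remark \ref{gener}. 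Applying Lemma \ref{good1} with $n = 0$ then gives that the composite $\calT_{\mathet}(k) \hookrightarrow \calG_{\mathet}^{\der}(k) \stackrel{\pi_0}{\to} \calG_{\mathet}(k)$ exhibits $\calG_{\mathet}(k)$ as a $0$-truncated geometric envelope of $\calT_{\mathet}(k)$.

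It then remains to identify this composite with the given inclusion $\calT_{\mathet}(k) \subseteq \calG_{\mathet}(k)$. On underlying $\infty$-categories, $\calT_{\mathet}(k) = \Nerve(\Comm_{k}^{\smooth})^{op}$ is spanned by discrete smooth $k$-algebras; since every \etale $k$-algebra over an affine space is of finite presentation over $k$, we have $\Comm_{k}^{\smooth} \subseteq \Comm_{k}^{\fin}$, yielding a canonical fully faithful embedding $\calT_{\mathet}(k) \hookrightarrow \Nerve(\Comm_{k}^{\fin})^{op} = \calG_{\mathet}(k)$; and $\pi_0$ restricts to the identity on discrete objects of $\SCR_{k}$, so the composite above is equivalent to this embedding. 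I would also observe that the geometry structure produced on $\calG_{\mathet}(k)$ by the $0$-stub construction is precisely the one of \S\ref{exet}: this is recorded in Remark \ref{swimble} (admissible morphisms are the \etale maps of discrete $k$-algebras, and the topology is the \etale topology of Definition \ref{spei}), and by transitivity of the ``generated by'' operation it is also the coarsest geometry structure making the inclusion a transformation of pregeometries, as required by Definition \ref{skaan}.

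The argument is essentially bookkeeping, so I do not anticipate a genuine obstacle; the one point that needs care is keeping the several structures in play consistent — the $\infty$-categorical geometric envelope of Proposition \ref{sturman}, the $0$-stub geometry structure of Remark \ref{swimble}, and the a priori geometry structure on $\calG_{\mathet}(k)$ from \S\ref{exet} — together with verifying that the composite $\calT_{\mathet}(k) \to \calG_{\mathet}^{\der}(k) \to \calG_{\mathet}(k)$ is indeed (equivalent to) the stated inclusion. Once these identifications are pinned down, Lemma \ref{good1} does the rest.
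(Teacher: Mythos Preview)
Your proposal is correct and follows exactly the same route as the paper: the paper's proof reads simply ``Combine Proposition \ref{sturman}, Lemma \ref{good1}, and Remark \ref{swimble},'' which is precisely the three ingredients you invoke in the same order. The additional bookkeeping you supply (identifying the composite with the stated inclusion and matching up the geometry structures) is a reasonable elaboration of what the paper leaves implicit.
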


\begin{proof}
Combine Proposition \ref{sturman}, Lemma \ref{good1}, and Remark \ref{swimble}.
\end{proof}

\begin{remark}\label{dunsee}
Let $\calX$ be an $\infty$-topos. Proposition \ref{sturman} implies the existence of a fully faithful embedding $\theta: \Struct_{\calT_{\mathet}(k)}(\calX) \rightarrow \Fun^{\lex}( \calG_{\mathet}^{\der}(k), \calX) \simeq \Shv_{\SCR_{k}}(\calX)$. We will often invoke this equivalence implicitly, and identify a $\calT_{\mathet}(k)$-structure on $\calX$ with the associated sheaf of simplicial commutative $k$-algebras on $\calX$. We will say that a sheaf of simplicial commutative $k$-algebras $\calO$ on $\calX$ is {\it strictly Henselian} if belongs to the essential image of this embedding.
\end{remark}

\begin{remark}
The condition that a sheaf $\calO$ of sheaf of commutative $k$-algebras be strictly Henselian depends only on the underlying sheaf $\pi_0 \calO$ of ordinary commutative $k$-algebras. More precisely,
let $\calX$ be an $\infty$-topos and $\calO$ an object of $\Fun^{\adm}(\calT_{\mathet}(k), \calX)$. Then:
\begin{itemize}
\item[$(1)$] For each $n \geq 0$, the composition
$$ \tau_{\leq n} \calO: \calT_{\mathet}(k) \stackrel{\calO}{\rightarrow} \calX 
\stackrel{ \tau_{\leq n}}{\rightarrow} \calX$$
belongs to $\Fun^{\adm}( \calT_{\mathet}(k), \calX)$.
\item[$(2)$] The sheaf $\calO$ belongs to $\Struct_{ \calT_{\mathet}(k)}(\calX)$ if and only if
$\tau_{\leq n} \calO$ belongs to $\Struct_{ \calT_{\mathet}(k)}(\calX)$.
\end{itemize}
Assertion $(1)$ follows immediately from Remark \ref{downta} below, and $(2)$ follows from
Proposition \toposref{pi00detects}.
\end{remark}

\begin{definition}
Let $k$ be a commutative ring. A {\it derived Deligne-Mumford stack} over $k$ is
a $\calT_{\mathet}(k)$-scheme.
\end{definition}

The following result shows that the theory of derived Deligne-Mumford stacks really does generalize the classical theory of Deligne-Mumford stacks:

\begin{proposition}\label{jak}
Let $\Sch^{\leq 0}_{\leq 1}( \calT_{\mathet}(k) )$ denote the full subcategory of
$\Sch( \calT_{\mathet}(k) )$ spanned by those $\calT_{\mathet}(k)$-schemes which
are $0$-truncated and $1$-localic. Then $\Sch^{\leq 0}_{\leq 1}( \calT_{\mathet}(k) )$
is canonically equivalent to the $\infty$-category of Deligne-Mumford stacks over $k$, as
defined in Definition \ref{spud}.
\end{proposition}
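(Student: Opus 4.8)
The plan is to deduce the statement from the characterization of Deligne--Mumford stacks obtained in Theorem \ref{sup}, by comparing the pregeometry $\calT_{\mathet}(k)$ with the geometry $\calG_{\mathet}(k)$. The starting point is Corollary \ref{supp}, which exhibits $\calG_{\mathet}(k)$ as the $0$-truncated geometric envelope of $\calT_{\mathet}(k)$. By Proposition \ref{sulw} with $n=0$, composition with the inclusion $\calT_{\mathet}(k) \hookrightarrow \calG_{\mathet}(k)$ induces, for every $\infty$-topos $\calX$, equivalences $\Struct_{\calG_{\mathet}(k)}(\calX) \to \Struct^{\leq 0}_{\calT_{\mathet}(k)}(\calX)$ and $\Struct^{\loc}_{\calG_{\mathet}(k)}(\calX) \to \Struct^{\loc,\leq 0}_{\calT_{\mathet}(k)}(\calX)$, depending functorially on $\calX$; these assemble into an equivalence between $\LGeo(\calG_{\mathet}(k))$ and the full subcategory of $\LGeo(\calT_{\mathet}(k))$ spanned by the $0$-truncated $\calT_{\mathet}(k)$-structures. (Since $\calG_{\mathet}(k)$ is the nerve of an ordinary category, every $\calG_{\mathet}(k)$-structure is automatically $0$-truncated, so nothing is lost on that side.)

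Next I would show that this equivalence carries $0$-truncated $\calT_{\mathet}(k)$-schemes to $\calG_{\mathet}(k)$-schemes, and conversely. By Definition \ref{schde}, a $\calT_{\mathet}(k)$-scheme is a $\calG_{\mathet}^{\der}(k)$-scheme for the \emph{full} geometric envelope $\calG_{\mathet}^{\der}(k)$ of Proposition \ref{sturman}, and a $\calT_{\mathet}(k)$-structure is $0$-truncated exactly when the corresponding $\calG_{\mathet}^{\der}(k)$-structure is (this is the argument concluding the proof of Proposition \ref{sulw}: $\calG_{\mathet}^{\der}(k)$ is generated under finite limits and retracts by the image of $\calT_{\mathet}(k)$, and the structure is left exact). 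So the point is to identify the $0$-truncated $\calG_{\mathet}^{\der}(k)$-schemes with the objects produced from $\calG_{\mathet}(k)$-schemes by Proposition \ref{lubba1}, applied to the transformation $\calG_{\mathet}^{\der}(k) \to \calG_{\mathet}(k)$ of Remark \ref{swimble} exhibiting $\calG_{\mathet}(k)$ as the $0$-stub of $\calG_{\mathet}^{\der}(k)$. For this I would invoke Proposition \ref{sableware} with $\calT = \calT_{\mathet}(k)$, $\calG = \calG_{\mathet}^{\der}(k)$, $n=0$: hypothesis $(a)$, that $\calT_{\mathet}(k)$ is compatible with $0$-truncations, follows from Proposition \ref{sweetdown} (comparing $\calO(\SSpec B)$ with its $0$-truncation for an \'etale map $A \to B$ of smooth $k$-algebras, fibrewise over $\calX$), and hypothesis $(b)$, that an admissible morphism in $\Ind(\calG_{\mathet}^{\der}(k)^{op}) = \SCR_k$ out of a discrete object has discrete target, is Remark \ref{than}. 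Then Proposition \ref{sableware}(3) shows that $0$-truncation carries $\calG_{\mathet}^{\der}(k)$-schemes to $\calG_{\mathet}^{\der}(k)$-schemes, while Proposition \ref{sableware}(1) shows that $\Spec^{\calG_{\mathet}^{\der}(k)}(R)$ is already $0$-truncated for discrete $R$ and that every $0$-truncated affine $\calG_{\mathet}^{\der}(k)$-scheme has this form; the universal property of the absolute spectrum functors identifies this, under Proposition \ref{lubba1}, with $\Spec^{\calG_{\mathet}(k)}(R)$. Patching over an \'etale affine cover (which transports correctly because Proposition \ref{lubba1} is compatible with \'etale morphisms and with the underlying $\infty$-topos) yields the identification; since being $1$-localic is a property of the underlying $\infty$-topos alone, we obtain an equivalence between $\Sch^{\leq 0}_{\leq 1}(\calT_{\mathet}(k))$ and the full subcategory of $\Sch(\calG_{\mathet}(k))$ spanned by the $1$-localic objects.

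Finally I would identify this latter $\infty$-category with the $\infty$-category of Deligne--Mumford stacks over $k$. Theorem \ref{skil} provides a fully faithful embedding $\phi : \Sch(\calG_{\mathet}(k)) \to \Fun(\Ind(\calG_{\mathet}(k)^{op}), \SSet) = \Fun(\Nerve(\Comm_k), \SSet)$, using the identification $\Ind(\calG_{\mathet}(k)^{op}) \simeq \Nerve(\Comm_k)$. By Theorem \ref{sup}, a functor $F : \Nerve(\Comm_k) \to \SSet$ is a Deligne--Mumford stack over $k$ in the sense of Definition \ref{spud} if and only if it is representable by a $\calG_{\mathet}(k)$-scheme whose underlying $\infty$-topos is $1$-localic, that is, if and only if it lies in the essential image of $\phi$ restricted to the $1$-localic $\calG_{\mathet}(k)$-schemes. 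Since the $\infty$-category of Deligne--Mumford stacks is by definition the corresponding full subcategory of $\Fun(\Nerve(\Comm_k), \SSet)$, the functor $\phi$ restricts to an equivalence there, and composing with the equivalence of the previous paragraph completes the proof.

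The step I expect to be the main obstacle is the second paragraph: Definition \ref{schde} phrases the notion of $\calT_{\mathet}(k)$-scheme in terms of the $\infty$-truncated envelope $\calG_{\mathet}^{\der}(k)$, whereas Theorem \ref{sup} is stated for the $0$-truncated geometry $\calG_{\mathet}(k)$, and one must carefully verify that the equivalence of Proposition \ref{lubba1} on $\calG$-structured $\infty$-topoi restricts to one on schemes — the essential content being the compatibility of $0$-truncation with the absolute spectrum functors, for which Propositions \ref{sableware} and \ref{sweetdown} together with Remark \ref{than} supply precisely the needed inputs.
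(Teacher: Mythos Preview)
Your proposal is correct and follows the same route the paper takes: the paper's proof is literally ``Combine Corollary \ref{supp} with Theorem \ref{sup}.'' You have unpacked precisely what that combination entails, supplying the step the paper leaves implicit---namely, that under the identification of $0$-truncated $\calT_{\mathet}(k)$-structures with $\calG_{\mathet}(k)$-structures furnished by Corollary \ref{supp} and Proposition \ref{sulw}, the notion of \emph{scheme} transports correctly. Your use of Proposition \ref{sableware} (together with the identification of affines via their universal properties) is the right way to justify this, and your final use of Theorem \ref{skil} to pass to the functor-of-points picture is exactly how one extracts the equivalence of $\infty$-categories from the representability statement of Theorem \ref{sup}.

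One small simplification: hypothesis $(a)$ of Proposition \ref{sableware}---that $\calT_{\mathet}(k)$ is compatible with $0$-truncations---is stated and proved as Proposition \ref{stungun}, so you can cite that directly rather than invoking Proposition \ref{sweetdown} (which is the main ingredient in the proof of Proposition \ref{stungun}).
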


\begin{proof}
Combine Corollary \ref{supp} with Theorem \ref{sup}.
\end{proof}

\begin{remark}
Let $k$ be a commutative ring.
There is an evident transformation of geometries $\calG_{\Zar}^{\der}(k) \rightarrow
\calG_{\mathet}^{\der}(k)$ (which is the identity functor on the underlying $\infty$-categories), which induces a relative spectrum functor 
$$\Spec^{\calG_{\mathet}^{\der}(k)}_{ \calG_{\Zar}^{\der}(k)}: \Sch(\calG_{\Zar}^{\der}(k))
\rightarrow \Sch( \calG_{\mathet}^{\der}(k)).$$
This functor associates a derived Deligne-Mumford stack over $k$ to every derived $k$-scheme. This
functor is {\em not} fully faithful in general. However, it is fully faithful when restricted to $0$-localic
derived $k$-schemes (see Warning \ref{gumber}).
\end{remark}

\begin{proof}[Proof of Proposition \ref{sturman}]
As in the proof of Proposition \ref{sturma}, we must prove the following:
\begin{itemize}
\item[$(1)$] For every idempotent complete $\infty$-category $\calC$ which admits finite limits, the restriction map
$$\Fun^{\lex}(\calG_{\mathet}^{\der}(k), \calC) \rightarrow \Fun^{\adm}(\calT_{\mathet}(k), \calC)$$ is an equivalence of $\infty$-categories.
\item[$(2)$] The collections of admissible morphisms and admissible coverings in $\calG_{\mathet}^{\der}(k)$ are generated by admissible morphisms and admissible coverings in the full subcategory $\calT_{\mathet}(k) \subseteq \calG_{\mathet}^{\der}(k)$.
\end{itemize}

We begin with the proof of $(1)$. We have a commutative diagram
$$ \xymatrix{ \Fun^{\lex}(\calG_{\mathet}^{\der}(k), \calC) \ar[rr] \ar[dr]^{u} & & \Fun^{\adm}(\calT_{\mathet}(k), \calC) \ar[dl]^{v} \\
& \Fun^{\pi}( \Nerve(\Poly_{k}), \calC), & }$$
where $\Fun^{\pi}( \Nerve( \Poly_{k} ), \calC)$ denotes the full subcategory of $\Fun( \Nerve( \Poly_{k} ), \calC)$ spanned by those functors which preserve finite products. Using Propositions \ref{spakk} and \toposref{lklk}, we deduce that $u$ is an equivalence of $\infty$-categories.
It will therefore suffice to show that $v$ is an equivalence of $\infty$-categories. In view of
Proposition \toposref{lklk}, it will suffice to show the following:
\begin{itemize}
\item[$(a)$] Every functor $f_0: \Nerve( \Poly_{k} ) \rightarrow \calC$ which preserves finite products
admits a right Kan extension $f: \calT_{\Zar}(k) \rightarrow \calC$.
\item[$(b)$] A functor $f: \calT_{\mathet}(k) \rightarrow \calC$ belongs to $\Fun^{\adm}( \calT_{\mathet}(k), \calC)$ if and only if $f_0 = f | \Nerve( \Poly_{k} )$ preserves products, and $f$ is a right Kan extension of $f_0$.
\end{itemize}
Assertion $(a)$ and the ``if'' direction of $(b)$ follow immediately from Proposition \ref{spakk}.
To prove the reverse direction, let us suppose that $f: \calT_{\mathet}(k) \rightarrow \calC$ belongs to
$\Fun^{\adm}(\calT_{\mathet}(k), \calC)$. Let $f_0 = f | \Nerve( \Poly_{k} )$, and let $f': \calT_{\mathet}(k) \rightarrow \calC$
be a right Kan extension of $f_0$ (whose existence is guaranteed by $(a)$). The
identification $f | \Nerve( \Poly_{k} ) = f' | \Nerve( \Poly_{k} )$ extends to a natural transformation $\alpha: f \rightarrow f'$, which is unique up to homotopy. We wish to show that $\alpha$ is an equivalence.

Fix an object of $\calT_{\mathet}(k)$, corresponding to a commutative $k$-algebra $A$.
Then there exists an \etale map $k[z_1, \ldots, z_m] \rightarrow A$. Using the structure theory for
\etale morphisms (see Proposition \ref{swimm}), we deduce the existence of a pushout diagram
$$ \xymatrix{  k[x_1, \ldots, x_n] \ar[r] \ar[d]^{\phi} & k[z_1, \ldots, z_m] \ar[d] \\
k[y_1, \ldots, y_n, \Delta^{-1}] \ar[r] & A }$$
where $\phi(x_i) = f_i(y_1, \ldots, y_n)$ and $\Delta$ is the determinant of the Jacobian matrix
$[ \frac{ \bd f_i }{\bd y_j} ]_{1 \leq i,j \leq n}$. We wish to show that $\alpha(A)$ is an equivalence.
Since $f, f': \calT_{\mathet}(k)$ both preserve pullbacks by \etale morphisms, it will suffice to show that
$\alpha( k[z_1, \ldots, z_m])$, $\alpha(k[x_1, \ldots, x_n])$, and $\alpha( k[y_1, \ldots, y_n], \Delta^{-1})$
are equivalences. In the first two cases, this is clear (since $f | \Nerve( \Poly_{k} ) = f' | \Nerve( \Poly_{k} )$), and in the third case it follows from the proof of Proposition \ref{sturma}.

Let us now prove $(2)$. Let $\calG$ denote a geometry whose underlying $\infty$-category coincides with $\calG_{\mathet}^{\der}(k)$, such that the inclusion $\calT_{\mathet}(k) \rightarrow \calG$ is a transformation of pregeometries. We must show:
\begin{itemize}
\item[$(2a)$] Every admissible morphism of $\calG_{\mathet}^{\der}(k)$ is an admissible morphism of $\calG$.
\item[$(2b)$] Every collection of $\calG_{\mathet}^{\der}(k)$-admissible morphisms $\{ \psi_{\alpha}: A \rightarrow A_{\alpha} \}$ which generates a $\calG_{\mathet}^{\der}(k)$-covering sieve also generates a $\calG$-covering sieve.
\end{itemize}

Assertion $(2a)$ follows immediately from Proposition \ref{swimm}. Let us prove $(2b)$.
Without loss of generality, we may assume that the set of indices $\alpha$ is finite.
Using Proposition \ref{swimm}, we conclude that each $\psi_{\alpha}$ fits into a pushout diagram
$$ \xymatrix{ k[x_1, \ldots, x_{n_{\alpha}}] \ar[r] \ar[d] & A \ar[d]^{\psi_{\alpha}} \\
R_{\alpha} \ar[r] & A_{\alpha}, }$$
where the vertical maps are \etale. Let $B$ be the tensor product of the polynomial algebras
$\{ k[x_1, \ldots, x_{n_{\alpha}} \}$, and let 
$$B_{\alpha} = B \otimes_{ k[x_1, \ldots, x_{n_{\alpha}}]} R_{\alpha}.$$
Each of the maps $\SSpec B_{\alpha} \rightarrow \SSpec B$ is \etale, and therefore has open image $U_{\alpha} \subseteq \SSpec B$. Let $V_{\alpha} \subseteq \SSpec \pi_0 A$ be the inverse
image of $U_{\alpha}$, so that $\bigcup V_{\alpha} = \SSpec \pi_0 A$. It follows that
there exists a collection of elements $b_1, \ldots, b_n \in B$ whose images generate the unit
ideal in $\pi_0 A$, such that each of the open sets $W_{b_i} = \{ \mathfrak{p} \subseteq B: b_i \notin \mathfrak{p} \}$ is contained in $U = \bigcup U_{\alpha}$. 

For $1 \leq i \leq n$, let $a_i \in \pi_0 A$ denote the image of $b_i$. The proof of Proposition \ref{sturma} shows that the collection of maps $\{ A \rightarrow A[ \frac{1}{a_i} ] \}$ generates
a covering sieve on $A$ with respect to the geometry $\calG$. It will therefore suffice to prove
$(2b)$ after replacing $A$ by $A[ \frac{1}{a_i}]$; in other words, we may assume that
the image of $\SSpec \pi_0 A$ in $\SSpec B$ is contained in $W_{b} \subseteq U$ for
some element $b \in B$. The desired result now follows from the observation that the maps
$\{ B[ \frac{1}{b}] \rightarrow B_{\alpha}[\frac{1}{b}] \}$ generate a $\calT_{\mathet}(k)$-covering sieve on $B[ \frac{1}{b} ] \in \calT_{\mathet}(k)$.
\end{proof}

To make a more detailed study of the theory of derived Deligne-Mumford stacks, it is essential to observe the following:

\begin{proposition}\label{precon}
Let $k$ be a commutative ring. Then the \etale topology on $\SCR_{k}$ is
{\it precanonical}. In other words, for every $A \in \SCR_{k}$, the corresponding corepresentable functor 
$$ \Pro( \calG_{\mathet}^{\der} )^{op} \simeq \SCR_{k} \rightarrow \SSet$$
belongs to $\Shv( \Pro(\calG_{\mathet}^{\der}(k)) )$ (see Definition \ref{sputt}).
\end{proposition}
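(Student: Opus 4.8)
The plan is to verify the sheaf condition for corepresentable functors by reducing to a descent statement about \'etale morphisms of simplicial commutative $k$-algebras. Fix $A \in \SCR_k$; we must show that for every object $B \in \SCR_k$ and every \'etale covering $\{ B \to B_\alpha \}$ (that is, a finite family inducing a faithfully flat map $\pi_0 B \to \prod \pi_0 B_\alpha$), the canonical diagram
$$ \bHom_{\SCR_k}(A, B) \to \prod_\alpha \bHom_{\SCR_k}(A, B_\alpha) \rightrightarrows \prod_{\alpha,\beta} \bHom_{\SCR_k}(A, B_\alpha \otimes_B B_\beta) $$
exhibits the first term as the limit of the cosimplicial object obtained from the \v{C}ech nerve. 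Equivalently, writing $B^\bullet$ for the \v{C}ech nerve of $B \to \coprod_\alpha B_\alpha$ in $\SCR_k^{op}$ (so $B^n \simeq \prod$ of iterated tensor products), we must show $B \simeq \lim_{\Delta} B^\bullet$ in $\SCR_k$, since mapping out of $A$ then gives the result. So the whole statement reduces to: \emph{faithfully flat \'etale coverings are of effective descent in $\SCR_k$}.

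First I would reduce to the affine/discrete situation as much as possible. Using Proposition \ref{swimm}, every \'etale map $B \to B_\alpha$ is the pushout of an \'etale map between finitely presented discrete polynomial-type algebras; using Corollary \ref{tubble} (the $\Tor$ spectral sequence) together with the flatness of \'etale maps, the homotopy groups behave well: $\pi_\ast B^n \simeq (\pi_\ast B) \otimes_{\pi_0 B} (\pi_0 B^n)$, so the cosimplicial abelian group $\pi_\ast B^\bullet$ is obtained from the classical \v{C}ech complex of the faithfully flat covering $\pi_0 B \to \prod \pi_0 B_\alpha$ by tensoring with the flat $\pi_0 B$-module $\pi_\ast B$. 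Classical faithfully flat descent then shows $\pi_\ast B \to \lim_\Delta \pi_\ast B^\bullet$ is an isomorphism and that the higher derived limits vanish (the Amitsur complex of a faithfully flat extension is a resolution). The remaining point is to promote this computation on homotopy groups to an actual equivalence $B \simeq \lim_\Delta B^\bullet$ in $\SCR_k$: one needs the relevant $\lim^1$-type obstruction (the descent spectral sequence for the totalization of $B^\bullet$, e.g. via the conservative, limit-preserving functor $\theta\colon \SCR_k \to (\EInfty)^{\conn}_{k/}$ of Proposition \ref{skillmon} and then the underlying-spectrum functor) to degenerate. Because the covering is finite, the cosimplicial object has a cofinal subcategory with only finitely many nondegenerate levels contributing in each homotopy degree, so the spectral sequence converges, and the vanishing of the higher \v{C}ech cohomology of the flat module $\pi_\ast B$ forces the edge map to be an isomorphism in every degree; conservativity of $\theta$ (and of $\Omega^\infty$ on connective spectra, via Remark \ref{tweeze}) then upgrades this to an equivalence.

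Finally I would assemble the pieces. Having shown $B \simeq \lim_\Delta B^\bullet$, apply $\bHom_{\SCR_k}(A, -)$, which carries limits to limits, to conclude that $\bHom_{\SCR_k}(A,-)$ sends the \v{C}ech nerve of any \'etale covering to a limit diagram of spaces. Unwinding Definition \ref{sputt}: we must check this for the induced Grothendieck topology on each slice $\Pro(\calG_{\mathet}^{\der}(k))_{/C}^{\adm} \simeq ((\SCR_k)_{C/}^{\mathet})^{op}$, but a covering sieve there is generated by a finite \'etale covering of some \'etale $C$-algebra, and the sheaf condition for such sieves follows from the effective descent statement applied over that algebra together with the transitivity of \v{C}ech nerves (Remark \ref{sadoprime}-style bookkeeping). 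Hence the corepresentable functor attached to $A$ lies in $\Shv(\Pro(\calG_{\mathet}^{\der}(k)))$, as claimed.

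\textbf{Main obstacle.} The delicate step is the passage from the homotopy-group computation to an honest equivalence $B \simeq \lim_\Delta B^\bullet$ in $\SCR_k$: limits in $\SCR_k$ are computed after passing to $E_\infty$-rings and then to underlying spectra, and one must control the descent (Bousfield--Kan) spectral sequence for the totalization of $B^\bullet$ — in particular verify that it converges (using finiteness of the covering) and that the $E_2$-page, which is the classical \v{C}ech cohomology of the faithfully flat covering with coefficients in the flat $\pi_0 B$-module $\pi_\ast B$, is concentrated in cohomological degree $0$. Everything else is either classical commutative algebra (faithfully flat descent, structure theory of \'etale morphisms, already packaged in Proposition \ref{swimm} and Corollary \ref{tubble}) or formal nonsense about mapping spaces and Grothendieck topologies on slices.
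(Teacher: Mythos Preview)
Your overall architecture matches the paper's: both reduce to the statement that for a finite faithfully flat \'etale covering $\{R \to R_i\}$, the augmented cosimplicial \v{C}ech nerve $R \to R^\bullet$ exhibits $R$ as the limit, and then both deduce the sheaf condition for corepresentables by mapping out. The paper packages the descent statement as Corollary~\ref{koblee} (a consequence of Proposition~\ref{cober} on flat hypercoverings of $E_\infty$-rings, whose proof is deferred to \cite{spectral}), whereas you sketch a direct Bousfield--Kan spectral sequence argument. So in a sense you are supplying an outline of the step the paper postpones; the strategy is the same.

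Two points of imprecision are worth flagging. First, your stated reason for convergence---``because the covering is finite, the cosimplicial object has a cofinal subcategory with only finitely many nondegenerate levels contributing in each homotopy degree''---is not correct: the \v{C}ech nerve is a genuine cosimplicial object with $R^n \simeq (R^0)^{\otimes_R (n+1)}$ for all $n$, and finiteness of the index set does not truncate it. The actual mechanism is exactly the one you name in your ``main obstacle'' paragraph: connectivity of the terms $R^n$ together with the vanishing $E_2^{p,q}=0$ for $p>0$ (classical faithfully flat descent for the flat $\pi_0 R$-module $\pi_\ast R$) forces the tower $\{\mathrm{Tot}^n R^\bullet\}$ to stabilize in each homotopy degree, giving $\pi_q \mathrm{Tot}\,R^\bullet \simeq E_2^{0,q} = \pi_q R$. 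Second, your reduction from an arbitrary covering sieve to one generated by a finite family is hand-waved (and the citation of Remark~\ref{sadoprime} is not apt). The paper handles this explicitly: given a sieve $\calC^{(0)}$ containing a finite generating family with associated sieve $\calC^{(1)}\subseteq\calC^{(0)}$, it shows $\varprojlim \phi|\calC^{(0)} \simeq \varprojlim \phi|\calC^{(1)}$ by checking that $\phi|\calC^{(0)}$ is a right Kan extension of $\phi|\calC^{(1)}$, which in turn reduces (object by object) to the already-established \v{C}ech descent over each $R' \in \calC^{(0)}$. You should spell this out rather than gesture at ``transitivity of \v{C}ech nerves.''
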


Before giving the proof of Proposition \ref{precon}, we introduce a bit of terminology.

\begin{notation}\label{swik}
Let $R^{\bigdot}: \Nerve( \cDelta_{+}) \rightarrow \EInfty$ be an augmented cosimplicial $E_{\infty}$-ring. For each $n \geq 0$, we let $\cDelta_{+}^{< n}$ denote the full subcategory of
$\cDelta_{+}$ spanned by the objects $[m]$ with $m < n$. The {\it $n$th comatching object}
of $R^{\bigdot}$ is defined to be a colimit of the diagram
$$ \Nerve( \cDelta_{+}^{< n}) \times_{ \Nerve( \cDelta_{+}) }
\Nerve( \cDelta_{+})_{/ [n] } \rightarrow \Nerve( \cDelta_{+}) \stackrel{R^{\bigdot}}{\rightarrow}
\EInfty.$$
If we denote this object by $M^n$, then we have a canonical map of
$E_{\infty}$-rings $e_{n}: M^{n} \rightarrow R^{n}$.

We will say that $R^{\bigdot}$ is a {\it flat hypercovering} if, for every $n \geq 0$, the map
$e_{n}$ is faithfully flat: that is, the underlying map of ordinary commutative rings
$\pi_0 M^{n} \rightarrow \pi_0 R^{n}$ is faithfully flat, and the maps
$$ \pi_i M^{n} \otimes_{ \pi_0 M^{n} } (\pi_0 R^{n}) \rightarrow \pi_i R^{n}$$
are isomorphisms for every integer $i$. In this case, we will also say that
the underlying cosimplicial object is a {\it flat hypercovering} of $R^{-1} \in \EInfty$.

Let $R^{\bigdot}: \Nerve(\cDelta_{+}) \rightarrow \SCR_{k}$ be a cosimplicial object of the
$\infty$-category of simplicial commutative $k$-algebras. We will say that
$R^{\bigdot}$ is a {\it flat hypercovering} (or that the underlying cosimplicial object is
a {\it flat hypercovering} of $R^{-1} \in \SCR_{k}$) if the composition
$$ \Nerve( \cDelta_{+} ) \rightarrow \SCR_{k} \stackrel{\theta}{\rightarrow}
(\EInfty)^{\conn}_{k/} \rightarrow \EInfty$$
is a flat hypercover, where the functor $\theta$ is defined as in Proposition \ref{skillmon}.
\end{notation}

\begin{example}\label{koble}
Let $f: R^{-1} \rightarrow R^{0}$ be a faithfully flat map of $E_{\infty}$-rings. 
Let $R^{\bigdot}: \Nerve( \cDelta_{+} ) \rightarrow \SCR_{k}$ be the \Cech nerve of $f$,
regarded as a morphism in $( \EInfty)^{op}$. More informally, $R^{\bigdot}$ is the cosimplicial $E_{\infty}$-ring described by the formula
$$ R^{n} = R^{0} \otimes_{ R^{-1} } \ldots \otimes_{ R^{-1} } R^{0}.$$
Then $R^{\bigdot}$ is a flat hypercovering:
in fact, the map $M^{n} \rightarrow R^{n}$ appearing in Notation \ref{swik} is an equivalence
for $n > 0$, and can be identified with $f$ for $n=0$. The same reasoning can be applied
if $f$ is instead a map of simplicial commutative $k$-algebras.
\end{example}

Our interest in the class of flat hypercoverings stems from the following result, whose proof we will defer until \cite{spectral}:

\begin{proposition}\label{cober}
Let $R^{\bigdot}: \Nerve( \cDelta) \rightarrow (\EInfty)_{R/}$ be a flat hypercovering of
an $E_{\infty}$-algebra $R$. Then the induced map $R \rightarrow \varprojlim R^{\bigdot}$ is an equivalence of $E_{\infty}$-rings.
\end{proposition}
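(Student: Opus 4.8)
The statement to prove is Proposition \ref{cober}: if $R^{\bigdot}: \Nerve(\cDelta) \rightarrow (\EInfty)_{R/}$ is a flat hypercovering of $R$, then $R \rightarrow \varprojlim R^{\bigdot}$ is an equivalence of $E_{\infty}$-rings. The plan is to reduce everything to a statement about the underlying module spectra, where the claim becomes a form of flat descent (cohomological descent) for the cosimplicial diagram. Since the forgetful functor $(\EInfty)_{R/} \to \Mod_R$ (or to $\Spectra$) detects and preserves limits, it suffices to prove the corresponding statement after forgetting the multiplicative structure: the augmented cosimplicial $R$-module $R \to R^{\bigdot}$ exhibits $R$ as the limit of $R^{\bigdot}$ in $\Mod_R$.

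First I would set up the comatching filtration from Notation \ref{swik}: the totalization $\varprojlim R^{\bigdot}$ is computed as the inverse limit of the tower of partial totalizations $\Tot^n R^{\bigdot}$, and the fiber of $\Tot^n \to \Tot^{n-1}$ is (a shift of) the $n$th ``conormalized'' term, built from the comatching map $e_n: M^n \to R^n$. The flat hypercovering hypothesis says precisely that each $e_n$ is faithfully flat. The key computational input is then the classical statement that for a faithfully flat map $A \to B$ of (connective, or even arbitrary) $E_\infty$-rings, the \Cech conerve $A \to B^{\otimes \bullet+1}$ has totalization $A$ — i.e. faithfully flat descent for the trivial module, or equivalently the Amitsur complex is a resolution. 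I would combine this with a dévissage along the comatching tower to conclude that the augmented complex $R \to R^{\bigdot}$ has vanishing ``conormalization'' in positive cosimplicial degree after the appropriate reindexing, hence the augmentation is an equivalence onto the totalization. Working at the level of homotopy groups, one gets a conditionally convergent Bousfield--Kan spectral sequence whose $E_1$-page (or $E_2$-page) is the Amitsur/\v{C}ech cohomology of the flat hypercover with coefficients in $\pi_\ast R$, which is concentrated in cohomological degree zero by faithfully flat descent, giving the result.

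The main obstacle is convergence. A flat hypercovering involves an infinite cosimplicial object, so $\varprojlim R^{\bigdot}$ is a genuinely infinite homotopy limit and the Bousfield--Kan spectral sequence is only conditionally convergent a priori; one must verify a $\varlimsup$ or Mittag-Leffler type condition on the comatching tower to upgrade conditional convergence to strong convergence. Here the flatness is doing real work: because each comatching map is flat, the relevant $\Tor$ and higher-$\Tor$ obstructions vanish in a controlled way, so the tower $\{\Tot^n R^{\bigdot}\}$ has the property that the maps on homotopy groups eventually stabilize in each fixed degree. Concretely, one checks that for each fixed $i$, $\pi_i \Tot^n R^{\bigdot} \to \pi_i \Tot^{n-1} R^{\bigdot}$ is an isomorphism for $n$ large, which both gives convergence and pins down the limit. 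This is exactly the kind of argument that is cleanest to run in the setting of \cite{spectral}, which is why the proof is deferred there; the structural reduction above (forget to modules, filter by the comatching tower, invoke faithfully flat descent degreewise, control convergence via flatness) is the skeleton one would fill in.

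A secondary point worth isolating: once the module-level statement is in hand, one recovers the $E_\infty$-ring statement for free, because the $\infty$-category $(\EInfty)_{R/}$ of $E_\infty$-$R$-algebras has limits computed in the underlying $\infty$-category of $R$-modules (limits of algebras are limits of underlying objects, by the general theory of algebras over an operad), and the analogous remark lets one pass between simplicial commutative $k$-algebras and connective $E_\infty$-rings via the functor $\theta$ of Proposition \ref{skillmon}, which preserves small limits. So there is no essential difference between the $\SCR_k$ formulation and the $\EInfty$ formulation of Notation \ref{swik}, and it is enough to treat the latter.
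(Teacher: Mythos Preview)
The paper does not prove this proposition: immediately before the statement it says ``whose proof we will defer until \cite{spectral}.'' So there is nothing in the present paper to compare your argument against; the paper simply records the statement and uses it (via Corollary \ref{koblee}) as a black box.

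That said, your outline is the expected shape of such an argument: reduce from $(\EInfty)_{R/}$ to underlying spectra or $R$-modules (limits of algebras are computed on underlying objects), analyze the totalization via the Bousfield--Kan/Tot tower, and use the faithfully flat hypercover hypothesis to see that the associated spectral sequence in $\pi_*$ collapses on the $E_2$-page (cohomological flat descent). You correctly isolate convergence of the Tot tower as the nontrivial point, and your final paragraph about passing between $\SCR_k$ and $\EInfty$ via $\theta$ is exactly how the paper deduces Corollary \ref{koblee} from Proposition \ref{cober}. One small terminological wobble: the fiber of $\Tot^n \to \Tot^{n-1}$ is governed by the \emph{matching} object (a limit over codegeneracy-type maps out of $[n]$), whereas the ``comatching'' object $M^n$ of Notation \ref{swik} is a colimit over maps into $[n]$ and is what encodes the hypercover condition; these play different roles in the argument, though of course the hypercover condition is ultimately what forces the spectral sequence to degenerate.
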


\begin{corollary}\label{koblee}
Let $R^{\bigdot}: \Nerve( \cDelta) \rightarrow (\SCR_{k})_{R/}$ be a flat hypercovering of a simplicial commutative $k$-algebra $R$. Then the induced map $R \rightarrow \varprojlim R^{\bigdot}$ is an equivalence of simplicial commutative $k$-algebras.
\end{corollary}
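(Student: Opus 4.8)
The plan is to deduce Corollary \ref{koblee} directly from Proposition \ref{cober} by applying the conservative, limit-preserving functor $\theta \colon \SCR_{k} \to (\EInfty)^{\conn}_{k/}$ of Proposition \ref{skillmon}. The statement we want is that for a flat hypercovering $R^{\bigdot} \colon \Nerve(\cDelta) \to (\SCR_{k})_{R/}$, the canonical map $R \to \varprojlim R^{\bigdot}$ is an equivalence in $\SCR_{k}$. By definition (Notation \ref{swik}), the statement that $R^{\bigdot}$ is a flat hypercovering of $R \in \SCR_{k}$ means precisely that the composite cosimplicial object $\theta \circ R^{\bigdot}$, regarded as valued in $\EInfty$ via the forgetful functor $(\EInfty)^{\conn}_{k/} \to \EInfty$, is a flat hypercovering of the $E_{\infty}$-ring $\theta(R)$ in the sense of Notation \ref{swik}. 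So Proposition \ref{cober} applies and tells us that $\theta(R) \to \varprojlim (\theta \circ R^{\bigdot})$ is an equivalence of $E_{\infty}$-rings.

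The key step is then to transport this back along $\theta$. First I would observe that $\theta$ preserves small limits (Proposition \ref{skillmon}(1)), so the canonical map $\theta(\varprojlim R^{\bigdot}) \to \varprojlim(\theta \circ R^{\bigdot})$ is an equivalence in $(\EInfty)^{\conn}_{k/}$; composing with the equivalence just obtained, the map $\theta(R) \to \theta(\varprojlim R^{\bigdot})$ induced by the natural transformation $R \to \varprojlim R^{\bigdot}$ is an equivalence. One subtlety: Proposition \ref{cober} is stated for the forgetful image in $\EInfty$, whereas we have a limit computed in $(\EInfty)^{\conn}_{k/}$; but limits of connective objects are connective and the forgetful functor $(\EInfty)^{\conn}_{k/} \to \EInfty$ is also limit-preserving and conservative, so this causes no difficulty — the limit in $\EInfty$, the limit in $(\EInfty)^{\conn}_{k/}$, and (via $\theta$) the limit in $\SCR_{k}$ all agree. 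Having established that $\theta$ carries the map $R \to \varprojlim R^{\bigdot}$ to an equivalence, I would invoke Proposition \ref{skillmon}(2): the functor $\theta$ is conservative. Therefore $R \to \varprojlim R^{\bigdot}$ is itself an equivalence in $\SCR_{k}$, which is exactly the assertion of the corollary.

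I do not expect any serious obstacle here — the corollary is genuinely a formal consequence of Proposition \ref{cober} together with the good properties of $\theta$. The only point requiring a little care is matching the definitions of ``flat hypercovering'' on the two sides (the $\SCR_{k}$-side definition in Notation \ref{swik} is by fiat the pullback of the $E_{\infty}$-side definition along $\theta$), and checking that the comatching objects $M^{n}$ are compatible with $\theta$ — but this is immediate since $M^{n}$ is defined as a finite colimit and $\theta$ preserves colimits (Proposition \ref{skillmon}(1)), so $\theta(M^{n})$ is the comatching object of $\theta \circ R^{\bigdot}$. Thus the faithful-flatness conditions transfer verbatim. The proof is therefore just the three sentences: apply $\theta$; use that $\theta$ preserves limits and Proposition \ref{cober} to see $\theta$ sends the map to an equivalence; conclude by conservativity of $\theta$.
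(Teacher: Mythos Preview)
Your approach is exactly the paper's: combine Proposition~\ref{cober} with the limit-preservation and conservativity of $\theta$ from Proposition~\ref{skillmon}. The overall structure is correct.

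There is one small error in your handling of the subtlety you flag. The claim that ``limits of connective objects are connective'' and that the forgetful functor $(\EInfty)^{\conn}_{k/} \to \EInfty$ is limit-preserving is false in general: the inclusion $(\EInfty)^{\conn} \hookrightarrow \EInfty$ is a left adjoint, not a right adjoint, so totalizations of connective objects can fail to be connective. The correct argument is that the limit in $(\EInfty)^{\conn}$ is computed by applying the connective cover $\tau_{\geq 0}$ to the limit in $\EInfty$; since Proposition~\ref{cober} tells you that the limit in $\EInfty$ is $\theta(R)$, which is already connective, the two limits coincide in this particular case. With that patch your argument goes through.
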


\begin{proof}
Combine Propositions \ref{cober} and \ref{skillmon}.
\end{proof}

\begin{proof}[Proof of Proposition \ref{precon}]
Fix an object $R \in \SCR_{k}$, let $\calC$ denote the full subcategory of
$( \SCR_{k} )_{R/}$ spanned by the \etale $R$-algebras, let
$\calC^{(0)} \subseteq \calC$ be a covering sieve on $R$, and let
$\phi_0$ denote the composition
$$ \calC^{(0)} \subseteq \calC \rightarrow \SCR_{k}.$$
$R \rightarrow \varprojlim \phi_0$ is an equivalence in $\SCR_{k}$.
We first treat the case where $\calC^{(0)}$ is the sieve generated by a finite
collection of \etale morphisms $\{ R \rightarrow R_{i} \}_{i \in S}$
such that the map $R \rightarrow \prod_{i \in S} R_i$ is faithfully flat.
Let $\cDelta^{S}$ denote the category whose objects are finite, nonempty linearly
ordered sets $[n]$ equipped with a map $c: [n] \rightarrow S$, and $\cDelta^{S, \leq 0}$ the full subcategory spanned by those objects with $n=0$ (which we can identify with the discrete
set of objects of $S$). The collection of objects $\{ R_i \}_{i \in S}$ determines a functor
$\psi_0: \Nerve( \cDelta^{S, \leq 0}) \rightarrow (\SCR_{k})_{R/}$. Let $\psi: \Nerve( \cDelta^{S}) \rightarrow (\SCR_{k})_{R/}$ be a left Kan extension
of $\psi_0$, so that $\psi$ can be described by the formula
$$ \psi( c: [n] \rightarrow S ) \mapsto R_{c(0)} \otimes_{R} R_{c(1)} \otimes_{R} \ldots
\otimes_{R} R_{c(n)}.$$
Note that $\psi$ factors through $\calC^{(0)}$. Using Corollary \toposref{hollowtt}, we deduce that the map $\psi^{op}: \Nerve( \cDelta^{S} )^{op} \rightarrow (\calC^{(0)})^{op}$ is cofinal.
It will therefore suffice to show that the canonical map
$R \rightarrow \varprojlim( \phi_0 \circ \psi)$ is a homotopy equivalence.
Let $R^{\bigdot}$ denote the cosimplicial object $\Nerve( \cDelta) \rightarrow ( \SCR_{k})_{R/}$ be the functor obtained
from $\psi$ by right Kan extension along the forgetful functor $\cDelta^{S} \rightarrow \cDelta$.
Then $R^{0} \simeq \prod_{i \in S} R_{s}$. Using the distributive law of Remark \ref{distrib}, we conclude
that the canonical map $R^{0} \otimes_{R} R^{0} \otimes_{R} \ldots \otimes_{R} R^{0} \rightarrow R^{n}$ is an equivalence for each $n$. In particular, $R^{\bigdot}$ is a flat hypercovering of
$R$ (Example \ref{koble}). We can identify $\varprojlim( \phi_0 \circ \psi)$ with the limit of the cosimplicial object
$\phi_0 \circ \overline{\psi}: \Nerve(\cDelta) \rightarrow \SCR_{k}$. The desired result now follows from
Proposition \ref{koblee}.

We now treat the case of an arbitrary covering sieve $\calC^{(0)} \subseteq \calC$ on $R$.
Choose a finite collection of \etale morphisms $\{ \alpha_i: R \rightarrow R_i \}_{1 \leq i \leq n}$
which belong to $\calC^{(0)}$ such that the induced map $R \rightarrow \prod R_{i}$ is faithfully flat, and let $\calC^{(1)} \subseteq \calC$ be the sieve generated by the morphisms $\alpha_{i}$.
We have a commutative diagram
$$ \xymatrix{ \phi(R) \ar[rr]^{f} \ar[dr]^{f'} & & \varprojlim \phi| \calC^{(0)} \ar[dl]^{f''} \\
& \varprojlim \phi|\calC^{(1)} & }$$
We wish to show that $f$ is a homotopy equivalence. The first part of the proof shows that $f'$ is a homotopy equivalence, so it will suffice to show that $f''$ is a homotopy equivalence.
In view of Lemma \toposref{basekann}, it will suffice to show that $\phi | \calC^{(0)}$ is a
right Kan extension of $\phi | \calC^{(1)}$. Unwinding the definitions, this reduces to the following assertion: if $R \rightarrow R'$ is an \etale morphism belonging to the sieve $\calC^{(0)}$, and
$\calC' \subseteq ( \SCR_{k})_{/R'}$ is the sieve given by the inverse image of
$\calC^{(1)}$, then the map $\phi(R') \rightarrow \varprojlim \phi| \calC'$ is a homotopy
equivalence. This follows from the first part of the proof, since the sieve
$\calC'$ is generated by the morphisms $\{ R' \rightarrow R' \otimes_{R} R_{i} \}_{1 \leq i \leq n}$.
\end{proof}

We conclude this section with a few remarks about the relationship between
$\calG_{\mathet}^{\der}(k)$ and its zero stub $\calG_{\mathet}(k)$.

\begin{proposition}\label{stungun}
Let $k$ be a commutative ring. The pregeometry $\calT_{\mathet}(k)$ is compatible with
$n$-truncations for each $n \geq 0$.
\end{proposition}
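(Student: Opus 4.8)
The natural first attempt is to invoke Proposition \ref{rabb}, which establishes compatibility with $n$-truncations for any pregeometry all of whose admissible morphisms are $(n-1)$-truncated; this is exactly how the Zariski analogue (Remark \ref{simpletime}) is obtained. That approach fails here, however: the admissible morphisms of $\calT_{\mathet}(k)$ are \etale maps of smooth $k$-algebras, which are far from being monomorphisms (a connected degree-$d$ \etale cover with $d>1$ is not $(-1)$-truncated). A genuinely different argument is therefore required, and the key input will be Proposition \ref{sweetdown}: the ``extra homotopy'' carried by an \etale extension is rigid, so that a map out of an \etale $S$-algebra $T$ is determined by the induced map on $\pi_0$.

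The plan is first to reduce to the case $\calX = \SSet$. Write $\calX = L\calP(\calC)$ with $L$ a left exact localization and $\iota\colon \calX \hookrightarrow \calP(\calC)$ the (limit-preserving) inclusion. Given $\calO \in \Struct_{\calT_{\mathet}(k)}(\calX)$ and an admissible morphism $\SSpec T \to \SSpec S$ (so $S \to T$ is an \etale map of smooth $k$-algebras), the composite $\iota\circ\calO$ lies in $\Fun^{\adm}(\calT_{\mathet}(k),\calP(\calC))$; since limits and truncations in $\calP(\calC) = \Fun(\calC^{op},\SSet)$ are computed pointwise, the square
$$\iota\calO(\SSpec T) \to \tau_{\leq n}\,\iota\calO(\SSpec T),\qquad \iota\calO(\SSpec S)\to\tau_{\leq n}\,\iota\calO(\SSpec S)$$
is a pullback in $\calP(\calC)$ provided the analogous statement holds, for every $c \in \calC$, for $\mathrm{ev}_{c}\circ\iota\circ\calO \in \Fun^{\adm}(\calT_{\mathet}(k),\SSet)$. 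Applying the left exact functor $L$ — which preserves pullbacks, commutes with $\tau_{\leq n}$, and carries $\iota\calO(\SSpec{-})$ back to $\calO(\SSpec{-})$ — then yields the desired pullback square in $\calX$. Thus it suffices to prove: for every $F \in \Fun^{\adm}(\calT_{\mathet}(k),\SSet)$ and every \etale $S \to T$ of smooth $k$-algebras, the square $[\,F(\SSpec T)\to\tau_{\leq n}F(\SSpec T)\,;\,F(\SSpec S)\to\tau_{\leq n}F(\SSpec S)\,]$ is a pullback of spaces.

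For this I would identify $\Fun^{\adm}(\calT_{\mathet}(k),\SSet) \simeq \Fun^{\lex}(\calG_{\mathet}^{\der}(k),\SSet) \simeq \SCR_{k}$ (using the proof of Proposition \ref{sturman} together with the fact that $\Fun^{\lex}$ out of the opposite of the compact objects of $\SCR_{k}$ recovers $\SCR_{k}$), so that $F$ corresponds to some $R \in \SCR_{k}$ with $F(\SSpec C) \simeq \bHom_{\SCR_{k}}(C,R)$ for each smooth $k$-algebra $C$. Using Remark \ref{bull} for polynomial algebras and Proposition \ref{sweetdown} applied to a presentation of a general smooth algebra as \etale over an affine space, one checks that $\pi_{0}\bHom_{\SCR_{k}}(C,R) \simeq \Hom_{\Comm_{k}}(C,\pi_{0}R)$ for $C$ smooth; that is, the $0$-truncation map $F(\SSpec C)\to\tau_{\leq 0}F(\SSpec C)$ is identified with $\bHom_{\SCR_{k}}(C,R)\to\Hom_{\Comm_{k}}(C,\pi_{0}R)$. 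Proposition \ref{sweetdown} then yields
$$F(\SSpec T) \simeq F(\SSpec S)\times_{\tau_{\leq 0}F(\SSpec S)}\tau_{\leq 0}F(\SSpec T),$$
which is the $n=0$ case of the claim, and exhibits $F(\SSpec T)\to F(\SSpec S)$ as the base change of a map of discrete sets along a $0$-truncation map — hence a coproduct decomposition identifying each connected component of $F(\SSpec T)$ with a connected component of $F(\SSpec S)$.

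Finally, I would bootstrap from $n=0$ to general $n$ using the pasting lemma for pullback squares applied to the ladder of truncation maps $F(\SSpec{-}) \to \tau_{\leq n}F(\SSpec{-}) \to \tau_{\leq 0}F(\SSpec{-})$. The outer rectangle is a pullback by the $n=0$ case just proved. The right-hand square $[\,\tau_{\leq n}F(\SSpec T)\to\tau_{\leq 0}F(\SSpec T)\,;\,\tau_{\leq n}F(\SSpec S)\to\tau_{\leq 0}F(\SSpec S)\,]$ is a pullback because both sides of the asserted identity decompose as coproducts, indexed by $\tau_{\leq 0}F(\SSpec T)$, of $\tau_{\leq n}$ applied to connected components of $F(\SSpec S)$ — here one uses that $\tau_{\leq n}$ commutes with coproducts and the component-wise equivalence from the previous paragraph. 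The pasting lemma then forces the left-hand square, which is precisely the square of Definition \ref{bluha} for the structure $F$ and the admissible morphism $\SSpec T \to \SSpec S$, to be a pullback. The main obstacle here is conceptual rather than computational: one must recognize that Proposition \ref{rabb} does not apply and that $n$-truncation compatibility must instead be extracted from the rigidity of \etale extensions (Proposition \ref{sweetdown}); the remaining manipulations with truncations, coproducts, and the left exact localization are routine.
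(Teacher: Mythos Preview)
Your proof is correct, but you've misdiagnosed the scope of Proposition \ref{rabb}. You claim it fails because \'etale maps are not monomorphisms, i.e.\ not $(-1)$-truncated. But Proposition \ref{rabb} only requires admissible morphisms to be $(n-1)$-truncated, so the monomorphism condition is only needed for $n=0$. For $n \geq 1$ the hypothesis is that \'etale maps be $0$-truncated, and they are: for an \'etale map $\SSpec B \to \SSpec A$ the diagonal $\SSpec B \to \SSpec(B \otimes_A B)$ is an open immersion (this is essentially the ``unramified'' half of \'etale), hence a monomorphism, so the map is $0$-truncated. The paper's proof exploits this and dispatches all $n > 0$ in one line via Proposition \ref{rabb}, reserving the real work for $n=0$.

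For $n=0$ your argument and the paper's are essentially the same: reduce to $\calX = \SSet$ by pushing forward into a presheaf category and evaluating pointwise, identify $\Fun^{\adm}(\calT_{\mathet}(k),\SSet)$ with $\SCR_k$, and then invoke Proposition \ref{sweetdown}. Your bootstrapping step from $n=0$ to general $n$ is valid---the key point being that a pullback along a map of discrete sets is a coproduct, and $\tau_{\leq n}$ commutes with coproducts---but it duplicates what Proposition \ref{rabb} already provides. In short: your route is sound, just longer than necessary; the paper separates the cases $n>0$ (easy, via \ref{rabb}) and $n=0$ (the genuine content, via \ref{sweetdown}).
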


\begin{proof}
If $n > 0$, then this follows from Proposition \ref{rabb}. Let us therefore suppose that $n=0$.
Let $\calX$ be an $\infty$-topos, let $\calO_0 \in \Struct_{ \calT_{\mathet}(k)}(\calX)$, and let
$U \rightarrow X$ be an admissible morphism in $\calT_{\mathet}(k)$. We wish to show that the diagram
$$ \xymatrix{ \calO_0(U) \ar[r] \ar[d] & \tau_{\leq 0} \calO_0(U) \ar[d] \\
\calO_0(X) \ar[r] & \tau_{\leq 0} \calO_0(X) }$$
is a pullback square in $\calX$. In view of Proposition \ref{sturman}, we may assume that
$\calO_0 = \calO | \calT_{\mathet}(k)$, where $\calO$ is a $\calG^{\der}_{\mathet}(k)$-structure on $\calX$. Let $\calG^{\der}_{\disc}(k)$ denote the discrete geometry underlying $\calG^{\der}_{\mathet}(k)$. It will evidently suffice to prove the following assertion:

\begin{itemize}
\item[$(\ast)$] Let $\calO: \calG^{\der}_{\disc}(k) \rightarrow \calX$ be a
$\calG^{\der}_{\disc}(k)$-structure on an $\infty$-topos $\calX$. Then the diagram
$$ \xymatrix{ \calO(U) \ar[r] \ar[d] & \tau_{\leq 0} \calO(U) \ar[d] \\
\calO(X) \ar[r] & \tau_{\leq 0} \calO(X) }$$
is a pullback square in $\calX$.
\end{itemize}

Note that if $\pi^{\ast}: \calY \rightarrow \calX$ is a geometric morphism of $\infty$-topoi
and $\calO'$ is a $\calG^{\der}_{\disc}(k)$-structure on $\calY$ satisfying $(\ast)$, then
$\pi^{\ast} \calO'$ also satisfies $(\ast)$. We may suppose that $\calX$ is a left exact localization of the
presheaf $\infty$-category $\calP(\calC)$, for some small $\infty$-category $\calC$, so that we have an adjunction
$$ \Adjoint{\pi^{\ast}}{\calP(\calC)}{\calX.}{\pi_{\ast}}$$
The counit map $\pi^{\ast} \pi_{\ast} \calO \rightarrow \calO$ is an equivalence. It will therefore suffice to show that $\calO' = \pi_{\ast} \calO$ satisfies $(\ast)$. In other words, we may reduce to the case where
$\calX = \calP(\calC)$. In particular, $\calX$ has enough points (given by the evaluation functors
$\calP(\calC) \rightarrow \SSet$ corresponding to objects of $\calC$), and we may reduce to the case where $\calX = \SSet$.

We can identify the object $\calO \in \Struct_{ \calG^{\der}_{\disc}(k)}(\SSet) \simeq \SCR_{k}$ with 
a simplicial commutative $k$-algebra $R$. Similarly, we can identify the map $U \rightarrow X$
with an \etale map $A \rightarrow B$ in $\SCR_{k}$. We wish to show that the diagram
$$ \xymatrix{ \bHom_{\SCR_{k}}( B, R) \ar[d]^{\phi} \ar[r] & \pi_0 \bHom_{\SCR_{k}}( B, R) \ar[d] \\
\bHom_{\SCR_{k}}(A,R) \ar[r] & \pi_0 \bHom_{\SCR_{k}}( A, R). }$$
is a homotopy pullback square in $\SSet$. Unwinding the definitions, we must show:
\begin{itemize}
\item[$(\ast')$] For every point $\eta \in \bHom_{ \SCR_{k} }(A, R)$, the homotopy fiber $F$ of
$\phi$ over the point $\eta$ is homotopy equivalent to the discrete space $\pi_0 F$, and
the action of $\pi_1( \bHom_{\SCR_{k}}(A,R), \eta)$ on $\pi_0 F$ is trivial.
\end{itemize}
This follows from the existence of the homotopy pullback diagram
$$ \xymatrix{ \bHom_{\SCR_{k}}( B, R) \ar[d]^{\phi} \ar[r] & \Hom_{\Comm_{k}}( \pi_0 B, \pi_0 R) \ar[d] \\
\bHom_{\SCR_{k}}(A,R) \ar[r] & \Hom_{\Comm_{k}}( \pi_0 A, \pi_0 R), }$$
(see Proposition \ref{sweetdown}).
\end{proof}

\begin{remark}\label{downta}
Let $k$ be a commutative ring and $\calX$ an $\infty$-topos. Propositions \ref{sturma} and
\ref{sturman} imply that the restriction functors
\begin{eqnarray*} 
\Fun^{\lex}( \calG^{\der}_{\mathet}(k), \calX) & \rightarrow & 
\Fun^{\adm}( \calT_{\mathet}(k), \calX) \\
& \rightarrow & \Fun^{\adm}( \calT_{\Zar}(k), \calX) \\
& \rightarrow & \Fun^{\pi}( \Nerve( \Poly_{k}^{op}), \calX) \end{eqnarray*}
are equivalences of $\infty$-categories (here $\Fun^{\pi}( \Nerve(\Poly_{k}^{op}), \calX)$ denotes the full subcategory of $\Fun( \Nerve( \Poly_{k}^{op}), \calX)$ spanned by those functors which preserve finite products). Remarks \ref{switchera} and \ref{switcheru} allow us to identify the $\infty$-category $\Fun^{\lex}( \calG^{\der}_{\mathet}(k), \calX)$ with the $\infty$-category $\Shv_{ \SCR_{k}}(\calX)$
of $\SCR_{k}$-valued sheaves $\calX$. In particular, for each $n \geq 0$, there
we have a truncation functor $\tau_{\leq n}: \Fun^{\lex}( \calG^{\der}_{\mathet}(k), \calX)
\rightarrow \Fun^{\lex}( \calG^{\der}_{\mathet}(k), \calX)$. This induces truncation functors
$$ \tau_{\leq n}: \Fun^{\adm}( \calT_{\mathet}(k), \calX) \rightarrow \Fun^{\adm}( \calT_{\mathet}(k), \calX)$$
$$ \tau_{\leq n}: \Fun^{\adm}( \calT_{\Zar}(k), \calX) \rightarrow \Fun^{\adm}( \calT_{\Zar}(k), \calX)$$
$$ \tau_{\leq n}: \Fun^{\pi}( \Nerve( \Poly_{k}^{op}), \calX) \rightarrow \Fun^{\pi}( \Nerve( \Poly_{k}^{op}), \calX).$$
We claim that each of these truncation functors are simply given by composition with
the truncation functor $\tau_{\leq n}^{\calX}$ on $\calX$. Unwinding the definitions, this amounts to the following assertion:
\begin{itemize}
\item[$(\ast)$] Let $\calO: \calG_{\mathet}^{\der}(k) \rightarrow \calX$ be a left exact functor, and
$\calO'$ its $n$-truncation in $\Fun^{\lex}( \calG_{\mathet}^{\der}(k), \calX)$. Then, for every
$A \in \calT_{\mathet}(k)$, the induced map $\calO(A) \rightarrow \calO'(A)$ exhibits
$\calO'(A)$ as an $n$-truncation of $\calO(A)$ in $\calX$.
\end{itemize}
Note that if $\pi^{\ast}: \calY \rightarrow \calX$ is a geometric morphism and
$\calO \in \Fun^{\lex} ( \calG_{\mathet}^{\der}(k), \calY)$ satisfies $(\ast)$, then
$\pi^{\ast} \calO$ also satisfies $(\ast)$ (because the induced map
$\Fun^{\lex}( \calG_{\mathet}^{\der}(k), \calY) \rightarrow \Fun^{\lex}( \calG_{\mathet}^{\der}(k), \calX)$
commutes with $n$-truncation, by Proposition \toposref{compattrunc}). 

Without loss of generality, we may suppose that $\calX$ arises as a left-exact localization of a presheaf $\infty$-category $\calP(\calC)$. Let $\pi^{\ast}: \calP(\calC) \rightarrow \calX$ be the localization functor, and $\pi_{\ast}: \calX \rightarrow \calP(\calC)$ its right adjoint. Then, for each
$\calO \in \Fun^{\lex}( \calG_{\mathet}^{\der}(k), \calX)$, the counit map
$\pi^{\ast} \pi_{\ast} \calO \rightarrow \calO$ is an equivalence. In view of the above remark, it will suffice to prove that $( \calP(\calC), \pi_{\ast} \calO)$ satisfies $(\ast)$. In particular, we may assume that $\calX$ has enough points (given by evaluation at objects of $\calC$), and can therefore reduce to the case $\calX = \SSet$. In this case, we can identify $\calO$ with a simplicial commutative $k$-algebra $R$, and assertion $(\ast)$ can be reformulated as follows:

\begin{itemize}
\item[$(\ast')$] Let $R$ be a simplicial commutative $k$-algebra, and let
$A \in \calT_{\mathet}(k)$. Then the map
$$ \bHom_{ \SCR_{k} }(A,R) \rightarrow \bHom_{ \SCR_{k}}( A, \tau_{\leq n} R)$$
exhibits $\bHom_{\SCR_{k}}( A, \tau_{\leq n} R)$ as an $n$-truncation of
the mapping space $\bHom_{\SCR_{k}}(A,R)$. 
\end{itemize}

If $A$ is a polynomial ring over $k$, this follows from Remark \toposref{parei}. In the general case
we may assume that there exists an \etale map $k[x_1, \ldots, x_n] \rightarrow A$, and the
result follows from Proposition \ref{sweetdown}.
\end{remark}

Combining Propositions \ref{stungun} and \ref{sableware}, we obtain the following result:

\begin{corollary}\label{jik}
Let $(\calX, \calO_{\calX})$ be a derived Deligne-Mumford stack, and let $n \geq 0$. Then
$(\calX, \tau_{\leq n} \calO_{\calX})$ is a derived Deligne-Mumford stack.
\end{corollary}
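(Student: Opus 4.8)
The statement to prove is Corollary \ref{jik}: if $(\calX, \calO_{\calX})$ is a derived Deligne-Mumford stack and $n \geq 0$, then $(\calX, \tau_{\leq n} \calO_{\calX})$ is again a derived Deligne-Mumford stack. The plan is to simply verify that the hypotheses of Proposition \ref{sableware} are met for the pregeometry $\calT = \calT_{\mathet}(k)$ and its geometric envelope $\calG = \calG^{\der}_{\mathet}(k)$, and then read off the conclusion.

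First I would fix notation: let $\calG = \calG_{\mathet}^{\der}(k)$ be the geometric envelope of $\calT_{\mathet}(k)$ (Proposition \ref{sturman}). A derived Deligne-Mumford stack $(\calX, \calO_{\calX})$ is by definition a $\calT_{\mathet}(k)$-scheme, which means that the corresponding $\calG$-structure $\calO'_{\calX}$ (obtained via the equivalence $\Struct_{\calT_{\mathet}(k)}(\calX) \simeq \Struct_{\calG}(\calX)$ of Proposition \ref{sku}) makes $(\calX, \calO'_{\calX})$ a $\calG$-scheme. I want to apply Proposition \ref{sableware} with this $\calG$ and this integer $n$, so I must check its two hypotheses:
\begin{itemize}
\item[$(a)$] $\calT_{\mathet}(k)$ is compatible with $n$-truncations. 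This is exactly the content of Proposition \ref{stungun}.
\item[$(b)$] For every admissible morphism $f \colon A \rightarrow B$ in $\Ind(\calG^{op}) \simeq \SCR_k$, if $A$ is $n$-truncated then $B$ is $n$-truncated. Unwinding the definition of $\calG_{\mathet}^{\der}(k)$, an admissible morphism in $\Ind(\calG^{op})$ is an \'etale map of simplicial commutative $k$-algebras in the sense of Definition \ref{juh}. If $A$ is $n$-truncated, then $\pi_i A = 0$ for $i > n$; since $f$ is \'etale, condition $(2)$ of Definition \ref{juh} gives $\pi_i B \simeq \pi_i A \otimes_{\pi_0 A} \pi_0 B = 0$ for $i > n$ as well, so $B$ is $n$-truncated.
\end{itemize}

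With $(a)$ and $(b)$ in hand, Proposition \ref{sableware} applies. Concretely: the canonical local transformation $\calO'_{\calX} \rightarrow \calO''_{\calX}$ in $\Struct_{\calG}^{\loc}(\calX)$ which induces $\tau_{\leq n} \calO'_{\calX}(U) \simeq \calO''_{\calX}(U)$ for $U$ in the essential image of $\calT_{\mathet}(k)$ produces, by part $(3)$ of that proposition, a $\calG$-scheme $(\calX, \calO''_{\calX})$. I should then check that $\calO''_{\calX}$ corresponds, under the equivalence of Proposition \ref{sku}, to the $n$-truncation $\tau_{\leq n}\calO_{\calX}$ of the original $\calT_{\mathet}(k)$-structure; this is precisely the content of Remark \ref{downta}, which says the truncation functor on $\Struct_{\calT_{\mathet}(k)}(\calX)$ is computed objectwise by $\tau_{\leq n}^{\calX}$ and is intertwined with the truncation functor on $\Fun^{\lex}(\calG_{\mathet}^{\der}(k), \calX)$. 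Therefore $(\calX, \tau_{\leq n}\calO_{\calX})$ is a $\calT_{\mathet}(k)$-scheme, i.e.\ a derived Deligne-Mumford stack.

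There is no real obstacle here; the corollary is genuinely immediate once Propositions \ref{stungun}, \ref{sableware}, and \ref{sku} are available. The only point requiring a moment's care is the bookkeeping in $(b)$ — making sure the admissibility structure on $\Ind(\calG^{op}) \simeq \SCR_k$ induced from the geometry $\calG_{\mathet}^{\der}(k)$ really does recover \'etale morphisms in the sense of Definition \ref{juh} — but this is recorded in Notation \ref{ilk} together with the construction of $\calG_{\mathet}^{\der}(k)$ and the structure theorem Proposition \ref{swimm}. Hence the proof amounts to the single sentence: combine Propositions \ref{stungun} and \ref{sableware} (and invoke Remark \ref{downta} to identify $\tau_{\leq n}\calO_{\calX}$), which is exactly what the paper's one-line proof does.
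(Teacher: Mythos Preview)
Your proposal is correct and follows exactly the paper's approach: the paper's proof is the single line ``Combining Propositions \ref{stungun} and \ref{sableware}'', and you have simply unpacked the verification of hypotheses $(a)$ and $(b)$ of Proposition \ref{sableware} and made explicit the identification of $\tau_{\leq n}\calO_{\calX}$ via Remark \ref{downta}. The only minor remark is that for the bookkeeping in $(b)$ you could cite Remark \ref{clump} directly rather than Notation \ref{ilk} and Proposition \ref{swimm}, since that remark already records that admissible morphisms in $\Pro(\calG_{\mathet}^{\der}(k))$ are precisely the \'etale maps.
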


\begin{remark}
Let $(\calX, \calO_{\calX})$ be a $1$-localic derived Deligne-Mumford stack over $k$.
Using Corollary \ref{jik} and Proposition \ref{jak}, we can identify the  $0$-truncation $(\calX, \pi_0 \calO_{\calX})$ with an ordinary Deligne-Mumford stack over $k$. We will refer to this ordinary
Deligne-Mumford stack as the {\it underlying ordinary Deligne-Mumford stack} of
$(\calX, \calO_{\calX})$. 
\end{remark}

We conclude this section by proving an analogue of Theorem \ref{swill}:

\begin{theorem}\label{swillboard}
Let $\calX$ be an $\infty$-topos and $\calO_{\calX}$ a sheaf of simplicial commutative $k$-algebras on $\calX$, viewed (via Proposition \ref{sturman}) as an object of
$\Fun^{\adm}( \calT_{ \mathet}(k), \calX)$. Then $(\calX, \calO_{\calX})$ is a derived Deligne-Mumford stack over $k$ if and only if the following conditions are satisfied:
\begin{itemize}
\item[$(1)$] Let $\phi_{\ast}: \calX \rightarrow \calX'$ be geometric morphism of $\infty$-topoi,
where $\calX'$ is $1$-localic and $\phi_{\ast}$ is an equivalence on discrete objects
(so that $\phi_{\ast}$ exhibits $\calX'$ as the $1$-localic reflection of $\calX$). Then
$( \calX', \phi_{\ast} \pi_0 \calO_{\calX} )$ is a derived Deligne-Mumford stack over $k$
(which is $1$-localic and $0$-truncated, and can therefore be identified with an
ordinary Deligne-Mumford stack $X$ over $k$ by Proposition \ref{jak}).  

\item[$(2)$] For each $i > 0$, $\pi_{i} \calO_{\calX}$ is a quasi-coherent sheaf on
$X$. 

\item[$(3)$] The structure sheaf $\calO_{\calX}$ is hypercomplete, when regarded as an object
of $\calX$ (see \S \toposref{hyperstacks}). 
\end{itemize}
\end{theorem}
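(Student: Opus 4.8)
The proof will proceed by imitating the structure of the proof of Theorem \ref{swill}, with the Zariski topology replaced by the \'etale topology throughout. The plan is to verify both implications, using affine derived Deligne-Mumford stacks as the local model in place of affine derived $k$-schemes.

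First I would prove the ``only if'' direction. Assume $(\calX, \calO_{\calX})$ is a derived Deligne-Mumford stack. For $(1)$: the pregeometry $\calT_{\mathet}(k)$ is compatible with $0$-truncations (Proposition \ref{stungun}), so by Proposition \ref{sableware} the pair $(\calX, \pi_0 \calO_{\calX})$ is a $\calT_{\mathet}(k)$-scheme; pushing forward along the $1$-localic reflection $\phi_{\ast}$ and invoking the analogue of Theorem \ref{top4} for $\calG_{\mathet}(k)$ (together with Corollary \ref{jik} and Proposition \ref{jak}) identifies $(\calX', \phi_{\ast} \pi_0 \calO_{\calX})$ with an ordinary Deligne-Mumford stack $X$. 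For $(2)$ and $(3)$, which are local on $\calX$ (for $(3)$ this uses Remark \toposref{suchlike}), I would reduce to the affine case $(\calX, \calO_{\calX}) = \Spec^{\calG_{\mathet}^{\der}(k)} A$. Here Theorem \ref{scoo} together with Proposition \ref{precon} (the \'etale topology is precanonical) lets me identify $\calX$ with the $\infty$-topos of sheaves on the small \'etale site of $A$, and identify $\calO_{\calX}$ with the sheaf carrying an \'etale $A$-algebra $B$ to $B$ itself. Since $\pi_{i} B \simeq \pi_i A \otimes_{\pi_0 A} \pi_0 B$ for \'etale maps (Definition \ref{juh}), each $\pi_i \calO_{\calX}$ is the quasi-coherent sheaf associated to $\pi_i A$, giving $(2)$; for $(3)$, a Postnikov tower $\{\tau_{\leq n} A\}$ induces $\calO_{\calX} \simeq \lim \{\calO^{\leq n}_{\calX}\}$ with each $\calO^{\leq n}_{\calX}$ being $n$-truncated by Corollary \ref{tabletime}, hence hypercomplete, so $\calO_{\calX}$ is hypercomplete.

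Next I would prove the converse. Assume $(1)$, $(2)$, $(3)$. Working locally on $X$ (using $(1)$ to pass to an affine \'etale chart $(\SSpec R) \to X$, and $(2)$ to describe the higher homotopy sheaves as quasi-coherent sheaves on $\SSpec R$ associated to $R$-modules $M_i$), I would form $A_{\leq n} = \Gamma(\calX; \tau_{\leq n} \calO_{\calX})$. The cohomology of quasi-coherent sheaves on an affine scheme vanishes in positive degrees, so the spectral sequence computing $\pi_{\ast} A_{\leq n}$ degenerates and yields $\pi_i A_{\leq n} \simeq M_i$ for $i \le n$ and $0$ otherwise; in particular $\pi_0 A_{\leq n} \simeq R$. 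Comparing $\Spec^{\calG_{\mathet}^{\der}(k)} A_{\leq n}$ with $(\calX, \tau_{\leq n} \calO_{\calX})$ as in the proof of Theorem \ref{swill}, I would check that the induced geometric morphism is an equivalence of $\infty$-topoi and that the comparison map is an isomorphism on $\pi_i$ for $i \le n$, so that (both sides being $n$-truncated) it is an equivalence. Setting $A = \lim_n A_{\leq n}$, one gets $\Spec^{\calG_{\mathet}^{\der}(k)} A \simeq (\calX, \calO'_{\calX})$ with $\calO'_{\calX} \simeq \lim_n \tau_{\leq n} \calO_{\calX}$, hence a canonical $\infty$-connective map $\psi: \calO_{\calX} \to \calO'_{\calX}$; since $\calO'_{\calX}$ is an inverse limit of truncated objects (hence hypercomplete) and $\calO_{\calX}$ is hypercomplete by $(3)$, $\psi$ is an equivalence, so $(\calX, \calO_{\calX})$ is an affine derived Deligne-Mumford stack, completing the argument.

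The main obstacle, as in the Zariski case, is the affine-chart reduction in the converse direction: I must be careful that ``working locally on $X$'' is legitimate when $X$ is a Deligne-Mumford stack rather than a scheme. This requires knowing that the property of being a derived Deligne-Mumford stack, hypercompleteness of the structure sheaf, and quasi-coherence of the $\pi_i$ can all be tested on an \'etale cover --- which follows from Proposition \ref{sizem} (stability of $\calG$-schemes under \'etale morphisms), Remark \toposref{suchlike}, and the fact that an \'etale surjection onto a $1$-localic derived Deligne-Mumford stack from an affine one exists by definition. Everything else is a transcription of the argument for Theorem \ref{swill}, replacing the explicit Zariski spectrum computation (Remark \ref{jikker}) with the analogous description of $\USpec A$ via the small \'etale site supplied by Theorem \ref{scoo} and Proposition \ref{precon}, and replacing ``$k$-scheme'' with ``Deligne-Mumford stack'' and Theorem \ref{stumm} with Theorem \ref{sup}. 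As in the remark following Theorem \ref{swill}, the only place where the non-$0$-localic case requires care is the theory of quasi-coherent sheaves, which is relegated to the hypotheses rather than developed here.
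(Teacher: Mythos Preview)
Your overall strategy matches the paper's, and the ``only if'' direction is essentially identical to what the paper does (you cite the same circle of results: Proposition~\ref{stungun}/Corollary~\ref{jik} for truncation, Theorem~\ref{top4} for the $1$-localic reflection, Theorem~\ref{scoo} and Proposition~\ref{precon} for the affine description, Corollary~\ref{tabletime} for the Postnikov-tower argument).

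There is, however, a genuine gap in your converse direction. In Theorem~\ref{swill} the hypothesis already places you in a $0$-localic $\infty$-topos, so after passing to an affine open the underlying $\infty$-topos literally agrees with that of $\Spec^{\calG} A$. In Theorem~\ref{swillboard} the $\infty$-topos $\calX$ is \emph{arbitrary}; condition~(1) only controls its $1$-localic reflection $\calX'$. When you say ``working locally on $X$ pass to an affine \'etale chart $\SSpec R \to X$'', you are replacing $\calX'$ by an \'etale open, and the pullback of $\calX$ along this map need not be $1$-localic. Consequently your comparison map $\Spec^{\calG_{\mathet}^{\der}(k)} A_{\leq n} \to (\calX, \tau_{\leq n}\calO_{\calX})$ has no reason to be an equivalence of $\infty$-topoi: the source is $1$-localic and the target need not be. Your closing paragraph diagnoses the obstacle as ``DM stack versus scheme'', but the real issue is ``arbitrary $\calX$ versus $1$-localic $\calX$''.

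The paper inserts exactly one additional maneuver to fix this. It observes that $\calX \to \calX'$ is \'etale, so $\calX \simeq \calX'_{/U}$ for some $2$-connective object $U \in \calX'$. After passing to a cover of $\calX'$ on which $U$ admits a global section $s: 1_{\calX'} \to U$, one obtains an \'etale morphism $s_{\ast}: \calX' \to \calX$ which (since $U$ is $1$-connective) is a cover. Proposition~\ref{sizem} then reduces the problem to showing that $(\calX', s^{\ast}\calO_{\calX})$ is a derived Deligne-Mumford stack, and now the underlying $\infty$-topos \emph{is} $1$-localic, so your spectral-sequence/Postnikov-tower argument (transcribed from Theorem~\ref{swill}) goes through verbatim. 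This section trick is the one ingredient missing from your outline.
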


\begin{proof}
The proof follows the same lines as that of Theorem \ref{swill}. Suppose first that
$(\calX, \calO_{\calX})$ is a derived Deligne-Mumford stack over $k$. 
We will prove that $(1)$, $(2)$, and $(3)$ are satisfied.
Corollary \ref{jik} implies that $(\calX, \pi_0 \calO_{\calX})$ is a derived Deligne-Mumford stack over $k$. Since $(\calX, \pi_0 \calO_{\calX})$ is $0$-truncated, Corollary \ref{supp} allows us to identify 
$\pi_0 \calO_{\calX}$ with a $\calG_{\mathet}(k)$-structure on $\calX$. Let
$\phi_{\ast}: \calX \rightarrow \calX'$ be the $1$-localic reflection of $\calX$; then
Theorem \ref{top4} implies that $(\calX', \phi_{\ast} (\pi_0 \calO_{\calX'}))$ is
again a derived Deligne-Mumford stack over $k$, and that the map
$(\calX, \pi_0 \calO_{\calX}) \rightarrow (\calX', \phi_{\ast} (\pi_0 \calO_{\calX}) )$
is \etale. This proves $(1)$.

We now prove $(3)$. By virtue of Remark \toposref{suchlike}, we can work locally on $\calX$ and we may therefore suppose that $(\calX, \calO_{\calX})$ is the affine $\calT_{\mathet}(k)$-scheme associated to a simplicial commutative $k$-algebra $A$. 
Choose a Postnikov tower
$$ \ldots \rightarrow \tau_{\leq 2} A \rightarrow \tau_{\leq 1} A \rightarrow \tau_{\leq 0} A,$$
for $A$. Corollary \ref{sweepdown} implies that for each $n$, the
$\infty$-category $(\SCR_{k})_{\tau_{\leq n} A/}^{\mathet}$ is equivalent to
$(\SCR_{k})_{A/}^{\mathet}$. Consequently, Theorem \ref{scoo} implies that we can identify
$\Spec^{\calG^{\der}_{\mathet}(k)}(\tau_{\leq n} A)$ with $(\calX, \calO_{\calX}^{\leq n})$, for
some sheaf $\calO_{\calX}^{\leq n}$ of simplicial commutative $k$-algebras on $\calX$.
Moreover, if we identify $\calX$ with the $\infty$-category $\Shv( ((\SCR_{k})_{A/}^{\mathet})^{op} )$,
Proposition \ref{precon} shows that $\calO_{\calX}^{\leq n}$ can be described by the formula
$A' \mapsto \tau_{\leq n} A'$, where $A'$ ranges over the $\infty$-category of
\etale $A$-algebras. Similarly, $\calO_{\calX}$ can be identified with the sheaf given by the forgetful functor $(\SCR_{k})_{A/}^{\mathet} \rightarrow \SCR_{k}$. It follows that the canonical map
$\calO_{\calX} \rightarrow \varprojlim \calO_{\calX}^{\leq n}$ is an equivalence. Thus
$\calO_{\calX}$ is an inverse limit of truncated objects of $\calX$, and therefore hypercomplete.

To prove $(2)$, we consider a collection of objects $\{ U_{\alpha} \in \calX \}$ such that
$\coprod U_{\alpha} \rightarrow 1_{\calX}$ is an effective epimorphism, and each
of the $\calT_{\mathet}(k)$-schemes $( \calX_{/U_{\alpha}}, \calO_{\calX} | U_{\alpha})$
is affine, equivalent to $\Spec^{ \calG^{\der}_{\mathet}(k)} A_{\alpha}$ for some
simplicial commutative $k$-algebra $A_{\alpha}$. The composite geometric morphisms
$$ \calX_{/U_{\alpha}} \rightarrow \calX \rightarrow \calX'$$
are \etale and cover $\calX'$. Since assertion $(2)$ is local on $\calX'$, it is sufficient to show
that the restriction of each $\pi_{i} \calO_{\calX}$ to $\calX_{/U_{\alpha}}$ is a quasi-coherent
sheaf on ordinary Deligne-Mumford stack given by $( \calX_{/U_{\alpha}}, \pi_0 (\calO_{\calX} | U_{\alpha} ))$ (in other words, the affine scheme $\SSpec \pi_0 A_{\alpha})$. This follows immediately
from Theorem \ref{scoo}: the restriction of $\pi_{i} \calO_{\calX}$ is the quasi-coherent sheaf
associated to $\pi_{i} A_{\alpha}$, viewed as a module over the commutative ring $\pi_0 A_{\alpha}$.

We now prove the converse. Suppose that $(1)$, $(2)$, and $(3)$ are satisfied; we wish to prove that
$(\calX, \calO_{\calX})$ is a derived Deligne-Mumford stack over $k$. The assertion is local on $\calX'$. The \etale geometric morphism $\calX \rightarrow \calX'$ determines an equivalence
$\calX \simeq \calX'_{/U}$, for some $2$-connective object $U$ in $\calX'$. Passing to a cover
of $\calX'$, we may assume without loss of generality that $U$ admits a global section
$s: 1_{\calX} \rightarrow U$; since $U$ is $1$-connective, this map is an effective epimorphism.
This section determines a geometric morphism of $\infty$-topoi $s_{\ast}: \calX' \rightarrow \calX$.
In view of Proposition \ref{sizem}, it will suffice to show that $(\calX', s^{\ast} \calO_{\calX})$ is
a derived Deligne-Mumford stack over $k$. Replacing $\calX$ by $\calX'$, we are reduced to the case where $\calX$ is $1$-localic and $(\calX, \pi_0 \calO_{\calX})$ is a derived Deligne-Mumford stack over $k$. Passing to a cover of $\calX$ again if necessary, we may suppose that
$(\calX, \pi_0 \calO_{\calX})$ is the spectrum of a (discrete) $k$-algebra $R$.

Applying $(2)$, we conclude that each $\pi_i \calO_{\calX}$ is the quasi-coherent sheaf associated to an $R$-module $M_i$. We then have isomorphisms
$$ \HH^{n}( \calX; \pi_i \calO_{\calX} ) \simeq \begin{cases} M_i & \text{if } n = 0 \\
0 & \text{otherwise.} \end{cases}$$
(see \S \toposref{chmdim} for a discussion of the cohomology of an $\infty$-topos, and Remark
\toposref{compfood} for a comparison with the usual theory of sheaf cohomology.)
For each $n \geq 0$, let $A_{\leq n} \in \SCR_{k}$ denote the global sections
$\Gamma( \calX; \tau_{\leq n} \calO_{\calX} )$. There is a convergent spectral sequence
$$ E_{2}^{p,q} = \HH^{p}(\calX; \pi_{q} (\tau_{\leq n} \calO_{\calX})) \Rightarrow \pi_{q-p} A_{\leq n}.$$
It follows that this spectral sequence degenerates to yield isomorphisms
$$ \pi_{i} A_{\leq n} \simeq \begin{cases} M_{i} & \text{if } i \leq n \\ 0 & \text{otherwise.} \end{cases}$$
In particular, $\pi_0 A_{\leq n} \simeq R$.

Fix $n \geq 0$, and let $( \calX_{n}, \calO_{\calX_{n}} )$ be the spectrum of $A_{\leq n}$.
The equivalence $A_{n} \simeq \Gamma( \calX; \tau_{\leq n} \calO_{\calX} )$ induces a map
$\phi_{n}: (\calX_{n}, \calO_{\calX_{n}}) \rightarrow (\calX, \tau_{\leq n} \calO_{\calX} )$ in $\LGeo( \calG_{\mathet}^{\der}(k))$. Since $\pi_0 A_{n} \simeq R$, the analysis of 
$\Spec^{ \calG_{\mathet}^{\der}(k)} A_{n}$ given in the first part of the proof shows that
$\phi_{n}^{\ast}: \calX_{n} \rightarrow \calX$ is an equivalence of $\infty$-topoi, and that
$\phi_{n}$ induces an isomorphism of quasi-coherent sheaves
$\phi_{n}^{\ast} (\pi_{i} \calO_{\calX_n}) \simeq \pi_{i} \calO_{\calX}$ for $0 \leq i \leq n$.
Since the structure sheaves on both sides are $n$-truncated, we conclude that $\phi_{n}$ is an equivalence.

Let $A \in \SCR_{k}$ denote the inverse limit of the tower
$$ \ldots \rightarrow A_{\leq 2} \rightarrow A_{\leq 1} \rightarrow A_{\leq 0},$$
so that $\pi_0 A \simeq R$. We can therefore identify the spectrum of $A$ with
$( \calX, \calO'_{\calX})$. The first part of the proof shows that $\calO'_{\calX}$ is the inverse limit
of its truncations 
$$\tau_{ \leq n} \calO'_{\calX} \simeq \phi_{n}^{\ast} \calO_{ \calX_{n}} \simeq
\tau_{ \leq n} \calO_{\calX}.$$
Passing to the inverse limit, we obtain a map
$$\psi: \calO_{\calX} \rightarrow \lim \{ \tau_{\leq n} \calO_{\calX} \} \simeq \calO'_{\calX}.$$
By construction, $\psi$ induces an isomorphism on all (sheaves of) homotopy groups, and is
therefore $\infty$-connective. The sheaf $\calO'_{\calX}$ is hypercomplete (since it is an inverse limit of truncated objects of $\calX$), and the sheaf $\calO_{\calX}$ is hypercomplete by assumption $(3)$.
It follows that $\psi$ is an equivalence, so that $(\calX, \calO_{\calX}) \simeq \Spec^{\calG^{\der}_{\mathet}} A$ is an affine derived $k$-scheme as desired.
\end{proof}

\subsection{Derived Complex Analytic Geometry}\label{secondcomp}

In \S \ref{juet}, we introduced the $\infty$-category of derived Deligne-Mumford stacks over a commutative ring $k$. In essence, we began with the ordinary category of smooth affine $k$-schemes, and used our theory of pregeometries to ``extrapolate'' to a theory which includes
singular objects. In this section, we will pursue an analogous strategy for describing a derived version of complex analytic geometry. 

\begin{definition}
We define a pregeometry $\calT_{\Stein}$ as follows:
\begin{itemize}
\item[$(1)$] The underlying $\infty$-category of $\calT_{\Stein}$ is
$\Nerve(\calC)$, where $\calC$ is the category of finite dimensional {\em Stein manifolds}: that is, complex analytic manifolds $X$ which admit closed immersions $X \hookrightarrow \C^{n}$.
\item[$(2)$] A morphism $U \rightarrow X$ in $\calT_{\Stein}$ is admissible if and only if 
it is a local homeomorphism.
\item[$(3)$] A collection of admissible morphisms $\{ U_{\alpha} \rightarrow X \}$ in
$\calT_{\Stein}$ generates a covering sieve on $X$ if and only if, for every point
$x \in X$, some inverse image $U_{\alpha} \times_{X} \{x\}$ is nonempty.
\end{itemize}
\end{definition}

\begin{remark}\label{spuut}
Let $\calT^{0}_{\Stein}$ be the pregeometry defined in the same way $\calT_{\Stein}$, except that the class of admissible morphisms in $\calT^{0}_{\Stein}$ is the class of {\em open immersions} of
Stein manifolds. Then the identity transformation $\calT^{0}_{\Stein} \rightarrow \calT_{\Stein}$ is a Morita equivalence; this follows immediately from Proposition \ref{spunkk}.
\end{remark}

\begin{remark}
Let $\calT^{1}_{\Stein}$ be the pregeometry defined in the same way as $\calT_{\Stein}$, but using
the class of {\em all} complex manifolds. Proposition \ref{silver} implies that the inclusion
$\calT_{\Stein} \subseteq \calT^{1}_{\Stein}$ is a Morita equivalence.
\end{remark}

\begin{proposition}
The pregeometry $\calT_{\Stein}$ is compatible with $n$-truncations.
\end{proposition}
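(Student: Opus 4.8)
The plan is to reduce everything to Proposition \ref{rabb} by passing to the Morita-equivalent pregeometry $\calT^{0}_{\Stein}$ introduced in Remark \ref{spuut}, whose admissible morphisms are the open immersions of Stein manifolds rather than arbitrary local homeomorphisms. The point of this replacement is that an open immersion $V \hookrightarrow X$ is a monomorphism (a holomorphic map into $V$ is determined by its composite to $X$), hence $(-1)$-truncated; consequently, for every $n \geq 0$, every admissible morphism of $\calT^{0}_{\Stein}$ is $(n-1)$-truncated, and Proposition \ref{rabb} applies verbatim to show that $\calT^{0}_{\Stein}$ is compatible with $n$-truncations for each $n \geq 0$. (For $n \geq 1$ one could equally well apply Proposition \ref{rabb} directly to $\calT_{\Stein}$, since $\calT_{\Stein}$ is the nerve of an ordinary category and hence every morphism of it is automatically $0$-truncated, a fortiori $(n-1)$-truncated; but this shortcut fails at $n = 0$, which is precisely the case that matters for truncating structure sheaves down to sheaves of rings.)

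Next I would transfer the conclusion from $\calT^{0}_{\Stein}$ to $\calT_{\Stein}$. Fix an $\infty$-topos $\calX$, an integer $n \geq 0$, and a $\calT_{\Stein}$-structure $\calO$ on $\calX$. Since $\calT_{\Stein}$ has more admissible morphisms than $\calT^{0}_{\Stein}$, a $\calT_{\Stein}$-structure satisfies more conditions than a $\calT^{0}_{\Stein}$-structure, so $\calO$ is in particular a $\calT^{0}_{\Stein}$-structure. Applying Proposition \ref{rubb} to the pregeometry $\calT^{0}_{\Stein}$ (which we have just shown is compatible with $n$-truncations) and the object $\calO$, we learn that $\tau_{\leq n}\circ \calO$ is again a $\calT^{0}_{\Stein}$-structure and that the canonical map $\calO \rightarrow \tau_{\leq n}\circ \calO$ is a local morphism of $\calT^{0}_{\Stein}$-structures.

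The crucial observation is then that Remark \ref{spuut} gives more than an abstract equivalence: the proof of Proposition \ref{spunkk} shows that the identity functor exhibits the equalities $\Struct_{\calT^{0}_{\Stein}}(\calX) = \Struct_{\calT_{\Stein}}(\calX)$ and $\Struct^{\loc}_{\calT^{0}_{\Stein}}(\calX) = \Struct^{\loc}_{\calT_{\Stein}}(\calX)$ of subcategories of $\Fun(\calT_{\Stein},\calX)$. Consequently $\tau_{\leq n}\circ \calO$ is a $\calT_{\Stein}$-structure and $\calO \rightarrow \tau_{\leq n}\circ \calO$ is a local morphism of $\calT_{\Stein}$-structures; by Definition \ref{spey}, the latter assertion says exactly that for every admissible morphism $U \rightarrow X$ of $\calT_{\Stein}$ the square with vertices $\calO(U)$, $\tau_{\leq n}\calO(U)$, $\calO(X)$, $\tau_{\leq n}\calO(X)$ is a pullback square in $\calX$. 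Since $\calX$, $n$, and $\calO$ were arbitrary, this is precisely the content of Definition \ref{bluha}: $\calT_{\Stein}$ is compatible with $n$-truncations.

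The only genuinely geometric input, and the step I would most want to check carefully, is the verification of hypothesis $(2)$ of Proposition \ref{spunkk} for the transformation $\calT^{0}_{\Stein} \rightarrow \calT_{\Stein}$ --- equivalently, the assertion (used already in Remark \ref{spuut}) that every local homeomorphism $f\colon U \rightarrow X$ of Stein manifolds admits a cover of $U$ by open sets $V_{\alpha}$ with each $f|_{V_{\alpha}}\colon V_{\alpha}\to X$ an open immersion. This is elementary: by definition of a local homeomorphism, each point of $U$ has an open neighbourhood on which $f$ restricts to a homeomorphism onto an open subset of $X$, and a holomorphic homeomorphism of complex manifolds is a biholomorphism, so the restriction is an open immersion. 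Beyond this, the argument is pure formalism, running parallel to --- and more simply than --- the proofs of Propositions \ref{silver} and \ref{stungun}, so I do not anticipate any real obstacle.
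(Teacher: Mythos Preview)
Your proof is correct and follows the paper's approach exactly: the paper's proof is the one-liner ``This follows immediately from Remark \ref{spuut} and Proposition \ref{rabb},'' and you have simply unpacked what that sentence means. Your observation that the Morita equivalence of Proposition \ref{spunkk} is literally an equality of subcategories $\Struct^{\loc}_{\calT^{0}_{\Stein}}(\calX) = \Struct^{\loc}_{\calT_{\Stein}}(\calX) \subseteq \Fun(\calT_{\Stein},\calX)$ is the right way to see why compatibility with $n$-truncations transfers from $\calT^{0}_{\Stein}$ to $\calT_{\Stein}$, despite the two pregeometries having different classes of admissible morphisms.
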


\begin{proof}
This follows immediately from Remark \ref{spuut} and Proposition \ref{rabb}.
\end{proof}

\begin{remark}\label{slup}
For every smooth $\C$-algebra $A$, the set $\Hom_{\C}(A, \C)$ of $\C$-points of $\SSpec A$
has the structure of a complex analytic manifold $(\SSpec A)^{\an}$. This construction determines a transformation of geometries $\calT_{\mathet}(\C) \rightarrow \calT_{\Stein}$. In particular, to every
$\calT_{\Stein}$-structured $\infty$-topos $(\calX, \calO_{\calX})$, we can associate an underlying
sheaf $\calO^{\alg}_{\calX}$ of simplicial commutative $\C$-algebras on $\calX$. For each
$n \geq 0$, we let $\pi_{n} \calO_{\calX}$ denote the $n$th homotopy group of this sheaf. Then
$\pi_0 \calO_{\calX}$ is a commutative $\C$-algebra object of the underlying topos
$\h{(\tau_{\leq 0} \calX)}$, and each $\pi_{i} \calO_{\calX}$ has the structure of a module over
$\pi_0 \calO_{\calX}$.
\end{remark}

\begin{notation}
For each $0 \leq n \leq \infty$, let $\calG^{\leq n}_{\Stein}$ denote an $n$-truncated geometric envelope of $\calT_{\Stein}$.
\end{notation}

We now state a complex-analytic version of Theorem \ref{swill}:

\begin{proposition}\label{swun}
Let $( \calX, \calO_{\calX} )$ be a $\calT_{\Stein}$-structured $\infty$-topos, where
$\calX$ is $0$-localic. Let $n$ be a nonnegative integer. The following conditions are equivalent:
\begin{itemize}
\item[$(1)$] The $\calT_{\Stein}$-structure $\calO_{\calX}$ is $n$-truncated, and the associated
object of $\LGeo( \calG^{\leq n}_{\Stein})$ is a $\calG^{\leq n}_{\Stein}$-scheme which is locally of finite presentation.
\item[$(2)$] There exists a complex analytic space $(X, \calO_X)$ such that:
\begin{itemize}
\item[$(a)$] The $\infty$-topos $\calX$ is equivalent to $\Shv(X)$. 
\item[$(b)$] There is an equivalence $\calO_X \simeq \pi_0 \calO_{\calX}$ (of sheaves of
commutative $\C$-algebras on $X$).
\item[$(c)$] For $0 < i \leq n$, $\pi_{i} \calO_{\calX}$ is a coherent sheaf of $\calO_X$-modules on $X$.
\item[$(d)$] For $i > n$, the sheaf $\pi_{i} \calO_{\calX}$ vanishes.
\end{itemize}
\end{itemize}
\end{proposition}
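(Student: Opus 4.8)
Proof plan. The strategy is to reduce Proposition~\ref{swun} to the already-established Zariski analogue, Theorem~\ref{swill}, by exploiting the fact that a complex analytic space is, locally, cut out inside a polydisc by finitely many holomorphic equations, and that the pregeometry $\calT_{\Stein}$ is built out of Stein manifolds whose \'{e}tale (local-homeomorphism) covers detect exactly the analytic topology. The key input, parallel to the role played by $\Comm^{\Zar}_{k}\subseteq\SCR_{k}$ in \S\ref{derzar}, is that the ordinary category of Stein manifolds (or better, $\calT^{0}_{\Stein}$, using open immersions as admissible morphisms --- see Remark~\ref{spuut}) sits inside $\calG^{\leq n}_{\Stein}$ as the $0$-truncated affines, and that $\calG^{\leq n}_{\Stein}$ is an $n$-truncated geometric envelope of $\calT_{\Stein}$. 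Throughout, I will freely use the transformation of pregeometries $\calT_{\mathet}(\C)\to\calT_{\Stein}$ of Remark~\ref{slup}, which equips every $\calT_{\Stein}$-structured $\infty$-topos with an underlying sheaf $\calO^{\alg}_{\calX}$ of simplicial commutative $\C$-algebras, so that the homotopy sheaves $\pi_{i}\calO_{\calX}$ and the sheaf-of-modules statements in $(2)$ make sense and behave compatibly with truncation (via the compatibility of $\calT_{\Stein}$ with $n$-truncations, just proved, together with Remark~\ref{downta}).

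First I would prove $(1)\Rightarrow(2)$. Since the assertion is local on $\calX$ and $\calO_{\calX}$ is assumed to present a $\calG^{\leq n}_{\Stein}$-scheme locally of finite presentation, I may assume $(\calX,\calO_{\calX})\simeq\Spec^{\calG^{\leq n}_{\Stein}}X$ for some $X\in\calT_{\Stein}$, i.e.\ $X$ a Stein manifold, or more precisely for an object of $\calG^{\leq n}_{\Stein}$ obtained from $\calT_{\Stein}$ by finitely many finite limits. Using the explicit construction of the absolute spectrum (Theorem~\ref{scoo}), $\calX$ is the $\infty$-topos of sheaves on $\Pro(\calG^{\leq n}_{\Stein})^{\adm}_{/X}$; I would show, exactly as in Remark~\ref{jikker}, that this site is equivalent to the site of open immersions into the underlying topological space of $X$ (this is where the analytic topology enters: an admissible cover in $\calT_{\Stein}$ is precisely an open cover), so $\calX\simeq\Shv(X^{\mathrm{top}})$ and is $0$-localic --- matching~$(a)$. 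Since any object of $\calG^{\leq n}_{\Stein}$ arises as a finite limit of Stein manifolds, and finite limits of Stein manifolds along local homeomorphisms are again (locally) Stein, the underlying topological space carries a natural structure sheaf making $(X,\calO_X)$ a complex analytic space; then $(b)$ follows because $\pi_0$ of the structure sheaf of the spectrum recovers the $0$-truncated structure (Corollary~\ref{tabletime} and Remark~\ref{downta}), and $(d)$ follows from Corollary~\ref{tabletime} applied to the $n$-truncated geometry $\calG^{\leq n}_{\Stein}$. For $(c)$, the coherence of $\pi_i\calO_{\calX}$ for $0<i\le n$: this is the content that requires the analytic analogue of the computation in Theorem~\ref{swill} --- namely that the higher homotopy sheaves of $\Spec$ of an object of $\SCR_{\C}$ presenting a finitely-presented analytic-local algebra are the coherent analytifications of finitely generated modules --- and I would deduce it by transporting through $\calT_{\mathet}(\C)\to\calT_{\Stein}$ and invoking Oka's coherence theorem for the structure sheaf of a complex analytic space, which guarantees that finitely generated modules over a coherent sheaf of rings are coherent.

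Next, $(2)\Rightarrow(1)$. Given a complex analytic space $(X,\calO_X)$ with the homotopy-sheaf data as in $(2)$, I must show $(\calX,\calO_{\calX})$ is a $\calG^{\leq n}_{\Stein}$-scheme locally of finite presentation; by the locality clause in the definition of $\calG$-scheme and the fact that every complex analytic space is locally a closed analytic subspace of a polydisc, I may restrict to the case where $(X,\calO_X)$ is such a model subspace. In that case I would exhibit $(\calX,\calO_{\calX})$ as an affine $\calG^{\leq n}_{\Stein}$-scheme by running exactly the argument of the converse half of Theorem~\ref{swill}: build the Postnikov tower $\cdots\to\tau_{\le 1}\calO_{\calX}\to\tau_{\le 0}\calO_{\calX}$, take global sections $A_{\le m}=\Gamma(\calX;\tau_{\le m}\calO_{\calX})$, use the vanishing of higher cohomology of coherent sheaves on a Stein manifold (Cartan's Theorem~B) to degenerate the descent spectral sequence and identify $\pi_i A_{\le m}$ with the appropriate global sections of $\pi_i\calO_{\calX}$, check that $\Spec^{\calG^{\leq n}_{\Stein}}(A_{\le m})\simeq(\calX,\tau_{\le m}\calO_{\calX})$ level by level via the $0$-localic spectrum computation of the first part, and pass to the inverse limit over $m$ (using hypercompleteness, which here is automatic since $\calO_{\calX}$ is $n$-truncated and hence the tower stabilizes). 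Since $A=\varprojlim A_{\le m}$ is $n$-truncated and belongs to the image of $\calT_{\Stein}$ under the envelope --- this is exactly the finite-presentation condition, and is where I use that $X$ was taken to be a finitely-cut-out model --- this shows $(\calX,\calO_{\calX})$ is locally of finite presentation.

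The main obstacle is the coherence/finite-presentation bookkeeping rather than any $\infty$-categorical subtlety: I expect the $\infty$-topos side of the argument (the $0$-localic spectrum computation, Postnikov-tower descent, hypercompleteness) to be essentially a transcription of \S\ref{derzar}--\S\ref{juet}, but the identification of ``$\calG^{\leq n}_{\Stein}$-scheme locally of finite presentation'' with ``complex analytic space with coherent higher homotopy sheaves'' rests on two genuinely analytic facts --- Cartan's Theorems A and B for coherent sheaves on Stein manifolds (replacing the triviality of quasi-coherent cohomology on an affine scheme used in Theorem~\ref{swill}) and Oka's coherence theorem (replacing the Noetherian structure theory of finitely generated modules) --- and one must be careful that the ``finite presentation'' condition on the $\calG$-scheme side, which refers to the essential image of the Yoneda embedding $\calT_{\Stein}\hookrightarrow\Pro(\calG^{\leq n}_{\Stein})$, corresponds exactly to the local-model description of analytic spaces as finitely-generated closed subspaces of polydiscs. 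Pinning down that dictionary precisely, and in particular checking that finite limits in $\calG^{\leq n}_{\Stein}$ of Stein manifolds along admissible maps are computed correctly on the analytic side, is the step I would budget the most care for.
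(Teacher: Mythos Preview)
The paper does not actually prove this proposition: immediately after the statement it says ``The proof requires some ideas which we have not yet introduced, and will be given in \cite{spectral}.'' So there is no proof in the present paper to compare against.

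Your sketch is a reasonable extrapolation from the algebraic analogues (Theorems~\ref{swill} and~\ref{swillboard}), and you have correctly identified the analytic substitutes for the key commutative-algebra inputs: Cartan's Theorems~A and~B in place of the vanishing of quasi-coherent cohomology on affines, and Oka's coherence theorem in place of Noetherian finiteness. The Postnikov-tower argument you outline for $(2)\Rightarrow(1)$ has exactly the shape one expects.

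That said, the paper's deferral is a signal that more is needed than a transcription of the algebraic argument, and the gap is essentially the one you flag yourself at the end. The algebraic proofs run on an explicit identification $\Ind((\calG^{\der}_{\Zar}(k))^{op})\simeq\SCR_{k}$ (Proposition~\ref{sturma}), which in turn rests on a concrete presentation of every object of $\calT_{\Zar}(k)$ as a hypersurface complement in affine space. You have no such description of $\calG^{\leq n}_{\Stein}$ or of $\Ind((\calG^{\leq n}_{\Stein})^{op})$ available in this paper. Without it, your appeal to Theorem~\ref{scoo} in the affine case does not immediately yield $\calX\simeq\Shv(X^{\mathrm{top}})$: the site $\Pro(\calG^{\leq n}_{\Stein})^{\adm}_{/A}$ is indexed by admissible morphisms in the envelope, not in $\calT_{\Stein}$, and identifying these with open subsets of an honest analytic space --- and likewise identifying the ``finite presentation'' condition with the local-model description of analytic spaces --- requires a structure theory for the envelope that is presumably what \cite{spectral} supplies. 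Your plan is the right outline, but the dictionary you say you would ``budget the most care for'' is genuinely not in this paper.
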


The proof requires some ideas which we have not yet introduced, and will be given
in \cite{spectral}.

\begin{corollary}
Let $(\calX, \calO_{\calX})$ be a $\calG^{\leq n}_{\Stein}$-scheme which is locally of finite presentation, and let $0 \leq m \leq n$. Then $(\calX, \tau_{\leq m} \calO_{\calX})$ is a $\calG^{\leq m}_{\Stein}$-scheme which is locally of finite presentation.
\end{corollary}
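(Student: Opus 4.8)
The plan is to follow the template of Corollary~\ref{jik}: combine the proposition just proved, that $\calT_{\Stein}$ is compatible with $m$-truncations (valid for every $0\le m\le n$), with (a truncated-geometry version of) Proposition~\ref{sableware}. Since the statement is phrased in terms of the $n$- and $m$-truncated geometric envelopes rather than the full envelope, I would first record the bookkeeping. By Lemma~\ref{good1} and the uniqueness of $n$-truncated geometric envelopes, $\calG^{\leq n}_{\Stein}$ is an $n$-stub of a full geometric envelope $\calG_{\Stein}$ of $\calT_{\Stein}$, and $\calG^{\leq m}_{\Stein}$ is an $m$-stub of $\calG^{\leq n}_{\Stein}$ (and of $\calG_{\Stein}$). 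Via Propositions~\ref{sulw} and~\ref{lubba1} one moves freely between $\calG^{\leq n}_{\Stein}$-structures on $\calX$, $n$-truncated $\calT_{\Stein}$-structures, and $n$-truncated $\calG_{\Stein}$-structures, compatibly with the analogous identifications at level $m$ and with the corresponding (relative and absolute) spectrum functors; in particular ``being a $\calG^{\leq n}_{\Stein}$-scheme which is locally of finite presentation'' is intrinsic and is the notion we must preserve.

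With this in place the argument runs exactly as in the proof of Proposition~\ref{sableware}. First, since $\calT_{\Stein}$ is compatible with $m$-truncations, Proposition~\ref{rubb} shows that $\tau_{\leq m}\calO_{\calX}:=\tau_{\leq m}\circ\calO_{\calX}$ is again a $\calT_{\Stein}$-structure --- necessarily $m$-truncated, hence a $\calG^{\leq m}_{\Stein}$-structure --- and that the canonical map $\calO_{\calX}\to\tau_{\leq m}\calO_{\calX}$ is a local transformation. It remains to see that $(\calX,\tau_{\leq m}\calO_{\calX})$ is a $\calG^{\leq m}_{\Stein}$-scheme locally of finite presentation, which is local on $\calX$, so we may assume $(\calX,\calO_{\calX})\simeq\Spec^{\calG^{\leq n}_{\Stein}}A$ with $A\in\calG^{\leq n}_{\Stein}$. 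Two things must then be checked: first, that $\tau_{\leq m}A$ lies in $\calG^{\leq m}_{\Stein}$, which holds because the truncation functor $\Ind((\calG^{\leq n}_{\Stein})^{op})\to\Ind((\calG^{\leq m}_{\Stein})^{op})$ preserves compact objects (this is exactly the device used in the proof of Proposition~\ref{snubber}, together with the fact that $\calG^{\leq m}_{\Stein}$ is idempotent complete by Lemma~\ref{sull}); and second, that the natural map $\tau_{\leq m}A\to\Gamma(\calX;\tau_{\leq m}\calO_{\calX})$ exhibits an equivalence $\Spec^{\calG^{\leq m}_{\Stein}}(\tau_{\leq m}A)\simeq(\calX,\tau_{\leq m}\calO_{\calX})$. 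The latter I would prove by repeating the argument for Proposition~\ref{sableware}(1): Corollary~\ref{tabletime} makes $\Spec^{\calG^{\leq m}_{\Stein}}(\tau_{\leq m}A)$ an $m$-truncated object, and then a mapping-space computation against $m$-truncated $\calG^{\leq m}_{\Stein}$-structured $\infty$-topoi (using Propositions~\ref{sulw} and~\ref{rubb}) identifies it with $(\calX,\tau_{\leq m}\calO_{\calX})$. Gluing the affine pieces back together gives the corollary.

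The point worth flagging is the hypothesis of Corollary~\ref{tabletime} invoked above --- equivalently, the analogue of hypothesis $(b)$ of Proposition~\ref{sableware} --- namely that an admissible morphism $U\to X$ in $\Pro(\calG^{\leq m}_{\Stein})$ carries $m$-truncated objects to $m$-truncated objects. Phrased over the truncated envelope this is in fact automatic: by the corollary following Proposition~\ref{lubba1}, $\Ind((\calG^{\leq m}_{\Stein})^{op})$ is identified with the full subcategory of $m$-truncated objects of $\Ind(\calG_{\Stein}^{op})$, so \emph{every} object of $\Pro(\calG^{\leq m}_{\Stein})$ is $m$-truncated, and there is nothing to verify. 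Thus I expect the main obstacle to be not mathematical depth but the absence of an off-the-shelf statement: one must carry out the (routine but fiddly) re-run of Proposition~\ref{sableware}'s proof with the full geometric envelope replaced by its truncations, being careful that ``locally of finite presentation'' is measured against $\calG^{\leq n}_{\Stein}$ and $\calG^{\leq m}_{\Stein}$ --- a genuinely stronger condition than over $\calG_{\Stein}$ --- and that $\tau_{\leq m}$ respects it. (If instead one wished to quote Proposition~\ref{sableware} verbatim with $\calG=\calG_{\Stein}$, one would have to verify its hypothesis $(b)$ for admissible morphisms of $\Ind(\calG_{\Stein}^{op})$ directly; this is the analytic counterpart of the fact that an \'etale morphism of simplicial commutative rings base-changes the higher homotopy groups (Definition~\ref{juh}), and properly belongs to the structure theory of derived analytic rings developed in \cite{spectral}, just as the proof of Proposition~\ref{swun} is deferred there.)
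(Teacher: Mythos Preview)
Your proposal is correct but takes a substantially different route from the paper. The paper's proof is a two-line application of Proposition~\ref{swun}: reduce to the affine case (where the underlying $\infty$-topos is $0$-localic), and then observe that the characterization given there --- $(X,\pi_0\calO_{\calX})$ a complex analytic space, $\pi_i\calO_{\calX}$ coherent for $0<i\le n$, vanishing for $i>n$ --- is manifestly preserved when one replaces $\calO_{\calX}$ by $\tau_{\le m}\calO_{\calX}$, since this only kills $\pi_i$ for $i>m$.

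Your approach instead avoids Proposition~\ref{swun} entirely and works with the abstract machinery: compatibility with $m$-truncations (from Proposition~\ref{rabb} via Remark~\ref{spuut}), Proposition~\ref{rubb}, and a truncated-envelope variant of the universal-property argument in Proposition~\ref{sableware}. Your observation that hypothesis~$(b)$ of Proposition~\ref{sableware} becomes vacuous over $\calG^{\le m}_{\Stein}$ is correct and is the point that makes the argument go through without appealing to any analytic input. The mapping-space computation you sketch does identify $(\calX,\tau_{\le m}\calO_{\calX})$ with $\Spec^{\calG^{\le m}_{\Stein}}(\tau_{\le m}A)$, and the preservation of compactness under $\tau_{\le m}$ follows from the construction in Proposition~\ref{snubber}. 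So your route is sound, and has the virtue of being independent of Proposition~\ref{swun} (whose proof is deferred); but it is considerably more work than what the paper does, and in context the Corollary is meant to be read as an immediate consequence of the characterization just stated.
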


\begin{proof}
The question is local on $\calX$, so we may assume without loss of generality that $(\calX, \calO_{\calX})$ is affine. In this case, $\calX$ is $0$-localic so that the desired result is an immediate consequence of Proposition \ref{swun}.
\end{proof}

\begin{definition}\label{oops}
A {\it derived complex analytic space} is a $\calT_{\Stein}$-structured $\infty$-topos
$(\calX, \calO_{\calX})$ such that, for every integer $n \geq 0$, the truncation
$(\calX, \tau_{\leq n} \calO_{\calX} )$ is a $\calG^{\leq n}_{\Stein}$-scheme which is locally of finite presentation. We let $\CompAn^{\der} \subseteq \LGeo( \calT_{\Stein} )^{op}$ denote the full subcategory spanned by the derived complex analytic spaces.
\end{definition}

\begin{remark}\label{noop}
In view of Proposition \ref{swun}, we can reformulate Definition \ref{oops} as follows:
if $\calX$ is a $0$-localic $\infty$-topos, then a $\calT_{\Stein}$-structure
$\calO_{\calX}: \calT_{\Stein} \rightarrow \calX$ determines a
derived complex analytic space $(\calX, \calO_{\calX})$ if and only if the following conditions are satisfied:
\begin{itemize}
\item[$(a)$] The $\infty$-topos $\calX$ has enough points, so that $\calX = \Shv(X)$ for some topological space $X$.
\item[$(b)$] The pair $(X, \pi_0 \calO_{\calX})$ is a complex analytic space.
\item[$(c)$] For each $i > 0$, $\pi_i \calO_{\calX}$ is a coherent sheaf on $X$.
\end{itemize}
If $\calX$ is not assumed to be $0$-localic, then we can still apply this criterion locally on $\calX$.
\end{remark}

In view of Proposition \ref{swun}, we can associate to every $0$-localic derived complex analytic space $(\calX, \calO_{\calX})$ an underlying complex analytic space $(X, \calO_{X})$, where
$X$ is the topological space of points of the ($0$-localic) $\infty$-topos $\calX$, and
$\calO_{X} = \pi_0 \calO_{\calX}$. This construction determines a functor
$$ \theta: \CompAn^{\der}_{\leq 0} \rightarrow \CompAn,$$
where $\CompAn^{\der}_{\leq 0}$ denotes the full subcategory of
$\CompAn^{\der}$ spanned by the $0$-localic objects, and $\CompAn$ the (nerve of the) ordinary category of complex analytic spaces. The following result shows that our theory of derived complex analytic geometry really does generalize the classical theory of complex analytic geometry:

\begin{proposition}\label{swune}
Let $\CompAn^{\der, \leq 0}_{\leq 0}$ denote the full subcategory of
$\CompAn^{\der}$ spanned by those objects which are $0$-localic and $0$-truncated.
Then the functor $\theta$ induces an equivalence of $\infty$-categories
$$ \CompAn^{\der, \leq 0}_{\leq 0} \rightarrow \CompAn.$$
\end{proposition}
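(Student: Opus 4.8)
The plan is to show that $\theta$ restricts to a fully faithful functor $\CompAn^{\der,\leq 0}_{\leq 0} \to \CompAn$ whose essential image is all of $\CompAn$. Essential surjectivity is the easy half: given a complex analytic space $(X,\calO_X)$, the topological space $X$ is sober (complex analytic spaces have underlying sober spaces, being locally Stein-analytic subsets of domains in $\C^n$), so $\calX = \Shv(X)$ is a $0$-localic $\infty$-topos with enough points. Using Proposition \ref{skup} (in its complex-analytic avatar, via Remark \ref{slup}) I can promote $\calO_X$ to a $\calT_{\Stein}$-structure $\calO_{\calX}: \calT_{\Stein} \to \calX$ whose homotopy sheaves are $\pi_0 = \calO_X$ and $\pi_i = 0$ for $i>0$; by Remark \ref{noop} (the $0$-truncated case of conditions $(a)$--$(c)$ there), $(\calX,\calO_{\calX})$ is a $0$-truncated, $0$-localic derived complex analytic space with $\theta(\calX,\calO_{\calX}) \simeq (X,\calO_X)$.

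For full faithfulness, the key point is a universal-property comparison. I would first reduce to understanding mapping spaces in $\CompAn^{\der,\leq 0}_{\leq 0}$ as mapping spaces in $\LGeo(\calT_{\Stein})^{op}$, i.e.\ as morphisms of $\calT_{\Stein}$-structured $\infty$-topoi. Since the objects in question are $0$-truncated, Proposition \ref{sulw} (applied with $\calG = \calG^{\leq 0}_{\Stein}$, the $0$-truncated geometric envelope) lets me replace $\calT_{\Stein}$-structures by $\calG^{\leq 0}_{\Stein}$-structures, and the objects become $0$-truncated $0$-localic $\calG^{\leq 0}_{\Stein}$-schemes which are locally of finite presentation. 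At this point the argument should parallel the proof of Theorem \ref{stumm}: one shows that a morphism of such $\calG^{\leq 0}_{\Stein}$-structured $\infty$-topoi, with $0$-localic source and target, is the same data as a morphism of locally ringed spaces $(X,\calO_X) \to (Y,\calO_Y)$ which is a morphism in the category of complex analytic spaces. Concretely, a geometric morphism between $0$-localic topoi with enough points is a continuous map of sober spaces, and compatibility with the $\calT_{\Stein}$-structure (testing on admissible morphisms, which are local homeomorphisms of Stein manifolds) translates into the condition that the induced map on structure sheaves be a map of $\calO$-algebras satisfying the locality condition characterizing morphisms of analytic spaces --- exactly as Remark \ref{testhigh} does in the algebraic case.

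The main obstacle, I expect, is the local identification of $\calT_{\Stein}$-structures with actual sheaves of analytic functions: one needs to know that a $\calT_{\Stein}$-structure $\calO_{\calX}$ on $\Shv(X)$ which is $0$-truncated recovers, and is recovered by, a sheaf of commutative $\C$-algebras $\calO_X$ that is genuinely a sheaf of local analytic rings in the sense that makes $(X,\calO_X)$ a complex analytic space. This is the content of the $0$-truncated part of Proposition \ref{swun} (whose proof the authors defer to \cite{spectral}), so I would simply cite Proposition \ref{swun} for the hard analytic input; granting it, the functor-of-points/locality bookkeeping is formal and mirrors \S\ref{exzar}. One subtlety to handle carefully is that the Grothendieck topology on $\calT_{\Stein}$ is generated by local homeomorphisms rather than open immersions, but Remark \ref{spuut} shows the pregeometry built from open immersions is Morita equivalent, so for the purposes of comparing $\calT_{\Stein}$-structures (and hence $\calT_{\Stein}$-structured $\infty$-topoi) with ringed-space data we may freely use the open-immersion version, which matches the classical site of a complex analytic space. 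Assembling these pieces --- essential surjectivity from Proposition \ref{skup}/Remark \ref{noop}, full faithfulness from Proposition \ref{sulw} plus the locality analysis plus Proposition \ref{swun} --- yields the asserted equivalence.
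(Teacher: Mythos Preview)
The paper does not actually prove this proposition: immediately after the statement it says ``Once again, the proof requires some ideas which have not yet been introduced, and will be deferred to \cite{spectral}.'' So there is no proof in the paper to compare against; your proposal is an attempt to fill in what the author explicitly postpones.

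Your outline is in the right spirit but understates the analytic difficulty, and one citation is misplaced. For essential surjectivity you invoke Proposition~\ref{skup} ``in its complex-analytic avatar,'' but Proposition~\ref{skup} is purely about sheaves of commutative $k$-algebras: it gives no mechanism for promoting $\calO_X$ to a $\calT_{\Stein}$-structure, which is strictly more data (composition with arbitrary holomorphic functions, not just polynomials). The correct construction is direct---set $\calO_{\calX}(M)(U) = \mathrm{Hol}(U,M)$ for $M$ a Stein manifold and $U \subseteq X$ open---but this is not what Proposition~\ref{skup} says. For full faithfulness you assert that ``granting [Proposition~\ref{swun}], the functor-of-points/locality bookkeeping is formal and mirrors \S\ref{exzar}.'' This is optimistic: the Warning immediately following Proposition~\ref{swune} in the paper explicitly flags that the algebraic structure sheaf determines the $\calT_{\Stein}$-structure in the $0$-truncated case only because of ``strong finiteness properties enjoyed by the class of Stein algebras,'' and calls this ``somewhat remarkable.'' That is real analytic input (essentially that a map of local analytic $\C$-algebras is automatically compatible with holomorphic functional calculus), not formal bookkeeping, and it is part of why the author defers the proof rather than just citing Proposition~\ref{swun} and wrapping up.
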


Once again, the proof requires some ideas which have not yet been introduced, and will be deferred to \cite{spectral}.

\begin{remark}\label{jill}
Let $(\calX, \calO_{\calX})$ be a derived complex analytic space. In view of Theorem \ref{top4}, there exists an \etale map $(\calX, \tau_{\leq 0} \calO_{\calX}) \rightarrow (\calY, \calO_{\calY})$, where
$(\calY, \calO_{\calY})$ is a $0$-truncated, $1$-localic derived complex analytic space. If $\calY$ is $0$-localic, then we can identify it with an ordinary complex analytic space (Proposition \ref{swune}). 
In general, we can view $(\calY, \calO_{\calY})$ as a ringed topos which is locally equivalent
to the underlying topos of a complex analytic space; in other words, we can think of
$(\calY, \calO_{\calY})$ as a {\it complex analytic orbifold}.
\end{remark}

\begin{remark}
The transformation of pregeometries $\calT_{\mathet}(\C) \rightarrow \calT_{\Stein}$ of
Remark \ref{slup} determines a relative spectrum functor
$\Sch( \calT_{\mathet}(\C) ) \rightarrow \Sch( \calT_{\Stein} )$. This functor carries
derived Deligne-Mumford stacks which are locally of finite presentation over $\C$
to derived complex analytic spaces (generally not $0$-localic; see Remark \ref{jill}); we will refer to this as the {\it analytification} functor.
\end{remark}

\begin{warning}
The category of complex analytic spaces can be identified with a full subcategory
the $\LRingSpace_{\C}$ of locally ringed spaces over $\C$. The $\infty$-categorical analogue of this statement is {\em false}. The structure sheaf $\calO_{\calX}$ of a derived complex analytic space
$(\calX, \calO_{\calX})$ has more structure than simply a sheaf of (simplicial) commutative $\C$-algebras. Roughly speaking, this structure allows us to compose sections of $\calO_{\calX}$ with arbitrary complex analytic functions $\phi$ defined on open sets $U \subseteq \C^{n}$; this structure
descends to the algebraic structure sheaf $\calO_{\calX}^{\alg} = \calO_{\calX} | \calT_{\Zar}(\C)$ only if $\phi$ is a rational function. That this algebraic structure sheaf determines $(\calX, \calO_{\calX})$ in the $0$-truncated case is somewhat remarkable, and depends on strong finiteness properties enjoyed by the class of Stein algebras (which do not hold in the derived setting).
\end{warning}

\begin{remark}
With some effort, the ideas presented in this section can be carried over to the setting of rigid analytic geometry. We will return to this subject in \cite{ellipticloop}.
\end{remark}

\subsection{Derived Differential Geometry}\label{derdiff}

Let $\Diff$ denote the category whose objects are smooth submanifolds of some Euclidean space
$\R^{n}$, and whose morphisms are smooth maps.
We regard the $\infty$-category $\calT_{\Diff} = \Nerve( \Diff)$ as a pregeometry as follows:

\begin{itemize}
\item[$(a)$] A morphism $f: U \rightarrow X$ in $\Diff$ is admissible if it identifies
$U$ with an open submanifold of $X$.
\item[$(b)$] A collection of admissible morphisms $\{ U_{\alpha} \rightarrow X \}$ generates a covering sieve on $X$ if and only if, for every point $x \in X$, some preimage $U_{\alpha} \times_{X} \{x\}$ is
nonempty.
\end{itemize}

\begin{remark}
The exact definition of the category $\Diff$ is not very important. For example, we would obtain
a Morita equivalent pregeometry if we replace $\Diff$ by the category of {\em all} smooth manifolds (Proposition \ref{spunkk}), or if we were to allow {\em all} local homeomorphisms as
admissible morphisms in $\calT_{\Diff}$ (Proposition \ref{silver}).
\end{remark}

\begin{example}\label{otte}
Let $M$ be an object of $\Diff$. We define a $\calT_{\Diff}$-structure $\calO_M$ on
$\Shv(M)$ by the formula $\calO_{M}(X)(U) = \Hom( U, X)$; here
$U$ ranges over the open subsets of $M$ and the set of morphisms is computed in the category
of all smooth manifolds. Then $( \Shv(M), \calO_{M} )$ is a smooth $\calT_{\Diff}$-scheme: in fact, it can
be identified with the absolute spectrum $\Spec^{\calT_{\Diff}}(M)$ (Proposition \ref{und}). 

The same definition makes sense for {\em any} smooth manifold $M$. Via this construction, we can identify the category of smooth manifolds with the $\infty$-category of $0$-localic, smooth
$\calT_{\Diff}$-schemes.
\end{example}

In view of Definition \ref{otte}, we can regard $\Sch( \calT^{\Diff})$ as an {\em enlargement} of the category of smooth manifolds. In addition to ordinary smooth manifolds, it contains orbifolds and their higher-categorical cousins (these can be identified with the smooth $\calT_{\Diff}$-schemes), and many other objects which are relevant to their study; for example, some of the basic constructs of {\it synthetic differential geometry} find their home here. For a more detailed exposition of the theory of derived smooth manifolds, we refer the reader to \cite{spivak}.


\end{document}